\newtheorem{lemma}{Lemma}[section]
\newtheorem{theorem}[lemma]{Theorem}
\newtheorem{proposition}[lemma]{Proposition}
\newtheorem{corollary}[lemma]{Corollary}
\theoremstyle{definition}
\newtheorem{definition}[lemma]{Definition}
\newtheorem{remark}[lemma]{Remark}
\def\keywords{
    \vspace{1ex}
    \noindent
    \if@twocolumn
      \small{\bf  Keywords}\/---$\!$    \else
      \begin{center}\small\ {\bf Keywords}\end{center}\quotation\small
    \fi}
\def\endkeywords{\vspace{0.6em}\par\if@twocolumn\else\endquotation\fi
    \normalsize\rm}
\newcommand{\mb}[1]{\ensuremath{\mathbb{#1}}}
\newcommand{\N}{{\mb{N}}}
\newcommand{\R}{{\mb{R}}}
\newcommand{\C}{{\mb{C}}}
\newcommand{\de}{\delta}
\newcommand{\eps}{\varepsilon}
\newcommand{\M}{\ensuremath{\mathcal M}}
\let \Re \relax
\DeclareMathOperator{\Re}{Re}
\let \Im \relax
\DeclareMathOperator{\Im}{Im}
\renewcommand{\S}{\ensuremath{\mathscr S}}
\DeclareMathOperator{\supp}{supp}
\DeclareMathOperator{\dist}{dist}
\DeclareMathOperator{\vois}{Vois}
\DeclareMathOperator{\id}{Id}
\renewcommand{\d}{\ensuremath{\partial}}
\let \div \relax
\DeclareMathOperator{\div}{div}
\DeclareMathOperator{\length}{length}
\def\x{x}
\newcommand{\E}{\mathscr E}
\def\e{{\varepsilon}}
\newcommand\bna{\begin{eqnarray*}} 
\newcommand\ena{\end{eqnarray*}}
\newcommand\bnan{\begin{eqnarray}} 
\newcommand\enan{\end{eqnarray}}
\newcommand\bnp{\begin{proof}} 
\newcommand\enp{\end{proof}}
\newcommand\bneq{\begin{eqnarray*}\left\lbrace \begin{array}{rcl}}
\newcommand\eneq{\end{array} \right.\end{eqnarray*}}
\newcommand\bneqn{\begin{eqnarray}\left\lbrace \begin{array}{rcl}}
\newcommand\eneqn{\end{array} \right.\end{eqnarray}}
\newcommand\nor[2]{\left\|#1\right\|_{#2}}
\newcommand\grando[1]{\mathcal{O}(#1)}
\newcommand\chit{\widetilde{\chi}}
\numberwithin{equation}{section}
\newtheorem{hypo}{Assumption}[section]
\author{Camille Laurent\footnote{CNRS UMR 7598 and UPMC Univ Paris 06, Laboratoire Jacques-Louis Lions, F-75005, Paris, France, email: laurent@ann.jussieu.fr} and Matthieu L\'eautaud\footnote{Universit\'e Paris Diderot, Institut de Math\'ematiques de Jussieu-Paris Rive Gauche, UMR 7586, B\^atiment Sophie Germain, 75205 Paris Cedex 13 France, email: leautaud@math.univ-paris-diderot.fr}
}
\begin{document}
\title{Quantitative unique continuation for operators with partially analytic coefficients. Application to approximate control for waves.}
\maketitle

\textbf{Abstract.} In this article, we first prove quantitative estimates associated to the unique continuation theorems for operators with partially analytic coefficients of Tataru~\cite{Tataru:95, Tataru:99}, Robbiano-Zuily~\cite{RZ:98} and H\"ormander~\cite{Hor:97}. We provide local stability estimates that can be propagated, leading to global ones. 
 
Then, we specify the previous results to the wave operator on a Riemannian manifold $\M$ with boundary. For this operator, we also prove  Carleman estimates and local quantitative unique continuation from and up to the boundary $\d \M$. This allows us to obtain a global stability estimate from any open set $\Gamma$ of $\M$ or $\d \M$, with the optimal time and dependence on the observation. 
 
This provides the cost of approximate controllability: for any $T>2 \sup_{x \in \M}(\dist(x,\Gamma))$, we can drive any data of $H^1_0 \times L^2$ in time $T$ to an $\eps$-neighborhood of zero in $L^2 \times H^{-1}$, with a control located in $\Gamma$, at cost $e^{C/\eps}$.

We also obtain similar results for the Schr\"odinger equation.

\tableofcontents

\section{Introduction and main results}
In this article, we are interested in the quantification of {\em global unique continuation} results of the following form: given a differential operator $P$ on an open set $\Omega \subset \R^n$, and given a small subset $U$ of $\Omega$, having
\bnan
\label{e:UCP-general}
Pu = 0  \text{ in  } \Omega, \quad u|_{U}=0 \Longrightarrow u= 0 \text{ on } \Omega .
\enan
More generally, in cases where~\eqref{e:UCP-general} is known to hold, we are interested in proving a quantitative version of
\bna
Pu \text{ small in  } \Omega, \quad u\text{ small in }U \Longrightarrow u \text{ small in } \Omega.
\ena
A more tractable problem than~\eqref{e:UCP-general} is the so called {\em local unique continuation} problem: given $x^0 \in \R^n$ and $S$ an oriented local hypersurface containing $x^0$, do we have the following implication:
\bnan
\label{e:UCP-general-local}
\text{There is a neighborhood } \Omega  \text{ of }x^0 ,\text{ such that } Pu = 0  \text{ in } \Omega , u|_{ \Omega \cap S^-}  = 0
\Longrightarrow x^0 \notin \supp(u) .
\enan
It turns out that proving~\eqref{e:UCP-general-local} for a suitable class of hypersurface (with regards to the operator $P$) is in general a key step in the proof of properties of the type~\eqref{e:UCP-general}. The first general unique continuation result of the form~\eqref{e:UCP-general-local} is the Holmgren Theorem, stating that, for operators with analytic coefficients, unique continuation holds across any noncharacteristic hypersurface $S$. This local unique continuation result enjoys a global version proved by John~\cite{John:49}, where uniqueness is propagated through a family of noncharateristic hypersurfaces. 

When focusing on operators with (only) smooth coefficients, the most general results was proved by H\"ormander~\cite{Hoermander:63},~\cite[Chapter~XXVIII]{Hoermander:V4}. Uniqueness across a hypersurface holds assuming a strict pseudoconvexity condition (see e.g. Definition~\ref{def: pseudoconvex-surface} below). 
This result uses as a key tools Carleman estimates, which were introduced in~\cite{Carleman:39} and developed at first for elliptic operators in~\cite{Calderon:58}. We also refer to \cite{Zuily:83} for a general presentation of these problems.

\bigskip
A particular motivation arises both from geoseismics~\cite {Symes:83} and control theory~\cite{Lio:88, Lio2:88}: in these contexts, one is interested in recovering the data/energy of a wave from the observation on a small part of the domain along a time interval. As well, unique continuation results for waves have been useful tools to solve inverse problems, for instance using the boundary control method~\cite{Belishev:87} (see also the review article~\cite{Belishev:07} and the book~\cite{KKL:01}). 

More precisely, consider the wave operator $P = \d_t^2 - \Delta_g$ on $\Omega = (-T,T) \times \M$, where $(\M , g)$ is a Riemannian manifold (with or without boundary) and $\Delta_g$ the associated (negative) Laplace-Beltrami operator.  A central question raised by the above applications is that of global unique continuation from sets of the form $(-T,T) \times \omega$, 
where $\omega \subset \M$ (resp. $\omega \subset \d\M$) is an observation region.

In this setting and in the context of control theory, the unique continuation property~\eqref{e:UCP-general} is equivalent to approximate controllability (from $(-T,T) \times \omega$); and an associated quantitative estimate (as proved in the present paper) is equivalent of estimating the cost of approximate controls.

If $\M$ is analytic (and connected), the Holmgren theorem applies, which together with the argument of John~\cite{John:49}, allows to prove unique continuation from $(-T,T) \times \omega$ for any nonempty open set $\omega$ as soon as $T>  \mathcal{L} (\M ,\omega)$, where, for $E \subset \M$, we have set 
\bnan
\label{e:def-L}
\mathcal{L}(\M ,E):= \sup_{x_1 \in \M}\big( \inf_{x_0 \in E} \dist(x_0 , x_1) \big)  , \quad \dist(x_0 , \x_1) = \inf_{\gamma \in C^0([0,1] ; \M), \gamma(0) = x_0 , \gamma(1) = x_1}\length(\gamma).
\enan
Due to finite speed of propagation, it is also not hard to prove that unique continuation from $(-T,T) \times \omega$ does not hold if $T <\mathcal{L}(\M ,\omega)$, so that the result is sharp.

Removing the analyticity condition on $\M$ has lead to a considerable difficulty, since H\"ormander general uniqueness result does not apply in this setting: time-like surfaces, as $\{x_1 = 0\}$, do not satisfy the pseudoconvexity assumption for the wave operator. The local unique continuation can even fail when adding some smooth lower order terms to the wave operator, as proved by Alinhac-Baouendi~\cite{AB:79,Alinhac:83,AB:95}.

This uniqueness problem in the $C^\infty$ setting was first solved by Rauch-Taylor~\cite{RT:72} and Lerner~\cite{Lerner:88} in the case $T= \infty$, and $\M = \R^d$ (under different assumptions at infinity). Then, a result of Robbiano~\cite{Robbiano:91} shows that it holds in any domain $\M$ for $T$ sufficiently large. H\"ormander~\cite{Hormander:92} improved this result down to $T> \sqrt{\frac{27}{23}} \mathcal{L}(\M ,\omega)$. That these two results fail to hold in time $\mathcal{L}$ translates the fact that the local uniqueness results of these two authors are not valid across any noncharacteristic surface.

\bigskip
The proof of local uniqueness results across any noncharacteristic surface for $\d_t^2 - \Delta_g$ was reached by Tataru in~\cite{Tataru:95}, leading to the global unique continuation result in optimal time $T> \mathcal{L}(\M ,\omega)$. The result of Tataru was not restricted to the wave operator:
he considered operators with coefficients that are analytic in part of the variables, interpolating between the Holmgren theorem and the H\"ormander theorem. Technical assumptions of this article were successively removed by Robbiano-Zuily~\cite{RZ:98}, H\"ormander~\cite{Hor:97} and Tataru~\cite{Tataru:99},  leading to a very general local unique continuation result for operators with partially analytic coefficients (containing as particular cases both Holmgren and H\"ormander theorems).

\bigskip
Concerning quantitative estimates of unique continuation, when~\eqref{e:UCP-general} holds, one may expect to have an estimate of the form 
\bnan
\label{e:QUCP-phi}
\nor{u}{\Omega} \leq  \varphi\big(\nor{u}{U},\nor{Pu}{\tilde\Omega} ,\nor{u}{\tilde\Omega}\big) , \quad \text{with } \varphi (a,b,c) \to 0 \text{ when } (a,b) \to 0 \text{ with } c \text{ bounded},
\enan
where $U \subset \Omega \subset \tilde \Omega$ are nonempty, and for appropriate norms. In this context, much less seems to be known.
Two additional difficulties arise: one needs first to quantify the local unique continuation property~\eqref{e:UCP-general-local},  and then to ``propagate'' the local estimates obtained towards a global one.

In the setting of the Holmgren theorem, local estimates of unique continuation of the form~\eqref{e:QUCP-phi} were proved by John~\cite{John:60}: they are of H\"older type, i.e. $\varphi (a,b,c) = (a+b)^\delta c^{1-\delta}$, in the case $P$ is elliptic, and of logarithmic type, i.e. $\varphi (a,b,c) = c \left(\log(1+\frac{c}{a+b})\right)^{-1}$, in the general case.

In the situation of the H\"ormander theorem, it is proved by Bahouri ~\cite{Bahouri:87} that H\"older stability always holds locally. Such local estimates were propagated, leading to global ones (in the case of elliptic operators $P$ of order two, even with low regularity assumptions) by Lebeau and Robbiano~\cite{Robbiano:95, LR:95}. They can also be improved to $\varphi (a,b,c) = a+b$ if boundary conditions are added~\cite{Robbiano:95, LR:95}.

The global problem for the wave operator in the analytic setting was tackled by Lebeau in~\cite{Leb:Analytic}. For $\Omega = \tilde\Omega = (-T,T) \times \M$ and $U= (-T,T)\times \omega$ with $\omega\subset \M$ (or more precisely $\Gamma\subset \partial\M$), he proved that the stability estimate~\eqref{e:QUCP-phi} with $\varphi (a,b,c) = c \left(\log(1+\frac{c}{a+b})\right)^{-1}$ holds for any $T > \mathcal{L}(\M ,\omega)$.
He also proved that this inequality is optimal if there exists a ray of geometric optic that does not intersect $(-T,T)\times \omega$ (and only has transverse intersection with $\d\M$). Under this assumption the (stronger) geometric control estimate (i.e.~\eqref{e:QUCP-phi} with $\varphi (a,b,c) = a+b$) of the Bardos-Lebeau-Rauch-Taylor Theorem~\cite{RT:74,BLR:92} is not satisfied.
When considering the $C^\infty$ situation for this problem, the first result is due to Robbiano~\cite{Robbiano:95}, who proved the result for $T$ suffiently large with $\varphi (a,b,c) =  c \left(\log(1+\frac{c}{a+b})\right)^{-\frac12}$. The result was improved by Phung \cite{Phung:10} to $\varphi (a,b,c) =  c \left(\log(1+\frac{c}{a+b})\right)^{-(1-\e)}$ (still in large time).
In his unpublished lecture notes~\cite{Tatarunotes}, Tataru proposes a strategy to obtain estimates of the form~\eqref{e:QUCP-phi} with $\varphi_\eps = c \left(\log(1+\frac{c}{a+b})\right)^{-(1-\eps)}$ in the general context of the uniqueness theorem for operators with partially analytic coefficients.

\bigskip
In this article, we develop a systematic approach both to quantify the local uniqueness Theorem of Tataru, Robbiano-Zuily and H\"ormander, and to propagate the quantitative local uniqueness results towards a global one (with optimal dependence $\varphi = c \left(\log(1+\frac{c}{a+b})\right)^{-1}$). When doing so, we face both difficulties of producing quantitative and global estimates.
 Then, we specify the previous results to the wave operator on $\M$. For this operator, we also prove appropriate Carleman estimates and local quantitative unique continuation results from and up to the boundary $\d \M$. This allows us to obtain a global stability estimate from any open set of $\M$ or $\d \M$, with the optimal time ($T>\mathcal{L}(\M,\omega)$) and dependence on the observation. This generalizes the result of Lebeau~\cite{Leb:Analytic} to non-analytic manifolds, and provides the cost of approximate controllability. We also treat the case of the Schr\"odinger operator.

\bigskip
In the present introduction, we first discuss the case of the wave and Schr\"odinger equations: in this particular setting, the results are simpler to state and more precise. Moreover, in this context, we are able to deal with the boundary value problem as well. Second, we state the general quantitative uniqueness result for operators with partially analytic coefficients in the setting of Tataru~\cite{Tataru:95, Tataru:99}, Robbiano-Zuily~\cite{RZ:98} and H\"ormander~\cite{Hor:97} (used in the proof for the wave equation). 
\subsection{The wave and Schr\"odinger equations}
\label{sec:intro-wave-schrod}
In this section, we describe the motivating applications of our main result, i.e. to the wave equation. In this very particular setting, we are also able to tackle the boundary value problem. We also state an analogous result for the Schr\"odinger equation.

\begin{theorem}[Quantitative unique continuation for waves]
\label{thmobserwaveintro}
Let $\M$ be a compact Riemannian manifold with (or without) boundary. For any nonempty open subset $\omega$ of $\M$ and any $T> 2\mathcal{L}(\M ,\omega)$, there exist $C, \kappa ,\mu_0>0$ such that for any $(u_0,u_1)\in H^1_0(\M)\times L^2(\M)$ and associated solution $u$ of 
\bneqn
\label{e:free-wave}
\partial_t^2u-\Delta_g u= 0& & \text{ in } [0,T] \times \M , \\
u_{\left|\partial \M\right.}=0 & &  \text{ in } [0,T] \times \d \M ,\\
(u,\partial_tu)_{\left|t=0\right.}=(u_0,u_1)& & \text{ in }\M ,
\eneqn
we have, for any $\mu\geq \mu_0$,
\bna
\nor{(u_0,u_1)}{L^2\times H^{-1}}\leq C e^{\kappa \mu}\nor{u}{L^2((0,T); H^1(\omega))}+\frac{1}{\mu}\nor{(u_0,u_1)}{H^1\times L^2}.
\ena
If $\partial\M\neq \emptyset$ and $\Gamma$ is a non empty open subset of $\partial \M$, for any $T>2\mathcal{L}(\M ,\Gamma)$, there exist $C, \kappa,\mu_0 >0$ such that for any $(u_0,u_1)\in H^1_0(\M)\times L^2(\M)$ and associated solution $u$ of~\eqref{e:free-wave}, we have 
\bna
\nor{(u_0,u_1)}{L^2\times H^{-1}}\leq C e^{\kappa \mu}\nor{\partial_{\nu}u}{L^2((0,T)\times \Gamma)} +\frac{1}{\mu}\nor{(u_0,u_1)}{H^1\times L^2}.
\ena
\end{theorem}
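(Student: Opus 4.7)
The strategy is to deduce the theorem from the global quantitative unique continuation estimate for operators with partially analytic coefficients developed earlier in the paper, applied to the wave operator $P=\partial_t^2-\Delta_g$ for which the analytic variable is time. The factor $2$ in the hypothesis $T>2\mathcal{L}(\M,\omega)$ is absorbed by centering the time interval around $t=0$: translating $[0,T]$ to $[-T/2,T/2]$, the natural threshold becomes $T/2>\mathcal{L}(\M,\omega)$, which is exactly what is needed to propagate local uniqueness from $(-T/2,T/2)\times\omega$ to a full space-time slab $(-\eta,\eta)\times\M$ around $\{t=0\}$ via Tataru-type pseudoconvex hypersurfaces.

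Applying the global quantitative estimate of the paper to $u$ would then produce, for some $\eta,C,\kappa,\mu_0>0$ and every $\mu\geq\mu_0$,
\begin{equation*}
\|u\|_{H^1((-\eta,\eta)\times\M)}\ \leq\ C e^{\kappa\mu}\|u\|_{L^2((-T/2,T/2);H^1(\omega))}\ +\ \frac{1}{\mu}\|u\|_{H^1((-T/2,T/2)\times\M)}.
\end{equation*}
Conservation of the $H^1_0\times L^2$ energy for the Dirichlet wave bounds the last term by $C\mu^{-1}\sqrt{T}\|(u_0,u_1)\|_{H^1\times L^2}$.

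Next, I would convert the space-time norm on the left into a bound on $(u_0,u_1)$ in $L^2\times H^{-1}$ by the following soft argument. The Dirichlet wave group also acts on $L^2\times H^{-1}$ and conserves its energy, so $t\mapsto\|(u,\partial_t u)(t)\|_{L^2\times H^{-1}}$ is constant. Moreover $u\in H^1((-\eta,\eta)\times\M)$ yields $u\in L^2((-\eta,\eta);L^2(\M))$ and $\partial_t u\in L^2((-\eta,\eta);L^2(\M))\hookrightarrow L^2((-\eta,\eta);H^{-1}(\M))$, whence
\begin{equation*}
2\eta\,\|(u_0,u_1)\|_{L^2\times H^{-1}}^2\ =\ \int_{-\eta}^{\eta}\|(u,\partial_t u)(t)\|_{L^2\times H^{-1}}^2\,dt\ \leq\ C\|u\|_{H^1((-\eta,\eta)\times\M)}^2.
\end{equation*}
Combining the two displays and absorbing the various constants into $C$ and $\kappa$ (and rescaling $\mu$) yields the stated inequality.

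The boundary observation case follows the exact same scheme, with two modifications: the interior global quantitative unique continuation is replaced by its boundary variant, which rests on the Carleman estimates up to $\partial\M$ developed in the paper; and the observation term $\|u\|_{L^2(H^1(\omega))}$ is replaced by the natural Neumann trace $\|\partial_\nu u\|_{L^2((0,T)\times\Gamma)}$, which is the boundary contribution arising in the integration by parts of the Dirichlet Carleman estimate. The geometric threshold $T>2\mathcal{L}(\M,\Gamma)$ again guarantees, after time translation, that $(-T/2,T/2)\times\Gamma$ governs a full neighborhood of $\{t=0\}\times\M$ through the Tataru foliation. The main obstacle is producing the global quantitative uniqueness with the sharp time $T/2>\mathcal{L}(\M,\omega)$: the local Hölder estimates from the Carleman inequality must be iterated along a carefully designed family of pseudoconvex hypersurfaces covering $\{t=0\}\times\M$, each step inducing only a logarithmic loss so that the final estimate retains the optimal logarithmic modulus and the correct dependence on $\mathcal{L}(\M,\omega)$. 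The boundary case adds the further difficulty of tracking Dirichlet conditions in the Carleman estimate up to $\partial\M$, especially at points where the boundary is tangent to the foliation.
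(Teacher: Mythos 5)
Your overall route is the paper's: center the time interval so that $T>2\mathcal{L}$ becomes $T/2>\mathcal{L}$ on $(-T/2,T/2)$, propagate the local low-frequency estimates along a noncharacteristic foliation built over paths realizing $\mathcal{L}(\M,\omega)$ to reach a slab $(-\eta,\eta)\times\M$, use the boundary Carleman estimate for the Neumann-trace observation, and control the remainder by energy conservation. However, there is a concrete gap in your final step. The semiglobal theorem, for a second-order operator, controls only $\nor{u}{L^2((-\eta,\eta)\times\M)}$ on the left, not $\nor{u}{H^1((-\eta,\eta)\times\M)}$: the high time-frequencies of $u$ are discarded using the trivial bound $\nor{(1-M^{\beta\mu}_\mu)u}{L^2}\leq C\mu^{-1}\nor{u}{H^1}$, and no analogous gain is possible at the $H^1$ level, so the intermediate inequality you write down is not what the machinery delivers. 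With only $L^2$ control of $u$ on the slab, your "soft argument" does not close: the conserved quantity is $\nor{u(t)}{L^2}^2+\nor{\d_t u(t)}{H^{-1}}^2$, and integrating it in time requires a bound on $\int_{-\eta}^{\eta}\nor{\d_t u(t)}{H^{-1}}^2\,dt$, which is not majorized by $\nor{u}{L^2((-\eta,\eta)\times\M)}^2$ without a further argument.

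The paper closes this step by duality rather than by integrating the conserved norm: for $(v_0,v_1)\in H^1_0\times L^2$, let $v$ solve the (adjoint) wave equation and set $w=\chi(t)v$ with $\chi$ a time cutoff equal to $1$ at $t=0$ and vanishing at $t=\eps$; then $\Box w=g$ with $\nor{g}{L^2}\leq C\nor{(v_0,v_1)}{H^1\times L^2}$, and integrating $\int u\,\Box w$ by parts puts all derivatives on $w$, yielding $|\langle(u_0,u_1),(-v_1,v_0)\rangle|\leq C\nor{u}{L^2((0,\eps)\times\M)}\nor{(v_0,v_1)}{H^1\times L^2}$; taking the supremum over $(v_0,v_1)$ gives $\nor{(u_0,u_1)}{L^2\times H^{-1}}\leq C\nor{u}{L^2((0,\eps)\times\M)}$, which is exactly compatible with the $L^2$ left-hand side the semiglobal theorem provides. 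You should replace your conservation-plus-integration step by this duality argument (or an equivalent integration by parts in time that moves the $\d_t$ off of $u$).
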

Theorem \ref{thmobserwaveintro} remains valid if $\Delta_g$ is perturbated by lower order terms that are analytic in time. In the special case where they are time independent, the constants in the previous estimates may be chosen uniformly with respect to these perturbations (in the appropiate norms). We refer to Theorem \ref{thmobserwave} for a  precise statement. This result can also be formulated in the following way, closer to the formulation~\eqref{e:QUCP-phi} (see Lemma~\ref{lmfonction}).
We only give the boundary observation case.

\begin{corollary}
\label{corlogstab}
Assume $\partial\M\neq \emptyset$ and $\Gamma$ is a non empty open subset of $\partial \M$. Then, for any $T>2\mathcal{L}(\M ,\Gamma)$, there exists $C >0$ such that for any $(u_0,u_1)\in H^1_0(\M)\times L^2(\M)$ and associated solution $u$ of~\eqref{e:free-wave}, we have 
\bna
\nor{(u_0,u_1)}{L^2\times H^{-1}}\leq C \frac{\nor{(u_0,u_1)}{H^1\times L^2}}{\log\left(\frac{\nor{(u_0,u_1)}{H^1\times L^2}}{\nor{\partial_{\nu}u}{L^2(]0,T[\times \Gamma)}}+1\right)} ,
\ena
\bna
\nor{(u_0,u_1)}{H^1\times L^2}\leq C e^{C\Lambda}\nor{\partial_{\nu}u}{L^2(]0,T[\times \Gamma)} , \quad \text{ with }\Lambda=\frac{\nor{(u_0,u_1)}{H^1\times L^2}}{\nor{(u_0,u_1)}{L^2\times H^{-1}}} .
\ena
\end{corollary}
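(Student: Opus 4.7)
The plan is to deduce both estimates in Corollary~\ref{corlogstab} from Theorem~\ref{thmobserwaveintro} by optimizing over the free parameter $\mu\geq \mu_0$. For readability, set
\begin{equation*}
A=\nor{(u_0,u_1)}{L^2\times H^{-1}}, \quad B=\nor{(u_0,u_1)}{H^1\times L^2}, \quad D=\nor{\partial_{\nu}u}{L^2((0,T)\times\Gamma)}.
\end{equation*}
Theorem~\ref{thmobserwaveintro} furnishes constants $C_0,\kappa,\mu_0>0$ such that
\begin{equation*}
A \leq C_0\, e^{\kappa\mu}\, D + \frac{B}{\mu}, \qquad \mu \geq \mu_0.
\end{equation*}
Both statements of the corollary are scale-invariant in $(u_0,u_1)$, so only the ratios $B/D$ and $\Lambda = B/A$ enter the final inequalities. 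The Sobolev embedding also yields the trivial bound $A\leq B$, i.e. $\Lambda\geq 1$.

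For the logarithmic stability, I would split on the size of $B/D$. When $B/D$ is bounded above by some threshold $M_0:=e^{2\kappa\mu_0}$, the quantity $\log(1+B/D)$ is uniformly bounded, so the trivial bound $A\leq B$ already gives $A\leq C B/\log(1+B/D)$ with $C$ depending on $M_0$. When $B/D\geq M_0$, I choose $\mu = \frac{1}{2\kappa}\log(B/D)$, which satisfies $\mu\geq\mu_0$; then $e^{\kappa\mu}D=\sqrt{BD}=B\sqrt{D/B}$. The elementary inequality $\sqrt{t}\geq c\log(1+t)$ (valid for $t\geq 1$, with $c>0$) yields
\begin{equation*}
C_0\,e^{\kappa\mu}D=C_0\sqrt{BD}\leq \frac{C\,B}{\log(1+B/D)},
\end{equation*}
while the second term is directly $B/\mu=2\kappa B/\log(B/D)\leq CB/\log(1+B/D)$ for $B/D\geq M_0\geq 1$. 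Summing the two contributions produces the first inequality of the corollary.

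The second inequality is then a direct reformulation: taking the logarithm in the just-obtained estimate gives $\log(1+B/D)\leq CB/A=C\Lambda$, hence $1+B/D\leq e^{C\Lambda}$ and therefore $B\leq D\,e^{C\Lambda}$ (possibly after enlarging $C$). The only substantive matter in the whole argument is the bookkeeping required to absorb the ``small ratio'' regime into the $1+$ in the logarithm, so that a single uniform constant works; this is a general optimization procedure for families of inequalities $A\leq C_0 e^{\kappa\mu}D+B/\mu$, and is precisely the content of the abstract Lemma~\ref{lmfonction} mentioned in the statement of the corollary.
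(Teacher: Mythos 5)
Your proposal is correct and follows essentially the same route as the paper: Corollary~\ref{corlogstab} is obtained from Theorem~\ref{thmobserwaveintro} by optimizing over $\mu$, which is exactly the content of Lemma~\ref{lmfonction} (applied with $a=A$, $b=D$, $c=B$, $\alpha=1$, together with $A\leq B$), and your choice $\mu=\frac{1}{2\kappa}\log(B/D)$ with the two-regime split matches the paper's choice $\mu(x)=\frac{1}{2C_1}\log(\frac1x+1)$ and its case distinction $\mu(x)\gtrless\mu_0$. The only cosmetic difference is that your large-ratio/small-ratio bookkeeping bypasses the hypothesis $b\leq C_2c$ of Lemma~\ref{lmfonction}, but the argument is the same optimization.
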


In the previous estimate, $\Lambda$ has to be considered as the typical frequency of the initial data. So, the estimate states a cost of observability of the order of an exponential of the typical frequency.

As proved by Lebeau \cite{Leb:Analytic} in the analytic case, this exponential dependence is sharp in the general case.

As a consequence of the previous Theorem, we can obtain some approximate controllability results as follows. For the sake of brevity, we only state the case of a boundary control.
\begin{theorem}[Cost of boundary approximate control]
For any $T> 2 \mathcal{L}(\M ,\Gamma)$, there exist $C,c>0$ such that for any $\eps >0$ and any $(u_0 ,u_1) \in H^1_0(\M) \times L^2(\M)$, there exists $g \in L^2((0,T) \times \Gamma)$ with 
$$
\|g\|_{L^2((0,T) \times \Gamma)} \leq C e^{\frac{c}{\eps}} \nor{(u_0 ,u_1)}{H^1_0(\M) \times L^2(\M)} ,
$$
such that the solution of 
\bneq
(\d_t^2 - \Delta) u  =  0 && \text{ in } (0,T) \times \M , \\
u_{| \d \M} = \mathds{1}_\Gamma g && \text{ in } (0,T) \times \d \M ,\\
(u ,\d_t u)_{|t=0} = (u_0 , u_1) ,&& \text{ in }\M ,
\eneq
satisfies $\nor{(u ,\d_t u)_{|t=T}}{L^2(\M) \times H^{-1}(\M)} \leq \eps\nor{(u_0 ,u_1)}{H^1_0(\M) \times L^2(\M)}$. 
\end{theorem}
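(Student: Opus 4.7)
The plan is to apply a standard penalized HUM / Fenchel duality argument, with the quantitative observability of Theorem~\ref{thmobserwaveintro} as the core ingredient. Set $R := \nor{(u_0,u_1)}{H^1_0 \times L^2}$, denote by $u^{\rm free}$ the solution of \eqref{e:free-wave} with $g=0$, and write $z := -(u^{\rm free}(T),\d_t u^{\rm free}(T))$. Conservation of the $L^2\times H^{-1}$ norm along the wave flow gives $\nor{z}{L^2\times H^{-1}}\leq R$. By transposition, the boundary control-to-state map $\mathcal{A}\colon g\mapsto(u(T),\d_t u(T))$ (with zero initial data) is continuous $L^2((0,T)\times\Gamma)\to L^2 \times H^{-1}$, so the target reduces to finding $g$ with $\nor{\mathcal{A}g-z}{L^2 \times H^{-1}}\leq\eps R$ and $\nor{g}{L^2}\leq Ce^{c/\eps}R$.

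The dual data space, consistent with the wave duality $\int \d_t u\, v - \int u\, \d_t v$, is $H^1_0(\M) \times L^2(\M)$: to any $(w_0,w_1)$ in this space one associates the solution $v$ of the backward adjoint equation $\d_t^2 v -\Delta_g v = 0$, $v|_{\d\M}=0$, $(v(T),\d_t v(T))=(w_0,w_1)$, and integration by parts shows $\mathcal{A}^*(w_0,w_1)=\pm\d_\nu v|_{(0,T)\times\Gamma}$. I would then minimise the convex functional
\bna
J_\eps(w_0,w_1) := \tfrac{1}{2}\nor{\d_\nu v}{L^2((0,T)\times\Gamma)}^2 + \eps R\,\nor{(w_0,w_1)}{H^1_0\times L^2} - L(w_0,w_1),
\ena
where $L(w_0,w_1)=\int_\M u_1 v(0)-\int_\M u_0\,\d_t v(0)$ is the linear data term. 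The key step is coercivity: Cauchy--Schwarz together with conservation of the $L^2\times H^{-1}$ norm under the adjoint flow yields $|L(w_0,w_1)|\leq R\,\nor{(w_0,w_1)}{L^2\times H^{-1}}$, so $L$ is controlled by the \emph{weaker} norm of $w$. After time reversal, Theorem~\ref{thmobserwaveintro} gives, for every $\mu\geq\mu_0$,
\bna
\nor{(w_0,w_1)}{L^2\times H^{-1}}\leq Ce^{\kappa\mu}\nor{\d_\nu v}{L^2((0,T)\times\Gamma)}+\frac{1}{\mu}\nor{(w_0,w_1)}{H^1_0\times L^2}.
\ena
Choosing $\mu=2/\eps$, the $\mu^{-1}$ contribution absorbs half of the penalty, and completing the square in $\nor{\d_\nu v}{L^2}$ produces $J_\eps(w)\geq -C'R^2 e^{c'/\eps}+\tfrac{\eps R}{2}\nor{(w_0,w_1)}{H^1_0\times L^2}$, the desired coercivity estimate.

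A minimiser $\bar w$ therefore exists by the direct method, and I would then set $g := \mathcal{A}^*\bar w$. The Euler--Lagrange condition $0\in\partial J_\eps(\bar w)$, combined with the fact that the subgradient of $\eps R\nor{\cdot}{H^1_0\times L^2}$ lies in the ball of radius $\eps R$ of the dual $L^2\times H^{-1}$, translates into $\nor{\mathcal{A}g-z}{L^2\times H^{-1}}\leq\eps R$, i.e.\ $\nor{(u,\d_t u)_{|t=T}}{L^2\times H^{-1}}\leq\eps R$. Feeding the same $\mu=2/\eps$ into the observability estimate inside the inequality $J_\eps(\bar w)\leq J_\eps(0)=0$ gives $\tfrac{1}{2}\nor{g}{L^2}^2\leq CRe^{2\kappa/\eps}\nor{g}{L^2}$, hence $\nor{g}{L^2}\leq CRe^{c/\eps}$, as required. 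The main obstacle is the coercivity step: one must recognise that, thanks to the conservation of the $L^2\times H^{-1}$ norm along the adjoint flow, the linear term $L$ is controlled by the weaker, flow-invariant norm of $w$ rather than by the dual $H^1_0\times L^2$ norm on which the penalty acts, and then use the free parameter $\mu$ in Theorem~\ref{thmobserwaveintro} (tuning $\mu\sim 1/\eps$) to convert the interpolation-type observability into the exponential cost $e^{c/\eps}$.
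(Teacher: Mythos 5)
Your argument is correct and is essentially the proof the paper intends: the paper does not write it out but refers to Robbiano's proof of his Theorem~2, which is exactly this penalized-HUM duality argument, namely minimising $J_\eps$ over adjoint data, using the observability estimate of Theorem~\ref{thmobserwaveintro} with $\mu\sim 1/\eps$ both for coercivity and for the $e^{c/\eps}$ bound on the control. The only points left implicit (and routine) are the case $\mu=2/\eps<\mu_0$, where $g=0$ already works since the free $L^2\times H^{-1}$ norm is bounded by $CR$, and the hidden regularity of $\d_\nu v$ needed to make $J_\eps$ well defined.
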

That this result is a consequence of Theorem \ref{thmobserwaveintro} is proved in \cite[Proof of Theorem~2, Section~3]{Robbiano:95}. The solution of the nonhomogeneous boundary value problem are defined in the sense of transposition, see \cite{Lio:88}.

\bigskip
We also obtain similar results for the Schr\"odinger equation. We only state here the counterpart of Theorem~\ref{thmobserwaveintro} in this setting.
\begin{theorem}
\label{thmobserschrodintro}
Let $\M$ be a compact Riemannian manifold with (or without) boundary. For any nonempty open subset $\omega$ of $\M$ and any $T> 0$, there exist $C ,\kappa , \mu_0 >0$ such that for any $u_0 \in H^2\cap H^1_0$ and associated solution $u$ of 
\bneqn
\label{e:schrod-free}
i\partial_t u+\Delta_g u= 0 &&\text{ in } (0,T) \times \M , \\
u_{\left|\partial \M\right.}=0   && \text{ in } (0,T) \times \d \M ,\\
u(0)=u_0&& \text{ in }\M ,
\eneqn
we have, for any $\mu\geq \mu_0$,
\bna
\nor{u_0}{L^2}\leq C e^{\kappa \mu}\nor{u}{L^2((-T,T); H^1(\omega))}+\frac{1}{\mu}\nor{u_0}{H^2}.
\ena
If $\partial\M\neq \emptyset$ and $\Gamma$ is a non empty open subset of $\partial \M$, then for any $T> 0$, there exist $C ,\kappa , \mu_0 >0$ such that for any $u_0 \in H^2\cap H^1_0$ and associated solution $u$ of~\eqref{e:schrod-free}, we have 
\bna
\nor{u_0}{L^2}\leq C e^{\kappa \mu}\nor{\partial_{\nu}u}{L^2((-T,T)\times \Gamma)}+\frac{1}{\mu}\nor{u_0}{H^2}.
\ena
\end{theorem}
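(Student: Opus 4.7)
The strategy is to mirror the proof of the wave case (Theorem~\ref{thmobserwaveintro}) and exploit the key simplification that the Schr\"odinger operator $P = i\partial_t + \Delta_g$ has coefficients that are \emph{independent} of $t$, hence trivially analytic in $t$. First I would apply the general quantitative unique continuation theorem for operators with partially analytic coefficients developed in the paper, treating $t$ as the analytic variable. Because the spatial principal symbol $|\xi|^2_g$ is elliptic in $\xi$, every spatial hypersurface is non-characteristic for $P$, and because the coefficients are $t$-analytic, every time-slice $\{t = t_0\}$ can also be crossed by Holmgren-type propagation. Consequently, local quantitative unique continuation chains along finitely many hypersurfaces to reach any open subset of $(-T,T) \times \M$ from $(-T,T) \times \omega$, for \emph{any} $T > 0$. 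This is precisely why no minimal observation time appears, in contrast with the wave case.

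Second, propagating the local estimates as in the paper's general framework, one obtains a global logarithmic stability of the form
\[
\nor{u}{L^2((-T/2,T/2) \times \M)} \leq C \, \frac{\nor{u}{L^2((-T,T); H^2(\M))}}{\log\!\left(1 + \frac{\nor{u}{L^2((-T,T); H^2(\M))}}{\nor{u}{L^2((-T,T); H^1(\omega))}}\right)}.
\]
Using the conservation laws $\nor{u(t)}{L^2} = \nor{u_0}{L^2}$ and $\nor{u(t)}{H^2} \leq C \nor{u_0}{H^2}$ (both immediate from commutation of the flow with $\Delta_g$), the left-hand side is bounded below by $c\sqrt{T}\,\nor{u_0}{L^2}$ and the numerator above by $C\sqrt{T}\,\nor{u_0}{H^2}$, yielding
\[
\nor{u_0}{L^2} \leq C \, \frac{\nor{u_0}{H^2}}{\log\!\left(1 + \frac{\nor{u_0}{H^2}}{\nor{u}{L^2((-T,T); H^1(\omega))}}\right)}.
\]

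Third, I would convert this logarithmic form into the interpolation form stated in the theorem by invoking the elementary Lemma~\ref{lmfonction} already used in Corollary~\ref{corlogstab}. Choosing $\mu$ essentially as $\kappa^{-1} \log\bigl(\nor{u_0}{H^2}/\nor{u}{L^2((-T,T); H^1(\omega))}\bigr)$ turns the log-stability into
\[
\nor{u_0}{L^2} \leq C e^{\kappa \mu}\nor{u}{L^2((-T,T); H^1(\omega))} + \frac{1}{\mu}\nor{u_0}{H^2},
\]
which is the content of the interior statement. For the boundary observation version, the proof is identical once the interior quantitative unique continuation is replaced by its up-to-the-boundary counterpart, established in the paper for the wave operator and adapted to $i\partial_t + \Delta_g$ by using the same $t$-analytic Carleman weights (the first-order term $i\partial_t$ is harmless for such weights).

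The main obstacle is concentrated upstream, in the general partially analytic quantitative unique continuation theorem and its global propagation, on which this corollary leans entirely; granted those, the Schr\"odinger case reduces to the short assembly above. The one genuine verification needed is that the boundary Carleman estimates of the paper, designed for the wave operator, pass through for $P = i\partial_t + \Delta_g$ with the same $t$-analytic weights; this is expected but must be done carefully, because the conormal calculus at $\partial\M$ is where boundary terms must be absorbed, and one wants to ensure that the $i\partial_t$ contribution does not spoil the sub-ellipticity gained from the weight.
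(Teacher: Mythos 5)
Your proposal follows essentially the same route as the paper: apply the semiglobal quantitative unique continuation machinery with $t$ as the analytic variable along a foliation by non-characteristic graphs over a path joining the observation region to an arbitrary point, then use conservation of the $L^2$ and $H^2$ norms together with $\partial_t u = i\Delta_g u$ to convert space-time norms of $u$ into norms of $u_0$ (this is exactly why $H^2$ appears on the right). Three remarks. First, your detour through the logarithmic estimate and back via Lemma~\ref{lmfonction} is unnecessary: Theorems~\ref{thmsemiglobal} and~\ref{thmsemiglobalwavebord} already produce the estimate in the form $Ce^{\kappa\mu}(\cdot)+C\mu^{-1}\nor{u}{H^1((-T,T)\times\M)}$ valid for \emph{all} $\mu\geq\mu_0$, which is what the statement asks; the reverse conversion from the log form does work, but only because of the a priori bound $\nor{u_0}{L^2}\leq\nor{u_0}{H^2}$, and it is the roundabout direction. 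Second, your claim that ``every time-slice $\{t=t_0\}$ can be crossed by Holmgren-type propagation'' is incorrect: the principal symbol of $i\partial_t+\Delta_g$ is $-|\xi|_g^2$, which vanishes on the conormal $(\tau,\xi)=(1,0)$ of a time-slice, so $\{t=t_0\}$ is characteristic. The correct mechanism (which you also state, and which suffices) is that non-characteristicity of the foliating graphs $\{x_n=G(t,w,\eps)\}$ constrains only the spatial gradient, so the foliation may be arbitrarily steep in $t$; this is the source of the absence of a minimal time. Third, the boundary verification you flag as outstanding is already done in Section~\ref{s:Dirichlet-pb-waves}: the Carleman estimate and local results there are stated for the general class of second-order operators with real quadratic principal symbol definite on $\{\xi_a=0\}$ and $x_a$ tangent to the boundary, which covers $i\partial_t+\Delta_g$, the term $i\partial_t$ being absorbed as an $x_a$-analytic first-order perturbation via Corollary~\ref{cor:carl-potentielanal-bord}.
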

As well, this result still holds with some lower order perturbations, analytic in $t$, see Theorem \ref{thmobserschrod} for a more precise statement. 

Note that some related results have already been proven in the internal case by Phung \cite{Phung:01} with $e^{\kappa \mu}$ replaced by $e^{\kappa \mu^2}$.
\subsection{Quantitative unique continuation for operators with partially analytic coefficients} 

Let us now turn to the general stability result and present the class of partial differential operators we deal with. We consider domains $\Omega \subset \R^n=\R^{n_a} \times \R^{n_b}$, where $n_a + n_b= n$. We denote by $x=(x_a,x_b)$ the global variables and $\xi=(\xi_a,\xi_b)$ the associated dual variables. The variables $x_a$ will denote the set of variables in which the considered operator is analytic.

We recall that, given a bounded domain $\Omega \subset \R^n=\R^{n_a} \times \R^{n_b}$, a smooth function $f : \Omega \to \R$ is analytic with respect to $x_a$ if, for any point $x^0= (x_a^0 , x_b^0) \in \Omega$, $f$ is equal to its partial Taylor expansion at $x_a^0$ with respect to the variable $x_a$ in a neighborhood of $x^0$ in $\Omega$.
Such a function extends as a holomorphic function in the variable $x_a$ in $B(x_a^0 , \eps ) + i B(0, \eps) \times B(x_b^0,\eps)$ for some $\eps >0$.

\medskip
The folowing definition is due to Tataru~\cite[Definition~2.2]{Tataru:99}.
\begin{definition}[analytically principally normal operator]
\label{def:anal principal normal}
Let $P$ be a partial differential operator on an open set $\Omega\subset \R^{n_a} \times \R^{n_b}$ of order $m \in \N^*$  with smooth coefficients and principal symbol $p(x_a, x_b, \xi_a , \xi_b)$. We say that $P$ is an analytically principally normal operator in $\{\xi_a = 0\}$ inside $\Omega$ if the coefficients of $P$ are real-analytic in the variable $x_a$ and for any $x^0\in \Omega$ there exist $\Omega_a\subset \R^{n_a}$, $\Omega_b\subset \R^{n_b}$, such that $x^0\in \Omega_a\times \Omega_b$, $\Omega_a\times \Omega_b\subset \Omega$ and there exists a complex neighborhood $\Omega^\C_a$ of $\Omega_a$ in $\C^{n_a}$ and a constant $C>0$ such that for all $z_a , \tilde{z}_a \in \Omega^\C_a$ and all $(x_b, \xi_b) \in \Omega_b \times \R^{n_b} , \xi_b \neq 0$, we have
\bnan
\label{cond1}
\left|  \left\{ p(z_a , \cdot , 0, \cdot), p(\tilde{z}_a , \cdot , 0, \cdot) \right\} (x_b, \xi_b)\right| + \left|  \left\{ \overline{p(z_a , \cdot , 0, \cdot)}, p(\tilde{z}_a , \cdot , 0, \cdot) \right\} (x_b, \xi_b)\right|  \leq C |p(z_a , x_b , 0, \xi_b)| |\xi_b|^{m-1} , 
\enan
\bnan
\label{cond2}
\left| \d_{z_a}p(z_a , x_b , 0, \xi_b) \right|  \leq C |p(z_a , x_b , 0, \xi_b)|.
\enan
\end{definition}

Note that in this definition, the Poisson brackets are taken only with respect to the $(x_b, \xi_b)$ variables. Yet, the combination of the two conditions \eqref{cond1} and \eqref{cond2} implies that such operators are in particular principally normal in $\{\xi_a = 0\}$ in the usual sense (see~\cite{RZ:98}, \cite{Hor:97} or \cite[Definition~2.1]{Tataru:99}), that is
\bnan
\label{principalnormalclassique}
\left|  \left\{ \overline{p}, p \right\} (x_a , x_b , 0, \xi_b)\right|  \leq C |p(x_a , x_b , 0, \xi_b)| |\xi_b|^{m-1},
\enan
where this time, $ \left\{ \overline{p}, p \right\}$ is computed with respect to all the variables.

Two interesting cases of operators $P$ being analytically principally normal in $\{\xi_a = 0\}$, considered in~\cite{RZ:98} and~\cite{Hor:97}, are operators with analytic coefficients in $x_a$ satisfying one of the following two assumptions: 
\begin{itemize}
\item[(E)] transversal ellipticity: $p(x_a,x_b, 0, \xi_b) \geq c |\xi_b|^m$ for $(x_a,x_b)\in \Omega, \xi_b \in \R^{n_b}$;
\item[(H)] principal normality and invariance with respect to the null bicharacteristic flow in $\{\xi_a = 0\}$: $$\left|  \left\{ \overline{p}, p \right\} (x_a , x_b , 0, \xi_b)\right|  \leq C |p(x_a , x_b , 0, \xi_b)| |\xi_b|^{m-1} \quad \text{and} \quad \d_{x_a}p(x_a , x_b , 0, \xi_b) = 0.$$
\end{itemize}
We now formulate the definition of strongly pseudoconvex surfaces for an operator $P$, see \cite[Definition~28.3.1]{Hoermander:V4}, \cite[Definitions~2.3 and 2.4]{Tataru:99} and~\cite[Section~1.2]{Tatarunotes}.

\begin{definition}[Strongly pseudoconvex oriented surface]
\label{def: pseudoconvex-surface}
Let $\Omega \subset \R^n$,  $\Gamma$ be a closed conic subset of $T^*\Omega$, and let $P$ be principally normal in $\Gamma$ inside $\Omega$ (in the sense of~\eqref{principalnormalclassique}) with principal symbol $p$. Let $S$ be a $C^2$ oriented hypersurface of $\Omega$ and $x^0 \in S \cap \Omega$. We say that $S$ is strongly pseudoconvex in $\Gamma$ at $x^0$ for $P$ if there exists $\phi \in C^2(\Omega; \R)$ such that $S = \{\phi = 0\}$, $\nabla \phi (x^0) \neq 0$, satisfying:
\bnan
\label{e:pseudo-surface-1}
\Re \left\{ \overline{p}, \{p ,\phi \} \right\} (x^0 ,\xi) >0  , 
&\text{ if } p(x^0,\xi) =  \{p ,\phi \}  (x^0 ,\xi) = 0 \text{ and } \xi \in \Gamma_{x^0}, \xi \neq 0 ; \\
\label{e:pseudo-surface-2}
\frac{1}{i\tau}\{ \overline{p}_\phi,p_\phi \} (x^0 ,\xi) >0  , 
&\text{ if } p_\phi(x^0,\xi) =  \{p_\phi ,\phi \}  (x^0 ,\xi) = 0 \text{ and } \xi \in \Gamma_{x^0}, \tau >0 ,
\enan
where $p_\phi(x,\xi) = p(x, \xi + i \tau\nabla \phi)$.
\end{definition}
Note that this is a property of the {\em oriented} surface $S$ solely, and not of the defining function $\phi$ (see~\cite{Hoermander:V4}, beginning of Section~28.3). If $\Gamma = T^*\Omega$, it is the usual condition of the H\"ormander Theorem (see~\cite[Section~28.3]{Hoermander:V4}), that is, under which uniqueness holds for $P$ at $x^0$ across the hypersurface $S$, i.e. from $\phi >0$ to $\phi <0$. 

Below, this condition will always be used for $\Gamma = \{\xi_a = 0\}$. In this case, and using the homogeneity of $p$ in $\xi$, Assumption~\eqref{e:pseudo-surface-2} may be rephrased as:
$$
\frac{1}{i}\{ \overline{p}(x, \xi - i \nabla \phi),p (x, \xi + i \nabla \phi) \} (x^0 ,0, \xi_b) >0  , 
\quad \text{ if } p (\zeta) =  \{p  ,\phi \} (\zeta) = 0  , \quad  \xi_b \in \R^{n_b} ,
$$
where $\zeta =  (x^0 , i \nabla_a\phi(x^0) , \xi_b+ i  \nabla_b \phi(x^0) )$. An important feature of this definition is that it is invariant by changes of coordinates.

Note also that in the case $\Gamma = \{\xi_a = 0\}$, the condition \eqref{e:pseudo-surface-1} is the limit as $\tau\rightarrow 0^+$ of \eqref{e:pseudo-surface-2} on the subset $\left\{p_\phi(x^0,\xi) =  \{p_\phi ,\phi \}  (x^0 ,\xi) = 0\right\}\cap \Gamma_{x^0}$, thanks to the principal normality assumption \eqref{principalnormalclassique}, see Remark~\ref{rem:limittautozero} below.

\bigskip
Before stating our main result, let us discuss some cases of operators of particular interest.
\begin{remark}[H\"ormander case]
If $n_a = 0$, there is no analytic variable. In this case, Definition~\ref{def:anal principal normal} coincides with the definition of principally normal operators~\cite[Chapter~XXVIII]{Hoermander:V4} and Definition~\ref{def: pseudoconvex-surface} with $\Gamma = T^*\Omega$ that of strictly pseudoconvex functions. The unique continuation result under consideration is the classical H\"ormander theorem~\cite[Chapter~XXVIII]{Hoermander:V4}.
\end{remark}

\begin{remark}[Holmgren case]
\label{rkHolmgren}
If $n_a=n$, that is the operator is analytic in all the variables, we have $x_a=x, \xi_a = \xi$, and hence $\Gamma = \Omega \times \{\xi_a = 0\}= \Omega \times \{\xi= 0\}$. In this situation, conditions \eqref{cond1}, \eqref{cond2} are empty since all the terms vanish.

Next, concerning the conditions on the surface $\{\phi = 0\}$, notice that \eqref{e:pseudo-surface-1} is also empty since $\Gamma_{x^0} \cap \left\{\xi\neq 0\right\}=\emptyset$. For \eqref{e:pseudo-surface-2}, if $\xi\in \Gamma_{x^0}$, that is $\xi=0$, we have $p_\phi(x^0 ,\xi) =p(x^0, i\tau \nabla \phi)=(i\tau)^m p(x^0,  \nabla \phi)$: any noncharacteristic surface is a strongly pseudoconvex oriented surface. 

Note that, in the case $n_a = n$, the results presented here hold under the condition:
$$
p (x^0 , \nabla \phi(x^0) ) =  \{p  ,\phi \} (x^0 , \nabla \phi(x^0) ) = 0 \Longrightarrow \frac{1}{i}\{ \overline{p}(x, \xi - i \nabla \phi),p (x, \xi + i \nabla \phi) \} (x^0 ,0 ) >0  ,
$$
which is weaker than the noncharactericity condition $p (x^0 , \nabla \phi(x^0) ) \neq 0$ of the Holmgren theorem.
\end{remark}

\begin{remark}[Wave type and Schr\"odinger type operators]
\label{rknoncaractwave}
Let us now consider the case of operators $P$ of principal symbol of the form $p_2(x,\xi) = Q_x(\xi)$, where $Q_x$ is a smooth family of real quadratic forms, such that $Q_x(0, \xi_b)$ is definite on $\R^{n_b}$. This is the case of the wave operator or Schr\"odinger type operators. First, condition (E) is fulfilled thanks to the positiveness of $Q_x(0, \xi_b)$. Then, Assumption~\eqref{e:pseudo-surface-1} holds (uniformly with respect to $x \in \Omega$) according to the definiteness of $Q_x((0, \xi_b))$. It is indeed empty since $p_2(x,(0, \xi_b))$ does not vanish for $\xi_b \neq 0$.
Moreover, we have $\{p_2 , \phi\}(x,\xi) = 2 \tilde{Q}_x(\xi , \nabla \phi)$, where  $\tilde{Q}_x$ is the polar form of $Q_x$, and 
$$\{p_2 , \phi\}(x,\xi+i\nabla \phi) = 2 \tilde{Q}_x(\xi , \nabla \phi) + 2i Q_x(\nabla \phi).$$
As a consequence, $\Im\{p_2 , \phi\}(x,\xi+i\nabla \phi) = 2Q_x(\nabla \phi)$ so that \eqref{e:pseudo-surface-2} is also empty (and thus satisfied) for any noncharacteristic hypersurface. 

In conclusion, for real quadratic forms which are definite on $\R^{n_b}$ at $\xi_a = 0$, any noncharacteristic hypersurface is strongly pseudoconvex in the sense of Definition~\ref{def: pseudoconvex-surface}. In the case $n_a = 1$, this includes the following operators of particular interest:
\begin{itemize}
\item $P = D_{x_a}^2 - \sum_{i,j=1}^{n-1}\alpha_{ij}(x)D_{x_b^j}D_{x_b^i}$ (wave operator) with $p = \xi_a^2 - \sum_{i,j=1}^{n-1}\alpha_{ij}(x)\xi_b^j \xi_b^i$;
\item $P = D_{x_a} - \sum_{i,j=1}^{n-1}\alpha_{ij}(x)D_{x_b^j}D_{x_b^i}$ (Schr\"odinger operator) with $p =  - \sum_{i,j=1}^{n-1}\alpha_{ij}(x)\xi_b^j \xi_b^i$.
\end{itemize}
where the quadratic form with coefficients $\alpha_{i,j}$ is positive definite.
\end{remark}

\bigskip
We are now prepared to formulate our main Theorem in the general framework.
We first describe the geometric context and then state the Theorem.

\bigskip

\textbf{Geometric setting:} (see Figure~\ref{f:geom-setting})
We first fix two splittings of $\R^n$ as $\R^n=\R_{x'}^{n-1}\times \R_{x_n}$ and $\R^n=\R^{n_a}_{x_a}\times \R^{n_b}_{x_b}$, possibly in two different basis.
We let $D$ be a bounded open subset of $\R^{n-1}$ with smooth boundary and $G = G(x',\eps) \in C^1(\overline{D} \times [0,1+\eta) )$, for some $\eta>0$, such that  
\begin{itemize}
\item For all $\eps \in (0,1]$, we have $\{x' \in \R^{n-1} , G(x' , \eps) \geq 0\} = \overline{D}$;
\item for all $x' \in D$, the function $\eps \mapsto G(x' , \eps)$ is strictly increasing;
\item for all $\eps \in (0,1]$, we have $\{x' \in \R^{n-1} , G(x' , \eps) = 0\} = \partial D$.
\end{itemize}
We set $G(x', 0) = 0$, $S_0 = \overline{D} \times \{0\}$ and,  for $\eps \in (0,1]$, 
\bna
&S_\eps = \{(x',x_n) \in \R^{n} , x_n \geq 0 \text{ and } G(x' , \eps) =x_n \} =( \overline{D} \times \R )\cap\{(x',x_n) \in \R^{n} , G(x' , \eps) =x_n \} ;\\
&K = \{x \in \R^n ,0\leq  x_n \leq G(x', 1)\}.
\ena

\begin{figure}[h!]
  \begin{center}
    \input{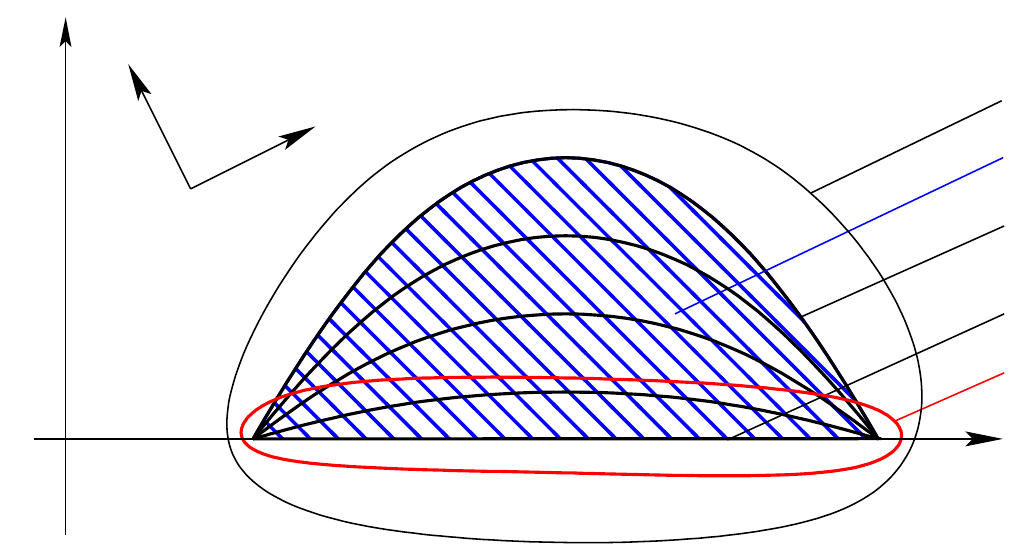_t} 
    \caption{Geometric setting of Theorem~\ref{thmsemiglobal}}
    \label{f:geom-setting}
 \end{center}
\end{figure}

\begin{theorem}
\label{thmsemiglobal}
In the above geometric setting, we moreover let $\Omega$ be a neighborhood of $K$, and $P$ be a differential operator of order $m$, analytically principally normal operator on $\Omega$ in $\{\xi_a = 0\}$. 

Assume also that, for any $\e\in [0,1+\eta)$, the oriented surfaces $S_{\e}=\left\{\phi_\eps=0\right\}$ with $\phi_\eps(x',x_n): =  G(x' , \eps) - x_n $ are strictly pseudoconvex in $\left\{\xi_a=0\right\}$ for $P$ on the whole $S_\eps$, in the sense of Definition \ref{def: pseudoconvex-surface}.

Then, for any open neighborhood $\tilde{\omega} \subset \Omega$ of $S_0$, there exists a neighborhood $U$ of $K$, and constants $\kappa ,C ,\mu_0 >0$ such that for all $\mu\geq \mu_0$ and $u\in C^{\infty}_0(\R^n)$,  we have
$$
\nor{ u}{L^2(U)}\leq C e^{\kappa \mu}\left(\nor{ u}{H^{m-1}_b(\tilde{\omega})} + \nor{Pu}{L^2(\Omega)}\right)+\frac{C}{\mu^{m-1}}\nor{u}{H^{m-1}(\Omega)} ,
$$
where we have denoted $\nor{u}{H^{m-1}_b(\tilde{\omega})}=\sum_{|\beta| \leq m-1}\nor{ D_b^\beta u}{L^2(\tilde\omega)}$.

If $n_a=n$ (Holmgren case), we get also for some $\widetilde{\varphi}\in C^{\infty}_0(\tilde{\omega})$ and for any $s\in \R$, the existence of $\kappa ,C ,\mu_0 >0$ such that for all $\mu\geq \mu_0$ and $u\in C^{\infty}_0(\R^n)$,  we have
$$
\nor{ u}{L^2(U)}\leq C e^{\kappa \mu}\left(\nor{\widetilde{\varphi} u}{H^{-s}(\R^n)} + \nor{Pu}{L^2(\Omega)}\right)+\frac{C}{\mu^{m-1}}\nor{u}{H^{m-1}(\Omega)}.
$$
If $n_a=0$ (H\"ormander case), there is $c, \kappa ,C ,\mu_0 >0$ such that for all $\mu\geq \mu_0$ and $u\in C^{\infty}_0(\R^n)$,  we have

$$
\nor{ u}{H^{m-1}(U)}\leq C e^{\kappa \mu}\left(\nor{ u}{H^{m-1}(\tilde{\omega})} + \nor{Pu}{L^2(\Omega)}\right)+Ce^{-c\mu}\nor{u}{H^{m-1}(\Omega)}
$$
\end{theorem}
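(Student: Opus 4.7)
The plan is to reduce Theorem~\ref{thmsemiglobal} to a local quantitative unique continuation statement across each pseudoconvex surface $S_\e$, derived from a Tataru/Robbiano--Zuily/H\"ormander-type Carleman estimate with an FBI--Gaussian weight in the analytic variables $x_a$, and then to propagate these local estimates by iteration along the one-parameter foliation $\{S_\e\}_{0\le \e \le 1}$. This is the natural semi-global scheme of Lebeau--Robbiano adapted to the partially analytic setting.

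\textbf{Step 1 (local estimate across $S_\e$).} For each fixed $\e\in[0,1+\eta)$, the strong pseudoconvexity of $S_\e=\{\phi_\e=0\}$ in $\{\xi_a=0\}$, together with the analytic-principally-normal structure of $P$, yields by the local quantitative unique continuation result established earlier in the paper (in the spirit of Tataru's theorem, quantified via an FBI/Gaussian weight $e^{-\tau|x_a-x_a^0|^2/2\sigma}$ in the analytic directions and a standard convexified Carleman weight transverse to $S_\e$) a local three-balls estimate of the form
\[
\nor{u}{L^2(V_\e^+)} \leq C e^{\kappa \mu}\bigl(\nor{u}{H^{m-1}_b(V_\e^-)} + \nor{Pu}{L^2(V_\e)}\bigr) + C \mu^{-(m-1)} \nor{u}{H^{m-1}(V_\e)},
\]
where $V_\e$ is a small neighborhood of a point of $S_\e$, $V_\e^-\subset V_\e\cap \{\phi_\e>0\}$ lies on the already-controlled side, and $V_\e^+\subset V_\e$ strictly crosses $S_\e$ by a definite amount $\delta_\e>0$ (so that $V_\e^+$ reaches a neighborhood of some piece of $S_{\e+\delta_\e}$). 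The parameter $\mu$ here corresponds to the optimization in the large Carleman parameter $\tau$.

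\textbf{Step 2 (foliation cover and iteration).} Since $P$ is smooth and $G\in C^1(\overline{D}\times[0,1+\eta))$, the pseudoconvexity condition varies continuously and holds \emph{uniformly} on the compact family $\{S_\e\}_{0\le \e\le 1}$; hence the local estimate of Step~1 holds with uniform constants $C$, $\kappa$, $\delta_\e\ge \delta_0>0$, and with $V_\e$ of uniform size. By compactness of each $S_\e\cap K$ in the $(n-1)$-dimensional transverse direction and of $[0,1]$ in the foliation parameter, one chooses a finite increasing sequence $0=\e_0<\e_1<\cdots<\e_N\ge 1$ and a finite subcover of caps at each level such that the advanced sets $\bigcup V_{\e_j}^+$ cover a neighborhood of $S_{\e_{j+1}}\cap K$. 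Starting from the datum $\nor{u}{H^{m-1}_b(\tilde\omega)}$ near $S_0\subset\tilde\omega$ and iterating $N$ times transports the estimate from $\tilde\omega$ up through $S_1$ and hence to a full neighborhood $U$ of $K$. Since $N$ is fixed, the successive products of constants and of the remainder terms recombine into an estimate of the same form $Ce^{\kappa\mu}(\cdots)+C\mu^{-(m-1)}\nor{u}{H^{m-1}(\Omega)}$, up to relabeling $C$ and $\kappa$. The main obstacle is precisely this bookkeeping: one must verify that at each iteration the new ``data'' region $V_{\e_j}^-$ is genuinely contained in the region controlled at the previous step, and that the norm there is an $H^{m-1}_b$ norm (not higher), so that the scheme closes; this uses interior elliptic-type regularity on the smaller caps, together with an energy absorption of the higher-norm contributions into the $\mu^{-(m-1)}\nor{u}{H^{m-1}(\Omega)}$ remainder.

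\textbf{Step 3 (two special cases).} When $n_a=n$ (Holmgren case), the operator is analytic in all variables and the FBI/Gaussian transform acts globally; one can further replace the observation norm by an arbitrarily weak negative Sobolev norm $\nor{\widetilde\varphi u}{H^{-s}}$ via the standard elliptic-regularization duality trick combined with the fact that analyticity in $x$ allows to estimate any derivative of $u$ by $u$ itself with a loss controlled by $e^{\kappa\mu}$. When $n_a=0$ (H\"ormander case), there is no analytic direction to exploit but the pure Carleman estimate gives H\"older-type stability directly at the local level; the same iteration then produces the remainder $e^{-c\mu}\nor{u}{H^{m-1}(\Omega)}$ instead of $\mu^{-(m-1)}\nor{u}{H^{m-1}(\Omega)}$, at the price of keeping the stronger $H^{m-1}$ norm on both sides. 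In each of these two cases the iteration scheme is identical to Step~2, only the ``elementary brick'' provided by Step~1 changes.
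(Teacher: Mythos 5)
Your overall architecture (a Carleman-based local estimate near each $S_\e$, then propagation along the foliation by a finite iteration) matches the paper's, but the elementary brick you iterate in Step~2 is not the right one, and this is a genuine gap rather than bookkeeping. If the local estimate reads
$\nor{u}{L^2(V_\e^+)} \leq C e^{\kappa \mu}(\nor{u}{H^{m-1}_b(V_\e^-)} + \nor{Pu}{L^2(V_\e)}) + C \mu^{-(m-1)} \nor{u}{H^{m-1}(V_\e)}$,
then feeding the output of one step into the data slot of the next produces a term $C^2 e^{\kappa\mu}\mu^{-(m-1)}\nor{u}{H^{m-1}}$, which is exponentially \emph{large} and cannot be relabeled as $C'\mu^{-(m-1)}\nor{u}{H^{m-1}}$. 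So the scheme does not close after even two steps; this is exactly the obstruction the paper identifies (iterating logarithmic local stability yields iterated logarithms, not a single logarithm). The claim in your Step~2 that ``the successive products \dots recombine into an estimate of the same form'' is therefore false for the remainder term.

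What the paper actually does is different in two essential ways. First, the local result (Theorem~\ref{th:alpha-unif}) controls only the \emph{low-frequency part in $x_a$} of the solution, $\nor{M^{\beta\mu}_{c_1\mu}\sigma_{r,c_1\mu}u}{m-1}$, in terms of the low-frequency part $\nor{M^{\mu}_{c_1\mu}\vartheta_{c_1\mu}u}{m-1}$ on the data side, with a remainder that is \emph{exponentially} small, $Ce^{-\kappa'\mu}\nor{u}{m-1}$, and with the amplification exponent $\kappa$ tunable to be arbitrarily small relative to $\kappa'$ (this tunability is built into the definition of the dependence relation $\unlhd$ and is used explicitly in the transitivity proof, where one takes $\kappa_1=\min(\kappa,\kappa')/2$ so that $e^{\kappa\mu}e^{-\kappa'\mu}$ stays exponentially decaying after composition). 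Each iteration loses a factor in the admissible frequency range ($\mu\to\beta\mu$), which is harmless because the number of steps is finite. Second, the polynomial remainder $C\mu^{-(m-1)}\nor{u}{H^{m-1}(\Omega)}$ appears only \emph{once, at the very end}, when one discards the high frequencies via the trivial bound $\nor{(1-M^{\beta\mu}_\mu)\chi_\mu u}{0}\leq C\mu^{-(m-1)}\nor{\chi_\mu u}{m-1}$; it is never part of the iterated local estimate. Your Step~3 for the case $n_a=0$ is consistent with the paper (there the local estimate is a genuine H\"older interpolation inequality with exponentially small remainder and does iterate directly), but for $0<n_a\leq n$ the frequency localization and the exponential smallness of the local remainder are indispensable and are missing from your argument.
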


Note that in the first two cases, we obtain a result of the type~\eqref{e:QUCP-phi} with a logarithmic function $\varphi$, whereas in the framework of the H\"ormander theorem, we obtain the stronger H\"older-type dependence:
$$
\nor{ u}{H^{m-1}(U)}\leq C \left(\nor{ u}{H^{m-1}(\tilde{\omega})} + \nor{Pu}{L^2(\Omega)}\right)^{\delta}\nor{u}{H^{m-1}(\Omega)}^{1-\delta}
$$
for some $\delta\in (0,1)$.

The formulation of the above result using a foliation by hypersurfaces is inspired by that of~\cite[Theorem p.~224]{John:49} in the context of the Holmgren theorem. The statement describing the hypersurfaces by graph could look rigid. We will give later in Theorem \ref{thmsemiglobaldepchgt} a slight variant where the partial analyticity and the foliation by graphs can be described in different coordinates (i.e. the linear change of coordinates between the two different splittings $\R^n=\R_{x'}^{n-1}\times \R_{x_n}$ and $\R^n=\R^{n_a}\times \R^{n_b}$ may be replaced by a diffeomorphism). We chose not to present this more general result here for the sake of the exposition. Most of global Theorems for the wave and Schr\"odinger equations on a manifold are proved in that setting, after some suitable change of coordinates.

\subsection{Idea of the proof}

As already mentioned, unique continuation theorems (e.g. the H\"ormander theorem) are often proved with Carleman estimates. Such inequalities are already quantitative, and hence furnish a good starting point towards local quantitative unique continuation results. This strategy has already been followed in~\cite{Robbiano:95, LR:95} in the case of elliptic operators, see also \cite{Bahouri:87}.
Starting from the Carleman inequality, the idea is to apply the estimates to some function $\chi(x) u$ where $\chi$ is a well chosen cutoff function. The exponential weight $e^{\tau \psi(x)}$ (where $\psi$ is an appropriate weight function) in the Carleman estimate naturally leads to some inequality of the form
\bnan
\label{estimRobclass}
\nor{u}{V_2}\leq e^{\kappa \mu} \big(\nor{u}{V_1} + \nor{Pu}{V_3} \big)+e^{-\kappa \mu}\nor{u}{V_3},
\enan
uniformly for $\mu\geq \mu_0$ and for some small open sets $V_1\subset V_2 \subset V_3$ depending on the local geometry. Optimizing in $\mu$ (see~\cite{Robbiano:95} or~\cite[Lemma~5.2]{LeLe:09}) this can then be written as an interpolation estimate 
\bna
\nor{u}{V_2}\leq \big(\nor{u}{V_1} + \nor{Pu}{V_3} \big)^{\delta}\nor{u}{V_3}^{1-\delta},
\ena 
for some $\delta\in (0,1)$. The interest of these interpolation estimates is that they can be easily iterated, leading to some global ones. It ends up with some H\"older type dependence, i.e.~\eqref{e:QUCP-phi} with $\varphi = (a+b)^\delta c^{1-\delta}$. We refer for instance to the survey article \cite{LeLe:09} for a description of these estimates in the elliptic case, with application to spectral estimates and control results for the heat equation.

\bigskip
Yet, in the context of the unique continuation theorem for partially analytic operators, the Carleman estimates proved in~\cite{Tataru:95, RZ:98, Hor:97, Tataru:99} contain a "microlocal" weight of the form $e^{-\frac{\e}{2\tau}|D_a|^2}e^{\tau\psi(x)}$. As for usual Carleman estimates, the term $e^{\tau\psi(x)}$ (loosely speaking) gives some strength to the set where $\psi$ is positive, but the additional term $e^{-\frac{\e}{2\tau}|D_a|^2}$ localizes in the low frequencies in the variable $x_a$. In this context, the proof of unique continuation proceeds with a (qualitative) complex analytic argument (maximum principle). 
This additional argument in the proof of unique continuation also requires to be quantified.  
As in~\cite{Robbiano:95}, this procedure naturally leads to local logarithmic (instead of H\"older) stability estimates. The main issue one then has to face  when quantifying unique continuation is that such estimate cannot be iterated (or would yield dependence estimates of the type~\eqref{e:QUCP-phi} with a function $\varphi$ being a composition of as many $\log$ as steps needed in the iteration).

\bigskip
One idea to overcome this difficulty, proposed by Tataru in his unpublished notes \cite{Tatarunotes}, was to propagate some low frequency estimates of the form
\bna
\left\{\begin{array}{rcl}
\nor{u}{H^{m-1}}&=&1\\
\nor{m\left(\frac{D_a}{\mu}\right)\sigma(\frac{x}{R})Pu}{L^2}&\leq& e^{-\mu^{\alpha }}
\end{array}\right. \Longrightarrow \nor{m\left(\frac{D_a}{\tau}\right)\sigma(\frac{x}{r})u}{H^{m-1}}\leq e^{-\tau }, \quad \forall \tau<c\ \mu^{\alpha}
\ena
and for all $u$ supported in $\left\{\phi<\phi(x_0)\right\} $, for some apropriate compactly supported cutoff functions $\sigma$ and $m(\xi)$ in the Gevrey class $1/\alpha$, $\alpha <1$, and for some $r<R$. This kind of estimates can be propagated and led to some global stability estimates of the form~\eqref{e:QUCP-phi} with $\varphi_\eps = c \left(\log(1+\frac{c}{a+b})\right)^{-(1-\eps)}$.

The loss $1-\e$ in the power of $\log$ is due to the use of functions Gevrey $\alpha$ with compact support. The optimal case $\alpha=1$ would correspond to analytic functions. Yet, analytic functions cannot have compact support, which is a key ingredient in the usual application of Carleman estimates.

Let us now explain our strategy to solve this problem.
\subsubsection{Obtaining local information at low frequency}
Part of the proof of the present paper is inspired by this idea of propagating only low frequency (in the analytic variable $x_a$) estimates. However, we replace the Gevrey cutoff functions by some analytic ``almost'' localized functions of the form $\chi_{\lambda}:=e^{-\frac{|D_a|^2}{\lambda}} \chi$ where $\chi$ is smooth with the expected compact support. It turns out that the right choice of $\lambda$ is $\lambda=C\mu$ where $\mu$ is the frequency where we want to measure our solution. That such functions are not compactly supported makes the commutator estimates much more intricate and requires a careful study of the dependence with respect the  regularisation parameter $\lambda$, the local frequency $\mu$ and the parameter $\tau$ in the Carleman estimate. All estimates are carried out up to an exponentially small remainder (in terms of these parameters). 

Following this procedure, the local estimates we prove (which we are in addition able to propagate) are some generalization of \eqref{estimRobclass}, but only with regards to the low frequencies (in the analytic variable $x_a$). In a neighborhood of a point $x^0$, they are of the form
\bnan
\label{estimpropintro}
\nor{m_\mu \left(\frac{D_a}{\beta\mu}\right) \chi_{2,\mu} u}{H^{m-1}}
\leq C e^{\kappa \mu}\left( \nor{m_\mu \left(\frac{D_a}{\mu} \right) \chi_{1,\mu} u}{H^{m-1}} + \nor{Pu}{L^2(B(x^0,R))}\right)+Ce^{-\kappa' \mu}\nor{u}{H^{m-1}} ,
\enan
uniformly for $\mu\geq \mu_0$. See the beginning of Section \ref{sectionlocal} for a more precise statement and remarks on this result. Here, $\chi_1$ and $\chi_2$ are some cutoff in the physical space that localize respectively to the place where the information is taken (locally in $\left\{\phi>\rho\right\}$ for some $\rho>0$) and to where it is propagated (a small neighborhood of $x^0$). The Fourier multipliers $m_\mu$ cuts off (analytically) the $\xi_a$ frequencies. All these cutoff functions are used only with their analytic regularization. They never localize exactly. Using such regularized cutoff functions and Fourier multipliers follows the spirit of analytic semiclassical analysis~\cite{Sjostrand:95} (see also~\cite{MartinezBook}). However, we do not make use of that theory and rather construct by hand the appropriate mollifiers, making the proof selfcontained in this respect.

The proof of estimates like \eqref{estimpropintro}, stated more precisely in Theorem \ref{th:alpha-unif} is the object of Section \ref{sectionlocal}. It proceeds in three steps. First, as in the usual proofs of unique continuation results, starting from the hypersurface $\{ \phi = 0\}$, one needs to construct a weight function $\psi$ with both properties 
\begin{itemize}
\item to satisfy the assumptions required to apply the Carleman estimate ($\psi$ should be a strictly pseudoconvex {\em function} in the sense of Definition~\ref{def: pseudoconvex-function});
\item to have level sets appropriately located with respect to those of $\phi$.
\end{itemize}
This corresponds to the so called ``convexification process''.  

Second, we apply as a black box the Carleman estimates of~\cite{Tataru:95, RZ:98, Hor:97, Tataru:99} (or some similar ones that we prove in the presence of boundary) to $\chi u$, where $\chi$ is a particular cutoff function (localizing near the point of interest, and according to levelsets of $\psi$), containing both rough cutoffs and mollified ones. We then need to estimate terms arising from the commutator $e^{-\frac{\e}{2\tau}|D_a|^2}e^{\tau\psi}[P,\chi]$, that are either well localized or have an exponentially small contribution. 

Finally, we need to transfer the information given by the Carleman estimate to some estimate like~\eqref{estimpropintro} on the low frequencies of the function. This is done through a complex analysis argument, the Carleman parameter $\tau$ playing the role of complex variable, as in ~\cite{Tataru:95}. If $\zeta$ is the complex variable, the Carleman estimates corresponds to an estimate on $\zeta = i \tau \in i\R^+$. Combined with {\em a priori} estimates, a Phragm\'en-Lindel\"of type theorem allows to extend this estimate to part of the real domain, where it corresponds to estimating $\nor{m \left(\frac{D_a}{\beta\mu}\right) \chi u}{}$. To obtain estimates that are uniform with respect to the frequency (and regularization) parameter $\mu$, we also need, following~\cite{Tatarunotes}, to use a scaling argument, replacing $\tau$ by $\tau/\mu$.
\subsubsection{Propagating local informations to global ones}
Once the local estimate are proved, we need to iterate them to obtain a global estimate. This is the object of Section \ref{s:semiglobal-estimate}. At first, we define some tools that will allow later in an abstract way to propagate easily our local estimate \eqref{estimpropintro}.  Roughly speaking, \eqref{estimpropintro} says that, for solution of $Pu=0$, some information can be transfered from the support of $\chi_1$ to the support of $\chi_2$. We formalize that with the notion of {\em zone of dependence}. Roughly speaking, we say that on open set $O_2$ depends on $O_1$ if \eqref{estimpropintro} holds for every $\chi_1$ equals to $1$ on $O_1$ and any $\chi_2$ supported in $O_2$. This part allows to make the proof of Theorem \ref{thmsemiglobal} a complete geometric one. Even if quite different in definition, it is close in spirit to the interpolation theory developped in Lebeau \cite{Leb:Analytic} to propagate globally the local information obtained by the Cauchy-Kowaleski theorem.
Moreover, it should adapt to some more general kind of foliation.  
Note that at each step of this propagation argument, we have a loss in the the range of frequency: from an information on frequencies $\leq \mu$, we obtain an information on frequencies $\leq \beta \mu$, with $\beta$ small. This is overcome by the fact that we only have a finite number of steps in this iterative procedure.

Once this propagation result is done, we are left with some information about the low frequency of our solution. Since we have no information about the high frequency part, the only thing to do is to use some trivial bound of the type
\bna
\nor{\left(1-m\left(\frac{D_a}{\mu}\right)\right)u}{L^2}\leq \frac{C}{\mu^{m-1}}\nor{u}{m-1}
\ena
This is actually much worse than the negative exponential that we already had. But it turns out to be the best we can do without any more information.

In section \ref{Application}, we specify our general result to the case of the wave and Schr\"odinger equations. The main task is to construct some noncharacteristic hypersurfaces that allow to be in the situation of Theorem \ref{thmsemiglobal}. This part is quite classical and was already present for instance in \cite{Leb:Analytic}. We recall the argument in the present context.

\subsubsection{Carleman estimates for the Dirichlet boundary value problem}
Finally, to prove the results of Section~\ref{sec:intro-wave-schrod}, it remains to deal with the boundary-value problem. This is the object of Section~\ref{s:Dirichlet-pb-waves}. 
As far as (qualitative) unique continuation is concerned, there is no need to prove quantitative estimates up to the boundary. As a consequence, we need here to carry over the analysis of~\cite{Tataru:95,RZ:98, Hor:97, Tataru:99} at the boundary. In this context, we consider a particular class of operators and a particular boundary condition. We assume that the operator belongs to the class described in Remark \ref{rknoncaractwave} (hence encompassing wave and Schr\"odinger type operators), that is, with symbols of the form $p_2(x,\xi) = Q_x(\xi)$ where $Q_x$ is a smooth family of real quadratic forms. 
We further assume that the analytic variables $x_a$ are tangent to the boundary, and that the functions satisfy {\em Dirichlet boundary conditions}. Recall that this situation is of particular interest for the wave/Schr\"odinger equations, for which $x_a$ is the time variable, which is always tangent to the boundary of cylindrical domains. 
 
The proof of the quantitative unique continuation result up to and from the boundary relies on a Carleman estimate at the boundary for such operators. As such, it interpolates between the ``boundary elliptic Carleman estimates'' of Lebeau and Robbiano~\cite{LR:95}, and the ``partially analytic Carleman estimates'' of Tataru~\cite{Tataru:95} (see also~\cite{RZ:98, Hor:97}). 
 Then, we obtain the counterpart of the local estimate of Theorem~\ref{th:alpha-unif} for this boundary value problem. All local, semiglobal and global results shall then follow as in the boundaryless case. We only need to be careful when performing changes of variables.

\bigskip
\noindent

We wish to thank Daniel~Tataru for having allowed us to use some ideas from his unpublished lecture notes~\cite{Tatarunotes}, and Luc Robbiano for his comments on a preliminary version of the paper.
The first author is partially supported by the Agence Nationale de la Recherche under grant EMAQS ANR-2011-BS01-017-0 and IPROBLEMS ANR-13-JS01-0006.
The second author is partially supported by the Agence Nationale de la Recherche under grant GERASIC ANR-13-BS01-0007-01.

When finalizing this article it came to our attention that another group, Roberta~Bosi, Yaroslav~Kurylev and Matti~Lassas has been working independently on issues related to this paper.

\section{Preliminaries}
The preliminary results presented in this section are mainly used in Section~\ref{sectionlocal} for the local estimate. Some are also used independently in Section~\ref{s:semiglobal-estimate} for the semiglobal estimate. They concern:
\begin{enumerate}
\item The Carleman estimate adapted to operators with partially analytic coefficients, as stated in~\cite{Tataru:95,RZ:98, Hor:97, Tataru:99};
\item The regularization procedure for cutoff functions and Fourier multipliers (which is a key part in the proofs);
\item Some preliminary commutator-type estimates.
\end{enumerate}

\subsection{Notation}
Before this, let us recall basic notation, used all along the article. 

Above and below, $\dist$ stands for the Euclidean distance in $\R^n$, $\R$ or $\R^{n_a}$, or the Riemannian distance on $(\M ,g)$.
For $K \subset \R^n$ (resp. $\R$, resp. $\R^{n_a}$) we define a $d$ neighborhood of $K$ by 
$$
\vois(K,d): = \bigcup_{x \in K} B(x, d) ,
$$
where balls are taken according to the distance $\dist$.
For two open set $U ,U'$, we write $U \Subset U'$ if $\bar U$ is compact and $\bar U \subset U'$.

We denote by $\mathcal{F}$ the Fourier transform in all variables, $\mathcal{F}_a$ in the variables $x_a \in \R^{n_a}$ only. When there is no possible confusion, we shall write $\hat{u} = \mathcal{F}_a(u)$ or $\hat{u} = \mathcal{F}(u)$.

We set  $\left< \xi \right> = (1 + |\xi|^2)^\frac12$, and denote by $\nor{\cdot}{m}$ the classical $H^m$ norm on $\R^n$: $\nor{u}{m} := \| \left< \xi \right>^m \mathcal{F} (u) \|_{L^2(\R^n)}$. Similarly, $$\nor{u}{m,\tau}=\nor{\left( \tau^2+|D|^2\right)^\frac{m}{2} u}{0} = \nor{(\tau^2 + |\xi|^2)^\frac{m}{2}\mathcal{F} (u) }{0}$$ will denote the weighted (semiclassical) $H^m$ norm for $\tau \geq 1$. In the main part of this article, $\tau$ will be a large parameter.
Finally, we use the notation $\nor{\cdot}{H^k \to H^\ell}$ for the operator norm from $H^k(\R^n)$ to $H^\ell(\R^n)$.
\subsection{The Carleman estimate}
Before stating the Carleman estimate used in the main part of paper, we need to introduce the definition of appropriate weight functions $\psi$.

\begin{definition}[Strongly pseudoconvex function]
\label{def: pseudoconvex-function}
Let $P$ be a principally normal operator in $\Omega \subset \R^n$, with principal symbol $p$, let $\psi \in C^2(\Omega; \R)$ and $\Gamma$ be a closed conic subset of $T^*\Omega$. Let $x^0 \in \Omega$. We say that $\psi$ is strongly pseudoconvex in $\Gamma$ at $x^0$ for $P$ if:
\bnan
\Re \left\{ \overline{p}, \{p ,\psi \} \right\} (x^0 ,\xi) >0  , 
&\text{ if } p(x^0,\xi) = 0 \text{ and } \xi \in \Gamma_{x^0}, \xi \neq 0 ; \\
\frac{1}{i\tau}\{ \overline{p}_\psi,p_\psi \} (x^0 ,\xi) >0  , 
&\text{ if } p_\psi(x^0,\xi) = 0 \text{ and } \xi \in \Gamma_{x^0}, \tau >0 ,
\enan
where $p_\psi(x,\xi) = p(x, \xi + i \tau\nabla \psi)$.
\end{definition}
Note that in the case $\Gamma = T^*\Omega$, this property is the usual one for proving a Carleman estimate with the weight function $\psi$. It is classical that a strongly pseudoconvex surface $S$ (in the sense of Definition~\ref{def: pseudoconvex-surface}) is a level surface for some pseudoconvex function (see e.g.~\cite[Proposition~28.3.3]{Hoermander:V4} or~\cite[Theorem~1.5]{Tatarunotes}), and that both definitions are stable with respect to small $C^2$ perturbations. In what follows a more precise link (adapted to our needs) between these two notions shall be made in Section~\ref{s:geom-setting}. 

In this paper (as in~\cite{Tataru:95, RZ:98, Hor:97, Tataru:99}), Definitions~\ref{def: pseudoconvex-surface} and~\ref{def: pseudoconvex-function} shalls alway be used with $\Gamma = \Omega \times \{\xi_a = 0\}$.

\bigskip
For $\e$, $\tau>0$ we define the operator 
\bnan
\label{Qeps}
Q_{\e,\tau}^{\psi} u 
= Q_{\e,\tau}^{\psi}(x,D_a)u
= e^{-\frac{\e}{2\tau}|D_a|^2}(e^{\tau\psi}u)
\enan
introduced in~\cite{Tataru:95}.

The following result is~due to Tataru \cite[Theorem~2]{Tataru:99}. A proof in cases (E) and (H) can be found in~\cite{Hor:97} (see in this reference Equation~(5.15), and the last equation before Section~7, respectively). Some closely related estimates are also proved in \cite[Proposition~4.6]{RZ:98}.

In Section~\ref{s:Dirichlet-pb-waves}, when studying the boundary value problem for wave equations, we include a proof of this result in the case (H) assuming that $P$ has a real principal part, is of order $m=2$, and under the additional assumption that the coefficients of $P$ do not depend on $x_a$.

\begin{theorem}
\label{thmCarleman}
Let $x^0 \in \Omega = \Omega_a \times \Omega_b \subset \R^{n_a} \times \R^{n_b}$ and $P$ be a partial differential operator on $\Omega$ of order $m$. Assume that 
\begin{itemize}
\item $P$ is analytically principally normal operator in $\{\xi_a = 0\}$ inside $\Omega$ (in the sense of Definition~\ref{def:anal principal normal});
\item $\psi$ is a quadratic polynomial in $x=(x_a,x_b)$, strongly pseudoconvex in $\Omega \cap \{\xi_a = 0\}$ at $x^0$ for $P$ (in the sense of Definition~\ref{def: pseudoconvex-function}).
\end{itemize}
Then, there exists $\eps >0$, $\mathsf{R}>0$, $\mathsf{d}>0$, $C>0$, $\tau_0>0$ such that $B(x^0, \mathsf{R}) \subset \Omega$ and for any $\tau >\tau_0$, we have
\bnan
\label{Carleman}
\tau \nor{Q_{\e,\tau}^{\psi}u}{m-1,\tau}^2
\leq C\left(\nor{Q_{\e,\tau}^{\psi}P u}{0}^2
+ {  \nor{e^{\tau(\psi-\mathsf{d})}P u}{0}^2}
+ \nor{e^{\tau(\psi-\mathsf{d})}u}{m-1,\tau}^2\right)
\enan
for any $u\in C^{\infty}_0(B(x^0, \mathsf{R}))$.
\end{theorem}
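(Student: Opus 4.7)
My plan is to treat the estimate as a microlocalized version of the classical H\"ormander positive-commutator Carleman estimate. The key algebraic object is the conjugated operator
\[
P_\psi := e^{\tau \psi} P e^{-\tau\psi}, \qquad \text{with principal symbol } p_\psi(x,\xi) = p(x,\xi + i\tau\nabla\psi(x)),
\]
and the key microlocal object is the Gaussian Fourier multiplier $T_{\e,\tau} := e^{-\e|D_a|^2/(2\tau)}$, which concentrates the $\xi_a$-spectrum in $|\xi_a| \lesssim (\tau/\e)^{1/2}$, i.e.\ on $|\xi_a|/\tau \to 0$ as $\tau \to \infty$. Since the pseudoconvexity of $\psi$ is only assumed on $\{\xi_a = 0\}$, the role of $T_{\e,\tau}$ is precisely to reduce (by microlocalization) to that region, while the analyticity of the coefficients of $P$ in $x_a$ ensures that $T_{\e,\tau}$ commutes with $P$ modulo exponentially small remainders.

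Setting $w := e^{\tau\psi} u$ and $v := T_{\e,\tau} w = Q^\psi_{\e,\tau} u$, I would first reduce the stated inequality to the sub-elliptic estimate
\[
\tau \nor{v}{m-1,\tau}^2 \leq C \nor{T_{\e,\tau} P_\psi w}{0}^2 + \text{(exponentially small and boundary-type remainders)}.
\]
To prove this, one applies the standard identity $\nor{P_\psi v}{0}^2 = \tfrac14 \nor{(P_\psi + P_\psi^*)v}{0}^2 + \tfrac14 \nor{(P_\psi - P_\psi^*)v}{0}^2 + \Re\big(v, [P_\psi^*, P_\psi]v\big)$ and observes that the principal symbol of $[P_\psi^*, P_\psi]$ is $\tfrac{1}{i}\{\bar p_\psi, p_\psi\}$, which by Definition~\ref{def: pseudoconvex-function} is strictly positive at points where $\xi_a = 0$. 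A sharp G\r{a}rding inequality, applied after inserting a smooth symbolic cutoff in $\xi_a/\tau$ near zero whose support contains the essential support of $v$, then yields the positivity bound $\tau \nor{v}{m-1,\tau}^2 \lesssim \nor{P_\psi v}{0}^2$ up to harmless lower-order terms on the microlocal region where $T_{\e,\tau}$ is effective.

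The main obstacle is controlling the commutator $[T_{\e,\tau}, P_\psi]$, which in a purely $C^\infty$ setting would be $O(1)$ in $\tau$ and would destroy the estimate. This is exactly the place where partial analyticity is indispensable: writing the action of $T_{\e,\tau}$ on a coefficient $a(x_a, x_b)$ as a Gaussian convolution in $x_a$ of width $\sqrt{\e/\tau}$, and using that $a$ extends holomorphically in $z_a$ to a complex tube $|\Im z_a| < \rho$, one deforms the integration contour into the strip of holomorphy and bounds the commutator by $e^{-c\rho^2 \tau/\e}$ via Cauchy-type estimates. Exploiting further that $\psi$ is a quadratic polynomial (so $\nabla \psi$ is linear), the conjugation of $T_{\e,\tau}$ with $e^{\tau\psi}$ may be computed explicitly by completing the square in a Gaussian integral, giving rise to a shift by $i\nabla_a\psi$ in the complex $z_a$-plane plus lower-order explicit terms. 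The two remainder terms $\nor{e^{\tau(\psi-\mathsf d)} Pu}{0}$ and $\nor{e^{\tau(\psi-\mathsf d)} u}{m-1,\tau}$ in the statement arise when the arguments above are applied in the region of $\supp(u) \subset B(x^0, \mathsf R)$ that lies outside the holomorphy strip or far from the peak of $\psi$; on that region analyticity cannot be invoked directly, and one instead absorbs the contribution using the strict pointwise drop of $e^{\tau\psi}$ by at least $e^{-\tau \mathsf d}$. The delicate point of the entire argument is the simultaneous tuning of $\mathsf R$, $\rho$, $\e$ and $\mathsf d$ so that the exponentially small commutator estimates beat the losses from the Gaussian tails and the spatial cutoff.
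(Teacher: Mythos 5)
The paper does not actually prove Theorem~\ref{thmCarleman}: it is invoked as a black box from Tataru~\cite{Tataru:99} and H\"ormander~\cite{Hor:97}, and the only proof supplied in the paper is of the boundary analogue (Theorem~\ref{th:carleman-bord} via Proposition~\ref{l:carleman-bord-compact}) for second-order operators with real principal part and coefficients independent of $x_a$. Your plan follows the same overall strategy as those sources and as Section~\ref{s:Dirichlet-pb-waves}: conjugate by $e^{\tau\psi}$, microlocalize to $\xi_a\approx 0$ with the Gaussian multiplier, run a positive-commutator argument, and use partial analyticity together with the quadratic form of $\psi$ to control the interaction of $e^{-\e|D_a|^2/2\tau}$ with $P_\psi$. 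So the approach is the right one; the issue is that the two hardest steps are declared rather than carried out, and one of them is mis-stated.

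The substantive gap is the G{\aa}rding step. The pseudoconvexity hypothesis gives positivity of $\frac{1}{i\tau}\{\overline{p}_\psi,p_\psi\}$ (resp.\ $\Re\{\overline{p},\{p,\psi\}\}$) only on the set $\{p_\psi=0\}\cap\{\xi_a=0\}$ (resp.\ $\{p=0\}\cap\{\xi_a=0\}$), not on a full conic neighborhood of $\{\xi_a=0\}$, so sharp G{\aa}rding cannot be applied to $\Re\big(v,[P_\psi^*,P_\psi]v\big)$ directly even after inserting a cutoff in $\xi_a/\tau$: the commutator symbol may be negative off the characteristic variety. One must first add a large multiple of $|p_\psi|^2/(|\xi|^2+\tau^2)$ (equivalently, borrow from the squares $\|(P_\psi+P_\psi^*)v\|^2$ and $\|(P_\psi-P_\psi^*)v\|^2$) to produce a symbol that is positive on all of $\{\xi_a=0\}$; this is exactly the content of Lemma~\ref{l:fgh-three-fcts} and, in the paper's boundary proof, of the factorization $i[\tilde{Q}_2^\eps,\tilde{Q}_1^\eps]=B_0^\eps\tilde{Q}_2^\eps+B_1^\eps\tilde{Q}_1^\eps+B_2^\eps$ in~\eqref{e:crochet Q2 Q1 tilde} followed by Lemmas~\ref{lmposit}--\ref{lmposittangent}. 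Relatedly, after conjugation by the Gaussian the relevant symbol is not $p_\psi$ but $p(x,\xi-\e\psi_{x,x_a}''\xi_a+i\tau\psi')$, and the positivity must be shown stable under this $O(\e)$ perturbation (Lemma~\ref{lmpositeps}); the residual $\xi_a$-dependence left after the microlocal cutoff has to be absorbed by an extra term of the type $\tau\|D_a f\|_0^2$, which is what forces the final frequency splitting at $|\xi_a|\sim\varsigma\tau$ in the paper. Finally, the remainder terms $e^{\tau(\psi-\mathsf d)}$ do not arise from a spatial region ``outside the holomorphy strip'': they absorb, uniformly on $\supp u$, the exponentially small $e^{-c\tau}$ errors produced by the Gaussian tails and contour deformations, and $\mathsf d$ is chosen against those rates. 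None of this invalidates your plan, but as written it is an outline of the cited proofs rather than a proof.
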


Note that most Carleman estimates in \cite{Tataru:95, RZ:98, Hor:97, Tataru:99} do not contain the term \\$ \nor{e^{\tau(\psi-\mathsf{d})}P u}{0}^2$ in the right hand-side. Also, this result was stated in some case where pseudoconvexity holds on all $\Omega$. Yet, pseudoconvexity at one points implies the pseudoconvexity in a small neighborhood (see \cite[Lemma 2.5]{Tataru:99}), so it implies the local Carleman estimate for functions supported close to $x^0$.

\subsection{Regularization of cutoff functions and Fourier multipliers}

All along the paper, we shall use several cutoff functions and need to regularize them. Here, we explain the regularization procedure we use,  give some of its basic properties, and define some (appropriately regularized) Fourier multipliers.
\subsubsection{Regularization of functions}
Before describing the regularization operators, let us collect some basic facts about gaussian integrals.
Note first that we have (derive with respect to $z$ or see e.g. \cite[(2.1.7) p17]{Lebedev}), for $z \geq 0$, 
\bna
\int_z^{+ \infty} e^{-s^2} ds = \frac{e^{-z^2}}{\sqrt{\pi}} \int_0^{+ \infty} \frac{e^{-z^2s^2}}{1+s^2} ds  \leq \frac{\sqrt{\pi}}{2}e^{-z^2}.
\ena
As a consequence, we have the following estimates
\bna
\int_r^{+\infty}e^{-\frac{s^2}{t}}ds
\leq \frac{\sqrt{\pi}}{2} \sqrt{t} e^{-\frac{r^2}{t}} ,
\quad \int_{r}^{+ \infty} \langle s \rangle^m e^{-\frac{s^2}{t}} ds \leq C_m \langle r \rangle^m \langle t \rangle^\frac{m+1}{2} e^{-\frac{r^2}{t}}
\quad 
\text{ for all } r\geq 0 , t>0 , m\in \N ,
\ena
where the second estimate is obtained by iterated integration by parts. As a consequence, we also have
\bnan
\label{e:estimgausshorsboul}
\int_{x_a \in \R^{n_a},|x_a|\geq r} e^{-\frac{|x_a|^2}{t}} dx_a 
\leq C_{n_a}\left\langle r\right\rangle^{n_a-1} \langle t \rangle^\frac{n_a}{2}e^{-\frac{r^2}{t}} 
\text{ for all } r\geq 0 , t>0 .
\enan
Moreover, we have for any measurable set $E \subset \R^{n_a}$, any $x_a \in \R^{n_a}$, and any $t>0$,
\bna
\int_{E} e^{-\frac{1}{t} |x_a - y_a|^2}~dy_a  \leq \int_{\R^{n_a}} e^{-\frac{1}{t} |x_a - y_a|^2}~dy_a  = (\pi t)^{\frac{n_a}{2}} . 
\ena
In addition, according to~\eqref{e:estimgausshorsboul}, there exists $C_{n_a}>0$ such that for any closed set $E \subset \R^{n_a}$, any $x_a \notin E$, and any $t>0$, we have
\bna
\int_{E} e^{-\frac{1}{t} |x_a - y_a|^2}~dy_a  \leq \int_{B(x_a , \dist(x_a , E))^c} e^{-\frac{1}{t} |x_a - y_a|^2}~dy_a  \leq C_{n_a}\left\langle \dist(x_a , E) \right\rangle^{n_a-1} \langle t \rangle^\frac{n_a}{2}e^{-\frac{\dist(x_a , E)^2}{t}},
\ena
Hence there exists $C_{n_a}>0$ such that for any closed set $E \subset \R^{n_a}$, any $x_a \in \R^{n_a}$, and any $t>0$, we have
\bnan
\label{e:estimgaussdistE}
\int_{E} e^{-\frac{1}{t} |x_a - y_a|^2}~dy_a  \leq
 C_{n_a}\left\langle \dist(x_a , E) \right\rangle^{n_a-1} \langle t \rangle^\frac{n_a}{2}e^{-\frac{\dist(x_a , E)^2}{t}}.
\enan

\bigskip
We are now prepared to define the appropriate regularization process, used all along the article.
We shall use the notation $f_{\lambda}$ to denote
\begin{itemize}
\item $f_{\lambda}:= e^{-\frac{|D|^2}{\lambda}} f$ for a function $f \in  L^{\infty}(\R)$;
\item or (more often used) \bna
f_{\lambda}:=e^{-\frac{|D_a|^2}{\lambda}} f ,
\ena 
for a function $f \in  L^{\infty}(\R^{n})$, and {\em a fortiori} for $f \in L^{\infty}(\R^{n_a})$.
\end{itemize}
We hope that this use shall not be confusing for the reader. We now discuss in more detail the basic properties of this regularization process in the second case only (the first case can be seen as the particular situation $n_a = 1 , n_b = 0$).

This definition can be rewritten as 
\bna
f_{\lambda}(x_a , x_b)=\left(\frac{\lambda}{4\pi}\right)^{\frac{n_a}{2}}\Big( e^{-\frac{\lambda}{4}|\cdot|^2} *_{\R^{n_a}} f(\cdot , x_b) \Big)(x_a)
 = \left(\frac{\lambda}{4\pi}\right)^{\frac{n_a}{2}}\int_{\R^{n_a}} f\left(y_a, x_b\right)e^{-\frac{\lambda}{4} |x_a- y_a|^2}~dy_a .
\ena

Note that similar smoothing of functions are used systematically when working with analytic microlocal  analysis, see~\cite{Sjostrand:95} or~\cite{MartinezBook}. In this context, it is related to the Fourier-Bros-Iagolnitzer transform. In applications to unique continution, it has been used in 
\cite{RT:72,Lerner:88,Robbiano:91,Hormander:92,Leb:Analytic,Robbiano:95, Tataru:95,RZ:98,Hor:97,Tataru:99}. In particular, the operator $Q_{\e,\tau}^{\psi}$ defined in~\eqref{Qeps} contains such a regularization (the regularizing parameter $\lambda$ being linked to the Carleman large parameter $\tau$).

We will use several times in the proofs that
\bnan
\label{normLismoothing1}
\left\|f_{\lambda}\right\|_{L^{2}(\R^n)} 
 \leq  \|e^{-\frac{|\cdot|^2}{\lambda}}\|_{L^\infty(\R^{n_a})} \|\mathcal{F}_a(f)(\xi_a , x_b)\|_{L^{2}(\R^n)} 
= \left\|f\right\|_{L^{2}(\R^n)}
\enan
and 
\bnan
\label{normLismoothing2}
\left\|f_{\lambda}\right\|_{L^{\infty}}
\leq \left(\frac{\lambda}{4\pi}\right)^{\frac{n_a}{2}} \|e^{-\frac{\lambda}{4}|\cdot|^2}\|_{L^1(\R^{n_a})}
 \left\|f\right\|_{L^{\infty}(\R^n)}
 =   \left\|f\right\|_{L^{\infty}(\R^n)} .
\enan
Notice also that we have
\bna
f \geq 0 \Longrightarrow f_\lambda \geq 0 ,\quad \text{ and hence } \quad f \geq g \Longrightarrow f_\lambda \geq g_\lambda .
\ena
Moreover, the function $f_\lambda$ may be extended as an entire function in the variable $x_a$ by
\bna
f_{\lambda}(z_a , x_b)
 = \left(\frac{\lambda}{4\pi}\right)^{\frac{n_a}{2}}\int_{\R^{n_a}} f\left(y_a, x_b\right)e^{-\frac{\lambda}{4} (z_a- y_a)^2}~dy_a , \quad z_a \in \C^{n_a} , x_b \in \R^{n_b} ,
\ena
(where $\zeta_a^2 = \zeta_a\cdot\zeta_a = |\Re\zeta_a|^2 - |\Im\zeta_a|^2 + 2 i\Re\zeta_a \cdot \Im\zeta_a$ is the real inner product)
with the uniform bound
\bnan
\label{estimalambdacompl}
\left|f_{\lambda}(z_a , x_b)\right|& \leq &\left(\frac{\lambda}{4\pi}\right)^{\frac{n_a}{2}}
\nor{f}{L^\infty}
\int_{y_a\in \supp(f(\cdot , x_b))} \left| e^{-\frac{\lambda}{4} (z_a- y_a)^2}\right|~dy_a \nonumber\\
& \leq &\left(\frac{\lambda}{4\pi}\right)^{\frac{n_a}{2}}
\nor{f}{L^\infty} e^{\frac{\lambda}{4}|\Im(z_a)|^2} 
\int_{y_a\in \supp(f(\cdot , x_b))} e^{-\frac{\lambda}{4} |\Re(z_a)- y_a|^2}~dy_a 
\nonumber\\
& \leq & C \left\langle \lambda\right\rangle^{\frac{n_a}{2}}
\nor{f}{L^\infty} e^{\frac{\lambda}{4}|\Im(z_a)|^2} \nonumber\\
&& \times
\left\langle \dist(\Re(z_a) , \supp(f(\cdot , x_b)) ) \right\rangle^{n_a-1}
e^{-\frac{\lambda}{4} \dist(\Re(z_a) , \supp(f(\cdot , x_b)) )^2} 
\enan
where the last estimate comes from~\eqref{e:estimgaussdistE}. Note that strictly speaking, if $f$ is only in $L^{\infty}(\R^n)$, $\supp(f(\cdot , x_b))$ is not really well defined for every $x_b\in \R^{n_b}$. But $\supp f$ (in the distributional sense of support) is a well defined closed set and we can define for every $x_b\in \R^{n_b}$ the closed set of $\R^{n_a}$, $\left\{x_a\in\R^{n_a}\left|(x_a,x_b)\in \supp f \right.\right\}$ that is $\supp(f(\cdot , x_b))$ for continuous functions. We will not discuss more this subtlety and will continue to write some expressions similar to \eqref{estimalambdacompl}. The estimate then makes sense by taking an element of the class in $L^{\infty}$ that is zero outside of $\supp(f)$ and that is bounded by $\nor{f}{L^{\infty}}$. 

For functions compactly supported in the $x_a$ variable, we have the simpler estimate
\bnan
\label{estimalambdacompl-bis}
\left|f_{\lambda}(z_a , x_b)\right|\leq C \lambda^{\frac{n_a}{2}}
\nor{f}{L^\infty} | \supp(f(\cdot , x_b)) | e^{\frac{\lambda}{4}|\Im(z_a)|^2} 
e^{-\frac{\lambda}{4} \dist(\Re(z_a) , \supp(f(\cdot , x_b)) )^2}  .
\enan
\subsubsection{Fourier multipliers}

Finally, we also need to introduce frequency localization functions, i.e. appropriately smoothed Fourier multipliers. 
Let $m(\xi_a)$ be a smooth radial function (i.e. depending only on $|\xi_a|$), compactly supported (in $|\xi_a| < 1 $) such that $m(\xi_a)=1$ for $|\xi_a| < 3/4$. We shall denote by $M^{\mu}$ the Fourier multiplier $M^{\mu}u= m\left(\frac{D_a}{\mu}\right)u$, that is 
$$
(M^{\mu} u) (x_a , x_b)=  \mathcal{F}_a^{-1} \left( m \left(\frac{\xi_a}{\mu}\right) \mathcal{F}_a (u)(\xi_a , x_b)\right)(x_a) ,
$$
where $\mathcal{F}_a$ denotes the Fourier transform in the variable $x_a$ only. 
Given $\lambda , \mu >0$, we shall denote by $M^{\mu}_{\lambda}$  the Fourier multiplier of symbol $m^{\mu}_{\lambda}(\xi_a)=m_{\lambda}\left(\frac{\xi_a}{\mu}\right)$, i.e. $M^{\mu}_{\lambda}= m_{\lambda}\left(\frac{D_a}{\mu}\right)$ or
$$
(M^{\mu}_{\lambda} u) (x_a , x_b)=  \mathcal{F}_a^{-1} \left( m_{\lambda}\left(\frac{\xi_a}{\mu}\right) \mathcal{F}_a (u)(\xi_a , x_b)\right)(x_a) ,
$$
with, according to the above notation for the subscript $\lambda$, 
\bna
m_{\lambda}(\xi_a) 
 = \left(\frac{\lambda}{4\pi}\right)^{\frac{n_a}{2}}\int_{\R^{n_a}} m \left(\eta_a \right)e^{-\frac{\lambda}{4} |\xi_a- \eta_a|^2}~dy_a .
\ena

Note that in this definition, the symbol is first regularized and then dilated. We hope the notation (with the subscript for the regularization and the exponent for the dilation) will not be confusing for the reader. Note also that these Fourier multipliers only act in the variable $x_a$.

\subsection{Some preliminary estimates}
In this section, we state several technical lemmata of commutator type, needed to prove the main local result Theorem~\ref{th:alpha-unif}. The proofs can certainly be omitted by the hurried reader. The spirit is that all the estimates that we would expect for exact cutoff are true with their analytically regularized version, up to some term exponentially small in term of $\lambda$. So, the important fact in all the estimates is the uniformity with respect to $\lambda$ and $\mu$ as large parameter. 

\subsubsection{Some basic preliminary estimates}
\label{s:prelim-estim}

\begin{lemma}
\label{lmsuppdisjoint}
\begin{enumerate}
\item
For any $d>0$, there exist $C,c>0$ such that for any $f_1, f_2\in L^{\infty}(\R^n)$ such that $\dist(\supp(f_1),\supp(f_2))\geq d$ and all $\lambda\geq 0$, we have 
\bna
\nor{f_{1,\lambda}f_2 }{L^{\infty}} \leq C e^{-c \lambda}\nor{f_1}{L^{\infty}} \nor{f_2}{L^{\infty}} ,
\qquad 
\nor{f_{1,\lambda}f_{2,\lambda}}{L^{\infty}}\leq C e^{-c \lambda}\nor{f_1}{L^{\infty}} \nor{f_2}{L^{\infty}} .
\ena
\item If moreover $f_1, f_2\in C^{\infty}(\R^n)$ have bounded derivatives, then for all $k \in \N$, there exist $C,c>0$ such that for all $\lambda\geq 1$, we have
\bna
\nor{f_{1,\lambda}f_2 }{H^k(\R^n) \to H^k(\R^n) } \leq C e^{-c \lambda} .
\ena
\item 
Let $f_1, f_2\in L^{\infty}(\R^{n_a})$ such that $\dist(\supp(f_1),\supp(f_2))>0$ . Then there exist $C,c>0$ such that for all $\lambda\geq 1$, for all $k \in \N$, for all $\mu \geq 1$,  we have 
\bna
\nor{f_{1,\lambda}(D_a /\mu )f_2 (D_a/\mu)}{H^k(\R^n) \to H^k(\R^n) } \leq C e^{-c \lambda} , \\
\nor{f_{1,\lambda}(D_a /\mu )f_{2,\lambda} (D_a/\mu)}{H^k(\R^n) \to H^k(\R^n) } \leq C e^{-c \lambda} .
\ena
\end{enumerate}
\end{lemma}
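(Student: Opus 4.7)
\textbf{Plan of proof for Lemma \ref{lmsuppdisjoint}.} The unifying idea is that the regularization $f\mapsto f_{\lambda}$ decays super-polynomially away from $\supp f$, with rate controlled by the Gaussian tail estimate \eqref{estimalambdacompl} of the excerpt. Each of the three items merely quantifies this decay when the two supports are at positive distance; in each case the $\langle \lambda\rangle^{n_a/2}$ prefactor coming from \eqref{estimalambdacompl} will be absorbed into $e^{-c\lambda}$ (by taking $c=d^{2}/8$, say, since $\langle\lambda\rangle^{n_a/2}e^{-\lambda d^{2}/4}\leq C e^{-\lambda d^{2}/8}$ for all $\lambda\geq 0$).

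For item (1), I would apply \eqref{estimalambdacompl} at the real point $z_a=x_a$. Whenever $x=(x_a,x_b)\in\supp(f_2)$, the point $x_a$ sits at distance $\geq d$ from $\supp(f_1(\cdot,x_b))$, since any $(y_a,x_b)\in\supp f_1$ satisfies $|x_a-y_a|=\dist(x,(y_a,x_b))\geq d$. Plugging this distance into \eqref{estimalambdacompl} gives
\bna
|f_{1,\lambda}(x)|\leq C\langle\lambda\rangle^{n_a/2}\langle d\rangle^{n_a-1}e^{-\lambda d^{2}/4}\|f_1\|_{L^\infty}\leq Ce^{-c\lambda}\|f_1\|_{L^\infty}
\ena
for every $x\in\supp(f_2)$, which is the first bound. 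For the doubly regularized product, I would set $d_i(x):=\dist(x_a,\supp f_i(\cdot,x_b))$; the triangle inequality forces $d_1(x)+d_2(x)\geq d$, hence $\max(d_1(x),d_2(x))\geq d/2$ for every $x\in\R^n$. Applying \eqref{estimalambdacompl} to whichever factor is far from its support and bounding the other by $\|f_i\|_{L^\infty}$ via \eqref{normLismoothing2} yields the second estimate. The mild subtlety flagged in the excerpt about $\supp(f(\cdot,x_b))$ not being well defined for $L^\infty$ functions is handled by the pointwise representative vanishing outside $\supp(f)$.

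For item (2), the standard bound for multiplication operators gives $\|\phi\cdot\|_{H^k\to H^k}\leq C_k\|\phi\|_{W^{k,\infty}}$ for integer $k$, so it suffices to show $\|f_{1,\lambda}f_2\|_{W^{k,\infty}}\leq Ce^{-c\lambda}$. Expanding by Leibniz, I note that the Fourier multiplier $e^{-|D_a|^2/\lambda}$ commutes with every partial derivative in $x_a$ or $x_b$, so $\partial^{\beta}f_{1,\lambda}=(\partial^{\beta}f_1)_{\lambda}$. Since $f_1\in C^\infty$, one has $\supp(\partial^{\beta}f_1)\subset\supp(f_1)$, which still lies at distance $\geq d$ from $\supp(\partial^{\alpha-\beta}f_2)\subset\supp(f_2)$. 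Item (1) applied to each term of the Leibniz expansion, combined with the boundedness of $\partial^{\beta}f_1$ and $\partial^{\alpha-\beta}f_2$, produces $\|\partial^{\alpha}(f_{1,\lambda}f_2)\|_{L^\infty}\leq Ce^{-c\lambda}$ for every $|\alpha|\leq k$.

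For item (3), the key observation is that Fourier multipliers in $D_a$ alone commute with every $\partial_{x_a^i}$ (via multiplication by $i\xi_a^i$) and every $\partial_{x_b^j}$ (which does not touch $x_a$), so the $H^k\to H^k$ operator norm equals the $L^\infty$ norm of the symbol. The relevant symbols are $\xi_a\mapsto f_{1,\lambda}(\xi_a/\mu)f_2(\xi_a/\mu)$ and $\xi_a\mapsto f_{1,\lambda}(\xi_a/\mu)f_{2,\lambda}(\xi_a/\mu)$. The argument of item (1) applies \emph{verbatim in the frequency variable}: outside $\supp(f_2)$ the first product vanishes, and on $\supp(f_2)$ the factor $f_{1,\lambda}(\xi_a/\mu)$ is bounded by $Ce^{-c\lambda}\|f_1\|_{L^\infty}$ thanks to \eqref{estimalambdacompl} and the distance hypothesis; the doubly smoothed symbol is again dealt with via the triangle split $\max(d_1,d_2)\geq d/2$. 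Uniformity in $\mu$ and $k$ is immediate since $\mu$ never enters the distance between the dilated supports $\mu\cdot\supp f_i$ via the change of variable $\xi_a\mapsto\xi_a/\mu$, and since commutation with derivatives is free of constants. I do not anticipate any serious obstacle: the whole lemma is bookkeeping around the single Gaussian tail estimate \eqref{estimalambdacompl}.
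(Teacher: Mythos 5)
Your proposal is correct and follows essentially the same route as the paper: everything reduces to the Gaussian tail estimate \eqref{estimalambdacompl} evaluated on the support of the other factor, items (2) and (3) then following because the regularization commutes with derivatives (so supports of $\partial^\beta f_{1,\lambda}$ are preserved) and because a Fourier multiplier's $H^k\to H^k$ norm is controlled by the sup of its symbol, with the scaling $\xi_a\mapsto\xi_a/\mu$ removing any $\mu$-dependence. The only cosmetic differences are that for the doubly regularized product the paper splits $\R^n$ with the indicator of $\vois(\supp(f_2),d/3)$ where you use the pointwise inequality $\max(d_1(x),d_2(x))\geq d/2$, and in item (2) the paper runs an induction on $k$ via the same Leibniz expansion you invoke directly through the $W^{k,\infty}$ multiplier bound; both variants are sound.
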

\bnp
Let us set $d=\dist(\supp(f_1),\supp(f_2))>0$.
We have
\bna
\left|f_{1,\lambda}(x_a ,x_b)\right|\leq C\lambda^{n_a/2}\|f_1\|_{L^\infty}\int_{y_a\in \supp_{x_a}(f_1(\cdot , x_b))} e^{-\frac{\lambda |y_a-x_a|^2}{4}}~d y_a .
\ena
Moreover, for all $x_b \in \R^{n_b}$ we have 
$$
\dist_{\R^{n_a}} \big(\supp_{x_a}(f_1(\cdot , x_b)) , \supp_{x_a}(f_2(\cdot , x_b)) \big) \geq d ,
$$
so that for all $x = (x_a ,x_b)\in \supp(f_2)$, we have $|y_a-x_a|\geq d$ in the above integral. As a consequence, we obtain, for all $x = (x_a ,x_b)\in \supp(f_2)$,
\bna
\left|f_{1,\lambda}(x_a ,x_b)\right|
& \leq & C\lambda^{n_a/2}\|f_1\|_{L^\infty}\int_{|y_a-x_a|\geq d} e^{-\frac{\lambda |y_a-x_a|^2}{4}}~dy_a \leq C\|f_1\|_{L^\infty}\lambda^{n_a/2}\int_{|y_a| \geq d} e^{-\frac{\lambda |y_a|^2}{4}}~dy_a \\
& \leq & Ce^{-c\lambda}\|f_1\|_{L^\infty}, 
\ena
which provides the first estimate in item 1.

The second estimate is obtained by decomposing 
$$f_{1,\lambda}f_{2,\lambda}=f_{1,\lambda}f_{2,\lambda}\mathds{1}_{\vois(\supp(f_2),d/3)}+ f_{1,\lambda}f_{2,\lambda}\mathds{1}_{\vois(\supp(f_2),d/3)^c} , 
$$
and applying the previous result to the products $f_{1,\lambda}\mathds{1}_{\vois(\supp(f_2),d/3)}$ and $f_{2,\lambda}\mathds{1}_{\vois(\supp(f_2),d/3)^c}$, where all the supports are disjoint as required.

Item 2 is proved by induction on $k \in \N$. For $k=0$, it is precisely the first estimate of item 1. Now assume that it holds for $k-1$ and write $\|f_{1,\lambda}f_2 u\|_{H^k} \leq \|f_{1,\lambda}f_2 u\|_{H^{k-1}} + \|\nabla (f_{1,\lambda}f_2 u)\|_{H^{k-1}}$. It only remains to estimate $\|\nabla (f_{1,\lambda}f_2 u)\|_{H^{k-1}}$: for this, it sufficies to write
$$\nabla (f_{1,\lambda}f_2 u)= (\nabla f_{1})_{\lambda}f_2 u+f_{1,\lambda}\nabla (f_2) u)+f_{1,\lambda}f_2 \nabla (u), 
$$
where all functions have the appropriate support properties to apply the case $k-1$. This finally yields 
$\|\nabla (f_{1,\lambda}f_2 u)\|_{H^{k-1}} \leq Ce^{-c\lambda} \|u\|_{H^{k-1}} + Ce^{-c\lambda} \|\nabla u\|_{H^{k-1}}$ and concludes the proof of item 2.

The proof of item 3 only relies on the fact that for any $k \in \N$
$$
\nor{f_{1,\lambda}(D_a /\mu )f_2 (D_a/\mu)}{H^k(\R^n) \to H^k(\R^n) }
= \nor{f_{1,\lambda}(\xi_a /\mu )f_2 (\xi_a/\mu)}{L^{\infty}}
=\nor{f_{1,\lambda}f_2 }{L^{\infty}},
$$ (and similarly for the other term) and the use of item 1.
\enp
Similarly, we have
\begin{lemma}
\label{l:noyau-chaleur-hormander}
Let $f_2\in C^{\infty}(\R^n)$ with all derivatives bounded, and $d>0$. Then for every $k\in \N$, there exist $C,c>0$ such that for all $f_1 \in H^k(\R^n)$ such that $\dist(\supp(f_1),\supp(f_2))\geq d$ and all $\lambda\geq 0$, we have 
\bna
\nor{f_{1,\lambda}f_2 }{H^k} \leq C e^{-c \lambda}\nor{f_1}{H^k} .
\ena
\end{lemma}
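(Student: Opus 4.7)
The plan is to reduce the statement to the case $k=0$ by the Leibniz rule, and to treat the base case via a direct convolution argument. For the reduction, since the Fourier multiplier $e^{-|D_a|^2/\lambda}$ commutes with every partial derivative, we have $D^\beta(f_{1,\lambda}) = (D^\beta f_1)_\lambda$, and the distribution $D^\beta f_1\in H^{k-|\beta|}$ has support contained in $\supp f_1$; the derivatives $D^{\alpha-\beta}f_2$ lie in $L^\infty$ with support contained in $\supp f_2$. Hence
$$
D^\alpha(f_{1,\lambda}f_2) = \sum_{\beta\leq \alpha}\binom{\alpha}{\beta}(D^\beta f_1)_\lambda\, D^{\alpha-\beta} f_2,
$$
and the claim for $|\alpha|\leq k$ reduces to the following $L^2$-level statement: whenever $g\in L^2$ is supported in $\supp f_1$ and $h\in L^\infty$ is supported in $\supp f_2$, we have $\nor{h\,g_\lambda}{L^2}\leq Ce^{-c\lambda}\nor{g}{L^2}$.

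For this base case, I would write the regularization as a convolution in $x_a$,
$$
g_\lambda(x_a,x_b) = \int_{\R^{n_a}} K_\lambda(x_a - y_a)\,g(y_a,x_b)\,dy_a,\qquad K_\lambda(z) := \left(\frac{\lambda}{4\pi}\right)^{n_a/2} e^{-\lambda|z|^2/4}.
$$
For a.e.\ $x_b\in\R^{n_b}$, one has $g(y_a,x_b)=0$ unless $(y_a,x_b)\in \supp g\subset\supp f_1$, and then for $(x_a,x_b)\in\supp f_2$ the separation hypothesis forces $|x_a-y_a|\geq d$. Setting $\tilde K_\lambda(z):=K_\lambda(z)\,\mathds{1}_{\{|z|\geq d\}}$, this shows $h(x)g_\lambda(x)=h(x)(\tilde K_\lambda*_{x_a}g)(x)$ almost everywhere, so Young's inequality applied in $x_a$ (together with Fubini in $x_b$) yields
$$
\nor{h\,g_\lambda}{L^2(\R^n)}\leq \nor{h}{L^\infty}\,\nor{\tilde K_\lambda}{L^1(\R^{n_a})}\,\nor{g}{L^2(\R^n)}.
$$
The Gaussian tail estimate~\eqref{e:estimgausshorsboul} provides $\nor{\tilde K_\lambda}{L^1(\R^{n_a})}\leq Ce^{-cd^2\lambda}$, which concludes.

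I do not anticipate a serious obstacle: the argument is elementary. The only delicate point is the support-theoretic justification restricting the convolution kernel to the tail region $\{|z|\geq d\}$, which relies on the a.e.\ vanishing of the $L^2$ function $g$ outside its distributional support and is formalized by Fubini. This lemma may be viewed as the $H^k$ counterpart of item~2 of Lemma~\ref{lmsuppdisjoint}, the two statements being logically independent since $f_1$ is not assumed here to be bounded.
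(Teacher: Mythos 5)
Your proposal is correct and follows essentially the same route as the paper's proof: the convolution representation of $f_{1,\lambda}$, restriction of the Gaussian kernel to $\{|x_a-y_a|\geq d\}$ via the support separation, Young's inequality in $x_a$, the tail estimate~\eqref{e:estimgausshorsboul}, and a Leibniz-rule reduction for $k\geq 1$. The only (welcome) difference is that you make explicit the measure-theoretic justification for a.e.\ vanishing of the $L^2$ function outside its distributional support, which the paper only alludes to.
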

\bnp
We have 
$$
f_{1,\lambda}f_2 (x_a,x_b)=  
\left(\frac{\lambda}{4\pi}\right)^{\frac{n_a}{2}}\int_{\R^{n_a}} f_2(x_a,x_b)f_1\left(y_a, x_b\right)e^{-\frac{\lambda}{4} |x_a- y_a|^2}~dy_a 
$$
so that 
\bna
|f_{1,\lambda}f_2| (x_a,x_b)
& \leq &  
\left(\frac{\lambda}{4\pi}\right)^{\frac{n_a}{2}}\int_{|x_a - y_a|\geq d} |f_2(x_a,x_b)f_1\left(y_a, x_b\right)|e^{-\frac{\lambda}{4} |x_a- y_a|^2}~dy_a \\
& \leq &  \|f_2\|_{L^\infty(\R^n)}
\left(\frac{\lambda}{4\pi}\right)^{\frac{n_a}{2}}\Big( \mathds{1}_{|\cdot|\geq d}e^{-\frac{\lambda}{4}|\cdot|^2} *_{\R^{n_a}} |f_1|(\cdot , x_b) \Big)(x_a) .
\ena
As a consequence, using the Young inequality, we have 
\bna
\|f_{1,\lambda}f_2\|_{L^2} \leq  \|f_2\|_{L^\infty(\R^n)}\left(\frac{\lambda}{4\pi}\right)^{\frac{n_a}{2}}\nor{ \mathds{1}_{|\cdot|\geq d}e^{-\frac{\lambda}{4}|\cdot|^2}}{L^1(\R^{n_a})} \|f_1\|_{L^2(\R^n)} ,
\ena
and, using~\eqref{e:estimgausshorsboul}, we obtain
\bna
\|f_{1,\lambda}f_2\|_{L^2} \leq Ce^{-d \lambda} \|f_2\|_{L^\infty(\R^n)}\|f_1\|_{L^2(\R^n)} ,
\ena
 which implies the result in the case $k=0$. We obtain the case $k>0$ by differentiating and applying the same result (see e.g. the proof of Lemma~\ref{lmsuppdisjoint}).
\enp

\begin{lemma}
\label{lmsuppdisjointchi}
Let $\psi : \R^n \to \R$ be a $C^\infty$ function, $f_1\in C^{\infty}(\R)$ with bounded derivatives and $f_2\in C^{\infty}_0(\R^n)$ such that $\dist(\supp(f_1 \circ \psi),\supp(f_2))>0$ . Then, for all $k \in \N$, there exist $C,c>0$ such that for all $\lambda> 0$, we have 
\bna
\nor{f_{1,\lambda}(\psi)f_2 }{H^k(\R^n) \to H^k(\R^n) } \leq C e^{-c \lambda}
\ena
\end{lemma}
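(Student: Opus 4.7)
The strategy is to reduce the $H^k\to H^k$ operator estimate for the multiplication operator $u\mapsto f_{1,\lambda}(\psi)f_2\,u$ to a uniform $C^k(\R^n)$ bound on the multiplier $g_\lambda(x):=f_{1,\lambda}(\psi(x))\,f_2(x)$, and then to exploit the Gaussian decay of $f_{1,\lambda}$ away from $\supp f_1$, in the same spirit as in the proof of Lemma~\ref{lmsuppdisjoint}. Since $f_2\in C_0^\infty(\R^n)$, the function $g_\lambda$ is smooth and compactly supported in $\supp f_2$; multiplication by such a function is bounded on $H^k(\R^n)$ with operator norm controlled by $C_k\|g_\lambda\|_{C^k(\R^n)}$. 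It therefore suffices to prove
\[
\|g_\lambda\|_{C^k(\R^n)} \leq C\, e^{-c\lambda}, \qquad \lambda\geq 1 .
\]

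The crucial step, which I regard as the main obstacle, is to upgrade the hypothesis to a statement in the target of $\psi$: one must produce $d_0>0$ such that $\dist_\R(\psi(x),\supp f_1)\geq d_0$ for every $x\in\supp f_2$. Choosing $\eta\in\big(0,\dist(\supp(f_1\circ\psi),\supp f_2)\big)$, the composition $f_1\circ\psi$ vanishes identically on the open neighborhood $U:=\vois(\supp f_2,\eta)$ of $\supp f_2$; combined with the compactness of $\supp f_2$ and the smoothness of $\psi$ (which in the applications behaves locally as a coordinate, so that $\psi(U)$ contains an $\R$-neighborhood of $\psi(\supp f_2)$ inside $\{f_1=0\}$), this yields the uniform positive distance $d_0$. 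The delicate point to handle carefully is to rule out the degenerate possibility that $\psi(x)$ meets $\supp f_1$ at boundary points where $f_1$ itself vanishes.

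Once $d_0$ is available, I would write $f_{1,\lambda}^{(j)}=\phi_\lambda*f_1^{(j)}$ with $\phi_\lambda(t)=(\lambda/4\pi)^{1/2}e^{-\lambda t^2/4}$ and, using the boundedness of each $f_1^{(j)}$ together with the one-dimensional analogue of~\eqref{e:estimgaussdistE} exactly as in the proof of Lemma~\ref{lmsuppdisjoint}, obtain
\[
\sup_{s\in\psi(\supp f_2)} |f_{1,\lambda}^{(j)}(s)| \leq C_j\, e^{-c \lambda d_0^2}, \qquad \lambda\geq 1,\quad j\in\N.
\]
Finally, by the Leibniz and chain rules, every derivative $\partial^\alpha g_\lambda$ with $|\alpha|\leq k$ is a finite sum of products of $(\partial^\beta f_2)(x)$, $f_{1,\lambda}^{(j)}(\psi(x))$ with $j\leq|\alpha|$, and universal polynomials in $\partial\psi,\ldots,\partial^{|\alpha|}\psi$; on the compact set $\supp f_2$ every factor except $f_{1,\lambda}^{(j)}(\psi(x))$ is uniformly bounded independently of $\lambda$, so the pointwise exponential decay transfers to the desired $C^k$ bound on $g_\lambda$, concluding the argument.
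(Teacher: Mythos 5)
Your overall strategy coincides with the paper's: reduce the $H^k\to H^k$ bound to a $C^k$ (in the paper, first $L^\infty$, then differentiate and induct) bound on the multiplier $f_{1,\lambda}(\psi)f_2$, establish a positive distance \emph{in $\R$} between $\supp(f_1)$ and $\psi(\supp(f_2))$, and conclude by the one-dimensional Gaussian decay of Lemma~\ref{lmsuppdisjoint} / estimate~\eqref{e:estimgaussdistE}. The reduction and the Gaussian-decay part are correct and match the paper.

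The gap is in the step you yourself single out as the crux: you never derive, from the stated hypotheses, a $d_0>0$ with $\dist_\R(\psi(x),\supp(f_1))\geq d_0$ for all $x\in\supp(f_2)$. Your justification appeals to the fact that ``in the applications $\psi$ behaves locally as a coordinate,'' but the lemma is stated for an arbitrary smooth $\psi$ and no submersion or openness property of $\psi$ is among the hypotheses, so as written this step is unproved. The paper's own argument is a one-line contradiction: set $K=\psi(\supp(f_2))$, which is compact; if $\dist(\supp(f_1),K)=0$ then (closed set meeting a compact set) there is $t\in\supp(f_1)\cap K$, i.e.\ $t=\psi(x)$ with $x\in\supp(f_2)$, and the paper then asserts that $\psi(x)\in\supp(f_1)$ forces $x\in\supp(f_1\circ\psi)$, contradicting $\dist(\supp(f_1\circ\psi),\supp(f_2))>0$. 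That last inference is exactly the ``degenerate possibility'' you flag: it is not automatic when $f_1$ vanishes at $t$ and $\psi$ fails to be open near $x$ (e.g.\ $\psi$ constant), so your instinct to worry was sound — but you should either adopt the paper's contradiction argument (which is what is intended, and is harmless for the weight functions $\psi$ and cutoffs $f_1$ actually used in the paper) or add the needed nondegeneracy as an explicit hypothesis; leaving it to ``the applications'' does not prove the lemma as stated. Once $d_0$ is granted, your conclusion via the pointwise domination $|f_2|\leq \nor{f_2}{L^\infty}\,\mathds{1}_{K}\circ\psi$ and Leibniz is the same as the paper's.
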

\bnp
We prove the estimate $\nor{f_{1,\lambda}(\psi)f_2 }{L^{\infty}(\R^n)}\leq C e^{-c \lambda}$ which implies the result in the case $k=0$. We obtain the case $k>0$ by differentiating and applying the same result (see e.g. the proof of Lemma~\ref{lmsuppdisjoint}).

Since $f_2\in C^{\infty}_0(\R^n)$, the set $K:=\psi(\supp (f_2))=\left\{\psi(x);x\in \supp (f_2) \right\}$ is a compact set of $\R$. Moreover, the assumption $\dist(\supp(f_1(\psi)),\supp(f_2))>0$ implies that $\dist(\supp(f_1),K)>0$. Indeed, otherwise, we would have $\supp(f_1) \cap \psi(\supp (f_2)) \neq \emptyset$: taking $t$ in this intersection, there would be $x \in \supp(f_2)$ such that $\psi(x) = t \in \supp(f_1)$, i.e. $x \in \supp(f_1(\psi))$, which contradicts the assumption.
Now, note that $x \in \supp(f_2)$ implies that $\psi(x) \in K$, so that we have the pointwise estimate $|f_2|\leq \|f_2\|_{L^\infty} \mathds{1}_{K}\circ \psi$ on $\R^n$. As a consequence, we have 
\bna
\nor{f_{1,\lambda}(\psi)f_2 }{L^{\infty}(\R^n)} \leq C\nor{f_{1,\lambda}(\psi)\mathds{1}_{K}(\psi) }{L^{\infty}(\R^n)} \leq C \nor{f_{1,\lambda}\mathds{1}_{K} }{L^{\infty}(\R)}\leq C e^{-c \lambda} ,
\ena
where we have used Lemma \ref{lmsuppdisjoint} together with $\dist(\supp(f_1),K)>0$.
\enp
\begin{lemma}
\label{lmsupHK}
Let $f_1, f_2 \in C^{\infty}_0(\R^n)$ such that $f_1=1$ in a neighborhood of $\supp(f_2)$. Then for all $k \in \N$ there exist $C,c>0$ such that for all $\lambda>0$, and all $u \in H^k(\R^n)$, we have
\bna
 \nor{f_{2,\lambda} \d^\alpha u }{0} \leq &C \nor{f_{1,\lambda} u }{k}  + C  e^{-c \lambda}\nor{ u }{k} ,& \quad \text{for all $\alpha$ such that $|\alpha|\leq k$}; \\
 \nor{f_{2,\lambda} u }{k} \leq &C \nor{f_{1,\lambda} u }{k}  + C e^{-c \lambda}\nor{ u }{k} .&
\ena
\end{lemma}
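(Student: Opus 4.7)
The plan is to reduce the second inequality to the first, and then prove the first via a cutoff splitting together with a key algebraic identity for the Gaussian regularization.

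First, observe that the second estimate is a direct consequence of the first. Since the smoothing $e^{-|D_a|^2/\lambda}$ commutes with every partial derivative, Leibniz yields
\bna
\d^\alpha(f_{2,\lambda}u) = \sum_{\beta\leq\alpha}\binom{\alpha}{\beta}(\d^{\alpha-\beta}f_2)_\lambda\, \d^\beta u ,
\ena
and every summand has the form $g_\lambda \d^\beta u$ with $|\beta|\leq k$ and $g = \d^{\alpha-\beta}f_2 \in C_0^\infty(\R^n)$ satisfying $\supp g \subset \supp f_2$, on which $f_1 = 1$. Applying the first inequality (with $g$ in place of $f_2$) and summing over $|\alpha|\leq k$ yields the second; so we focus on proving the first.

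For the first inequality, pick $\chi\in C^\infty_0(\R^n)$ such that $\chi=1$ on a neighborhood of $\supp f_2$ and $f_1=1$ on a neighborhood of $\supp\chi$, which is possible by the hypothesis and compactness of $\supp f_2$. Split $u = \chi u + (1-\chi)u$. The function $w := \d^\alpha((1-\chi)u)$ has support in $\supp(1-\chi)$, which lies at a positive distance $d$ from $\supp f_2$; the pointwise Gaussian estimate \eqref{e:estimgaussdistE} (used exactly as in the proof of Lemma~\ref{lmsuppdisjoint} item~1) gives $|f_{2,\lambda}(x)|\leq C e^{-c\lambda}$ for $x\in\supp w$, whence $\nor{f_{2,\lambda}w}{0}\leq C e^{-c\lambda}\nor{u}{k}$. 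For the main term, the uniform $L^\infty$ bound \eqref{normLismoothing2} together with $|\alpha|\leq k$ gives
\bna
\nor{f_{2,\lambda}\d^\alpha(\chi u)}{0} \leq \nor{f_2}{L^\infty}\nor{\chi u}{k} .
\ena
It remains to bound $\nor{\chi u}{k}$ by $\nor{f_{1,\lambda}u}{k}+Ce^{-c\lambda}\nor{u}{k}$.

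The key algebraic observation is the identity $(\mathbf{1})_\lambda = \mathbf{1}$ (the Gaussian integrates to one), which implies $(1-f_1)_\lambda = 1 - f_{1,\lambda}$, and hence
\bna
f_1 - f_{1,\lambda} = -(1-f_1) + (1-f_1)_\lambda .
\ena
Since $f_1=1$ on $\supp\chi$, we have $\chi u=\chi f_1 u$, so $\chi u - \chi f_{1,\lambda}u = \chi(f_1-f_{1,\lambda})u$; multiplication by $\chi$ annihilates the first summand above (because $1-f_1$ vanishes on $\supp\chi$), while for the second summand, the supports of $\chi$ and $1-f_1$ are disjoint, so Lemma~\ref{lmsuppdisjoint} item~2 gives $\nor{\chi(1-f_1)_\lambda}{H^k\to H^k}\leq Ce^{-c\lambda}$. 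Combining with $\nor{\chi f_{1,\lambda}u}{k}\leq C\nor{f_{1,\lambda}u}{k}$ (multiplication by the fixed cutoff $\chi$ is bounded on $H^k$) yields $\nor{\chi u}{k}\leq C\nor{f_{1,\lambda}u}{k}+Ce^{-c\lambda}\nor{u}{k}$ and concludes the proof. The only delicate step is the one just described: the identity $(1-f_1)_\lambda = 1 - f_{1,\lambda}$ is precisely what converts the awkward difference $f_1-f_{1,\lambda}$ into a sum of two terms, each of which can be handled either by the pointwise support condition $\chi(1-f_1)=0$ or by the disjoint-support exponential-decay lemma.
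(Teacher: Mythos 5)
Your proof is correct and follows essentially the same strategy as the paper's: split into a region near $\supp(f_2)$ and its complement, use the exponential smallness of Gaussian-regularized cutoffs away from their supports, and exploit the identity $1-f_{1,\lambda}=(1-f_1)_\lambda$ to pass from $u$ to $f_{1,\lambda}u$ near $\supp(f_2)$. The only cosmetic differences are that you insert a smooth intermediate cutoff $\chi$ and route the main term through $\nor{\chi u}{k}$, where the paper splits with an indicator function $\mathds{1}_{\vois(\supp(f_2),d/3)}$ and writes $u=f_{1,\lambda}u+(1-f_{1,\lambda})u$ directly.
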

\bnp
Let $d=\dist(\supp(f_2),\supp(1-f_1))>0$. 
Thanks to the first item of Lemma \ref{lmsuppdisjoint}, we have
\bna
\nor{f_{2,\lambda} \mathds{1}_{\vois(\supp(f_2),d/3)^c}\d^\alpha u }{0}\leq Ce^{-c\lambda}\nor{u}{k} .
\ena
Concerning the other term, we use again Lemma \ref{lmsuppdisjoint} applied to $\mathds{1}_{\vois(\supp(f_2),d/3)}$ and some $\d^\alpha (1-f_1)$ (using $\d^\alpha (f_{1,\lambda}) =(\d^\alpha f_{1})_{\lambda}$), to obtain 
\bna
\nor{f_{2,\lambda} \mathds{1}_{\vois(\supp(f_2),d/3)}\d^\alpha u }{0}&\leq& \nor{f_{2,\lambda} \mathds{1}_{\vois(\supp(f_2),d/3)}\d^\alpha (f_{1,\lambda}u) }{0}\\
&&  +\nor{f_{2,\lambda} \mathds{1}_{\vois(\supp(f_2),d/3)}\d^\alpha ((1-f_{1,\lambda})u) }{0}\\
&\leq&\nor{f_{2,\lambda}\mathds{1}_{\vois(\supp(f_2),d/3)}\d^\alpha (f_{1,\lambda} u) }{0}+Ce^{-c\lambda}\nor{u}{k} .
\ena
Writing then 
\bna
\nor{f_{2,\lambda} \mathds{1}_{\vois(\supp(f_2),d/3)}\d^\alpha (f_{1,\lambda}u) }{0}\leq C\nor{\d^\alpha (f_{1,\lambda} u )}{0}\leq C\nor{f_{1,\lambda} u }{k}
\ena
concludes the proof of the first estimate of the Lemma.

\medskip
The second inequality follows from noticing that $\d^\alpha (f_{2,\lambda}u)$ is a sum of terms of the form $(\d^\beta f_2)_{\lambda}\d^{\alpha-\beta} u$ for which we can apply the first part of the Lemma.
\enp

\begin{lemma}
\label{lmFL1}
Assume $m_1, m_2\in L^{\infty}(\R^{n_a})$ are bounded by $1$, and satisfy $\dist(\supp (m_1),\supp(m_2))\geq d>0$. Then, there exists $C>0$ such that for all $f\in L^\infty(\R^{n_b};L^{\infty}(\R^{n_a}))$ satisfying $\mathcal{F}_a(f)\in L^\infty(\R^{n_b};L^1(\R^{n_a}))$ and all $\mu , \lambda >0$, we have
\bna
\nor{m_{1,\lambda}(D_a/\mu)f(x)m_{2,\lambda}(D_a/\mu)}{L^2(\R^n)\rightarrow L^2(\R^n)}
\leq \nor{\mathcal{F}_a(f)}{L^\infty_{x_b}L^1(|\xi_a|\geq d \mu/3)}+Ce^{-c\lambda}\nor{\mathcal{F}_a(f)}{L^\infty(\R^{n_b};L^1(\R^{n_a}))}
\ena
and 
\bna
\nor{m_{1,\lambda}(D_a/\mu)f(x)m_{2}(D_a/\mu)}{L^2(\R^n)\rightarrow L^2(\R^n)}
\leq \nor{\mathcal{F}_a(f)}{L^\infty_{x_b}L^1(|\xi_a|\geq d \mu/3)}+Ce^{-c\lambda}\nor{\mathcal{F}_a(f)}{L^\infty(\R^{n_b};L^1(\R^{n_a}))}\ena
\end{lemma}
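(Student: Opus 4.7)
The plan is to split $f$ according to the size of its Fourier variable $\xi_a$ at the threshold $d\mu/3$. Write $f = f_1 + f_2$ with $\mathcal{F}_a(f_1) = \mathds{1}_{|\xi_a| \geq d\mu/3}\mathcal{F}_a(f)$ and $\mathcal{F}_a(f_2) = \mathds{1}_{|\xi_a| < d\mu/3}\mathcal{F}_a(f)$, and handle the contributions of $f_1$ and $f_2$ separately.

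For $f_1$, Fourier inversion in $x_a$ at each fixed $x_b$ gives the pointwise bound $\nor{f_1}{L^\infty(\R^n)} \leq C \nor{\mathcal{F}_a(f)}{L^\infty_{x_b} L^1(|\xi_a| \geq d\mu/3)}$; combined with the trivial $L^2$-bounds $\nor{m_{i,\lambda}(D_a/\mu)}{L^2 \to L^2} \leq \nor{m_{i,\lambda}}{L^\infty} \leq \nor{m_i}{L^\infty} \leq 1$ (consequence of~\eqref{normLismoothing2}), this produces the first term on the right-hand side.

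For $f_2$, I would view $m_{1,\lambda}(D_a/\mu)\, f_2(x)\, m_{2,\lambda}(D_a/\mu)$ on the Fourier side (in $x_a$) as an integral operator, with $x_b$ a parameter, whose kernel is
\[
K(\xi_a, \eta_a, x_b) = (2\pi)^{-n_a}\, m_{1,\lambda}(\xi_a/\mu)\, \mathcal{F}_a(f_2)(\xi_a - \eta_a, x_b)\, m_{2,\lambda}(\eta_a/\mu),
\]
supported in $\{|\xi_a - \eta_a| \leq d\mu/3\}$. The geometric observation is that, since $\dist(\supp m_1, \supp m_2) \geq d$, the triangle inequality forbids simultaneously $\xi_a/\mu \in \vois(\supp m_1, d/3)$ and $\eta_a/\mu \in \vois(\supp m_2, d/3)$. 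Hence at every $(\xi_a, \eta_a)$ in the support of $K$, either $\dist(\xi_a/\mu, \supp m_1) \geq d/3$ or $\dist(\eta_a/\mu, \supp m_2) \geq d/3$. The representation $m_{i,\lambda}(\zeta) = (\lambda/4\pi)^{n_a/2} \int m_i(\theta) e^{-\lambda|\zeta - \theta|^2/4} d\theta$, together with~\eqref{e:estimgaussdistE} applied with $t = 4/\lambda$, yields the bound $|m_{i,\lambda}(\zeta)| \leq C \langle \lambda \rangle^{n_a/2} \langle \dist(\zeta, \supp m_i) \rangle^{n_a-1} e^{-\lambda \dist(\zeta, \supp m_i)^2/4}$, so that on $\supp K$ one has $|m_{1,\lambda}(\xi_a/\mu)\, m_{2,\lambda}(\eta_a/\mu)| \leq C e^{-c\lambda}$ after absorbing the polynomial prefactor into a slightly smaller exponential rate.

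A Schur test on $K$ then closes the argument: both row and column $L^1$-norms are controlled, using translation-invariance in $\xi_a - \eta_a$, by $C e^{-c\lambda} \sup_{x_b} \nor{\mathcal{F}_a(f)(\cdot, x_b)}{L^1(\R^{n_a})}$, and since the operator does not mix $x_b$ values, this furnishes the $L^2(\R^n) \to L^2(\R^n)$ bound $C e^{-c\lambda} \nor{\mathcal{F}_a(f)}{L^\infty_{x_b} L^1_{\xi_a}}$, which is the second term in the claimed inequality. The second inequality, with $m_2$ in place of $m_{2,\lambda}$, follows from exactly the same procedure: the unregularized $m_2(\eta_a/\mu)$ vanishes outside $\mu \cdot \supp m_2$, and the same triangle argument then forces $\dist(\xi_a/\mu, \supp m_1) \geq 2d/3$ on $\supp K$, so that the exponential decay is provided entirely by $m_{1,\lambda}(\xi_a/\mu)$. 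The main point to monitor throughout is that the constants $C, c$ remain independent of both $\mu$ and $\lambda$, which is automatic since every step reduces to Gaussian-tail bounds governed only by the fixed separation parameter $d$.
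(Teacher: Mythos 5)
Your proof is correct and follows essentially the same route as the paper's: the same high/low frequency splitting of $f$ at the threshold $d\mu/3$, the same $L^\infty$ treatment of the high-frequency piece, and the same geometric observation that the low-frequency convolution cannot bridge the $d$-separation between the dilated supports, combined with the Gaussian tail bound~\eqref{e:estimgaussdistE} for the regularized multipliers. The only (immaterial) difference is packaging: you handle the low-frequency kernel with a single pointwise dichotomy plus a Schur test, whereas the paper splits it into three terms $Y_1+Y_2+Y_3$ using indicators of $d/3$-neighborhoods and concludes with Young's inequality -- the two arguments are equivalent.
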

\bnp
We begin with the first estimate, the second one being simpler to handle.
We denote $m^{\mu}_{j,\lambda}(\xi_a) = m_{j,\lambda}(\xi_a/\mu)$ for $j=1,2$, and, to lighten the notation, set $\hat f = \mathcal{F}_a(f)$ (in the proof only).
We set $f_L = \mathds{1}_{|D_a| \leq d \mu/3}f$ (that is $\widehat{f_L}(\xi)=\mathds{1}_{|\xi_a| \leq d \mu/3}\widehat{f}(\xi) $) and  $f_H= \mathds{1}_{|D_a| \geq d \mu/3}f$.
We first have
\bna
\nor{m^{\mu}_{1,\lambda}(D_a)f_H(x)m^{\mu}_{2,\lambda}(D_a)}{L^2\rightarrow L^2}\leq \nor{f_H}{L^\infty(\R^n)} \leq \nor{\hat{f}_H}{L^\infty(\R^{n_b};L^1(\R^{n_a}))}
\leq \nor{\hat{f}}{L^\infty_{x_b}L^1(|\xi_a|\geq d \mu/3)} .
\ena
Then, it remains to estimate $\nor{m^{\mu}_{1,\lambda}(D_a)f_L(x)m^{\mu}_{2,\lambda}(D_a)}{L^2\rightarrow L^2}$. We work in the Fourier domain: for $u\in L^2(\R^n)$, we have
\bna
\mathcal{F}_a\big(m^{\mu}_{1,\lambda}(D_a)f_L(x)m^{\mu}_{2,\lambda}(D_a) u\big)(\xi_a, x_b)=m^{\mu}_{1,\lambda}(\xi_a)\left[\widehat{f_L}(\xi_a , x_b)*\left[m^{\mu}_{2,\lambda}(\xi_a) \hat{u}(\xi_a,x_b)\right]\right] ,
\ena
where $*$ denotes the convolution in the variable $\xi_a$ only.
Now, we set $\widetilde{m}_1=\mathds{1}_{\vois(\supp(m_1),d/3)}$ and $\widetilde{m}_2=\mathds{1}_{\vois(\supp(m_2),d/3)}$, satisfying $\|\widetilde{m}_j\|_{L^\infty} \leq 1$ and $$\dist(\supp (\widetilde{m}_1),\supp (\widetilde{m}_2))\geq d/3 . $$ We write
\bna
m^{\mu}_{1,\lambda}(\xi_a)\left[\widehat{f_L}(\xi_a, x_b)*\left(m^{\mu}{2,\lambda}(\xi_a) \hat{u}(\xi_a , x_b)\right)\right]&=& Y_1 + Y_2 + Y_3 ,
\ena
with 
\bna
Y_1 &= &\widetilde{m}^{\mu}_{1}m^{\mu}_{1,\lambda}(\xi_a)\left[\widehat{f_L}(\xi_a, x_b)*\left(\widetilde{m}^{\mu}_{2}m^{\mu}_{2,\lambda}(\xi_a) \hat{u}(\xi_a, x_b)\right)\right]\\
Y_2 &= &(1-\widetilde{m}_{1,\mu})m^{\mu}_{1,\lambda}(\xi_a)\left[\widehat{f_L}(\xi_a, x_b)*\left(\widetilde{m}^{\mu}_{2}m^{\mu}_{2,\lambda}(\xi_a) \hat{u}(\xi_a, x_b)\right)\right]\\
Y_3 &= & 
m^{\mu}_{1,\lambda}(\xi_a)\left[\widehat{f_L}(\xi_a, x_b)*\left((1-\widetilde{m}^{\mu}_{2})m^{\mu}_{2,\lambda}(\xi_a) \hat{u}(\xi_a, x_b)\right)\right] .
\ena
The term $Y_1$ vanishes since $\widetilde{m}^{\mu}_{2}m^{\mu}_{2,\lambda}(\xi_a) u(\xi_a, x_b)$ is supported in $\xi_a/\mu  \in \vois(\supp(m_2),d/3)$; hence, using that $\supp(\hat{f_L}) \subset \{|\xi_a|/\mu \leq d/3\}$ the convolution $\left[\widehat{f_L}(\xi_a, x_b)*\left(\widetilde{m}^{\mu}_{2}m_{2,\lambda}^{\mu}(\xi_a) u(\xi_a, x_b)\right)\right]$ is supported in $\xi_a/\mu \in \vois(\supp(m_2),2d/3)$ which does not intersect the support (in $\xi_a/\mu$) of $\widetilde{m}^{\mu}_{1}$ that is $\vois(\supp(m_1),d/3)$.

\medskip
Concerning the term $Y_2$, Lemma \ref{lmsuppdisjoint} implies $\nor{(1-\widetilde{m}^{\mu}_{1})m^{\mu}_{1,\lambda}}{L^{\infty}_{\xi_a}}\leq Ce^{-c\lambda }$. This, together with the Young inequality in the variable $\xi_a$ and the uniform boundedness of $\widetilde{m}^{\mu}_{2}m^{\mu}_{2,\lambda}$, yields
\begin{align*}
& \nor{(1-\widetilde{m}^{\mu}_{1})m^{\mu}_{1,\lambda}(\xi_a)\left[\widehat{f_L}(\xi_a, x_b)*\left(\widetilde{m}^{\mu}_{2}m^{\mu}_{2,\lambda}(\xi_a) \hat{u}(\xi_a, x_b)\right)\right]}{L^2(\R^n)} \\
& \quad  \leq  \nor{(1-\widetilde{m}^{\mu}_{1})m^{\mu}_{1,\lambda}}{L^{\infty}_{\xi_a}}\nor{\hat{f_L}}{L^\infty_{x_b}L^1_{\xi_a}}\nor{\mathcal{F}_a(u)}{L^2(\R^{n_a} \times \R^{n_b})}\\
&  \quad \leq  Ce^{-c\lambda }\nor{\hat{f}}{L^\infty_{x_b}L^1_{\xi_a}} \nor{u}{L^2(\R^n)}.
\end{align*}
The term $Y_3$ is treated similarly and the proof is complete.

\medskip
The second estimate of the Lemma follows the same proof and is actually simpler because the term $(1-\widetilde{m}^{\mu}_{2})m^{\mu}_{2}$ is zero.
\enp
\begin{lemma}
\label{lmFL1x}
Assume $f_1$, $f_2\in L^{\infty}(\R^{n})$ are bounded by $1$, and satisfy $\dist(\supp (f_1),\supp(f_2))\geq d>0$. Then, there exists $C>0$ such that for all $m\in L^{\infty}(\R^{n_a})$ satisfying $\hat{m}\in L^1(\R^{n_a})$ and all $\lambda >0$, we have
\bna
\nor{f_{1,\lambda}(x)m(D_a)f_{2,\lambda}(x)}{L^2(\R^n)\rightarrow L^2(\R^n)}\leq \nor{\hat{m}}{L^1(|\eta_a|\geq d/3)}+Ce^{-c\lambda}\nor{\hat{m}}{L^1(\R^{n_a})}
\ena
and 
\bna
\nor{f_{1,\lambda}(x)m(D_a)f_2(x)}{L^2(\R^n)\rightarrow L^2(\R^n)}\leq \nor{\hat{m}}{L^1(|\eta_a|\geq d /3)}+Ce^{-c\lambda}\nor{\hat{m}}{L^1(\R^{n_a})}.
\ena
\end{lemma}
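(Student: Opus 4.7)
The statement is the physical-space analog of Lemma~\ref{lmFL1}: the operator $m(D_a)$ is a convolution in $x_a$ with kernel proportional to $\mathcal{F}_a^{-1}(m)$, so the splitting $\nor{\hat m}{L^1(|\eta_a|\geq d/3)}$ versus $\nor{\hat m}{L^1}$ corresponds, in the Fourier dual variable, to a decomposition of the convolution kernel by its physical support. I would mirror the three-piece decomposition used to prove Lemma~\ref{lmFL1}, swapping the roles of the physical and Fourier sides.

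Write $m=m_L+m_H$ with $\widehat{m_H}=\hat m\,\mathds{1}_{|\eta_a|\geq d/3}$ and $\widehat{m_L}=\hat m\,\mathds{1}_{|\eta_a|\leq d/3}$, so that the convolution kernel of $m_L(D_a)$ is supported in $\{|x_a-y_a|\leq d/3\}$. The high-frequency piece is immediate: $\nor{m_H(D_a)}{L^2\to L^2}\leq \nor{m_H}{L^\infty}\leq \nor{\hat m}{L^1(|\eta_a|\geq d/3)}$, and combined with $\nor{f_{j,\lambda}}{L^\infty}\leq 1$ from~\eqref{normLismoothing2}, this yields the first term on the right-hand side of both estimates.

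For the low-frequency piece, I would split $f_{1,\lambda}(x)m_L(D_a)f_{2,\lambda}(x)=I_1+I_2$ by inserting $\mathds{1}_{\vois(\supp f_1,d/3)^c}(x)+\mathds{1}_{\vois(\supp f_1,d/3)}(x)=1$ on the left. On $I_1$, we have $\dist(x,\supp f_1)\geq d/3$, so~\eqref{estimalambdacompl} gives $|f_{1,\lambda}(x)|\leq Ce^{-c\lambda}$; combined with $\nor{m_L(D_a)}{L^2\to L^2}\leq\nor{\hat m}{L^1}$ and $\nor{f_{2,\lambda}}{L^\infty}\leq 1$, this produces $\nor{I_1}{L^2\to L^2}\leq Ce^{-c\lambda}\nor{\hat m}{L^1}$. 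On $I_2$, the locality of the kernel of $m_L(D_a)$ forces any contributing $y=(y_a,x_b)$ to satisfy $|x_a-y_a|\leq d/3$, while $x\in\vois(\supp f_1,d/3)$ then puts $y\in\vois(\supp f_1,2d/3)$; the assumption $\dist(\supp f_1,\supp f_2)\geq d$ and the triangle inequality give $\dist(y,\supp f_2)\geq d/3$, so~\eqref{estimalambdacompl} applied to $f_2$ yields $|f_{2,\lambda}(y)|\leq Ce^{-c\lambda}$ pointwise on the effective kernel support. Young's inequality then bounds $\nor{I_2}{L^2\to L^2}\leq Ce^{-c\lambda}\nor{\hat m}{L^1}$.

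For the second estimate, where $f_2$ replaces $f_{2,\lambda}$, the only change is in the handling of $I_2$: the condition $y\in\vois(\supp f_1,2d/3)$ forces $y\notin\supp f_2$, so $f_2(y)=0$ identically on the effective kernel support and $I_2$ vanishes exactly; the treatments of $I_1$ and $m_H$ are unchanged. The argument contains no genuine difficulty beyond the careful bookkeeping of the three nested scales $d/3,2d/3,d$, chosen so that the support of the convolution kernel of $m_L$ and each analytic regularization each consume one factor of $d/3$ of the separation. This is the strict dual of Lemma~\ref{lmFL1} under the swap $(x_a,\xi_a)\leftrightarrow(\xi_a,\eta_a)$, and the same decomposition as in that proof is the natural template to follow.
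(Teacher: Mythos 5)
Your proof is correct and follows essentially the same route as the paper: split $m=m_L+m_H$ at frequency $d/3$ so that $m_L(D_a)$ has a convolution kernel supported in $\{|x_a-y_a|\leq d/3\}$, bound $m_H(D_a)$ by $\nor{\hat m}{L^1(|\eta_a|\geq d/3)}$, and use the $d/3$-separation bookkeeping together with the exponential smallness of $f_{j,\lambda}$ away from $\supp f_j$ (via~\eqref{estimalambdacompl} / Lemma~\ref{lmsuppdisjoint}) to control the low-frequency piece. The only cosmetic difference is that the paper inserts indicator functions of $d/3$-neighborhoods on both sides (reducing to the $Y_1+Y_2+Y_3$ decomposition of Lemma~\ref{lmFL1}, with the main term vanishing exactly), whereas you cut only on the left and absorb the remaining interaction through the pointwise smallness of $f_{2,\lambda}$ on the effective kernel support; both are valid.
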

\bnp
This is essentially the same proof as the previous Lemma except that we have to be careful that the functions $f_i$ depend on all variables, while $m$ only depends on the variable $x_a \in \R^{n_a}$. Again, we set $m_L = \mathds{1}_{|D_a| \leq d /3}m$ (that is $\widehat{m_L}(\eta_a)=\mathds{1}_{|\eta_a| \leq d /3}\widehat{m}(\eta_a) $) and $m_H= \mathds{1}_{|D_a| \geq d /3}m$. We first have
\bna
\nor{f_{1,\lambda}(x)m_H(D_a)f_{2,\lambda}(x)}{L^2(\R^n)\rightarrow L^2(\R^n)}
& \leq &  \nor{m_H(D_a)}{L^2(\R^n)\rightarrow L^2(\R^n)} \\
& \leq & \nor{m_H}{L^\infty(\R^{n_a})} 
\leq \nor{\hat{m}_H}{L^1(\R^{n_a})} 
\leq \nor{\hat{m}}{L^1(|\eta_a|\geq d /3)} .
\ena
Concerning the second term, and denoting $\check{m}_L = \mathcal{F}_a^{-1}(m_L)$, i.e. $\check{m}_L(\eta_a) = \hat{m}_L(-\eta_a)$, we have
\bna
f_{1,\lambda}(x)m_L(D_a)f_{2,\lambda}(x)u=f_{1,\lambda}(x) \ \check{m}_L *_{\R^{n_a}}\big(f_{2,\lambda}(\cdot , x_b)u(\cdot , x_b)\big) .
\ena
We then remark that we can finish the proof as in the previous Lemma: introducing $\tilde{f}_j :=\mathds{1}_{\vois(\supp(f_j),d/3)}$, $j=1,2$, we notice that we have
$$
\supp \Big(\check{m}_L *_{\R^{n_a}}\left[\tilde{f}_2 f_{2,\lambda} u\right]
\Big)
\subset  \vois(\supp(f_2),d/3) +\left\{(x_a,0),|x_a|\leq d/3\right\}
\subset \vois(\supp(f_2),2d/3) .
$$
Moreover, Lemma \ref{lmsuppdisjoint} still yields 
\bna
\nor{(1 - \tilde{f}_j )f_{j,\lambda }}{L^{\infty}(\R^n)}\leq Ce^{-c\lambda} ,\quad j = 1,2 ,
\ena
so that the proof then follows exactly that of Lemma~\ref{lmFL1}. We obtain the second inequality similarly.
\enp
\begin{lemma}
\label{lemma: Gevrey Chambertinf}
Let $k\in \N$ and $f \in C^\infty_0(\R^n)$. 
Then, there exist $C,c$ such that, for any $\lambda, \mu>0$, we have
\bna
\nor{M^{\mu}_{\lambda}f_{\lambda}(1-M^{2\mu}_{\lambda})}{H^k(\R^n) \to H^k(\R^n)}  \leq C e^{-c \frac{\mu^2}{\lambda}} +C e^{-c \lambda} ; \\
\nor{(1-M^{2\mu}_{\lambda})f_{\lambda}M^{\mu}_{\lambda}}{H^k(\R^n) \to H^k(\R^n)}  \leq C e^{-c \frac{\mu^2}{\lambda}} +C e^{-c \lambda} .
\ena
\end{lemma}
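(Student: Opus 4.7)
The two inequalities are proved by the same argument with the roles of left and right factors exchanged, so I focus on the first. The heuristic is that $M^{\mu}_{\lambda}$ has symbol $m_\lambda(\xi_a/\mu)$ essentially concentrated in $\{|\xi_a|\leq \mu\}$, while $1-M^{2\mu}_{\lambda}$ has symbol $(1-m)_\lambda(\xi_a/(2\mu))$ essentially concentrated in $\{|\xi_a|\geq 3\mu/2\}$, leaving a gap of order $\mu$. The intermediate multiplication by $f_\lambda$ acts, after partial Fourier transform in $x_a$, as convolution by $\mathcal{F}_a(f_\lambda)(\zeta,x_b)=e^{-|\zeta|^2/\lambda}\hat f(\zeta,x_b)$, producing a Gaussian gain $e^{-c\mu^2/\lambda}$ when forced to bridge that gap; the non-sharpness of these essential supports will cost only $O(e^{-c\lambda})$.

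\textbf{Microlocal decomposition.} I would pick smooth radial cutoffs $\chi_1\in C^\infty_c(\R^{n_a})$ with $\chi_1=1$ on $\{|\xi_a|\leq 17/16\}$ and $\supp\chi_1\subset\{|\xi_a|\leq 9/8\}$, and $\chi_2\in C^\infty(\R^{n_a})$ with $\chi_2=1$ on $\{|\xi_a|\geq 11/16\}$ and $\supp\chi_2\subset\{|\xi_a|\geq 5/8\}$, and decompose, using commutativity of Fourier multipliers in $D_a$,
$$M^{\mu}_{\lambda}f_{\lambda}(1-M^{2\mu}_{\lambda})=T+R_1+R_2,$$
with $T:=M^{\mu}_{\lambda}\chi_1(D_a/\mu)f_{\lambda}\chi_2(D_a/(2\mu))(1-M^{2\mu}_{\lambda})$, and $R_1,R_2$ carrying the factors $(1-\chi_1(D_a/\mu))$, $(1-\chi_2(D_a/(2\mu)))$ respectively. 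Since $\supp(1-\chi_1)$ is at distance $\geq 1/16$ from $\supp m$, the Gaussian decay of $m_\lambda$ away from $\supp m$ (a Fourier-side application of Lemma~\ref{lmsuppdisjoint}) gives $\nor{(1-\chi_1)m_\lambda}{L^\infty(\R^{n_a})}\leq Ce^{-c\lambda}$; symmetrically, $\nor{(1-\chi_2)(1-m)_\lambda}{L^\infty(\R^{n_a})}\leq Ce^{-c\lambda}$. Since Fourier multipliers in $D_a$ act on $H^k(\R^n)$ by the $L^\infty$ norm of their symbol, and since multiplication by $f_\lambda$ is uniformly bounded on $H^k$ for $\lambda\geq 1$ (as $\partial^\alpha f_\lambda=(\partial^\alpha f)_\lambda$ is bounded by $\nor{\partial^\alpha f}{L^\infty}$ uniformly thanks to $f\in C^\infty_0$ and~\eqref{normLismoothing2}), the remainders satisfy $\nor{R_j}{H^k\to H^k}\leq Ce^{-c\lambda}$ for $j=1,2$.

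\textbf{Main term, $H^k$-upgrade, and the hard point.} For $T$, the supports $\supp\chi_1(\cdot/\mu)\subset\{|\xi_a|\leq 9\mu/8\}$ and $\supp\chi_2(\cdot/(2\mu))\subset\{|\xi_a|\geq 5\mu/4\}$ are separated by at least $\mu/8$. Partial Fourier transform in $x_a$ represents $T$ as integration against a kernel supported where $|\xi_a-\eta_a|\geq\mu/8$, bounded in absolute value by $e^{-|\xi_a-\eta_a|^2/\lambda}|\hat f(\xi_a-\eta_a,x_b)|$. Splitting $e^{-|\zeta|^2/\lambda}=e^{-|\zeta|^2/(2\lambda)}\cdot e^{-|\zeta|^2/(2\lambda)}$ on $\{|\zeta|\geq\mu/8\}$ and using that $\hat f(\cdot,x_b)$ is Schwartz in $\xi_a$ uniformly in $x_b$ (because $f\in C^\infty_0(\R^n)$), I would obtain
$$\int_{|\zeta|\geq\mu/8}|\mathcal{F}_a(f_\lambda)(\zeta,x_b)|\,d\zeta\leq e^{-\mu^2/(128\lambda)}\int_{\R^{n_a}}|\hat f(\zeta,x_b)|\,d\zeta\leq Ce^{-c\mu^2/\lambda},$$
uniformly in $x_b$. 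The Schur test in $\xi_a$, together with Plancherel in $x_a$, then yields $\nor{T}{L^2(\R^n)\to L^2(\R^n)}\leq Ce^{-c\mu^2/\lambda}$. To upgrade to $H^k\to H^k$, I would use that $\partial^\alpha$ commutes with Fourier multipliers in $D_a$, so $\partial^\alpha Tu$ is a Leibniz sum of operators having exactly the same structure as $T$, with $f_\lambda$ replaced by $(\partial^\beta f)_\lambda$ ($\beta\leq\alpha$) and $u$ by $\partial^\delta u$ ($|\delta|\leq k$); since each $\partial^\beta f\in C^\infty_0$, the same $L^2$ bound applies to each summand, and one gets $\nor{T}{H^k\to H^k}\leq Ce^{-c\mu^2/\lambda}$. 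The second inequality of the Lemma is obtained by repeating the same argument after swapping left and right. The delicate point to handle carefully is the calibration of $\chi_1,\chi_2$: they must be sharp enough that $\supp\chi_1(\cdot/\mu)$ and $\supp\chi_2(\cdot/(2\mu))$ are strictly disjoint with a gap of order $\mu$ (necessary for the Schur-type estimate), yet loose enough to equal $1$ on a true neighborhood of $\supp m$ and $\supp(1-m)$ respectively, which is what forces the remainders to be only $O(e^{-c\lambda})$.
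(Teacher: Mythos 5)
Your proof is correct, and the underlying mechanism is exactly the one the paper uses: the essential supports of $m_\lambda(\cdot/\mu)$ and $(1-m)_\lambda(\cdot/(2\mu))$ are separated by a gap of order $\mu$, so the convolution kernel $\mathcal{F}_a(f_\lambda)(\zeta,x_b)=e^{-|\zeta|^2/\lambda}\hat f(\zeta,x_b)$ is only evaluated at $|\zeta|\gtrsim\mu$ (yielding $e^{-c\mu^2/\lambda}$), while the Gaussian tails of the regularized multipliers outside their nominal supports cost $O(e^{-c\lambda})$; the $H^k$ upgrade by Leibniz and induction is also the paper's. The only difference is organizational: the paper disposes of the case $k=0$ in one line by observing that $\mathcal{F}_a(f_\lambda)=e^{-|\xi_a|^2/\lambda}\mathcal{F}_a(f)$ and invoking the already-proved Lemma~\ref{lmFL1}, whose proof splits the \emph{middle} function into low and high $\xi_a$-frequencies ($f_L=\mathds{1}_{|D_a|\leq d\mu/3}f$, $f_H=\mathds{1}_{|D_a|\geq d\mu/3}f$) and handles the multiplier tails with sharp indicator enlargements of $\supp m_1,\supp m_2$; you instead truncate the \emph{outer} multipliers with smooth cutoffs $\chi_1,\chi_2$ and run a Schur-test kernel estimate on the main term. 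The two decompositions are dual to one another and give the same constants up to relabelling; your version is self-contained, the paper's reuses a lemma it needs elsewhere anyway. One cosmetic remark: your restriction to $\lambda\geq 1$ in bounding $\|\partial^\alpha f_\lambda\|_{L^\infty}$ is unnecessary, since~\eqref{normLismoothing2} holds for all $\lambda>0$ (and for $\lambda\leq 1$ the claimed bound is anyway weaker than the trivial uniform operator bound).
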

\bnp
Note first that $\mathcal{F}_a( \d_{x_a}^{\alpha}\d_{x_b}^{\beta}  f_{\lambda})(\xi_a, x_b)=(i \xi_a)^{\alpha}e^{-\frac{|\xi_a|^2}{\lambda}}\d_{x_b}^{\beta} \mathcal{F}_a(f)(\xi_a, x_b)$. Hence, for $k=0$, the result is a direct consequence of (the first estimate in) Lemma~\ref{lmFL1}. Note that we also use the fact that $(1-m)_{\lambda}=1-m_{\lambda}$.

For $k\geq1$, the proof proceeds by induction, noticing that 
$$
\nabla \left[(1-M^{2\mu}_{\lambda})f_{\lambda}M^{\mu}_{\lambda}u\right]=  (1-M^{2\mu}_{\lambda})(\nabla f)_{\lambda}M^{\mu}_{\lambda}u+ (1-M^{2\mu}_{\lambda})f_{\lambda}M^{\mu}_{\lambda}\nabla u
$$ (see e.g. the proof of Lemma~\ref{lmsuppdisjoint}).
\enp
\begin{lemma}
\label{l:commgevrey2}
Let $f_1$ and $f_2\in C^{\infty}(\R^n)$ bounded as well as all their derivatives, with\\ $\dist(\supp (f_1),\supp(f_2))\geq d>0$. Then for every $k\in \N$, there exist $C,c>0$ such that for all $\mu > 0$ and $\lambda >0$, we have 
$$
\nor{f_{1,\lambda} M^{\mu}_{\lambda} f_{2,\lambda}}{H^k(\R^n) \to H^k(\R^n)} \leq  C e^{-c \frac{\mu^2}{\lambda}} +C e^{-c \lambda}
, \qquad 
\nor{f_{1,\lambda} M^{\mu}_{\lambda} f_{2}}{H^k(\R^n) \to H^k(\R^n)} \leq  C e^{-c \frac{\mu^2}{\lambda}} +C e^{-c \lambda}.
$$ 
\end{lemma}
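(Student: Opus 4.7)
The plan is to reduce to Lemma \ref{lmFL1x} applied with the Fourier multiplier of symbol $m_\lambda^\mu(\xi_a):=m_\lambda(\xi_a/\mu)$. Since $f_1,f_2$ have disjoint supports with $\dist(\supp f_1,\supp f_2)\geq d$ and are in $L^\infty$, that lemma yields in the case $k=0$
\begin{equation*}
\nor{f_{1,\lambda} M^\mu_\lambda f_{2,\lambda}}{L^2\to L^2}\leq C\Big(\nor{\widehat{m_\lambda^\mu}}{L^1(|\eta_a|\geq d/3)}+e^{-c\lambda}\nor{\widehat{m_\lambda^\mu}}{L^1(\R^{n_a})}\Big),
\end{equation*}
together with the analogous estimate with $f_2$ in place of $f_{2,\lambda}$ via the second inequality of Lemma \ref{lmFL1x}. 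Everything thus reduces to bounding these two $L^1$-norms of $\widehat{m_\lambda^\mu}$.

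\textbf{Kernel estimate.} Write $m_\lambda=G_\lambda*_{\R^{n_a}}m$ with $G_\lambda(\xi_a):=(\lambda/(4\pi))^{n_a/2}e^{-\lambda|\xi_a|^2/4}$. The identity $\widehat{G_\lambda}(\eta_a)=e^{-|\eta_a|^2/\lambda}$ and the dilation rule for the Fourier transform give
\begin{equation*}
\widehat{m_\lambda^\mu}(\eta_a)=\mu^{n_a}\widehat{m_\lambda}(\mu \eta_a)=\mu^{n_a}e^{-\mu^2|\eta_a|^2/\lambda}\hat m(\mu \eta_a),
\end{equation*}
with $\hat m\in\mathcal{S}(\R^{n_a})$ since $m\in C^\infty_c$. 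The change of variable $z=\mu \eta_a$ then yields $\nor{\widehat{m_\lambda^\mu}}{L^1(\R^{n_a})}\leq \nor{\hat m}{L^1}\leq C$, and splitting $e^{-|z|^2/\lambda}\leq e^{-\mu^2d^2/(18\lambda)}e^{-|z|^2/(2\lambda)}$ on $\{|z|\geq \mu d/3\}$ produces
\begin{equation*}
\nor{\widehat{m_\lambda^\mu}}{L^1(|\eta_a|\geq d/3)}=\int_{|z|\geq \mu d/3}e^{-|z|^2/\lambda}|\hat m(z)|\,dz\leq Ce^{-c\mu^2/\lambda}.
\end{equation*}
Plugged in the first display, this proves both inequalities of the lemma for $k=0$.

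\textbf{Induction on $k$.} For $k\geq 1$ we follow the scheme already used in the proof of Lemma \ref{lemma: Gevrey Chambertinf}. Since $M^\mu_\lambda$ and the $\lambda$-regularization act only in $x_a$, they commute with every $\partial_i$ ($i=1,\dots,n$), and $\partial_i(f_{j,\lambda})=(\partial_i f_j)_\lambda$. Hence
\begin{equation*}
\partial_i(f_{1,\lambda} M^\mu_\lambda f_{2,\lambda} u)=(\partial_i f_1)_\lambda M^\mu_\lambda f_{2,\lambda} u + f_{1,\lambda} M^\mu_\lambda (\partial_i f_2)_\lambda u + f_{1,\lambda} M^\mu_\lambda f_{2,\lambda}\partial_i u,
\end{equation*}
and $\supp(\partial_i f_j)\subset\supp f_j$ preserves the disjoint-support hypothesis. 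Applying the induction hypothesis at order $k-1$ to each of the three terms gives the $H^k\to H^k$ bound; the second inequality, with $f_2$ in place of $f_{2,\lambda}$, is treated identically.

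\textbf{Main obstacle.} The only genuinely non-routine ingredient is the estimate of $\nor{\widehat{m_\lambda^\mu}}{L^1(|\eta_a|\geq d/3)}$; everything rests on the explicit factorization $\widehat{m_\lambda}=e^{-|\cdot|^2/\lambda}\hat m$ combined with the $\mu$-scaling, which converts the $\xi_a$-cutoff at scale $\mu$ into a Gaussian concentration at physical scale $\sqrt\lambda/\mu$, hence the exponent $\mu^2/\lambda$.
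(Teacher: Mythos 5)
Your proof is correct and follows essentially the same route as the paper: reduce to Lemma \ref{lmFL1x} with the multiplier $m_\lambda(\cdot/\mu)$, use the factorization $\widehat{m_\lambda^\mu}(\eta_a)=\mu^{n_a}e^{-\mu^2|\eta_a|^2/\lambda}\hat m(\mu\eta_a)$ to extract the $e^{-c\mu^2/\lambda}$ gain on $\{|\eta_a|\geq d/3\}$, and induct on $k$ via the Leibniz rule, noting that the support separation of the derivatives is preserved.
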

\bnp
We first prove both estimates for $k=0$, by using Lemma \ref{lmFL1x} with $m$ replaced by $m_b=m_{\lambda}\left(\frac{\cdot}{\mu}\right)$. The Fourier transform of $m_b$ is given by
$$
\hat{m}_b(\eta_a)
= \mu^{n_a} \mathcal{F}_a(m_\lambda)(\mu \eta_a)
=\mu^{n_a}e^{-\frac{|\eta_a|^2\mu^2}{\lambda}}\hat{m}\left(\eta_a\mu\right).
$$
As a consequence, we have 
\bna
 \nor{\hat{m}_b}{L^1(|\eta_a|\geq d/3)} \leq  e^{-\frac{d^2\mu^2}{9\lambda}} \nor{\hat{m}}{L^1(\R^{n_a})}
 \ena 
 and $\nor{\hat{m}_b}{L^1(\R^{n_a})} \leq \nor{\hat{m}}{L^1(\R^{n_a})}$, so that 
\bna
 \nor{\hat{m}_b}{L^1(|\eta_a|\geq d/3)} +Ce^{-c\lambda}\nor{\hat{m}_b}{L^1(\R^{n_a})}\leq  C e^{-\frac{d^2\mu^2}{9\lambda}}+Ce^{-c\lambda}.
\ena
Lemma~\ref{lmFL1x} then yields the sought result in the case $k=0$.

Again, for $k\geq 1$, the result is proved by induction noticing that 
$$
\nabla \left[f_{1,\lambda} M^{\mu}_{\lambda} f_{2,\lambda} u\right]=(\nabla f_{1})_{\lambda} M^{\mu}_{\lambda} f_{2,\lambda} u+ f_{1,\lambda}M^{\mu}_{\lambda} (\nabla f_{2})_{\lambda} u+f_{1,\lambda} M^{\mu}_{\lambda} f_{2,\lambda} \nabla u ,
$$ 
and using that the relative support properties of $\nabla f_i$ are preserved (see e.g. the proof of Lemma~\ref{lmsuppdisjoint}).
\enp

\begin{lemma}
\label{l:commutateurnon}
Let $k\in \N$ and let $f\in C^{\infty}_0(\R^n)$.  
Then there exist $C,c>0$ such that for all $\mu > 0$, $\lambda>0$ and $u \in H^k(\R^n)$, we have
\bnan
\label{chgtordrexi}
\nor{M^{\mu}_{\lambda} f_{\lambda} u }{k} \leq \nor{f_{\lambda} M^{2\mu}_{\lambda} u }{k}  + C \left(e^{-c \frac{\mu^2}{\lambda}} + e^{-c \lambda}\right)\nor{ u }{k} .
\enan
Moreover, for any $f_1 \in C^{\infty}(\R^n)$ bounded as well as all its derivatives, such that $f_1=1$ on a neighborhood of $\supp(f)$, for any $k \in \N$, there exist $C,c>0$ such that for all $\mu > 0$, $\lambda>0$ and $u \in H^k(\R^n)$, we have 
\bnan
\label{chgtordrex}
\nor{f_{\lambda} M^{\mu}_{\lambda} u }{k} \leq C\nor{M^{\mu}_{\lambda} f_{1,\lambda} u }{k}  + C \left(e^{-c \frac{\mu^2}{\lambda}} + e^{-c \lambda}\right)\nor{ u }{k} .
\enan
\end{lemma}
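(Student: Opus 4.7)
Both inequalities are ``swap'' statements: to within an exponentially small remainder of size $e^{-c\mu^2/\lambda} + e^{-c\lambda}$, the regularized Fourier multiplier $M^\mu_\lambda$ commutes with the regularized cutoff $f_\lambda$, at the price of slightly enlarging either the frequency scale (first estimate) or the physical support (second estimate). The proof in both cases is an insertion of a partition of unity followed by an application of one of the preliminary Lemmas~\ref{lemma: Gevrey Chambertinf} or~\ref{l:commgevrey2}, together with the uniform $H^k$-boundedness of the operators $M^\mu_\lambda$ and multiplication by $f_\lambda$. Recall for the latter that $\|m_\lambda\|_{L^\infty(\R^{n_a})} \leq \|m\|_{L^\infty(\R^{n_a})} = 1$ by~\eqref{normLismoothing2}, so $M^\mu_\lambda$ commutes with $\langle D\rangle^k$ and hence has operator norm $\leq 1$ on $H^k(\R^n)$ uniformly in $\mu,\lambda$; similarly, since $\partial^\alpha(f_\lambda) = (\partial^\alpha f)_\lambda$ and $f\in C^\infty_0$, multiplication by $f_\lambda$ is bounded on $H^k$ uniformly in $\lambda$.

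\textbf{Step 1: Proof of~\eqref{chgtordrexi}.} I split
$$
M^{\mu}_{\lambda} f_{\lambda} u = M^{\mu}_{\lambda} f_{\lambda} M^{2\mu}_{\lambda} u + M^{\mu}_{\lambda} f_{\lambda} (1-M^{2\mu}_{\lambda}) u .
$$
For the first term, the uniform $H^k$-bound $\|M^\mu_\lambda\|_{H^k\to H^k}\leq 1$ immediately gives $\|M^{\mu}_{\lambda} f_{\lambda} M^{2\mu}_{\lambda} u\|_k \leq \|f_{\lambda} M^{2\mu}_{\lambda} u\|_k$. For the second term, the first estimate of Lemma~\ref{lemma: Gevrey Chambertinf} applied to $f\in C^\infty_0(\R^n)$ gives
$$
\|M^{\mu}_{\lambda} f_{\lambda}(1-M^{2\mu}_{\lambda})\|_{H^k\to H^k} \leq C e^{-c\mu^2/\lambda} + C e^{-c\lambda},
$$
which provides the desired remainder term after applying to $u$.

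\textbf{Step 2: Proof of~\eqref{chgtordrex}.} Here I split
$$
f_{\lambda} M^{\mu}_{\lambda} u = f_{\lambda} M^{\mu}_{\lambda} f_{1,\lambda} u + f_{\lambda} M^{\mu}_{\lambda} (1-f_{1,\lambda}) u .
$$
For the first term, uniform boundedness of multiplication by $f_\lambda$ on $H^k$ yields $\|f_{\lambda} M^{\mu}_{\lambda} f_{1,\lambda} u\|_k \leq C \|M^{\mu}_{\lambda} f_{1,\lambda} u\|_k$. For the second term, the key observation is that $(1-f_1)_\lambda = 1 - f_{1,\lambda}$, because $1_\lambda=1$ (the Gaussian kernel has integral one). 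Since $f_1 \equiv 1$ on a neighborhood of $\supp(f)$, the function $1-f_1 \in C^\infty(\R^n)$ has bounded derivatives and $\dist(\supp(f), \supp(1-f_1)) > 0$. Lemma~\ref{l:commgevrey2} therefore gives
$$
\|f_\lambda M^\mu_\lambda (1-f_{1,\lambda})\|_{H^k \to H^k} = \|f_\lambda M^\mu_\lambda (1-f_1)_\lambda\|_{H^k \to H^k} \leq C e^{-c\mu^2/\lambda} + C e^{-c\lambda} ,
$$
which concludes the proof upon summing both contributions.

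\textbf{Main obstacle.} Once the right preliminary lemmas are identified, the calculation is routine. The only point requiring attention is the identity $(1-f_1)_\lambda = 1 - f_{1,\lambda}$, which allows one to legitimately apply Lemma~\ref{l:commgevrey2} (whose statement requires both cutoffs to appear in their regularized form) to the non-smoothed factor $1 - f_{1,\lambda}$ arising naturally from the partition of unity.
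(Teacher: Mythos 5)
Your proof is correct and follows essentially the same route as the paper: the identical decompositions $M^{\mu}_{\lambda} f_{\lambda} u = M^{\mu}_{\lambda} f_{\lambda} M^{2\mu}_{\lambda} u + M^{\mu}_{\lambda} f_{\lambda} (1-M^{2\mu}_{\lambda}) u$ and $f_{\lambda} M^{\mu}_{\lambda} u = f_{\lambda} M^{\mu}_{\lambda} f_{1,\lambda} u + f_{\lambda} M^{\mu}_{\lambda} (1-f_1)_{\lambda} u$, with Lemma~\ref{lemma: Gevrey Chambertinf} and Lemma~\ref{l:commgevrey2} handling the respective remainders and the uniform $H^k$-bounds on $M^\mu_\lambda$ and multiplication by $f_\lambda$ handling the main terms. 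Your explicit remark that $(1-f_1)_\lambda = 1 - f_{1,\lambda}$ is exactly the (implicit) identification the paper uses to invoke Lemma~\ref{l:commgevrey2}.
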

\bnp
We write
\bna
\nor{M^{\mu}_{\lambda} f_{\lambda}  u }{k} \leq \nor{M^{\mu}_{\lambda} f_{\lambda} M^{2\mu}_{\lambda} u }{k}  +\nor{M^{\mu}_{\lambda} f_{\lambda} (1-M^{2\mu}_{\lambda})u }{k} .
\ena
According to Lemma \ref{lemma: Gevrey Chambertinf}, we first have $\nor{M^{\mu}_{\lambda}  f_{\lambda}(1-M^{2\mu}_{\lambda})u }{k}\leq C \left(e^{-c \frac{\mu^2}{\lambda}} + e^{-c \lambda}\right)\nor{ u }{k}$.
The first term is simply estimated by $\nor{M^{\mu}_{\lambda} f_{\lambda} M^{2\mu}_{\lambda} u }{k}\leq \nor{ f_{\lambda}M^{2\mu}_{\lambda}u }{k}$, which proves~\eqref{chgtordrexi}.

Concerning the second part of the Lemma, we write
\bna
\nor{f_{\lambda} M^{\mu}_{\lambda} u }{k} \leq \nor{f_{\lambda} M^{\mu}_{\lambda} f_{1,\lambda} u }{k}+\nor{f_{\lambda} M^{\mu}_{\lambda}  (1-f_1)_{\lambda} u}{k} .
\ena
For the first term, we only have to remark that $\nor{f_{\lambda} M^{\mu}_{\lambda} f_{1,\lambda} u }{k}\leq C\nor{M^{\mu}_{\lambda} f_{1,\lambda} u }{k}$ uniformly in $\lambda$.
Then, using the assumption $\dist (\supp(f) , \supp(1-f_1)) >0$, Lemma \ref{l:commgevrey2} applies and yields
\bna
\nor{f_{\lambda} M^{\mu}_{\lambda}  (1-f_1)_{\lambda} u}{k}\leq C \left(e^{-c \frac{\mu^2}{\lambda}} + e^{-c \lambda}\right)\nor{ u }{k},
\ena
which eventually proves~\eqref{chgtordrex}.
\enp

\begin{lemma}
\label{lemma: Gevrey Chambertinsomme}
Let $k\in \N$ and $f \in C^{\infty}_0(\R^n)$. Assume $\supp (f) \subset \bigcup_{i\in I} U_i$ where $(U_i)_{i \in I}$ is a finite family of bounded open sets. Let $b_j\in C^{\infty}_0(\R^n)$ such that $b_j =1$ on a neighborhood of $\overline{U_i}$. Then, for any $k \in \N$, there exist $C,c>0$ such that for all $\mu > 0$, $\lambda>0$ and $u \in H^k(\R^n)$, we have 
\bna
\nor{M^{\mu}_{\lambda}f_{\lambda} u }{k}  \leq C\sum_{i}\nor{M^{2\mu}_{\lambda} (b_i)_{\lambda} u }{k} +  C \left(e^{-c \frac{\mu^2}{\lambda}} + e^{-c \lambda}\right)\nor{u }{k} .
\ena
\end{lemma}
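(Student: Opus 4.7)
The statement is essentially a ``localized'' refinement of Lemma~\ref{l:commutateurnon}: we want to split the factor $f_\lambda$ according to the cover $(U_i)_{i \in I}$ and then use the two estimates of Lemma~\ref{l:commutateurnon} to arrive at expressions involving $(b_i)_\lambda$. The convolution structure of the regularization $g \mapsto g_\lambda$ is linear, so splitting $f$ before regularizing is harmless.

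\medskip

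The plan is the following. First, since $\supp(f) \subset \bigcup_{i \in I} U_i$ with $I$ finite, one may pick a smooth partition of unity $(\chi_i)_{i \in I}$ with $\chi_i \in C^\infty_0(U_i)$ and $\sum_i \chi_i = 1$ on a neighborhood of $\supp(f)$. Then $f = \sum_{i\in I} f\chi_i$, and by linearity of the heat kernel regularization,
$$
f_\lambda = \sum_{i \in I} (f \chi_i)_\lambda ,
$$
so that, applying $M^\mu_\lambda$ and the triangle inequality,
$$
\nor{M^\mu_\lambda f_\lambda u}{k} \leq \sum_{i \in I} \nor{M^\mu_\lambda (f\chi_i)_\lambda u}{k} .
$$

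\medskip

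Next, I would apply inequality \eqref{chgtordrexi} of Lemma~\ref{l:commutateurnon} to each $f\chi_i \in C^\infty_0(\R^n)$, which exchanges the order of the multiplier and the cutoff at the cost of doubling the frequency threshold:
$$
\nor{M^\mu_\lambda (f\chi_i)_\lambda u}{k} \leq \nor{(f\chi_i)_\lambda M^{2\mu}_\lambda u}{k} + C\bigl( e^{-c\mu^2/\lambda} + e^{-c\lambda}\bigr) \nor{u}{k}.
$$
Now $\supp(f\chi_i) \subset U_i$ and $b_i \in C^\infty_0(\R^n)$ equals $1$ on a neighborhood of $\overline{U_i}$, hence on a neighborhood of $\supp(f\chi_i)$. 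This is exactly the hypothesis required to apply inequality \eqref{chgtordrex} of Lemma~\ref{l:commutateurnon} with $f \leftrightarrow f\chi_i$, $f_1 \leftrightarrow b_i$ and $\mu \leftrightarrow 2\mu$:
$$
\nor{(f\chi_i)_\lambda M^{2\mu}_\lambda u}{k} \leq C \nor{M^{2\mu}_\lambda (b_i)_\lambda u}{k} + C\bigl(e^{-c(2\mu)^2/\lambda} + e^{-c\lambda}\bigr)\nor{u}{k} .
$$

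\medskip

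Combining the two estimates and summing over the finite index set $I$ produces the desired inequality, after adjusting the constants $C$ and $c$ to absorb the finite number of terms (and noting $e^{-4c\mu^2/\lambda} \leq e^{-c\mu^2/\lambda}$). Since every step is an application of a lemma already proved in this section, there is no real obstacle here; the only point requiring a little care is to make sure that the partition of unity is chosen so that each $f\chi_i$ has support inside $U_i$ (so that $b_i = 1$ on a neighborhood of $\supp(f\chi_i)$), which is automatic once $\chi_i \in C^\infty_0(U_i)$.
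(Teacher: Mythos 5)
Your proof is correct and follows essentially the same route as the paper's: both rest on a partition of unity subordinate to the cover and on the two estimates \eqref{chgtordrexi} and \eqref{chgtordrex} of Lemma~\ref{l:commutateurnon}. The only (harmless) difference is the order of operations: you split $f=\sum_i f\chi_i$ exactly before regularizing and commuting, whereas the paper first commutes $M^\mu_\lambda$ past $f_\lambda$ and then splits, which forces it to invoke Lemma~\ref{lmsupHK} to replace $f_\lambda$ by $\sum_i (f_i)_\lambda$ up to an $e^{-c\lambda}$ error — a step your exact decomposition renders unnecessary.
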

\bnp
Applying the first item of Lemma \ref{l:commutateurnon} to $f$, we obtain
\bnan
\label{e:techniq-Mf-1}
\nor{M^{\mu}_{\lambda}f_{\lambda} u }{k}  \leq \nor{f_{\lambda}M^{2\mu}_{\lambda} u }{k}+  C \left(e^{-c \frac{\mu^2}{\lambda}} + e^{-c \lambda}\right)\nor{u }{k} .
\enan
Let now $(f_i)_{i \in I}$ be a smooth partition of unity of a neighborhood of $\supp (f)$ such that 
$$
\sum_{i \in I} f_i = 1 \text{ in a neighborhood of }\supp (f) , 
\quad \supp(f_i) \subset U_i ;  
\quad 0\leq f_i \leq 1 .
$$
Note that in particular, $b_i = 1$ in a neighborhood of $\supp(f_i)$. Using the second estimate of Lemma~\ref{lmsupHK}, we have
\bnan
\label{e:techniq-Mf-2}
\nor{f_{\lambda}M^{2\mu}_{\lambda} u }{k}  
\leq C\nor{\sum_i(f_i)_{\lambda}M^{2\mu}_{\lambda} u }{k}
+ C e^{-c\lambda}\nor{M^{2\mu}_{\lambda} u }{k}
\leq C\sum_i \nor{(f_i)_{\lambda}M^{2\mu}_{\lambda} u }{k}
+ C e^{-c\lambda}\nor{ u }{k} .
\enan
Using the second estimate in Lemma \ref{l:commutateurnon}, we then obtain
\bna
\nor{(f_i)_{\lambda}M^{2\mu}_{\lambda} u }{k}\leq C\nor{M^{2\mu}_{\lambda} (b_i)_{\lambda}u }{k}+  C \left(e^{-c \frac{\mu^2}{\lambda}} + e^{-c \lambda}\right)\nor{u }{k} ,
\ena
which, combined with~\eqref{e:techniq-Mf-1} and~\eqref{e:techniq-Mf-2} concludes the proof of the Lemma. 
\enp
 \begin{lemma}
\label{lmboundchipsi}
There exists $C>0$ such that for all $D\in\R$ and $\widetilde{\chi} \in L^\infty(\R)$ such that $\supp(\widetilde{\chi})\subset (-\infty,D]$, for all $\lambda , \tau >0$, we have
\bna
| e^{\tau z}\widetilde{\chi}_{\lambda}(z)| 
\leq C \nor{\widetilde{\chi}}{L^\infty(\R)}\left\langle \lambda\right\rangle^{1/2}e^{\frac{\lambda}{4}|\Im(z)|^2}e^{D\tau}e^{\frac{\tau^2}{\lambda}} , \quad \text{ for all }z \in \C ;
\ena
\bna
\nor{e^{\tau \psi}\widetilde{\chi}_{\lambda}(\psi)}{L^{\infty}(\R^n)}\leq C \nor{\widetilde{\chi}}{L^\infty(\R)}\left\langle \lambda\right\rangle^{1/2}e^{D\tau}e^{\frac{\tau^2}{\lambda}} , \quad \text{ for all } \psi \in C^0(\R^n ; \R) .
\ena
\end{lemma}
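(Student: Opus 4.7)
The plan is to use the explicit integral representation of $\widetilde{\chi}_{\lambda}$ and complete the square in the exponent to isolate the three factors $e^{D\tau}$, $e^{\tau^2/\lambda}$ and $e^{\lambda|\Im z|^2/4}$.

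\textbf{Step 1: Integral representation.} Since $\widetilde{\chi}\in L^{\infty}(\R)$ is supported in $(-\infty,D]$, the heat-regularized function extends to an entire function
\[
\widetilde{\chi}_{\lambda}(z)=\left(\frac{\lambda}{4\pi}\right)^{1/2}\int_{-\infty}^{D}\widetilde{\chi}(y)\,e^{-\frac{\lambda}{4}(z-y)^{2}}\,dy,\qquad z\in\C,
\]
where $(z-y)^{2}=(z-y)\cdot(z-y)$ is the complex-analytic square.

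\textbf{Step 2: Completing the square.} For any $y\in\R$ I would write $\tau z=\tau y+\tau(z-y)$ and use
\[
\tau(z-y)-\frac{\lambda}{4}(z-y)^{2}=-\frac{\lambda}{4}\left((z-y)-\frac{2\tau}{\lambda}\right)^{2}+\frac{\tau^{2}}{\lambda},
\]
which is a direct algebraic identity. Plugging this back into $e^{\tau z}\widetilde{\chi}_{\lambda}(z)$ gives
\[
e^{\tau z}\widetilde{\chi}_{\lambda}(z)=e^{\tau^{2}/\lambda}\left(\frac{\lambda}{4\pi}\right)^{1/2}\int_{-\infty}^{D}\widetilde{\chi}(y)\,e^{\tau y}\,e^{-\frac{\lambda}{4}\left((z-y)-\frac{2\tau}{\lambda}\right)^{2}}dy.
\]

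\textbf{Step 3: Use the support condition and take moduli.} Since the integration is restricted to $y\leq D$ and $\tau>0$, we have $e^{\tau y}\leq e^{\tau D}$. For the Gaussian term, writing $z-y-2\tau/\lambda=a+ib$ with $a=\Re z-y-2\tau/\lambda\in\R$ and $b=\Im z$, the elementary identity $\Re\,(a+ib)^{2}=a^{2}-b^{2}$ yields
\[
\left|e^{-\frac{\lambda}{4}\left((z-y)-\frac{2\tau}{\lambda}\right)^{2}}\right|=e^{\frac{\lambda}{4}|\Im z|^{2}}\,e^{-\frac{\lambda}{4}\left(\Re z-y-\frac{2\tau}{\lambda}\right)^{2}}.
\]
Putting these estimates together and enlarging the integration domain from $(-\infty,D]$ to $\R$,
\[
|e^{\tau z}\widetilde{\chi}_{\lambda}(z)|\leq\|\widetilde{\chi}\|_{L^{\infty}(\R)}\,e^{\tau^{2}/\lambda}\,e^{D\tau}\,e^{\frac{\lambda}{4}|\Im z|^{2}}\left(\frac{\lambda}{4\pi}\right)^{1/2}\int_{\R}e^{-\frac{\lambda}{4}s^{2}}ds.
\]
The last Gaussian integral equals $2\sqrt{\pi/\lambda}$, so the prefactor reduces to $1$, and in particular is bounded by $C\langle\lambda\rangle^{1/2}$. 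This proves the first inequality.

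\textbf{Step 4: The $L^{\infty}$ bound on $\R^{n}$.} For the second inequality, I would simply apply the first to $z=\psi(x)\in\R$ for each $x\in\R^{n}$, so that $\Im z=0$ and the factor $e^{\lambda|\Im z|^{2}/4}$ disappears; taking the supremum in $x$ yields the claim. No step is really an obstacle here: the only thing to be careful about is the complex square in the Gaussian and the algebraic completion of the square, everything else being bookkeeping.
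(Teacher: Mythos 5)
Your proof is correct, and it takes a slightly different (and in fact cleaner) route than the paper. The paper first invokes its standing pointwise bound $|\widetilde{\chi}_{\lambda}(z)|\leq C\nor{\widetilde{\chi}}{L^\infty}\lambda^{1/2}e^{\frac{\lambda}{4}|\Im z|^2}e^{-\frac{\lambda}{4}\dist(\Re z,\supp\widetilde{\chi})^2}$ (its estimate \eqref{estimalambdacompl}), splits into the cases $\Re(z)\leq D$ and $\Re(z)\geq D$, and in the second case computes $\sup_{s\geq D}\big(e^{\tau s}e^{-\frac{\lambda}{4}(s-D)^2}\big)=e^{D\tau}e^{\tau^2/\lambda}$ --- a completion of the square performed on the outer variable $\Re(z)$. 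You instead complete the square directly in the integration variable inside the convolution integral, which lets you pull out $e^{\tau^2/\lambda}$, bound $e^{\tau y}\leq e^{\tau D}$ on the support, and absorb the remaining shifted Gaussian into the normalizing prefactor. The two arguments hinge on the same algebraic identity, but yours avoids the case split and actually yields the bound with constant $C=1$, so the factor $\langle\lambda\rangle^{1/2}$ in the statement is not even needed; the paper's route costs that factor only because its intermediate estimate \eqref{estimalambdacompl} already carries a $\lambda^{1/2}$. All the individual steps you flag (the complex square identity $\Re\,(a+ib)^2=a^2-b^2$, the value of the Gaussian integral, the reduction of the second inequality to real $z$) check out.
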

\bnp
First, according to~\eqref{estimalambdacompl}, we have the estimate 
$$|\widetilde{\chi}_{\lambda}(z)|\leq C  \nor{\widetilde{\chi}}{L^\infty(\R)} \lambda^{1/2} 
e^{\frac{\lambda}{4}|\Im(z)|^2} e^{- \frac{\lambda}{4} \dist(\Re(z),\supp \widetilde{\chi})^2} , \quad \text{ for all } z \in \C .$$ 
Now, if $\Re(z)\leq D$, we use the bound $|e^{\tau z}|\leq e^{D\tau}$, which yields 
 $|e^{\tau z}\widetilde{\chi}_{\lambda}(z)|\leq \nor{\widetilde{\chi}}{L^\infty(\R)} e^{\frac{\lambda}{4}|\Im(z)|^2} e^{D\tau}$. 

Next, for $\Re(z)\geq D$, we have $\dist(\Re(z),\supp \widetilde{\chi})\geq \Re(z)-D\geq 0$, and 
\bna
|e^{\tau z}\widetilde{\chi}_{\lambda}(z)| & \leq &  e^{\tau \Re(z)}
C  \nor{\widetilde{\chi}}{L^\infty(\R)} \left\langle \lambda\right\rangle^{1/2} e^{\frac{\lambda}{4}|\Im(z)|^2}  e^{- \frac{\lambda}{4} (\Re(z)-D)^2} \\
& \leq & C  \nor{\widetilde{\chi}}{L^\infty(\R)} \left\langle \lambda\right\rangle^{1/2} e^{\frac{\lambda}{4}|\Im(z)|^2} 
\sup_{s \geq D}\left( e^{\tau s} e^{- \frac{\lambda}{4} (D-s)^2} \right).
\ena
Finally, we have 
$$
\sup_{s \geq D}\left( e^{\tau s} e^{- \frac{\lambda}{4} (D-s)^2} \right)
= \sup_{t \geq 0}\left( e^{\tau (D+ t) } e^{- \frac{\lambda}{4}t^2} \right)
=  e^{\tau D} \sup_{t \geq 0}\left(e^{t(\tau  - \frac{\lambda}{4}t)} \right)
= e^{D \tau}e^{\frac{\tau^2}{\lambda}},
 $$ 
 which concludes the proof of the first estimate of the lemma. The second estimate of the lemma follows from the first estimate for $z = s \in \R$ combined with
\bna
\nor{e^{\tau \psi}\widetilde{\chi}_{\lambda}(\psi)}{L^{\infty}(\R^n)}=\nor{e^{\tau s}\widetilde{\chi}_{\lambda}(s)}{L^{\infty}(\R)} .
\ena
\enp
\begin{lemma}
\label{lemma:expAmu}
There exist $C,c$ such that, for any $\eps ,\tau, \lambda, \mu,>0$, for any $k \in \N$, we have
\bna
\nor{e^{-\frac{\e|D_a|^2}{2\tau}}(1-M^{\mu}_{\lambda})}{H^k(\R^n)\to H^k(\R^n)} 
\leq e^{- \frac{\e \mu^2}{8\tau}} +C e^{-c \lambda}.
\ena
\end{lemma}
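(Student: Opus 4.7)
The operator $e^{-\frac{\e|D_a|^2}{2\tau}}(1-M^{\mu}_{\lambda})$ is a Fourier multiplier acting only in the variable $x_a$, with symbol
$$
a(\xi_a) := e^{-\frac{\e|\xi_a|^2}{2\tau}}\bigl(1-m_{\lambda}(\xi_a/\mu)\bigr) .
$$
Since it acts as $T_a\otimes \id$ on $L^2(\R^{n_a})\otimes L^2(\R^{n_b})$, it commutes with every derivative $\d_{x_a}^\alpha \d_{x_b}^\beta$, and therefore its $H^k(\R^n)\to H^k(\R^n)$ operator norm equals its $L^2\to L^2$ norm, which in turn equals $\nor{a}{L^{\infty}(\R^{n_a})}$. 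So the plan reduces to proving the pointwise bound $|a(\xi_a)| \leq e^{-\e\mu^2/(8\tau)} + Ce^{-c\lambda}$ on $\R^{n_a}$, which I would do by splitting $\R^{n_a}$ into the two regions $\{|\xi_a|\geq \mu/2\}$ and $\{|\xi_a|\leq \mu/2\}$.

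On the high-frequency region $|\xi_a|\geq \mu/2$, the Gaussian factor gives $e^{-\frac{\e|\xi_a|^2}{2\tau}}\leq e^{-\frac{\e\mu^2}{8\tau}}$, and the second factor is controlled using that $0\leq m\leq 1$ (so $0\leq m_{\lambda}\leq \nor{m}{L^\infty}\leq 1$ by~\eqref{normLismoothing2}), yielding $|1-m_{\lambda}(\xi_a/\mu)|\leq 1$. Multiplying gives the first term $e^{-\e\mu^2/(8\tau)}$.

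On the low-frequency region $|\xi_a|\leq \mu/2$, I would exploit that regularization is linear and preserves constants, so $1-m_{\lambda}=(1-m)_{\lambda}$, where the function $(1-m)$ is bounded and supported in $\{|\eta_a|\geq 3/4\}$. Hence $\dist(\xi_a/\mu,\supp(1-m))\geq 3/4-1/2=1/4$ whenever $|\xi_a/\mu|\leq 1/2$, and the defining convolution gives
$$
\bigl|(1-m)_{\lambda}(\xi_a/\mu)\bigr|
\leq \Bigl(\frac{\lambda}{4\pi}\Bigr)^{n_a/2}\int_{|\eta_a|\geq 3/4} e^{-\frac{\lambda}{4}|\xi_a/\mu-\eta_a|^2} d\eta_a
\leq \Bigl(\frac{\lambda}{4\pi}\Bigr)^{n_a/2}\int_{|y|\geq 1/4} e^{-\frac{\lambda}{4}|y|^2} dy .
$$
By the Gaussian tail estimate~\eqref{e:estimgausshorsboul}, the right-hand side is bounded by $Ce^{-c\lambda}$ for some $c>0$ uniformly in $\lambda\geq 0$. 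Combining with the trivial bound $e^{-\e|\xi_a|^2/(2\tau)}\leq 1$ produces the second term $Ce^{-c\lambda}$.

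There is no real obstacle here; the only point requiring some care is that the bounds $0\leq m_{\lambda}\leq 1$ and the identity $1-m_{\lambda}=(1-m)_{\lambda}$ (which are direct consequences of the definition and the positivity remark stated before~\eqref{estimalambdacompl}) let the Gaussian tail~\eqref{e:estimgausshorsboul} be applied directly without needing the more delicate complex estimate~\eqref{estimalambdacompl}. Summing the two contributions concludes.
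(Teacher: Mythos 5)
Your proof is correct and takes essentially the same route as the paper's: reduce to the $L^\infty$ norm of the multiplier symbol, split at $|\xi_a|=\mu/2$, use the Gaussian weight on the high-frequency region and the disjoint-support Gaussian-tail bound on the low-frequency region. The only cosmetic difference is that you unfold the convolution estimate explicitly where the paper simply invokes Lemma~\ref{lmsuppdisjoint} with $f_1=1-m$ and $f_2=\mathds{1}_{|s|\leq 1/2}$.
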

\bnp
Since the operator $e^{-\frac{\e|D_a|^2}{2\tau}}(1-M^{\mu}_{\lambda})$ is a Fourier multiplier, we are left to estimate\\ $\sup_{\xi_a \in \R^{n_a}} |e^{-\frac{\e|\xi_a|^2}{2\tau}}(1-m_{\lambda}(\frac{\xi_a}{\mu}))|$. Recall that $m \in C^\infty_0(\R^{n_a}; [0,1])$ is a radial function that we identify below with a function $m  = m(s) \in C^\infty_0(\R^+)$, satisfying $\supp(m) \subset [0,1)$ and $m=1$ on $[0,3/4)$.
We distinguish the following two cases:
\begin{itemize}
\item If $|s|\leq \mu/2$, Lemma \ref{lmsuppdisjoint} applied with $f_1=(1-m_{\lambda}(s))$ and $f_2=\mathds{1}_{|s|\leq 1/2}$ implies $ |\mathds{1}_{|s|\leq \mu/2} (1-m_{\lambda}(\frac{s}{\mu}))|\leq Ce^{-c\lambda}$ uniformly with respect to $\lambda , \mu >0$;
\item If $|s|\geq \mu/2$, we simply have $|\mathds{1}_{|s|\geq \mu/2}  e^{-\frac{\e|s|^2}{2\tau}}(1-m_{\lambda}(\frac{s}{\mu}))| \leq e^{-\frac{\e \mu^2}{8\tau}}$.
\end{itemize}
Combining these two estimates concludes the proof of the lemma.
\enp

\subsubsection{Some more involved preliminary estimates}

We will need the estimate of the following Lemma. 
\begin{lemma}
\label{lmdecrFourier}
Let $\psi$ be a smooth real valued function on $\R^n$, which is a quadratic polynomial in the variable $x_a \in \R^{n_a}$, let $R_\sigma>0$, and $\sigma \in C^\infty_c(B_{\R^n}(0, R_\sigma))$. Let $\chi  \in C^\infty_0(\R)$ with $\supp(\chi) \subset (-\infty, 1)$, and  $\tilde\chi \in C^\infty(\R)$ such that $\tilde\chi = 1$ on a neighborhood of $(-\infty ,\frac32 )$,  $\supp(\tilde\chi) \subset (-\infty, 2)$, and set $\chi_\delta (s) := \chi  (s/\delta)$, $\tilde\chi_\delta (s) := \tilde\chi  (s/\delta)$.  
Let $f\in C^\infty_0(\R^{n})$ be {\em real analytic} in the variable $x_a$ in a neighborhood of $\overline{B}_{\R^{n_a}}(0,R_\sigma)$ and define
\bna
\label{e:def-g}
g=e^{\tau\psi}\chi_{\delta,\lambda}(\psi) \tilde\chi_{\delta}(\psi)f \sigma_\lambda \quad  \in C^\infty_0(\R^{n}).
\ena
Then, there exists $c_0, c_1 >0$  such that for all $N \in \N$ and $\beta \in \N^{n_b}$, there exist $C >0$ such that for all $\delta>0$, there is $\eps_0>0$, so that for any $\lambda\geq 1$, $\tau >0$, and $0<\e<\e_0$, we have
\bna
|\d_{x_b}^{\beta} \mathcal{F}_a(g)(\xi_a,x_b)|& \leq & C \left< \xi_a\right>^{-N} (\tau + \delta^{-1} + 1)^{N+ |\beta|}\lambda^{(n_a+1)/2}e^{\de \tau}\\
&& \times \left( e^{\frac{\tau^2}{\lambda}}e^{c_1 \e^2\lambda}e^{-c_0\e|\xi_a|}+ e^{\frac{\tau^2}{\lambda}} e^{-c_0\lambda} 
+  e^{c_1\lambda \e^2}   e^{\de \tau} e^{- c_0 \delta^2 \lambda} 
\right).
\ena
In particular, for all $\delta>0$, $N\in \N$,  $\beta \in \N^{n_b}$, there is $C,c,\e_0>0$, so that for any $\lambda, \tau \geq 1$,  and $0<\e<\e_0$, we have
\bna
|\d_{x_b}^{\beta} \mathcal{F}_a(g)(\xi_a,x_b)|\leq C \left< \xi_a\right>^{-N} \tau^{N+ |\beta|}\lambda^{(n_a+1)/2}e^{\de \tau}e^{\frac{\tau^2}{\lambda}}\left( e^{C \e^2\lambda}e^{-c\e|\xi_a|}
+    e^{\de \tau} e^{-c\lambda} 
\right).
\ena
\end{lemma}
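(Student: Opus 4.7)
The plan is to write $g = g_1 - g_2$ with
\[
g_1 := e^{\tau\psi}\chi_{\delta,\lambda}(\psi)\, f \sigma_\lambda, \qquad g_2 := e^{\tau\psi}\chi_{\delta,\lambda}(\psi)(1-\tilde\chi_\delta(\psi))\, f \sigma_\lambda,
\]
and to estimate $\mathcal{F}_a(g_1)$ and $\mathcal{F}_a(g_2)$ separately. The first two terms of the claimed bound will come from $g_1$ (a principal shifted-contour contribution and a remainder reflecting that $f$ is only real analytic in $x_a$ on a neighborhood of $\overline{B}_{\R^{n_a}}(0,R_\sigma)$), and the third from $g_2$, where the non-analytic factor $1-\tilde\chi_\delta(\psi)$ prevents contour deformation. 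In both pieces, the polynomial decay $\langle\xi_a\rangle^{-N}$ and the prefactor $(\tau+\delta^{-1}+1)^{N+|\beta|}$ are obtained by integrating by parts in $x_a$ (applied after bounding the integrand pointwise or on the shifted contour) and differentiating $\partial_{x_b}^\beta$: each such derivative falling on $e^{\tau\psi}$, $\chi_{\delta,\lambda}(\psi)$, $\tilde\chi_\delta(\psi)$, $f$ or $\sigma_\lambda$ produces at worst a factor of order $\tau+\delta^{-1}+1$ times a function of the same form.

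For $g_1$, every factor extends holomorphically in $x_a$: $e^{\tau\psi}$ is entire (as $\psi$ is quadratic in $x_a$), $\chi_{\delta,\lambda}(\psi)$ and $\sigma_\lambda$ are entire with the complex bounds~\eqref{estimalambdacompl}--\eqref{estimalambdacompl-bis}, and by compactness $f$ extends holomorphically on $\{\Re x_a\in\vois(\overline{B}(0,R_\sigma),\eta_0),\,|\Im x_a|<\rho_0\}$ for some $\eta_0,\rho_0>0$. Choosing $\e_0$ with $c_0\e_0<\rho_0$ and setting $\omega=\xi_a/|\xi_a|$, we split the $x_a$-integral into $|x_a|\le R_\sigma+\eta_0/2$ and $|x_a|>R_\sigma+\eta_0/2$. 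The outer region is controlled by combining Lemma~\ref{lmboundchipsi} (which gives $\|e^{\tau\psi}\chi_{\delta,\lambda}(\psi)\|_{L^\infty}\le C\lambda^{1/2}e^{\delta\tau}e^{\tau^2/\lambda}$) with the Gaussian decay of $\sigma_\lambda$ away from $\supp\sigma$ from~\eqref{estimalambdacompl-bis}, producing the second bound term $e^{\tau^2/\lambda}e^{-c_0\lambda}$. On the inner region, deform the $x_a$-contour into $x_a\mapsto x_a-ic_0\e\omega$, which stays in the analyticity tube: Cauchy's theorem converts the phase factor into $e^{-c_0\e|\xi_a|}$, while the side contributions of the deformation at $|x_a|=R_\sigma+\eta_0/2$ are again $O(e^{-c_0\lambda})$ by the same Gaussian decay. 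On the shifted contour, $|e^{\tau\psi(z_a,x_b)}|=e^{\tau\psi(x_a,x_b)+O(\tau\e^2)}$; combining with the complex bound~\eqref{estimalambdacompl} for $\chi_{\delta,\lambda}$ through a completing-the-square argument as in Lemma~\ref{lmboundchipsi} yields a factor $C\lambda^{1/2}e^{\delta\tau}e^{\tau^2/\lambda}e^{C\e^2\lambda}$, while~\eqref{estimalambdacompl-bis} gives $|\sigma_\lambda(z_a,x_b)|\le C\lambda^{n_a/2}e^{C\e^2\lambda}e^{-\lambda\dist(x_a,\supp\sigma)^2/4}$ with integrable residual Gaussian. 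Collecting these bounds (and absorbing $e^{O(\tau\e^2)}$ into $e^{c_1\e^2\lambda}$ when $\tau\le\lambda$, the opposite regime being already controlled by the $e^{\tau^2/\lambda}$ factor) produces the first bound term $\lambda^{(n_a+1)/2}e^{\tau^2/\lambda}e^{c_1\e^2\lambda}e^{-c_0\e|\xi_a|}e^{\delta\tau}$.

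For $g_2$, we work pointwise. On $\supp(1-\tilde\chi_\delta(\psi))$ we have $\psi\ge 3\delta/2$, hence $\dist(\psi,\supp\chi_\delta)\ge\delta/2$, and~\eqref{estimalambdacompl-bis} applied to the one-variable regularization of $\chi_\delta$ (compactly supported in $(-\infty,\delta]$) yields $|\chi_{\delta,\lambda}(\psi(x))|\le C\lambda^{1/2}\delta\, e^{-\lambda(\psi(x)-\delta)^2/4}$. Splitting $\lambda=\lambda'+\lambda''$, completing the square in $\tau\psi-\lambda'(\psi-\delta)^2/4$, and using $(\psi-\delta)^2\ge\delta^2/4$ on the support, we obtain
\[
\bigl|e^{\tau\psi}\chi_{\delta,\lambda}(\psi)(1-\tilde\chi_\delta(\psi))\bigr|\le C\lambda^{1/2}\delta\, e^{\delta\tau+\tau^2/\lambda'-\lambda''\delta^2/16}.
\]
When $\tau\le c\e\lambda$, the choice $\lambda'=\tau^2/(c_1\e^2\lambda)$ (giving $\lambda''\sim\lambda$) yields the third bound term $e^{c_1\e^2\lambda}e^{\delta\tau}e^{-c_0\delta^2\lambda}$; in the complementary regime $\tau>c\e\lambda$, the trivial choice $\lambda'=\lambda$, $\lambda''=0$ gives a bound $e^{\delta\tau}e^{\tau^2/\lambda}$ already absorbed by the first term of the claim. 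Integration by parts in $x_a$ and differentiation $\partial_{x_b}^\beta$ then supply the missing $\langle\xi_a\rangle^{-N}(\tau+\delta^{-1}+1)^{N+|\beta|}$ prefactor. The main technical point is legitimizing the $x_a$-contour deformation for $g_1$ despite the only local analyticity of $f$, which is handled precisely by the spatial splitting together with the Gaussian decay of $\sigma_\lambda$ outside $\supp\sigma$.
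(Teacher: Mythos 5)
Your decomposition $g = g_1 - g_2$, with $g_1 = e^{\tau\psi}\chi_{\delta,\lambda}(\psi)f\sigma_\lambda$ and $g_2$ carrying the factor $1-\tilde\chi_\delta(\psi)$, is genuinely different from the paper's proof, which keeps all factors together and instead splits the domain of the $x_a$-integral into $\{|x_a|> R_\sigma+R_f\}$, $\{|x_a|\le R_\sigma+R_f,\ \psi>\tfrac43\delta\}$ and $\{|x_a|\le R_\sigma+R_f,\ \psi\le\tfrac43\delta\}$, deforming the contour only over the last region, where $\tilde\chi_\delta(\psi(z_a,x_b))\equiv 1$ so that $g$ itself is holomorphic. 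Your treatment of $g_1$ is sound and in fact slightly simpler: since $g_1$ contains no $\tilde\chi_\delta$, it is holomorphic in the whole analyticity tube of $f$, the deformation needs no restriction to a sublevel set of $\psi$, and the boundary segments are controlled by the Gaussian decay of $\sigma_\lambda$ alone, just as in the paper's estimates of $I_2$ and of the segments $\gamma_L,\gamma_R$.

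The gap is in $g_2$. On $\supp(1-\tilde\chi_\delta(\psi))$ you only know $\psi\ge\tfrac32\delta$; $\psi$ is \emph{not} bounded above by $2\delta$ there --- that cap is exactly what the discarded factor $\tilde\chi_\delta(\psi)$ provides, since $\supp\tilde\chi_\delta\subset(-\infty,2\delta)$. Consequently your method can only produce $\sup_{s\ge 3\delta/2}e^{\tau s}\,\lambda^{1/2}e^{-\lambda(s-\delta)^2/4}$, and for $\tau\ge\lambda\delta/4$ this supremum is attained at the interior point $s=\delta+2\tau/\lambda$ and equals $\lambda^{1/2}e^{\delta\tau}e^{\tau^2/\lambda}$, with no residual decay whatsoever. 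The claim that this is ``already absorbed by the first term'' is false: the first term carries the factor $e^{-c_0\e|\xi_a|}$, which is arbitrarily small for large $|\xi_a|$; the second carries $e^{-c_0\lambda}$; and domination by the third would require $e^{\tau^2/\lambda}\le e^{\delta\tau}e^{c_1\e^2\lambda}e^{-c_0\delta^2\lambda}$, which fails for $\tau\sim\delta\lambda$ once $\e$ is small (and your ``complementary regime'' $\tau>c\e\lambda$ does contain $\tau\sim\delta\lambda$ for small $\e$; the lemma is moreover claimed for all $\tau>0$, and $\tau\sim\delta\lambda$ is exactly the regime used in the final optimization of Section~\ref{s:complex-analysis-arg}). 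So your bound on $\mathcal{F}_a(g_2)$ does not imply the stated estimate when $\tau\gtrsim\delta\lambda$. The fix is the paper's: keep $\tilde\chi_\delta(\psi)$ attached to the non-holomorphic piece, so that on $\{\psi\ge\tfrac43\delta\}$ one has simultaneously $e^{\tau\psi}\le e^{2\delta\tau}$ and $|\chi_{\delta,\lambda}(\psi)|\le C\lambda^{1/2}e^{-c\delta^2\lambda}$, which yields the third term directly with no $e^{\tau^2/\lambda}$ factor.
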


\bnp[Proof of Lemma \ref{lmdecrFourier}]

First, we prove the result for $N=0$ and $\beta =0$ (the other cases shall be obtained by differentiating $g$).

Let us denote by $R_f'>0$ a real number such that $\supp(f)\subset B(0,R_f')$ and $K_b\subset B_{\R^{n_b}}(0,R_f')$ the projection in the variable $x_b$ of the support of $f$. $K_b$ is compact since $f$ has compact support. 
The function $f$ being real analytic in the variable $x_a$ in a neighborhood of the compact set $\overline{B}_{\R^{n_a}}(0,R_\sigma)$, there exists $R_f >0$ such that $f$ can be extended in an analytic way in a neighborhood of $z_a \in \overline{B}_{\R^{n_a}}(0,R_\sigma+R_f) + i  \overline{B}_{\R^{n_a}}(0, R_f)$, uniformly for $x_b\in K_b$. Note that $z_a$ denotes the complex variable associated to $x_a$, and we can also impose that $0 < R_\sigma + R_f < R_f'$.

Notice also that we can extend $\tilde\chi$ by $1$ (hence analytically) on a neighborhood of $(-\infty ,\frac32 ) + i \R$.
Moreover, since $\psi$ is quadratic in $x_a$, there exists $\eps_0 = \e_0(\delta)>0$ such that 
\bnan
\label{e:setsetsetooo}
\left( \psi(\Re(z_a),x_b) \leq \frac43 \delta  < \frac32 \delta , \quad |\Im(z_a)| \leq \e_0 R_f  , \quad x_b \in K_b \right)
\Longrightarrow \Re (\psi(z_a , x_b)) \leq \frac32 \delta , \\
\label{e:setsetset}
\left( \psi(\Re(z_a),x_b) = \frac43 \delta , \quad |\Im(z_a)| \leq \e_0 R_f  , \quad x_b \in K_b \right)
\Longrightarrow  \Re (\psi(z_a , x_b)) \geq \frac54 \delta .
\enan
In particular, $\tilde\chi(\psi(z_a , x_b)) =1$ on $$
\left( \psi(\Re(z_a),x_b) \leq \frac43 \delta  < \frac32 \delta , \quad |\Im(z_a)| \leq \e_0 R_f  , \quad x_b \in K_b \right) .
$$
As a consequence, given $x_b \in \R^{n_b}$, the function $$z_a \mapsto \chi_{\delta,\lambda}(\psi(z_a , x_b)) \tilde\chi_{\delta}(\psi(z_a , x_b))$$ is an analytic function on a neighborhood of $\{x_a \in \R^{n_a} , \psi(x_a , x_b) \leq \frac43 \delta \} + i\overline{B}_{\R^{n_a}}(0, \eps_0 R_f)$. Hence, $z_a \mapsto g(z_a , x_b)$ is holomorphic in a neighborhood of
$$
\mathcal{A}_{x_b}(\eps_0) : = \left( \{\psi(x_a , x_b) \leq \frac43 \delta \} \cap  \overline{B}_{\R^{n_a}}(0,R_\sigma+R_f) \right) + i  \overline{B}_{\R^{n_a}}(0, \eps_0 R_f) .
$$
The plan of the proof is first to estimate $g$ in the complex domain, and then bound its Fourier transform using a complex deformation. We use the analyticity inside of $\mathcal{A}_{x_b}(\eps_0)$ and the smallness elsewhere on the real domain. 

\medskip
\noindent
\textbf{Step 1: uniform estimates on the function $g$.} 
We estimate separately $f \sigma_\lambda$ and $e^{\tau\psi}\chi_{\delta,\lambda}(\psi)\tilde\chi_{\delta}(\psi)$, and then deduce estimates for $g$.

 According to the basic estimate~\eqref{estimalambdacompl-bis} for $\sigma_\lambda$, we have, uniformly for $x_b \in \R^{n_b}$
\bna
|(f \sigma_\lambda )(z_a, x_b)| & \leq &  C\lambda^{n_a/2}e^{\frac{\lambda}{4}|\Im z_a|^2} e^{- \frac{\lambda}{4} \dist(\Re z_a,\supp \sigma ( \cdot , x_b))^2}, 
\quad z_a \in B_{\R^{n_a}}(0,R_\sigma+R_f) +  i  B_{\R^{n_a}}(0, R_f) , 
\ena
where the constant $C$ depends only on $\|f\|_{L^\infty}$ (on the previous complex domain), $\|\sigma\|_{L^\infty}$ and $R_f'$.

In particular, we have for any $\eps \in [0,1]$,
\bnan
|(f \sigma_\lambda )(z_a, x_b)| & \leq & C\lambda^{n_a/2}e^{\frac{\lambda}{4}\e^2R_f^2} , 
\quad z_a \in B_{\R^{n_a}}(0,R_\sigma+R_f) + i  B_{\R^{n_a}}(0, \eps R_f) ,  \quad x_b \in \R^{n_b} .
\label{estimup}
\enan
We now notice that 
\bnan
\label{e:sups-ball}
\dist(x_a,\supp \sigma ( \cdot , x_b)) \geq \dist((x_a,x_b) , B(0,R_\sigma)) \geq |x_a| - R_\sigma , \quad \text{ for }|x_a|\geq R_\sigma+R_f .
\enan
As a first consequence, we have $\dist(x_a,\supp \sigma ( \cdot , x_b)) \geq R_f$ if $|x_a| =R_\sigma+R_f $, so that for any\\ $\eps \in [0,1]$, we obtain, uniformly for $x_b \in \R^{n_b}$
\bnan
|(f \sigma_\lambda )(z_a, x_b)| & \leq & C\lambda^{n_a/2}e^{\frac{\lambda}{4}\e^2R_f^2} e^{-\frac{\lambda}{4}R_f^2}\leq C\lambda^{n_a/2}e^{\frac{\lambda}{4}(\e^2-1)R_f^2},\label{estimside}\\ 
&&\quad |\Im(z_a)| \leq \e R_f , |\Re(z_a)|=R_\sigma+R_f \nonumber
\enan
Using now the estimate~\eqref{estimalambdacompl-bis} for $\sigma_\lambda$ on the real domain together with the boundedness of $f$ and \eqref{e:sups-ball}, we obtain, uniformly for $x_b \in \R^{n_b}$
\bnan
\label{estimbreal}
|(f \sigma_\lambda )(x_a, x_b)| & \leq &  C\lambda^{n_a/2}  e^{- \frac{\lambda}{4} \dist(x_a,\supp \sigma ( \cdot , x_b))^2} \nonumber\\
& \leq & C\lambda^{n_a/2}  e^{- \frac{\lambda}{4} (|x_a|-R_\sigma)^2}, 
\quad x_a \in \R^{n_a} ,\quad |x_a|\geq R_\sigma+R_f .
\enan

We now estimate the term $e^{\tau \psi} \chi_{\delta , \lambda}(\psi) \tilde\chi_{\delta}(\psi)$ in parts of the complex domain. 

First, on the real domain, we have
$$
|e^{\tau s}\chi_{\delta,\lambda}(s)  \tilde\chi_{\delta}(s)|
 \leq e^{2 \delta \tau} |\chi_{\delta,\lambda}(s)  \tilde\chi_{\delta}(s)| 
\leq C \lambda^\frac12 e^{2 \delta \tau} e^{-c \delta^2\lambda } , \quad s \geq \frac43 \delta ,
$$
after having used~\eqref{e:estimgaussdistE}, where $c$ is a numerical constant. As a consequence, we obtain
\bnan
\label{estimpsicompl-reel}
|e^{\tau \psi (x_a,x_b)}\chi_{\delta,\lambda}( \psi (x_a,x_b))  \tilde\chi_{\delta}( \psi (z_a,x_b))| 
\leq C \lambda^\frac12 e^{2 \delta \tau} e^{-c \delta^2\lambda} ,
\quad \text{ if }\psi(x_a,x_b) \geq \frac43 \delta  .
\enan
Next, for $z\in \C$,  using Lemma~\ref{lmboundchipsi}, there is $C>0$ such that for all $\delta \in\R$ and all $\lambda\geq 1$ , $\tau >0$, we have
\bnan
\label{regpsireal}
|e^{\tau z}\chi_{\delta,\lambda}(z)|
& \leq & C\lambda^{1/2}e^{\frac{\lambda}{4}(\Im z)^2}e^{\de \tau}e^{\frac{\tau^2}{\lambda}},
\quad \text{ for all }  z \in \C .
\enan
Using that $\psi$ is a quadratic polynomial in the variable $x_a$, with real coefficients, we have
$$
|\Im(\psi(z_a,x_b)) | \leq C |\Re(z_a)||\Im(z_a)| + C(K_b)  |\Im(z_a)| , \quad (z_a,x_b) \in \C^{n_a} \times K_b ,
$$
where we have used the fact that $K_b$ is compact. As a consequence, there is a constant $C_0 = C_0(\psi,R_\sigma,R_f,K_b) >0$ such that 
$$
|\Im(\psi(z_a,x_b)) | \leq \e C_0 , \quad \text{for }z_a \in B_{\R^{n_a}}(0,R_\sigma+R_f) + i  B_{\R^{n_a}}(0, \eps R_f) , \quad x_b \in K_b .
$$
Hence, using~\eqref{regpsireal}, we obtain, for all $\eps \in (0,\eps_0)$

\bnan
\label{estimpsicompl}
|e^{\tau \psi (z_a,x_b)}\chi_{\delta,\lambda}( \psi (z_a,x_b)) \tilde\chi_{\delta}( \psi (z_a,x_b)) |
&\leq & C\lambda^{1/2}e^{\lambda \frac{C_0^2 \e^2}{4}}e^{\de \tau}e^{\frac{\tau^2}{\lambda}} 
, \quad  x_b \in K_b , z_a \in \mathcal{A}_{x_b}(\eps) . 
\enan
According to~\eqref{estimalambdacompl}, we also have
\bna
\left|\chi_{\delta ,\lambda}(z)\right|
& \leq & C \lambda^{\frac12}
 e^{\frac{\lambda}{4}|\Im(z)|^2} 
e^{-\frac{\lambda}{4} \dist(\Re(z) , \supp(\chi_{\delta} ))^2}
\leq C \lambda^{\frac12}
 e^{\frac{\lambda}{4}|\Im(z)|^2} 
e^{- c\delta^2 \lambda} ,  \quad \text{ on } \Re(z) \geq \frac54 \delta ,
\ena
where $c$ is a numerical constant. Using~\eqref{e:setsetset}, this yields
\bna
\left|\chi_{\delta ,\lambda}(\psi(z_a, x_b))\right|
\leq C \lambda^{\frac12}
 e^{\frac{C_0^2 \eps^2}{4} \lambda} 
e^{- c\delta^2 \lambda} ,  \quad x_b \in K_b , z_a \in \mathcal{A}_{x_b}(\eps) , \psi(\Re(z_a),x_b) = \frac43 \delta ,
\ena
and, with~\eqref{e:setsetsetooo}, this implies
\bnan
\label{e:bordpsi}
|e^{\tau \psi (z_a,x_b)}\chi_{\delta,\lambda}( \psi (z_a,x_b)) \tilde\chi_{\delta}( \psi (z_a,x_b)) |
\leq C\lambda^{1/2}e^{\frac{C_0^2 \e^2}{4}\lambda }e^{\frac32\de \tau} e^{- c\delta^2 \lambda} 
, \nonumber\\
  x_b \in K_b , z_a \in \mathcal{A}_{x_b}(\eps) , \psi(\Re(z_a),x_b) = \frac43 \delta .
\enan

Let us finally gather all estimates obtained on the function $g$.
Multiplying \eqref{estimpsicompl} with \eqref{estimup} and \eqref{estimside}, there is a constant $C_1>0$ independent on $\lambda$, $\mu$, $\tau$, $\delta$, $\e$ such that, for any $\eps \in (0,\eps_0)$, 
\bnan
|g(z_a, x_b)|  & \leq & C\lambda^{(n_a+1)/2} e^{C_1\lambda \e^2}e^{\de \tau}e^{\frac{\tau^2}{\lambda}}, 
\quad x_b \in K_b , z_a \in \mathcal{A}_{x_b}(\eps) ,
\label{estimgrectangle}\\
\label{estimsideg}
|g(z_a, x_b)|  & \leq & C\lambda^{(n_a+1)/2}e^{\lambda (-\frac{R_f^2}{4}+C_1\e^2)}e^{\de \tau}e^{\frac{\tau^2}{\lambda}}, 
\quad  x_b \in K_b , z_a \in \mathcal{A}_{x_b}(\eps) , |\Re(z_a)|=R_\sigma+R_f. 
\enan
Next, multiplying~\eqref{e:bordpsi} and~\eqref{estimup} we also have
\bnan
\label{estimgrectangle-psi}
|g(z_a, x_b)|  & \leq & C\lambda^{(n_a+1)/2} e^{C_1 \e^2\lambda }e^{\frac32\de \tau} e^{- c\delta^2 \lambda} 
, 
\quad  x_b \in K_b , z_a \in \mathcal{A}_{x_b}(\eps) , \psi(\Re(z_a) , x_b) = \frac43 \delta. 
\enan
Combining \eqref{estimbreal} with \eqref{regpsireal}, and rewriting \eqref{estimpsicompl-reel}, 
we also have on the real domain
\bnan
\label{estimgloin}
|g(x_a, x_b)| & \leq &  C\lambda^{(n_a+1)/2}  e^{\de \tau}e^{\frac{\tau^2}{\lambda}}e^{- \frac{\lambda}{4} (|x_a|-R_\sigma)^2}
, \quad  x_a \in \R^{n_a}  ,  |x_a|\geq R_\sigma+R_f , x_b \in \R^{n_b} ,\\
\label{estimgloin-psi}
|g(x_a, x_b)| & \leq &  C  \lambda^\frac12 e^{2 \delta \tau} e^{- c \delta^2\lambda}
, \quad  x_a \in \R^{n_a}  ,  x_b \in \R^{n_b} , \psi(x_a,x_b) \geq \frac43 \delta.
\enan

\medskip
\noindent
\textbf{Step 2: estimating the Fourier transform using a deformation of contour in the complex domain.}
We now want to estimate $\mathcal{F}_a(g)(\xi_a, x_b)$ uniformly with respect to $x_b$. We split the integral as
\bna
\mathcal{F}_a(g)(\xi_a, x_b) =  \int_{\R^{n_a}} e^{ - ix_a \cdot \xi_a}g(x_a, x_b) dx_a 
= I_0 + I_1 + I_2, 
\ena
with $I_j  = I_j(\xi_a , x_b)$ defined by
\bna
I_0  :=  \int_{|x_a|\leq R_\sigma+R_f , \psi(x_a ,x_b) \leq \frac43\delta}
,\quad
I_1 :=  \int_{|x_a|\leq R_\sigma+R_f , \psi(x_a ,x_b) > \frac43\delta}
,\quad
I_2 :=  \int_{|x_a|> R_\sigma+R_f}
\ena
Using \eqref{estimgloin}, we obtain, for all $\delta , \tau >0$ and $\lambda >1$, 
\bnan
\label{e:estim I_2}
|I_2| & \leq & C\lambda^{(n_a+1)/2}  e^{\de \tau}e^{\frac{\tau^2}{\lambda}}\int_{|x_a|\geq R_\sigma+R_f}e^{- \frac{\lambda}{4} (|x_a|-R_\sigma)^2} dx_a \nonumber\\
& \leq & C\lambda^{(n_a+1)/2}  e^{\de \tau}e^{\frac{\tau^2}{\lambda}}\int_{s= R_f}^{+\infty}(s+R_\sigma)^{n_a-1}e^{- \frac{\lambda}{4} s^2}\leq C\lambda^{(n_a+1)/2} e^{\de \tau}e^{\frac{\tau^2}{\lambda}}e^{-\frac{R_f^2}{4} \lambda} .
\enan
Using \eqref{estimgloin-psi}, we obtain, for all $\delta , \tau >0$ and $\lambda >1$, 
\bnan
\label{e:estim I_1}
|I_1| & \leq & C \lambda^\frac12 e^{2 \delta \tau} e^{- c \delta^2\lambda}.
\enan
We now want to estimate the integral $I_0(\xi_a , x_b)$: we write $x_a = x_1 \frac{\xi_a}{|\xi_a|} + x_a'$ for $x_1 =  x_a \cdot\frac{\xi_a}{|\xi_a|} $ and $x_a'$ such that $x_a' \cdot \xi_a = 0$ and make the orthogonal change of coordinates to $(x_1 , x_a')$ (preserving the ball $B_{\R^{n_a}}(0 ,R_\sigma+R_f)$). This yields 
\bna
 I_0(\xi_a , x_b) & = & \int_{B_{\R^{n_a}}(0 ,R_\sigma+R_f) \cap \{\psi(\cdot ,x_b) \geq \frac43\delta \}} e^{- ix_1 | \xi_a|}g(x_1 , x_a') dx_a' dx_1 \\
 & = & \int_{B_{\R^{n_a-1}}(0 ,R_\sigma+R_f)}   \mathcal{I}_{\xi_a, x_b}(x_a') dx_a'  ,  
 \ena
 \bna
\text{with } \quad \mathcal{I}_{\xi_a, x_b}(x_a') 
= \int_{|x_1|^2\leq (R_\sigma+R_f)^2 - |x_a'|^2,  \psi(x_1 , x_a' , x_b) \leq \frac43\delta}e^{- ix_1 | \xi_a| }g(x_1 , x_a') d x_1 ,
 \ena
 so that $$| I_0(\xi_a , x_b) | \leq C \sup_{x_a' \in B_{\R^{n_a-1}}(0 ,R_\sigma+R_f)}| \mathcal{I}_{\xi_a, x_b}(x_a')| .$$
 Hence, it only remains to estimate $|\mathcal{I}_{\xi_a, x_b}(x_a')|$ uniformly.
Now, $g$ being analytic in a neighborhood of $\mathcal{A}_{x_b}(\eps_0)$, and given any $x_a' \in B_{\R^{n_a-1}}(0 ,R_\sigma+R_f)$, the function $z_1 \mapsto e^{- iz_1 | \xi_a|}g(z_1 , x_a')$ is holomorphic in a neighborhood of the set 
$$
|\Re(z_1)|^2\leq (R_\sigma+R_f)^2 - |x_a'|^2,  \quad \psi(\Re(z_1), x_a' , x_b) \leq \frac43\delta , 
 \quad |\Im(z_1)| \leq \eps R_f, 
$$
for $\e \in (0, \e_0)$.

Now, we have
$$
\{ x_1 \in \R, \quad |x_1|^2\leq (R_\sigma+R_f)^2 - |x_a'|^2, \quad  \psi(x_1 , x_a' , x_b) \leq \frac43\delta  \}
= \bigcup_{k\in J} [\alpha_k^1 , \alpha_k^2 ],
$$
where $J = J(x_a' , x_b)$ has $0, 1$ or $2$ elements since $\psi$ is quadratic. Moreover, we have
\bnan
\label{e:either-or}
\text{either } \quad |\alpha_k^i|^2 + |x_a'|^2= (R_\sigma+R_f)^2, \quad \text{ or } \quad \psi(\alpha_k^i , x_a' , x_b) = \frac43\delta
\enan
 for $k \in J$ and $i = 1,2$, together with 
$$
\mathcal{I}_{\xi_a, x_b}(x_a') = \sum_{k\in J} \int_{[\alpha_k^1 , \alpha_k^2 ]} e^{- ix_1 | \xi_a| }g(x_1 , x_a') d x_1.
$$
To estimate $\mathcal{I}_{\xi_a, x_b}(x_a')$, we now make a change of contour in the complex variable $z_1$ as follows:
\bna
 \int_{[\alpha_k^1 , \alpha_k^2 ]} e^{- ix_1 | \xi_a| }g(x_1 , x_a') d x_1 = I_L+I_T+I_R , \quad \text{ with } I_\star = \int_{\gamma_\star} e^{- iz_1 | \xi_a|}g(z_1 , x_a') d z_1, \quad \text{for } \star = L,T,R , 
\ena
and 
\bna
\gamma_L & =&  [\alpha_k^1 , \alpha_k^1 -i  \eps R_f ]  ,\\
\gamma_T & = & [\alpha_k^1 -i  \eps R_f, \alpha_k^2 -i  \eps R_f]  ,\\
\gamma_R & = & [ \alpha_k^2 -i  \eps R_f, \alpha_k^2  ] ,
\ena
are three oriented segments in $\C$ (see Figure~\ref{f:oriented-contours1}). 
\begin{figure}[h!]
  \begin{center}
    \input{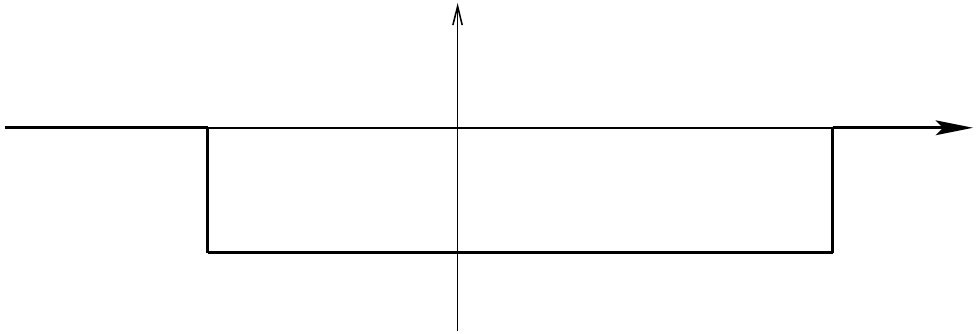_t} 
    \caption{Oriented contours}
    \label{f:oriented-contours1}
 \end{center}
\end{figure}
We have
$$
 |I_\star | \leq \int_{\gamma_\star} e^{\Im(z_1) | \xi_a|} |g(z_1 , x_a')| d z_1, \quad \text{for } \star = L,T,R .
$$
On $\gamma_L$ and $\gamma_R$, using~\eqref{e:either-or} and $\Im(z_1) \leq 0$, we can use either estimate \eqref{estimsideg} or~\eqref{estimgrectangle-psi} and obtain, uniformly in $x_a' , \xi_a ,x_b$, $\delta , \tau >0$, $\lambda >1$, and $\eps \in (0, \eps_0(\delta))$
\bna
|I_L|+|I_R|\leq
C \eps \lambda^{(n_a+1)/2} e^{C_1\lambda \e^2} \left( e^{\de \tau} e^{-\lambda \frac{R_f^2}{4}}e^{\frac{\tau^2}{\lambda}} +   e^{\frac32\de \tau} e^{- c\delta^2 \lambda} \right) .
\ena
On $\gamma_T$, we have $(z_1 , x_a')\in \mathcal{A}_{x_b}(\eps)$ and $\Im(z_1)= -\eps R_f$, and thus using~\eqref{estimgrectangle}, we obtain, uniformly in $x_a' , \xi_a ,x_b$, $\delta , \tau >0$, $\lambda >1$, and $\eps \in (0,\eps_0(\delta))$,
\bna
|I_T|\leq C\lambda^{(n_a+1)/2}  e^{C_1\lambda \e^2}e^{\de \tau}e^{\frac{\tau^2}{\lambda}}e^{-\e R_f|\xi_a|}.
\ena
Combining the estimates on $I_L , I_R, I_T$ now proves that there is $C >0$ such that for any $\xi_a \in \R^{n_a},x_b \in R^{n_b}$, $\delta , \tau >0$, $\lambda >1$, and $\eps \leq \min( \eps_0(\delta) ,\frac{R_f}{2\sqrt{C_1}})$,
$$
|I_0| \leq C\lambda^{(n_a+1)/2}  e^{\de \tau}e^{\frac{\tau^2}{\lambda}}  \left(e^{C_1 \lambda \e^2} e^{-\e R_f|\xi_a|} +  e^{-\frac{R_f^2}{8}\lambda } \right) +  C \lambda^{(n_a+1)/2} e^{C_1\lambda \e^2}   e^{\frac32\de \tau} e^{- c\delta^2 \lambda},
$$
which, in view of Estimate~\eqref{e:estim I_2} and \eqref{e:estim I_1}, implies the result for $N=0$ and $\alpha=0$. 

\medskip
To obtain the result for $N \in \N$ and $\beta \in \N^{n_b}$, we notice that the functions $g_{\alpha, \beta}= \d_{x_a}^{\alpha} \d_{x_b}^{\beta}g$ can be written as a finite sum of terms that have the same form as the one of the assumption of the theorem with some different $f$, $b$ and $\chi_{\delta}$ (with the same support and analyticity properties) and with powers of $\tau^{\alpha'}\delta^{-\beta'}$ for $|\alpha'| + |\beta'| \leq |\alpha| + |\beta|$. The constants in the exponentials do not depend on $\alpha , \beta$ since they are functions of $\psi,R_\sigma,R_f,K_b$ only.
Noting that 
$(i\xi_a)^{\alpha}\d_{x_b}^{\beta} \mathcal{F}_a(g)(\xi_a,x_b) =  \mathcal{F}_a(\d_{x_a}^{\alpha} \d_{x_b}^{\beta}g)(\xi_a,x_b)$ finally concludes the proof of the lemma.
\enp
As a consequence of the previous result, we now have the following lemma.
\begin{lemma}
\label{lemma: analytic Chambertin}
Under the assumptions of  Lemma \ref{lmdecrFourier}, we have the following. For all $k\in \N$, $\delta>0$, there exist $N \in \N$, $C, c_0 ,\e_0>0$, such that for any $\lambda ,\mu,  \tau \geq 1$  and $0<\e<\e_0$, we have
\bna
\nor{M^{\mu/2}_{\lambda} g (1-M^{\mu}_{\lambda})}{H^k(\R^n) \to H^k(\R^n)} \leq C  \tau^{N}\lambda^{(n_a+1)/2}e^{\de \tau}e^{\frac{\tau^2}{\lambda}}\left( e^{C \e^2\lambda}e^{-c_0\e\mu}
+    e^{\de \tau} e^{-c_0\lambda} \right) ,\\
\nor{(1-M^{\mu}_{\lambda}) g M^{\mu/2}_{\lambda}}{H^k(\R^n) \to H^k(\R^n)} 
\leq C  \tau^{N}\lambda^{(n_a+1)/2}e^{\de \tau}e^{\frac{\tau^2}{\lambda}}\left( e^{C \e^2\lambda}e^{-c_0\e\mu}
+    e^{\de \tau} e^{-c_0\lambda}  
\right).
\ena

\end{lemma}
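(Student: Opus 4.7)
The plan is to combine the pointwise Fourier decay of Lemma~\ref{lmdecrFourier} with a Schur estimate on the $x_a$--Fourier side, after reducing the $H^k \to H^k$ bound to an $L^2 \to L^2$ one. Since $M^{\mu/2}_\lambda$ and $1-M^\mu_\lambda$ are Fourier multipliers in $x_a$ alone, they commute with every $\partial^\gamma$, and the Leibniz rule rewrites $\partial^\gamma \big[M^{\mu/2}_\lambda g (1-M^\mu_\lambda) u\big]$ as a finite linear combination of $M^{\mu/2}_\lambda(\partial^{\gamma_1}g)(1-M^\mu_\lambda)\partial^{\gamma_2}u$ with $\gamma_1+\gamma_2=\gamma$, $|\gamma|\leq k$. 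Each $\partial^{\gamma_1}g$ still satisfies Lemma~\ref{lmdecrFourier} after shifting $N$ by $|\gamma_1|$ and allowing an extra $\tau^{|\gamma_1|}$ factor, so it is enough to prove the $L^2 \to L^2$ bound uniformly in such $\gamma_1$.

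\textbf{Schur on the Fourier side.} Conjugating by $\mathcal F_a$, the operator $A = M^{\mu/2}_\lambda g (1-M^\mu_\lambda)$ has, for each fixed $x_b$, the integral kernel
$$
\tilde K(\xi_a,\eta_a,x_b) = m_\lambda(2\xi_a/\mu)\, \mathcal F_a g(\xi_a-\eta_a,x_b)\,(1-m_\lambda(\eta_a/\mu)).
$$
Plancherel and Fubini give $\|A\|_{L^2(\R^n)\to L^2(\R^n)} \leq \sup_{x_b}\|\tilde K(\cdot,\cdot,x_b)\|_{L^2(\R^{n_a})\to L^2(\R^{n_a})}$, which is bounded by Schur's test by the geometric mean of $\sup_{\xi_a}\int|\tilde K|\,d\eta_a$ and $\sup_{\eta_a}\int|\tilde K|\,d\xi_a$. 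Fix $1/2 < a < b < 3/4$ and split $\tilde K = \tilde K_1 + \tilde K_2 + \tilde K_3$, restricted respectively to $\{|\xi_a|\leq a\mu,\ |\eta_a|\geq b\mu\}$, $\{|\eta_a|<b\mu\}$, and $\{|\xi_a|>a\mu,\ |\eta_a|\geq b\mu\}$.

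\textbf{Bounding each piece.} On $\supp \tilde K_1$ one has $|\xi_a-\eta_a|\geq (b-a)\mu$; the trivial bounds $|m_\lambda(2\cdot/\mu)|,|1-m_\lambda(\cdot/\mu)|\leq 1$ together with Lemma~\ref{lmdecrFourier} applied with $N=n_a+1$ give the main contribution of the claimed form, carrying the exponential decay $e^{-c\e(b-a)\mu}$ after bounding $e^{-c\e|\xi_a-\eta_a|}$ on the support and integrating the factor $\langle\xi_a-\eta_a\rangle^{-(n_a+1)}$. On $\supp \tilde K_2$, the identity $1-m_\lambda=(1-m)_\lambda$ (which follows from linearity of the heat flow and $e^{-|D|^2/\lambda}1=1$) together with $1-m\equiv 0$ on $\{|\xi|<3/4\}$ allows Lemma~\ref{lmsuppdisjoint} to yield $|1-m_\lambda(\eta_a/\mu)|\leq Ce^{-c\lambda}$ uniformly on $\{|\eta_a|<b\mu\}$; coupling this with the $L^1$ integrability of $\mathcal F_a g$ from Lemma~\ref{lmdecrFourier} produces an extra factor $e^{-c\lambda}$. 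For $\tilde K_3$, writing $M^{\mu/2}_\lambda=\phi_{4\lambda}(D_a/\mu)$ with $\phi(\xi)=m(2\xi)$ supported in $\{|\xi|<1/2\}$, Lemma~\ref{lmsuppdisjoint} similarly gives $|m_\lambda(2\xi_a/\mu)|\leq Ce^{-c\lambda}$ on $\{|\xi_a|>a\mu\}$ since $a>1/2$.

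\textbf{Assembly and second estimate.} Summing the three pieces and choosing $\e_0$ small enough that $C\e^2\lambda \leq c_0\lambda/2$, the cross factors of the form $e^{-c\lambda}e^{C\e^2\lambda}$ are absorbed into $e^{-c_0\lambda/2}$, which yields the first estimate with $c_0:=c(b-a)$. The second estimate follows from the same Step~1 reduction applied to $(1-M^\mu_\lambda)gM^{\mu/2}_\lambda$, together with the observation that each $L^2\to L^2$ norm $\|(1-M^\mu_\lambda)(\partial^{\gamma_1}g)M^{\mu/2}_\lambda\|$ equals $\|M^{\mu/2}_\lambda\overline{\partial^{\gamma_1}g}(1-M^\mu_\lambda)\|$ by self-adjointness of the real-symboled Fourier multipliers; the latter is bounded by the first estimate applied to $\overline{\partial^{\gamma_1}g}$, which enjoys the same Fourier bounds as $\partial^{\gamma_1}g$. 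The main technical point is the mismatch between the frequency scales $\mu/2$ and $\mu$: since $M^{\mu/2}_\lambda$ equals $\phi_{4\lambda}(D_a/\mu)$ rather than $\phi_\lambda(D_a/\mu)$, a single application of Lemma~\ref{lmFL1} is not available, and the disjoint-support argument has to be carried out by hand through the splitting of $\tilde K$ above.
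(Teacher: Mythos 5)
Your proof is correct, and the underlying mechanism is the same as the paper's: disjointness of the Fourier supports of $m(2\cdot)$ and $1-m$, combined with the pointwise decay of $\mathcal F_a(g)$ from Lemma~\ref{lmdecrFourier}, with the $H^k$ case reduced to $L^2$ by Leibniz and the second estimate obtained by symmetry/adjointness. The difference is one of packaging. The paper's proof is two lines: it cites Lemma~\ref{lmFL1} as a black box to get
$\nor{M^{\mu/2}_{\lambda} g (1-M^{\mu}_{\lambda})}{H^k\to H^k}\lesssim \sum\nor{\xi_a^\alpha\partial_{x_b}^\beta\mathcal F_a(g)}{L^\infty_{x_b}L^1(|\xi_a|\geq d\mu/3)}+e^{-c\lambda}\nor{\cdots}{L^\infty_{x_b}L^1}$ and then plugs in Lemma~\ref{lmdecrFourier}. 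You instead re-derive the content of Lemma~\ref{lmFL1} by hand via a three-piece Schur decomposition of the kernel, on the grounds that $M^{\mu/2}_\lambda=\phi_{4\lambda}(D_a/\mu)$ (with $\phi=m(2\cdot)$) does not literally match the format $m_{1,\lambda}(D_a/\mu)$ required by Lemma~\ref{lmFL1}. That observation is accurate: the paper's citation is slightly loose on this point, the mismatch being harmless only because the proof of Lemma~\ref{lmFL1} uses nothing about the regularization parameter beyond the $L^\infty$ bound and the $e^{-c\lambda}$ off-support decay from Lemma~\ref{lmsuppdisjoint}, both of which are stable under replacing $\lambda$ by $4\lambda$. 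Your route costs a page of bookkeeping but is self-contained and rigorous; the paper's route is shorter at the price of this implicit rescaling. Your absorption of the cross term $e^{C\e^2\lambda}e^{-c\lambda}$ into $e^{-c_0\lambda}$ by shrinking $\e_0$ is exactly the role the hypothesis $\e<\e_0$ plays in the statement, so nothing is missing.
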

The estimates of this lemma will only be used under the weaker form: for all $c, \delta>0$, $k \in \N$, there exist $c_0 , C, N>0$ such that for any $ \tau , \mu \geq 1$ and $c^{-1}\mu \leq \lambda \leq c \mu$, we have
\bnan
\label{commutweaker}
\nor{M^{\mu/2}_{\lambda} g (1-M^{\mu}_{\lambda})}{H^k(\R^n) \to H^k(\R^n)}  \leq  C\tau^{N} e^{\frac{\tau^2}{\lambda}}e^{2\de \tau}e^{- c_0 \mu}, 
\enan
with the same estimate for the second term. It is obtained by taking $\eps$ sufficiently small in the regime $c^{-1}\mu \leq \lambda \leq c \mu$.
\begin{proof}
The two estimates are proved the same way, so we only prove the first one.
First, Lemma \ref{lmFL1} yields 
\begin{multline}
\nor{M^{\mu/2}_{\lambda} g (1-M^{\mu}_{\lambda})}{H^k(\R^n) \to H^k(\R^n)} \\
\leq  \sum_{|\alpha| + |\beta|\leq k}
 \nor{\xi_a^\alpha \d_{x_b}^\beta \mathcal{F}_a(g)}{L^\infty_{x_b}L^1(|\xi_a|\geq d \mu/3)}+Ce^{-c\lambda}\nor{\xi_a^\alpha \d_{x_b}^\beta \mathcal{F}_a(g)}{L^\infty(\R^{n_b};L^1(\R^{n_a}))} .
\end{multline}
Next, Lemma \ref{lmdecrFourier} with a $N\in \N$ large enough so that $\left< \xi_a \right>^{-(N+k)}$ is integrable on $\R^{n_a}$ yields
\bna
\nor{M^{\mu/2}_{\lambda} g (1-M^{\mu}_{\lambda})}{H^k(\R^n) \to H^k(\R^n)} \leq  
 C  \tau^{N+ k}\lambda^{(n_a+1)/2}e^{\de \tau}e^{\frac{\tau^2}{\lambda}}\left(e^{c_1 \e^2\lambda}e^{-c_0\e \mu}+e^{\de \tau}e^{-c_0\lambda}\right) ,
 \ena
which concludes the proof of the Lemma.
\end{proof}
\section{The local estimate}
\label{sectionlocal}
The aim of this section is to prove the local quantitative uniqueness result, (analytically) localized in frequency in the analytic variables.

In the following, we shall denote by 
\begin{equation}
\label{defb}
\begin{array}{c}
\sigma_R (x) := \sigma(R^{-1}|x-x^0|)\text{ with }
\sigma  \in C^\infty(\R) \text{ such that } \\
 \sigma=1 \text{ in a neighborhood of } ]-\infty,1 ], \text{ and } \sigma=0 \text{ in a neighborhood of }[2,+\infty[.
 \end{array}
\end{equation}
Our main local theorem is the following. See Figure~\ref{f:geom-local} for the geometry of the theorem. An important feature of this local result is that it can be iterated and hence propagated.
\begin{theorem}
\label{th:alpha-unif}
Let $x^0 \in \Omega \subset \R^{n_a} \times \R^{n_b}$ and $P$ be a partial differential operator on $\Omega$ of order $m$. Assume that 
\begin{itemize}
\item $P$ is analytically principally normal operator in $\{\xi_a = 0\}$ inside $\Omega$ (in the sense of Definition~\ref{def:anal principal normal});
\item there is a  function $\phi$ defined in a neighborhood of $x^0$ such that $\phi(x^0)=0$, and $\{\phi = 0\}$ is a $C^2$ strongly pseudoconvex oriented surface in the sense of Definition~\ref{def: pseudoconvex-surface}.
\end{itemize}
Then, there exists $R_0>0$ such that for any $R\in (0,R_0)$, there exist $r, \rho, \tilde{\tau}_0>0$,  for any $\vartheta\in C^{\infty}_0(\R^n)$  such that $\vartheta(x)=1$ on a neighborhood of $\left\{\phi\geq 2\rho\right\}\cap B(x^0,3R)$,
for all $c_1, \kappa>0$ there exist $C, \kappa', \beta_0 >0$ such that for all $\beta \leq \beta_0$, we have
\bna
\nor{M^{\beta\mu}_{c_1\mu} \sigma_{r,c_1\mu} u}{m-1}\leq C e^{\kappa \mu}\left(\nor{M^{\mu}_{c_1\mu} \vartheta_{c_1\mu} u}{m-1} + \nor{Pu}{L^2(B(x^0,4R))}\right)+Ce^{-\kappa' \mu}\nor{u}{m-1} ,
\ena
for all $\mu\geq \frac{\tilde{\tau}_0}{\beta}$ and $u\in C^{\infty}_0(\R^{n})$.
\end{theorem}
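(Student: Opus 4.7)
The plan is to combine the three ingredients announced in the introduction: a \emph{convexification} turning the strongly pseudoconvex oriented surface $\{\phi=0\}$ into a strongly pseudoconvex quadratic weight $\psi$; an \emph{application} of the Carleman estimate of Theorem~\ref{thmCarleman} to an analytically regularized cutoff of $u$; and a \emph{complex deformation} in the Carleman parameter converting the weight $e^{\tau\psi}$ into an analytic low-frequency cutoff of the type $M^{\beta\mu}_{c_1\mu}$. First, I will apply the classical convexification procedure (cf.~\cite[Proposition 28.3.3]{Hoermander:V4},~\cite[Theorem 1.5]{Tatarunotes}) to produce, after shrinking neighborhoods if necessary, a quadratic polynomial $\psi$ in $x$ which is strongly pseudoconvex in $\{\xi_a=0\}$ at $x^0$ for $P$, together with $R_0,R,r,\rho>0$ and levels $\rho_{\max}>\rho_+>\rho_->0$ such that
\bna
B(x^0,2r)\subset \{\psi\geq\rho_{\max}\},\qquad \{\psi\geq \rho_+\}\cap B(x^0,3R)\subset \{\phi\geq 2\rho\}.
\ena
Theorem~\ref{thmCarleman} then applies on some $B(x^0,\mathsf{R})\supset B(x^0,4R)$ with a fixed $\eps\in(0,\eps_0)$, $\mathsf{d}>\sup_{B(x^0,4R)}\psi$, and $\tau\geq\tau_0$. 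I will pick $\chi\in C^\infty_0(B(x^0,3R))$ equal to $1$ on $B(x^0,2R)\cap\{\psi\geq\rho_-\}$ with $\supp\chi\subset\{\psi\geq\rho_-/2\}$, and $\tilde\chi\in C^\infty(\R)$ supported in $(-\infty,\rho_+)$ with $\tilde\chi\equiv 1$ on $(-\infty,3\rho_-/4)$.

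Next, setting $\lambda=c_1\mu$, I will apply Theorem~\ref{thmCarleman} to $v=(\tilde\chi(\psi)\chi u)_\lambda$, after a preliminary localization by a fixed cutoff of $B(x^0,\mathsf{R})$ whose removal costs $e^{-c\lambda}\|u\|_{m-1}$ by Lemma~\ref{lmsuppdisjoint}. The commutator $[P,\tilde\chi(\psi)\chi]u$ appearing on the right-hand side splits into a piece supported in $\supp\nabla\chi$, which by the geometric setup lies in $\{\phi\geq 2\rho\}\cap B(x^0,3R)$ and will be controlled by $\nor{M^\mu_{c_1\mu}\vartheta_{c_1\mu}u}{m-1}$ through Lemmas~\ref{lemma: Gevrey Chambertinsomme}--\ref{l:commutateurnon} (since $\vartheta\equiv 1$ there), and a piece supported in $\{\psi\in[3\rho_-/4,\rho_+]\}$ whose Carleman weight is at most $e^{\tau\rho_+}$, strictly less than the weight $e^{\tau\rho_{\max}}$ controlling the left-hand side, hence absorbed up to a remainder exponentially small in $\tau$. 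The zeroth-order term $\|e^{\tau(\psi-\mathsf{d})}v\|_{m-1,\tau}$ contributes $e^{-c\tau}\|u\|_{m-1}$ since $\psi\leq \mathsf{d}$ on $\supp\chi$. All exchanges between sharp and analytically regularized cutoffs, or between sharp and regularized Fourier multipliers, will be paid using Lemmas~\ref{lemma: Gevrey Chambertinf}--\ref{lemma: analytic Chambertin}, whose whole purpose is to enforce these exchanges uniformly in $(\lambda,\mu,\tau,\eps)$ with the coupling $\lambda=c_1\mu$, $\tau=O(\beta\mu)$, $\eps$ small.

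The heart of the argument, and the \textbf{main obstacle}, is the third step: transferring the information carried by $e^{-\frac{\eps}{2\tau}|D_a|^2}e^{\tau\psi}$ to the frequency cutoff $M^{\beta\mu}_{c_1\mu}$. Following~\cite{Tataru:95,Tatarunotes}, I will regard $\tau$ as a complex parameter $\zeta$ in $\{\Re\zeta>0\}$, where $e^{-\frac{\eps}{2\zeta}|D_a|^2}$ is holomorphic and bounded on $L^2$ (unitary on $\zeta=i\R^+$), combine the Carleman estimate at $\zeta=i\tau$ with the trivial a priori bound $\|e^{\zeta\psi}(\cdots)\|_{m-1,|\zeta|}\leq e^{|\zeta|\mathsf{d}}(1+|\zeta|)^{m-1}\|u\|_{m-1}$ via a Phragm\'en--Lindel\"of argument on a sector, and transfer the estimate to the real axis $\zeta=\tau>0$. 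On that axis, $e^{-\frac{\eps}{2\tau}|D_a|^2}$ is a Gaussian low-pass filter at scale $\sqrt{\tau/\eps}$, which for $\tau=\beta\mu/c_0$ dominates $M^{\beta\mu}_{c_1\mu}$ up to an $e^{-c\mu}$ error by Lemma~\ref{lemma:expAmu}. A scaling argument $\tau\leftrightarrow\tau/\mu$ (as in~\cite{Tatarunotes}) ensures uniformity in $\mu$. The factor $e^{\tau\rho_{\max}}$ on the left cancels $e^{\tau\mathsf{d}}$ on the right to leave $e^{\tau(\mathsf{d}-\rho_{\max})}=e^{O(\beta\mu)}$, which is $\leq e^{\kappa\mu}$ provided $\beta\leq\beta_0=\beta_0(\kappa)$ is small enough; the constraint $\mu\geq\tilde\tau_0/\beta$ is exactly $\tau\geq\tau_0$. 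The delicate point throughout is balancing the mixed exponential factors $e^{\tau^2/\lambda}$, $e^{c_1\eps^2\lambda}$, $e^{-c_0\eps\mu}$, $e^{-c\mu^2/\lambda}$, $e^{-c\delta^2\lambda}$ produced by Lemmas~\ref{lmdecrFourier}--\ref{lemma: analytic Chambertin} so that the negative exponents dominate uniformly in $\beta$, which is precisely what the coupling $\lambda=c_1\mu$ and the smallness of $\eps$ guarantee.
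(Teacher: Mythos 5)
Your Steps 1 and 2 essentially reproduce the paper's proof: the convexification is Lemma~\ref{l:phi-psi}/Corollary~\ref{c:geometric-setting}, and the Carleman estimate is applied to a product of rough and $\lambda$-regularized cutoffs with the commutator split into pieces supported either in $\{\phi>2\rho\}\cap B(x^0,3R)$ (controlled by $\vartheta$) or in regions where the weight is exponentially small (Lemma~\ref{lemmacommut}). One caveat: the paper regularizes only the cutoff functions, working with $\sigma_{2R}\sigma_{R,\lambda}\chi_{\delta,\lambda}(\psi)\tilde\chi_{\delta}(\psi)\,u$, whereas your $v=(\tilde\chi(\psi)\chi u)_{\lambda}$ regularizes the product including $u$; since the coefficients of $P$ depend on $x_a$, this creates an extra commutator $[P,e^{-|D_a|^2/\lambda}]$ and forces you to undo the regularization of $u$ at the end, neither of which you address.

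The genuine gap is in Step 3, which you rightly call the heart of the argument but whose mechanism you describe incorrectly. You claim the estimate is transferred to the real axis $\zeta=\tau>0$, where ``$e^{-\frac{\eps}{2\tau}|D_a|^2}$ is a Gaussian low-pass filter at scale $\sqrt{\tau/\eps}$, which for $\tau=\beta\mu/c_0$ dominates $M^{\beta\mu}_{c_1\mu}$ up to an $e^{-c\mu}$ error by Lemma~\ref{lemma:expAmu}.'' This cannot work as stated: the Gaussian cuts at frequency $\sqrt{\beta\mu/\eps}\ll\beta\mu$, so it does not dominate $M^{\beta\mu}_{c_1\mu}$ (Lemma~\ref{lemma:expAmu} bounds the opposite composition $e^{-\eps|D_a|^2/2\tau}(1-M^{\mu}_{\lambda})$ and is used in the commutator step, not here); and, more fundamentally, at any real value of the Carleman parameter the inequality still carries the weight $e^{\tau\psi}$, which is exponentially unbalanced across $\{\psi=0\}$ and cannot simply be divided out to yield the unweighted local norm $\nor{M^{\beta\mu}_{c_1\mu}\sigma_{r,c_1\mu}u}{m-1}$. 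The actual transfer is a duality/Laplace-inversion argument in the $\psi$-variable: for a test function $f$ one forms the pushforward $h_f=\psi_*\big((M^{\beta\mu}f)\,\sigma_{2R}\sigma_{R,\lambda}\chi_{\delta,\lambda}(\psi)\tilde\chi_{\delta}(\psi)u\big)$, whose entire Fourier transform $\hat h_f(\zeta)$ is controlled on $i\R^+$ by the Carleman estimate --- the frequency localization entering because $e^{+\eps|D_a|^2/2\tau}M^{\beta\mu}f$ costs only $e^{\eps\beta^2\mu^2/2\tau}\|f\|_{1-m}$ when $f$ is cut to $|\xi_a|\le\beta\mu$ --- and the quantity to be estimated is recovered as $\int_{\R}\hat h_f(\zeta)\hat\eta_{\delta,\lambda}(-\zeta)\,d\zeta$ for an auxiliary cutoff $\eta_{\delta}$ supported in $(-4\delta,\delta)$. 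That integral is then handled by the Phragm\'en--Lindel\"of bound for $\hat h_f$ on the annulus $\frac{d}{4}\mu\le|\zeta|\le 2d\mu$ of the upper half-plane together with a contour shift to $\Im\zeta=d\mu/2$, exploiting the decay of $\hat\eta_{\delta,\lambda}(-\zeta)$ there. Your sketch contains neither the pushforward $h_f$, nor the test-function duality forced by the non-locality of $e^{-\eps|D_a|^2/2\tau}$, nor the function $\eta_{\delta,\lambda}$ and the contour deformation; as written, the step converting the Carleman inequality into the claimed low-frequency estimate is missing.
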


\begin{figure}[h!]
  \begin{center}
    \input{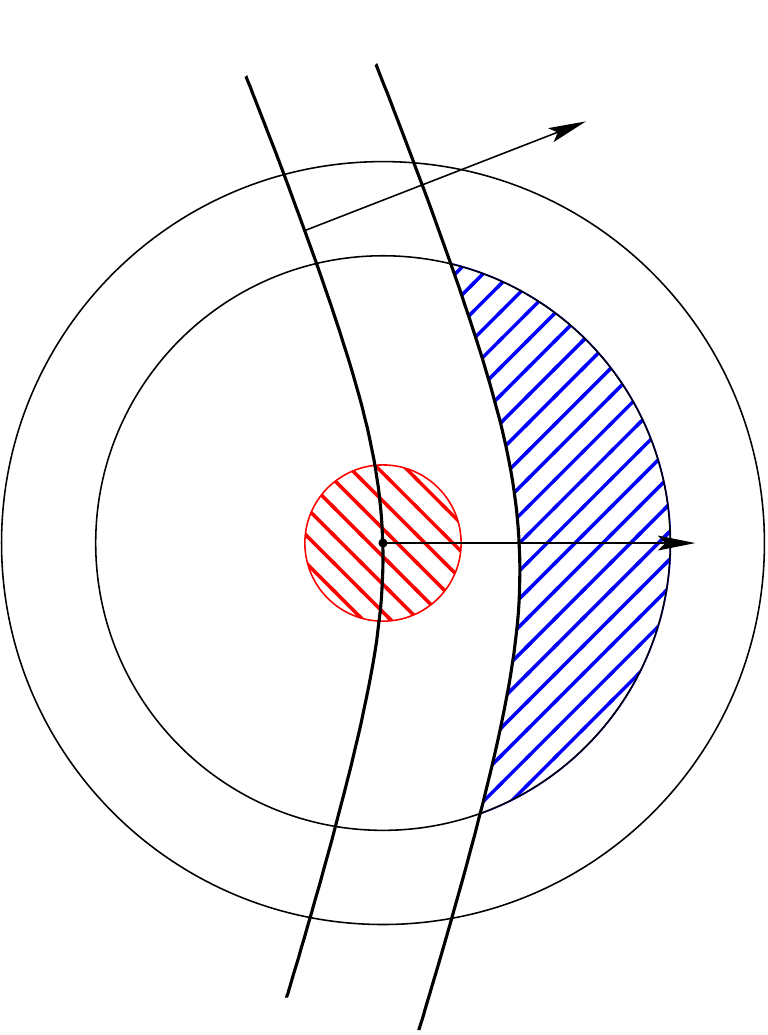_t} 
    \caption{Geometry of the local uniqueness result.}
    Tlue striped region is the observation region (i.e. where $\vartheta =1$). The red striped region is the observed region (i.e. where $\sigma_r = 1$).
    \label{f:geom-local}
 \end{center}
\end{figure}

Note that this local result contains in particular the unique continuation result for operators with partially analytic coefficients~\cite{Tataru:95,RZ:98, Hor:97,Tataru:99} (which it is aimed to quantify). The latter is proved by letting $\mu \to + \infty$ in the estimate (and controlling some error terms), yielding: $Pu = 0 \text{ on }B(x^0,4R)), \ u = 0 \text{ on }\supp(\vartheta) \Longrightarrow u =0 \text{ on }\{ \sigma = 1 \}$.

This theorem allows to systematically quantify this local unique continuation result under partial analyticity conditions (in a way that can be iterated/propagated). As such, it also allows in particular to systematically quantify both the H\"ormander and the Holmgren theorems (again, in a way that can be iterated/propagated). Let us briefly comment on these two extreme situations: $n_a=0$ (H\"ormander case) and $n_a=n$ (Holmgren case).
\begin{remark}
If $n_a=0$, this inequality takes the form:
\bna
\nor{ \sigma_{r} u}{m-1}\leq C \frac{1}{\eps^{\kappa/\kappa'}}\left(\nor{\vartheta u}{m-1} + \nor{Pu}{L^2(B(x^0,4R))}\right)+C \eps \nor{u}{m-1} ,\quad \text{ for all } \eps \leq \eps_0, 
\ena
and hence
\bna
\nor{ \sigma_{r} u}{m-1}\leq C  \left(\nor{\vartheta u}{m-1} + \nor{Pu}{L^2(B(x^0,4R))}\right)^{\delta}\nor{u}{m-1}^{1-\delta} ,\quad \text{ for some } \delta >0, 
\ena
 which is an interpolation inequality of Lebeau-Robbiano type~\cite{LR:95} (see also~\cite{Robbiano:95}), and, as such, propagates well. Here it quantifies the general situation of the H\"ormander theorem (see also~\cite{Bahouri:87}).
 
 \medskip
 If $n_a=n$, we here describe a systematic way to quantify the Holmgren Theorem, which propagates well. See also~\cite{John:60} for a local result and ~\cite{Leb:Analytic} for a global result for waves. 
\end{remark}
 
\begin{remark}
The previous inequality can be written in the following way:

For all $(D, \mu,u) \in \R^+ \times [\frac{\tilde{\tau}_0}{\beta}, + \infty)\times H^{m-1}(\R^{n})$, satisfying 
\bna
\nor{M^{\mu}_{c_1\mu} \vartheta_{c_1\mu} u}{m-1}\leq  e^{-\kappa \mu}D\\
\nor{Pu}{L^2(B(x^0,4R))}\leq e^{-\kappa \mu} D ,
\ena
 we have 
$$
\nor{M^{\beta\mu}_{c_1\mu} \sigma_{r,c_1\mu} u}{m-1}\leq C' e^{-\kappa' \mu}\left(D + \nor{u}{m-1}\right) .
$$
This could certainly be written in the framework of propagation of (semiclassical, partially analytic) microsupport with respect to the variable $x_a$, see~\cite{Sjostrand:95} or \cite[Section 3.2]{MartinezBook}.  If $n_a=n$, it seems related to microlocal proofs of Holmgren theorem and the propagation of the analytic wavefront set (see~\cite{Sjostrand:95}).
\end{remark}

The proof of Theorem \ref{th:alpha-unif} is divided in three steps, given in Sections~\ref{s:geom-setting},~\ref{s:using-carleman}, and~\ref{s:complex-analysis-arg} respectively.

\subsection{Step 1: Geometric setting}
\label{s:geom-setting}
The following lemma is a refined version of~\cite[Lemma~4.1 p514]{RZ:98} or \cite[Lemmata 4.3 and 4.4]{Hor:97}. Its proof essentially follows that of~\cite[Lemma~4.1]{RZ:98}.  We state the geometric part for some balls not necessary euclidian. This will be useful in the case of boundary where some change of variable are used.
\begin{lemma}
\label{l:phi-psi}
Let $P$ be analytically principally normal in $\Omega \subset \R^n$, of order $m$ and principal symbol $p$. Let $\phi \in C^2(\Omega; \R)$ and $S = \{\phi =0\}$ be a $C^2$ oriented hypersurface of $\Omega$. Let $x^0 \in S \cap \Omega$ with $\nabla \phi (x^0) \neq 0$. Assume that $S$ is strongly pseudoconvex in $\Omega \times \{\xi_a = 0\}$ at $x^0$ for $P$ (in the sense of Definition~\ref{def: pseudoconvex-surface}).
Then, there exists $A>0$ such that the function
$$
\psi(x) := (x - x^0) \cdot \nabla \phi (x^0) + A((x - x^0) \cdot \nabla_x \phi (x^0))^2
+ \frac12 \phi'' (x^0)(x - x^0, x - x^0) - \frac{1}{A} |x - x^0|^2
$$
satisfies 
\begin{enumerate}
\item $\psi(x^0) = 0$ , $\nabla_x\psi(x^0) = \nabla_x\phi (x^0)$ ;

\item $\psi$ is strongly pseudoconvex in $\Omega \cap \{\xi_a = 0\}$ at $x^0$ for $P$ (in the sense of Definition~\ref{def: pseudoconvex-function}).

\item Let $N$ be a distance function locally equivalent to the euclidian distance. There exists $R_0>0$, such that for any $R\in (0,R_0)$,  there exists $\eta_0>0$ and for any $0< \eta  < \eta_0$, any $\eta_1, \eta_2>0$ there exist $\rho, r>0$ such that we have
\bnan
\label{eq: psi eta B}
& \Big( \{\phi \leq \rho \} \cap \{\psi \geq -\eta \} \cap  B_N(x^0 ,R) \Big) \subset  B_N(x^0 ,\frac{R}{8}) ,  \\
\label{eq: psi etatilde}
& \Big( \left\{\psi\geq \eta_1 \right\} \cap \overline{B}_N(x^0 ,R) \Big) \subset \left\{\phi > \rho \right\}, \\
\label{eq: psi eta r}
& B_N(x^0, r) \subset \{- \eta_2 <\psi< \eta_2 \} .
\enan
\end{enumerate}
\end{lemma}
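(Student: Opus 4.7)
The plan is to treat the three items in turn. For Item 1 I would verify by direct substitution: each term in $\psi$ vanishes at $x^0$, and the quadratic corrections all have vanishing gradient there, so $\nabla\psi(x^0) = \nabla\phi(x^0)$. Item 2 is the classical convexification of Hörmander \cite[Prop.~28.3.3]{Hoermander:V4}, adapted to the partially analytic setting in~\cite[Lemma 4.1]{RZ:98} and~\cite[Lemmata 4.3--4.4]{Hor:97}: since the pseudoconvexity conditions at $x^0$ depend only on the $2$-jet of $\psi$ and since the perturbation $q(x) := A((x-x^0)\cdot\nabla\phi(x^0))^2 - \tfrac{1}{A}|x-x^0|^2$ has $\nabla q(x^0) = 0$, one has $\{p,\psi\}(x^0,\xi) = \{p,\phi\}(x^0,\xi)$, and the verification then splits between covectors $\xi$ with $\{p,\phi\}(x^0,\xi) = 0$ (where strong pseudoconvexity of the surface $\{\phi = 0\}$ applies directly) and the rest (where the Hessian of $q$ in the $\nabla\phi(x^0)$-direction provides the needed positivity for $A$ large enough).

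The main work lies in Item 3. Decomposing $x - x^0 = y_\parallel \hat n + y_\perp$ with $\hat n = \nabla\phi(x^0)/\alpha$, $\alpha = |\nabla\phi(x^0)|$, and Taylor-expanding $\phi$ while using the exact quadratic form of $\psi$, I would obtain the pointwise identity
\begin{equation*}
\phi(x) - \psi(x) = -\tilde c_1 y_\parallel^2 + \tfrac{1}{A}|y_\perp|^2 + R_3(y), \qquad \tilde c_1 := A\alpha^2 - \tfrac{1}{A} > 0 \text{ for } A > 1/\alpha,
\end{equation*}
with $R_3(y) = o(|y|^2)$. The key intermediate claim to prove is that, for $R_0$ small enough, $\{\phi \le 0\} \cap \{\psi \ge 0\} \cap \overline{B}_N(x^0, R_0) = \{x^0\}$. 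Assuming $0 < |y| \le R_0$ in this set, the inequality $\phi - \psi \le 0$ combined with absorption of $R_3$ (legitimate only once $|y|$ is small compared to $1/A$) yields $|y_\perp|^2 \le C y_\parallel^2$, hence $|y|^2 \le C' y_\parallel^2$, so $y_\parallel$ controls $|y|$. Reinjecting this in $\phi \le 0$ when $y_\parallel \ge 0$ (resp.\ in $\psi \ge 0$ when $y_\parallel \le 0$) produces an inequality of the form $|y_\parallel|(\alpha - K|y_\parallel|) \le 0$ for a constant $K = K(A,\alpha,\|\phi''(x^0)\|)$; either $y_\parallel = 0$ (whence $|y| = 0$ by the first bound) or $|y_\parallel| \ge \alpha/K$, which contradicts $|y_\parallel|\le R_0$ once $R_0 < \alpha/K$. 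The hardest step is precisely this quantitative absorption of the $o(|y|^2)$ remainder: the constants $C, C', K$ all depend on $A$, so $R_0$ has to be chosen \emph{after} $A$ has been fixed in Item 2.

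Granted this geometric claim, the three inclusions follow by soft continuity and compactness arguments. For~\eqref{eq: psi eta B}, on the compact annulus $\overline{B}_N(x^0, R) \setminus B_N(x^0, R/8)$ with $R < R_0$, the continuous function $\max(\phi, -\psi)$ is pointwise strictly positive (by the claim) and hence bounded below by some $c > 0$, so taking $\rho, \eta < c$ yields the inclusion. For~\eqref{eq: psi etatilde}, the compact set $\{\psi \ge \eta_1\} \cap \overline{B}_N(x^0, R)$ does not contain $x^0$ (since $\psi(x^0) = 0 < \eta_1$), so by the claim $\phi$ is pointwise positive on it, hence $\phi \ge c' > 0$ by compactness, and one takes $\rho < c'$. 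Finally,~\eqref{eq: psi eta r} is immediate from continuity of $\psi$ at $x^0$, where $\psi(x^0) = 0$. The local equivalence of $N$ with the Euclidean distance allows all of the above to be transported freely between the two classes of balls.
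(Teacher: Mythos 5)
Your proof is correct, and Items 1 and 2 follow the paper's own route (the paper likewise only sketches Item 2, deferring to \cite{RZ:98} and \cite{Hor:97}). One caveat on Item 2: your dichotomy ``$\{p,\phi\}(x^0,\xi)=0$ versus the rest'' hides the only genuinely delicate point, namely the degeneration $\tau\to 0^+$ in condition~\eqref{e:pseudo-surface-2}. There, $\frac{1}{i\tau}\{\overline{p}_\phi,p_\phi\}$ is not obviously continuous up to $\tau=0$, and the paper has to invoke the principal normality bound~\eqref{principalnormalclassique} to show it converges to $2\Re\{\overline p,\{p,\phi\}\}$ modulo $O(|\{p,\phi\}|+|\tau|)$, and then a compactness lemma (Lemma~\ref{l:fgh-three-fctsbis}) to glue the two regimes; your sketch would need that ingredient to compile, though it is indeed contained in the references you cite.

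For Item 3 you take a genuinely different, softer route. The paper proves~\eqref{eq: psi eta B} by a direct quantitative manipulation: it isolates $\frac1A|x-x^0|^2$ from the definition of $\psi$, bounds $|(x-x^0)\cdot\nabla\phi(x^0)|$ from both the constraints $\psi\ge-\eta$ and $\phi\le\rho$, absorbs the error terms, and lands on the explicit bound $|x-x^0|^2\le 2A(2\eta+A\eta^2)$; the inclusion~\eqref{eq: psi etatilde} is then deduced from the $\rho=\eta=0$ case of that computation (which is exactly your ``touching point'' claim $\{\phi\le0\}\cap\{\psi\ge0\}\cap\overline{B}=\{x^0\}$) by the same compactness argument you use. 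You instead prove the touching-point claim first, via the parallel/perpendicular decomposition and the identity $\phi-\psi=-(A\alpha^2-\tfrac1A)y_\parallel^2+\tfrac1A|y_\perp|^2+o(|y|^2)$, and then derive \emph{all three} inclusions by continuity and compactness on the annulus. Both are valid; your version is more conceptual and makes the single geometric fact doing the work explicit, while the paper's direct computation buys an explicit dependence of $\eta_0$ on $R$ and $A$ rather than an unquantified compactness constant (irrelevant for the way the lemma is used, since $\eta_0$ is never tracked quantitatively). You also correctly identify the one ordering constraint that matters, namely that $R_0$ must be chosen after $A$, and your choice of a single $\rho<\min(c,c')$ satisfying both~\eqref{eq: psi eta B} and~\eqref{eq: psi etatilde} respects the quantifier structure of the statement.
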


Conditions~\eqref{eq: psi eta B}-\eqref{eq: psi etatilde}-\eqref{eq: psi eta r} are illustrated on Figure~\ref{f:local-geom-set}.

\begin{figure}[h!]
  \begin{center}
    \input{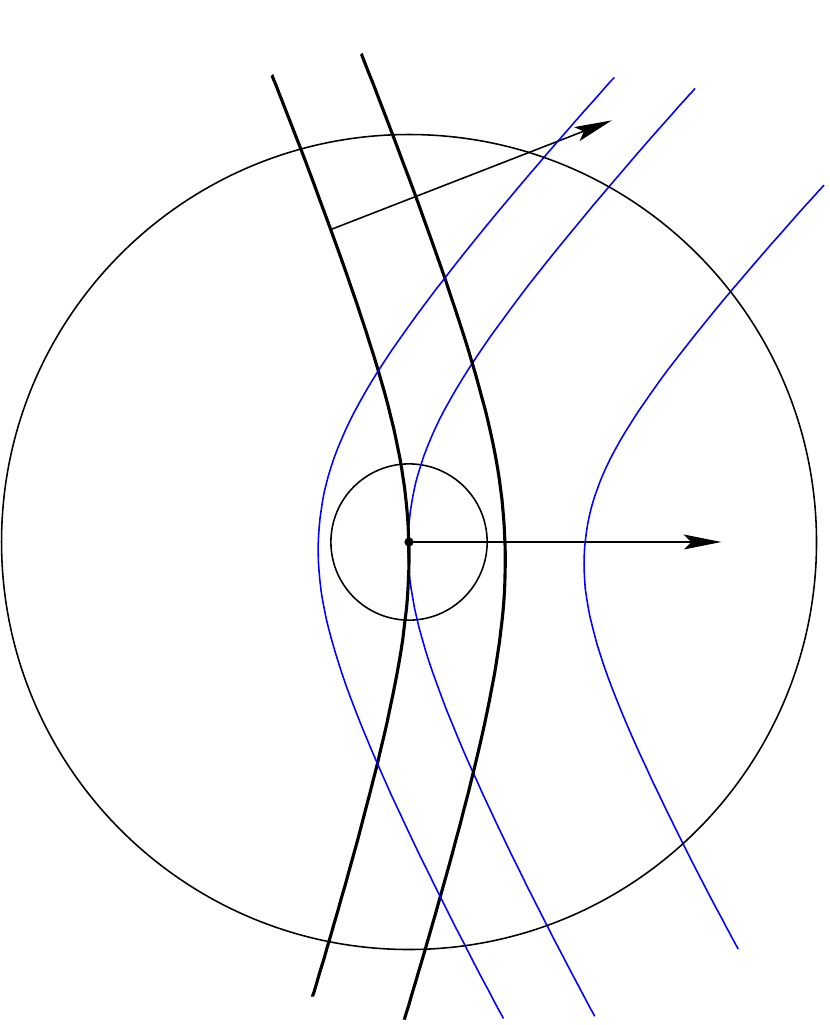_t} 
    \caption{Local geometry of the level sets of the convexified function $\psi$ (in the case $N=$euclidean distance)}
    \label{f:local-geom-set}
 \end{center}
\end{figure}
\bnp
The first item directly follows from the definition of $\psi$  as a second order perturbation of the Taylor expansion of $\phi$ at $x^0$.
\medskip
The proof of the pseudoconvexity in Item 2 is very similar to ~\cite[Lemma 4.1]{RZ:98} or \cite[Lemma 7.4]{Hor:97}. We sketch it for sake of completeness. 

Let us compute $\Re \left\{ \overline{p}, \{p ,\psi \} \right\}$:  we have 
\bna
\Re \left\{ \overline{p}, \{p ,\psi \} \right\}=\Re \left(\frac{\partial^2 p}{\partial \xi\partial x }\left[\frac{\partial \bar{p}}{\partial \xi};\nabla \psi\right]+\psi''_{xx}\left[\frac{\partial \bar{p}}{\partial \xi};\frac{\partial p}{\partial \xi}\right] -\frac{\partial^2p}{\partial \xi^2} \left[\frac{\partial \bar{p}}{\partial x};\nabla \psi\right]\right) .
\ena
Since $\nabla\psi(x^0)=\nabla\psi(x^0)$, we have
\bna
\Re \left\{ \overline{p}, \{p ,\psi \} \right\} (x^0 , \xi)
= \Re \left\{ \overline{p}, \{p ,\phi \} \right\} (x^0 , \xi)
+2A \left|\nabla_x\phi(x^0) \cdot \frac{\partial p}{\partial \xi}(x^0,\xi) \right|^2 - \frac{2}{A}\left|\frac{\partial p}{\partial \xi}(x^0,\xi)\right|^2.
\ena
In this identity, all terms are homogeneous of order $2m-2$ in the variable $\xi$, so it is enough to prove the estimate for $\xi \in \mathbb{S}^{n-1}$
Hence, applying Lemma~\ref{l:fgh-three-fcts} below on the compact set $K =\{ \xi \in \mathbb{S}^{n-1}, \xi_a = 0 , p(x^0, \xi) = 0\}$, together with the first part of the pseudoconvexity assumption yields for $A$ large enough 
\bnan
\Re \left\{ \overline{p}, \{p ,\psi \} \right\} (x^0 ,\xi) >0  , 
&\text{ if } p(x^0,\xi) = 0 \text{ and } \xi_a = 0, \xi_b \neq 0 . \label{e:cond1-pseudocon}  
\enan
For the second estimate, we compute 
\bna
\frac{1}{i}\{ \overline{p}_\phi,p_\phi \} (x ,\xi)& = & \frac{1}{i} \left(\frac{\partial \bar{p}}{\partial \xi}(x,\xi-i\tau\nabla \phi)\frac{\partial p}{\partial x}(x,\xi+i\tau\nabla \phi) +i\tau \phi''_{xx}\left[\frac{\partial \bar{p}}{\partial \xi}(x,\xi-i\tau\nabla \phi);\frac{\partial p}{\partial \xi}(x,\xi+i\tau\nabla \phi)\right]\right)\\
&&-\frac{1}{i} \left(\frac{\partial \bar{p}}{\partial x}(x,\xi-i\tau\nabla \phi)\frac{\partial p}{\partial \xi}(x,\xi+i\tau\nabla \phi) -i\tau \phi''_{xx}\left[\frac{\partial \bar{p}}{\partial \xi}(x,\xi-i\tau\nabla \phi);\frac{\partial p}{\partial \xi}(x,\xi+i\tau\nabla \phi)\right]\right)\\
&=&C_{\tau,\phi,1}(x, \xi)+C_{\tau,\phi,2}(x, \xi) , 
\ena
with 
\bna
C_{\tau,\phi,1}(x, \xi):= \frac{1}{i} \left(\frac{\partial \bar{p}}{\partial \xi}(x,\bar{\zeta})\frac{\partial p}{\partial x}(x,\zeta)-\frac{\partial \bar{p}}{\partial x}(x,\bar{\zeta})\frac{\partial p}{\partial \xi}(x,\zeta)\right), \quad 
C_{\tau,\phi,2}(x, \xi):=  2\tau \phi''_{xx}\left[\frac{\partial \bar{p}}{\partial \xi}(x,\bar{\zeta});\frac{\partial p}{\partial \xi}(x,\zeta)\right] ,
\ena
where we have denoted $\zeta=\xi+i\tau\nabla \phi(x)$. 
But, we notice that for fixed $(x,\xi)$ (and when $\phi$ varies), $C_{\tau,\phi,1}(x, \xi)$ only depends on $\nabla \phi (x)$, while $C_{\tau,\phi,2}(x, \xi)$ is linear in $\phi''_{xx}(x^0)$, once $\nabla \phi(x^0)$ is fixed. So, since $\psi(x^0) = 0$ , $\nabla\psi(x^0) = \nabla\phi (x^0)$, and $\psi''_{xx}(x^0)=\phi''_{xx}(x^0)+2A^t\nabla \phi(x^0)\nabla \phi(x^0)-\frac{2}{A}Id$ we have $C_{\tau,\phi,1}(x^0, \xi) =  C_{\tau,\psi,1}(x^0, \xi)$, i.e.
\bnan
\label{e:Ctaupsi}
 \frac{1}{i}\{ \overline{p}_\psi,p_\psi \} (x^0 ,\xi) = C_{\tau,\phi,1}(x^0, \xi)+4A\tau  \left|\nabla_x\phi(x^0) \cdot \frac{\partial p}{\partial \xi}(x^0,\zeta) \right|^2 - \frac{4\tau}{A}\left|\frac{\partial p}{\partial \xi}(x^0,\zeta)\right|^2 .
\enan
In identity \eqref{e:Ctaupsi}, all terms are homogeneous of order $2m-1$ in the variables $(\tau ,\xi)$, so it is enough to prove the estimate for $(\tau, \xi) \in \mathbb{S}^{n}$, $\tau>0$. We now want this to be positive on the set $\{ (\tau, \xi) \in \mathbb{S}^{n}, \tau>0,  \xi_a = 0 , p_\phi(x^0, \xi) = 0\} = \{ (\tau, \xi) \in \mathbb{S}^{n}, \tau>0,  \xi_a = 0 , p_\psi(x^0, \xi) = 0\}$.

For this, notice first that $\left.\frac{\partial}{\partial \tau }\frac{1}{i}\{ \overline{p}_\phi,p_\phi \}\right|_{\tau=0}=2\Re \left\{ \overline{p}, \{p ,\phi \} \right\}$. Hence, we can write
\bnan
\label{eqncrochet1}
\frac{1}{i}\{ \overline{p}_\phi,p_\phi \}=\frac{1}{i}\{ \overline{p},p \}+2\tau \Re \left\{ \overline{p}, \{p ,\phi \} \right\}+\grando{\tau^2}, \quad \tau \to 0^+,
\enan
with $\grando{\tau^2}$ uniform on $(\tau, \xi) \in \mathbb{S}^{n}$. 

Moreover, by Taylor formula, we have $p_{\phi}=p+i\tau \nabla \phi\cdot\frac{\partial p}{\partial \xi}+\grando{\tau^2}=p+i\tau \left\{p,\phi\right\}+\grando{\tau^2}$, with $\grando{\tau^2}$ uniform on $(\tau, \xi) \in \mathbb{S}^{n}$. Hence, on the compact set $\{ (\tau, \xi) \in \mathbb{S}^{n}, \xi_a = 0 , p_\phi(x^0, \xi) = 0\}$, we have $p=-i\tau \left\{p,\phi\right\} +\grando{\tau^2}$.
But since $P$ is analytically principally normal, \eqref{principalnormalclassique} holds and we have $\{ \overline{p},p \}=\grando{p}$ on the compact set $\{ (\tau, \xi) \in \mathbb{S}^{n}, \xi_a = 0\}$.

In particular, on the set $\{ (\tau, \xi) \in \mathbb{S}^{n}, \xi_a = 0 , p_\phi(x^0, \xi) = 0,\tau \neq 0\}$, we have a constant $C$ so that $\left|\frac{1}{i\tau}\{ \overline{p},p \}\right|\leq C(|\left\{p,\phi\right\}|+|\tau|) $.
Getting back to \eqref{eqncrochet1}, it gives, on this set, the inequality
\bnan
\label{e:limittautozero}
\left|\frac{1}{i\tau}\{ \overline{p}_\phi,p_\phi \}-2 \Re \left\{ \overline{p}, \{p ,\phi \} \right\}\right|\leq C(|\left\{p,\phi\right\}|+|\tau|) .
\enan
Moreover, the first pseudoconvexity assumption~\eqref{e:pseudo-surface-1} and Lemma~\ref{l:fgh-three-fcts} below provide $C_1, C_2>0$ such that, on the set $\left\{\xi_a=0\right\}\cap \left\{|\xi|^2=1\right\} $, we have
\bna
2\Re \left\{ \overline{p}, \{p ,\phi \} \right\}+C_1\left(\left|p\right|^2+|\left\{p,\phi\right\}|^2\right)\geq C_2 .
\ena
This is also true by homogeneity for $|\xi|$ close to $1$ with a different constant. Hence, in the set $\{ (\tau, \xi) \in \mathbb{S}^{n}, \xi_a = 0 , p_\phi(x^0, \xi) = 0,\tau \neq 0\}$, there exist constants $\tilde C, C >0$ such that $|\left\{p,\phi\right\}|\leq \e$ and $|\tau|\leq \e$ imply 
\bna
 \frac{1}{i\tau}\{ \overline{p}_\phi,p_\phi \}\geq C_2- \tilde{C}\left(\left|p\right|^2+|\left\{p,\phi\right\}|^2+|\left\{p,\phi\right\}|+|\tau|\right)\geq C_2-C\e
\ena
where we have used $\left|p\right|\leq C|\tau|\leq C\e $ on this set.

Therefore, there exists $\e , C_3 >0$ such that in $\{ (\tau, \xi) \in \mathbb{S}^{n}, \xi_a = 0 , p_\phi(x^0, \xi) = 0,\tau \neq 0\}$, we have
\bna
|\left\{p,\phi\right\}|\leq \e,\quad  |\tau|\leq \e \Longrightarrow \frac{1}{i\tau}\{ \overline{p}_\phi,p_\phi \}\geq C_3.
\ena
We now extend $\frac{1}{i\tau}\{ \overline{p}_\phi,p_\phi \}$ to the compact set $K_{\e} =\{ (\tau, \xi) \in \mathbb{S}^{n}, \xi_a = 0 , p_\phi(x^0, \xi)= 0,0\leq \tau\leq \e\}$, by giving any positive value when $\tau=0$. We are in position to apply Lemma~\ref{l:fgh-three-fctsbis} with $g=\frac{1}{i\tau}\{ \overline{p}_\phi,p_\phi \}$ (its extension), $f=|\left\{p,\phi\right\}|^2$ and $h=\left|\frac{\partial p}{\partial \xi}(x^0,\zeta)\right|^2 $: This yields $\frac{1}{i\tau}\{ \overline{p}_\psi,p_\psi \} (x^0 ,\xi) >C$ on $K_\eps$. 

The case $\tau\geq \e$ is easier since $\frac{1}{i\tau}\{ \overline{p}_\phi,p_\phi \}$ is continuous. We apply directly Lemma~\ref{l:fgh-three-fcts} using the second pseudoconvexity assumption~\eqref{e:pseudo-surface-2}.

So, at this stage, we have proved, that there exist $C$ so that for $A$ large enough,  $\frac{1}{i\tau}\{ \overline{p}_\psi,p_\psi \} (x^0 ,\xi) >C$ on $\{ (\tau, \xi) \in \mathbb{S}^{n}, \xi_a = 0 , p_\phi(x^0, \xi)= 0,0<\tau\}$. Since, $p_\psi(x^0, \xi)=p_\phi(x^0, \xi)$, this yields
\bnan
\frac{1}{i}\{ \overline{p}_\psi,p_\psi \} (x^0 ,\xi) >0  , 
&\text{ if } p_\psi(x^0,\xi) = 0 \text{ and } \xi_a = 0,  \tau >0 .  \label{e:cond2-pseudocon}
\enan
Combining~\eqref{e:cond1-pseudocon} and~\eqref{e:cond2-pseudocon} implies that $\psi$ is a strongly pseudoconvex function in $\Omega \cap \{\xi_a = 0\}$ at $x^0$ for $P$.

\bigskip
Let us now prove the geometrical part of the lemma, i.e. Item 3. From now on, the parameter $A$ is fixed. To simplify the notation, we set $x^0 = 0$ and assume that $0\leq \rho\leq \eta$.  

Let $C_N$ a positive constant so that $\frac{1}{C_N}N(x,0)\leq |x|\leq C_N N(x,0)$.

Let us first prove \eqref{eq: psi eta B}. 
We have
$$
\frac{1}{A} |x|^2 = - \psi(x)  + x \cdot \nabla \phi (0) + A(x \cdot \nabla \phi (0))^2 + \frac12 \phi'' (0)(x,x) ,
$$
which implies 
$$
\frac{1}{A} |x|^2 \leq \eta  + x  \cdot \nabla\phi (0) + A(x\cdot \nabla \phi (0))^2 + \frac12 \phi'' (0)(x, x ), 
$$
on the set $\{\psi \geq - \eta\}$.
Moreover, the Taylor expansion of $\phi$ yields $x \cdot \nabla \phi (0)+ \frac12 \phi'' (0)(x , x) = \phi(x) + f(x)$ ,with $|f(x)|\leq \epsilon(|x|) |x|^2$, where $\epsilon : \R^+ \to \R^+$ is increasing and $\epsilon(s) \to 0^+$ as $s \to 0^+$. 
For $x \in \{\psi \geq -\eta \}\cap \{\phi \leq \rho \}$, we thus obtain
\begin{equation}
\label{eq:psiphitaylor}
\frac{1}{A} |x|^2 \leq   \eta  + \rho + A(x \cdot \nabla \phi (0))^2 + \epsilon(|x|) |x|^2 
\leq    2 \eta   + A(x \cdot \nabla \phi (0))^2 + \epsilon(|x|) |x|^2 .
\end{equation}
Moreover, for $x \in \{\psi \geq -\eta \} $, the definition of $\psi$ gives
\begin{align*}
x \cdot \nabla \phi (0) &=\psi(x) -  A(x \cdot \nabla \phi (0))^2
- \frac12 \phi'' (0)(x, x ) + \frac{1}{A} |x |^2 \\
& \geq -\eta -  (A C_0^2 + C_0/2  + 0) | x |^2  ,
\end{align*}
for $C_0 = \max(|\nabla \phi (0)| , | \phi'' (0)|)$.
Also, for $x\in  \{\phi \leq \rho \}$, we have
\begin{align*}
x\cdot \nabla \phi (0) &\leq \phi(x) + C_0 /2 | x|^2  
\leq  \rho+C_0 /2 | x |^2 \leq \eta +C_0 /2 | x |^2. 
\end{align*}
Combining the last two inequalities, we obtain for $x \in \{\phi \leq \rho \} \cap \{\psi \geq -\eta \} $,
\begin{align*}
|x \cdot \nabla\phi (0)| \leq \eta + (A C_0^2 + C_0/2) | x|^2  ,
\end{align*}
and hence 
\begin{align*}
|x \cdot \nabla\phi (0)|^2 \leq \eta^2 + 2 \eta(A C_0^2 + C_0/2) | x|^2  + (A C_0^2 + C_0/2)^2 | x|^4 .
\end{align*}
Coming back to~\eqref{eq:psiphitaylor}, this yields for $x \in \{\phi \leq \rho \} \cap \{\psi > -\eta \} $
\begin{align*}
\frac{1}{A} |x|^2 \leq 2\eta  + A\eta^2 + 2 A\eta(A C_0^2 + C_0/2) | x|^2  + A(A C_0^2 + C_0/2)^2 | x|^4 + \epsilon(|x|) |x|^2.
\end{align*}
For $x \in \{\phi \leq \rho \} \cap \{\psi \geq -\eta \} \cap B_N(0,R)$, this yields 
\begin{align*}
\frac{1}{A} |x|^2 \leq  2\eta  + A\eta^2 + 2 A\eta(A C_0^2 + C_0/2) | x|^2  + A(A C_0^2 + C_0/2)^2 (C_N R)^2| x|^2 + \epsilon(C_NR) |x|^2.
\end{align*}
Taking $R \leq R_0$ with $R_0 = R_0(A, C_0)$ sufficiently small such that 
$$
  A(A C_0^2 + C_0/2)^2 (C_N R)^2 + \epsilon(C_NR) < \frac{1}{4A} ,
$$
and $\eta <\eta_0$ sufficiently small such that 
$$
2 A\eta(A C_0^2 + C_0/2) < \frac{1}{4A},
$$
we have by absorption 
\begin{align*}
 |x|^2 \leq 2A( 2\eta  + A\eta^2).
\end{align*}
This gives $N(x,0) < \frac{R}{8}$ as soon as $\eta<\eta_0$ for $\eta_0 = \eta_0(A, C_0, R) $ sufficiently small. This concludes the proof of~\eqref{eq: psi eta B} for the chosen constants and as long as $0\leq \rho\leq \eta$.

\medskip
Let us now prove \eqref{eq: psi etatilde}. Note that performing exactly the same computation as before with $\rho=\eta=0$ and the same $R$, we obtain
\bnan
\label{caseta0}
\{\phi \leq 0 \} \cap \{\psi \geq 0 \} \cap \overline{B}_N(0,R)=\left\{0\right\} .
\enan
Assumme that the compact set $\left\{\psi\geq \eta_1 \right\} \cap \overline{B}_N(0,R)$ is nonempty, otherwise \eqref{eq: psi etatilde} is trivial. The minimum of $\phi$ on that set is reached for some point $x_m$. We have necessary $ \phi(x_m)>0$, otherwise, \eqref{caseta0} implies $x_m=0$, which is impossible since $\eta_1>0$ and $\psi(0)=0$. So, in particular, $x\in \left\{\psi\geq \eta_1 \right\} \cap \overline{B}_N(0 ,R)$ implies $\phi(x)\geq \phi(x_m)>0$. This is \eqref{eq: psi etatilde} with some apropriate $0<\rho< \min (\phi(x_m),\eta)$.

\medskip
Finally, Assertion \eqref{eq: psi eta r} is just a matter of continuity. Since $\psi(0)=0$, there exists $r>0$ such that $N(x,0)\leq r$ implies $|\psi(x)|\leq \eta_2$. 
\enp

\begin{remark}
\label{rem:limittautozero}
Note that the estimate~\eqref{e:limittautozero} implies in particular that $2 \Re \left\{ \overline{p}, \{p ,\phi \} \right\}$ is the limit as $\tau \rightarrow 0$ of $\frac{1}{i\tau}\{ \overline{p}_\phi,p_\phi \}$ on the subset $\{ (\tau, \xi) \in \mathbb{S}^{n}, \xi_a = 0 , p_\phi(x^0, \xi) =\left\{p_\phi,\phi\right\}(x^0, \xi)= 0,\tau \neq 0\}$. However, this is not used directly in the above proof.
\end{remark}
Now, thanks to Lemma \ref{l:phi-psi} and the Carleman estimate of  Theorem \ref{thmCarleman}, we have the following result.

\begin{corollary}
\label{c:geometric-setting}
Let $x^0 \in \Omega = \Omega_a \times \Omega_b \subset \R^{n_a} \times \R^{n_b}$ and $P$ be a partial differential operator on $\Omega$ of order $m$. Assume that 
\begin{itemize}
\item $P$ is analytically principally normal operator in $\{\xi_a = 0\}$ inside $\Omega$ (in the sense of Definition~\ref{def:anal principal normal});
\item there is a  function $\phi$ defined in a neighborhood of $x^0$ such that $\phi(x^0)=0$, and $\{\phi = 0\}$ is a $C^2$ strongly pseudoconvex oriented surface in the sense of Definition~\ref{def: pseudoconvex-surface}.
\end{itemize}
Then, there exists a quadratic polynomial $\psi : \Omega \to \R$, there exists $R_0>0$ such that $B(x^0 ,4 R_0) \subset \Omega$ and for any $R\in (0,R_0]$, there exist  $\eps, \delta ,\rho , r ,\mathsf{d} , \tau_0, C >0$, such that $\delta \leq \frac{\mathsf{d}}{8}$ and 
 \begin{enumerate}

\item The Carleman estimate 
\bnan
\label{Carleman-copie}
\tau \nor{Q_{\e,\tau}^{\psi}u}{m-1,\tau}^2
\leq C\left(\nor{Q_{\e,\tau}^{\psi}P u}{0}^2
+  \nor{e^{\tau(\psi-\mathsf{d})}P u}{0}^2
+ \nor{e^{\tau(\psi-\mathsf{d})}u}{m-1,\tau}^2\right)
\enan
holds for all $\tau \geq \tau_0$ and all $u \in C^\infty_0(B(x^0, 4R))$;
\item we have 
\bnan
\label{geomrhobis}
 & \Big( B(x^0 , 5R/2 ) \setminus  B(x^0 , R /2) \cap \left\{-9\delta\leq \psi\leq 2\delta \right\}\Big) \Subset \left\{\phi > 2\rho \right\}\cap B(x^0 , 3R), \\
& \label{geomrho}
 \left\{\delta/4\leq \psi\leq 2\delta \right\} \cap B(x^0 ,5R/2) \Subset \left\{\phi > 2\rho\right\} \cap B(x^0, 3R), \\
& \label{eq:Brsubsetpsi}
B(x^0,2r) \Subset \{-\delta / 2 \leq \psi \leq \delta /2\}\cap B(x^0, R) .
\enan
\end{enumerate}
\end{corollary}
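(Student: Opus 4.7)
The corollary essentially packages Lemma~\ref{l:phi-psi} together with Theorem~\ref{thmCarleman} under a coherent choice of constants, so the proof amounts to careful bookkeeping rather than new analysis.

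First, I would invoke Lemma~\ref{l:phi-psi} to produce the quadratic polynomial $\psi$: by items~1 and~2, it satisfies $\psi(x^0)=0$, $\nabla\psi(x^0) = \nabla\phi(x^0)\neq 0$, and is strongly pseudoconvex in $\{\xi_a=0\}$ at $x^0$ for $P$ in the sense of Definition~\ref{def: pseudoconvex-function}. I would then feed this $\psi$ into Theorem~\ref{thmCarleman}: since $\psi$ is quadratic and pseudoconvex, the theorem furnishes constants $\eps, \mathsf{R}_1, \mathsf{d}, \tau_0,C>0$ such that~\eqref{Carleman} holds for $u\in C^\infty_0(B(x^0,\mathsf{R}_1))$.

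Second, I would fix $R_0 := \frac14\min(\mathsf{R}_1, \tilde R_0)$, where $\tilde R_0$ denotes the radius produced by item~3 of Lemma~\ref{l:phi-psi} applied with $N$ the Euclidean distance. This choice guarantees $B(x^0,4R_0)\subset \Omega$ and ensures that for every $R\in(0,R_0]$ the Carleman estimate~\eqref{Carleman-copie} holds for $u\in C^\infty_0(B(x^0,4R))$ with the same $\eps, \mathsf{d}, \tau_0, C$, since enlarging the ball in Theorem~\ref{thmCarleman} only weakens the class of admissible test functions.

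Third, given $R\in(0,R_0]$, I would choose $\delta>0$ small enough that $\delta\leq\mathsf{d}/8$ and $10\delta<\eta_0$, where $\eta_0$ is the threshold produced by item~3 of Lemma~\ref{l:phi-psi} applied with the ball $B(x^0,4R)$. Setting $\eta := 10\delta$, $\eta_1 := \delta/4$, $\eta_2 := \delta/4$, the lemma yields $\tilde\rho, \tilde r>0$ satisfying \eqref{eq: psi eta B}--\eqref{eq: psi eta r} on $B(x^0,4R)$. I would finally define $\rho := \tilde\rho/2$ and $r := \tilde r/2$.

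Fourth, I would verify the three geometric inclusions. For~\eqref{geomrhobis}, the contrapositive of~\eqref{eq: psi eta B} gives $\phi > \tilde\rho = 2\rho$ on $(B(x^0,4R)\setminus B(x^0,R/2))\cap \{\psi\geq -10\delta\}$; since $10\delta>9\delta$ and $\overline{B(x^0,5R/2)}\subset B(x^0,3R)$, the closed set $(B(x^0,5R/2)\setminus B(x^0,R/2))\cap\{-9\delta\leq\psi\leq 2\delta\}$ is compactly contained in $\{\phi>2\rho\}\cap B(x^0,3R)$. Condition~\eqref{geomrho} follows at once from~\eqref{eq: psi etatilde} with $\eta_1=\delta/4$, again using $\overline{B(x^0,5R/2)}\subset B(x^0,3R)$. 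Finally,~\eqref{eq:Brsubsetpsi} follows from~\eqref{eq: psi eta r} with $\eta_2=\delta/4$: the lemma gives $B(x^0,\tilde r)\subset\{-\delta/4<\psi<\delta/4\}$, so $\overline{B(x^0,2r)}\subset B(x^0,\tilde r)\subset\{-\delta/2<\psi<\delta/2\}\cap B(x^0,R)$.

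The only real subtlety is notational: one must take slightly larger values for $\eta,\eta_1,\eta_2$ than the levels appearing in the corollary, and halve $\tilde\rho,\tilde r$ at the end, in order to upgrade the (merely) closed inclusions of Lemma~\ref{l:phi-psi} into the compact inclusions $\Subset$ of the corollary. No new pseudoconvex analysis beyond that carried out in Lemma~\ref{l:phi-psi} is required.
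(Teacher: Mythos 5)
Your proof is correct and follows essentially the same route as the paper: produce $\psi$ via Lemma~\ref{l:phi-psi}, invoke Theorem~\ref{thmCarleman} for the Carleman estimate, then apply item~3 of Lemma~\ref{l:phi-psi} with suitably adjusted levels $\eta,\eta_1,\eta_2$ (the paper uses $9\delta$, $\delta/4$, $\delta/2$) and relabel $\rho,r$ to upgrade to the compact inclusions. The only cosmetic slip is that with $r=\tilde r/2$ one has $\overline{B(x^0,2r)}=\overline{B}(x^0,\tilde r)\not\subset B(x^0,\tilde r)$, and one should also ensure $2r<R$; taking $r$ a bit smaller (e.g.\ $r=\min(\tilde r,R)/4$) fixes this.
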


 \bnp
First, Lemma \ref{l:phi-psi} furnishes the function $\psi$ for some $A$ (large enough in its proof) and $R_0>0$. Once $\psi$ is fixed, Theorem \ref{thmCarleman} yields the Carleman estimate~\eqref{Carleman-copie} for some constants $\mathsf{R}, \mathsf{d}, \tau_0 , \eps ,C$. Then, we take any $R< \min (\mathsf{R}/4,R_0/3)$ and $\delta < \min (\frac{\mathsf{d}}{8}, \eta_0/9)$. Finally, applying the conclusion of Lemma \ref{l:phi-psi} with $\eta=9\delta$, $\eta_1=\delta/4$, $\eta_2=\delta/2$ implies \eqref{geomrhobis}-\eqref{geomrho}-\eqref{eq:Brsubsetpsi}, with eventually some different constants, which concludes the proof of the corollary.
 \enp
\subsection{Step 2: Using the Carleman estimate}
\label{s:using-carleman}
From now on, we let $\Omega, x^0$, $P$ and $\phi$ be fixed as in Corollary~\ref{c:geometric-setting}. The function $\psi$, and constants $R_0$, $R:= R_0$ (that we fix now) and $\delta , \rho , r$ are provided accordingly by Corollary~\ref{c:geometric-setting}, as well as the constants $\mathsf{d} ,\tau_0, C$ of the Carleman estimate~\eqref{Carleman-copie}. We shall moreover assume that there exists $\mathsf{C}>0$ such that 
\bnan
\label{e:lambda-eq-mu}
 \frac{1}{\mathsf{C}}\mu\leq \lambda \leq \mathsf{C} \mu.
\enan
Actually, at the end of the proof, we will take $\lambda=c_1 \mu$, but we believe that to keep the notation $\lambda$ makes the presentation more readable by making a difference between $\mu$ which is the frequency and $\lambda$ which is the  regularization parameter. 
All the constants appearing in the following may depend upon the above ones.

\bigskip
Before going further, we need to introduce some cutoff functions that will be used all along the proof. We first let $\chi(s)$ be a smooth function supported in $(-8, 1)$ such that $\chi(s)=1$ for $s\in [-7, 1/2]$ and set
\begin{equation}
\label{def:chidelta}
\chi_\delta (s) := \chi  (s/\delta) .
\end{equation}
Hence, $\chi_\delta (s)$ is a smooth function supported in $(-8\delta, \delta)$ such that $\chi_\delta(s)=1$ for $s\in [-7\delta, \delta/2]$. 
We also define $\chit$ so that $\chit=1$ on $(-\infty, 3/2)$ and supported in $s\leq 2$, and denote as well $\tilde\chi_\delta (s) := \tilde\chi  (s/\delta)$.
We finally recall that the functions $\sigma_R$ and $\sigma_{2R}$ are defined in~\eqref{defb}.

\bigskip
In this part of the proof, we want to apply the Carleman estimate~\eqref{Carleman-copie} (with weight $\psi$ and constants $\mathsf{d} ,\tau_0, C$  given by Corollary~\ref{c:geometric-setting}) to the functions $\sigma_{2R}\sigma_{R,\lambda}\chit_{\delta}(\psi)\chi_{\delta,\lambda}(\psi)u$ (for any $u \in C^\infty_0(\R^n)$), which is indeed compactly supported in $B(x^0,4R)$ (according to the definition of $\sigma_{2R}$ as in~\eqref{defb}). We first need to estimate the following term
$$
\nor{Q_{\e,\tau}^{\psi}P\sigma_{2R}\sigma_{R,\lambda}\chit_{\delta}(\psi)\chi_{\delta,\lambda}(\psi)u}{0} ,
$$
that will appear in the right handside of the inequality.
Using $\supp(\chi_\delta ) \subset (-\infty ,  \delta)$ with Lemma \ref{lmboundchipsi}, together with~\eqref{e:lambda-eq-mu}, we first have
\bnan
\nor{Q_{\e,\tau}^{\psi}P\sigma_{2R}\sigma_{R,\lambda}\chit_{\delta}(\psi)\chi_{\delta,\lambda}(\psi)u}{0}&\leq& \nor{Q_{\e,\tau}^{\psi}\sigma_{2R}\sigma_{R,\lambda}\chit_{\delta}(\psi)\chi_{\delta,\lambda}(\psi)Pu}{0} \nonumber \\
&& +\nor{Q_{\e,\tau}^{\psi}[\sigma_{2R}\sigma_{R,\lambda}\chit_{\delta}(\psi)\chi_{\delta,\lambda}(\psi),P]u}{0} \nonumber\\
&\leq &C\mu^{1/2}e^{C\frac{\tau^2}{\mu}}e^{\delta \tau }
\nor{Pu}{B(x^0,4R)} \nonumber\\
&&+\nor{Q_{\e,\tau}^{\psi}[\sigma_{2R}\sigma_{R,\lambda}\chit_{\delta}(\psi)\chi_{\delta,\lambda}(\psi),P]u}{0} .
\label{eq:estimP}
\enan
The main task now consists in estimating the term containing the commutator, that we put in the following Lemma.
\begin{lemma}
\label{lemmacommut}
With the previous notations and assumptions, for any $\vartheta\in C^{\infty}_0(\R^n)$  such that $\vartheta(x)=1$ on a neighborhood of $\left\{\phi\geq 2\rho\right\}\cap B(x^0,3R)$, there exist $C>0$, $c>0$ and $N>0$ such that we have the estimate
\bnan
\nor{Q_{\e,\tau}^{\psi}[\sigma_{2R}\sigma_{R,\lambda}\chit_{\delta}(\psi)\chi_{\delta,\lambda}(\psi),P]u}{0}
& \leq &
 C e^{2\delta \tau} \nor{ M^{2\mu}_{\lambda}\vartheta_{\lambda} u}{m-1}   \nonumber \\
 & & +C\mu^{1/2}\tau^N\left(e^{-8\delta \tau}+e^{-\frac{\eps\mu^2}{8\tau}}+e^{-c\mu}e^{\delta \tau}\right)e^{C\frac{\tau^2}{\mu}}e^{\delta \tau }\nor{u}{m-1}
\label{lmeq:estimcommutat}
\enan
for any $u\in C^{\infty}_0(\R^n)$,  $\mu\geq 1$, $\lambda$ such that~\eqref{e:lambda-eq-mu} holds and $\tau \geq 1$. 
\end{lemma}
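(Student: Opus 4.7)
The plan is to expand the commutator by Leibniz and classify the resulting terms by which of the four cutoff factors $\sigma_{2R}$, $\sigma_{R,\lambda}$, $\chit_\delta(\psi)$, $\chi_{\delta,\lambda}(\psi)$ absorbs the derivatives. Iterating $[fg,P] = f[g,P] + [f,P]g$ produces a sum $\sum_{j=1}^4 A_j u$, where each $A_j$ contains $[f_j, P]$ (a differential operator of order $\leq m-1$ whose coefficients are derivatives of $f_j$) sandwiched with the other three cutoffs acting as multiplication operators. We then estimate $\nor{Q_{\e,\tau}^{\psi} A_j u}{0}$ for each $j$ separately.

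\textbf{Two types yield exponentially small contributions via disjoint supports.} For $j=1$ (derivatives on $\sigma_{2R}$): $\supp \nabla \sigma_{2R}$ lies in the annulus $\{2R \leq |x-x^0|\leq 4R\}$, disjoint from $\supp \sigma_R \subset \{|x-x^0| \leq 2R\}$, so Lemma~\ref{lmsuppdisjoint} gives that $\sigma_{R,\lambda}$ restricted to this support is $O(e^{-c\lambda})$. For $j=3$ (derivatives on $\chit_\delta(\psi)$): $\supp \chit'_\delta \subset [3\delta/2, 2\delta]$ is at distance $\geq \delta/2$ from $\supp \chi_\delta \subset (-\infty,\delta]$ in the $\psi$-variable, so Lemma~\ref{lmsuppdisjointchi} gives that $\chi_{\delta,\lambda}(\psi)$ on $\supp \partial^\alpha \chit_\delta(\psi)$ is $O(e^{-c\lambda})$. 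Combining with the pointwise bound $\nor{e^{\tau\psi}\chi_{\delta,\lambda}(\psi)\chit_\delta(\psi)}{L^\infty} \leq C\sqrt{\lambda}e^{2\delta\tau}e^{\tau^2/\lambda}$ from Lemma~\ref{lmboundchipsi} and the $L^2$-contractivity of $e^{-\e|D_a|^2/(2\tau)}$, these contributions fit within the $\mu^{1/2}\tau^N e^{-c\mu}e^{\delta\tau}e^{C\tau^2/\mu}\nor{u}{m-1}$ bound (using $\lambda \sim \mu$ via~\eqref{e:lambda-eq-mu}).

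\textbf{The main term} comes from $j=2$ and the upper branch of $j=4$. For $j=2$: $\partial^\alpha \sigma_{R,\lambda} = (\partial^\alpha \sigma_R)_\lambda$ has essential support in the annulus $\{R \leq |x-x^0| \leq 2R\}$; combined with the $\psi$-localization from $\chi_{\delta,\lambda}(\psi)\chit_\delta(\psi)$ (restricting to $\psi \in (-9\delta, 2\delta)$ up to $O(e^{-c\lambda})$), this lies in $\{\phi > 2\rho\}\cap B(x^0, 3R)$ by~\eqref{geomrhobis}, where $\vartheta = 1$. For $j=4$ we split $\chi'_\delta = (\chi'_\delta)^- + (\chi'_\delta)^+$ into its two disjoint support components, so that $\partial^\alpha \chi_{\delta,\lambda}(\psi)$ splits accordingly; the $+$ part has $\psi \in (\delta/2, \delta)$ and, by~\eqref{geomrho}, is supported in $\{\phi>2\rho\}\cap B(x^0, 5R/2)$, where $\vartheta = 1$. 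For these good terms Lemma~\ref{lmsupHK} allows us to insert the factor $\vartheta_\lambda$ up to $O(e^{-c\lambda})$, and we then decompose $u = M^{2\mu}_{\lambda} u + (I - M^{2\mu}_{\lambda})u$. The low-frequency piece, after commuting $M^{2\mu}_\lambda$ through the analytic-like cutoffs via Lemma~\ref{l:commutateurnon}, contributes the main term $Ce^{2\delta\tau}\nor{M^{2\mu}_\lambda \vartheta_\lambda u}{m-1}$ plus $O(e^{-c\mu})$. For the high-frequency piece, we insert $I = M^{\mu/2}_\lambda + (I - M^{\mu/2}_\lambda)$ after $e^{-\e|D_a|^2/(2\tau)}$: Lemma~\ref{lemma:expAmu} controls $e^{-\e|D_a|^2/(2\tau)}(I - M^{\mu/2}_\lambda)$ by $e^{-\e\mu^2/(8\tau)} + e^{-c\lambda}$, while Lemma~\ref{lemma: analytic Chambertin} controls $M^{\mu/2}_\lambda e^{\tau\psi}(\text{cutoffs})(I-M^{2\mu}_\lambda)$ by $O(e^{-c\mu})$ since the middle factor is essentially analytic in $x_a$.

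Finally, the $-$ part of $j=4$ is supported in $\psi \in (-8\delta,-7\delta)$ where $e^{\tau\psi} \leq e^{-7\delta\tau}$; applying Lemma~\ref{lmboundchipsi} to this shifted branch produces the $e^{-8\delta\tau}$ contribution (absorbing the slack into the polynomial and $e^{C\tau^2/\mu}$ factors). \textbf{The main obstacle} is the uniform tracking of the frequency cutoffs $M^\mu_\lambda$ through mollified but non-compactly-supported factors such as $\sigma_{R,\lambda}$ and $\chi_{\delta,\lambda}(\psi)$: although these are essentially analytic in $x_a$, their commutators with $M^\mu_\lambda$ are only controlled up to $e^{-c\mu}$ errors, and keeping these errors compatible with the other exponential scales (in $\tau$ and $\tau^2/\mu$) requires the full arsenal of Section~\ref{s:prelim-estim}, in particular Lemmata~\ref{lemma: analytic Chambertin} and~\ref{lmdecrFourier}.
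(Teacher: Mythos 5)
Your proposal is correct and follows essentially the same route as the paper: Leibniz expansion of the commutator, disjoint-support/exponential-smallness for the terms where derivatives fall on $\sigma_{2R}$ or $\chit_{\delta}(\psi)$, the split of $\chi_\delta'$ into the $\psi\in[-8\delta,-7\delta]$ branch (giving $e^{-7\delta\tau}$) and the $\psi\in[\delta/2,\delta]$ branch, geometric localization of the remaining terms via~\eqref{geomrhobis}--\eqref{geomrho} to the region where $\vartheta=1$, and the frequency decomposition handled by Lemmata~\ref{lemma:expAmu}, \ref{lemma: analytic Chambertin}, \ref{lmsupHK} and \ref{l:commutateurnon}. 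The only differences are organizational (telescoping the commutator versus the paper's direct classification of Leibniz terms, and slightly different named frequency thresholds), and do not affect the argument.
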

\bnp
The operator $P$ can be written $P=\sum_{|\alpha|\leq m}p_{\alpha}(x)\partial^{\alpha}$, with $p_{\alpha}$ smooth and analytic in $x_a$ in a neighborhood of $B(x^0,4R) \subset \Omega$. By the Leibniz rule, we have
\begin{multline*}
p_{\alpha}(x)\partial^{\alpha}\left(\sigma_{2R}\sigma_{R,\lambda} \chit_{\delta}(\psi)\chi_{\delta,\lambda}(\psi)u\right)\\
=p_{\alpha}(x)\sum_{ \alpha_1+\alpha_2+\alpha_3+\alpha_4+\alpha_5=\alpha}C_{(\alpha_i)} \partial^{\alpha_1}(\chi_{\delta,\lambda}(\psi))\partial^{\alpha_2}(\sigma_{2R})\partial^{\alpha_3}(\sigma_{R,\lambda}) \partial^{\alpha_4}(\chit_{\delta}(\psi))\partial^{\alpha_5}u.
\end{multline*}
The commutator $[\chit_{\delta}(\psi)\chi_{\delta,\lambda}(\psi)\sigma_{2R}\sigma_{R,\lambda},P]$ consists in all terms in the sum where at least one of the $\alpha_i$ is non zero, for $i=1$, $2$, $3$ or $4$. Hence, we can split it in a sum of  differential operators of order $m-1$ as
 $$[P,\sigma_{2R}\sigma_{R,\lambda} \chit_{\delta}(\psi)\chi_{\delta,\lambda}(\psi)] = B_1 + B_2 + B_3+B_4,$$
where 
\begin{enumerate}
\item $B_1$ contains the terms with $\alpha_1\neq 0 $ and $\alpha_2=\alpha_4=0$;
\item $B_2$ contains some terms with $\alpha_2\neq 0$;
\item $B_3$ contains the terms with $\alpha_3\neq 0$ and $\alpha_1=\alpha_2=\alpha_4=0$;
\item $B_4$ contains some terms with $\alpha_4\neq 0$.
\end{enumerate}
Note that some terms could belong to several categories, and that all terms are supported in $\{ \psi\leq 2 \delta \} \cap B(x^0 ,4 R)$.  More precisely, we have
\begin{enumerate}
\item $B_1$ consists in terms where there is at least one derivative on $\chi_{\delta,\lambda}(\psi)$ and none on $\sigma_{2R}$ and $\chit_{\delta}(\psi)$. According to the definition of $\chi$ and \eqref{def:chidelta}, there are only two possibilities for the localization of a derivative of $\chi_{\delta}$. Since we have $\chi_{\delta,\lambda}'= \frac{1}{\delta}(\chi')_{\delta,\lambda}$, then $\partial^{\alpha_1}(\chi_{\delta,\lambda}(\psi))$ with $\alpha_1\neq 0$ can be decomposed in two categories of terms: we shall use the notation $\chi_{\delta,\lambda}^{-}$ for those terms supported in $[-8\delta,-7\delta]$ and $\chi_{\delta,\lambda}^{+}$ for those supported in $[\delta/2,\delta]$. Hence, the term $B_1$ is a sum of generic terms of the form 
\bna
B_{\pm}= b_\pm(x) \d^{\gamma}
=f \sigma_{2R} \d^{\beta}(\sigma_{R,\lambda}) \chi_{\delta,\lambda}^{\pm}(\psi) \chit_{\delta}(\psi)\d^{\gamma} ,
\ena
where $|\beta|,|\gamma|\leq m-1$, $f\in C^{\infty}_0(\R^n)$ is analytic in $x_a$ in $B(x^0,4R)$, and $\chi_{\delta}^{\pm}$ is a derivative of $\chi_{\delta}$ (with the above convention for the superscript $\pm$). The function $f$ actually contains some terms coming from $p_{\alpha}$ and some derivatives of $\psi$. 
Notice that in the absence of regularization (i.e. the subscript $\lambda$), $B_+$ would be supported in 
$$
 \Big( \left\{\delta/2\leq \psi\leq \delta \right\} \cap B(x^0 ,2R) \Big)\subset \Big( \{\phi > 2\rho \}\cap \{ \psi\leq \delta \} \cap B(x^0 ,2 R) \Big),
 $$
 and $B_-$ in $\left\{-8 \delta\leq \psi\leq -7\delta \right\} \cap B(x^0 ,2R)$.
 
\item $B_2$ consists in terms where there is at least one derivative on $\sigma_{2R}$. Hence, $B_2$ is a sum of generic terms of the form
\bna
\check{B}_{2}= b_2(x) \d^{\gamma}
=\widetilde{b} \d^{\beta}(\sigma_{R,\lambda}) (\chi^{(k)})_{\delta,\lambda}(\psi)\d^{\gamma} ,
\ena
where $k, |\beta|,|\gamma| \leq m-1$, the function $\widetilde{b}$ is smooth supported in $B(x^0 , 4R ) \setminus  B(x^0 , 2R )$ and 
$\widetilde{b}$ contains derivatives of $\sigma_{2R}$, some terms of $p_{\alpha}(x)$, and potentially some derivatives of $\psi$ or $\chit_{\delta}(\psi)$. 
\item $B_3$ consists in terms where there is at least one derivative on $\sigma_{R,\lambda}$ and none on $\chi_{\delta,\lambda}(\psi)$, $\chit_{\delta}(\psi)$ and $ \sigma_{2R}$. Hence, $B_3$ is a sum of generic terms of the form
\bna
\check{B}_{3}= b_3(x) \d^{\gamma}
= f \sigma_{2R} \d^{\beta}(\sigma_{R,\lambda})  \chi_{\delta,\lambda}(\psi)\chit_{\delta}(\psi)\d^{\gamma} ,
\ena
where $f$ is smooth in $(x_a,x_b)$, analytic in $x_a$ in a neighborhood of $B(x^0,4R)$, $|\beta|\geq 1$ and $|\beta|,|\gamma| \leq m-1$.
Notice also that in the absence of regularization (i.e. the subscript $\lambda$), $\check{B}_3$ would be supported in 
 $$
\Big(  \left\{-8\delta\leq \psi\leq \delta \right\} \cap B(x^0 ,2R)\setminus B(x^0 ,R) \Big)
\subset 
\Big(\{\phi > 2\rho \}\cap \{ \psi\leq \delta \} \cap B(x^0 , 2R) \Big).
 $$
\item $B_4$ consists in terms where there is at least one derivative on $\chit_{\delta}(\psi)$. Hence, $B_4$ is a sum of generic terms of the form
\bna
\check{B}_{4}= b_4(x) \d^{\gamma}
=\widetilde{b} \d^{\beta}(\sigma_{R,\lambda}) (\chi^{(k)})_{\delta,\lambda}(\psi)\d^{\gamma}
\ena
where $k, |\beta|,|\gamma| \leq m-1$ and the function $\widetilde{b}$ is smooth supported in $B(x^0 , 4R )\cap \left\{\psi\in [3\delta/2,2\delta]\right\}$ and $\widetilde{b}$ contains derivatives of $\sigma_{2R}$, some terms from $p_{\alpha}(x)$, and some derivatives of $\psi$ or $\chit_{\delta}(\psi)$. 
\end{enumerate}

Now, proving an estimate of the last term in~\eqref{eq:estimP} consists in estimating successively the associated expressions with the generic terms $B_{\pm}$, $\check{B}_{2}$, $\check{B}_{3}$, $\check{B}_{4}$; the final estimate then follows as the LHS of ~\eqref{lmeq:estimcommutat} is bounded by a finite sum of such terms.

\bigskip
\noindent\textbf{Estimating $B_-$.}
Starting with $B_-$, we have, using Lemma \ref{lmboundchipsi} applied to $\chi_{\delta}^-$,
\bnan
\label{estimation B-}
\nor{Q_{\e,\tau}^{\psi}B_- u}{0}\leq \nor{e^{\tau \psi}B_- u}{0}\leq C_\delta \lambda^{1/2}e^{-7 \delta \tau}e^{\frac{\tau^2}{\lambda}} \nor{u}{m-1}\leq C \mu^{1/2}e^{-7 \delta \tau}e^{C\frac{\tau^2}{\mu}} \nor{u}{m-1}.
\enan

\noindent\textbf{Estimating $B_2$.}
Concerning $B_2$, we use Lemma \ref{lmboundchipsi} applied to $\chi^{(k)}_{\delta}$ and Lemma \ref{lmsuppdisjoint} applied to $\widetilde{b}$ and $\d^{\beta}(\sigma_R)$ where $\supp(\widetilde{b}) \cap \supp(\sigma_R) = \emptyset$.
This yields 
\bnan
\label{e:B_2 final}
\nor{Q_{\e,\tau}^{\psi}B_2 u}{0}\leq \nor{e^{\tau \psi}B_2 u}{0}\leq C_\delta \lambda^{1/2}e^{\delta \tau}e^{\frac{\tau^2}{\lambda}}e^{-c\lambda} \nor{u}{m-1}\leq C \mu^{1/2}e^{\delta \tau}e^{C\frac{\tau^2}{\mu}}e^{-c\mu} \nor{u}{m-1}.
\enan

\noindent\textbf{Estimating $B_4$.}
For $B_4$, we use $e^{\tau\psi} \leq e^{2\delta \tau}$ and $\left|(\chi^{(k)})_{\delta,\lambda}(\psi)\right|\leq Ce^{-c\lambda} $ on $\left\{\psi\in [3\delta/2,2\delta]\right\}$ thanks to Lemma \ref{lmsuppdisjoint} applied to $\chi^{(k)}$ and $\mathds{1}_{[3\delta/2,2\delta]}$.
This yields 
\bnan
\label{e:B_4 final}
\nor{Q_{\e,\tau}^{\psi}B_4 u}{0}\leq \nor{e^{\tau \psi}B_4 u}{0}\leq C_\delta e^{2\delta \tau}e^{-c\lambda} \nor{u}{m-1}\leq Ce^{2\delta \tau}e^{-c\mu} \nor{u}{m-1}.
\enan

\noindent\textbf{First estimates on $B_+$ and $B_3$.}
Concerning $B_{\star}$, with $\star=+$ or $\star= 3$, we have
\bna
\nor{Q_{\e,\tau}^{\psi}B_{\star} u}{0}
& = & \nor{e^{-\eps \frac{|D_a|^2}{2\tau}}e^{\tau \psi} B_{\star} u}{0}
\leq \nor{e^{-\eps \frac{|D_a|^2}{2\tau}}M^{\mu}_{\lambda} e^{\tau \psi} B_{\star} u}{0}
+ \nor{e^{-\eps \frac{|D_a|^2}{2\tau}}(1- M^{\mu}_{\lambda}) e^{\tau \psi} B_{\star} u}{0} \nonumber \\
& \leq & \nor{M^{\mu}_{\lambda} e^{\tau \psi} B_{\star} u}{0}
+ C\left(e^{-\frac{\eps\mu^2}{8\tau}}+e^{-c\mu}\right)\nor{ e^{\tau \psi} B_{\star} u}{0} \nonumber \\
& \leq & \nor{M^{\mu}_{\lambda} e^{\tau \psi} B_{\star} u}{0}
+ C\lambda^{1/2}\left(e^{-\frac{\eps\mu^2}{8\tau}}+e^{-c\mu}\right) e^{C\frac{\tau^2}{\mu}} e^{\delta \tau }\nor{u}{m-1} ,
\ena
where the second inequality comes from the application of Lemma \ref{lemma:expAmu} and the third from Lemma \ref{lmboundchipsi}.

Next, concerning the term with $\nor{M^{\mu}_{\lambda} e^{\tau \psi} B_{\star} u}{0}$, we have $B_{\star} = b_\star \d^{\gamma}$ where $\star$ is either $+$ or $3$. So, we can estimate
\bna
\nor{M^{\mu}_{\lambda}  e^{\tau \psi} B_{\star} u}{0}
&\leq& \nor{M^{\mu}_{\lambda}  e^{\tau \psi}b_\star (1 - M^{2\mu}_{\lambda} ) \d^{\gamma} u}{0}
+ \nor{M^{\mu}_{\lambda}  e^{\tau \psi} b_\star M^{2\mu}_{\lambda}\d^{\gamma} u}{0} ,
\ena
where 
$$\nor{M^{\mu}_{\lambda} e^{\tau \psi}b_\star (1 - M^{2\mu}_{\lambda} ) \d^{\gamma} u}{0} \leq C\tau^N e^{C\frac{\tau^2}{\mu}}e^{2\delta \tau - c \mu}\nor{ u}{m-1} , $$
 according to Lemma~\ref{lemma: analytic Chambertin} applied in the specific case of \eqref{commutweaker}. Note that we use that $f\sigma_{2R}=f$ in a neighborhood of $B(x^0,2R) \supset \supp(\sigma_R)$, and $f\sigma_{2R}$ is therefore analytic on a neighborhood of this set. Next we have
\bna
 \nor{M^{\mu}_{\lambda} e^{\tau \psi} b_\star M^{2\mu}_{\lambda}\d^{\gamma} u}{0}
 & \leq & 
  \nor{e^{\tau \psi} b_\star M^{2\mu}_{\lambda}\d^{\gamma} u}{0} .
\ena 
Combining the four above estimates, we now have 
\bnan
\label{estimation tildeB 1}
\nor{Q_{\e,\tau}^{\psi}B_{\star} u}{0}
& \leq &  \nor{e^{\tau \psi} b_\star M^{2\mu}_{\lambda}\d^{\gamma} u}{0}+
C\mu^{1/2}\tau^N\left(e^{-\frac{\eps\mu^2}{8\tau}}+e^{\delta \tau }e^{-c\mu}\right)e^{C\frac{\tau^2}{\mu}}e^{\delta \tau }\nor{u}{m-1} .
\enan
Now, to estimate the first term of the RHS, we will distinguish whether $\star=+$ or $3$, using the geometry of the "almost" location of each $b_{\star}$.

\medskip
\noindent\textbf{Estimating $B_+$.} We have to treat  terms of the form
\bna
B_{+}=b_+\d^{\gamma}=f  \widetilde{\widetilde{b}}_{\lambda} \chi_{\delta,\lambda}^{+}(\psi)\chit_{\delta}(\psi) \d^{\gamma} ,
\ena
where $\widetilde{\widetilde{b}}=\d^{\beta}(\sigma_{R})$, $|\beta|\leq m-1$, is supported in $B(x^0,2R)$ and $f\in C^{\infty}_0(\R^n)$. We decompose $\R^n$ as 
\bna
\R^n&=&O_1\cup O_2 \cup O_3, \quad \text{ with}\\
O_1 &=&  \left\{\psi\notin [\delta/4,2\delta]\right\}\cap B(x^0,5R/2) ,\\
O_2& =&  B(x^0,5R/2)^c ,\\
O_3 &=&   \left\{\psi\in [\delta/4,2\delta]\right\}\cap B(x^0,5R/2) .
\ena
On $O_1$, since $\chi^+_{\delta}$ is supported in $[\delta/2,\delta]$ and using Lemma \ref{lmsuppdisjoint} with $f_2=\mathds{1}_{[\delta/4,2\delta]^c}$, we have $\left|\chi_{\delta,\lambda}^+(\psi)\right| \leq e^{-c\lambda }$. Moreover, we have $e^{\tau\psi}\leq e^{2\delta\tau}$ on the support of $\chit_{\delta}$. Hence, we obtain
\bna
\nor{e^{\tau \psi} b_+ M^{2\mu}_{\lambda}\d^{\gamma} u}{L^2(O_1)}\leq C e^{-c\lambda }e^{2\delta\tau}\nor{u}{m-1}\leq C e^{-c\mu }e^{2\delta\tau}\nor{u}{m-1}.
\ena
On $O_2$, using Lemma \ref{lmsuppdisjoint} with $f_2 = \mathds{1}_{O_2}$ and $f_1 = \widetilde{\widetilde{b}}$ and then Lemma \ref{lmboundchipsi}, we get 
\bna
\nor{e^{\tau \psi} b_+ M^{2\mu}_{\lambda}\d^{\gamma} u}{L^2(O_2)}\leq C \lambda^{1/2}e^{-c\lambda }e^{\delta \tau}e^{\frac{\tau^2}{\lambda}}\nor{u}{m-1}\leq C \mu^{1/2}e^{-c\mu }e^{\delta \tau}e^{C\frac{\tau^2}{\mu}}\nor{u}{m-1}.
\ena

Using \eqref{geomrho}, we can find a smooth cutoff function $ \tilde{\vartheta}$ such that $ \tilde{\vartheta}=1$ on a neighborhood of $O_3$ and supported in $\left\{\phi > 2\rho \right\}\cap B(x^0 , 3R) $. So, for $\lambda$ large enough, we have $\tilde{\vartheta}_{\lambda}\geq 1/2$ on $O_3$. Moreover, we have $|e^{\tau\psi}|\leq e^{2\delta \tau}$ on $O_3$, and thus, we obtain
\bna
\nor{e^{\tau \psi} b_+ M^{2\mu}_{\lambda}\d^{\gamma} u}{L^2(O_3)} 
& \leq & e^{2\delta \tau}\nor{b_+ M^{2\mu}_{\lambda}\d^{\gamma} u}{L^2(O_3)}\leq Ce^{2\delta \tau}\nor{M^{2\mu}_{\lambda}\d^{\gamma} u}{L^2(O_3)} \\
& \leq & C e^{2\delta \tau}\nor{\tilde{\vartheta}_{\lambda} M^{2\mu}_{\lambda}\d^{\gamma} u}{L^2(O_3)}\leq C e^{2\delta \tau}\nor{\tilde{\vartheta}_{\lambda} M^{2\mu}_{\lambda}\d^{\gamma} u}{L^2}.
\ena

Let $\tilde{\tilde{\vartheta}}\in C^{\infty}_0$ such that $\tilde{\tilde{\vartheta}} = 1$ on a neighborhood of $\supp(\tilde{\vartheta})$ and supported in $\left\{\phi > 2\rho \right\}\cap B(x^0 , 3R)$.  This is possible since $\supp \tilde{\vartheta} \subset \left\{\phi > 2\rho \right\}\cap B(x^0 , 3R) $. In particular, since $\vartheta =1$ on $\left\{\phi > 2\rho \right\}\cap B(x^0 , 3R)$ by the assumption, we have $\vartheta =1$ in a neighborhood of $\supp \tilde{\tilde{\vartheta}}$. 
 Then, according to Lemma \ref{lmsupHK} and the properties of $\tilde{\tilde{\vartheta}}$, we have 
\bna
 \nor{\tilde{\vartheta}_{\lambda} M^{2\mu}_{\lambda}\d^{\gamma} u}{L^2}
& \leq &  \nor{\tilde{\tilde{\vartheta}}_{\lambda}  M^{2\mu}_{\lambda}u}{m-1} + e^{- c \lambda }\nor{ u}{m-1},
\ena
and then
\bna
\nor{\tilde{\tilde{\vartheta}}_{\lambda}  M^{2\mu}_{\lambda}u}{m-1}
& \leq &  \nor{ M^{2\mu}_{\lambda}\vartheta_{\lambda} u}{m-1} + Ce^{- c \mu }\nor{ u}{m-1},
\ena
according to Lemma~\ref{l:commutateurnon}.

Combining the previous estimates with \eqref{estimation tildeB 1}, we have obtained
\bnan
\label{estimfinB+}
\nor{Q_{\e,\tau}^{\psi}B_{+} u}{0}
& \leq &  Ce^{2\delta \tau}\nor{ M^{2\mu}_{\lambda}\vartheta_{\lambda} u}{m-1}+
 C\mu^{1/2}\tau^N\left(e^{-\frac{\eps\mu^2}{8\tau}}+e^{-c\mu}e^{\delta \tau}\right)e^{C\frac{\tau^2}{\mu}}e^{\delta \tau }\nor{u}{m-1}
\enan
\medskip
\noindent\textbf{Estimating $B_3$.} We now treat terms of the form
\bna
B_{3}=b_3\d^{\gamma}=f\widetilde{\widetilde{b}}_{\lambda}  \chi_{\delta,\lambda}(\psi)\chit_{\delta}(\psi)\d^{\gamma} , 
\ena
where $\widetilde{\widetilde{b}}=\d^{\beta}(\sigma_{R})$, with $|\beta|\geq 1$, is supported in $B(x^0,2R)\setminus B(x^0,R)$ and $f\in C^{\infty}_0(\R^n)$. We decompose $\R^n$ as 
\bna
\R^n&=&O_1'\cup O_2' \cup O_3', \quad \text{ with} \\
O_1'&=& \left\{\psi\notin [-9\delta,2\delta]\cap \left\{|x-x^0|\in [R/2,5R/2]\right\}\right\} , \\
O_2' &=& \left\{|x-x^0|\notin [R/2,5R/2]\right\} ,\\
O_3' &=& \left\{\psi\in [-9\delta,2\delta]\cap \left\{|x-x^0|\in [R/2,5R/2]\right\}\right\} .
\ena
On $O_1' \cap \supp(\chit_{\delta}(\psi))$, we have $e^{\tau\psi}\left|\chi_{\delta,\lambda}(\psi)\right| \leq e^{-c \lambda }e^{2\delta\tau}$ as a consequence of Lemma~\ref{lmsuppdisjoint} with $f_2=\mathds{1}_{[-9\delta,2\delta]^c}$, since $\chi_{\delta}$ is supported in $[-8 \delta,\delta]$. We thus obtain
\bna
\nor{e^{\tau \psi} b_3 M^{2\mu}_{\lambda}\d^{\gamma} u}{L^2(O_1')}\leq C e^{-c \lambda }e^{2\delta\tau}\nor{u}{m-1}\leq C e^{-c\mu }e^{2\delta\tau}\nor{u}{m-1}.
\ena
On $O_2'$, using Lemma \ref{lmsuppdisjoint} with $f_2= \mathds{1}_{O_2'}$ and $f_1 = \widetilde{\widetilde{b}}$ and using the support of $\chit_{\delta}(\psi)$, we get 
\bna
\nor{e^{\tau \psi} b_3 M^{2\mu}_{\lambda}\d^{\gamma} u}{L^2(O_2')}\leq C e^{-c\lambda }e^{2\delta \tau}\nor{u}{m-1}\leq C e^{-c\mu }e^{2\delta \tau}\nor{u}{m-1}.
\ena

Using \eqref{geomrhobis}, we can find a function $\tilde{\vartheta}$ such that $ \tilde{\vartheta}=1$ on a neighborhood of $O_3'$ and supported in $\left\{\phi > 2\rho \right\}\cap B(x^0 , 3R) $. So, for $\lambda$ large enough, we have $\tilde{\vartheta}_{\lambda}\geq 1/2$ on $O_3'$. Moreover, we have $|e^{\tau\psi}|\leq e^{2\delta \tau}$ on $O_3'$. This yields
\bna
\nor{e^{\tau \psi} b_3 M^{2\mu}_{\lambda}\d^{\gamma} u}{L^2(O_3')} &\leq & e^{2\delta \tau}\nor{b_3 M^{2\mu}_{\lambda}\d^{\gamma} u}{L^2(O_3')}\leq Ce^{2\delta \tau}\nor{M^{2\mu}_{\lambda}\d^{\gamma} u}{L^2(O_3')}\\
& \leq & C e^{2\delta \tau}\nor{\tilde{\vartheta}_{\lambda} M^{2\mu}_{\lambda}\d^{\gamma} u}{L^2(O_3')}.
\ena

We can then finish the estimates for $B_3$ as for $B_+$ to get, combining the above estimates with \eqref{estimation tildeB 1}, 
\bnan
\label{estimfinB3}
\nor{Q_{\e,\tau}^{\psi}B_{3} u}{0}
& \leq &  Ce^{2\delta \tau}\nor{ M^{2\mu}_{\lambda}\vartheta_{\lambda} u}{m-1}
+ C\mu^{1/2}\tau^N\left(e^{-\frac{\eps\mu^2}{8\tau}}+e^{\delta \tau}e^{-c\mu}\right)e^{C\frac{\tau^2}{\mu}}e^{\delta \tau }\nor{u}{m-1}
\enan

\medskip
Combining~\eqref{estimation B-},~\eqref{e:B_2 final}, ~\eqref{estimfinB+} and \eqref {estimfinB3}, this concludes the estimate of the commutator~\eqref{lmeq:estimcommutat} and the proof of Lemma \ref{lemmacommut}.
\enp
\begin{remark}
\label{rklowerunif}
In the special case of terms $p_{\alpha}(x_b)\partial^{\alpha}$, that is some coefficients independent on $x_a$, we can have some better estimates uniform in the size of $p_{\alpha}$
\bna
\nor{Q_{\e,\tau}^{\psi}[\sigma_{2R}\sigma_{R,\lambda}\chit_{\delta}(\psi)\chi_{\delta,\lambda}(\psi),p_{\alpha}(x_b)\partial^{\alpha}]u}{0}&=&\nor{p_{\alpha}(x_b)Q_{\e,\tau}^{\psi}[\sigma_{2R}\sigma_{R,\lambda}\chit_{\delta}(\psi)\chi_{\delta,\lambda}(\psi),\partial^{\alpha}]u}{0}\\
&\leq &\nor{p_{\alpha}}{L^{\infty}}\nor{Q_{\e,\tau}^{\psi}[\sigma_{2R}\sigma_{R,\lambda}\chit_{\delta}(\psi)\chi_{\delta,\lambda}(\psi),\partial^{\alpha}]u}{0}.
\ena
Also, for $\alpha=0$, that is for a potential $V(x_b)$, we have $[\sigma_{2R}\sigma_{R,\lambda}\chi_{\delta,\lambda}(\psi) \chit_{\delta}(\psi),V]=0$, so this term does not give any contribution.

This will be useful in particular for getting estimates uniform to lower order perturbation.

Moreover, if $p_{\alpha}$ is only analytic in $x_a$ and bounded in $x_b$, all estimates of the commutator remain valid. Indeed, we only use Lemma \ref{lemma: analytic Chambertin} for $k=0$ which remains true in that setting. 
\end{remark}
\medskip
Now, we are ready to apply the Carleman estimate~\eqref{Carleman-copie} to obtain the estimate of the following lemma. 
\begin{lemma}
With the previous notations and assumptions, for any $\vartheta\in C^{\infty}_0(\R^n)$  such that $\vartheta(x)=1$ on a neighborhood of $\left\{\phi> 2\rho\right\}\cap B(x^0,3R)$, there exist $\mu_0>0$, $C>0$, $c>0$ and $N>0$ such that we have the estimate
\bnan
\tau \nor{Q_{\e,\tau}^{\psi}\sigma_{2R}\sigma_{R,\lambda}\chi_{\delta,\lambda}(\psi)\chit_{\delta}(\psi)u}{m-1,\tau}
& \leq  &
C \mu^{1/2}e^{C\frac{\tau^2}{\lambda}}e^{\delta\tau}\nor{Pu}{B(x^0,4R)}
+ C e^{2\delta \tau} \nor{ M^{2\mu}_{\lambda}\vartheta_{\lambda} u}{m-1}   \nonumber \\
 & & + C\mu^{1/2}\tau^N\left( e^{-8\delta \tau}+e^{-\frac{\eps\mu^2}{8\tau}}+ e^{\delta \tau -c\mu}\right)e^{C\frac{\tau^2}{\mu}}e^{\delta \tau }\nor{u}{m-1}  
 \label{eq:carlemantronque} .
\enan
for any $u\in C^{\infty}_0(\R^n)$,  $\mu\geq\mu_0$, $\lambda$ such that~\eqref{e:lambda-eq-mu} holds and $\tau \geq \tau_0$. 
\end{lemma}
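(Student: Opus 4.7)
The plan is to apply the Carleman estimate~\eqref{Carleman-copie} of Corollary~\ref{c:geometric-setting} to the cutoff-localized function
$$
v := \sigma_{2R}\sigma_{R,\lambda}\chit_{\delta}(\psi)\chi_{\delta,\lambda}(\psi)u,
$$
and then to combine the already-established inequality~\eqref{eq:estimP} with Lemma~\ref{lemmacommut} to handle the right hand side. Since $\sigma_{2R}$ is compactly supported in $B(x^0,4R)$, one has $v\in C^{\infty}_0(B(x^0,4R))$, so~\eqref{Carleman-copie} is applicable. Extracting a square root (via $\sqrt{a^2+b^2+c^2}\leq a+b+c$ for nonnegative reals) will yield
$$
\tau^{1/2}\nor{Q_{\e,\tau}^{\psi}v}{m-1,\tau}\leq C\Bigl(\nor{Q_{\e,\tau}^{\psi}Pv}{0}+\nor{e^{\tau(\psi-\mathsf{d})}Pv}{0}+\nor{e^{\tau(\psi-\mathsf{d})}v}{m-1,\tau}\Bigr),
$$
and the missing $\tau^{1/2}$ factor on the left hand side of the target inequality will be absorbed into the polynomial $\tau^{N}$ on the right.

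Next, I would treat separately the three terms on the right. For the first one, inequality~\eqref{eq:estimP} already decomposes $\nor{Q_{\e,\tau}^{\psi}Pv}{0}$ as $C\mu^{1/2}e^{C\tau^2/\mu}e^{\delta\tau}\nor{Pu}{B(x^0,4R)}$---which is exactly the first term of the target---plus a commutator, and Lemma~\ref{lemmacommut} was designed precisely to bound that commutator by $Ce^{2\delta\tau}\nor{M^{2\mu}_{\lambda}\vartheta_{\lambda}u}{m-1}$ plus the exponentially small remainders appearing in the statement.

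The two remaining terms rely on a pure support argument. Because $\chit_{\delta}(\psi)$ is an unmollified cutoff, $v$ is genuinely supported in $\{\psi\leq 2\delta\}$. Since $\delta\leq \mathsf{d}/8$ by Corollary~\ref{c:geometric-setting}, this gives $e^{\tau(\psi-\mathsf{d})}\leq e^{-6\delta\tau}$ on $\supp(v)$. Decomposing $Pv=\sigma_{2R}\sigma_{R,\lambda}\chit_{\delta}(\psi)\chi_{\delta,\lambda}(\psi)Pu+[P,v]u$ and using the crude bounds $\nor{v}{m-1,\tau}\leq C\tau^{m-1}\nor{u}{m-1}$ and $\nor{[P,v]u}{0}\leq C\mu^{N}\nor{u}{m-1}$, both remaining terms will be controlled by
$$
C\tau^{N}e^{-6\delta\tau}\bigl(\nor{Pu}{L^2(B(x^0,4R))}+\nor{u}{m-1}\bigr),
$$
which gets absorbed into the $e^{-8\delta\tau}$ contribution of the target by taking $\tau_0$ large enough to dominate the polynomial prefactor.

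The substance of the lemma has already been extracted in Lemma~\ref{lemmacommut}; the only remaining task is a careful bookkeeping of the exponential weights and of the polynomial factors in $\tau$, $\mu$ and $\lambda$, which under the constraint $\lambda\sim\mu$ of~\eqref{e:lambda-eq-mu} is routine. I therefore do not anticipate any substantial further obstacle; essentially all the analytic work was done while proving Lemma~\ref{lemmacommut}, and the present lemma is only the final packaging into a form amenable to the complex-analytic step that will follow in Section~\ref{s:complex-analysis-arg}.
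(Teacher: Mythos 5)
Your proposal follows the paper's proof essentially verbatim: apply \eqref{Carleman-copie} to $v=\sigma_{2R}\sigma_{R,\lambda}\chit_{\delta}(\psi)\chi_{\delta,\lambda}(\psi)u$, use \eqref{eq:estimP} together with Lemma \ref{lemmacommut} for the term $\nor{Q^{\psi}_{\e,\tau}Pv}{0}$, and kill the two $e^{\tau(\psi-\mathsf{d})}$ terms via the localization of $v$ near $\{\psi\lesssim\delta\}$ combined with $\delta\le\mathsf{d}/8$. The one bookkeeping caveat is that your support argument through $\chit_{\delta}$ alone only yields $e^{\tau(\psi-\mathsf{d})}\le e^{-6\delta\tau}$ on $\supp(v)$, one factor of $e^{\delta\tau}$ short of the stated $e^{-8\delta\tau}\cdot e^{\delta\tau}=e^{-7\delta\tau}$; the paper instead applies Lemma \ref{lmboundchipsi} to the product $e^{\tau\psi}\chi_{\delta,\lambda}(\psi)$ (whose unmollified cutoff is supported in $\psi<\delta$) to gain $e^{(\delta-\mathsf{d})\tau}\le e^{-7\delta\tau}$ at the harmless cost of a factor $\lambda^{1/2}e^{\tau^2/\lambda}$ --- a cosmetic adjustment, since the downstream complex-analysis argument has margin in the exponent anyway.
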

\bnp
We only need to estimate the last two terms in the RHS of Carleman estimate~\eqref{Carleman-copie} (the first term being estimated in~\eqref{eq:estimP} and Lemma~\ref{lemmacommut}).
Since we have chosen $\delta \leq \frac{\mathsf{d}}{8}$, we have that $\delta \leq \mathsf{d}-7\delta$ so that the support of $\chi_\delta$ gives using again Lemma \ref{lmboundchipsi}, for $\tau \geq \tau_0$, $\frac{1}{\mathsf{C}}\mu\leq \lambda \leq \mathsf{C} \mu$, 
\bnan
\label{eq:estimepsi-d}
\nor{e^{\tau(\psi-\mathsf{d})}\sigma_{2R}\sigma_{R,\lambda}\chi_{\delta,\lambda}(\psi)\chit_{\delta}(\psi) u}{m-1,\tau}
& \leq & C \lambda^{1/2}\tau^{m-1} e^{-7 \delta \tau} e^{\frac{\tau^2}{\lambda}}\nor{u}{m-1} \nonumber\\
& \leq & C \mu^{1/2}\tau^{m-1} e^{-7 \delta \tau} e^{C\frac{\tau^2}{\mu}}\nor{u}{m-1}.
\enan
We also need to estimate the term $e^{\tau(\psi-\mathsf{d})}P \sigma_{2R}\sigma_{R,\lambda}\chi_{\delta,\lambda}(\psi)\chit_{\delta}(\psi)u$: we have
\bnan
\label{estimPuexp}
\nor{e^{\tau(\psi-\mathsf{d})}P \sigma_{2R}\sigma_{R,\lambda}\chi_{\delta,\lambda}(\psi)\chit_{\delta}(\psi)u}{0}&\leq &\nor{e^{\tau(\psi-\mathsf{d})} \sigma_{2R}\sigma_{R,\lambda}\chi_{\delta,\lambda}(\psi)\chit_{\delta}(\psi)Pu}{0} \nonumber\\
&& +\nor{e^{\tau(\psi-\mathsf{d})}[\sigma_{2R}\sigma_{R,\lambda}\chi_{\delta,\lambda}(\psi)\chit_{\delta}(\psi),P]u}{0}\nonumber\\
&\leq & Ce^{-\tau \mathsf{d}} \lambda^{1/2}e^{\delta \tau}e^{\frac{\tau^2}{\lambda}}\left(\nor{Pu}{L^2(B(x^0,4R))}+\nor{u}{m-1}\right)\nonumber\\
&\leq & C\mu^{1/2}e^{-7\delta \tau}e^{C\frac{\tau^2}{\mu}}\left(\nor{Pu}{L^2(B(x^0,4R))}+\nor{u}{m-1}\right)
\enan
where we have used several times Lemma \ref{lmboundchipsi} to $\chi_{\delta,\lambda}(\psi)$ or some of its derivatives of order less than $m-1$.
So, the Carleman estimate~\eqref{Carleman-copie} applied to $\sigma_{2R}\sigma_{R,\lambda}\chi_{\delta,\lambda}(\psi)\chit_{\delta}(\psi)u$, together with~\eqref{eq:estimP},~\eqref{lmeq:estimcommutat},~\eqref{eq:estimepsi-d} and ~\eqref{estimPuexp}  gives for all $\tau_0 \leq \tau$,  $\mu$ large enough, $\lambda$ such that~\eqref{e:lambda-eq-mu} holds, the sought estimate~\eqref{eq:carlemantronque}.
\enp
\subsection{Step 3: A complex analysis argument}
\label{s:complex-analysis-arg}
The purpose of this part is to transfer the information given by the Carleman estimate to some estimates on the low frequencies of the function and conclude the proof of Theorem~\ref{th:alpha-unif}. The presence of the non local regularizing term $e^{-\frac{\e|D_a|^2}{2\tau}}$ makes this task more intricate than in the usual case and imposes to work by duality.
Following~\cite{Tataru:95, Hor:97, Tataru:99, Tatarunotes}, the idea is to proceed with the following three steps:
\begin{enumerate}
\item We make a kind of foliation along the level sets of $\psi$: if we want to measure $u$, we rather define the distribution $h_f=\psi_*(fu)$ by $\langle h_f , w\rangle_{\E'(\R), C^\infty(\R)}=\langle fu , w(\psi) \rangle_{\E'(\R^n), C^\infty(\R^n)}$ and estimate it for any test function $f$. Heuristically, $h_f(s)$ is the integral of $fu$ on the level set $\left\{\psi(x)=s\right\}$.
\item We notice that the Fourier transform of $h_f$ is $\widehat{h_f}(\zeta)= \langle fu , e^{-i\zeta \psi} \rangle$ and can be extended to the complex domain if $u$ is compactly supported. In particular, on the imaginary axis, $\widehat{h_f}(i \tau)= \langle f , ue^{\tau \psi} \rangle$. Since the Carleman estimate gives information on the norm of $e^{\tau \psi}u$ for $\tau$ large, this can be translated in some information on $\widehat{h_f}$ on the upper imaginary axis. A Phragm\'en-Lindel\"of type argument allows to transfer this estimate to the (almost) whole upper plan. 
\item Finally, using a change of contour, this information can be transferred to the real axis where we can estimate the real Fourier transform $\widehat{h_f}$.  
\end{enumerate}

Note that in the problem of (qualitative) unique continuation, the third step is replaced by a Paley-Wiener type argument: a bound of exponential type for $|\widehat{h_f}(\zeta)|$ on $\C$ implies some conditions on the support of $h_f$. 
Roughly speaking, if $\psi(x)=x_1$, the problem is to transfer some information on the Laplace transform (with respect to the $x_1$ variable) $\int_{x_1\geq C}  e^{\tau x_1} fu$ (given by the Carleman estimate) to some information on the Fourier transform using complex analysis. Moreover, since the Carleman estimate only gives some information on $e^{-\frac{\e|D_a|^2}{2\tau}} e^{\tau \psi}u$, we need to add some cutoff in frequency to this reasoning. 

\bigskip

More precisely, let us define 
\bna
\eta \in C^\infty_0((-4, 1)) , \quad \eta=1 \textnormal{ in } [-1/2,1/2]  \quad \text{and }\eta_\delta (s) := \eta(s/\delta) .
\ena 
We first prove the following lemma. We then conclude this section with the end of the proof of Theorem~\ref{th:alpha-unif} by estimating the left hand-side of the estimate of the lemma.
\begin{lemma} 
\label{l:local-lem-complex}
Under the above assumptions, there exists $\tilde \tau_0 = (\|\psi\|_{L^\infty(B(x^0,4R))}+11\delta)^\frac12\tau_0>0$ such that for any $\kappa , c_1 >0$, there exists $ \beta_0 ,C,c>0$ (depending on $\delta, \psi , \mathsf{d}, \tau_0, \kappa ,c_1 , \eps$), such that for any $0<\beta<\beta_0$,  for all $\mu \geq \frac{\tilde \tau_0}{\beta}$ and $u \in C^\infty_0(\R^n)$, we have  
\begin{align*}
\nor{M^{\beta \mu} \sigma_{2R} \sigma_{R,\lambda}\chi_{\delta,\lambda}(\psi)\chit_{\delta}(\psi)\eta_{\delta,\lambda}(\psi) u}{m-1}
 \leq C  e^{-c  \mu }   ( D + \nor{u}{m-1} ) , 
\end{align*}
with 
$$D=e^{\kappa\mu}\left(\nor{ M^{2\mu}_{\lambda}\vartheta_{\lambda} u}{m-1} +\nor{Pu}{B(x^0,4R)}\right) ,  \quad \lambda =2 c_1 \mu .$$ 
\end{lemma}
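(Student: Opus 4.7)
The function $v := \sigma_{2R}\sigma_{R,\lambda}\chi_{\delta,\lambda}(\psi)\chit_{\delta}(\psi)u$ is precisely the one controlled by the Carleman estimate \eqref{eq:carlemantronque}. The plan is to transfer this control of $Q^{\psi}_{\e,\tau}v = e^{-\e|D_a|^2/(2\tau)}(e^{\tau\psi}v)$ into a bound on $M^{\beta\mu}[\eta_{\delta,\lambda}(\psi)v]$ through a complex deformation argument in the spirit of Tataru~\cite{Tataru:95}, in which the Carleman parameter $\tau$ is turned into a complex variable. I would argue by duality, bounding $|\langle \eta_{\delta,\lambda}(\psi)v, g\rangle|$ against test functions $g$ with $\mathcal{F}_a g$ supported in $\{|\xi_a|\leq\beta\mu\}$; the $H^{m-1}$ nature of the target norm is absorbed by using the $\|\cdot\|_{m-1,\tau}$-norm already present in \eqref{eq:carlemantronque} and pairing with test functions in a suitable dual space.

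\textbf{Integral representation and contour shift.} Since $\eta_{\delta,\lambda}$ is the Gaussian regularization of $\eta_\delta\in C^{\infty}_0((-4\delta,\delta))$, its Fourier transform $\widehat{\eta_{\delta,\lambda}}(s)=\hat\eta_\delta(s)\,e^{-s^2/\lambda}$ extends to an entire function of $s\in\C$, with rapid Gaussian decay on horizontal lines. Fourier inversion in the scalar $\psi$-variable gives
\[
\langle \eta_{\delta,\lambda}(\psi)v, g\rangle = \frac{1}{2\pi}\int_\R \widehat{\eta_{\delta,\lambda}}(s)\,\Phi(s)\,ds, \qquad \Phi(s) := \langle e^{is\psi}v, g\rangle.
\]
Since $v$ has compact support, $\Phi$ is entire in $s$ with controlled exponential type on horizontal strips, so Cauchy's theorem allows deforming the contour to $\R - iT$ for any $T > 0$. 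On the new contour, $s=\sigma-iT$ yields $e^{is\psi}v = e^{T\psi}e^{i\sigma\psi}v$, precisely matching the Carleman weight. Using self-adjointness of $e^{-\e|D_a|^2/(2T)}$, I rewrite
\[
\Phi(\sigma - iT) = \langle Q^{\psi}_{\e,T}v,\ e^{\e|D_a|^2/(2T)}[e^{-i\sigma\psi}g]\rangle,
\]
so Cauchy-Schwarz gives $|\Phi(\sigma-iT)|\leq \|Q^{\psi}_{\e,T}v\|\cdot\|e^{\e|D_a|^2/(2T)}[e^{-i\sigma\psi}g]\|$, the first factor being controlled by \eqref{eq:carlemantronque} at $\tau=T$. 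For the second, since $\psi$ is quadratic in $x$, the multiplication by $e^{-i\sigma\psi}$ shifts the $x_a$-Fourier content of $g$ by at most $|\sigma|\cdot|\nabla_{x_a}\psi|$ (with an extra FBI-type smearing from the quadratic part), yielding $\|e^{\e|D_a|^2/(2T)}[e^{-i\sigma\psi}g]\|_{L^2}\leq C e^{\e(A|\sigma|+\beta\mu)^2/(2T)}\|g\|_{L^2}$ for some $A$ depending on $\psi$ on $B(x^0,4R)$.

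\textbf{Optimization.} Combining with the Paley--Wiener-type bound $|\widehat{\eta_{\delta,\lambda}}(\sigma-iT)|\leq C_N\langle\sigma\rangle^{-N}e^{4\delta T+(T^2-\sigma^2)/\lambda}$ (any $N\in\N$), the $\sigma$-integrand contains a Gaussian factor $e^{-\sigma^2/\lambda+\e A^2\sigma^2/(2T)}\langle\sigma\rangle^{-N}$ modulated by $\mu$-dependent terms. With $\lambda = 2c_1\mu$ and $T=\alpha\mu$ for a constant $\alpha$ satisfying $1/\lambda>\e A^2/(2T)$ (i.e.\ $\alpha>c_1\e A^2$), the Gaussian wins and the integral is bounded by a multiple of $e^{\e\beta^2\mu/(2\alpha)}$. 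Substituting the explicit form of the RHS of \eqref{eq:carlemantronque}, the data terms $\|Pu\|$ and $\|M^{2\mu}_\lambda\vartheta_\lambda u\|_{m-1}$ acquire an exponential factor of the form $e^{C'(\alpha,c_1,\delta,\e,\beta)\mu}$, which for any prescribed $\kappa$ can be brought below $e^{\kappa\mu}$ by taking $\beta_0$ small enough, while the error terms involving $\|u\|_{m-1}$ retain the exponential decay $e^{-c\mu}$ inherited from the factors $e^{-8\delta T}$, $e^{-\e\mu^2/(8T)}$ and $e^{\delta T-c\mu}$ in \eqref{eq:carlemantronque}. The Carleman applicability requires $T\geq\tau_0$, which, together with the exponential type bound on $\Phi$ controlled by $\|\psi\|_{L^\infty(B(x^0,4R))}+11\delta$, produces the threshold $\mu\geq\tilde\tau_0/\beta$ of the statement.

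\textbf{Main obstacle.} The principal technical difficulty is the multiplication by $e^{-i\sigma\psi}$ on the deformed contour: because $\psi$ is quadratic (rather than linear) in $x$, this is not a simple Fourier translation but an FBI-type operator that spreads the $x_a$-Fourier content. The analysis is made viable by the Gaussian decay $e^{-\sigma^2/\lambda}$ in $\widehat{\eta_{\delta,\lambda}}$, which confines the effective range of $\sigma$ to $|\sigma|\lesssim\sqrt\lambda$; combined with $\lambda = 2c_1\mu$, this keeps the induced frequency spread comparable to the cutoff scale $\beta\mu$. This is precisely why the regularization scale $\lambda$ is forced to be linear in $\mu$, and why the threshold $\tilde\tau_0=(\|\psi\|_{L^\infty(B(x^0,4R))}+11\delta)^{1/2}\tau_0$ appears in the statement.
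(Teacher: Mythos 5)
Your route is genuinely different from the paper's: you shift the whole contour to a single horizontal line $\R-iT$ and try to bound $\Phi(\sigma-iT)$ pointwise by Cauchy--Schwarz against $e^{\e|D_a|^2/(2T)}[e^{-i\sigma\psi}g]$, whereas the paper uses the Carleman estimate only on the imaginary axis (where $\sigma=0$ and no chirp appears), interpolates to nonzero real parts by a Phragm\'en--Lindel\"of/subharmonic-majorant argument (Lemmas~\ref{l:holomorphic} and~\ref{explicitgreenbis}), and then deforms the contour only over $|\Re\zeta|\le d\mu$ with two vertical returning segments, keeping the tails $|\Re\zeta|\ge d\mu$ on the real axis where the trivial a priori bound and the Gaussian decay of $\hat\eta_{\delta,\lambda}$ suffice.

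As written, your argument has a genuine gap exactly at the point you flag as the ``main obstacle''. First, the deformed contour runs over all $\sigma\in\R$, and your Cauchy--Schwarz factor $\nor{e^{\e|D_a|^2/(2T)}[e^{-i\sigma\psi}g]}{0}$ is not finite for all $\sigma$: the quadratic-in-$x_a$ part of $\psi$ (which is present after the convexification of Lemma~\ref{l:phi-psi}) turns $e^{-i\sigma\psi}g$ into a chirped function whose $\xi_a$-Fourier transform decays only like $e^{-c|\xi_a|^2/(|\sigma|\,\|\psi''_{x_ax_a}\|)}$, so $e^{\e|\xi_a|^2/(2T)}$ times it is integrable only for $|\sigma|\lesssim T/(\e\|\psi''_{x_ax_a}\|)$. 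The Gaussian factor $e^{-\sigma^2/\lambda}$ cannot ``confine'' the integral past that threshold, because it multiplies a quantity that is infinite there; you would have to modify the contour (which is what the paper's $\Gamma^V_\pm$ do). Second, even on the convergent range your scheme is over-constrained: the condition $1/\lambda>\e A^2/(2T)$ forces $\alpha=T/\mu>c_1\e A^2$, a lower bound independent of $\kappa$, while the Carleman right-hand side carries factors $e^{CT^2/\mu}e^{2\delta T}=e^{(C\alpha^2+2\delta\alpha)\mu}$ that must stay below $e^{\kappa\mu}$ (so that the data term is at most $Ce^{-c\mu}D$) and below $e^{8\delta T}$ (so that the error term still decays); for small $\kappa$ this requires $\alpha\lesssim\kappa$, incompatible with the lower bound unless $\psi$ is affine in $x_a$. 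The lemma is used with arbitrarily small $\kappa$ in the propagation argument, so this is not a removable restriction. The paper's Phragm\'en--Lindel\"of step avoids both problems at once, and the height of its final contour, $d\mu/2$ with $d\le\min(4c_1\delta,d_0)$, is free to be taken small.
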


\bnp
We now follow \cite[proposition~2.1]{Hor:97}. 
For any test function $f \in \S(\R^n)$, we define the following distribution (with $\beta>0$ to be chosen later on)
$$
\langle h_f , w\rangle_{\E'(\R), C^\infty(\R)} := \langle (M^{\beta\mu} f) \sigma_{2R}\sigma_{R,\lambda}\chi_{\delta,\lambda}(\psi)\chit_{\delta}(\psi) u , w(\psi) \rangle_{\E'(\R^n), C^\infty(\R^n)} .
$$
We choose the particular test functions $w = \eta_{\delta , \lambda}$, and want to estimate the quantity
\begin{align*}
 \langle h_f , \eta_{\delta,\lambda} \rangle_{\E'(\R), C^\infty(\R)} & =  \langle (M^{\beta \mu} f) \sigma_{2R}\sigma_{R,\lambda}\chi_{\delta,\lambda}(\psi)\chit_{\delta}(\psi) u , \eta_{\delta,\lambda}(\psi) \rangle_{\E'(\R^n), C^\infty(\R^n)} \\
& =  \langle M^{\beta \mu} \sigma_{2R}\sigma_{R,\lambda}\chi_{\delta,\lambda}(\psi)\chit_{\delta}(\psi)\eta_{\delta,\lambda}(\psi) u, f \rangle_{\S'(\R^n), \S(\R^n)} ,
\end{align*} 
uniformly with respect to $f$ to finally obtain an estimate on $\| M^{\beta\mu} \sigma_{2R}\sigma_{R,\lambda}\chi_{\delta,\lambda}(\psi)\chit_{\delta}(\psi)\eta_{\delta,\lambda}(\psi) u\|_{m-1}.$
As the Fourier transform of a compactly supported distribution, $\hat{h}_f$ is an entire function satisfying 
\begin{align*}
\hat{h}_f (\zeta) 
&= \langle (M^{\beta\mu} f) \sigma_{2R}\sigma_{R,\lambda}\chi_{\delta,\lambda}(\psi)\chit_{\delta}(\psi) u , e^{- i \zeta \psi} \rangle_{\E'(\R^n), C^\infty(\R^n)} \\
&= \langle \sigma_{2R}\sigma_{R,\lambda}\chi_{\delta,\lambda}(\psi)\chit_{\delta}(\psi) u , e^{- i \zeta \psi} (M^{\beta\mu} f) \rangle_{\E'(\R^n), C^\infty(\R^n)} \\
&= \langle e^{- i \zeta \psi} \sigma_{2R} \sigma_{R,\lambda}\chi_{\delta,\lambda}(\psi)\chit_{\delta}(\psi) u ,  (M^{\beta\mu} f) \rangle_{\E'(\R^n), C^\infty(\R^n)}  ,
 \quad \zeta \in \C .
\end{align*}
Using $\supp(\sigma_{2R}) \subset B(x^0, 4R)$, we have the {\em a priori} estimate
\begin{align}
\label{e:estim h sur tout C}
|\hat{h}_f (\zeta) |
 & = |\langle e^{- i \zeta \psi} \sigma_{2R} \sigma_{R,\lambda}\chi_{\delta,\lambda}(\psi)\chit_{\delta}(\psi) u ,  (M^{\beta\mu} f) \rangle_{\E'(\R^n), C^\infty(\R^n)} | \nonumber  \\
 & \leq  \|e^{- i \zeta \psi} \sigma_{2R} \sigma_{R,\lambda}\chi_{\delta,\lambda}(\psi)\chit_{\delta}(\psi) u \|_{m-1}  \|(M^{\beta\mu} f) \|_{1-m} \nonumber \\
& \leq C \langle  |\zeta| \rangle^{m-1} e^{|\Im (\zeta)| \|\psi\|_{L^\infty(B(x^0,4R))}} \|u\|_{m-1}  \|f\|_{1-m}, \quad \zeta \in \C .
\end{align}
Next, for $\zeta \in \R$, we have 
\begin{align}
\label{e:estim h sur R}
|\hat{h}_f (\zeta) |
 = |\langle e^{- i \zeta \psi} \sigma_{2R} \sigma_{R,\lambda}\chi_{\delta,\lambda}(\psi)\chit_{\delta}(\psi) u ,  (M^{\beta\mu} f) \rangle_{\E'(\R^n), C^\infty(\R^n)} | 
\leq C \langle \zeta \rangle^{m-1} \|u\|_{m-1}  \|f\|_{1-m}, \quad \zeta \in \R .
\end{align}

Finally, for $\zeta \in i \R^+$, $\zeta =i \tau$ with $\tau >0$, we have 
\begin{align*}
|\hat{h}_f (i \tau)|
&= |\langle (M^{\beta\mu} f) , e^{\tau \psi}\sigma_{2R}\sigma_{R,\lambda}\chi_{\delta,\lambda}(\psi)\chit_{\delta}(\psi) u \rangle_{C^\infty(\R^n), \E'(\R^n)} |\\
&= |\langle e^{\frac{\eps}{2\tau}|D_a|^2}(M^{\beta\mu} f) , e^{- \frac{\eps}{2\tau}|D_a|^2} e^{\tau \psi} \sigma_{2R}\sigma_{R,\lambda}\chi_{\delta,\lambda}(\psi)\chit_{\delta}(\psi) u \rangle_{\S(\R^n), \S'(\R^n)} | \\
&\leq \| e^{\frac{\eps}{2\tau}|D_a|^2}M^{\beta\mu} f\|_{1-m} \| e^{- \frac{\eps}{2\tau}|D_a|^2} e^{\tau \psi} \sigma_{2R}\sigma_{R,\lambda}\chi_{\delta,\lambda}(\psi)\chit_{\delta}(\psi) u  \|_{m-1} \\
&\leq e^{\frac{\eps}{2\tau}\beta^2\mu^{2}}\| f\|_{1-m} \|Q_{\e,\tau}^{\psi}\sigma_{2R}\sigma_{R,\lambda}\chi_{\delta,\lambda}(\psi)\chit_{\delta}(\psi) u  \|_{m-1} ,
\end{align*}
as $|\xi_a|\leq \beta\mu$ on $\supp (m^{\beta\mu})$. Using \eqref{eq:carlemantronque}, we obtain for all $ \tau \geq \tau_0$, $\mu \geq 1$, $\frac{1}{\mathsf{C}}\mu\leq \lambda \leq \mathsf{C} \mu$,
\bna
|\hat{h}_f (i \tau)|
& \leq  &
Ce^{\frac{\eps}{2\tau}\beta^2\mu^{2}}\| f\|_{1-m}  \Bigg(
 \mu^{1/2}e^{C\frac{\tau^2}{\mu}}e^{\delta\tau}\nor{Pu}{B(x^0,4R)}
+ e^{2\delta \tau} \nor{ M^{2\mu}_{\lambda}\vartheta_{\lambda} u}{m-1}   \nonumber \\
 & & + C\mu^{1/2}\tau^N\left( e^{-8\delta \tau}+e^{-\frac{\eps\mu^2}{8\tau}}+ e^{\delta \tau -c\mu}\right)e^{C\frac{\tau^2}{\mu}}e^{\delta \tau }\nor{u}{m-1}\Bigg).
\ena
Now, we choose 
$$\lambda=2c_1\mu ,$$ and to simplify the notation we write, for $\kappa>0$,
$$D=e^{\kappa\mu}\left(\nor{ M^{2\mu}_{\lambda}\vartheta_{\lambda} u}{m-1} +\nor{Pu}{B(x^0,4R)}\right) .$$ 
With this notation, we have
\bnan
\label{e:grosse-estim-hat-h}
|\hat{h}_f (i \tau)|
& \leq  &
Ce^{\frac{\eps}{2\tau}\beta^2\mu^{2}}\| f\|_{1-m}  \Bigg(
 \mu^{1/2}e^{C\frac{\tau^2}{\mu}}e^{\delta\tau}e^{-\kappa \mu}D
+ e^{2\delta \tau}  e^{-\kappa \mu}D  \nonumber \\
 & & +  \mu^{1/2}\tau^N\left( e^{-8\delta \tau}+e^{-\frac{\eps\mu^2}{8\tau}}+ e^{\delta \tau -c\mu}\right)e^{C\frac{\tau^2}{\mu}}e^{\delta \tau }\nor{u}{m-1}\Bigg)  \nonumber \\ 
& \leq  &
C\mu^{1/2}\tau^N e^{\frac{\eps}{2\tau}\beta^2\mu^{2}}e^{C\frac{\tau^2}{\mu}}e^{2\delta \tau }(D+\nor{u}{m-1})\| f\|_{1-m}  \left(
 e^{-c \mu}
 +e^{-\frac{\eps\mu^2}{8\tau}}+ e^{-9\delta \tau}\right) ,
\enan
where the new constant $c>0$ may depend on $\kappa$.

We now come back to the quantity we want to estimate:
\begin{align*}
\langle M^{\beta\mu} \sigma_{2R} \sigma_{R,\lambda}\chi_{\delta,\lambda}(\psi)\chit_{\delta}(\psi)\eta_{\delta,\lambda}(\psi) u , f \rangle_{\S'(\R^n), \S(\R^n)} = \langle h_f , \eta_{\delta,\lambda} \rangle_{\E'(\R), C^\infty(\R)}
 = \int_\R \hat{h}_f (\zeta) \hat{\eta}_{\delta ,\lambda}(- \zeta) d\zeta .
\end{align*} 
As $\eta_\delta \in C^{\infty}_0(-4 \delta, \delta)$, the function  $\hat{\eta}_\delta$ is holomorphic in the lower complex half-plane together with the estimate
$$
|\hat{\eta}_\delta(\zeta)| \leq C e^{-4 \delta \Im(\zeta)} , \quad \text{for } \Im(\zeta) \leq 0 ,
$$
that is,
\bnan
|\hat{\eta}_\delta(- \zeta)| \leq C e^{4 \delta \Im(\zeta)} , \quad \text{for } \Im(\zeta) \geq 0 ,\\
|\hat{\eta}_{\delta,\lambda}(- \zeta)|=|e^{-\frac{\zeta^2}{\lambda}}\hat{\eta}_{\delta}(- \zeta)| \leq C e^{\frac{\Im(\zeta)^2-\Re(\zeta)^2}{\lambda}}e^{4 \delta \Im(\zeta)} , \quad \text{for } \Im(\zeta) \geq 0 \label{e:eta Im+},
\enan

For a constant $0<d\leq 1$ (beware that this $d$ is not the same as $\mathsf{d}$ appearing in the Carleman estimate) to be chosen later on, we split the integral in three parts according to
$$
\int_\R \hat{h}_f (\zeta) \hat{\eta}_{\delta,\lambda} (- \zeta) d\zeta = I_- + I_0 + I_+ , 
$$
with
$$
I_- := \int_{-\infty}^{-d \mu} \hat{h}_f (\zeta) \hat{\eta}_{\delta,\lambda} (- \zeta) d\zeta , \quad 
I_0 :=  \int_{-d \mu}^{d \mu} \hat{h}_f (\zeta) \hat{\eta}_{\delta,\lambda} (- \zeta) d\zeta , \quad 
I_+ :=  \int_{d \mu}^{+\infty} \hat{h}_f (\zeta) \hat{\eta}_{\delta,\lambda} (- \zeta) d\zeta .
$$
According to~\eqref{e:estim h sur R} and~\eqref{e:eta Im+}, we have, for $\mu \geq 1$, $\lambda = 2c_1 \mu$,
\bnan
\label{e:estimIpm}
|I_\pm|
& \leq & C \int_{d \mu}^{+\infty} e^{-\frac{|\zeta|^2}{\lambda}} \langle \zeta \rangle^{m-1} \|u\|_{m-1}  \|f\|_{1-m}d \zeta \leq C\mu^{2m} e^{-d^2 \frac{\mu^2}{\lambda}} \|u\|_{m-1}  \|f\|_{1-m} \nonumber \\
& \leq & C_d e^{-cd^2\mu } \|u\|_{m-1}  \|f\|_{1-m} .
\enan
So the main problem is to estimate $I_0$.
For this, let us define 
$$
\mathcal{H}(\zeta) = \mu^{-1/2} (\zeta + i)^{-N} e^{ i 2\delta \zeta} \hat{h}_f (\zeta) .
$$
From~\eqref{e:grosse-estim-hat-h}, we have the estimate on the imaginary axis for all $ \tau \geq \tau_0$, for $\mu \geq 1$, $\lambda = 2c_1 \mu$, 
\bna
|\mathcal{H}(i \tau)| & \leq & 
C e^{\frac{\eps}{2\tau}\beta^2\mu^{2}}e^{C\frac{\tau^2}{\mu}} (D+\nor{u}{m-1})\| f\|_{1-m}  \left(
 e^{-c \mu}
 +e^{-\frac{\eps\mu^2}{8\tau}}+ e^{-9\delta \tau}\right).
\ena
Moreover, \eqref{e:estim h sur tout C} implies (we can assume $N\geq m-1$ without loss of generality)
$$
|\mathcal{H}(\zeta) |
 \leq C  e^{|\Im (\zeta)| \left(2 \delta  + \|\psi\|_{L^\infty(B(x^0,4R))}\right)} \|u\|_{m-1}  \|f\|_{1-m}, \quad \zeta \in \C , ~\Im(\zeta)\geq 0.
$$
Next, we define $H:= \frac{\mathcal{H}}{c_0} $, with 
\bnan
\label{e:defk0}
c_0 =  C (D+\nor{u}{m-1})\| f\|_{1-m} , 
\enan
and apply Lemma~\ref{l:holomorphic} below to the function $H$.
 
This Lemma implies the existence of $d_0>0$ (depending only on $\delta,\kappa,  \|\psi\|_{L^\infty(B(x^0,4R))} , \eps$ and the constants $C,c$ appearing in the exponents of the estimates of $\mathcal{H}(i\tau)$) such that for any $d<d_0$, there exists $ \beta_0>0$, (depending on the same parameters, together with $d$)
 such that for any $0<\beta<\beta_0$,  for all $\mu \geq \frac{\tilde \tau_0}{\beta} := \frac{\tau_0 (\|\psi\|_{L^\infty(B(x^0,4R))}+11\delta)^\frac12}{\beta}$, 
we have
$$
|\mathcal{H}(\zeta)|\leq c_0 e^{-8 \delta \Im(\zeta)} ,   \quad \text{on } \overline{Q}_1 \cap \{\frac{d}{4} \mu\leq |\zeta| \leq 2 d \mu \} ,
$$
with $Q_1 = \R_+^* + i \R_+^*$. The same procedure leads to the same estimate if $Q_1$ is replaced by the set $ \R_-^* + i \R_+^*$, and hence, by the whole $\C_+ = \{\zeta \in \C , \Im(\zeta)\geq 0\}$.
Coming back to $\hat{h}_f$, we obtain
\bnan
\label{e:estimhf}
|\hat{h}_f (\zeta) | \leq c_0 \mu^{1/2}  \langle |\zeta | \rangle^{N}  e^{-6\delta \Im(\zeta)}\leq c_0 \mu^{N+1/2} e^{-6\delta \Im(\zeta)} ,   \quad \text{on } \C_+ \cap \{\frac{d}{4} \mu\leq |\zeta| \leq 2 d \mu \} .
\enan
where $c_0$ is defined in~\eqref{e:defk0}. 

We now come back to $I_0$. The function $\hat{h}_f (\zeta) \hat{\eta}_{\delta,\lambda} (- \zeta)$ being holomorphic in $\C_+$, we make the following change of contour in the complex plane:
$$
I_0 =   \int_{\Gamma_+^V} \hat{h}_f (\zeta) \hat{\eta}_{\delta , \lambda} (- \zeta) d\zeta
+ \int_{\Gamma^H} \hat{h}_f (\zeta) \hat{\eta}_{\delta , \lambda}(- \zeta)d\zeta
+  \int_{\Gamma_-^V} \hat{h}_f (\zeta) \hat{\eta}_{\delta , \lambda}(- \zeta)d\zeta ,
$$ 
where the contours (oriented counterclockwise) are defined by
\bna
& \Gamma_\pm^V = \{ \Re(\zeta) = \pm d \mu, 0 \leq \Im(\zeta) \leq d \mu/2 \} , \\
& \Gamma^H = \{ -d \mu \leq \Re(\zeta) \leq d \mu , \Im(\zeta) = d\mu /2 \} .
\ena
with $d \in ]0, d_0[$ still to be chosen later on.

\begin{figure}[h!]
  \begin{center}
    \input{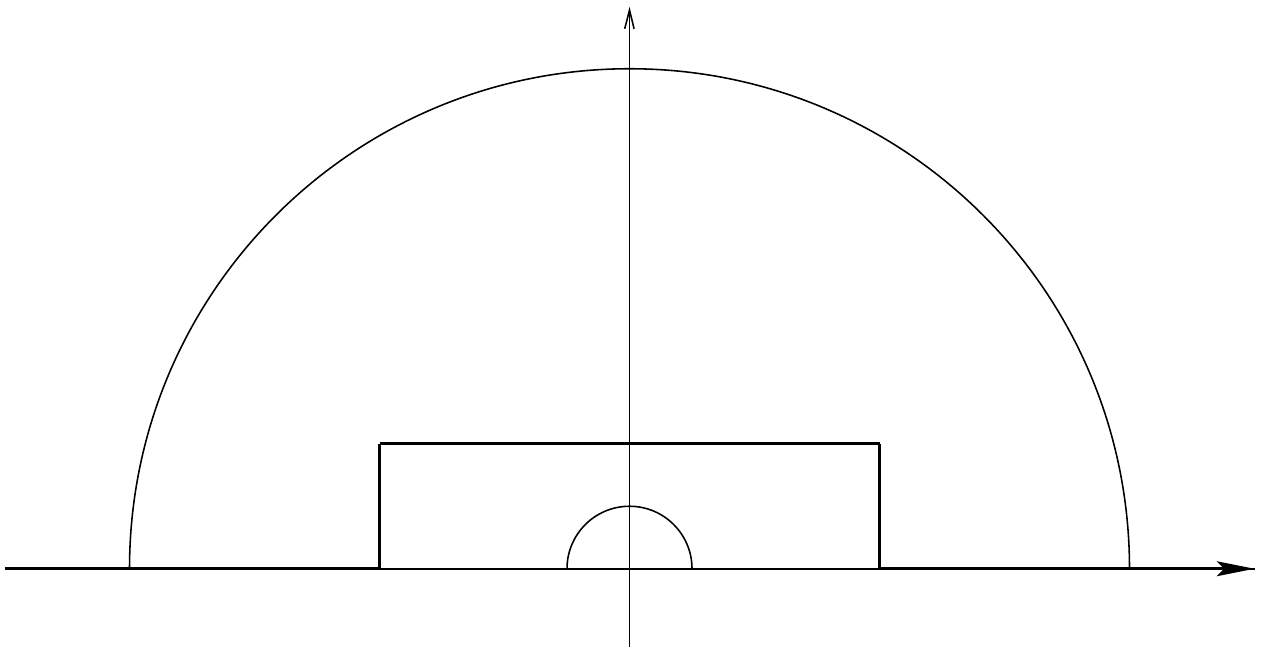_t} 
    \caption{Coutours of integration}
 \end{center}
\end{figure}
 Since $\Gamma_+^V \cup \Gamma^H \cup \Gamma_-^V \subset  \C_+ \cap \{\frac{d}{4} \mu\leq |\zeta| \leq 2 d \mu\}$ and $\lambda = c_1\mu$, estimates~
\eqref{e:eta Im+} and \eqref{e:estimhf} yields the estimate
\bna
|\hat{h}_f (\zeta) \hat{\eta}_{\delta , \lambda} (- \zeta) | & \leq & c_0 \mu^{N+1/2}e^{-6\delta \Im(\zeta)} e^{\frac{\Im(\zeta)^2-\Re(\zeta)^2}{2 c_1\mu}}e^{4 \delta \Im(\zeta)} , \quad \zeta \in \Gamma_+^V \cup \Gamma^H \cup \Gamma_-^V 
\\
 & \leq & c_0 \mu^{N+1/2} e^{-2 \delta \Im(\zeta)}  e^{\frac{\Im(\zeta)^2-\Re(\zeta)^2}{2c_1\mu}}
 , \quad \zeta \in \Gamma_+^V \cup \Gamma^H \cup \Gamma_-^V 
\ena
Using that $\frac{3d^2}{4} \mu^2\leq \Re(\zeta)^2-\Im(\zeta)^2 \leq  d^2 \mu^{2}$ for $\zeta \in \Gamma_+^V \cup \Gamma_-^V$ we now obtain
\bna
|\hat{h}_f (\zeta) \hat{\eta}_{\delta, \lambda} (- \zeta) |  & \leq & c_0 \mu^{N+1/2} e^{-2 \delta \Im(\zeta)} 
e^{-\frac{3d^2\mu }{8 c_1}}
 , \quad \zeta \in \Gamma_+^V  \cup \Gamma_-^V .
\ena
On $\Gamma^H$, we have $\Im(\zeta)= d \mu/2$, so , we can estimate
\bna
|\hat{h}_f (\zeta) \hat{\eta}_{\delta, \lambda} (- \zeta) | 
 & \leq & c_0 \mu^{N+1/2} e^{-  \delta d \mu}  e^{\frac{d^2}{8c_1}\mu}
 , \quad \zeta \in  \Gamma^H  
\ena
Now, we can fix $0< d \leq \min(4 c_1\delta, d_0)$ so that  we have $e^{-  \delta d \mu}  e^{\frac{d^2}{8 c_1}\mu} \leq C e^{-c  \mu}$ (for some $0<c \leq 2c_1 \de^2$ ).
As a consequence, we have 
\bnan
\label{e:estimateI_01}
|I_0| = \left| \int_{\Gamma_+^V \cup \Gamma^H \cup \Gamma_-^V} \hat{h}_f (\zeta) \hat{\eta}_{\delta, \lambda} (- \zeta) d\zeta \right|
&\leq & c_0 \mu^{N+1/2} | \Gamma_+^V \cup \Gamma^H \cup \Gamma_-^V | e^{-c \mu }  \nonumber \\
&\leq & C  e^{-c \mu}
 (D+\nor{u}{m-1})\| f\|_{1-m} ,
\enan
for any $0<\beta<\beta_0$, for all $\mu \geq \max(C,\frac{\tilde \tau_0}{\beta}) $  (as $| \Gamma_+^V \cup \Gamma^H \cup \Gamma_-^V | = C d \mu$).

This, together with \eqref{e:estimIpm} yields, for any $0<\beta<\beta_0$, for all $\mu \geq \frac{\tilde \tau_0}{\beta} $,
\bna
\left| \langle M^{\beta\mu} \sigma_{2R} \sigma_{R,\lambda}\chi_{\delta,\lambda}(\psi)\chit_{\delta}(\psi)\eta_{\delta,\lambda}(\psi) u ,f  \rangle_{\S'(\R^n), \S(\R^n)} \right| 
& =& \left| \int_\R \hat{h}_f (\zeta) \hat{\eta}_{\delta, \lambda} (- \zeta) d\zeta \right| \\
&\leq& C  e^{-c  \mu } 
 (D+\nor{u}{m-1})\| f\|_{1-m} .
\ena
The constants being uniform with respect to $f \in\S(\R^n)$, this provides by duality the estimate
\begin{align*}
\nor{M^{\beta\mu} \sigma_{2R} \sigma_{R,\lambda}\chi_{\delta,\lambda}(\psi)\chit_{\delta}(\psi)\eta_{\delta,\lambda}(\psi) u}{m-1}
 \leq C  e^{-c  \mu }   ( D + \nor{u}{m-1} ) ,
\end{align*}
which concludes the proof of the lemma.
\enp
With Lemma~\ref{l:local-lem-complex}, we can now conclude the proof of the local estimate of Theorem \ref{th:alpha-unif}. Lemma~\ref{l:holomorphic} and its proof are postponed to the end of the section.
\bnp[End of the proof of Theorem \ref{th:alpha-unif}.]
Using Lemma \ref{lmsuppdisjoint} with $m(2 \  \cdot )$ and $1-m(\cdot )$, we get 
$$\nor{M^{\frac{\beta \mu}{2}}_{\lambda}(1-M^{\beta\mu})}{H^{m-1}(\R^n)\to H^{m-1}(\R^n)}\leq Ce^{-c\lambda}.$$
Hence, applying Lemma~\ref{l:local-lem-complex}, we obtain, for any $0<\beta<\beta_0$, for all $\mu \geq \frac{\tilde \tau_0}{\beta} $ and $ \lambda=2c_1 \mu$,
\bnan
\label{estimfinCarl}
\nor{M^{\beta\mu} \sigma_{2R} \sigma_{R,\lambda}\chi_{\delta,\lambda}(\psi)\chit_{\delta}(\psi)\eta_{\delta,\lambda}(\psi) u}{m-1}
 & \leq &
 \nor{M^{\frac{\beta \mu}{2}}_{\lambda} (1-M^{\beta\mu})\sigma_{2R} \sigma_{R,\lambda}\chi_{\delta,\lambda}(\psi)\chit_{\delta}(\psi)\eta_{\delta,\lambda}(\psi) u}{m-1} \nonumber\\
& & \quad + \nor{M^{\frac{\beta \mu}{2}}_{\lambda} M^{\beta\mu}\sigma_{2R} \sigma_{R,\lambda}\chi_{\delta,\lambda}(\psi)\chit_{\delta}(\psi)\eta_{\delta,\lambda}(\psi) u}{m-1} \nonumber\\
& \leq & C  e^{-c  \mu }   ( D + \nor{u}{m-1} ).
\enan
Using Lemma \ref{l:commutateurnon}, estimate \eqref{estimfinCarl} and the definition of $r$ in Corollary~\ref{c:geometric-setting}, we get for any $0<\beta<\beta_0$, for all $\mu \geq \frac{\tilde \tau_0}{\beta} $ and $ \lambda=2c_1 \mu$,
\bnan
\label{estimbfin}
\nor{M^{\frac{\beta \mu}{4}}_{\lambda} \sigma_{r,\lambda} u}{m-1}&\leq& \nor{\sigma_{r,\lambda} M^{\frac{\beta \mu}{2}}_{\lambda}  u}{m-1}  + C e^{-c \mu}\nor{ u }{m-1} \nonumber \\
&\leq &
\nor{\sigma_{r,\lambda} M^{\frac{\beta \mu}{2}}_{\lambda} \sigma_{2R} \sigma_{R,\lambda}\chi_{\delta,\lambda}(\psi)\chit_{\delta}(\psi)\eta_{\delta,\lambda}(\psi) u}{m-1} \nonumber \\
&& +\nor{\sigma_{r,\lambda} M^{\frac{\beta \mu}{2}}_{\lambda} \big(1 - \sigma_{2R} \sigma_{R,\lambda}\chi_{\delta,\lambda}(\psi)\chit_{\delta}(\psi)\eta_{\delta,\lambda}(\psi) \big)u}{m-1} +C e^{-c \mu}\nor{ u }{m-1}\nonumber  \\
& \leq & C  e^{-c  \mu }   ( D + \nor{u}{m-1} ) +\nor{\sigma_{r,\lambda} M^{\frac{\beta \mu}{2}}_{\lambda} \big(1 - \sigma_{2R} \sigma_{R,\lambda}\chi_{\delta,\lambda}(\psi)\eta_{\delta,\lambda}(\psi) \big)u}{m-1} .
\enan
We know that $\sigma_R=\chi_{\delta}(\psi)=\chit_{\delta}(\psi)=\eta_\delta(\psi)=1$ on a neighborhood of $\supp(\sigma_r)$ according to~\eqref{eq:Brsubsetpsi} and the properties of $\chi$, $\chit_{\delta}$ and $\eta$. So, we can select $\Pi\in C^{\infty}_0(\R^n)$ such that $\Pi=1$ on a neighborhood of $\supp(\sigma_r)$ and such that $\sigma_{2R}=\sigma_R=\chi_{\delta}(\psi)=\chit_{\delta}(\psi)=\eta_\delta(\psi)= 1$ on an neighborhood of $\supp(\Pi)$.
Now, we have
\begin{multline}
\nor{\sigma_{r,\lambda} M^{\frac{\beta \mu}{2}}_{\lambda} \big(1 - \sigma_{2R} \sigma_{R,\lambda}\chi_{\delta,\lambda}(\psi)\chit_{\delta}(\psi)\eta_{\delta,\lambda}(\psi) \big)u}{m-1}  \\
\leq \nor{\sigma_{r,\lambda} M^{\frac{\beta \mu}{2}}_{\lambda} \big(1 - \sigma_{2R} \sigma_{R,\lambda}\chi_{\delta,\lambda}(\psi)\chit_{\delta}(\psi)\eta_{\delta,\lambda}(\psi) \big) (1-\Pi)u}{m-1} \\
+ \nor{\sigma_{r,\lambda} M^{\frac{\beta \mu}{2}}_{\lambda} \big(1 - \sigma_{2R} \sigma_{R,\lambda}\chi_{\delta,\lambda}(\psi)\chit_{\delta}(\psi)\eta_{\delta,\lambda}(\psi) \big) \Pi u}{m-1} .
\end{multline}
To estimate the first term, we use Lemma~\ref{l:commgevrey2} to obtain $\nor{\sigma_{r,\lambda} M^{\frac{\beta \mu}{2}}_{\lambda}   (1-\Pi)}{H^{m-1} \to H^{m-1}}\leq Ce^{-c\mu}$.
Concerning the second term, we have 
\begin{multline}
\nor{\sigma_{r,\lambda} M^{\frac{\beta \mu}{2}}_{\lambda} \big(1 - \sigma_{2R} \sigma_{R,\lambda}\chi_{\delta,\lambda}(\psi)\chit_{\delta}(\psi)\eta_{\delta,\lambda}(\psi) \big) \Pi u}{m-1}  \\
\leq
C\nor{\big(1 - \sigma_{2R} \sigma_{R,\lambda}\chi_{\delta,\lambda}(\psi)\chit_{\delta}(\psi)\eta_{\delta,\lambda}(\psi) \big) \Pi u}{m-1}  \leq Ce^{-c \mu}\nor{u}{m-1}
\end{multline}
where we have decomposed in the last inequality
\bna
1 - \sigma_{2R} \sigma_{R,\lambda}\chi_{\delta,\lambda}(\psi)\chit_{\delta}(\psi)\eta_{\delta,\lambda}(\psi)& = &(1-\sigma_{2R})+\sigma_{2R}(1-\sigma_{R,\lambda}) +\sigma_{2R}\sigma_{R,\lambda}(1-\chi_{\delta,\lambda}(\psi))\\
&&+\sigma_{2R}\sigma_{R,\lambda}\chi_{\delta,\lambda}(\psi)  (1 -\chit_{\delta}(\psi))+\sigma_{2R}\sigma_{R,\lambda}\chi_{\delta,\lambda}(\psi) \chit_{\delta}(\psi) (1 -\eta_{\delta,\lambda}(\psi))
\ena
and used Lemmata \ref{lmsuppdisjoint} and \ref{lmsuppdisjointchi}. These two Lemmata can be applied thanks to the geometric fact that 
$$\dist(\supp(\Pi),\left\{x \in \R^n;\sigma_{2R}(x)\neq 1 \right\})>0,  $$ and the same is true with $\sigma_{2R}$ replaced by $\sigma_R$, $\chi_{\delta}(\psi)$, $\chit_{\delta}(\psi)$ or $\eta_\delta(\psi)$. We now have the existence of $\tilde \tau_0>0$ such that for any $\kappa , c_1 >0$, there exist $ \beta_0 ,C,c>0$, such that for any $0<\beta<\beta_0$,  $\mu \geq \frac{\tilde \tau_0}{\beta} $ and $ \lambda=2c_1 \mu$, the following estimate holds:
\bna
\nor{M^{\frac{\beta \mu}{4}}_{\lambda} \sigma_{r,\lambda} u}{m-1}\leq C  e^{-c  \mu }   ( D + \nor{u}{m-1} ) , \quad D=e^{\kappa\mu}\left(\nor{ M^{2\mu}_{\lambda}\vartheta_{\lambda} u}{m-1} +\nor{Pu}{B(x^0,4R)}\right) .
\ena
This concludes the proof of Theorem~\ref{th:alpha-unif} with $\kappa' = c$, when replacing $\mu$ and $\mu_0$ by $\mu/2$ and $\mu_0/2$ respectively.
\enp
It only remains to prove Lemma \ref{l:holomorphic} below.
\begin{lemma}
\label{l:holomorphic}
Let $\delta,\kappa, R_0 , C_1, \eps, \tau_0>0$.
Then, there exists $d_0 = d_0(\delta,\kappa, R_0 , C_1, \eps)$ such that for any $d<d_0$, there exists $ \beta_0(\delta,\kappa, R_0,c_1, \eps , d)$ such that for any $0<\beta<\beta_0$ and for all $\mu \geq \frac{\tau_0 (R_0+9\delta)^\frac12}{\beta}$, we have the following statement:

For every $H$ holomorphic function in $Q_1 = \R_+^* + i \R_+^*$, continuous on $\bar Q_1$ satisfying 
\bnan
\label{hyp h bornee}
|H(i \tau)| \leq  e^{\e\frac{\beta^2}{2\tau}\mu^{2}}  e^{ C_1 \frac{\tau^2}{\mu}} \max(e^{-\kappa\mu} ,   e^{-\frac{\eps\mu^2}{8\tau}} , e^{-9\delta \tau}) \quad \text{for } \tau \in [\tau_0, + \infty)  , 
\enan
\bnan
\label{hyp h bornee bis}
|H(\zeta)| \leq e^{R_0 \Im(\zeta )} \quad \text{on } \bar Q_1  ,
\enan
  
we have
\bnan
\label{conclu h bornee}
|H(\zeta)| \leq e^{-8\delta \Im(\zeta)} \quad \text{on } \bar Q_1 \cap \{\frac{d}{4} \mu\leq |\zeta| \leq 2 d \mu \} .
\enan
\end{lemma}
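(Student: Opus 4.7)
The lemma is a quantitative Phragm\'en--Lindel\"of estimate on the quadrant $Q_1$: hypothesis~\eqref{hyp h bornee} controls $|H|$ on the positive imaginary axis (with a $\mu$-dependent profile), \eqref{hyp h bornee bis} gives an a priori exponential-type growth of rate $R_0$ on all of $\bar Q_1$, and we must deduce a pointwise bound on the arc $\{\tfrac{d}{4}\mu \leq |\zeta| \leq 2 d\mu\} \cap \bar Q_1$. The plan is to follow the approach of~\cite[Proposition~2.1]{Hor:97}: first, conformally map $Q_1$ to the upper half-plane $\mathbb{H} = \{\Im w > 0\}$ via $w = \zeta^2$, setting $W(w) = H(\sqrt{w})$ for the branch sending $\mathbb H$ onto $Q_1$. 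The bound \eqref{hyp h bornee bis} becomes $|W(w)| \leq e^{R_0 |w|^{1/2}}$, so $W$ has order $\frac12 < 1$ in $\mathbb{H}$. This subcritical growth yields the Poisson representation for the subharmonic function $\log |W|$, which combined with $|W(x)| \leq 1$ on $x>0$ and the change of variables $x = -t^2$ on $x<0$ gives, for $\zeta_0 \in Q_1$,
\[
\log |H(\zeta_0)| = \log |W(\zeta_0^2)| \leq \int_0^\infty 2t\, P(\zeta_0^2, -t^2)\, \log |H(it)|\, dt,
\]
with $P(w_0, x) = \pi^{-1} \Im(w_0)/((x-\Re w_0)^2 + (\Im w_0)^2)$.

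Second, for $\zeta_0 = d\mu\, e^{i\theta_0}$ with $\theta_0 \in (0, \pi/2)$, the rescaling $t = s\, d\mu$ transforms the Poisson weight into a fixed probability density $K(s, \theta_0)\, ds$ on $\R_+$, concentrated around $s = 1$ independently of $\mu$. Substituting \eqref{hyp h bornee} in the form
\[
\mu^{-1} \log |H(it)| \leq \frac{\eps \beta^2}{2 c} + C_1 c^2 + \max\!\Bigl( -\kappa,\ -\frac{\eps}{8 c},\ -9\delta c \Bigr), \qquad c = t/\mu,
\]
the problem becomes to choose $d_0$ and $\beta_0$ so that the right-hand side is $\leq -8 \delta c$ uniformly on the bulk support of $K(\cdot, \theta_0)$, say $c \in [d/8, 4 d]$. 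One first picks $d_0 = d_0(\delta, \kappa, \eps, C_1)$ small enough so that, on $c \leq 4 d_0$: (i) $C_1 c^2 \leq \delta c /4$, (ii) $9 \delta c \leq \kappa$ and $72 \delta c^2 \leq \eps$ so that the maximum equals $-9 \delta c$; then one chooses $\beta_0 \lesssim d\sqrt{\delta/\eps}$ so that $\eps\beta^2/(2c) \leq \delta c/4$ for $c \geq d/8$. These conditions give $\mu^{-1}\log |H(it)| \leq -(17/2)\delta c$ on $t \in [d\mu/8, 4d\mu]$, so after integration against $K$ the main contribution is $\leq -8 \delta\, \Im \zeta_0$.

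Third, two edge regimes of the integral must be handled: on $t \in [0, \tau_0]$, where \eqref{hyp h bornee} does not apply, only $|H(it)| \leq e^{R_0 t}$ is available, contributing $O(R_0 \tau_0)$ to the integral, which is absorbed by the main term $8 \delta\, \Im \zeta_0 \sim d \mu$ as soon as $\mu \geq \tilde \tau_0 /\beta$ with $\tilde \tau_0 = \tau_0 (R_0 + 9 \delta)^{1/2}$; the tail $t \gg d \mu$, where the pointwise bound loses its negativity, is controlled using $|H(it)| \leq e^{R_0 t}$ and the decay $2 t\, P(\zeta_0^2, -t^2) \lesssim d^2 \mu^2 / t^3$, producing a contribution of order $R_0 d^2 \mu$, dominated by the main term provided $d$ is small enough relative to $\delta / R_0$ (a condition folded into the choice of $d_0$).

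The main obstacle is precisely this tail analysis: matching the decay of the Poisson kernel against the global $e^{R_0 t}$ growth uniformly in $\theta_0$ not too close to the axes, then complementing by continuity on the boundary arcs $\theta_0 \in \{0, \pi/2\}$ where the conclusion follows directly from the positive real axis bound or from the imaginary axis bound with the chosen $d_0$. Once all pieces are assembled, the bound $|H(\zeta)| \leq e^{-8\delta \Im \zeta}$ on the arc follows for every $\beta \leq \beta_0$ and $\mu \geq \tilde \tau_0 / \beta$.
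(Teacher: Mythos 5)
Your overall strategy is the right one and is in essence the paper's: after the conformal map $w=\zeta^2$ (equivalently, the explicit quadrant Poisson kernel built from image points in Lemma~\ref{l:green}), one dominates $\log|H(\zeta_0)|$ by the Poisson integral of the boundary data, drops the non-positive contribution of $\R_+$, and estimates the integral over the imaginary axis by splitting it into regions. The paper packages the comparison step as an explicit harmonic majorant plus the Phragm\'en--Lindel\"of Lemma~\ref{l:phragmen} and rescales by $\mu$ first, but those differences are cosmetic.

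The quantitative bookkeeping in your main step, however, does not close. Let $K$ denote the measure $2t^{2}P(\zeta_0^{2},-t^{2})\,dt$ on $\R_+$; by the exact identity of Lemma~\ref{integralespourries} its total mass equals $\Im\zeta_0$, and its concentration scale is $|\zeta_0|\sim d\mu$. Your bulk $t\in[d\mu/8,\,4d\mu]$ therefore carries a fraction of that mass which is bounded away from $1$ \emph{uniformly in $d$}: for $\theta_0$ small one computes that about $30\%$ of the mass sits in $t\ge 4|\zeta_0|$, so integrating $-\tfrac{17}{2}\delta t$ over your bulk yields only about $-6\delta\,\Im\zeta_0$, not $\le-8\delta\,\Im\zeta_0$. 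Worse, if everything beyond $4d\mu$ is treated as ``tail'' with the a priori bound $R_0t>0$, its contribution is of order $R_0\,\Im\zeta_0$ and does \emph{not} shrink as $d\to0$, because the kernel rescales with $|\zeta_0|=O(d\mu)$; the condition ``$d$ small relative to $\delta/R_0$'' cannot rescue a tail that starts at $4d\mu$. The repair --- which is exactly the decomposition of the paper's Lemma~\ref{explicitgreenbis} --- is to exploit the negative bound from~\eqref{hyp h bornee} on the whole interval $[\eta|\zeta_0|,\,D\mu]$ with $D=\min\bigl(\delta/(2C_1),\,\kappa/(9\delta),\,\sqrt{\eps}/(3\sqrt{\delta})\bigr)$ a threshold \emph{independent of $d$} (so your conditions (i)--(ii) must be imposed up to $c=D$, not up to $c=4d_0$), reserving the crude bound $R_0t$ for $t\ge D\mu$ only. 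Then the $K$-mass of the good region is at least $\bigl(1-O(d/D)-O(\eta^{3})\bigr)\Im\zeta_0$ and the residual tail term is $O(R_0 d/D)\,\Im\zeta_0$, both controlled by choosing $d_0$ small; with that correction, and with the $\sin\theta_0$ factors tracked through the kernel as you indicate, your argument goes through.
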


The proof essentially consists in performing a scaling argument to get rid of the parameter $\mu$ and then applying Lemma~\ref{explicitgreenbis}.
\begin{proof}[Proof of Lemma~\ref{l:holomorphic}]
The function $H$ is holomorphic in $Q_1$ and $z \mapsto \log|z|$ is subharmonic on $\C^*$. As a consequence, the function 
$$
g^\mu : \zeta \mapsto \mu^{-1}\log \left|H(\mu \zeta)\right|
$$
is subharmonic on $Q_1$ (which is invariant by dilations). Assumption~\eqref{hyp h bornee} (used for $\tau \mu\in [\tau_0, + \infty)$) yields 
\bnan
\label{hyp g bornee}
g^\mu (i \tau)\leq  c_1 \tau^2 +  \frac{\e\beta^2}{\tau} +  \max( -\kappa  ,  -9\delta \tau , -\frac{\eps}{8\tau} ) , \quad {  \text{for }  \tau \in [\frac{\tau_0}{\mu}, + \infty)  },
\enan 
and Assumption~\eqref{hyp h bornee bis} yields
\bnan
\label{hyp g bornee bis}
g^\mu  (\zeta) \leq R_0 \Im(\zeta ) , \quad \text{on } \bar Q_1  .
\enan
Now, we set 
\bnan
\label{eq:defoff}
 f_1^\mu (y)= R_0 y  \mathds{1}_{[0, \frac{\tau_0}{\mu})}(y) + \mathds{1}_{[\frac{\tau_0}{\mu}, + \infty)}(y) \min \{ R_0 y , \max(- \kappa , -9\delta y , -\frac{\eps}{8 y} ) +  C_1 y^2+ \frac{\e\beta^2}{y}\} , \quad y \in \R_+.
\enan
According to Lemma \ref{explicitgreenbis}, there exists $d_0 = d_0(\delta, \kappa,R_0,\e,C_1)$ such that for any $d<d_0$, there exists $\beta_0(\delta, \kappa,R_0,d,\e,C_1)$, such that for any $0<\beta<\beta_0$, and any $\mu \geq \frac{\tau_0 (R_0+9\delta)^\frac12}{\beta }$, the function $f_1^\mu$ is continuous and the associated function $f^\mu$ given by Lemma~\ref{l:green} with $f_0 = 0$ and $f_1=f_1^{\mu}$ satisfies 
$$
f^\mu  \in C^0 (\bar Q_1) , 
\quad \Delta f^\mu = 0 \text{ and } |f^\mu (x,y)| \leq C_\mu(1+|(x,y)|)\text{ in } Q_1 , 
\quad f^\mu = f_1^\mu \text{ on } i \R_+ ,
\quad f^\mu = 0 \text{ on } \R_+
$$
together with 
$$
f^\mu (\zeta) \leq -8\delta \Im(\zeta) \quad \text{on } \bar Q_1 \cap \{\frac{d}{4}  \leq |\zeta| \leq 2 d  \} .
$$
This is 
\bnan
\label{eq:estimHmuzeta}
f^\mu (\zeta/ \mu) \leq -8\delta \Im(\zeta)/\mu \quad \text{on } \bar Q_1 \cap \{\frac{d}{4} \mu  \leq |\zeta| \leq 2 d \mu \}.
\enan
Now, as $g^\mu $ is subharmonic and $f^\mu$ harmonic, the function
$$
h^\mu(\zeta) := g^\mu (\zeta) - f^\mu(\zeta)
$$ is subharmonic too. 
As a consequence of~\eqref{hyp g bornee},~\eqref{hyp g bornee bis} and~\eqref{eq:defoff}, we have
$$
h^\mu(\zeta) \leq 0 \quad \text{on } \R_+ \cup i \R_+ .
$$
Moreover, \eqref{hyp g bornee bis} and $|f^\mu (\zeta)| \leq C(1+|\zeta|)$ also yield 
$$
h^\mu(\zeta) \leq C_\mu  + (C_\mu +R_0)|\zeta| .
$$
According to Lemma~\ref{l:phragmen}, this implies
$$
h^\mu(\zeta) \leq 0 \quad \text{on } \bar Q_1,
$$
and hence
\bna
|H(\mu \zeta)| = e^{\mu g^\mu (\zeta)} \leq e^{\mu f^\mu (\zeta)} \quad \text{on } \bar Q_1 .
\ena
Finally, coming back to \eqref{eq:estimHmuzeta}, we obtain
\bna
|H(\zeta)| \leq e^{-8\delta \Im(\zeta)} \quad \text{on } \bar Q_1 \cap \{\frac{d}{4} \mu  \leq |\zeta| \leq 2 d \mu \},
\ena
which concludes the proof of the lemma.
\end{proof}
\section{Semiglobal estimates}
\label{s:semiglobal-estimate}

\subsection{Some tools for propagating the information}
The Local Estimate of Theorem~\ref{th:alpha-unif} only provides information on the low frequency part of the function. Iterating this result alows us to propagate the low frequency information. 
In this section, we define some tools that will be useful for this iterative procedure. 
They are aimed at describing how information on the low frequency part of the solution can be deduced from one subregion to another one.
\begin{definition}
\label{defdependence}
Fix $\Omega $ be an open set of $\R^n=\R^{n_a}\times \R^{n_b}$, $P$ a differential operator of order $m$ defined in $\Omega$, and $(V_j)_{j\in J}$ and $(U_i)_{i\in I}$ two {\em finite collections of bounded open sets} of $\R^n$. We say that $(V_j)_{j\in J}$ is \textbf{under the dependence of }$(U_i)_{i\in I}$, denoted
\bna
(V_j)_{j\in J} \unlhd (U_i)_{i\in I} ,
\ena
 if for any $\vartheta_i\in C^{\infty}_0(\R^n)$ such that $\vartheta_i(x)=1$ on a neighborhood of $\overline{U_i}$, for any $\widetilde{\vartheta}_j\in C^{\infty}_0(V_j)$
and for all $\kappa, \alpha>0$, there exist $C, \kappa', \beta,\mu_0 >0$ such that for all $(\mu,u) \in [\mu_0, + \infty)\times C^{\infty}_0(\R^{n})$, we have 
\bna
\sum_{j\in J}\nor{M^{\beta\mu}_{\mu} \widetilde{\vartheta}_{j,\mu}  u}{m-1}\leq C e^{\kappa \mu}\left(\sum_{i\in I} \nor{M^{\alpha\mu}_{\mu} \vartheta_{i,\mu} u}{m-1} + \nor{Pu}{L^2(\Omega)}\right)+Ce^{-\kappa' \mu}\nor{u}{m-1} .
\ena
If $\sharp I=1$ and $U_1=U$, we write $(V_j)_{j\in J} \unlhd U$, with the same convention for $V$.
\end{definition}
The norm $\nor{\cdot}{m-1}$ being taken in $\R^n$.
\begin{remark}
\label{rkinvariance}
The definition $\unlhd$ actually depends on the splitting $\R^n=\R^{n_a}\times \R^{n_b}$, the set $\Omega$ and the operator $P$. However, in the main part of this work, $\R^n=\R^{n_a}\times \R^{n_b}$, $\Omega$ and  $P$ will be fixed, so it should not lead to confusion (in particular in the applications). The dependence of  $\unlhd$ upon these object will be mentioned when needed.

 For the applications, it is important that the function $u$ is not necessarily supported in $\Omega$. 

In the following, we will only need to use this relation $\unlhd$ in some appropriate coordinate charts. However, it will not be a problem for what we want to prove, even on a compact manifold. Indeed, we will fix some coordinate chart on an open set $\Omega\subset \R^n$ close to a point or close to a trajectory. Then, we will use the relation $\unlhd$ related to $\Omega$ to finally obtain some estimates which will be invariant by change of coordinates. 
\end{remark}
Now, we list some general properties of the relation $\unlhd$, which actually hold without using any asumption on the set $\Omega$ and the operator $P$.
\begin{proposition}
\label{propweak}
We have the following properties
\begin{enumerate}
\item \label{proptoto} If $(V_j)_{j\in J} \unlhd (U_i)_{i\in I}$ with $U_i=U$ for all $i\in I$, then $(V_j)_{j\in J} \unlhd U$.
\item \label{proptoto2} If $(V_j)_{j\in J} \unlhd (U_i)_{i\in I}$ with $U_i\subset W_i$ for all $i\in I$, then $(V_j)_{j\in J} \unlhd (W_i)_{i\in I}$.
\item \label{propinclude}If $V\subset U$ then, $V \unlhd U $. In particular, we always have $U \unlhd U$.
\item \label{propunion} $\bigcup_{i\in I} U_i \unlhd (U_i)_{i\in I}$.
\item \label{propproduit} If for any $i\in I$, $V_i\unlhd U_i$, then $(V_i)_{i\in I}\unlhd (U_i)_{i\in I}$. In particular, we always have $ (U_i)_{i\in I} \unlhd (U_i)_{i\in I}$.
\end{enumerate}
\end{proposition}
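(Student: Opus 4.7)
The plan is to dispatch properties 1, 2, 5 almost immediately from the definition and concentrate the work on property 3, from which property 4 follows via a partition of unity argument.

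For property 1, having all $U_i = U$ we simply observe that a single cutoff $\vartheta$ with $\vartheta = 1$ near $\bar U$ qualifies as each $\vartheta_i$, so the sum in the hypothesis $(V_j)\unlhd (U_i)$ collapses to $\sharp I$ copies of $\nor{M^{\alpha\mu}_\mu \vartheta_\mu u}{m-1}$, yielding $(V_j) \unlhd U$. For property 2, any $\vartheta_i \in C^\infty_0(\R^n)$ with $\vartheta_i = 1$ on a neighborhood of $\bar W_i \supset \bar U_i$ is in particular a valid test function for the definition of $(V_j) \unlhd (U_i)$, so the conclusion is automatic. Property 5 follows by summing the individual estimates from $V_i \unlhd U_i$ (for a fixed pair $\alpha,\kappa > 0$); we obtain constants $C_i,\kappa'_i,\beta_i,\mu_{0,i}$, and the desired statement holds with $\beta := \min_i \beta_i$, $C := \sum_i C_i$, $\kappa' := \min_i \kappa_i'$, and $\mu_0 := \max_i \mu_{0,i}$ (using that $\nor{M^{\beta\mu}_\mu (\cdot) }{\bullet}\leq \nor{M^{\beta_i\mu}_\mu (\cdot) }{\bullet}$ up to an $e^{-c\mu}\|u\|_{m-1}$ error via Lemma~\ref{lmsuppdisjoint}).

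For property 3, fix $\vartheta$ with $\vartheta = 1$ on a neighborhood of $\bar U \supset \bar V$ and $\widetilde{\vartheta}\in C^\infty_0(V)$; then $\vartheta = 1$ on a neighborhood of $\supp(\widetilde{\vartheta})$. Given $\alpha,\kappa > 0$, I will choose $\beta \leq \alpha/4$ (so that $2\beta \leq \alpha/2 < 3\alpha/4$), and proceed in three elementary steps. First, equation~\eqref{chgtordrexi} of Lemma~\ref{l:commutateurnon} (applied with $\lambda=\mu$, which satisfies~\eqref{e:lambda-eq-mu}) gives
\[
\nor{M^{\beta\mu}_\mu \widetilde{\vartheta}_\mu u}{m-1} \leq \nor{\widetilde{\vartheta}_\mu M^{2\beta\mu}_\mu u}{m-1} + C e^{-c\mu}\nor{u}{m-1}.
\]
Second, equation~\eqref{chgtordrex} of the same lemma (using that $\vartheta$ is $1$ on a neighborhood of $\supp \widetilde{\vartheta}$) yields
\[
\nor{\widetilde{\vartheta}_\mu M^{2\beta\mu}_\mu u}{m-1} \leq C \nor{M^{2\beta\mu}_\mu \vartheta_\mu u}{m-1} + C e^{-c\mu}\nor{u}{m-1}.
\]
Third, since $2\beta < 3\alpha/4$, Lemma~\ref{lmsuppdisjoint} applied to $f_1(\xi_a) = m(\xi_a/(2\beta))$ and $f_2(\xi_a) = 1 - m(\xi_a/\alpha)$ (which have disjoint supports) produces $\nor{M^{2\beta\mu}_\mu (1-M^{\alpha\mu}_\mu)}{H^{m-1}\to H^{m-1}} \leq C e^{-c\mu}$, hence
\[
\nor{M^{2\beta\mu}_\mu \vartheta_\mu u}{m-1} \leq \nor{M^{\alpha\mu}_\mu \vartheta_\mu u}{m-1} + Ce^{-c\mu}\nor{u}{m-1}.
\]
Concatenating the three estimates proves $V \unlhd U$ (with no actual contribution of $\|Pu\|_{L^2(\Omega)}$, which appears trivially with coefficient zero).

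Property 4 reduces to property 3 and property 5 via a partition of unity argument. Given $\widetilde{\vartheta} \in C^\infty_0(\bigcup_i U_i)$, compactness of $\supp(\widetilde{\vartheta})$ allows us to choose $\{\chi_i\}_{i \in I}\subset C^\infty_0(\R^n)$ with $\supp(\chi_i)\subset U_i$ and $\sum_i \chi_i = 1$ on a neighborhood of $\supp(\widetilde{\vartheta})$. Then $\widetilde{\vartheta}_\mu = \sum_i (\chi_i \widetilde{\vartheta})_\mu$ by linearity of the smoothing, and since each $\chi_i \widetilde{\vartheta} \in C^\infty_0(U_i)$, the argument of property 3 applied to each summand (with $U$ replaced by $U_i$, $V$ by $U_i$, using $\chi_i \widetilde{\vartheta}$ as the inner cutoff) gives the desired estimate. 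The only obstacle worth flagging is the necessity to choose $\beta$ sufficiently small compared to $\alpha$ in property 3, but this is automatic once one inspects the quantifier order in Definition~\ref{defdependence}.
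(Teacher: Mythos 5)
Your proof is correct and follows essentially the same route as the paper: items 1 and 2 straight from the definition, item 3 from Lemma~\ref{l:commutateurnon} (the paper applies it with $\alpha\mu/2$ in place of $\mu$ to get $\beta=\alpha/2$ directly, whereas you add a harmless third frequency-comparison step and settle for $\beta\leq\alpha/4$), item 4 by partition of unity plus the item-3 argument (which is precisely the content of Lemma~\ref{lemma: Gevrey Chambertinsomme}, cited as a black box in the paper), and item 5 by summation plus the same frequency comparison. One small parameter slip in item 5: with $\beta:=\min_i\beta_i$, the support-disjointness needed to apply Lemma~\ref{lmsuppdisjoint} to $m(\cdot/\beta)$ and $1-m(\cdot/\beta_i)$ requires $\beta<\tfrac34\beta_i$, which can fail when some $\beta_i$ is close to but larger than the minimum; take $\beta=\tfrac12\min_i\beta_i$ (as the paper does with $2\beta=\inf_i\beta_i$) and the step goes through.
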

\bnp
Property \ref{proptoto} is obvious from the definition. Property \ref{proptoto2} is also immediate since $\vartheta_i(x)=1$ on a neighborhood of $W_i$ implies $\vartheta_i(x)=1$ on a neighborhood of $U_i$ since $U_i\subset W_i$.

Property \ref{propinclude} is a consequence of Lemma \ref{l:commutateurnon} applied with $\alpha\mu/2$ instead of $\mu$, $\lambda= \mu$, $f_1=\vartheta$ and $f=\widetilde{\vartheta}$. The assumptions on $\vartheta$ and $\widetilde{\vartheta}$ ensures that $f_1=1$ on a uniform neighborhood of $\supp(f)$. This gives the result with $\beta=\alpha/2$.

Property \ref{propunion} is a consequence of Lemma \ref{lemma: Gevrey Chambertinsomme} with the same parameters as before for Property \ref{propinclude}, but with $b_i=\vartheta_i$. 

Property \ref{propproduit} is almost a consequence of the definition. Actually, the only difference is that a priori, we have one $\beta_i$ for each $i\in I$. Taking the worst of the constants $C,\kappa',\mu_0$ given by the application of the definition for any $i$, it gives
\bna
\sum_{i\in I}\nor{M^{\beta_i\mu}_{\mu} \widetilde{\vartheta}_{i,\mu}  u}{m-1}\leq C e^{\kappa \mu}\left(\sum_{i\in I} \nor{M^{\alpha\mu}_{\mu} \vartheta_{i,\mu} u}{m-1} + \nor{Pu}{L^2(\Omega)}\right)+Ce^{-\kappa' \mu}\nor{u}{m-1} .
\ena
with $\vartheta_{i}=1$ on $\overline{U_i}$ and $\widetilde{\vartheta}_{i}\in C^{\infty}_0(V_i)$. But taking $2\beta=\inf \left\{\beta_i,i\in I\right\}$, we have
\bna
\nor{M^{\beta\mu}_{\mu} \widetilde{\vartheta}_{i,\mu}  u}{m-1}&\leq & \nor{M^{\beta_i\mu}_{\mu} M^{\beta\mu}_{\mu}\widetilde{\vartheta}_{i,\mu}  u}{m-1}+\nor{M^{\beta\mu}_{\mu} (1-M^{\beta_i\mu}_{\mu})\widetilde{\vartheta}_{i,\mu}  u}{m-1}\\
&\leq & \nor{M^{\beta_i\mu}_{\mu}\widetilde{\vartheta}_{i,\mu}u}{m-1}+Ce^{-c \mu}\nor{u}{m-1} ,
\ena
where we have used Lemma \ref{lmsuppdisjoint} and the properties of support of $m(\frac{\cdot}{\beta})$ and $(1-m(\frac{\cdot}{\beta_i}))$ for the last estimate. The second part comes from the combination with $U_i\unlhd U_i$ for all $i\in I$.
\enp

The relation is not clearly transitive but we have the following weaker but sufficient property:
if $(V_j)_{j\in J} \unlhd  (\widetilde{U}_i)_{i\in I}$ and $\widetilde{U}_i\Subset U_i$ with compact inclusion (that is $\overline{\widetilde{U}}_i\subset U_i$) and $(U_i)_{i\in I} \unlhd (W_k)_{k\in K }$, then, $(V_j)_{j\in J} \unlhd (W_k)_{k\in K}$ (this is proved by introducing functions $f_i\in C^{\infty}_0(U_i)$ equal to $1$ on $\widetilde{U}_i$: see the proof of Item~\ref{proptransstrong} in Proposition~\ref{propstrong} below). 

For this reason, it is convenient to introduce the following stronger property.
\begin{definition}
\label{defdependencestrong}
Given $\Omega $ an open set of $\R^n=\R^{n_a}\times \R^{n_b}$, $P$ a differential operator of order $m$ defined in $\Omega$, and $(V_j)_{j\in J}$ and $(U_i)_{i\in I}$ two {\em finite collections of bounded open sets} of $\R^n$, we say that $(V_j)_{j\in J}$ is \textbf{under the strong dependence} of $(U_i)_{i\in I}$ if there exists $\widetilde{U}_i\Subset U_i$ such that $(V_j)_{j\in J}\unlhd (\widetilde{U}_i)_{i\in I}$. In that case, we write.
\bna
(V_j)_{j\in J} \lhd (U_i)_{i\in I}.
\ena
\end{definition}
This makes the relation transitive, but it becomes more strict in the sense that we do not always have $U \lhd U$.
We sumarize again the properties of this relation.
\begin{proposition}
\label{propstrong}
We have the following properties
\begin{enumerate}
\item \label{propweakstrong} $(V_j)_{j\in J} \lhd (U_i)_{i\in I}$ implies $(V_j)_{j\in J} \unlhd (U_i)_{i\in I}$.
\item \label{proptotostrong} If $(V_j)_{j\in J} \lhd (U_i)_{i\in I}$ with $U_i=U$ for all $i\in I$, then $(V_j)_{j\in J} \lhd U$.
\item \label{propincludestrong}If $V_i\Subset U_i$ for any $i\in I$, then, $(V_i)_{i\in I} \lhd  (U_i)_{i\in I}$.
\item \label{propunionstrong} If $V_i\Subset U_i$ for any $i\in I$, then $\bigcup_{i\in I} V_i \lhd (U_i)_{i\in I}$.
\item \label{propproduitstrong} If for any $i\in I$, $V_i\lhd U_i$, then $(V_i)_{i\in I}\lhd (U_i)_{i\in I}$. In particular, if for any $i\in I$, $U_i\lhd U$, then $(U_i)_{i\in I}\lhd U$. 
\item \label{proptransstrong} The relation is transitive, that is \bna
\left[(V_j)_{j\in J} \lhd (U_i)_{i\in I} \textnormal{ and } (U_i)_{i\in I}\lhd (W_k)_{k\in K } \right]\Longrightarrow (V_j)_{j\in J} \lhd (W_k)_{k\in K }.
\ena
\end{enumerate}
\end{proposition}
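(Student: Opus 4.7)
The plan is to reduce each assertion to the corresponding statement for $\unlhd$ in Proposition~\ref{propweak}, using the characterization $(V_j)_{j\in J} \lhd (U_i)_{i\in I} \iff $ there exist $\widetilde{U}_i \Subset U_i$ with $(V_j)_{j\in J} \unlhd (\widetilde{U}_i)_{i\in I}$. Items~\ref{propweakstrong}--\ref{propproduitstrong} are mostly routine. Item~\ref{propweakstrong} follows from the monotonicity property of $\unlhd$ (Proposition~\ref{propweak}, Item~\ref{proptoto2}) applied to $\widetilde{U}_i \subset U_i$. For Item~\ref{proptotostrong}, when all $U_i$ equal a common $U$, one sets $\widetilde{U}=\bigcup_{i\in I}\widetilde{U}_i\Subset U$ (a finite union) and uses Items~\ref{proptoto2} and~\ref{proptoto} of Proposition~\ref{propweak} to collapse the family to a single set. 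Item~\ref{propincludestrong} follows from reflexivity $(V_i)\unlhd (V_i)$ (Proposition~\ref{propweak}, Item~\ref{propproduit}) together with $V_i\Subset U_i$. Items~\ref{propunionstrong} and~\ref{propproduitstrong} are similarly obtained from Items~\ref{propunion} and~\ref{propproduit} of Proposition~\ref{propweak}, combined (for Item~\ref{propproduitstrong}) with the definition of each individual $V_i\lhd U_i$.

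The crux is the transitivity (Item~\ref{proptransstrong}). Assuming $(V_j)_{j\in J}\unlhd (\widetilde{U}_i)_{i\in I}$ with $\widetilde{U}_i\Subset U_i$ and $(U_i)_{i\in I}\unlhd (\widetilde{W}_k)_{k\in K}$ with $\widetilde{W}_k\Subset W_k$, the idea is to choose smooth cutoffs $f_i \in C^\infty_0(U_i)$ equal to $1$ on a neighborhood of $\overline{\widetilde{U}_i}$, which exist since $\widetilde{U}_i\Subset U_i$. Each $f_i$ then plays a double role: it serves as an admissible ``test function'' $\vartheta_i$ in the estimate defining $(V_j)\unlhd (\widetilde{U}_i)$ (since $f_i=1$ on a neighborhood of $\overline{\widetilde{U}_i}$), and simultaneously as an admissible ``observation function'' $\widetilde{\vartheta}_i\in C^\infty_0(U_i)$ in the estimate defining $(U_i)\unlhd (\widetilde{W}_k)$. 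Chaining the two estimates substitutes the intermediate $\|M^{\cdot\mu}_\mu f_{i,\mu}u\|_{m-1}$ terms and produces an inequality of the required form, yielding $(V_j)\unlhd (\widetilde{W}_k)$ and hence $(V_j)\lhd (W_k)$.

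The main obstacle in this chaining is purely quantitative: naively combining two estimates of the shape
\[
\cdots \leq C e^{\kappa\mu}(\cdots) + C e^{-\kappa'\mu}\|u\|_{m-1}
\]
generates a residual $e^{(\kappa-\kappa')\mu}\|u\|_{m-1}$, which is useful only if $\kappa<\kappa'$. The workaround is to apply the second inequality first with some prefactor $\kappa_2$ and frequency parameter $\alpha$ equal to the final target $\alpha$; this produces an intermediate frequency cutoff $\beta_2$ and a decay rate $\kappa'_2$. One then applies the first inequality with frequency parameter $\alpha_1=\beta_2$ and prefactor $\kappa_1$ chosen small enough that both $\kappa_1+\kappa_2\leq \kappa$ (the prescribed final prefactor) and $\kappa_1<\kappa'_2$ hold. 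Taking for instance $\kappa_2=\kappa/2$ first, and then $\kappa_1=\min(\kappa/2,\kappa'_2/2)$, makes both conditions available, and the chained estimate then has the form $Ce^{\kappa\mu}(\cdots)+Ce^{-\kappa''\mu}\|u\|_{m-1}$ with some $\kappa''>0$, as required by the definition of $(V_j)\unlhd (\widetilde{W}_k)$.
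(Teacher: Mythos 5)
Your proposal is correct and follows essentially the same route as the paper: items 1--5 are reduced to the corresponding properties of $\unlhd$ in Proposition~\ref{propweak} exactly as in the text, and for transitivity you use the same intermediate cutoffs $\chi_i\in C^{\infty}_0(U_i)$ equal to $1$ near $\overline{\widetilde{U}_i}$ playing the double role of observation and test functions. Your handling of the constants (first fixing $\kappa_2=\kappa/2$ in the outer estimate, then choosing $\kappa_1\leq\min(\kappa/2,\kappa'_2/2)$ so the residual $e^{(\kappa_1-\kappa'_2)\mu}$ term decays) and of the frequency parameters (feeding the intermediate $\beta$ as the $\alpha$ of the inner estimate) coincides with the paper's argument.
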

\bnp
Property~\ref{propweakstrong} is obvious.
For \ref{proptotostrong}, the assumption gives some $(\widetilde{U}_i)_{i\in I}$ with $(V_j)_{j\in J} \unlhd (\widetilde{U}_i)_{i\in I}$ and $\widetilde{U}_i \Subset U$ for all $i\in I$. Since $\overline{\widetilde{U}_i} \subset U$ for all $i\in I$ and $I$ is finite, we have $\overline{\cup_{i\in I}\widetilde{U}_i}= \cup_{i\in I}\overline{\widetilde{U}_i} \subset U$. Denote $W=\cup_{i\in I}\widetilde{U}_i$. We have $\widetilde{U}_i\subset W$ for all $i\in I$, so Property \ref{proptoto2} and then Property \ref{proptoto} of the previous Lemma give $(V_j)_{j\in J} \unlhd W$ which implies $(V_j)_{j\in J} \lhd U$ since $W\Subset U$. 

For \ref{propincludestrong}, we use $ (V_i)_{i\in I} \unlhd (V_i)_{i\in I}$ from Property \ref{propproduit} of the previous Lemma and $V_i\Subset U_i$.

For \ref{propunionstrong}, we use Property \ref{propunion} of the previous Lemma, which gives $\bigcup_{i\leq I} V_i \unlhd (V_i)_{i\in I}$. This is $\bigcup_{i\leq I} V_i \lhd (U_i)_{i\in I}$ by the definition of $\lhd$. 

For \ref{propproduitstrong}, assume $V_i\unlhd \widetilde{U}_i$ with $\widetilde{U}_i\Subset U_i$. Then, Property \ref{propproduit} of the previous Lemma gives $(V_i)_{i\in I}\unlhd (\widetilde{U}_i)_{i\in I}$ which gives $(V_i)_{i\in I}\lhd (U_i)_{i\in I}$ by definition. The second part is direct by combining with Property \ref{proptotostrong}.

For \ref{proptransstrong}, the assumptions give the existence of $\widetilde{U}_i\Subset U_i$ and $\widetilde{W}_k\Subset W_k$ such that
\bna
(V_j)_{j\in J} \unlhd (\widetilde{U}_i)_{i\in I} \textnormal{ and } (U_i)_{i\in I}\unlhd (\widetilde{W}_k)_{k\in K }
\ena
Since $\widetilde{U}_i\Subset U_i$, we can pick $\chi_i\in C^{\infty}_0(U_i)$ such that $\chi_i=1$ in an neighborhood of $\overline{\widetilde{U}_i}$. Let $\alpha>0$, $\kappa>0$, and take $\vartheta_k\in C^{\infty}_0(\R^n)$ (for all $k\in K$) such that $\vartheta_k(x)=1$ on a neighborhood of $\overline{\widetilde{W}_k}$ and $\widetilde{\vartheta}_j\in C^{\infty}_0(V_j)$ (for all $j\in J$). Since we have $(U_i)_{i\in I}\unlhd (\widetilde{W}_k)_{k\in K }$ and $\chi_i\in C^{\infty}_0(U_i)$, there exist $C, \kappa', \beta,\mu_0 >0$, such that we have
\bna
\sum_{i\in I}\nor{M^{\beta\mu}_{\mu} \chi_{i,\mu}  u}{m-1}\leq C e^{\frac{\kappa}{2} \mu}\left(\sum_{k\in K} \nor{M^{\alpha\mu}_{\mu} \vartheta_{k,\mu} u}{m-1} + \nor{Pu}{L^2(\Omega)}\right)+Ce^{-\kappa' \mu}\nor{u}{m-1} .
\ena
Now, we apply the relation given by $(V_j)_{j\in J} \unlhd (\widetilde{U}_i)_{i\in I}$ with $\alpha$ replaced by the above $\beta$ and $\kappa$ replaced by $\kappa_1=\min(\kappa',\kappa)/2>0$. Since $\chi_i=1$ in an neighborhood of $\widetilde{U}_i$ and $\widetilde{\vartheta}_j\in C^{\infty}_0(V_j)$, there exist $C', \kappa'', \beta',\mu_0' >0$ such that
\bna
\sum_{j\in J}\nor{M^{\beta'\mu}_{\mu} \widetilde{\vartheta}_{j,\mu}   u}{m-1}\leq C' e^{\kappa_1 \mu}\left(\sum_{i\in I} \nor{M^{\beta\mu}_{\mu} \chi_{i,\mu} u}{m-1} + \nor{Pu}{L^2(\Omega)}\right)+C'e^{-\kappa'' \mu}\nor{u}{m-1} .
\ena
Combining the above two estimates now yields
\bna
\sum_{j\in J}\nor{M^{\beta'\mu}_{\mu} \widetilde{\vartheta}_{j,\mu}   u}{m-1}&\leq& CC' e^{(\frac{\kappa}{2}+\kappa_1) \mu}\sum_{k\in K} \nor{M^{\alpha\mu}_{\mu} \vartheta_{k,\mu} u}{m-1} + C' e^{\kappa_1 \mu}\left(1+Ce^{\frac{\kappa}{2} \mu}\right)\nor{Pu}{L^2(\Omega)}\\
&& +\left(C'e^{-\kappa'' \mu}+CC'e^{(\kappa_1-\kappa')\mu}\right)\nor{u}{m-1}.
\ena
Since $\kappa/2+\kappa_1\leq \kappa$ and $\kappa_1-\kappa'<\kappa'/2-\kappa'=-\kappa'/2<0$, it gives $(V_j)_{j\in J} \unlhd (\widetilde{W}_k)_{k\in K}$, which implies the result since $\widetilde{W}_k\Subset W_k$. 

Note that in the proofs above, we have omitted to precise each time the restriction $\mu\geq \mu_0$. Yet, all the estimates have to be taken with that restriction, taking the worst constant $\mu_0$ when several restrictions are involved.
\enp
\begin{corollary}
\label{cordominance}
Under the assumptions of Theorem \ref{th:alpha-unif}, there exists $R_0>0$ such that for any $R\in (0,R_0)$, there exists $r$, $\rho>0$ so that  we have
\bna
B(x^0,r) \unlhd \left[\left\{\phi> 2\rho\right\}\cap B(x^0,3R)\right] , \\
B(x^0,r) \lhd \left[\left\{\phi> \rho\right\}\cap B(x^0,4R)\right] .
\ena
\end{corollary}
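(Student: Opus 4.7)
The plan is to translate Theorem \ref{th:alpha-unif} directly into the language of the dependence relation $\unlhd$ of Definition \ref{defdependence}, via two elementary adjustments: first, a rescaling $\mu\mapsto\alpha\mu$ to handle the arbitrary frequency parameter $\alpha>0$ in Definition \ref{defdependence}; second, a change of cutoff function from the specific $\sigma_r$ of Theorem \ref{th:alpha-unif} to an arbitrary $\widetilde\vartheta\in C^\infty_0(B(x^0,r))$, via the commutator estimates of Lemma \ref{l:commutateurnon}. The second (strong) dependence relation $\lhd$ will then follow from the first via an elementary compact inclusion of open sets.

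To begin, I would apply Theorem \ref{th:alpha-unif} to $P$, $x^0$, and $\phi$ (whose hypotheses coincide with those of the corollary), obtaining $R_0>0$ and, for each $R\in(0,R_0)$, associated numbers $r,\rho,\tilde\tau_0>0$. Shrinking $R_0$ if necessary so that $\nabla\phi$ does not vanish on $B(x^0,3R_0)$, the elementary inclusion $\{\phi\geq 2\rho\}\cap B(x^0,3R)\subset\overline{\{\phi>2\rho\}\cap B(x^0,3R)}$ holds by local non-degeneracy of $\phi$. Consequently, any $\vartheta$ equal to $1$ on a neighborhood of $\overline{\{\phi>2\rho\}\cap B(x^0,3R)}$ (as required by Definition \ref{defdependence} with $U=\{\phi>2\rho\}\cap B(x^0,3R)$) automatically equals $1$ on a neighborhood of $\{\phi\geq 2\rho\}\cap B(x^0,3R)$, which is the hypothesis of Theorem \ref{th:alpha-unif}.

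Given $\widetilde\vartheta\in C^\infty_0(B(x^0,r))$ and arbitrary $\alpha,\kappa>0$, I would apply Theorem \ref{th:alpha-unif} with $\mu$ replaced by $\alpha\mu$, with $c_1:=1/\alpha$, and with $\kappa/\alpha$ in the role of the theorem's $\kappa$. Since $c_1\cdot\alpha\mu=\mu$, this yields, for $\beta\leq\beta_0(\alpha,\kappa)$ and $\mu$ sufficiently large,
\begin{equation*}
\nor{M^{\beta\alpha\mu}_{\mu}\sigma_{r,\mu}u}{m-1}\leq Ce^{\kappa\mu}\bigl(\nor{M^{\alpha\mu}_{\mu}\vartheta_{\mu}u}{m-1}+\nor{Pu}{L^2(B(x^0,4R))}\bigr)+Ce^{-c\mu}\nor{u}{m-1}.
\end{equation*}
Since $\supp\widetilde\vartheta\Subset B(x^0,r)$ and $\sigma_r=1$ on a neighborhood of $\overline{B(x^0,r)}$, combining \eqref{chgtordrexi} and \eqref{chgtordrex} of Lemma \ref{l:commutateurnon} (with $\lambda=\mu$, $f=\widetilde\vartheta$, $f_1=\sigma_r$) then gives, for $\beta':=\beta\alpha/2$,
\begin{equation*}
\nor{M^{\beta'\mu}_{\mu}\widetilde\vartheta_{\mu}u}{m-1}\leq C\nor{M^{\beta\alpha\mu}_{\mu}\sigma_{r,\mu}u}{m-1}+Ce^{-c\mu}\nor{u}{m-1}.
\end{equation*}
Chaining these two displays, together with $\nor{Pu}{L^2(B(x^0,4R))}\leq\nor{Pu}{L^2(\Omega)}$, produces precisely the inequality of Definition \ref{defdependence}, proving $B(x^0,r)\unlhd\{\phi>2\rho\}\cap B(x^0,3R)$.

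Finally, for the second relation I would invoke Definition \ref{defdependencestrong}: since
\begin{equation*}
\overline{\{\phi>2\rho\}\cap B(x^0,3R)}\subset\{\phi\geq 2\rho\}\cap\overline{B(x^0,3R)}\subset\{\phi>\rho\}\cap B(x^0,4R),
\end{equation*}
we have the compact inclusion $\{\phi>2\rho\}\cap B(x^0,3R)\Subset\{\phi>\rho\}\cap B(x^0,4R)$, so combined with the weak dependence just proved, Definition \ref{defdependencestrong} immediately gives $B(x^0,r)\lhd\{\phi>\rho\}\cap B(x^0,4R)$. The main technical point is the rescaling step: the arbitrariness of $\alpha$ in Definition \ref{defdependence} forces the trick $\mu\mapsto\alpha\mu$, $c_1=1/\alpha$, which converts the $M^\mu$-cutoff of Theorem \ref{th:alpha-unif} into the required $M^{\alpha\mu}$-cutoff while keeping the regularization at scale $\mu$; all else is bookkeeping with the commutator lemmata of Section \ref{s:prelim-estim}.
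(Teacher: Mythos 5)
Your proposal is correct and follows essentially the same route as the paper's proof: the rescaling $\mu\mapsto\alpha\mu$, $c_1=1/\alpha$, $\kappa\mapsto\kappa/\alpha$ to handle the arbitrary $\alpha$ in Definition~\ref{defdependence}, the use of Lemma~\ref{l:commutateurnon} to pass from $\sigma_r$ to an arbitrary $\widetilde\vartheta\in C^\infty_0(B(x^0,r))$, and the compact inclusion $\{\phi>2\rho\}\cap B(x^0,3R)\Subset\{\phi>\rho\}\cap B(x^0,4R)$ for the strong relation. Your extra remark reconciling ``$\vartheta=1$ near $\overline{\{\phi>2\rho\}\cap B(x^0,3R)}$'' with the hypothesis ``$\vartheta=1$ near $\{\phi\geq 2\rho\}\cap B(x^0,3R)$'' of Theorem~\ref{th:alpha-unif} via $\nabla\phi\neq 0$ is a detail the paper leaves implicit, and is a welcome clarification.
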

\bnp[Proof of Corollary \ref{cordominance}]
First, we restrict $R_0$ so that $B(x^0,4R_0)\subset \Omega$. Theorem \ref{th:alpha-unif} gives some $R$, $r$, $\rho$, $\tilde{\tau}_0>0$. 

Let $\kappa$, $\alpha>0$. We apply the result with $\mu=\alpha \mu'$, $c_1=1/\alpha$ and $\kappa$ replaced by $\kappa/\alpha$ to obtain, uniformly  for $\mu'\geq \tilde{\tau_0}/(\alpha\beta)$,
\bna
\nor{M^{\beta\alpha\mu'}_{\mu'} \sigma_{r,\mu'} u}{m-1}\leq C e^{\kappa \mu'}\left(\nor{M^{\alpha\mu'}_{\mu'} \vartheta_{\mu'} u}{m-1} + \nor{Pu}{L^2(B(x^0,4R))}\right)+Ce^{-\alpha\kappa' \mu'}\left(\nor{u}{m-1}\right) .
\ena
Now, let $\widetilde{\vartheta} \in C^{\infty}_0(B(x^0,r))$. Since $\sigma_r=1$ on $B(x^0,r)$, Lemma \ref{l:commutateurnon} gives
\bna
\nor{M^{\beta\alpha\mu'/2}_{\mu'}\widetilde{\vartheta}_{\mu'} u}{m-1} \leq \nor{M^{\beta\alpha\mu'}_{\mu'} \sigma_{r,\mu'} u}{m-1}+Ce^{-c\mu'} \nor{u}{m-1}
\ena
This gives the result. The second comes from the compact inclusion of $\left[\left\{\phi> 2\rho\right\}\cap B(x^0,3R)\right]$ into
$B(x^0,r) \lhd \left[\left\{\phi> \rho\right\}\cap B(x^0,4R)\right]$.
\enp
\subsection{Semiglobal estimates along foliation by graphs}
This section is devoted to the proof of Theorem \ref{thmsemiglobal}.
Actually, this result is a corollary of the following stronger theorem, stated here in the context of zone of dependence.
\begin{theorem}
\label{thmsemiglobaldep}
Under the assumptions of Theorem \ref{thmsemiglobal}, there exists an open neighborhood $U$ of $K$ such that for any open neighborhood $\hat \omega$ of $S_0$, we have 
\bna
U\lhd \hat{\omega}.
\ena
\end{theorem}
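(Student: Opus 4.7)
The strategy is to iterate the local estimate of Corollary \ref{cordominance} along the foliation $(S_\eps)_{\eps \in [0, 1+\eta)}$ using the composition properties of the strong dependence relation $\lhd$ established in Proposition \ref{propstrong}. Introduce the compact sublevels $K_\eps := \{(x', x_n) \in \overline D \times \R : 0 \le x_n \le G(x', \eps)\}$, so that $K_0 = S_0$, $K_1 = K$, and $(K_\eps)$ increases from $S_0$ to $K$ as $\eps$ goes from $0$ to $1$.

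The core of the argument is a one-step propagation lemma: for each $\eps_0 \in [0, 1+\eta)$ there exist $\delta(\eps_0)>0$ and an open neighborhood $V_{\eps_0}$ of $S_{\eps_0}$ together with an open set $W_{\eps_0}$ contained in an arbitrarily small neighborhood of $K_{\max(\eps_0 - \delta, 0)}$ (with the convention that for $\eps_0 = 0$ this ``neighborhood'' lies on the $\{x_n < 0\}$-side of $S_0$), such that $V_{\eps_0} \lhd W_{\eps_0}$. To prove this, I would apply Corollary \ref{cordominance} at each $x^0 \in S_{\eps_0}$ with $\phi_{\eps_0}$ as the pseudoconvex defining function, obtaining local dependences $B(x^0, r(x^0)) \lhd \{\phi_{\eps_0} > \rho(x^0)\} \cap B(x^0, 4R(x^0))$. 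Compactness of $S_{\eps_0}$ yields a finite subcover, and items \ref{propunionstrong} and \ref{propproduitstrong} of Proposition \ref{propstrong} combine the local statements into a global one. Strict monotonicity and uniform continuity of $G(x', \cdot)$ then locate the observation region on the right strictly below the leaf $S_{\eps_0 - \delta}$ for some $\delta > 0$, hence inside any sufficiently thin neighborhood of $K_{\max(\eps_0 - \delta, 0)}$.

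The global statement then follows by a finite induction using compactness of $[0, 1]$. The open intervals $(\eps_0 - \delta(\eps_0), \eps_0 + \delta(\eps_0))$, $\eps_0 \in [0, 1]$, cover $[0, 1]$, and a finite subcover gives a chain $0 = \eps^*_0 < \eps^*_1 < \cdots < \eps^*_N \ge 1$ whose consecutive gaps are smaller than the relevant $\delta$'s. By induction on $j$, I would construct an increasing sequence of open neighborhoods $U_j$ of $K_{\eps^*_j}$ satisfying $U_j \lhd \hat\omega$ for every open neighborhood $\hat\omega$ of $S_0$. The base case $j=0$ applies the one-step lemma at $\eps^*_0 = 0$, where $W_0$ can be made strictly inside any given $\hat\omega$ by taking the local parameters in Corollary \ref{cordominance} small enough (using that $W_0$ lies on the $\{x_n < 0\}$-side of $S_0$, which belongs to any neighborhood of $S_0$). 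The inductive step composes the one-step lemma at $\eps^*_{j+1}$ with the inductive hypothesis $U_j \lhd \hat\omega$ via transitivity of $\lhd$ (item \ref{proptransstrong}), provided $W_{\eps^*_{j+1}} \Subset U_j$, which is arranged using the gap condition $\eps^*_{j+1} - \delta(\eps^*_{j+1}) < \eps^*_j$ and by taking $W_{\eps^*_{j+1}}$ small; then $U_{j+1} := U_j \cup V_{\eps^*_{j+1}}$ is a neighborhood of $K_{\eps^*_{j+1}}$ with $U_{j+1} \lhd \hat\omega$ (combining items \ref{propproduitstrong} and \ref{propunionstrong} on the union). After $N$ steps, $U := U_N$ is a neighborhood of $K$.

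The main obstacle will be to arrange, at each inductive step, that the observation region $W_{\eps^*_{j+1}}$ coming from the one-step lemma is \emph{strictly} contained in the previously constructed neighborhood $U_j$, so that transitivity of $\lhd$ genuinely applies. This requires a careful quantitative choice of the local parameters $R, \rho, r$ from Corollary \ref{cordominance}, coordinated with the gap $\eps^*_{j+1} - \eps^*_j$ and the thickness of $U_j$, using compactness of each leaf $S_{\eps^*_j}$ and uniform continuity of $G$ on $\overline D \times [0, 1]$. An additional technicality is the degeneracy of the foliation at $\partial D \times \{0\}$ (where all leaves $S_\eps$ coincide), but near such points every leaf is arbitrarily close to $S_0$, so the corresponding observation regions automatically fit into any neighborhood $\hat\omega$ of $S_0$.
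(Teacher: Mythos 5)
Your proposal is correct and follows essentially the same strategy as the paper: cover each leaf $S_\eps$ by the local dependences of Corollary \ref{cordominance}, combine them with the calculus of Proposition \ref{propstrong}, use compactness of the parameter interval to extract a finite chain, and propagate by transitivity, with the key point being that monotonicity of $G$ in $\eps$ places each observation region $\{\phi_\eps > \rho\}$ strictly inside the previously controlled region (the paper's Lemma \ref{lem:supp theta ou il faut}). The obstacles you flag (fitting $W_{\eps^*_{j+1}}$ compactly into $U_j$, and the parts of the observation regions lying in $\{x_n<0\}$ or near $\partial D\times\{0\}$) are exactly the ones the paper resolves via the choice $2R<\dist(\omega_1^c,S_0)$ and the inequality $G(x',\e-g(\e))\geq G(x',\e)-\rho_\e$.
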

In the present section, we first prove that Theorem \ref{thmsemiglobaldep} implies Theorem \ref{thmsemiglobal}, and then prove Theorem~\ref{thmsemiglobaldep}.
\bnp[Proof that Theorem \ref{thmsemiglobaldep} implies Theorem \ref{thmsemiglobal}]
We first apply Theorem \ref{thmsemiglobaldep} for a neighborhood $\hat{\omega}$ of $S_0$ such that $\hat \omega  \Subset \tilde{\omega}$, where $\tilde{\omega}$ is that in the statement of Theorem~\ref{thmsemiglobal}. We obtain $U\lhd \omega_1$. Take $\chi\in C^{\infty}_0(U)$ such that $\chi=1$ on a neighborhood $U_{\chi}$ of $K$, and $\varphi \in C^{\infty}_0(\tilde{\omega})$ such that $\varphi=1$ on a neighborhood of $\omega_1$. We obtain that for any $\kappa>0$, there exist $C, \beta, \kappa', \mu_0 >0$ such that for $\mu\geq \mu_0$,
\bnan
\label{estimaMmu}
\nor{M^{\beta\mu}_{\mu} \chi_{\mu}  u}{m-1}\leq C e^{\kappa \mu}\left(\nor{M^{\mu}_{\mu} \varphi_{\mu} u}{m-1} + \nor{Pu}{L^2(\Omega)}\right)+Ce^{-\kappa' \mu}\nor{u}{m-1} .
\enan
But since $\varphi \in C^{\infty}_0(\tilde{\omega})$, taking again $\widetilde{\varphi}\in C^{\infty}_0(\tilde{\omega})$, $\widetilde{\varphi}=1$ on a neighborhood of $\supp (\varphi)$, we get thanks to Lemma \ref{lmsuppdisjoint}
\bna
\nor{M^{\mu}_{\mu} \varphi_{\mu} u}{m-1} 
& \leq & \nor{ M^{\mu}_{\mu}  \widetilde{\varphi}\varphi_{\mu} u}{m-1}+\nor{(1-\widetilde{\varphi}) \varphi_{\mu} u}{m-1} \\
& \leq & \sum_{|\alpha| + |\beta| \leq m-1}\nor{ D_a^\alpha M^{\mu}_{\mu} ( D_b^\beta \widetilde{\varphi}\varphi_{\mu} u)}{0}+Ce^{-c \mu}\nor{u}{m-1} .
\ena
Next, we have
\bna
\nor{ D_a^\alpha M^{\mu}_{\mu} f}{0} & \leq & \nor{\xi_a^\alpha m_{\mu}(\xi_a/\mu)}{L^\infty(\R^{n_a})} \nor{f}{0} \\
& \leq & \mu^{|\alpha|}\nor{\xi_a^\alpha m_{\mu}(\xi_a)}{L^\infty(\R^{n_a})} \nor{f}{0} 
\leq C \mu^{|\alpha|} \nor{f}{0} ,
\ena
since the function $\xi_a \mapsto \xi_a^\alpha m_{\mu}(\xi_a)$ is uniformly bounded on $\R^{n_a}$ for $\mu \geq 1$. As a consequence, we now have
\bna
\nor{M^{\mu}_{\mu} \varphi_{\mu} u}{m-1} 
& \leq & C\sum_{|\alpha| + |\beta| \leq m-1}\mu^{|\alpha|}\nor{ D_b^\beta ( \widetilde{\varphi}\varphi_{\mu} u)}{0}+Ce^{-c \mu}\nor{u}{m-1} \\
& \leq & C\mu^{m-1}\sum_{|\beta| \leq m-1}\nor{ D_b^\beta u}{L^2(\tilde\omega)}+Ce^{-c \mu}\nor{u}{m-1}\\
& \leq & C\mu^{m-1}\nor{ u}{H_b^{m-1}(\tilde\omega)}+Ce^{-c \mu}\nor{u}{m-1}.
\ena
In the particular case where $n_a=n$, we change slightly the estimate
\bna
\nor{M^{\mu}_{\mu} \varphi_{\mu} u}{m-1}&\leq& \nor{M^{2\mu} M^{\mu}_{\mu} \varphi_{\mu} u}{m-1}+\nor{(1-M^{2\mu})M^{\mu}_{\mu} \varphi_{\mu} u}{m-1}\\
&\leq & C \mu^{s+m-1}\nor{\varphi_{\mu} u}{-s}+Ce^{-c \mu}\nor{u}{m-1}\\
&\leq &C \mu^{s+m-1}\nor{\widetilde{\varphi}\varphi_{\mu} u}{-s}+C \mu^{s+m-1}\nor{(1-\widetilde{\varphi}) \varphi_{\mu} u}{-s}+Ce^{-c \mu}\nor{u}{m-1}\\
&\leq &C \mu^{s+m-1}\nor{\widetilde{\varphi} u}{H^{-s}}+Ce^{-c \mu}\nor{u}{m-1}.
\ena
In~\eqref{estimaMmu}, the constant $\kappa>0$ is arbitrary (all other constants in that estimate depending on it): imposing $\kappa<c/2$ and noticing that $\mu^{m-1}\leq C_m e^{\kappa \mu}$, we obtain, with $c' := \min (c/2,\kappa')$,
\bnan
\label{e:numero1}
\nor{M^{\beta\mu}_{\mu} \chi_{\mu}  u}{m-1}\leq C e^{2\kappa \mu}\left(\nor{ u}{H^{m-1}_b(\tilde{\omega})} + \nor{Pu}{L^2(\Omega)}\right)+Ce^{-c' \mu}\nor{u}{m-1} .
\enan
In the analytic case, $n_a=n$, using $\mu^{s+m-1}\leq C_s e^{\kappa \mu}$, we have similarly
\bna
\nor{M^{\beta\mu}_{\mu} \chi_{\mu}  u}{m-1}\leq C e^{2\kappa \mu}\left(\nor{\widetilde{\varphi} u}{H^{-s}} + \nor{Pu}{L^2(\Omega)}\right)+Ce^{-c' \mu}\nor{u}{m-1} .
\ena
Now, let $\widetilde{\chi}\in C^{\infty}_0(U_{\chi})$ be such that $\widetilde{\chi}=1$ in a neighborhood of $K$. We have, using again  Lemma \ref{lmsuppdisjoint},
\bnan
\label{e:numero2}
\nor{\widetilde{\chi}u}{0}&\leq &\nor{\widetilde{\chi}\chi_{\mu}u}{0}+\nor{(1-\chi_{\mu})\widetilde{\chi}u}{0} \nonumber\\
&\leq & C\nor{\chi_{\mu}u}{0}+Ce^{-c\mu }\nor{u}{m-1} \nonumber\\
&\leq&  C\nor{M^{\beta\mu}_{\mu} \chi_{\mu}u}{0}+C\nor{(1-M^{\beta\mu}_{\mu}) \chi_{\mu}u}{0}+Ce^{-c\mu }\nor{u}{m-1}.
\enan
Concerning the second term in this estimate, we write
\bna
\nor{(1-M^{\beta\mu}_{\mu}) \chi_{\mu}u}{0}\leq C \sup_{(\xi_a,\xi_b)\in \R^{n_a+n_b}}\left|\frac{ (1-m_{\mu})(\frac{\xi_a}{\beta\mu})}{\left| \xi_a\right|^{m-1}+\left\langle \xi_b\right\rangle^{m-1}}\right| \nor{\chi_{\mu}u}{m-1}.
\ena
Hence, in the range $|\xi_a|\geq \beta\mu/2$ with $\mu\geq \mu_0$, we have the loose estimate
\bnan
\label{e:numero3}
\left|\frac{ (1-m_{\mu})(\frac{\xi_a}{\beta\mu})}{\left| \xi_a\right|^{m-1}+\left\langle \xi_b\right\rangle^{m-1}}\right| \leq \frac{C}{\mu^{m-1}}.
\enan
In the range $|\xi_a|\leq \beta\mu/2$, using $\dist\left( \supp (1-m(\frac{\cdot}{\beta})), \left\{|\xi_a|\leq \beta/2\right\}\right)>0$, we have $\left|(1-m_{\mu})(\frac{\xi_a}{\beta\mu})\right|\leq Ce^{-c\mu}$ according to Lemma \ref{lmsuppdisjoint}. In this range of $\xi_a$, this yields
\bna
\left|\frac{(1-m_{\mu})(\frac{\xi_a}{\beta\mu})}{\left| \xi_a\right|^{m-1}+\left\langle \xi_b\right\rangle^{m-1}}\right| \leq Ce^{-c\mu},
\ena
so that \eqref{e:numero3} holds for all $\xi_a \in \R^{n_a}$, and $\mu \geq \mu_0$.
This yields 
$\nor{(1-M^{\beta\mu}_{\mu}) \chi_{\mu}u}{0} \leq \frac{C}{\mu^{m-1}} \nor{\chi_{\mu}u}{m-1}$, which, combined with~\eqref{e:numero1} and~\eqref{e:numero2}  gives, for $\mu\geq \mu_0$,
\bna
\nor{\widetilde{\chi}u}{0}&\leq &C e^{2\kappa \mu}\left(\nor{ u}{H^{m-1}_b(\tilde{\omega})} + \nor{Pu}{L^2(\Omega)}\right)+\frac{C}{\mu^{m-1}}\nor{u}{m-1}.
\ena
Similarly, in the analytic case, we have
\bna
\nor{\widetilde{\chi}u}{0}&\leq &C e^{2\kappa \mu}\left(\nor{\widetilde{\varphi} u}{H^{-s}} + \nor{Pu}{L^2(\Omega)}\right)+\frac{C}{\mu^{m-1}}\nor{u}{m-1}.
\ena
Finally, the case $n_a=0$ is a direct consequence of \eqref{estimaMmu} since there is no regularization.

Now, we notice that the previous estimates are true for any $\Omega$ neighborhood of $K$. Denoting now by $\Omega$ the neighborhood of $K$ given by the assumptions of the Theorem, we can apply the previous estimates to an open neighborhood $\widetilde{\Omega}$ of $K$ so that $\widetilde{\Omega} \Subset\Omega$. This gives that for any $\widetilde{\omega}\subset \widetilde{\Omega}$ neighborhood of $S_0$, there exists an open set $\widetilde{U}$ neighborhood of $K$ (that we can impose included in $\widetilde{\Omega}$) so that we have the estimates 
\bnan
\label{inegchi0}
\nor{u}{L^2(\widetilde{U})}&\leq &C e^{2\kappa \mu}\left(\nor{ u}{H^{m-1}_b(\tilde{\omega})} + \nor{Pu}{L^2(\widetilde{\Omega})}\right)+\frac{C}{\mu^{m-1}}\nor{u}{m-1}.
\enan
Take $\chi_0$ supported in $\Omega$ and so that $\chi_0=1$ in $\widetilde{\Omega}$. In particular, we have $\nor{P(\chi_0 u)}{L^2(\widetilde{\Omega})}=\nor{Pu}{L^2(\widetilde{\Omega})}\leq \nor{Pu}{L^2(\Omega)}$, $\nor{\chi_0 u}{L^2(\widetilde{U})}=\nor{u}{L^2(\widetilde{U})}$, $\nor{ \chi_0 u}{H^{m-1}_b(\tilde{\omega})}=\nor{ u}{H^{m-1}_b(\tilde{\omega})}$ and $\nor{\chi_0 u}{m-1}\leq C \nor{u}{H^{m-1}(\Omega)}$. Applying inequality \eqref{inegchi0} to $\chi_0 u$ gives
\bna
\nor{u}{L^2(\widetilde{U})}&\leq &C e^{2\kappa \mu}\left(\nor{ u}{H^{m-1}_b(\tilde{\omega})} + \nor{Pu}{L^2(\Omega)}\right)+\frac{C}{\mu^{m-1}}\nor{u}{H^{m-1}(\Omega)}.
\ena
This concludes the proof of Theorem \ref{thmsemiglobal} in the general case. The end of the proof in the cases $n_a=n$ and $n_a=0$ is similar.
\enp

Now, we come to the proof of the main result of this section, namely Theorem~\ref{thmsemiglobaldep}. This proof consists in two main steps: first defining the adapted geometrical context, and second to iterate the local result in this geometric context, using an induction argument.

\bnp[Proof of Theorem~\ref{thmsemiglobaldep}]
To begin with, we choose $\omega_1 \Subset \omega_2  \Subset \hat{\omega}$ where $\omega_1$ is another open neighborhood of $S_0$.
We fix $R$ such that 
\bnan
\label{eq:defR}
2 R < \min (\dist(K , \Omega^c), \dist(\omega_1^c, S_0)) ,
\enan
define the set
$$
K^R = \bigcup_{x \in K} B(x, 2R),
$$
and pick a cutoff function
\bnan
\label{eq:defchiK}
\chi_K \in C^\infty_c(\Omega), \quad \text{such that }\chi_K = 1 \text{ on } K^R , \text{ and } \supp(\chi_K) \cap \{x_n \leq0 \} \subset \omega_1 .
\enan
Given any point $x\in K$, there exists $\eps>0$ such that $x \in S_\eps$. We denote by $R_0>0$ the constant given by Theorem \ref{th:alpha-unif} associated to the point $x$ and the function $\phi_\eps$. Next, we set 
\bnan
\label{e:defRx}
R_x:=\min(R_0/2 , R/4), 
\enan
and then 
$$
r_x := \min(r/2, 3R_x), \qquad \rho_x=\rho ,
$$
where $r, \rho>0$ are the constants given by Theorem \ref{th:alpha-unif} (and Corollary \ref{cordominance}) associated to $x$, $\phi_\eps$ and $R_x$.

For any $\e\in (0,1]$ and $x\in S_{\e}$, we have $\phi_{\e}(x)=0$. So, we can write 
\bna
S_{\e}\subset \bigcup_{x\in S_{\e}}  B(x,r_x)  ,
\ena 
and, since $S_{\e}$ is compact, we can extract a finite covering, i.e. there is a finite set of indices $I_\eps$ and a finite number of points $(x_i^\eps)_{i \in I_\eps}$, such that 
\bna
S_{\e}\subset \bigcup_{i\in I_{\e}} B(x_i^\eps,r_{x_i^\eps}) , \quad x_i^\eps \in S_\eps.
\ena
For $x_i^\eps \in S_\eps$, we rename the associated radii, setting 
$$
R_i^\eps := R_{x_i^\eps} , \qquad r_i^\eps := r_{x_i^\eps} , \qquad \rho_i^\eps := \rho_{x_i^\eps} ,
$$
and define  $$\rho_{\e}:=\min_{i \in I_{\e}} \rho_i^\eps>0.$$ Since $\phi_{\e}=0$ on $S_{\e}$, we still have 
\bna
S_{\e}\subset \left(\bigcup_{i\in I_{\e}} B(x_i^\e,r_{i}^\e) \right) 
\cap \left\{ \phi_{\e} <\rho_{\e}\right\}= : \mathcal{U}_{\e}.
\ena
The definition of $\mathcal{U}_{\e}$ is illustrated on Figure~\ref{f:def-U-eps}.
\begin{figure}[h!]
  \begin{center}
    \input{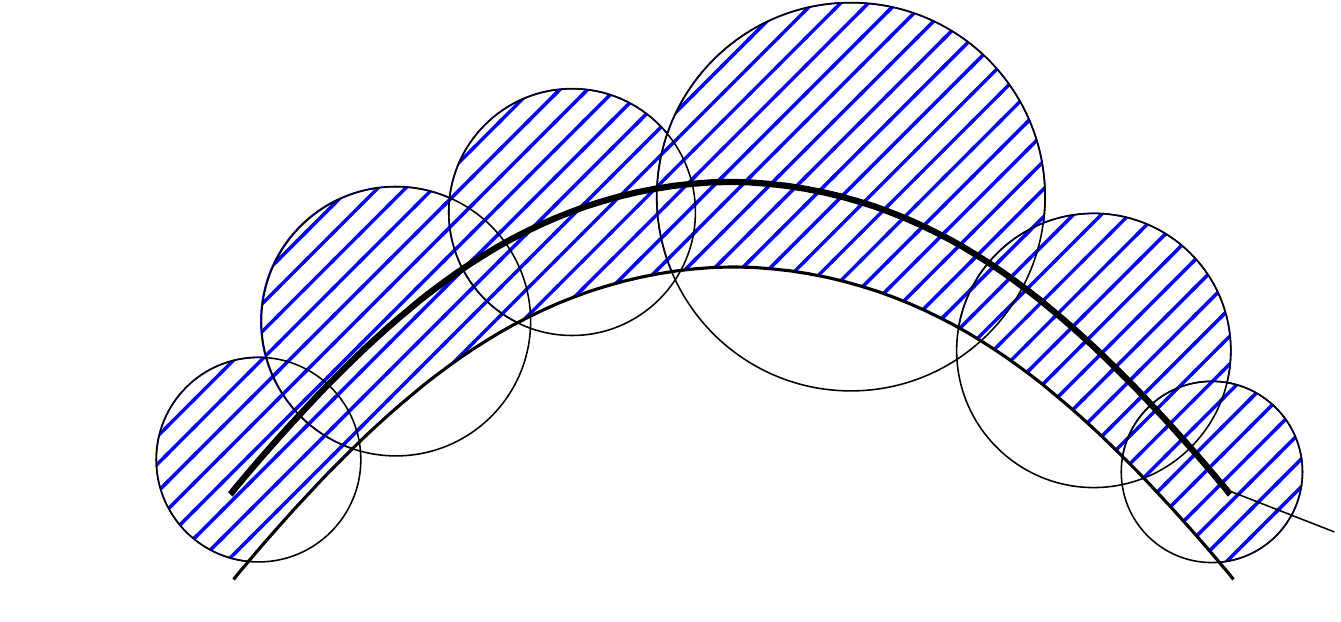_t} 
    \caption{Definition of the set $\mathcal{U_\e}$, striped in blue}
    \label{f:def-U-eps}
 \end{center}
\end{figure}
Therefore, for $\eps \in ]0,1]$, $\mathcal{U}_{\e}$ is an open neighborhood the compact surface $S_{\e}$. Since $G$ is $C^1$, we claim that we can find $g(\e)>0$ so that 
\bnan
\label{defVj}
\mathcal{V}_{\e} := \bigcup_{\e'\in ]\e-g(\e),\e+g(\e)[}S_{\e'} \subset \mathcal{U}_{\e}
\enan
(the definition of $\mathcal{V}_{\e}$ is illustrated on Figure~\ref{f:def-V-eps}).
\begin{figure}[h!]
  \begin{center}
    \input{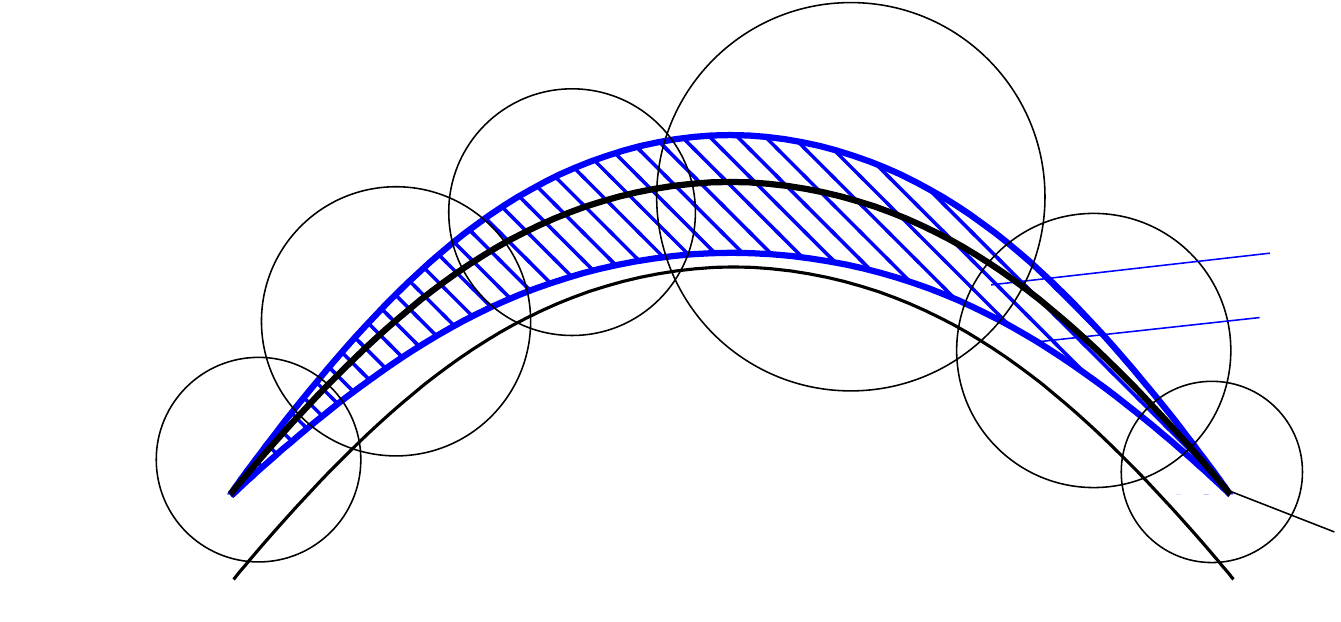_t} 
    \caption{Definition of the set $\mathcal{V_\e}$, striped in blue}
    \label{f:def-V-eps}
 \end{center}
\end{figure}
Indeed, since $G \in C^1(\bar D \times ]0,1])$, we can find $C>0$ so that
\bna
\left|G(x',\e)-G(x',\e')\right|\leq C |\e-\e'| ,
\ena
uniformly for $x'\in\overline{D}$. In particular, if $|\e-\e'|\leq \frac{1}{2C} \dist(S_\e , \mathcal{U}_\e^c)$ with $\dist(S_\e , \mathcal{U}_\e^c) >0$, we have 
\bna
\dist\left[\left(x', G(x',\e')\right),S_{\e}\right]
\leq \dist\left[\left(x', G(x',\e)\right),\left(x', G(x',\e')\right)\right]
\leq \left|G(x',\e)-G(x',\e')\right|
\leq \dist(S_\e , \mathcal{U}_\e^c)/2.
\ena
This holds for any $x' \in \bar{D}$, so that $S_{\e'}$ is contained in a neighborhood of $S_\eps$ of size $\dist(S_\e , \mathcal{U}_\e^c)/2$, and hence contained in $\mathcal{U}_\e$. This proves~\eqref{defVj} with $g(\eps) = \dist(S_\e , \mathcal{U}_\e^c)/2C >0 $.

\medskip
As a consequence of~\eqref{defVj}, we have in particular, for any $\e \in ]0,1]$,
\bnan
\label{emptyinterV}
\mathcal{V}_{\e}\subset \mathcal{U}_{\e}\subset \left\{  \phi_{\e} <\rho_{\e}\right\}.
\enan
Now, we also have
$$
K\subset \bigg( S_0 \cup \bigcup_{\e\in (0,1]}\mathcal{V}_{\e} \bigg)
\subset \bigg( \omega_1\cup \bigcup_{\e\in (0,1]}\mathcal{V}_{\e} \bigg) .
$$
The same argument as above using that $\omega_1$ is a neighborhood of $S_0$ shows that there exists $\eps_0$ such that $$
\mathcal{V}_0 := \bigcup_{\e\in [0, \e_0)}S_{\e} \subset \omega_1.
$$
 As a consequence, we now have
$$
K \subset \bigg(  \mathcal{V}_0 \cup \bigcup_{\e\in [\eps_0,1]}\mathcal{V}_{\e}\bigg) , \quad \mathcal{V}_0 \subset \omega_1. 
$$
From the covering $[\e_0, 1] \subset  \bigcup_{\e\in [\eps_0,1]}]\e-g(\e),\e+g(\e)[$, we now extract a finite covering $[\e_0, 1] \subset \bigcup_{j\in J}]\e_j-g(\e_j),\e_j+g(\e_j)[$, where $J$ is a finite set of indices. In particular, this yields a finite covering
\bnan
\label{e:coveringofzero1}
[0,1] \subset [0, \e_0) \cup \bigcup_{i\in J}]\e_j-g(\e_j),\e_j+g(\e_j)[.
\enan
As a consequence, we now have ($\mathcal{V}_{\e_j}$ being defined in~\eqref{defVj})
\bnan
\label{inclusionK}
K\subset \omega_1 \cup \bigcup_{j\in J}\mathcal{V}_{\e_j}
\quad \left(\subset \ \ \omega_1 \cup \bigcup_{j\in J}\bigcup_{i\in I_{\e_j}} \left(B(x_i^{\e_j},r_{i}^{\e_j})\cap \left\{ \phi_{\e_j} <\rho_{\e_j}\right\}\right)\right).
\enan
Now, we reorder the set $J$ by increasing order of $\e_j-g(\e_j)$, that is 
\bnan
\label{increasinge}
J =  \llbracket0,N\rrbracket , \quad \text{with} \quad \e_j-g(\e_j)\leq\e_{j+1}-g(\e_{j+1}), \quad \text{for all } j \in \llbracket0,N-1\rrbracket .
\enan
 Note that if $\e_j-g(\e_j)=\e_{j+1}-g(\e_{j+1})$, we can suppress that associated to the smaller $\e_j+g(\e_j)$ is smaller, and the covering property remains true. We will also need to check that we have
\bnan
\label{ordree}
\e_{k+1}-g(\e_{k+1})<\max_{1\leq j\leq k}(\e_j+g(\e_j)).
\enan
Indeed, if it is not the case, we have $\e_{k+1}-g(\e_{k+1})\geq \max_{0\leq j\leq k}(\e_j+g(\e_j))$. In particular, for $j\leq k$, we have $ \e_j+g(\e_j)\leq \e_{k+1} - g(\e_{k+1})$ and $\e_{k+1}-g(\e_{k+1})\notin ]\e_j- g(\e_j) ,\e_j+g(\e_j)[$. But for $j\geq k+1$, by increasing choice \eqref{increasinge}, we have $\e_{k+1}-g(\e_{k+1})\leq \e_{j}-g(\e_{j})$, and in particular, $\e_{k+1}-g(\e_{k+1})\notin ]\e_j- g(\e_j) ,\e_j+g(\e_j)[$. Hence $\e_{k+1}-g(\e_{k+1})\notin \bigcup_{j \in J} ]\e_j- g(\e_j) ,\e_j+g(\e_j)[$. Moreover, we have $\e_{k+1}-g(\e_{k+1})\geq \max_{1\leq j\leq k}(\e_j+g(\e_j)) \geq \e_0$ as $\e_j\geq \e_0$ for $j \geq 1$ and hence $\e_{k+1}-g(\e_{k+1}) \notin [0,\e_0[$. This contradicts \eqref{e:coveringofzero1} and proves \eqref{ordree}.
 
\bigskip

This preparatory definitions were made to state the following geometrical Lemma that we prove below.
\begin{lemma}
\label{lem:supp theta ou il faut}
With the notation of the proof of Theorem~\ref{thmsemiglobaldep}, we have for any $k\in \llbracket0,N-1\rrbracket$ and $i\in I_{\e_k}$.
\bna
\left\{\phi_{\e_{k+1}}> \rho_{\e_{k+1}}\right\}\cap B(x_i^{\e_{k+1}},4R_i^{\e_{k+1}}) \Subset \left[\omega_1 \cup\bigcup_{j\in \llbracket1,k\rrbracket}\bigcup_{i\in I_{\e_j}} B(x_i^{\e_j},r_{i}^{\e_j})\right],
\ena
where we consider the union $\bigcup_{j\in \llbracket1,k\rrbracket}$ empty if $k=0$.
\end{lemma}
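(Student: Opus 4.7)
\smallskip

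The plan is to use the intermediate value theorem on the parameter $\e$ of the foliation $\{S_\e\}$ to locate any point $x$ with $\phi_{\e_{k+1}}(x)\geq \rho_{\e_{k+1}}$ on a surface $S_{\e^*(x)}$ with $\e^*(x)$ strictly smaller than $\e_{k+1}$ by a quantitative amount, then to invoke the finite covering \eqref{e:coveringofzero1} together with the ordering \eqref{increasinge}--\eqref{ordree} to place $\e^*(x)$ among the earlier covering intervals $(\e_j-g(\e_j),\e_j+g(\e_j))$ with $j\leq k$ (or in the initial interval $[0,\e_0)$). Consequently $x$ lies in the corresponding $\mathcal{V}_{\e_j}\subset \mathcal{U}_{\e_j}$, whence in the desired union of balls (or in $\omega_1$).

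The first step is to refine the choice of $g(\e_{k+1})$: the defining property \eqref{defVj} is preserved when $g(\e)$ is shrunk, so without loss of generality I impose $g(\e_{k+1})\leq \delta_{k+1}:=\rho_{\e_{k+1}}/(2C_G)$, where $C_G:=\|\partial_\e G\|_{L^\infty(\bar{D}\times[0,1+\eta))}$ is finite since $G\in C^1(\bar D \times [0,1+\eta))$. Then, for any $x$ with $x'\in \bar{D}$, $x_n\geq 0$ and $\phi_{\e_{k+1}}(x)\geq \rho_{\e_{k+1}}$, the mean value theorem applied to $\e\mapsto G(x',\e)$ gives
\[
G(x',\e_{k+1}-\delta_{k+1})\geq G(x',\e_{k+1})-C_G\delta_{k+1}\geq x_n+\tfrac{\rho_{\e_{k+1}}}{2}>x_n.
\]
Combined with $G(x',0)=0\leq x_n$ and the strict monotonicity of $\e\mapsto G(x',\e)$, the intermediate value theorem yields a unique $\e^*(x)\in [0,\e_{k+1}-\delta_{k+1}]$ with $G(x',\e^*(x))=x_n$, i.e.\ $x\in S_{\e^*(x)}$.

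Next, by the finite covering \eqref{e:coveringofzero1} of $[0,1]$, either $\e^*(x)\in[0,\e_0)$ or $\e^*(x)\in (\e_j-g(\e_j),\e_j+g(\e_j))$ for some $j\in J$. The strict inequality $\e^*(x)\leq \e_{k+1}-\delta_{k+1}<\e_{k+1}-g(\e_{k+1})$ rules out $j=k+1$, while the ordering \eqref{increasinge} gives $\e_j-g(\e_j)\geq \e_{k+1}-g(\e_{k+1})>\e^*(x)$ for $j>k+1$, ruling out $j>k+1$ as well. Hence $j\leq k$, and by \eqref{defVj}--\eqref{emptyinterV} together with the inclusion $\mathcal{V}_0\subset \omega_1$, I conclude
\[
x\in \mathcal{V}_0\cup \bigcup_{j=1}^{k}\mathcal{V}_{\e_j}\subset \omega_1\cup\bigcup_{j\in\llbracket 1,k\rrbracket}\bigcup_{i\in I_{\e_j}}B(x_i^{\e_j},r_i^{\e_j}).
\]
The compact inclusion $\Subset$ then follows by noting that the pointwise inclusion is established for every point in the compact set $\{\phi_{\e_{k+1}}\geq\rho_{\e_{k+1}}\}\cap \overline{B}(x_i^{\e_{k+1}},4R_i^{\e_{k+1}})$, which contains the closure of the left-hand side, and the right-hand side is open.

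The main obstacle is the boundary case where $x$ does not lie in $\bar D\times \R_+$ (i.e.\ $x_n<0$ or $x'\notin \bar D$), since the IVT setup uses $G(x',0)\leq x_n$ with $x'\in\bar D$. To deal with it I would invoke the choice \eqref{eq:defR}, namely $2R<\dist(\omega_1^c,S_0)$, together with $4R_i^{\e_{k+1}}\leq R$ coming from \eqref{e:defRx}: for any $x\in B(x_i^{\e_{k+1}},4R_i^{\e_{k+1}})$ with $x_n\leq 0$, proximity to the bottom face $S_0=\bar D\times\{0\}$ combined with this distance condition forces $x\in\omega_1$, so such points contribute to the $\omega_1$-part of the right-hand side. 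Verifying this purely geometric dichotomy is the only delicate step, after which the IVT argument covers all remaining points.
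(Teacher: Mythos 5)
Your argument is correct, and it is essentially the paper's proof viewed in dual form: the paper covers $K$ in physical space by the sets $\mathcal{V}_{\e_j}$ and shows that those with $j\geq k+1$ are disjoint from $\{\phi_{\e_{k+1}}\geq \rho_{\e_{k+1}}\}$, whereas you cover the parameter interval $[0,1]$ by \eqref{e:coveringofzero1}, use the intermediate value theorem to locate the leaf $S_{\e^*(x)}$ through each point, and then exclude the indices $j\geq k+1$. Both exclusions rest on the same three ingredients: monotonicity of $G$ in $\e$, the ordering \eqref{increasinge}, and a quantitative bound of the form $G(x',\e)-G(x',\e-g(\e))\leq \rho_\e$. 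Two remarks. First, your ``without loss of generality'' shrinking of $g(\e_{k+1})$ is circular as written, since $\e_{k+1}$ is only defined after the finite cover --- hence $g$ --- has been fixed; you should either shrink $g(\e)$ to $\min\big(g(\e),\rho_\e/(2C_G)\big)$ for \emph{all} $\e$ before extracting the cover, or better, observe that the bound you need is exactly the paper's inequality \eqref{inegge}, which already follows from $\mathcal{V}_\e\subset\{\phi_\e<\rho_\e\}$ at no extra cost: it gives $G(x',\e_{k+1}-g(\e_{k+1}))\geq x_n$, which still places $\e^*(x)$ in $[0,\e_{k+1}-g(\e_{k+1})]$ and hence outside every \emph{open} interval $(\e_j-g(\e_j),\e_j+g(\e_j))$ with $j\geq k+1$. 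Second, your boundary dichotomy only treats $x_n\leq 0$; the remaining case $x'\notin\overline{D}$, $x_n\geq 0$ is vacuous because $G(\cdot,\e_{k+1})<0$ off $\overline{D}$ forces $\phi_{\e_{k+1}}(x)<0<\rho_{\e_{k+1}}$, and the case $x_n\leq 0$ is settled exactly as in the paper: writing $x_i^{\e_{k+1}}=(y',y_n)$ with $y_n\geq 0$, one has $|x-(y',0)|\leq |x-x_i^{\e_{k+1}}|<4R_i^{\e_{k+1}}\leq R$, so $\dist(x,S_0)<2R<\dist(\omega_1^c,S_0)$ by \eqref{eq:defR} and \eqref{e:defRx}, whence $x\in\omega_1$. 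With these points made explicit, the proof is complete.
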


Now, we are going to use an abstract iteration argument, so we set the following notations for $j\in \llbracket 1,N\rrbracket=J$ and $i\in I_{\e_j}$:
\begin{itemize}
\item $I_J = I_{\eps_j}$,
\item $U_{i,j}=B(x_i^{\e_j},2r_{i}^{\e_j})$,
\item $\omega_{i,j}=B(x_i^{\e_j},r_{i}^{\e_j})$,
\item $V_{i,j}=\left[\left\{\phi_{\eps_{j}}> \rho_{\eps_{j}}\right\}\cap B(x_i^{\eps_{j}},4R_i^{\eps_{j}})\right]$,
\item $V_0=\hat{\omega}$,
\item $U_0=\omega_1$.
\end{itemize}
The choice of the $r_i^{\e_j}$ and $\rho_i^{\e_j}\leq \rho_{\eps_{j}}$ according to Corollary \ref{cordominance} implies
\bna
U_{i,j}\lhd V_{i,j}.
\ena
Moreover, we have $\omega_{i,j}\Subset U_{i,j}$ and  Lemma \ref{lem:supp theta ou il faut} can be written as
$V_{i,k+1} \Subset \left[U_0 \cup\bigcup_{j\in \llbracket 1,k\rrbracket}\bigcup_{i\in I_j} \omega_{i,j}\right]$.
\bigskip
Now, we are in position to apply the following  iteration Proposition, that we prove later on.
\begin{proposition}
\label{propiterationabstrait}
Assume that there exists some open sets $U_0$, $U_{i,j}$, $\omega_{i,j}\Subset U_{i,j}$, with $j\in \llbracket 1,N\rrbracket$ and $i\in I_j$ ($I_j$ finite) such that we have
\bna
& U_{i,j}\lhd V_{i,j} \quad \text{ and } \quad \omega_{i,j}\Subset U_{i,j}, \quad \text{ for all } j\in \llbracket 1,N\rrbracket \text{ and }i\in I_j ; & \\ 
& V_{i,k+1} \Subset \left[U_0 \cup\bigcup_{j\in \llbracket 1,k\rrbracket}\bigcup_{i\in I_{j}} \omega_{i,j}\right], \quad \text{ for } k\in \llbracket 0,N -1\rrbracket,&
\ena 
where we consider the union $\bigcup_{j\in \llbracket 1,k\rrbracket}$ empty if $k=0$.
Then, we have $\left[U_0 \cup\bigcup_{j\in \llbracket 1,N\rrbracket}\bigcup_{i\in I_{j}} \omega_{i,j}\right]\lhd V_0$ for any $U_0\Subset V_0$.
\end{proposition}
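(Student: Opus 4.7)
The plan is to prove Proposition~\ref{propiterationabstrait} by a finite induction on $k\in\llbracket 0,N\rrbracket$, establishing at each step the statement
$$H(k):\quad W_k \lhd V_0, \qquad \text{where}\quad W_k := U_0 \cup \bigcup_{j=1}^{k}\bigcup_{i\in I_j} \omega_{i,j}.$$
The base case $H(0)$ reduces to $W_0 = U_0 \Subset V_0$, which gives $W_0 \lhd V_0$ directly from Item~\ref{propincludestrong} of Proposition~\ref{propstrong}.

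For the inductive step, assume $H(k)$ and set $\Omega_{k+1} := \bigcup_{i\in I_{k+1}}\omega_{i,k+1}$, so that $W_{k+1} = W_k \cup \Omega_{k+1}$. I would first show $\Omega_{k+1} \lhd W_k$ by a short chain of applications of Proposition~\ref{propstrong}. Namely, the compact inclusions $\omega_{i,k+1}\Subset U_{i,k+1}$ together with Item~\ref{propunionstrong} yield $\Omega_{k+1}\lhd (U_{i,k+1})_{i\in I_{k+1}}$; the assumption $U_{i,k+1}\lhd V_{i,k+1}$ for each $i$, combined with Item~\ref{propproduitstrong}, gives $(U_{i,k+1})_i \lhd (V_{i,k+1})_i$; and the compact inclusions $V_{i,k+1}\Subset W_k$ together with Items~\ref{propincludestrong} and~\ref{proptotostrong} give $(V_{i,k+1})_i \lhd W_k$. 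Transitivity (Item~\ref{proptransstrong}) applied twice yields $\Omega_{k+1}\lhd W_k$, and a further application with $H(k)$ gives $\Omega_{k+1}\lhd V_0$. Combining this with $W_k \lhd V_0$ through Items~\ref{propproduitstrong} and~\ref{proptotostrong} produces the collection-level statement $(W_k,\Omega_{k+1}) \lhd V_0$.

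The last and most delicate step is to pass from this collection-level dependence to the single-set statement $W_{k+1} = W_k \cup \Omega_{k+1}\lhd V_0$. Unraveling the definitions, $(W_k,\Omega_{k+1})\unlhd \widetilde V_0$ (for some $\widetilde V_0 \Subset V_0$) provides an estimate in which the test functions on the left-hand side are separately supported in $W_k$ and $\Omega_{k+1}$, whereas for $W_{k+1}\unlhd \widetilde V_0$ one needs a single test function supported in their union. To bridge the gap, given $\widetilde\vartheta\in C^\infty_0(W_{k+1})$, I would choose a smooth partition of unity $\chi_1+\chi_2=1$ on $\supp\widetilde\vartheta$ subordinate to the open cover $\{W_k,\Omega_{k+1}\}$, write $\widetilde\vartheta = \chi_1\widetilde\vartheta + \chi_2\widetilde\vartheta$ with $\chi_1\widetilde\vartheta\in C^\infty_0(W_k)$ and $\chi_2\widetilde\vartheta\in C^\infty_0(\Omega_{k+1})$, and exploit the linearity of $f\mapsto f_\mu = e^{-|D_a|^2/\mu}f$ to split $\widetilde\vartheta_\mu$ accordingly. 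The triangle inequality then converts the sum-over-collection estimate into the desired single-set estimate, yielding $H(k+1)$ and completing the induction.

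The main obstacle is really conceptual rather than technical: it is the last partition-of-unity argument that converts family-level information into union-level information, since the algebra of $\lhd$ set up in Proposition~\ref{propstrong} does not directly provide a ``$\bigcup$ on the left, $\bigcup$ on the right'' rule without a compact inclusion hypothesis on the left. Everything else is bookkeeping within the calculus of $\lhd$, and the finiteness of $N$ and of each $I_j$ ensures that the constants $\beta$, $\kappa'$ and $C$ degrade only finitely many times along the induction, so the final conclusion is nontrivial and quantitative.
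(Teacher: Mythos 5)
Your proof is correct, and it follows the same basic engine as the paper (finite induction, driven by the transitivity and product properties of Proposition~\ref{propstrong}), but the bookkeeping is genuinely different in two places. First, your induction hypothesis is the union-level statement $W_k\lhd V_0$, whereas the paper's hypothesis $(IA_k)$ is the family of individual statements $U_{i,j}\lhd V_0$ for $j\leq k$, from which the union statement is re-derived at each step. Second, and more substantively, the paper never needs your final partition-of-unity step: it always writes the union in terms of the pieces $U_0\Subset W_0$ and $\omega_{i,j}\Subset U_{i,j}$, for which the compact inclusions are available, so that Property~\ref{propunionstrong} (itself resting on Lemma~\ref{lemma: Gevrey Chambertinsomme}) applies directly and yields the strong dependence $\big[U_0\cup\bigcup\omega_{i,j}\big]\lhd (W_0,U_{i,j})_{i,j}$, which then chains with $(IA_k)$ by transitivity. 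Your decomposition $W_{k+1}=W_k\cup\Omega_{k+1}$ blocks that route because $W_k$ carries no compact inclusion, so you are forced to go back to the definition of $\unlhd$ and split the test function $\widetilde\vartheta=\chi_1\widetilde\vartheta+\chi_2\widetilde\vartheta$ by hand; this is legitimate (the regularization $f\mapsto f_\mu$ and the multiplier $M^{\beta\mu}_\mu$ are linear, so the triangle inequality converts the two-entry estimate for the pair $(\chi_1\widetilde\vartheta,\chi_2\widetilde\vartheta)$ into the one for $\widetilde\vartheta$, with constants depending on the chosen partition, which is harmless since the definition quantifies constants after the test functions). In effect you are re-proving, at the point of use, the content of Property~\ref{propunion} of Proposition~\ref{propweak}; recognizing that the hypotheses $\omega_{i,j}\Subset U_{i,j}$ are in the statement precisely so that Property~\ref{propunionstrong} can absorb this step would shorten your argument to the paper's. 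Both versions lose the frequency parameter $\beta$ and the constants only finitely many times, as you note, so the quantitative conclusion is the same.
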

Now, we always have $\omega_2\lhd\hat{\omega}$, as a consequence of Properties \ref{propproduitstrong} (second part) and \ref{proptransstrong} of Proposition \ref{propproduitstrong}, 
Hence, denoting $U:=\left[\omega_1 \cup\bigcup_{j\in \llbracket 1,k\rrbracket}\bigcup_{i\in I_{\e_j}} B(x_i^{\e_j},r_{i}^{\e_j})\right]$, the application of Proposition~\ref{propiterationabstrait} yields
\bna
U \lhd \hat{\omega}.
\ena
Since $U$ is a neighborhood of $K$ by the covering property \eqref{inclusionK}, this concludes the proof of Theorem \ref{thmsemiglobaldep}, up to the proofs of Lemma \ref{lem:supp theta ou il faut} and Proposition \ref{propiterationabstrait}.
\enp

The next two sections are devoted to the proofs of Lemma \ref{lem:supp theta ou il faut} and Proposition \ref{propiterationabstrait}, respectively.
\subsubsection{Proof of Lemma \ref{lem:supp theta ou il faut}}
In this section,we give a proof of Lemma \ref{lem:supp theta ou il faut}. 
We first prove, for later use, that for any $x'\in \bar{\Omega}$, $\e>0$, we have
\bnan
\label{inegge}
G(x',\e-g(\e))\geq G(x',\e)-\rho_{\e}
\enan
Indeed, let $x\in \mathcal{V}_{\e}$, so $x\in S_{\e'}$ for one $\e'\in]\e-g(\e),\e+g(\e)[$. That is $x_n=G(x',\e')$. Using \eqref{emptyinterV}, we have $\phi_{\e}(x)<\rho_{\e}$, that is $G(x',\e)-x_n<\rho_{\e}$ and so $G(x',\e)-G(x',\e')<\rho_{\e}$. This is true for any point $x=(x',G(x',\e')$ for $\e'\in]\e-g(\e),\e+g(\e)[$. Letting $\e'$ tending to $\e-g(\e)$ and using the continuity of $G$, we get $G(x',\e)-G(x',\e-g(\e))\leq \rho_{\e}$, which is \eqref{inegge}. 

\medskip
We now come back to the proof of the Lemma. 
Notice that, as a consequence of the definitions of $\mathcal{U}_{\e}$, $\mathcal{V}_{\e}\subset \mathcal{U}_{\e}$ and of~\eqref{inclusionK}, we have for all $k \in \llbracket 0,N\rrbracket$
\bnan
\label{hypiteration2}
\left[\mathcal{V}_0\cup\bigcup_{j\in \llbracket1,k\rrbracket}\mathcal{V}_{\e_j}\right]
\Subset \left[\omega_1 \cup\bigcup_{j\in \llbracket1,k\rrbracket}\bigcup_{i\in I_{\e_j}} B(x_i^{\e_j},r_{i}^{\e_j})\right].
\enan
By \eqref{hypiteration2}, it is sufficient to prove, for any $k\in \llbracket 0,N-1\rrbracket$, the inclusion
\bna
\Big(\left\{\phi_{\e_{k+1}}\geq \rho_{\e_{k+1}}\right\}\cap B(x_i^{\e_{k+1}}, 4R_i^{\e_{k+1}}) \Big)
\subset  \Big(\omega_1 \cup \bigcup_{j\in \llbracket1,k\rrbracket}\mathcal{V}_{\e_j} \Big) ,
\ena
which shall follow from the following two inclusions:
\bnan
\label{weakerphi}
\left(\left\{\phi_{\e_{k+1}}\geq \rho_{\e_{k+1}}\right\}\cap K \right)\subset 
\Big( \omega_1\bigcup_{j\in \llbracket 1,k\rrbracket}\mathcal{V}_{\e_j}  \Big) ,
\enan
and 
\bnan
\label{e:remaining-part}
\left(\left\{\phi_{\e_{k+1}}\geq \rho_{\e_{k+1}}\right\}\cap K^c \right)\cap B(x_i^{\e_{k+1}},4R_i^{\e_{k+1}})\subset \omega_1 .
\enan

Let us first prove~\eqref{weakerphi}. Since $K\subset \left(\omega_1\cup\bigcup_{j\in \llbracket 1,N\rrbracket}\mathcal{V}_{\e_j}\right)$ by \eqref{inclusionK}, we have
\bnan
\label{expressioninclusion}
\left(\left\{\phi_{\e_{k+1}}\geq \rho_{\e_{k+1}}\right\}\cap K \right)\subset
\Big( \omega_1\cup\bigcup_{j\in \llbracket 1,N\rrbracket}\left(\mathcal{V}_{\e_j} \cap \left\{\phi_{\e_{k+1}}\geq \rho_{\e_{k+1}}\right\}\right) \Big) .
\enan
Moreover, using \eqref{emptyinterV}, we get
\bna
\mathcal{V}_{\e_{k+1}} \subset  \left\{\phi_{\e_{k+1}}< \rho_{\e_{k+1}}\right\} .
\ena
Now, we will use the fact that $G$ is increasing in $\e$ to prove that we also have
\bnan
\label{inclusionk}
\mathcal{V}_{\e_{j}} \subset  \left\{\phi_{\e_{k+1}}< \rho_{\e_{k+1}}\right\}\quad \textnormal{ for }j\geq k+1 .
\enan
Actually, for $x\in \mathcal{V}_{\e_{j}}$, with $j\geq k+1$, we have $x_n=G(x',\e)$ for some $\e> \e_j-g(\e_j)\geq \e_{k+1}-g(\e_{k+1})$ (that is here that we use the order of the $\e_j$ defined in \eqref{increasinge}). But since $G$ is strictly increasing in $\e$, this implies $x_n> G(x',\e_{k+1}-g(\e_{k+1}))$. Using the inequality \eqref{inegge}, true for any $\e>0$, we obtain $x_n> G(x',\e_{k+1})-\rho_{\e_{k+1}}$. This gives $\phi_{\e_{k+1}}(x',x_n)< \rho_{\e_{k+1}}  $ and therefore \eqref{inclusionk}.
As a consequence, in the right hand-side of \eqref{expressioninclusion} only the terms for $j\leq k$ are nonempty, and it thus implies precisely~\eqref{weakerphi}.

\medskip We now prove~\eqref{e:remaining-part}.
Since $x_i^{\e_{k+1}}\in K$ and $4R_i^{\e_{k+1}}\leq R$, it is sufficient to prove
\bna
\left(\left\{\phi_{\e_{k+1}}\geq 0\right\}\cap K^c \cap K^{R}\right)\subset \omega_1 .
\ena
We first notice that, according to the definition of $K$, we have
\bna
K^c=\left\{x_n<0\right\}\cup \left\{x_n >G(x',1)\right\} .
\ena
In addition, since $G$ is increasing in $\e$, we have,  
\bna
\left\{\phi_{\e_{k+1}}\geq 0\right\}=\left\{x_n\leq G(x',\e_{k+1})\right\}\subset \left\{x_n\leq G(x',1)\right\}.
\ena
As a consequence, $\left\{\phi_{\e_{k+1}}\geq 0\right\}\cap K^c\subset \left\{x_n<0\right\}$. We are thus left to prove 
\bna
\left(\left\{x_n<0\right\} \cap K^{R}\right)\subset \omega_1,
\ena
which is true thanks to \eqref{eq:defR}. This concludes the proof of~\eqref{e:remaining-part}.

We finally check that the proof works the same way for the degenerate case $k=0$, which corresponds to the same proof with $\emptyset$ instead of  $\bigcup_{j\in \llbracket 1,k\rrbracket}$. This concludes the proof of Lemma \ref{lem:supp theta ou il faut}.
\hfill \qedsymbol 
\begin{remark}
\label{rkbouleloin}
In this process, we can also impose that the points $x_i^{\e_j}$ are far from $\left\{x_n=0\right\}$, by forcing $B(x_i^{\e_j},4R_i^{\e_j})\cap \left\{x_n=0\right\}=\emptyset$. 

Indeed, if $B(x_i^{\e_j},4R_i^{\e_j})\cap \left\{x_n=0\right\}\neq \emptyset$, we have necessarily $\dist(x_i^{\e_j},S_0)<4R_i^{\e_j}$ because\\ $\dist(x_i^{\e_j},\left\{x_n=0\right\})$ is necessarily reached at a point in $S_0=\overline{D}\times \left\{0_{x_n}\right\}$, since $x_i^{\e_j}\in S_{\e_j} \subset \overline{D}\times \R_{x_n}$. But, in the process, see~\eqref{eq:defR} and~\eqref{e:defRx}, we have chosen $R_i^{\e_j}\leq \dist(\omega_1^c, S_0)/8$. This implies $\dist(x_i^{\e_j},\omega_1^c)\geq \dist(\omega_1^c,S_0) -\dist(x_i^{\e_j},S_0)> 8R_i^{\e_j}-4R_i^{\e_j}$ and so $B(x_i^{\e_j},4R_i^{\e_j})\subset \omega_1$. In particular, these points $x_i^{\e_j}$ can be removed without affecting the set 
\bna
\left[\omega_1 \cup\bigcup_{j\in \llbracket1,k\rrbracket}\bigcup_{i\in I_{\e_j}} B(x_i^{\e_j},r_{i}^{\e_j})\right].
\ena
for any $k$.

This fact was not used here but it will be useful later in the presence of boundary. 
\end{remark}

\subsubsection{Semiglobal estimates by iteration: proof of Proposition~\ref{propiterationabstrait}}
We now prove Proposition \ref{propiterationabstrait},
which follows an induction argument on $k\in \llbracket 1,N\rrbracket=J$.
We make the following induction assumption at step $k$: 
\begin{equation}
\label{IAK}
\tag{$IA_k$}
\text{For any } j\in \llbracket 1,k\rrbracket \text{ and } i\in I_{j}, \text{ we have} \quad  
U_{i,j} \lhd V_0.
\end{equation}
Note that using Property \ref{propunionstrong} of Proposition \ref{propproduitstrong} and since we can select $W_0$ with $U_0 \Subset W_0  \Subset V_0$ and $\omega_{i,j}\Subset U_{i,j}$, we have 
\bna
\left[U_0 \cup\bigcup_{j\in \llbracket 1,k\rrbracket}\bigcup_{i\in I_{j}} \omega_{i,j}\right] \lhd \left(W_0,U_{i,j} \right)_{j\in \llbracket 1,k\rrbracket, i\in I_{j}}
\ena
So, since we always have $W_0\lhd V_0$, using Properties \ref{propproduitstrong} (second part) and \ref{proptransstrong} of Proposition \ref{propproduitstrong},~\eqref{IAK} directly implies 
\bnan
\label{dependrecabstrait}
\left[U_0 \cup\bigcup_{j\in \llbracket 1,k\rrbracket}\bigcup_{i\in I_{j}} \omega_{i,j}\right] \lhd V_0.
\enan
In particular, proving $(IA_{N})$ implies \eqref{dependrecabstrait} for $k=N$, which is the result of the proposition:
\bnan
U:=\left[U_0 \cup\bigcup_{j\in \llbracket 1,N\rrbracket}\bigcup_{i\in I_{j}} \omega_{i,j}\right] \lhd V_0.
\enan
\medskip
We now come to the proof of~\eqref{IAK} by induction

For $k =1$, we need to prove $U_{i,1} \lhd V_0$ for $i\in I_1$. But the assumption with $k=0$ gives $V_{i,1} \Subset U_0$, which implies $V_{i,1} \lhd U_0$. Since $U_{i,1} \lhd V_{i,1}$ by assumption, we get by transitivity $U_{i,1} \lhd U_0$. Since, we also have $U_0 \lhd V_0$, we obtain the expected result $U_{i,1} \lhd V_0$.

\medskip
We now prove $(IA_{k}) \Longrightarrow (IA_{k+1})$ for $k \in  \llbracket 1,N-1\rrbracket$.  
The assumption of the proposition gives
\bna
V_{i,k+1} \Subset \left[U_0 \cup\bigcup_{j\in \llbracket 1,k\rrbracket}\bigcup_{i\in I_{j}} \omega_{i,j}\right] .
\ena
Combined with Property \ref{propincludestrong} of Proposition \ref{propproduitstrong}, this yields
\bna
V_{i,k+1}
 \lhd \left[U_0 \cup\bigcup_{j\in \llbracket 1,k\rrbracket}\bigcup_{i\in I_{j}} \omega_{i,j}\right].
\ena
Using \eqref{dependrecabstrait} true for $k$ since $(IA)_{k}$ is true and the transitivity of $\lhd$, we get
\bna
V_{i,k+1} \lhd V_0
\ena
Since $U_{i,j} \lhd V_{i,j}$, the transitivity Property gives again $U_{i,k+1} \lhd V_0$.
This implies $(IA_{k+1})$ and thus proves the induction property for $k \in  \llbracket 1,N-1\rrbracket$.

This concludes the proof of Proposition~\ref{propiterationabstrait}.
\hfill \qedsymbol

\subsection{Semiglobal estimates along foliation by hypersurfaces}
The previous framework, where we define hypersurfaces by graphs may look a bit rigid for the applications. This definition of these hypersurfaces as graphs was mainly convenient to make the foliation more effective and order the hypersurfaces more easily.

Now, we give a slight variant of Theorem \ref{thmsemiglobaldep}, more adapted to some possible changes of variables.
\begin{theorem}
\label{thmsemiglobaldepchgt}
Let $\Omega\subset \R^n = \R^{n_a} \times \R^{n_b}$ and $P$ smooth differential operator of order $m$ on $\Omega$, analytically principally normal in $\{\xi_a= 0\}$. Let $\Phi$ a diffeomorphism of class $C^2$ from $\Omega$ to $\widetilde{\Omega}=\Phi(\Omega)$. Assume that the Geometric Setting of Theorem~\ref{thmsemiglobal} is satisfied for some $D$, $G$, $K$, $\phi_{\e}$ on $\widetilde{\Omega}$ (and not on $\Omega$). 
Assume further that for any $\eps \in [0, 1+\eta)$, the oriented surface $\{\phi_{\e}\circ \Phi = 0\} = \Phi^{-1}(S_{\e})$ (well defined on $\Omega$) is be stricly pseudoconvex with respect to $P$ on $\Phi^{-1}(S_{\e})$.

Then, for any $\omega$ a neighborhood of $\Phi^{-1}(S_{0})$, there exists an open neighborhood $U\subset \Omega$ of $\Phi^{-1}(K)$ such that
\bna
U\lhd \omega.
\ena
where $\lhd=\lhd_{\Omega,P}$ is related to the operator $P$ defined on $\Omega$ (see Remark \ref{rkinvariance}).
\end{theorem}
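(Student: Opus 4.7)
The plan is to reduce to Theorem~\ref{thmsemiglobaldep} by pulling back the foliation via $\Phi$ and working on $\Omega$, where the relation $\lhd$ is defined. Set $\tilde{S}_\e := \Phi^{-1}(S_\e)$, $\tilde{K} := \Phi^{-1}(K)$ and $\tilde{\phi}_\e := \phi_\e \circ \Phi$. By assumption each $\tilde{S}_\e = \{\tilde{\phi}_\e = 0\}$ is a $C^2$ strongly pseudoconvex oriented surface in $\{\xi_a = 0\}$ for $P$ at every one of its points, so for every $x^0 \in \tilde{K}$ there is an $\e(x^0)$ with $x^0 \in \tilde{S}_{\e(x^0)}$ and Theorem~\ref{th:alpha-unif} is available at $x^0$ with the defining function $\tilde{\phi}_{\e(x^0)}$.

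The key observation is that the proof of Theorem~\ref{thmsemiglobaldep} only used three geometric features of the graph foliation $(S_\e)$ on $\tilde\Omega$: \emph{(i)} a uniform $C^1$-estimate on $G$, giving the existence of $g(\e) > 0$ such that $\bigcup_{\e' \in (\e - g(\e), \e + g(\e))} S_{\e'}$ is contained in a fixed small neighborhood of $S_\e$ (equation \eqref{defVj}); \emph{(ii)} the strict monotonicity of $G(x',\cdot)$ in $\e$, used crucially in \eqref{inclusionk}; and \emph{(iii)} the compatibility between the foliation and the observation neighborhood, i.e.\ that $S_\e$ for small $\e > 0$ sits inside any prescribed neighborhood of $S_0$. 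All three properties transfer to $(\tilde{S}_\e)$ on $\Omega$: since $\Phi$ is a $C^2$ diffeomorphism on the compact $\Phi^{-1}(\overline{K^R})$, it is bi-Lipschitz there with bounded $C^2$ norm, so \emph{(i)} and \emph{(iii)} follow from their analogues on $\tilde\Omega$, while \emph{(ii)} is preserved under any homeomorphism (the family $\tilde{S}_\e$ remains strictly nested in the $\e$ variable). To make the argument run in $\Omega$'s coordinates, I would work with the distance $N(x, y) := |\Phi(x) - \Phi(y)|$ on $\Omega$, which is locally equivalent to the Euclidean distance on any compact; this is exactly the generality for which Lemma~\ref{l:phi-psi} (and therefore Corollary~\ref{cordominance}) was stated. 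The local dependence relation $B_N(x^0, r) \lhd \big(\{\tilde{\phi}_\e > \rho\} \cap B_N(x^0, 4R)\big)$ that comes out of Corollary~\ref{cordominance} is exactly what the iteration needs, since the relation $\lhd$ is insensitive to the geometric shape of the sets it involves: it only sees the supports of cutoffs and Fourier multipliers in the $x_a$ variables on $\Omega$.

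With these identifications in place, I would re-run, almost verbatim, the proof of Theorem~\ref{thmsemiglobaldep}: choose $\omega_1 \Subset \omega_2 \Subset \omega$ an open neighborhood of $\tilde{S}_0$ in $\Omega$; cover each $\tilde{S}_\e$ by finitely many $N$-balls $B_N(x_i^\e, r_i^\e)$ on which Corollary~\ref{cordominance} applies with the function $\tilde\phi_\e$; extract a finite sub-covering of $[0,1]$ by intervals of the form $(\e_j - g(\e_j), \e_j + g(\e_j))$ and reorder them so that \eqref{increasinge}--\eqref{ordree} hold; and finally invoke the abstract Proposition~\ref{propiterationabstrait}. The analogue of Lemma~\ref{lem:supp theta ou il faut} to be checked, namely
\[
\{\tilde{\phi}_{\e_{k+1}} > \rho_{\e_{k+1}}\} \cap B_N(x_i^{\e_{k+1}}, 4R_i^{\e_{k+1}}) \Subset \omega_1 \cup \bigcup_{j \leq k}\bigcup_{i \in I_{\e_j}} B_N(x_i^{\e_j}, r_i^{\e_j}),
\]
follows from exactly the same argument as in the proof of Lemma~\ref{lem:supp theta ou il faut}, using the pulled-back monotonicity and the pulled-back uniform $C^1$ estimate; here again I would use Remark~\ref{rkbouleloin} to discard $N$-balls that cross $\Phi^{-1}(\{x_n = 0\})$.

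The main obstacle, and essentially the only technical issue, is to make sure that every ingredient of Section~\ref{sectionlocal} (construction of the pseudoconvex weight $\psi$, Carleman estimate, commutator estimates, complex analysis argument) remains valid when the local balls $B(x^0, R)$ used there are replaced by the $N$-balls $B_N(x^0, R)$. This is why it is worth emphasizing that Lemma~\ref{l:phi-psi} was already stated for an arbitrary distance locally equivalent to the Euclidean one, so that Corollary~\ref{cordominance} automatically gives a local zone-of-dependence statement in $N$-balls; none of the subsequent abstract manipulations on $\lhd$ (Propositions~\ref{propweak}--\ref{propstrong}) sees the geometry. Once this is granted, the conclusion $\tilde{U} \lhd \omega$ for some open neighborhood $\tilde{U} \subset \Omega$ of $\tilde{K} = \Phi^{-1}(K)$ follows exactly as in Theorem~\ref{thmsemiglobaldep}, which is the desired statement.
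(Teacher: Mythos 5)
Your proposal is correct and follows essentially the same route as the paper: the paper likewise performs the local estimates in $\Omega$, uses that $\Phi$ is a homeomorphism together with the monotonicity of $\lhd$ to sandwich preimages of $\widetilde{\Omega}$-balls inside/outside the Euclidean balls furnished by Corollary~\ref{cordominance} (your $N$-balls $B_N(x,r)$ are exactly these preimages $\Phi^{-1}\left[B_{\widetilde{\Omega}}(\Phi(x),r)\right]$), carries out the covering and ordering argument in $\widetilde{\Omega}$, and runs the iteration of Proposition~\ref{propiterationabstrait} in $\Omega$ on the preimages. The only superfluous item is your appeal to Remark~\ref{rkbouleloin}, which is not needed in this boundaryless setting.
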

\bnp
The proof is exactly the same as that of Theorem \ref{thmsemiglobal}/\ref{thmsemiglobaldep} except that the local uniqueness estimates are performed in $\Omega$. So, for any $x\in \Phi^{-1}(S_{\e})$, it furnishes some $r_x$, $R_x$ and $\rho_x$, so that
\bna
B_{\Omega}(x,r_x) \lhd_{\Omega,P} \left[\left\{\phi_{\e}\circ \Phi > \rho_x \right\}\cap B_{\Omega}(x,4R_x)\right].
\ena
But since $\Phi$ is an homeomorphism, it implies the existence of $\widetilde{r_x}$ and $\widetilde{R_x}$ (that can still be chosen small enough) so that $\Phi^{-1}\left[B_{\widetilde{\Omega}}(\Phi(x),\widetilde{r_x})\right] \Subset B_{\Omega}(x,r_x)$ and $B_{\Omega}(x,4R_x)\Subset \Phi^{-1}\left[B_{\widetilde{\Omega}}(\Phi(x),4\widetilde{R_x})\right]$, so that 
\bna
\Phi^{-1}\left[B_{\widetilde{\Omega}}(\Phi(x),\widetilde{r_x}) \right]\lhd_{\Omega,P}
\Big(  \left\{\phi_{\e}\circ \Phi > \rho_x \right\}\cap \Phi^{-1}\left[B_{\widetilde{\Omega}}(\Phi(x),4\widetilde{R_x})\right] \Big).
\ena
where $B_{\Omega}$ (resp. $B_{\widetilde{\Omega}}$) denote balls in $\Omega$ (resp. $\widetilde{\Omega}$).

The geometric part of the proof of Theorem \ref{thmsemiglobal}/\ref{thmsemiglobaldep} is then exactly the same, performed in $\widetilde{\Omega}$, i.e. replacing $r_x$, $R_x$ by $\widetilde{r_x}$ and $\widetilde{R_x}$. Once the geometric part is done, the iteration process, performed in $\Omega$, is exactly the same by replacing each geometric term by the preimage in $\Omega$ (for instance $\Phi^{-1}\left[B_{\widetilde{\Omega}}(\Phi(x_i^{\eps_{k}}),4 \widetilde{R}_{x_i^{\eps_{k}}})\right]$ replaces $B(x_i^{\eps_{k}},4R_{x_i^{\eps_{k}}})$ etc.). 
\enp

\section{The Dirichlet problem for some second order operators}
\label{s:Dirichlet-pb-waves}
In this section, we shall consider a particular class of operators as described in Remark \ref{rknoncaractwave}, that is, with symbols the form $p_2(x,\xi) = Q_x(\xi)$ where $Q_x$ is a smooth family of real quadratic forms. 
Assuming that the variables $x_a$ are tangent to the boundary, and that the functions satisfy Dirichlet boundary conditions, we prove a counterpart of the local estimate of Theorem~\ref{th:alpha-unif} for this boundary value problem. For this, the main goal to achieve is to prove a Carleman estimate adapted to this boundary value problem. All local, semiglobal and global results shall then follow.

This situation is of particular interest for the wave equation for which $x_a$ is the time variable, which is always tangent to the boundary of cylindrical domains.  

For the sake of simplicity, we shall further assume that the operator principal symbol of $P$ is independent of the $x_a$ variable (we would otherwise need to assume the coefficients of $P$ to be analytic with respect to $x_a$). This allows to avoid some additional technicalities in the (already rather technical) proofs.

\subsection{Some notation}
Here, we shall always assume that the analytic variables are tangential to the boundary, that is 
$$
x  = (x_a, x_b) \in  \R^{n_a} \times \R^{n_b}_+, \quad \text{ with } \R^{n_b}_+ = \R^{n_b - 1} \times \R_+, \quad \text{ and } x_b = (x_b', x_b^n) .
$$
When the distinction between analytic and non-analytic variables is not essential, we shall split the variables according to
$$
x  = (x', x_n) \in \R^{n}_+ = \R^{n-1} \times \R_+ , \quad \text{ with } x' = (x_a , x_b') \in \R^{n_a + n_b - 1} , \quad \text{ and } x_n = x_b^n \in \R^+ .
$$ 
We also denote by  $\xi' = (\xi_a,\xi_b')\in \R^{n-1}$ the cotangential variables and $\xi_n = \xi_b^n$ the conormal variable, by $D' = (D_a,  D_{x_b'}) = \frac{1}{i}(\d_{x_a}, \d_{x_b'})$ the associated tangential derivations and $D_n = D_{x_b^n} = \frac{1}{i} \d_{x_n}$ the normal derivation. 

For any $r_0>0$, we define 
\bnan 
\label{e:def-Kr}
K_{r_0}=\left\{x\in \R^n_+;|x_a|\leq r_0, |x_b|\leq r_0\right\}
 =   \overline{B}_{\R^{n_a}}(0, r_0) \times \overline{B}_{\R^{n_b}}(0, r_0) \cap \{x_n \geq 0\} .
\enan
We denote by $C^{\infty}_0(\R^{n}_+)$ the space of restrictions to $\R^{n}_+$ of functions in $C^{\infty}_0(\R^{n})$, and by $C^{\infty}_0(K_{r_0})$ the space of functions $C^{\infty}_0(\R^n_+)$ supported in $K_{r_0}$. the trace of a function $f \in C^{\infty}_0(\R^{n}_+)$ at $x_n=0$ is denoted by $f_{|x_n=0}$.

\medskip 
We denote by $(f,g)= \int_{\R^n_+}f \overline{g}$, $\nor{f}{0, +}^2 = (f,f)$ the $L^2(\R^n_+)$ inner product and norm.
For $k\in \N$, the norm $\nor{\cdot}{k, +}$ will denote the classical Sobolev norm on $\R^n_+$ and $\nor{\cdot}{k, +, \tau}$ the associated weighted norms, that is,
$$
\nor{f}{k, +, \tau}^2 =  \sum_{j+|\alpha|\leq k }  \tau^{2j} \nor{\d^{\alpha}f}{0,+}^2 ,  \quad \tau \geq 1 .
$$
We also define the tangential Sobolev norms, given by
$$
|f|_{k, \tau}^2 = \nor{(|D'|  + \tau )^k f}{0,+}^2 \sim  \sum_{j+|\alpha|\leq k }  \tau^{2j} \nor{\d_{x'}^{\alpha}f}{0,+}^2 ,  \quad \tau \geq 1 .
$$
We shall also use, for $f,g \in C^\infty_0(\R^n_+)$, the notation $(f,g)_0 = \int_{\R^{n-1}}f_{|x_n=0}(x') g_{|x_n=0}(x')dx'$.

\medskip
Finally,  for $j \in \N$, we denote by $\mathcal{D}^k_\tau$, the space of {\em tangential} differential operators, i.e. operators of the form
$$
P(x, D' , \tau) 
= \sum_{j + |\alpha|\leq k}  a_{j, \alpha}(x) \tau^{ j} D'^{\alpha} ,
$$ and by 
$$
\sigma(P) = p(x, \xi' , \tau) = \sum_{j + |\alpha| = k}  a_{j, \alpha}(x) \tau^{ j} \xi'^{\alpha}
$$ their principal symbol.
\begin{remark}
\label{rkrestriction} 
Denote $T$ the restriction operator from $\mathcal{D}'(\R^n)$ to $\mathcal{D}'(\R^n_+)$. We denote $H^k(\R^n_+)=T(H^k(\R^n))$ with the restriction Sobolev norms 
\bna
\nor{u}{k,+}:=\inf \left\{\nor{v}{k}\left|v\in H^k(\R^n); Tv=u \textnormal{ in }\mathcal{D}'(\R^n_+)\right.\right\} "="\inf \left\{\nor{v}{k}\left|v\in H^k(\R^n); v=u \textnormal{ on }\R^n_+\right.\right\}
\ena
We have the property
\bna
\nor{u}{k,+}\approx \sup_{|\alpha|\leq k} \nor{\partial^{\alpha} u}{L^{2}(\R^n_+)} ,
\ena
see Chapter B2. of \cite{Hoermander:V3} and Corollary B.2.5 (with different notations $\overline{H}_{(k,0)}(\R^n_+)$) . 
Moreover, the set $C^{\infty}_0(\R^n_+)= T(C^{\infty}_0(\R^n))$ of restriction of smooth functions is dense in $H^k(\R^n_+)$ (see Theorem B.2.1 of \cite{Hoermander:V3}).  
As a conclusion, if $L$ is a linear operator from $H^k$ to $H^l$ of norm $C$ that sends $\ker(T) \cap H^k$ into $\ker(T) \cap H^l$, then, $L$ extends to a linear operator from $H^k(\R^n_+)$ to $H^l(\R^n_+)$ and we have
\bna
\nor{ L u}{l,+}\leq C\nor{u}{k,+} .
\ena
In particular, this will be the case for all ``tangential'' operators.
\end{remark}

\subsection{The Carleman estimate}
In this section, we state and prove the counterpart of the Carleman estimate~\eqref{Carleman} asociated to the Dirichlet problem for waves. Recall that the operator $Q_{\e,\tau}^{\psi}$ is defined in~\eqref{Qeps} and acts in the variable $x_a$ only, and hence, is tangential to the boundary.
\begin{theorem}[Local Carleman estimate]
\label{th:carleman-bord}
Let $r_0>0$ and $P = D_{x_b^n}^2 +r(x_b ,D_{x_a},D_{x_b'})$ be a differential operator of order two on a neighborhood of $K_{r_0}$, with real principal part, where $r(x_b ,D_{x_a},D_{x_b'})$ does not depend on $x_a$ and is a smooth $x_b^n$ family of second order operators in the (tangential) variable $(x_a, x_b')$.

Let $\psi$ be quadratic polynomial such that $\psi_{x_b^n}' \neq 0$ on $K_{r_0}$ and
\bnan
\label{hypopseudoconvecCarletau0}
 \left\{ p, \{p ,\psi \} \right\} (x,\xi) >0  , 
&\text{ if } p(x,\xi) = 0, \quad x \in K_{r_0}\text{ and } \xi_a=0, \quad \xi \neq 0 ; \\
\label{hypopseudoconvecCarle}
\frac{1}{i\tau}\{ \overline{p}_\psi,p_\psi \} (x ,\xi) >0  , 
&\text{ if } p_\psi(x,\xi) = 0,  \quad x \in K_{r_0} \text{ and } \xi_a = 0 , \quad \tau >0 ,
\enan
where $ p_\psi(x,\xi) = p(x, \xi + i \tau\nabla \psi)$.

Then, there exist $\eps >0$, $\mathsf{d}>0$, $C>0$, $\tau_0>0$ such that for any $\tau >\tau_0$, we have  for all $u \in C^\infty_0(K_{r_0/4})$
\bnan
\label{Carlemanfromboundary}
 \tau \|Q_{\e,\tau}^{\psi}u\|_{1,+, \tau}^2 & \leq & C\left(
 \nor{Q_{\e,\tau}^{\psi}P u}{0,+}^2+ e^{-\mathsf{d}\tau}\nor{e^{\tau\psi}u}{1,+,\tau}^2
 +  \tau^3 |(Q_{\e,\tau}^{\psi} u)_{|x_n=0}|_{0}^2   \right.\nonumber\\
  && \left. + e^{-\mathsf{d}\tau} |e^{\tau\psi}u_{|x_n=0}|_{0}^2
 + \tau |\big(D(Q_{\e,\tau}^{\psi} u)\big)_{|x_n=0}|_{0}^2 + e^{-\mathsf{d} \tau } |e^{\tau\psi}Du_{|x_n=0}|_{0}^2  \right).
\enan
If moreover $\psi_{x_n}'>0$ for $(x',x_n=0) \in K_{r_0}$, then we have for all $u \in C^\infty_0(K_{r_0/4})$ such that $u_{|x_n=0} = 0$,
\bnan
\label{Carlemanuptoboundary}
 \tau \|Q_{\e,\tau}^{\psi}u\|_{1,+, \tau}^2 \leq  C\left(
 \nor{Q_{\e,\tau}^{\psi}P u}{0,+}^2+ e^{-\mathsf{d}\tau}\nor{e^{\tau\psi}u}{1,+,\tau}^2 \right).
\enan
\end{theorem}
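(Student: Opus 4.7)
The plan is to follow the classical derivation of Carleman estimates via integration by parts (as in \cite[Ch.~28]{Hoermander:V4}), simultaneously incorporating two adaptations: Tataru's \cite{Tataru:95} tangential FBI-type regularization $Q_{\e,\tau}^{\psi}$ to handle the partial analyticity, and Lebeau-Robbiano's \cite{LR:95} bookkeeping of boundary traces for second order operators. The key simplification making everything cleaner is that the coefficients of $P$ are independent of $x_a$, so the tangential Fourier multiplier $e^{-\e|D_a|^2/(2\tau)}$ commutes with the principal symbol (modulo errors coming only from the weight $\psi$) and, being tangential, also commutes with $D_{x_n}$ and with the boundary trace operator.

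My first step is the conjugation. Let $v = e^{\tau\psi}u$ and write $e^{\tau\psi}Pu = P_\psi v$, where the operator $P_\psi$ has principal symbol $p_\psi(x,\xi) = p(x,\xi+i\tau\nabla\psi) = (\xi_n+i\tau\psi'_{x_n})^2 + r(x_b,\xi'+i\tau\nabla_{x'}\psi)$. Split $P_\psi$ as $P_2 + iP_1$ with $P_j$ (formally) self-adjoint tangentially; since $\psi'_{x_n}\neq 0$, $P_1$ has the structure $2\tau\psi'_{x_n}D_{x_n} + \text{tangential}$, while $P_2 = D_{x_n}^2 + \text{lower tangential}$. Taking the $L^2(\R^n_+)$ norm squared,
\begin{equation*}
\nor{P_\psi v}{0,+}^2 = \nor{P_2v}{0,+}^2 + \nor{P_1v}{0,+}^2 + \big(i[P_2,P_1]v,v\big) + B(v),
\end{equation*}
where $B(v)$ collects all boundary integrals at $x_n=0$ produced by integrating $D_{x_n}$ by parts. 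These are explicit sesquilinear forms in $v_{|x_n=0}$ and $(D_{x_n}v)_{|x_n=0}$ with smooth tangential coefficients, and in the Dirichlet case $v_{|x_n=0}=0$ only a term of the form $2\tau\psi'_{x_n}(x',0)|(D_{x_n}v)_{|x_n=0}|^2$ (plus lower order) remains --- positive when $\psi'_{x_n}>0$, and hence absorbable into the left-hand side as in \cite{LR:95}. The bulk commutator $i[P_2,P_1]$ has principal symbol $\frac{1}{2i\tau}\{\bar p_\psi,p_\psi\}$ (times $\tau$), which is positive on the characteristic set $\{p_\psi=0\}\cap\{\xi_a=0\}$ by \eqref{hypopseudoconvecCarle}; combined with \eqref{hypopseudoconvecCarletau0} and the standard Hörmander argument (writing the symbol as a G\r{a}rding-type sum involving $|p_\psi|^2$ and $|\{p_\psi,\psi\}|^2$), this gives in the interior the coercive bound $\tau\nor{v}{1,+,\tau}^2 \lesssim \nor{P_\psi v}{0,+}^2 + B(v) + \text{lower order}$, provided we restrict to $\xi_a=0$.

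The second step is to incorporate the tangential regularization. Since pseudoconvexity holds only in $\{\xi_a=0\}$, one runs the preceding argument after conjugating additionally by $e^{-\e|D_a|^2/(2\tau)}$, so $v$ is replaced by $w = Q_{\e,\tau}^{\psi}u$. This amounts, on the Fourier side in $x_a$, to multiplying by a Gaussian $e^{-\e|\xi_a|^2/(2\tau)}$ that suppresses frequencies $|\xi_a|\gtrsim\sqrt{\tau/\e}$. By Lemma~\ref{lmsuppdisjoint}-type arguments, the contribution of $|\xi_a|\geq\delta\tau$ is $O(e^{-\mathsf{d}\tau})$ times $\nor{e^{\tau\psi}u}{1,+,\tau}$, hence absorbable into the error on the RHS. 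For the remaining range of $\xi_a$, a Taylor expansion of the symbol near $\xi_a=0$ combined with \eqref{hypopseudoconvecCarle}--\eqref{hypopseudoconvecCarletau0} (which are stable under small perturbations) yields the needed positivity of the commutator, uniformly for $\e$ small. Since $Q_{\e,\tau}^{\psi}$ is tangential, its boundary trace is $Q_{\e,\tau}^{\psi}(u_{|x_n=0})$, and the boundary terms $B(w)$ produced by integration by parts match precisely the tangential boundary norms appearing on the RHS of \eqref{Carlemanfromboundary}. This gives \eqref{Carlemanfromboundary}; the Dirichlet version \eqref{Carlemanuptoboundary} follows by noting that $u_{|x_n=0}=0$ implies $w_{|x_n=0}=0$, so the only surviving boundary term in $B(w)$ carries the positive sign coming from $\psi'_{x_n}>0$ and is absorbed to the left, leaving no boundary term on the right.

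The main obstacle will be this last step: handling the interaction between $Q_{\e,\tau}^{\psi}$ and the $x_b$-dependent tangential operator $r(x_b,D')$. Each commutator between $e^{-\e|D_a|^2/(2\tau)}$ and multiplication by a smooth function of $x_b$ is in fact zero (they involve different variables), but commutators arising from the quadratic structure of $\psi$ and from the splitting $P_\psi = P_2 + iP_1$ must be tracked very carefully: they produce error terms formally of size $\e$ or $\e/\tau$ that need to be placed either into the positive left-hand side via absorption, or into the exponentially small $e^{-\mathsf{d}\tau}\nor{e^{\tau\psi}u}{1,+,\tau}^2$ remainder on the right via the Gaussian decay of the multiplier in $\xi_a$. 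The crucial technical input, already used throughout Section~\ref{sectionlocal}, is that all these manipulations are compatible with the Fourier-side gaussian weight $e^{-\e|\xi_a|^2/(2\tau)}$, which simultaneously allows the complex deformation needed for pseudoconvexity and produces the decaying error terms in the final estimate.
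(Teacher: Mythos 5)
Your overall architecture is the one the paper uses: conjugate, split the conjugated operator into (skew-)self-adjoint parts, expand the square, extract a positive commutator from the pseudoconvexity hypotheses while bookkeeping the boundary terms at $x_n=0$ \`a la Lebeau--Robbiano, and use the Gaussian multiplier to neutralize the frequencies $\xi_a\neq 0$ where pseudoconvexity is not assumed. But the step you compress into ``the standard H\"ormander argument \dots provided we restrict to $\xi_a=0$'' followed by ``a Taylor expansion of the symbol near $\xi_a=0$ \dots yields the needed positivity of the commutator'' is where the actual difficulty sits, and as written it does not go through. The commutator $i[\tilde{Q}_2^\eps,\tilde{Q}_1^\eps]$ contains $D_n^2$ and $D_n$, so you cannot invoke a sharp G{\aa}rding inequality for it on the half-space; and the positivity of $\frac{1}{i\tau}\{\bar p_\psi,p_\psi\}$ holds only on $\{p_\psi=0\}\cap\{\xi_a=0\}$, so even at $\xi_a=0$ you must first compensate off the characteristic set by adding multiples of $|p_\psi|^2$ (i.e.\ of $\|\tilde Q_2^\eps f\|^2$ and $\tau^2\|\tilde Q_1^\eps f\|^2$) before any coercivity is available. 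The paper's resolution is to factorize $i[\tilde{Q}_2^\eps,\tilde{Q}_1^\eps]=B_0^\eps\tilde{Q}_2^\eps+B_1^\eps\tilde{Q}_1^\eps+B_2^\eps$ with $B_2^\eps$ \emph{tangential} (using $\psi'_{x_n}\neq 0$ to solve for $D_n$ in terms of $\tilde Q_1^\eps$), to transfer the full-symbol positivity to the tangential symbol $b_2^\eps$ modulo the tangential quantity $\mu^\eps(x,\xi')$ detecting real roots in $\xi_n$ and modulo $|\xi_a|^2$ (Lemmas~\ref{lmposit}--\ref{lmposittangent}), and only then to apply the \emph{tangential} G{\aa}rding inequality; the leftover $\tau\|D_af\|_0^2$ is what the Gaussian kills after a frequency splitting at $|\xi_a|=\varsigma\tau$. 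Your sketch contains no substitute for this reduction, and without it the argument stalls at the commutator step.

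Two further points. First, you treat the interaction of $e^{-\e|D_a|^2/(2\tau)}$ with the operator as producing perturbative errors ``of size $\e$ or $\e/\tau$'' to be absorbed; in fact, because $\psi$ is a quadratic polynomial and the coefficients of $P$ are $x_a$-independent, the conjugation is \emph{exact}: $e^{-\frac{\e}{2\tau}|D_a|^2}P_\psi=P_{\psi,\eps}e^{-\frac{\e}{2\tau}|D_a|^2}$ with $P_{\psi,\eps}=P(x,D-\e\psi''_{x,x_a}D_a+i\tau\psi')$ a genuine differential operator, and the $\e$-dependence is handled not by absorption but by re-proving the positivity lemmas uniformly for small $\e$ by continuity and compactness of the unit cosphere. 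Second, $Q_{\e,\tau}^{\psi}u$ is not compactly supported in $x_a$, while the symbol positivity holds only on $K_{r_0}$; the paper must therefore apply Proposition~\ref{l:carleman-bord-compact} to $\chi(x_a)Q_{\e,\tau}^{\psi}u$ and control the discrepancy by Lemma~\ref{l:noyau-chaleur-hormander}, which is precisely the origin of the $e^{-\mathsf{d}\tau}\nor{e^{\tau\psi}u}{1,+,\tau}^2$ and $e^{-\mathsf{d}\tau}|e^{\tau\psi}Du_{|x_n=0}|_0^2$ terms in \eqref{Carlemanfromboundary}; your proposal omits this cutoff entirely.
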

The proof of this theorem relies on a Carleman estimate interpolating between the ``boundary elliptic Carleman estimates'' of Lebeau and Robbiano~\cite{LR:95} and the ``partially analytic Carleman estimates'' of Tataru~\cite{Tataru:95} (see also~\cite{Hor:97}). We first state two Corollaries and get to the proof.
\begin{corollary}
\label{cor:carl-potentiel-bord}
Under the assumptions of Theorem~\ref{th:carleman-bord}, there exist $\eps >0$, $\mathsf{d}>0$, $C>0$, $\tau_0>0$ such that for any $V  \in L^\infty (K_{r_0})$, $W \in L^\infty (K_{r_0}; \R^n )$, independent of $x_a$ and any $\tau >\tau_0 \max\{1 , \|V\|_{L^\infty}^{\frac23},\|W\|_{L^\infty}^2\}$, the Carleman estimates \eqref{Carlemanfromboundary} or \eqref{Carlemanuptoboundary} are satisfied with $P$ replaced by $P_{V,W} = P+W \cdot \nabla + V$. 
\end{corollary}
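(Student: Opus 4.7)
The plan is a direct perturbation argument starting from Theorem~\ref{th:carleman-bord}. Applied to $u\in C^\infty_0(K_{r_0/4})$, the estimate \eqref{Carlemanfromboundary} (resp.~\eqref{Carlemanuptoboundary}) controls the LHS $\tau\|Q_{\e,\tau}^\psi u\|_{1,+,\tau}^2$ by $\|Q_{\e,\tau}^\psi P u\|_0^2$ plus the usual remainders. Writing $Pu = P_{V,W}u - (W\cdot\nabla + V)u$, the task reduces to bounding $\|Q_{\e,\tau}^\psi((W\cdot\nabla+V)u)\|_0^2$ by a small multiple of the LHS plus a term fitting the RHS remainder, for $\tau$ large enough in terms of $\|V\|_\infty,\|W\|_\infty$.

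The crucial structural point is that $V$ and the components of $W$ depend only on $x_b$, hence commute with the tangential Fourier multiplier $R := e^{-\e|D_a|^2/(2\tau)}$. Combining this with $e^{\tau\psi}\nabla u = \nabla(e^{\tau\psi}u) - \tau(\nabla\psi) e^{\tau\psi}u$ and the fact that $\nabla$ commutes with $R$, one obtains
\[ Q_{\e,\tau}^\psi(Vu) = V\, Q_{\e,\tau}^\psi u, \qquad Q_{\e,\tau}^\psi(W\cdot\nabla u) = W\cdot\nabla Q_{\e,\tau}^\psi u - \tau R\bigl((W\cdot\nabla\psi)e^{\tau\psi}u\bigr). \]
The first identity gives $\|V Q_{\e,\tau}^\psi u\|_0 \le (\|V\|_\infty/\tau)\|Q_{\e,\tau}^\psi u\|_{1,+,\tau}$, whose square is absorbed into $\tau\|Q_{\e,\tau}^\psi u\|_{1,+,\tau}^2$ as soon as $\tau \ge C\|V\|_\infty^{2/3}$. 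The first term in the second identity yields $\|W\|_\infty\|Q_{\e,\tau}^\psi u\|_{1,+,\tau}$, absorbed for $\tau \ge C\|W\|_\infty^2$.

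The delicate piece is $\tau R((W\cdot\nabla\psi)e^{\tau\psi}u)$. Since $\psi$ is a quadratic polynomial, $W\cdot\nabla\psi(x) = a(x_b) + \sum_j b_j(x_b)\,x_a^j$ with $\|a\|_\infty,\|b_j\|_\infty \le C\|W\|_\infty$. Using the elementary symbol-level identity $[R,x_a^j] = -i(\e/\tau)RD_{x_a^j}$, and the fact that $a,b_j$ commute with $R$, one rewrites
\[ \tau R\bigl((W\cdot\nabla\psi)e^{\tau\psi}u\bigr) = \tau a\,Q_{\e,\tau}^\psi u + \tau \sum_j b_j\,x_a^j Q_{\e,\tau}^\psi u - i\e \sum_j b_j\, D_{x_a^j}Q_{\e,\tau}^\psi u. \]
The first and third terms are readily bounded by $C\|W\|_\infty\|Q_{\e,\tau}^\psi u\|_{1,+,\tau}$. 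For the middle term, one splits the $L^2$ norm using the support of $u$: on $\{|x_a|\le 2r_0\}$ the factor $x_a^j$ is bounded by $(2r_0)^j$, contributing $C\|W\|_\infty\|Q_{\e,\tau}^\psi u\|_{1,+,\tau}$; on $\{|x_a|\ge 2r_0\}$ the explicit Gaussian kernel of $R$, together with $\supp u \subset K_{r_0/4}$, gives the pointwise bound $|Q_{\e,\tau}^\psi u(x_a,x_b)|\le C(\tau/\e)^{n_a/2}e^{-c\tau|x_a|^2/\e}\|e^{\tau\psi}u(\cdot,x_b)\|_{L^2}$, yielding a contribution $\le C\|W\|_\infty e^{-c\tau}\|e^{\tau\psi}u\|_{1,+,\tau}$ that merges into the exponentially small remainder of the Carleman estimate (with a possibly smaller $\mathsf{d}$).

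The boundary terms in \eqref{Carlemanfromboundary} are handled analogously: $Q_{\e,\tau}^\psi$ is tangential (as $x_a$ is tangent to $\{x_n=0\}$) so it commutes with the trace, and the favorable prefactors $\tau^3$, $\tau$ leave room for the same absorption scheme. For \eqref{Carlemanuptoboundary}, the Dirichlet hypothesis $u_{|x_n=0}=0$ removes the boundary contributions outright. The main obstacle is precisely the last step above: the naive bound $\|Rf\|_0 \le \|f\|_0$ produces a term $\|W\|_\infty^2\|e^{\tau\psi}u\|_{1,+,\tau}^2$ that is \emph{not} exponentially small in $\tau$, and only the Gaussian-tail splitting combined with the affine dependence of $W\cdot\nabla\psi$ on $x_a$ saves the argument.
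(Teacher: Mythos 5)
Your proof is correct and follows essentially the same route as the paper, which simply invokes the exact conjugation identity \eqref{e:conjugationpsitaueps} to write $Q_{\e,\tau}^{\psi}(Du)=(D-\e\psi_{x,x_a}''D_a+i\tau\psi')Q_{\e,\tau}^{\psi}u$ and then absorbs $\|V Q_{\e,\tau}^{\psi}u\|_0^2$ and $\|W\cdot Q_{\e,\tau}^{\psi}(Du)\|_0^2$ into $\tau\|Q_{\e,\tau}^{\psi}u\|_{1,+,\tau}^2$ under the stated thresholds $\tau\gtrsim\|V\|_{L^\infty}^{2/3}$ and $\tau\gtrsim\|W\|_{L^\infty}^{2}$. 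Your additional Gaussian-tail splitting for the term $\tau R\bigl((W\cdot\nabla\psi)e^{\tau\psi}u\bigr)$, where $\nabla\psi$ is affine and hence unbounded in $x_a$ while $Q_{\e,\tau}^{\psi}u$ is no longer compactly supported there, is a legitimate and welcome refinement of a step the paper compresses into a single line.
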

\bnp
Applying the Carleman estimates \eqref{Carlemanfromboundary} or \eqref{Carlemanuptoboundary} for $P = P_{V,W}- i W \cdot D -V$, we need to estimate the term 
$$Q_{\e,\tau}^{\psi}P u = Q_{\e,\tau}^{\psi}P_{V,W} u - i W \cdot  Q_{\e,\tau}^{\psi} (Du) - V Q_{\e,\tau}^{\psi}u
$$
where we used $V = V(x_b)$, $W = W(x_b)$. Notice first that we have
$$
C \nor{V Q_{\e,\tau}^{\psi}u }{0,+}^2\leq C\|V\|_{L^\infty}^2\nor{Q_{\e,\tau}^{\psi}u}{0,+}^2  \leq \frac14 \tau \nor{Q_{\e,\tau}^{\psi}u}{1,+}^2  ,
$$
as soon as $\tau^3/4C \geq \|V\|_{L^\infty}^2$. Next, using~\eqref{e:conjugationpsitaueps}, we write
$$
Q_{\e,\tau}^{\psi} (Du) =  (D - \e \psi_{x, x_a}''D_a + i \tau \psi')Q_{\e,\tau}^{\psi} u ,
$$
and consequently 
$$
C\nor{i W \cdot  Q_{\e,\tau}^{\psi} (Du)}{0,+}^2 \leq C'\|W\|_{L^\infty}^2\nor{Q_{\e,\tau}^{\psi}u}{1,+}^2
\leq \frac14 \tau \nor{Q_{\e,\tau}^{\psi}u}{1,+}^2  ,
$$
as soon as $\tau/4C' \geq \|W\|_{L^\infty}^2$. For such $\tau$, these two terms may hence be absorbed in the left hand-side of the inequality. This concludes the proof of the corollary. 
\enp
\begin{corollary}
\label{cor:carl-potentielanal-bord}
Under the assumptions of Theorem~\ref{th:carleman-bord}, take $R(x,D)$ a differential operator of order $1$, with coefficients which can be extended to a bounded function in $\left\{(z_a,x_b)\in \C^{n_a}\times \R^{n_b}; |z_a|<5r_0,|x_b|<5r_0\right\} $ which are analytic with respect to $z_a$, for fixed $x_b$. 

Then, there exist $\eps >0$, $\mathsf{d}>0$, $C>0$, $\tau_0>0$ such that for any any $\tau>\tau_0$, the Carleman estimates \eqref{Carlemanfromboundary} or \eqref{Carlemanuptoboundary} are satisfied with $P$ replaced by $P_{R} = P+R$. 
\end{corollary}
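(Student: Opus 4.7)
The strategy is the same as in Corollary \ref{cor:carl-potentiel-bord}: apply the Carleman estimate of Theorem \ref{th:carleman-bord} to $P=P_R-R$, estimate
$$\nor{Q_{\e,\tau}^{\psi}Pu}{0,+}^2 \leq 2\nor{Q_{\e,\tau}^{\psi}P_R u}{0,+}^2 + 2\nor{Q_{\e,\tau}^{\psi}Ru}{0,+}^2,$$
and absorb the last term into the left-hand side $\tau\|Q_{\e,\tau}^{\psi}u\|_{1,+,\tau}^2$ for $\tau$ large. In contrast with Corollary \ref{cor:carl-potentiel-bord}, the coefficients of $R$ now depend on $x_a$, so $Q_{\e,\tau}^{\psi}$ does not commute with them; the analyticity assumption will be used precisely to control the corresponding commutator.

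The main task is thus to prove the commutator estimate: for any $a\in C^{\infty}(K_{5r_0})$ admitting a bounded holomorphic extension in $z_a$ on $\{|z_a|<5r_0,|x_b|<5r_0\}$, and for all $f$ supported in $K_{r_0/4}$,
$$
\bigl\|e^{-\tfrac{\e}{2\tau}|D_a|^2}(af)-a\,e^{-\tfrac{\e}{2\tau}|D_a|^2}f\bigr\|_{0,+} \leq \frac{C}{\tau^{1/2}}\|f\|_{0,+} + Ce^{-c\tau}\|f\|_{0,+}.
$$
The plan to obtain this is to use the Gaussian kernel representation
$$
\bigl(e^{-\tfrac{\e}{2\tau}|D_a|^2}(af)\bigr)(x_a,x_b) = \Bigl(\tfrac{\tau}{2\pi\e}\Bigr)^{n_a/2}\!\int_{\R^{n_a}} e^{-\tfrac{\tau}{2\e}|x_a-y_a|^2}a(y_a,x_b)f(y_a,x_b)\,dy_a,
$$
split the integral into a region $|y_a-x_a|<r_0$ and its complement, estimate the outer piece by the Gaussian tail (giving the $e^{-c\tau}$ remainder), and Taylor expand $a(y_a,x_b)$ around $x_a$ on the inner piece. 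The analyticity of $a$ provides Cauchy bounds $|\partial_{x_a}^\alpha a|\leq C\alpha!/r_0^{|\alpha|}$, so summing the Taylor remainders against the Gaussian moments $\int|y_a-x_a|^k e^{-\tfrac{\tau}{2\e}|x_a-y_a|^2}dy_a = O((\e/\tau)^{k/2}k!^{1/2})$ and optimizing the truncation order $N\sim c\tau/\e$ yields the announced $\tau^{-1/2}$ bound plus an exponentially small tail.

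With this commutator estimate in hand, we write $Ru=\sum_j a_j(x)D_ju+a_0(x)u$, use $e^{\tau\psi}D_j=(D_j-i\tau\psi'_j)e^{\tau\psi}$, and note that since $\psi$ is a quadratic polynomial (hence analytic in $x_a$), each product $a_j\psi'_j$ remains analytic in $x_a$ with a bounded holomorphic extension on the prescribed complex strip. Applying the commutator lemma to each coefficient and using that $D_j$ commutes with $e^{-\tfrac{\e}{2\tau}|D_a|^2}$ (as this is a Fourier multiplier in $x_a$ only, and $a_j$ is independent of differentiation in $j$), we obtain
$$
\nor{Q_{\e,\tau}^{\psi}Ru}{0,+} \leq C\nor{Q_{\e,\tau}^{\psi}u}{1,+,\tau} + Ce^{-c\tau}\nor{e^{\tau\psi}u}{1,+,\tau}.
$$
Squaring, the first term is bounded by $C^2\|Q_{\e,\tau}^{\psi}u\|_{1,+,\tau}^2$ which is absorbed into $\tau\|Q_{\e,\tau}^{\psi}u\|_{1,+,\tau}^2$ provided $\tau\geq 2C^2$; the second is bounded by $C^2e^{-2c\tau}\|e^{\tau\psi}u\|_{1,+,\tau}^2$ which is absorbed (after possibly shrinking $\mathsf{d}$) into the term $e^{-\mathsf{d}\tau}\|e^{\tau\psi}u\|_{1,+,\tau}^2$ already present on the right-hand side of \eqref{Carlemanfromboundary} or \eqref{Carlemanuptoboundary}.

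The principal obstacle will be carrying out the Gaussian-moment/Taylor-remainder optimization rigorously and uniformly with respect to all the relevant parameters $\e,\tau$, including the case where $x_a$ lies near the boundary of the analyticity strip and the contribution outside the disk of analyticity must be separated. The conjugation by $e^{\tau\psi}$ also introduces the complex-shift features that interact with the Gaussian; since $\psi$ is a polynomial these are tractable, but the bookkeeping must be carried out with some care, paralleling that of Lemma~\ref{lmdecrFourier} in the boundaryless setting.
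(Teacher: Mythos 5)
Your overall strategy (apply the Carleman estimate to $P=P_R-R$, bound $\nor{Q^{\psi}_{\e,\tau}Ru}{0,+}$, absorb) is the right one and matches the paper's; but the key intermediate estimate you propose is of the wrong form, and this is a genuine gap. You claim
$$
\bigl\|e^{-\tfrac{\e}{2\tau}|D_a|^2}(af)-a\,e^{-\tfrac{\e}{2\tau}|D_a|^2}f\bigr\|_{0,+} \leq \frac{C}{\tau^{1/2}}\|f\|_{0,+} + Ce^{-c\tau}\|f\|_{0,+},
$$
and this estimate is indeed true (it holds even for merely Lipschitz $a$, since the commutator kernel carries a factor $a(y_a)-a(x_a)=O(|x_a-y_a|)$ against the Gaussian). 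But it is useless here: in your application $f$ is $e^{\tau\psi}u$ or $(D_j-i\tau\psi_j')e^{\tau\psi}u$, and the quantity $\tau^{-1/2}\nor{e^{\tau\psi}u}{1,+,\tau}$ is controlled by nothing on either side of \eqref{Carlemanfromboundary}--\eqref{Carlemanuptoboundary}. The left-hand side only controls the \emph{regularized} norm $\nor{Q^{\psi}_{\e,\tau}u}{1,+,\tau}$, and the only admissible multiple of the unregularized weighted norm $\nor{e^{\tau\psi}u}{1,+,\tau}$ is an \emph{exponentially} small one, to be absorbed into the $e^{-\mathsf{d}\tau}\nor{e^{\tau\psi}u}{1,+,\tau}^2$ term. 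So a polynomially small error against $\|f\|$ cannot be absorbed, and your assembled bound $\nor{Q_{\e,\tau}^{\psi}Ru}{0,+} \leq C\nor{Q_{\e,\tau}^{\psi}u}{1,+,\tau} + Ce^{-c\tau}\nor{e^{\tau\psi}u}{1,+,\tau}$ does not follow from your commutator lemma. What analyticity actually buys, and what you must prove, is the stronger statement that the \emph{main} part of $e^{-\frac{\e}{2\tau}|D_a|^2}(af)$ is a bounded operator applied to $e^{-\frac{\e'}{2\tau}|D_a|^2}f$ (morally $a(x_a+\tfrac{i\e}{\tau}D_a)$ acting after the Gaussian, obtained by summing the \emph{full} Taylor series against Gaussian moments using the Cauchy bounds — each term $(\e/\tau)^{|\alpha|}D_a^{\alpha}e^{-\frac{\e}{2\tau}|D_a|^2}$ being controlled by $C^{|\alpha|}(|\alpha|!)^{1/2}(\e/\tau)^{|\alpha|/2}e^{-\frac{\e}{4\tau}|D_a|^2}$), with only the contribution from outside the disk of analyticity left as an error against $\|f\|$ — and that error is then $O(e^{-c\tau})$ by the Gaussian tail. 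Note the loss $\e\to\e'<\e$ in the regularization parameter that this summation produces; it must be tracked against the $\e$ of Theorem~\ref{th:carleman-bord}.

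For comparison, the paper does not reprove any of this: it invokes H\"ormander's Lemma~4.8 of \cite{Hor:97} as a black box, which gives exactly $\nor{Q^{\psi}_{\e,\tau}R(x,D)u}{0,+}\leq C \nor{Q^{\psi}_{\e,\tau}u}{1,+,\tau}+C e^{-\tau \mathsf{d}}\nor{e^{\tau \psi}u}{1,+,\tau}$, and only checks that H\"ormander's proof (estimates (3.13)--(3.14) there) is performed first in the tangential variable $x_a$ and then integrated in $x_b$, hence carries over to the half-space. The absorption step you describe at the end is then identical to the paper's.
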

\bnp
Lemma 4.8 of H\"ormander \cite{Hor:97} yields
\bna
\nor{Q^{\psi}_{\e,\tau}R(x,D)u}{0,+}\leq C \nor{Q^{\psi}_{\e,\tau}u}{1,+,\tau}+C e^{-\tau \mathsf{d}}\nor{e^{\tau \psi}u}{1,+,\tau}
\ena
for all $u \in C^\infty_0(K_{r_0/4})$. 
Actually, it is stated for the interior case, with the norm $\nor{\cdot}{1,+,\tau}$ replaced by the norm $\nor{\cdot}{1,\tau}$. Yet, the estimates used for the proof, (3.13) and (3.14) in \cite{Hor:97}, are actually made first in the variable $x_a$ and then integrated in $x_b$. Since, the variable $x_a$ is tangential, the same proof gives the expected result.  

As in Corollary \ref{cor:carl-potentiel-bord}, we can absorb the term $C \nor{Q^{\psi}_{\e,\tau}u}{1,+,\tau}$ for $\tau$ large enough. The second term has the same form as the right hand side of the Carleman estimate, up to changing $\mathsf{d}$.
\enp

\begin{remark}
This theorem, as well as its consequences may be extended with some modification to the Neumann case following Lebeau-Robbiano~\cite{LR:97}. It could also be generalized to a larger class of operators and boundary condition (satisfying a Lopatinskii condition) following Tataru \cite{Tataru:96} and Bellassoued-Le~Rousseau~\cite{BLR:15}.
\end{remark}

To prove Theorem~\ref{th:carleman-bord}, we define the conjugated operator $P_\psi = e^{\tau\psi} P e^{- \tau\psi}=P(x, D+ i \tau \psi')$, and also $P_{\psi,\eps}$ the conjugate of $P_\psi$ with respect to $e^{- \frac{\e}{2\tau}|D_a|^2}$, that is, such that
\bnan
\label{e:conjugationpsitaueps}
 e^{-\frac{\e}{2\tau}|D_a|^2} P_\psi w = P_{\psi,\eps}  e^{- \frac{\e}{2\tau}|D_a|^2} w  . 
\enan
Since $P$ is independent on $x_a$, we have
\bna
P_{\psi,\eps}  = P(x, D - \e \psi_{x, x_a}''D_a + i \tau \psi') ,
\ena
where $\psi_{x, x_a}''D_a = \psi_{xx}''\big((D_a ,0) \big)$ (with the notation of~\cite{Hor:97}).

When proving the theorem, we shall drop the index $+$ in the norms to lighten the notation; of course, all inner norms and integrals are meant on $\R^n_+$. We first need the following proposition.

\begin{proposition}
\label{l:carleman-bord-compact}
Under the assumptions of Theorem~\ref{th:carleman-bord}, there exist $C>0$, $\tau_0>0$ such that for any $\tau>\tau_0$ and $f \in C^\infty_0(K_{r_0})$, we have
\bnan
\label{e:carleman-bord-general}
\tau \|f\|_{1, \tau}^2 
\leq C
\nor{P_{\psi,\e}f}{0}^2
 + \tau\| D_a f\|_0^2 
 +\tau^3 |f_{|x_n=0}|_{0}^2 
+ \tau |D f_{|x_n=0}|_{0}^2 .
 \enan
 If moreover $\psi_{x_n}'>0$ for $(x',x_n=0) \in K_{r_0}$, then 
 \bnan
 \label{e:carleman-bord-dirichlet}
\tau \|f\|_{1, \tau}^2 
\leq C
\nor{P_{\psi,\e}f}{0}^2
 + \tau\| D_a f\|_0^2 , \quad \text{for all }f \in C^\infty_0(K_{r_0})\text{ such that }f_{|x_n=0} = 0 . 
  \enan
\end{proposition}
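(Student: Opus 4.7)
The plan is to prove this as a classical Carleman estimate at the boundary for the conjugated operator $P_{\psi,\e}$, following the scheme of Lebeau--Robbiano~\cite{LR:95} and Tataru~\cite{Tataru:95,Hor:97}, adapted to the partially analytic framework where pseudoconvexity is only assumed at $\xi_a = 0$. The crucial structural observation is that, since the coefficients of $P$ are independent of $x_a$ and $\psi$ is a quadratic polynomial, the $\e$-correction in $P_{\psi,\e}$ enters through $-\e \psi''_{x,x_a} D_a$ with constant coefficients, so every error term it generates carries a factor of $D_a$. The tolerance term $\tau \|D_a f\|_0^2$ on the right hand side is precisely what enables us to absorb these $\e$-errors and to restrict the effective pseudoconvexity argument to the slice $\xi_a = 0$ required by~\eqref{hypopseudoconvecCarletau0} and~\eqref{hypopseudoconvecCarle}.

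First I would rewrite $P_{\psi,\e}$ as a second-order differential operator in $x_n$ with tangential coefficients, $P_{\psi,\e} = D_n^2 + B_1(x,D',\tau,\e)\, D_n + B_0(x,D',\tau,\e)$, and decompose it as $P_{\psi,\e} = A + iB$ into essentially selfadjoint and skew-adjoint parts with respect to the $L^2(\R^n_+)$ inner product. Expanding $\|P_{\psi,\e} f\|_0^2 = \|A f\|_0^2 + \|B f\|_0^2 + (i[A,B] f, f) + \mathcal{B}(f)$ via integration by parts in $x_n$ produces interior terms together with a boundary contribution $\mathcal{B}(f)$ at $\{x_n = 0\}$ of the schematic form $c_3 \tau^3 |f_{|x_n=0}|_0^2 + c_1 \tau |D f_{|x_n=0}|_0^2$. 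The principal symbol of $\tfrac{1}{i}[A,B]$ is, modulo $\e$-corrections that vanish at $\xi_a = 0$, the Poisson bracket $\tfrac{1}{2i\tau}\{\bar p_\psi, p_\psi\}$, which on the characteristic set of $p_\psi$ at $\xi_a = 0$ is strictly positive by~\eqref{hypopseudoconvecCarle}, with the corresponding real statement~\eqref{hypopseudoconvecCarletau0} handling the $\tau \to 0^+$ limit.

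To convert this pointwise positivity into an operator inequality, I would employ a tangential G{\aa}rding inequality microlocalized to the region $|\xi_a| \leq c \tau$ (adding if necessary a small multiple of $|p_{\psi,\e}|^2$ itself to handle the characteristic set, as in the classical proof), while the complementary region $|\xi_a| > c \tau$ is controlled directly by $c^{-2} \tau \|D_a f\|_0^2$. All $\e$-dependent corrections to $A$, $B$ and to the commutator either vanish at $\xi_a = 0$ or carry a factor of $D_a$, so they are absorbed either by choosing $\e$ small enough or by the same $\tau \|D_a f\|_0^2$ term. Combined with the standard control of $\|D_n f\|_0^2$ coming from $\|A f\|_0^2 + \|B f\|_0^2$, this yields the interior lower bound $c \tau \|f\|_{1,\tau}^2$.

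The main obstacle is the careful bookkeeping of the boundary terms generated by the integrations by parts in $x_n$. For the general estimate~\eqref{e:carleman-bord-general}, these are simply retained on the right hand side with their natural $\tau$-weights. For the Dirichlet case~\eqref{e:carleman-bord-dirichlet}, the hypothesis $f_{|x_n=0} = 0$ eliminates the $\tau^3$ contribution and every term involving tangential traces; the only remaining boundary term is proportional to $\tau \psi'_{x_n}|_{x_n=0} |D_n f_{|x_n=0}|_0^2$, which has a favorable sign under the additional assumption $\psi'_{x_n} > 0$ on $\{x_n = 0\} \cap K_{r_0}$ and can therefore be discarded. This sign-exploitation at the boundary is the standard Lebeau--Robbiano device for Dirichlet data, and completes the proof of both inequalities.
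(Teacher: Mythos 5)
Your overall skeleton (decomposition into self-adjoint and skew-adjoint parts, expansion of $\nor{P_{\psi,\e}f}{0}^2$, positivity of the commutator via pseudoconvexity, sign of the boundary term under $\psi_{x_n}'>0$) matches the paper, and your observation that all $\e$-corrections carry a factor of $D_a$ and are absorbed by $\tau\|D_a f\|_0^2$ is exactly the right mechanism. But there is a genuine gap at the central step. The commutator $i[A,B]$ is a second-order operator containing $D_n^2$, and the symbol $|p_{\psi,\e}|^2$ you propose to add is quartic in $\xi_n$; neither is a tangential object. On the half-space one only has a pseudodifferential calculus (and hence a G{\aa}rding inequality) in the tangential variables $(x',\xi',\tau)$, so the classical interior device of ``adding a small multiple of $|p_{\psi,\e}|^2$ and applying G{\aa}rding'' does not apply as stated: there is no admissible quantization of a symbol depending polynomially on $\xi_n$ to degree $4$ acting on $C^\infty_0(K_{r_0})$. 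This is precisely the difficulty that distinguishes the boundary case from the interior one.

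The paper's resolution, which is absent from your plan, has two ingredients. First, using $\psi_{x_n}'\neq 0$ one substitutes $D_n=\frac{1}{2\psi_{x_n}'}\tilde{Q}_1^\e+\mathcal{D}^1_\tau$ and $D_n^2=\tilde{Q}_2^\e+\cdots$ to factor the commutator as $i[\tilde{Q}_2^\e,\tilde{Q}_1^\e]=B_0^\e\tilde{Q}_2^\e+B_1^\e\tilde{Q}_1^\e+B_2^\e$ with $B_2^\e$ purely tangential; the $B_0^\e\tilde{Q}_2^\e$ and $B_1^\e\tilde{Q}_1^\e$ contributions are then absorbed by the squares $\nor{\tilde{Q}_2^\e f}{0}^2+\tau^2\nor{\tilde{Q}_1^\e f}{0}^2$. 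Second, since $b_2^\e$ is only known to be positive where the full symbol $p_\psi^\e$ has a real root in $\xi_n$, one introduces the tangential ``discriminant'' symbol $\mu^\e(x,\xi')=(q_1^\e)^2+2\e q_1^\e\psi_{x_n,x_a}''(\psi_{x_n}';\xi_a)+(\psi_{x_n}')^2q_2^\e$, which vanishes exactly when such a real root exists, and proves the tangential sub-elliptic estimate $(|\xi'|^2+\tau^2)\leq C_1 b_2^\e+C_2\big[\frac{(\mu^\e)^2}{|\xi'|^2+\tau^2}+|\xi_a|^2\big]$. The tangential G{\aa}rding inequality is applied to this inequality, and the resulting operator term $\Re(\Sigma f,Gf)$ (where $\Sigma$ quantizes $\mu^\e$) must itself be re-expressed through $\tilde{Q}_1^\e$ and $\tilde{Q}_2^\e$ modulo acceptable errors and one further boundary term. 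Without this reduction of the positivity statement to a genuinely tangential one, the G{\aa}rding step in your argument cannot be carried out, so the proof as proposed is incomplete at its key point.
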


\bnp
Defining $\tilde{Q}_2^\eps = \frac{1}{2}(P_{\psi,\eps} + P_{\psi,\eps}^* )$ and $\tilde{Q}_1^\eps = \frac{1}{2i\tau }(P_{\psi,\eps} - P_{\psi,\eps}^* )$, we have 
$$P_{\psi,\eps} = \tilde Q_2^\eps + i \tau \tilde{Q}_1^\eps,$$
and denote by $\tilde{q}_j^\eps$ the principal symbol of $\tilde{Q}_j^\eps$, $j =1,2$.
We have 
\bneqn
\label{formuleQi}
\tilde{Q}_2^\eps &=& D_n^2 - 2\eps \psi_{x_n , x_a}'' (D_n ; D_{a}) + Q_2^\eps \\
\tilde{Q}_1^\eps &=& D_n \psi_{x_n}' + \psi_{x_n}' D_n  + 2  Q_1^\eps , 
\eneqn
where $Q_2^\eps \in \mathcal{D}^2_\tau$ and $Q_1^\eps \in \mathcal{D}^1_\tau$ with principal symbols 
\bna
q_2^\eps & = &\eps^2  \big( \psi_{x_n , x_a}'' \xi_a \big)^2 - \tau^2 (\psi_{x_n}')^2 + r(x, \xi'- \eps\psi_{x' , x_a}'' \xi_{a} ) - \tau^2 r(x,\psi_{x'}' )\\
q_1^\eps & = &\tilde{r}(x_b , \xi'- \eps \psi_{x' , x_a}'' \xi_{a}  ,\psi_{x'}' ) , 
\ena
where $\tilde{r}$ is the bilinear form associated with the quadratic form $r$. Note that, even if it does not appear in the notation, all these operators depend upon the parameter $\tau$.

With this notation, we hence have $p_\psi = \tilde{q}_2^0 + i\tau \tilde{q}_1^0$, so that $\frac{1}{i\tau}\{ \overline{p}_\psi, p_\psi \}=2 \{ \tilde{q}_2^0 , \tilde{q}_1^0 \}$. Assumptions~\eqref{hypopseudoconvecCarletau0} and \eqref{hypopseudoconvecCarle} then translate respectively into 
\bnan
\label{hypopseudoconvecCarletau0bis}
 \{ \tilde{q}_2^0 , \tilde{q}_1^0 \} (x ,\xi) >0  , 
&\text{ if } p(x,\xi) = 0 , \quad x \in K_{r_0} \text{ and } \xi_a = 0 , \tau =0 ; \\
\label{hypopseudoconvecCarlebis}
\{ \tilde{q}_2^0 , \tilde{q}_1^0 \} (x ,\xi) >0  , 
&\text{ if } p_\psi(x,\xi) = 0,  \quad x \in K_{r_0} \text{ and } \xi_a = 0 , \tau >0 ,
\enan
where the second assertion is a direct consequence of~\eqref{hypopseudoconvecCarle}, and the first one follows from~\eqref{hypopseudoconvecCarletau0} together with the fact that, using that $p$ is real, we have
$$
\lim_{\tau \to 0^+} \frac{1}{i\tau}\{ \overline{p}_\psi, p_\psi \}
= \left.\frac{\d}{\d \tau} \frac{1}{i}\{ \overline{p}_\psi, p_\psi \}\right|_{\tau=0} 
=2\left\{ p, \{p ,\psi \} \right\} .
$$ 

Next, we have the integration by parts formul\ae:
\bneqn
\label{IPPformula}
(g , \tilde{Q}_2^\eps f) & = & (\tilde{Q}_2^\eps g ,  f) - i \left[ (g , D_n f)_0 + (D_n g , f)_0 + 2 \eps (g , \psi_{x_n , x_a}'' D_{a}  f)_0\right]  ,\\
(g , \tilde{Q}_1^\eps f) & = & (\tilde{Q}_1^\eps g ,  f) - 2 i \left( \psi_{x_n}' g , f\right)_0  .
\eneqn
So, we have for $f\in C^{\infty}_0(K_{r_0})$
\bnan
\nor{P_{\psi,\e}f}{0}=\nor{\tilde{Q}_2^\eps f}{0}^2 +\tau^2\nor{\tilde{Q}_1^\eps f}{0}^2+i\tau \left[ \left(\tilde{Q}_1^\eps f,\tilde{Q}_2^\eps f\right)-\left(\tilde{Q}_2^\eps f,\tilde{Q}_1^\eps f\right)\right] .
\enan
So, we get, using the integration by parts formul\ae~\eqref{IPPformula}
\bnan
\label{e:IPP-B}
\nor{P_{\psi,\e}f}{0}=\nor{\tilde{Q}_2^\eps f}{0}^2 +\tau^2\nor{\tilde{Q}_1^\eps f}{0}^2+i\tau \left([\tilde{Q}_2^\eps,\tilde{Q}_1^\eps]f,f\right)+\tau \mathcal{B}^\eps(f) ,
\enan
with the boundary term
\bnan
\label{e:boundary-bilinear-term}
\mathcal{B}^\eps(f)&=& \left[ (\tilde{Q}_1^\eps f , D_n f)_0 + (D_n \tilde{Q}_1^\eps f , f)_0 + 2 \eps (\tilde{Q}_1^\eps f , \psi_{x_n , x_a}'' D_{a}  f)_0\right]- 2 \left( \psi_{x_n}' \tilde{Q}_2^\eps f, f\right)_0 \nonumber\\
&=&2(\psi_{x_n}' D_n f , D_n f)_0+(M_1^\eps f , D_n f)_0+(M_1'^{\eps} D_n f , f)_0+(M_2^{\eps} f , f)_0 ,
\enan
for some tangential operator $M_1^\eps$ of order $1$ (in $\xi',\tau$) (note that terms of order two in $D_n$ cancel).

Now that we have made the exact computations, we will make some estimates on the symbols of the interior part of the commutator. The idea is to tranfer the positivity assumption of the full symbol to some positivity of a tangential symbol, which will then allow to apply the tangential G{\aa}rding.

The first step is to perform a factorisation of $[\tilde{Q}_2^\eps,\tilde{Q}_1^\eps]$ with respect to $\tilde{Q}_1^\eps$ and $\tilde{Q}_2^\eps$ to have a tangential reminder. Since $[\tilde{Q}_2^\eps,\tilde{Q}_1^\eps]$ is of order $2$, it can be written $i[\tilde{Q}_2^\eps,\tilde{Q}_1^\eps]=C_2+C_1D_n+C_0D_n^2$ where $C_i\in \mathcal{D}^i_\tau$. But using \eqref{formuleQi}, and $\psi_{x_n}'\neq 0$ on $K_{r_0}$, we can replace $D_n=\frac{1}{2\psi_{x_n}'} \tilde{Q}_1^\eps+ \mathcal{D}^1_\tau$ and $D_n^2 =\tilde{Q}_2^\eps  + 2\eps \psi_{x_n , x_a}'' (D_n ; D_{a}) - Q_2^\eps$. So, in particular, we can write
\bnan
\label{e:crochet Q2 Q1 tilde}
i [\tilde{Q}_2^\eps,\tilde{Q}_1^\eps]=B_0^\eps \tilde{Q}_2^\eps+B_1^\eps \tilde{Q}_1^\eps + B_2^\eps .
\enan
where $B_i^\eps \in \mathcal{D}^i_\tau$ with real symbol $b_i^\eps$. Now, we need to 
\begin{itemize}
\item use the assumption to get some positivity of the symbol $\{ \overline{p}_\psi,p_\psi \} $, this is Lemma \ref{lmposit};
\item transfer this positivity to $\{ \overline{p}_\psi^\eps,p_\psi^\eps \}$ for $\e$ small enough by approximation, this is Lemma \ref {lmpositeps};
\item transfer this information to a tangential information on the symbol, this is Lemma \ref {lmposittangent}.
\end{itemize}
\begin{lemma}
\label{lmposit}
There exist $C_1, C_2>0$ such that for all $(x,\xi)\in K_{r_0}\times \R^n$  and $\tau>0$, we have
\bna
(|\xi|^2+\tau^2)\leq C_1 \{ \tilde{q}_2^0 , \tilde{q}_1^0 \} (x ,\xi) + C_2\left[ \frac{|p_\psi(x,\xi)|^2}{|\xi|^2+\tau^2}+|\xi_a|^2\right] .
\ena
\end{lemma}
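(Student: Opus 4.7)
\textbf{Proof plan for Lemma \ref{lmposit}.} The inequality is homogeneous of degree $2$ in $(\xi,\tau)$: indeed $\tilde q_2^0$ is homogeneous of degree $2$ and $\tilde q_1^0$ of degree $1$ jointly in $(\xi,\tau)$, so the Poisson bracket $\{\tilde q_2^0,\tilde q_1^0\}$ (taken with respect to $(x,\xi)$ only, treating $\tau$ as a parameter) is homogeneous of degree $2$; similarly $|p_\psi|^2/(|\xi|^2+\tau^2)$ and $|\xi_a|^2$ are degree $2$. Hence it suffices to prove the estimate on the compact set
\[
\Sigma \;=\; \bigl\{\,(x,\xi,\tau)\in K_{r_0}\times\R^n\times\R_+\ :\ |\xi|^2+\tau^2=1\,\bigr\},
\]
in the simplified form
\[
c \;\leq\; C_1\,\{\tilde q_2^0,\tilde q_1^0\}(x,\xi) \;+\; C_2\bigl(|p_\psi(x,\xi)|^2+|\xi_a|^2\bigr)
\]
for some $c>0$, after which a scaling $(\xi,\tau)\mapsto(\xi,\tau)/\sqrt{|\xi|^2+\tau^2}$ restores the claimed inequality.

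The key observation is that the pseudoconvexity hypotheses \eqref{hypopseudoconvecCarletau0bis} and \eqref{hypopseudoconvecCarlebis} combine to give $\{\tilde q_2^0,\tilde q_1^0\}(x,\xi)>0$ on the closed set
\[
F \;=\; \bigl\{\,(x,\xi,\tau)\in\Sigma\ :\ p_\psi(x,\xi)=0\text{ and }\xi_a=0\,\bigr\}.
\]
Indeed on $F$ either $\tau>0$ (and \eqref{hypopseudoconvecCarlebis} applies) or $\tau=0$, in which case $p_\psi=p$ and $|\xi|=1\neq 0$, so $\xi_b\neq 0$ and \eqref{hypopseudoconvecCarletau0bis} applies. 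Since $F$ is compact and $\{\tilde q_2^0,\tilde q_1^0\}$ continuous, there exists $c_0>0$ with $\{\tilde q_2^0,\tilde q_1^0\}\geq c_0$ on $F$.

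The remainder is a standard three-function argument (the same Lemma \ref{l:fgh-three-fcts} invoked in the proof of Lemma \ref{l:phi-psi}) applied on the compact set $\Sigma$ to the continuous functions $g=\{\tilde q_2^0,\tilde q_1^0\}$, $f=|p_\psi|^2$ and $h=|\xi_a|^2$: since $g$ admits a positive lower bound on the common zero set $\{f=h=0\}=F$, there exist constants $C,c>0$ such that
\[
\{\tilde q_2^0,\tilde q_1^0\}(x,\xi)\;+\;C\bigl(|p_\psi(x,\xi)|^2+|\xi_a|^2\bigr)\;\geq\;c\qquad\text{on }\Sigma.
\]
Multiplying both sides by $(|\xi|^2+\tau^2)$ and using the homogeneities identified above yields the desired estimate with $C_1=1/c$ and $C_2=C/c$. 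The only delicate point is verifying that the joint hypotheses \eqref{hypopseudoconvecCarletau0bis}--\eqref{hypopseudoconvecCarlebis} really cover the whole zero set $F$ including the limit $\tau\to 0^+$, which is why the statement of the theorem carries the separate assumption \eqref{hypopseudoconvecCarletau0} together with \eqref{hypopseudoconvecCarle}; all other steps are soft compactness/homogeneity arguments.
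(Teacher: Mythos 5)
Your proof is correct and follows essentially the same route as the paper: reduction by homogeneity to the compact set $|\xi|^2+\tau^2=1$, positivity of $\{\tilde q_2^0,\tilde q_1^0\}$ on the characteristic set $\{p_\psi=0,\ \xi_a=0\}$ from the two pseudoconvexity hypotheses, and then the compactness Lemma~\ref{l:fgh-three-fcts}. The only cosmetic point is your assignment of roles in that lemma: as stated it requires $g>0$ on $\{f=0\}$ (not on $\{f=h=0\}$), so you should take $f=|p_\psi|^2+|\xi_a|^2$ and $h=0$ — exactly the choice made in the paper — rather than splitting $|\xi_a|^2$ off into $h$.
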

\bnp
All the terms are homogeneous of order $2$ in $(\xi, \tau)$ and continuous on the compact $(x, \xi , \tau ) \in K_{r_0} \times \{(\xi, \tau) \in \R^n \times \R^+  ,  |\xi|^2 + \tau^2 =1\}$. Thus, on this set, the result is a consequence of \eqref{hypopseudoconvecCarletau0bis}, \eqref{hypopseudoconvecCarlebis} and Lemma \ref{l:fgh-three-fcts} applied to $f=\frac{|p_\psi(x,\xi)|^2}{|\xi|^2+\tau^2}+|\xi_a|^2\geq 0$, $g=\{ \tilde{q}_2^0 , \tilde{q}_1^0 \}$ and $h=0$. The result on the whole $K_{r_0}\times \R^n \times \R^+$ follows by homogeneity.
\enp
\begin{lemma}
\label{lmpositeps}
There exists $\e_0$ such that for all $\e \in (0,\e_0)$, there exist $C_1, C_2>0$ such that for all $(x,\xi)\in K_{r_0}\times \R^n$  and $\tau>0$, we have 
\bna
(|\xi|^2+\tau^2)\leq C_1  \{ \tilde{q}_2^\eps , \tilde{q}_1^\eps \}(x ,\xi) + C_2\left[\frac{|p_\psi^\e(x,\xi)|^2}{|\xi|^2+\tau^2}+|\xi_a|^2\right] .
\ena
\end{lemma}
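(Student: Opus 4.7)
\medskip
\noindent\textbf{Proof plan for Lemma \ref{lmpositeps}.}

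My approach is a perturbation argument based on the polynomial dependence of all the relevant symbols on $\eps$. Since $P$ is a second-order operator with real, $x_a$-independent principal part $Q_x$, inspection of the formul\ae{} for $\tilde{q}_2^\eps$ and $\tilde{q}_1^\eps$ (as written just after \eqref{formuleQi}) shows that $\tilde{q}_j^\eps$ is polynomial in $\eps$ with coefficients polynomial in $(\xi,\tau)$ and smooth in $x \in K_{r_0}$, and that $\tilde{q}_j^\eps$ is homogeneous of degree $2$ in $(\xi,\tau)$ (resp. of degree $1$) for $j=2$ (resp. $j=1$). Consequently $\{\tilde{q}_2^\eps,\tilde{q}_1^\eps\}$ is homogeneous of degree $2$ in $(\xi,\tau)$ and polynomial in $\eps$, and we may write
\begin{equation*}
\{\tilde{q}_2^\eps,\tilde{q}_1^\eps\}(x,\xi,\tau) = \{\tilde{q}_2^0,\tilde{q}_1^0\}(x,\xi,\tau) + \eps\, A(x,\xi,\tau,\eps),
\end{equation*}
with $|A(x,\xi,\tau,\eps)|\leq C_A(|\xi|^2+\tau^2)$ uniformly for $x\in K_{r_0}$ and $\eps\in[0,1]$.

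For the symbol $p_\psi^\eps(x,\xi) = Q_x(\xi - \eps\psi''_{x,x_a}\xi_a + i\tau\psi')$, since $Q_x$ is quadratic, expanding the polar form $\tilde Q_x$ yields
\begin{equation*}
p_\psi^\eps - p_\psi = -2\eps\,\tilde Q_x(\xi+i\tau\psi'\,,\,\psi''_{x,x_a}\xi_a) + \eps^2 Q_x(\psi''_{x,x_a}\xi_a),
\end{equation*}
from which one reads off $|p_\psi^\eps - p_\psi|^2 \leq C\eps^2|\xi_a|^2(|\xi|^2+\tau^2) + C\eps^4|\xi_a|^4$ on $K_{r_0}$. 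Using $(a+b)^2\leq 2a^2+2b^2$ and then $|\xi_a|^4 \leq |\xi_a|^2(|\xi|^2+\tau^2)$, this gives
\begin{equation*}
\frac{|p_\psi(x,\xi)|^2}{|\xi|^2+\tau^2} \leq 2\,\frac{|p_\psi^\eps(x,\xi)|^2}{|\xi|^2+\tau^2} + C'\eps^2(1+\eps^2)\,|\xi_a|^2.
\end{equation*}

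Now I plug both perturbation estimates into the inequality of Lemma \ref{lmposit}, obtaining
\begin{equation*}
|\xi|^2+\tau^2 \;\leq\; C_1\{\tilde{q}_2^\eps,\tilde{q}_1^\eps\} + C_1 C_A\,\eps\,(|\xi|^2+\tau^2) + 2C_2\,\frac{|p_\psi^\eps|^2}{|\xi|^2+\tau^2} + \bigl(C_2 + C_2 C'\eps^2(1+\eps^2)\bigr)|\xi_a|^2,
\end{equation*}
valid on $K_{r_0}\times\R^n\times\R^+$ (the inequality being homogeneous of degree $2$ in $(\xi,\tau)$, one may indeed first prove it on the unit sphere $|\xi|^2+\tau^2=1$ and extend by homogeneity, or just read it off directly as above). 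Choosing $\eps_0>0$ so small that $C_1C_A\eps_0\leq 1/2$, the second term on the right is absorbed into the left for every $\eps\in(0,\eps_0)$, yielding the announced inequality with new constants $C_1' := 2C_1$, $C_2' := \max(4C_2\,,\,2C_2(1+C'\eps_0^2(1+\eps_0^2)))$. The proof contains no substantial obstacle; the only point requiring care is tracking the dependence of $|p_\psi^\eps-p_\psi|$ on $|\xi_a|$ (rather than only on $(|\xi|^2+\tau^2)^{1/2}$), which is exactly what allows the residual term to be absorbed into $C_2'|\xi_a|^2$ rather than spoiling the bound.
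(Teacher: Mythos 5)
Your proof is correct and follows essentially the same route as the paper: both deduce the $\eps>0$ inequality from the $\eps=0$ case (Lemma \ref{lmposit}) by a perturbation argument, using that all terms are homogeneous of degree $2$ in $(\xi,\tau)$. The only difference is one of style — you make the perturbation quantitative by expanding $\{\tilde{q}_2^\eps,\tilde{q}_1^\eps\}$ and $p_\psi^\eps$ explicitly in powers of $\eps$ and absorbing the $O(\eps)(|\xi|^2+\tau^2)$ error, whereas the paper restricts to the compact unit sphere and invokes continuity of $\eps\mapsto q_j^\eps$ in $C^1$; both are valid.
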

\bnp
By the same argument, we may restrict to the compact $(x, \xi , \tau ) \in K_{r_0} \times \{(\xi, \tau) \in \R^n \times \R^+  ,  |\xi|^2 + \tau^2 =1\}$. There, the inequality follows from Lemma~\ref{lmposit} and the continuity of the maps $\eps \mapsto q_j^\eps$, $j=1,2$ from $\R$ to $C^1(V)$, where $V$ is a neighborhood of $K_{r_0} \times \{(\xi, \tau) \in \R^n \times \R^+  ,  |\xi|^2 + \tau^2 =1\}$ in $\R^n \times \R^n \times \R^+$.
\enp
Now, we set 
\bna
\mu^{\eps}(x,\xi')= (q_1^{\e})^2+2\e q_1^{\e} \psi_{x_n , x_a}'' (\psi_{x_n}'; \xi_{a}) +(\psi_{x_n}')^2 q_2^{\e}. 
\ena
The symbol $\mu^\eps(x,\xi')$ satisfies the property that $\mu^\eps(x,\xi')=0$ if and only if there exists $\xi_n$ real such that $p_{\psi}^\eps(x,\xi',\xi_n)=0$. This is easily seen by noticing that the zero of $q_1^{\e}$ can only be with $\xi_n= -\frac{q_1^\eps}{\psi_{x_n}'}$.

Notice also that $\mu^{\eps}(x,\xi')$ is a tangential symbol of order $2$.
\begin{lemma}
\label{lmposittangent}
There exists $\e_0$ such that for all $\e \in (0,\e_0)$, there exist $C_1, C_2>0$ such that for all $(x,\xi')\in K_{r_0}\times \R^{n-1}$ and $\tau>0$, we have 
\bnan
\label{e:tangential-subell}
(|\xi'|^2+\tau^2)\leq C_1 b_2^\eps + C_2\left[\frac{\left[\mu^{\eps}(x,\xi')\right]^2}{|\xi'|^2+\tau^2}+|\xi_a|^2\right] .
\enan
\end{lemma}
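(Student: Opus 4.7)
The plan is to reduce the tangential estimate \eqref{e:tangential-subell} to the full-symbol estimate of Lemma~\ref{lmpositeps} by evaluating the latter at the unique real root $\xi_n^\star$ of the equation $\tilde q_1^\eps(x,\xi',\xi_n)=0$. Since $\tilde q_1^\eps = 2\psi_{x_n}'\xi_n + 2 q_1^\eps$ and $\psi_{x_n}'$ does not vanish on $K_{r_0}$, this root is $\xi_n^\star := -q_1^\eps/\psi_{x_n}'$, bounded by $C(|\xi'|+\tau)$ so that $|\xi'|^2 + (\xi_n^\star)^2 + \tau^2 \asymp |\xi'|^2 + \tau^2$. By construction of $\mu^\eps$, a direct substitution gives
$$\tilde q_2^\eps(x,\xi',\xi_n^\star) = \mu^\eps(x,\xi')/(\psi_{x_n}')^2, \qquad p_\psi^\eps(x,\xi',\xi_n^\star) = \mu^\eps(x,\xi')/(\psi_{x_n}')^2,$$
so that $p_\psi^\eps$ evaluated at this real $\xi_n^\star$ is precisely (a constant multiple of) $\mu^\eps$.

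Applied at the point $(x,\xi',\xi_n^\star,\tau)$, Lemma~\ref{lmpositeps} becomes
$$|\xi'|^2+\tau^2 \;\leq\; C_1\,\{\tilde q_2^\eps,\tilde q_1^\eps\}(x,\xi',\xi_n^\star)\;+\;C_2\left[\frac{|\mu^\eps(x,\xi')|^2}{(\psi_{x_n}')^4\,(|\xi'|^2+\tau^2)}+|\xi_a|^2\right].$$
Now I use the factorisation \eqref{e:crochet Q2 Q1 tilde}, which at the level of principal symbols reads $\{\tilde q_2^\eps,\tilde q_1^\eps\} = b_0^\eps\,\tilde q_2^\eps + b_1^\eps\,\tilde q_1^\eps + b_2^\eps$, where $b_2^\eps = b_2^\eps(x,\xi',\tau)$ is a tangential symbol (independent of $\xi_n$). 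Because $\tilde q_1^\eps$ vanishes at $\xi_n^\star$ and $\tilde q_2^\eps = \mu^\eps/(\psi_{x_n}')^2$ there, we obtain
$$\{\tilde q_2^\eps,\tilde q_1^\eps\}(x,\xi',\xi_n^\star) = b_0^\eps(x,\xi',\xi_n^\star)\,\frac{\mu^\eps(x,\xi')}{(\psi_{x_n}')^2} + b_2^\eps(x,\xi',\tau).$$

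The final step is to absorb the cross term involving $b_0^\eps \mu^\eps$. Since $B_0^\eps \in \mathcal{D}_\tau^0$ (order zero), $b_0^\eps$ is uniformly bounded on $K_{r_0}\times\R^{n-1}\times\R^+$, and $\psi_{x_n}'$ is bounded away from zero. Thus the cross term is dominated by a constant times $|\mu^\eps|$, and Young's inequality gives, for any $\delta>0$,
$$|\mu^\eps| \leq \delta(|\xi'|^2+\tau^2) + \frac{1}{4\delta}\frac{|\mu^\eps|^2}{|\xi'|^2+\tau^2}.$$
Choosing $\delta$ small enough allows to absorb the $\delta(|\xi'|^2+\tau^2)$ contribution into the left-hand side, yielding the desired inequality with adjusted constants $C_1, C_2$. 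All terms are homogeneous of degree two in $(\xi',\tau)$, so the above argument only needs to be carried out on the compact set $\{|\xi'|^2+\tau^2 = 1\}$ and extended by homogeneity; this handles the potential issue of dividing by $|\xi'|^2+\tau^2$ near the origin. The main point of the proof is this switch from the full symbol to a purely tangential one via the substitution $\xi_n = \xi_n^\star$; the Young absorption is then routine.
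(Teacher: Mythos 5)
Your proof is correct, and its core reduction is the same as the paper's: evaluate at the real root $\xi_n^\star=-q_1^\eps/\psi_{x_n}'$ of $\tilde q_1^\eps$, where $p_\psi^\eps=\tilde q_2^\eps=(\psi_{x_n}')^{-2}\mu^\eps$, and use the factorisation \eqref{e:crochet Q2 Q1 tilde} to convert the Poisson bracket into the tangential symbol $b_2^\eps$ plus terms proportional to $\tilde q_1^\eps$ and $\tilde q_2^\eps$. Where you diverge is in how the argument is closed. The paper extracts only the \emph{qualitative} consequence of Lemma~\ref{lmpositeps}: on the set $\{\mu^\eps=0,\ \xi_a=0\}$ one has $p_\psi^\eps(x,\xi',\xi_n^\star)=0$, hence $\{\tilde q_2^\eps,\tilde q_1^\eps\}>0$ there, hence $b_2^\eps>0$ there; it then re-derives the quantitative inequality \eqref{e:tangential-subell} by a second application of the compactness Lemma~\ref{l:fgh-three-fcts} on $\{|\xi'|^2+\tau^2=1,\ \xi_a=0\}$ followed by homogeneity. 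You instead keep the quantitative form of Lemma~\ref{lmpositeps} throughout (noting correctly that $|\xi|^2+\tau^2\geq|\xi'|^2+\tau^2$ at $\xi_n=\xi_n^\star$ works in your favour on both sides) and absorb the cross term $b_0^\eps\,\mu^\eps/(\psi_{x_n}')^2$ by Young's inequality, using that $b_0^\eps$ is a bounded order-zero tangential symbol. This buys you a more direct, fully quantitative derivation that avoids invoking the compactness lemma a second time; the paper's route is slightly more robust in that it never needs to control the $b_0^\eps$ coefficient explicitly. Two cosmetic remarks: $b_0^\eps$ is tangential, so writing $b_0^\eps(x,\xi',\xi_n^\star)$ is a harmless abuse; and since $\tau>0$ the denominator $|\xi'|^2+\tau^2$ never vanishes, so your Young absorption already holds pointwise and the closing homogeneity remark is not actually needed.
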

\bnp
Note first that for any $(x, \xi', \xi_n)$ with $\xi_n= -\frac{q_1(x, \xi')}{\psi_{x_n}'}$, we have $\tilde{q}_1^\eps(x, \xi',\xi_n) = 0$ and
$$
p_\psi^\eps(x, \xi' , \xi_n) = \tilde{q}_2^\eps(x, \xi' , \xi_n)  = (\psi_{x_n}')^{-2}\mu^{\eps}(x,\xi') .
$$
Now, assume $\mu^\eps(x,\xi')=0$ and $\xi_a=0$. Setting $\xi_n= -\frac{q_1(x, \xi')}{\psi_{x_n}'}$, we have $p_\psi^\eps(x, \xi' , \xi_n) =0$. Using Lemma~\ref{lmpositeps}, we have $\{ \tilde{q}_2^\eps , \tilde{q}_1^\eps \}(x ,\xi', \xi_n)>0$. According to the definition of $B_2^\eps$ in~\eqref{e:crochet Q2 Q1 tilde}, we have $b_2^\eps(x, \xi' , \xi_n)>0$. As a consequence, we have
\bna
\big(\mu^\eps(x,\xi')=0 \quad \text{ and } \quad \xi_a=0  \big)\Longrightarrow b_2^\eps(x, \xi' , \xi_n)>0.
\ena
Moreover, all terms in~\eqref{e:tangential-subell} are homogeneous of order $2$ in the variables $(\xi', \tau)$ and continuous on $(\xi', \tau) \neq (0,0)$. Hence, applying Lemma~\ref{l:fgh-three-fcts} below on the compact set $K_{r_0} \times \{(\xi', \tau) \in \R^{n-1} \times \R^+  ,  |\xi' |^2 + \tau^2 =1 , \xi_a = 0  \}$ yields~\eqref{e:tangential-subell} on that set. The conclusion follows by homogeneity.
\enp
Taking the real part of \eqref{e:IPP-B} and using~\eqref{e:crochet Q2 Q1 tilde}, we obtain 
\bnan
\label{e:ptaupsi-t-B2}
\nor{P_{\psi,\e}f}{0}- \tau \Re \left( \mathcal{B}^\eps(f) \right)
=\nor{\tilde{Q}_2^\eps f}{0}^2 +\tau^2\nor{\tilde{Q}_1^\eps f}{0}^2
+ \tau \Re  \left(B_2^\eps f,f\right)
+ \tau \Re \left((B_0^\eps \tilde{Q}_2^\eps+B_1^\eps \tilde{Q}_1^\eps) f,f\right) .
\enan
Concerning the remainder term, we have
\bnan
\label{e:estim-term-rest}
\tau |\Re \left((B_0^\eps \tilde{Q}_2^\eps+B_1^\eps \tilde{Q}_1^\eps) f,f\right) |
& \leq & \tau \|f\|_0 \|\tilde{Q}_2^\eps f\|_0 + \tau |f|_1 \|\tilde{Q}_1^\eps f\|_0 \nonumber\\
& \leq & \tau^{-1/2} \left( \tau |f|_{1,\tau}^2 + \|\tilde{Q}_2^\eps f\|_0^2 + \tau^2 \|\tilde{Q}_1^\eps f\|_0^2 \right) .
\enan
Defining now $$\Sigma = (Q_1^{\e})^2+2\e Q_1^{\e} \psi_{x_n , x_a}'' (\psi_{x_n}'; D_{a}) +(\psi_{x_n}')^2 Q_2^{\e}, 
$$ with principal symbol $\mu^\eps$, and for an operator $G$ with principal symbol $\frac{\mu^{\eps}(x,\xi')}{|\xi'|^2+\tau^2}$, the tangential G{\aa}rding inequality in the class $S\Big( (|\xi'| + \tau)^2 , |dx'|^2 + \frac{|d\xi'|^2}{(|\xi'| + \tau)^2}\Big)$ (see~\cite[Chapter~XVIII]{Hoermander:V3} or~\cite{Lerner:10}), in which symbols are allowed to depend smoothly upon the variable $x_n$ yields, for $\tau$ sufficiently large,
\bnan
\label{e:garding}
|f|_{1,\tau}^2 \leq C \Re \left(B_2^\eps f,f\right) 
+ \Re \left( \Sigma f , G f \right) + \| D_a f\|_0^2 .
\enan
Writing $\psi_{x_n}'D_n= \frac12 (\tilde{Q}_1^\eps - [D_n , \psi_{x_n}']) - Q_1^\eps$ (where $\psi_{x_n}'$ does not vanish), this allows to estimate the full norm $\|f\|_{1, \tau}$ according to
\bnan
\label{e:double-simple-norm}
\|f\|_{1, \tau} \leq C (\|\tilde{Q}_1^\eps f \|_0 + |f|_{1, \tau}) .
\enan
Recalling the definitions of $\tilde{Q}_i^\eps$ in~\eqref{formuleQi}, we also have
\bnan
\Sigma & = & \left(\frac12( \tilde{Q}_1^\eps - [D_n , \psi_{x_n}']) - \psi_{x_n}' D_n \right)^2
+2\e Q_1^{\e} \psi_{x_n , x_a}'' (\psi_{x_n}'; D_{a}) \nonumber \\
& & + (\psi_{x_n}')^2 \left(\tilde{Q}_2^\eps - D_n^2 +2\eps \psi_{x_n , x_a}'' (D_n ; D_{a})\right) \nonumber \\
& = & \left(\frac12( \tilde{Q}_1^\eps - [D_n , \psi_{x_n}']) - \psi_{x_n}' D_n \right)
\frac12( \tilde{Q}_1^\eps - [D_n , \psi_{x_n}'])
+2\e Q_1^{\e} \psi_{x_n , x_a}'' (\psi_{x_n}'; D_{a})  \nonumber \\
& & + (\psi_{x_n}')^2 \left(\tilde{Q}_2^\eps +2\eps \psi_{x_n , x_a}'' (D_n ; D_{a})\right),
\enan
and hence $$\Sigma  \in (\psi_{x_n}')^2 \tilde{Q}_2^\eps - \frac12 \psi_{x_n}' D_n\tilde{Q}_1^\eps 
+2\e \psi_{x_n , x_a}'' \left( (\psi_{x_n}')^2 D_n +  Q_1^{\e} \psi_{x_n}'; D_{a} \right) + \mathcal{D}^1_\tau \tilde{Q}_1^\eps +  \mathcal{D}^1_\tau + \mathcal{D}^0_\tau D_n .$$

We now want to estimate the term $\Re \left( \Sigma f , G f \right)$ in \eqref{e:garding}. For this, 
integrating by parts in the tangential direction $x_a$, we have
\bna
\left|\left(\psi_{x_n , x_a}'' \left( (\psi_{x_n}')^2 D_n +  Q_1^{\e} \psi_{x_n}'; D_{a} \right) f , Gf \right) \right|  \leq C \|\left<D_a \right> f\| \|f\|_{1,\tau} .
\ena
This yields
\bnan
\label{estimMG}
|\left( \Sigma f , G f \right)| & \leq & C \|\tilde{Q}_2^\eps f \|_0 \| f \|_0
+ \left|\left( \frac{1}{2i}\psi_{x_n}' \tilde{Q}_1^\eps f, Gf \right)_0 \right| \nonumber \\
&& + \|\tilde{Q}_1^\eps f \|_0 \| f \|_{1, \tau} + \| f \|_0 \| f \|_{1, \tau}
+C \|\left<D_a \right> f\| \|f\|_{1,\tau} \nonumber\\
& \leq &\left|\left( \frac{1}{2i}\psi_{x_n}' \tilde{Q}_1^\eps f, Gf \right)_0 \right| +
 C \| f \|_{1, \tau} \left(\tau^{-1}\|\tilde{Q}_2^\eps f \|_0 + 
+ \|\tilde{Q}_1^\eps f \|_0  + \tau^{-1}\| f \|_{1,\tau}
+ \| D_a  f\|_0  \right) 
\enan
According to~\eqref{e:garding} and~\eqref{e:double-simple-norm} and~\eqref{estimMG}, this now implies
\bna
\|f\|_{1, \tau}^2  &\lesssim &\Re \left(B_2^\eps f,f\right) + \|\tilde{Q}_1^\eps f \|_0^2
+ \left|\left( \frac{1}{2i}\psi_{x_n}' \tilde{Q}_1^\eps f, Gf \right)_0 \right| +
 \tau^{-2}\|\tilde{Q}_2^\eps f \|_0^2   + \| D_a f\|_0^2 .
\ena
Coming back to~\eqref{e:ptaupsi-t-B2}, we obtain, for $\tau$ large enough,
\bna
\tau \|f\|_{1, \tau}^2
& \lesssim & 
\nor{P_{\psi,\e}f}{0}^2- \tau \Re \left( \mathcal{B}^\eps(f) \right)
- \nor{\tilde{Q}_2^\eps f}{0}^2 -\tau^2\nor{\tilde{Q}_1^\eps f}{0}^2
 + \tau\| D_a f\|_0^2 
 +\tau \left|\left( \frac{1}{2i}\psi_{x_n}' \tilde{Q}_1^\eps f, Gf \right)_0 \right| \\
 & \lesssim & 
\nor{P_{\psi,\e}f}{0}^2- \tau \Re \left( \mathcal{B}^\eps(f) \right)
 + \tau\| D_a f\|_0^2 
 +\tau \left|\left( \frac{1}{2i}\psi_{x_n}' \tilde{Q}_1^\eps f, Gf \right)_0 \right|  .
 \ena
Recalling te definition of $ \tilde{Q}_1^\eps$, we have $\psi_{x_n}'  \tilde{Q}_1^\eps = D_n + G_1$, where $G_1 \in \mathcal{D}^1_\tau$ is a differential operator of order $1$ (in $(\tau, D')$), we finally have
\bnan
\label{e:LRLem2}
\tau \|f\|_{1, \tau}^2
  \lesssim  
\nor{P_{\psi,\e}f}{0}^2- \tau \Re \left( \mathcal{B}^\eps(f) \right)
 + \tau\| D_a f\|_0^2 
 +\tau \left|\left( D_n f + G_1 f, Gf \right)_0 \right| ,
 \enan
 where $G$ a tangential pseudodifferential operator of order zero, 
 Recalling the form of $\mathcal{B}^\eps(f)$ in~\eqref{e:boundary-bilinear-term} gives the bound $| \mathcal{B}^\eps(f)| \leq  \tau^2 |f_{|x_n=0}|_{0}^2 
+  |D f_{|x_n=0}|_{0}^2$, which concludes the proof of~\eqref{e:carleman-bord-general}.
 
 Now if $f_{|x_n=0} = 0$, all tangential derivatives vanish. With~\eqref{e:LRLem2} and the form of $\mathcal{B}^\eps(f)$ in~\eqref{e:boundary-bilinear-term}, this yields
 \bna
\tau \|f\|_{1, \tau}^2
  \lesssim  
\nor{P_{\psi,\e}f}{0}^2- 2\tau  
 (\psi_{x_n}' D_n f , D_n f)_0
 + \tau\| D_a f\|_0^2 ,
 \ena
 which proves~\eqref{e:carleman-bord-dirichlet} since  $\psi_{x_n}'>0$ for $(x',x_n=0) \in K$.
This concludes the proof of Proposition~\ref{l:carleman-bord-compact}.
\enp
We turn now to the proof of Theorem~\ref{th:carleman-bord}.
\bnp[Proof of Theorem~\ref{th:carleman-bord}]
In the proof, we consider functions $u\in C^\infty_0 (K_{r_0/4})$ where $K_r$ is defined in~\eqref{e:def-Kr}.
Let $\chi \in C^\infty_0 (B_{\R^{n_a}}(0, r_0))$ such that $\chi=1$ on $B_{\R^{n_a}}(0, r_0/2)$. Setting  $v = Q_{\e,\tau}^{\psi} u = e^{-\frac{\e}{2\tau}|D_a|^2}(e^{\tau\psi}u)$ and $f = \chi(x_a)v(x)$, we have $\supp (f) \subset K_{r_0}$ so that we may apply Proposition~\ref{l:carleman-bord-compact} to $f$. We have $v-f = (1-\chi)Q_{\e,\tau}^{\psi} u = (1-\chi) e^{-\frac{\e}{2\tau}|D_a|^2}(\check{\chi}e^{\tau\psi}u)$ for some $\check{\chi} \in C^\infty_c (B_{\R^{n_a}}(0, r_0/3))$ with $\check{\chi} =1$ in a neighborhood of $\overline{B}_{\R^{n_a}}(0, r_0/4)$. As a consequence of Lemma~\ref{l:noyau-chaleur-hormander}, we have, for $\tau \geq \tau_0$
 \bnan
\|v\|_{1,\tau} \leq \|f\|_{1,\tau} + C e^{-C\frac{\tau}{\eps}} \|e^{\tau\psi}u\|_{1, \tau}
\enan
Now, it remains to estimate the terms on the RHS of Proposition~\ref{l:carleman-bord-compact} in terms of $v$. Notice first that the same reasonning with Lemma~\ref{l:noyau-chaleur-hormander} (using that $D_a$ is tangential) allows to estimate the boundary terms as:
 \bnan
 \label{e:term-bord-f-v-0}
|f_{|x_n=0}|_{0} \leq |v_{|x_n=0}|_{0} + C e^{-C\frac{\tau}{\eps}} |e^{\tau\psi}u_{|x_n=0}|_{0},
\enan
and, with $Dv - Df = D \left( (1-\chi) e^{-\frac{\e}{2\tau}|D_a|^2}(\check{\chi}e^{\tau\psi}u) \right)$, 
  \bnan
 \label{e:term-bord-f-v-1}
|Df_{|x_n=0}|_{0} & \leq & |Dv_{|x_n=0}|_{0} + C  e^{-C\frac{\tau}{\eps}} |e^{\tau\psi}u_{|x_n=0}|_{0}
+ C e^{-C\frac{\tau}{\eps}} |e^{\tau\psi} (\tau \psi' + D) u_{|x_n=0}|_{0} \nonumber\\
&&+ C e^{-C\frac{\tau}{\eps}} |e^{\tau\psi}Du_{|x_n=0}|_{0} \nonumber\\
&  \leq & |Dv_{|x_n=0}|_{0} + C \tau e^{-C\frac{\tau}{\eps}} |e^{\tau\psi}u_{|x_n=0}|_{0} 
+ C e^{-C\frac{\tau}{\eps}} |e^{\tau\psi}Du_{|x_n=0}|_{0}
\enan
Second, we estimate $\nor{P_{\psi,\e}f}{0}=\nor{P_{\psi,\e}\chi v}{0}=\nor{\chi P_{\psi,\e}v}{0}+\nor{[P_{\psi,\e},\chi ]v}{0}$. For the commutator, we write $[P_{\psi,\e},\chi ]v=[P_{\psi,\e},\chi ]e^{-\frac{\e}{2\tau}|D_a|^2}\check{\chi}e^{\tau\psi}u$. We notice that $[P_{\psi,\e},\chi ]$ is a differential operator of order $1$ in $(D,\tau)$ with some coefficients supported on $\supp(\chi'_{x_a} )$ that is, away from $\supp (\check{\chi})$. In particular, Lemma \ref{l:noyau-chaleur-hormander} implies $\nor{[P_{\psi,\e},\chi ]v}{0}\leq C e^{-c\frac{\tau}{\eps} }\nor{e^{\tau\psi}u}{1,\tau}$. This yields
\bnan
\label{estimPpsi}
\nor{P_{\psi,\e}f}{0}\leq \nor{P_{\psi,\e}v}{0}+C e^{-c\frac{\tau}{\eps} }\nor{e^{\tau\psi}u}{1,\tau}
\enan

Now, it remains to treat the term $\| D_a f\|_0$. Similarly, we obtain
\bnan
\label{estimDa1}
\| D_a f\|_0=\| D_a (\chi v)\|_0 \leq \| \chi D_a v\|_0+\| \chi'_{x_a} e^{-\frac{\e}{2\tau}|D_a|^2}\check{\chi}e^{\tau\psi}u \|_0\leq \|  D_a v\|_0 +C e^{-c\frac{\tau}{\eps} }\nor{e^{\tau\psi}u}{0}
\enan
where we have used again Lemma \ref{l:noyau-chaleur-hormander}.

Let $\varsigma$ a small constant to be fixed later on. We distinguish between frequencies of size smaller and bigger than $\varsigma \tau$. We get for $\tau \geq \frac{1}{\varsigma^2 \eps}$ large enough
 (so that the function $s\mapsto s e^{-\frac{\e}{2\tau}s^2}$ is decreasing on $s\geq \sqrt{\frac{\tau}{\eps}}$)
\bnan
\label{estimDa2}
\|  D_a v\|_0 =\|  D_a  e^{-\frac{\e}{2\tau}|D_a|^2}e^{\tau\psi}u\|_0&\leq &\|  D_a \mathds{1}_{|D_a|\leq \varsigma \tau}v\|_0+\|  D_a \mathds{1}_{|D_a|\geq \varsigma \tau} e^{-\frac{\e}{2\tau}|D_a|^2}e^{\tau\psi}u\|_0\nonumber\\
&\leq & \varsigma \tau \|  v\|_0+\varsigma \tau e^{-\frac{\tau\varsigma^2 \e}{2}}\| e^{\tau\psi}u\|_0
\enan

We may now apply Proposition~\ref{l:carleman-bord-compact} to $f$.
Combining the Carleman estimate \eqref{e:carleman-bord-general} with \eqref{estimPpsi}, \eqref{estimDa1}, \eqref{estimDa2}, \eqref{e:term-bord-f-v-0}, \eqref{e:term-bord-f-v-1}, we obtain, for some $C_1>0$ and $\tau \geq \tau_0$ with $\tau_0$ (depending also on $\varsigma , \eps)$) sufficiently large, 
\bna
C_1 \tau \|v\|_{1, \tau}^2 & \leq &
 \nor{P_{\psi,\e}v}{0}^2+C e^{-2c\frac{\tau}{\eps} }\nor{e^{\tau\psi}u}{1,\tau}^2+\varsigma^2 \tau^3 \|  v\|_0^2+\varsigma^2 \tau^3 e^{-\tau\varsigma^2 \e}\| e^{\tau\psi}u\|_0^2 \\
 && +  \tau^3 |v_{|x_n=0}|_{0}^2 +  \tau^3 e^{-2c\frac{\tau}{\eps}} |e^{\tau\psi}u_{|x_n=0}|_{0}^2
 + \tau |Dv_{|x_n=0}|_{0}^2 + \tau e^{-2 c\frac{\tau}{\eps}} |e^{\tau\psi}Du_{|x_n=0}|_{0}^2 .
\ena
Fixing $\varsigma \leq C_1/2$, this yields, for some $\mathsf{d}>0$ ($\eps$ is fixed already) and $\tau \geq \tau_0$, 
\bnan
\label{e:carleman-final-1}
\frac{C_1}{2} \tau \|v\|_{1, \tau}^2 & \leq &
 \nor{P_{\psi,\e}v}{0}^2+C e^{-\mathsf{d}\tau}\nor{e^{\tau\psi}u}{1,\tau}^2 \nonumber \\
 && +  \tau^3 |v_{|x_n=0}|_{0}^2 + e^{-\mathsf{d}\tau} |e^{\tau\psi}u_{|x_n=0}|_{0}^2
 + \tau |Dv_{|x_n=0}|_{0}^2 + e^{-\mathsf{d} \tau } |e^{\tau\psi}Du_{|x_n=0}|_{0}^2 .
\enan
Similarly, if moreover $\psi_{x_n}'>0$ for $(x',x_n=0) \in K_{r_0}$, then~\eqref{e:carleman-bord-dirichlet} yields for all $u \in C^\infty_0(K_{r_0/4})$ such that $u_{|x_n=0} = 0$, 
\bna
\tau \|v\|_{1, \tau}^2 \lesssim 
 \nor{P_{\psi,\e}v}{0}^2+ e^{-2c\frac{\tau}{\eps} }\nor{e^{\tau\psi}u}{1,\tau}^2+\varsigma^2 \tau^3 \|  v\|_0^2+\varsigma^2 \tau^3 e^{-\tau\varsigma^2 \e}\| e^{\tau\psi}u\|_0^2 ,
\ena
and hence
\bnan
\label{e:carleman-final-2}
\frac{C_1}{2} \tau \|v\|_{1, \tau}^2 & \leq &
 \nor{P_{\psi,\e}v}{0}^2+ e^{-\mathsf{d}\tau}\nor{e^{\tau\psi}u}{1,\tau}^2 .
\enan
Rewriting~\eqref{e:carleman-final-1}-\eqref{e:carleman-final-2} in terms of $u$ concludes the proof of Theorem~\ref{th:carleman-bord}.
\enp

\subsection{The local quantitative uniqueness result}
The Carleman estimates of the previous section have been proved when $P$ has a very specific form. 
Before proving the local quantitative uniqueness result, we first state them in a more invariant way that can be obtained by change of coordinates in $x_b$. When doing so, we strengthen the assumptions made on the operator $P$, still encompassing the cases of wave and Schr\"odinger operators (or more generally of the form of Remark~\ref{rknoncaractwave})

Up to now, and until the end of the section, $P$ will have the following property:
\begin{hypo}
\label{assumptionP}
$P$ is a differential operator on $\R^{n_a}\times \R^{n_b}_+$ of order two with coefficients analytic in the variable $x_a$. Assume moreover that $P$ has principal symbol independent of $x_a$ of the form $p(x,\xi) = q_{x_b}(\xi_a)+\tilde{q}_{x_b}(\xi_b)$, where $q_{x_b}$, $\tilde{q}_{x_b}$ are smooth $x_b$-families of real quadratic forms on $\R^{n_a}$ and $\R^{n_b}$ respectively.

Moreover, if $V  \in L^\infty (\R^{n_b}_+)$ and $W \in L^\infty (\R^{n_b}_+; \R^n )$, independent of $x_a$, we denote $P_{V,W} = P+W \cdot \nabla + V$.
\end{hypo}

 The proof of the local quantitative uniqueness will then be essentially the same as in the boundaryless case. 
The following Proposition is the counterpart, in the boundary case, of the end of the first step in Section~\ref{sectionlocal} (hence containing the geometrical part of the proof of the local uniqueness result).

\begin{proposition}
\label{p:boundary-geom}
Let $x^0 \in \{x_n = 0\}$ and let $P$ satisfying Assumption \ref{assumptionP}. 

Assume that $\{x_n = 0\}$ is non-characteristic with respect to $P$.  

Let $\phi$ be a function  defined in a neighborhood of $x^0$ in $\R^n$ such that $\phi(x^0)=0$, and $\{\phi = 0\}$ is a $C^2$ strongly pseudoconvex oriented surface at $x^0$ in the sense of Definition~\ref{def: pseudoconvex-surface}.

Then, there exists $R_0>0$ and a smooth function $\psi : B(x^0 , 4R_0) \to \R$ which is a quadratic polynomial with respect to $x_a \in \R^{n_a}$,  such that for any $R\in (0,R_0]$, there exist  $\eps, \delta ,\rho , r ,\mathsf{d} , \tau_0, C >0$, such that we have 

\begin{enumerate} 

\item $\delta \leq \frac{\mathsf{d}}{8}$ and~\eqref{geomrhobis}-\eqref{geomrho}-\eqref{eq:Brsubsetpsi},

\item
 for any $\tau \geq \tau_0 $, the Carleman estimate \eqref{Carlemanfromboundary} holds for $P$, for all $u \in C^\infty_0(\R^n_+)$ with $\supp(u) \subset B(x^0 , 4R)$.
 
 If moreover $\phi_{x_n}'(x^0)>0$, the Carleman estimate \eqref{Carlemanuptoboundary} holds for $P$ for all $u \in C^\infty_0(\R^n_+)$ with $\supp(u) \subset B(x^0 , 4R)$ and $u_{|x_n=0} = 0$.

The estimates can also be made uniform for $\tau >\tau_0 \max\{1 , \|V\|_{L^\infty}^{\frac23},\|W\|_{L^\infty}^2\}$ if $P$ is replaced by $P_{W,V}$, as in Corollary \ref{cor:carl-potentiel-bord}.
\end{enumerate}
\end{proposition}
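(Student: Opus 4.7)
The plan is to reduce to Theorem \ref{th:carleman-bord} via a convenient change of coordinates, then combine with Lemma \ref{l:phi-psi} exactly as in the proof of Corollary \ref{c:geometric-setting}. First I would apply Lemma \ref{l:phi-psi} at $x^0$ to produce a quadratic polynomial $\psi$ with $\psi(x^0)=0$, $\nabla\psi(x^0)=\nabla\phi(x^0)$, strongly pseudoconvex in $\{\xi_a = 0\}$ for $P$. With $\psi$ fixed, Theorem~\ref{th:carleman-bord} (after the reduction below) provides constants $\eps, \mathsf{d}, \tau_0$, and one then chooses $R \leq R_0$ and $\delta < \min(\mathsf{d}/8, \eta_0/9)$, applies the last part of Lemma~\ref{l:phi-psi} with $\eta = 9\delta$, $\eta_1 = \delta/4$, $\eta_2 = \delta/2$, and obtains the geometric inclusions~\eqref{geomrhobis}-\eqref{geomrho}-\eqref{eq:Brsubsetpsi} verbatim as in Corollary~\ref{c:geometric-setting}. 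The fact that $\psi$ is a quadratic polynomial in $x_a$ (only) is automatic from its definition, since the convexification in Lemma~\ref{l:phi-psi} only adds a quadratic correction in all variables to the Taylor expansion of $\phi$, while the original $\psi$ remains a quadratic polynomial in $x$. In the Dirichlet case $\phi_{x_n}'(x^0) > 0$, we also have $\psi_{x_n}'(x^0) = \phi_{x_n}'(x^0) > 0$, and by continuity this persists on $\overline{B(x^0,4R)} \cap \{x_n = 0\}$ for $R$ small enough.

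The only nontrivial issue is that Theorem~\ref{th:carleman-bord} is stated for operators of the specific form $P = D_{x_n}^2 + r(x_b, D_{x_a}, D_{x_b'})$, whereas here $P$ satisfies only Assumption~\ref{assumptionP}. I would address this by a change of coordinates affecting only the $x_b$ variable (keeping $x_a$ fixed, so that the tangential/analytic splitting is preserved). Since $\{x_n=0\}$ is non-characteristic and $p = q_{x_b}(\xi_a) + \tilde q_{x_b}(\xi_b)$, the coefficient $a(x_b)$ of $\xi_n^2$ in $\tilde q_{x_b}$ is nowhere zero near $x^0$; dividing $P$ by $a(x_b)$ (a lower-order perturbation absorbed by Corollary~\ref{cor:carl-potentiel-bord}) normalizes this coefficient to $1$. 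Then one removes the remaining cross terms $\xi_n \xi_b'$ by the classical local diffeomorphism of the form $y_n = x_n$, $y_b' = x_b' + x_n\, h(x_b')$ (or a slight variant) adapted to the quadratic form $\tilde q_{x_b}$; after this change of variables, the principal symbol becomes $\xi_n^2 + q_{y_b}(\xi_a) + \tilde r_{y_b}(\xi_b')$, which is exactly of the form required by Theorem~\ref{th:carleman-bord}. The $x_a$ variable is unaffected, so the analyticity in $x_a$ and the independence of the principal symbol from $x_a$ are preserved.

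Once $P$ is in the normalized form, the assumptions~\eqref{hypopseudoconvecCarletau0}-\eqref{hypopseudoconvecCarle} of Theorem~\ref{th:carleman-bord} are exactly the restriction to $\{\xi_a=0\}$ of the pseudoconvexity conditions in Definition~\ref{def: pseudoconvex-function} for the transformed $\tilde\psi$ with respect to the transformed $\tilde P$; since these conditions are invariant under $C^2$ changes of coordinates that preserve the subspace $\{\xi_a=0\}$ (which is our case, since $x_a$ is unchanged), they follow from the pseudoconvexity of $\psi$ for $P$ produced by Lemma~\ref{l:phi-psi}. Theorem~\ref{th:carleman-bord}, together with Corollaries~\ref{cor:carl-potentiel-bord} and~\ref{cor:carl-potentielanal-bord} applied to the lower-order terms generated by the normalization and to $W, V$, yields~\eqref{Carlemanfromboundary} and~\eqref{Carlemanuptoboundary} (under $\psi_{x_n}'>0$ at the boundary) in the new coordinates. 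Pulling back via the (bi-Lipschitz, $C^2$) change of variables preserves the form of both sides of the estimate up to harmless constants, since the change of coordinates is tangential to the boundary in $x_n$ and leaves $x_a$ untouched; one obtains the estimate for $u \in C^\infty_0(B(x^0, 4R) \cap \R^n_+)$ as stated.

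The main technical obstacle is verifying carefully that the $x_b$-change of variables producing the normalized form of $P$ can be performed without disturbing the strong pseudoconvexity or the quadratic polynomial structure in $x_a$ of $\psi$. Since only $x_b$ is affected and the pseudoconvexity conditions are invariant under coordinate changes of this type, this is manageable but requires bookkeeping; everything else then reduces to the already-established ingredients.
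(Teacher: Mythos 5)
Your overall strategy is the right one and matches the paper's: reduce to Theorem~\ref{th:carleman-bord} by a change of coordinates acting only on $x_b$ (so that the analytic/tangential splitting is preserved), convexify the surface via Lemma~\ref{l:phi-psi}, and absorb lower-order terms with Corollaries~\ref{cor:carl-potentiel-bord} and~\ref{cor:carl-potentielanal-bord}. However, there is a genuine gap in your order of operations. You apply Lemma~\ref{l:phi-psi} \emph{first}, in the original coordinates, obtaining a weight $\psi$ that is a quadratic polynomial in $x=(x_a,x_b)$, and only \emph{then} perform the (nonlinear) diffeomorphism $\Psi_b$ in $x_b$. The transformed weight $\tilde\psi=\psi\circ\Psi^{-1}$ is then quadratic in $x_a$ but only \emph{smooth} in the new $x_b$ variables, so it is \emph{not} a quadratic polynomial in the coordinates where you want to invoke Theorem~\ref{th:carleman-bord} --- whose hypothesis, as stated, is that the weight be a quadratic polynomial (this is what makes the exact conjugation identity~\eqref{e:conjugationpsitaueps} and the ensuing symbol computations in Proposition~\ref{l:carleman-bord-compact} legitimate). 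You cannot use the theorem as a black box for such a $\tilde\psi$ without reproving it for a wider class of weights. The paper reverses the order: it first passes to normal geodesic coordinates for $\tilde q_{x_b}$ (the correct replacement for your shear $y_b'=x_b'+x_n h(x_b')$, which in general does not kill the variable-coefficient cross terms), notes that $\phi\circ\Psi^{-1}$ is still strongly pseudoconvex by invariance, and \emph{then} applies Lemma~\ref{l:phi-psi} in the new coordinates to produce a genuine quadratic polynomial $\tilde\psi$ there; pulling back gives $\psi=\tilde\psi\circ\Psi$, which is quadratic only in $x_a$ --- this is precisely why the proposition's conclusion is phrased that way, a point your "automatic from its definition" remark glosses over.

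A second, smaller point: for the geometric inclusions~\eqref{geomrhobis}--\eqref{geomrho}--\eqref{eq:Brsubsetpsi} you cannot simply quote Corollary~\ref{c:geometric-setting} "verbatim", since with the corrected ordering the convexification takes place in the transformed coordinates while the balls in the statement are Euclidean balls in the original ones. This is exactly why Lemma~\ref{l:phi-psi} is stated for a distance $N$ merely equivalent to the Euclidean one: one applies it with $N(x,y)=|\Psi^{-1}(x)-\Psi^{-1}(y)|$ in the geodesic coordinates to recover the inclusions with Euclidean balls in the original setting. With these two corrections the rest of your argument (invariance of pseudoconvexity under tangential coordinate changes, treatment of $W,V$ and of the analytic-in-$x_a$ lower-order terms, persistence of $\psi_{x_n}'>0$ near the boundary) goes through as you describe.
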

\bnp
First, according to non-charactericticity assumption, we have $\tilde{q}_{x_b}(\xi_b) \neq 0$ for $x_b = (x_b' ,0)$ and $\xi_b'=0,  \xi_b^n = 1$. We may thus place ourselves in normal geodesic coordinates for $\tilde{q}_{x_b}$ in $\R^{n_b}$, in a sufficiently small neighborhood of $\{x_n = 0\}$. More precisely (see \cite[Appendix~C.5]{Hoermander:V3}) there exists a local diffeomorphism $\Psi_b$ from a neighborhood of $x^0_b$ in $\R^{n_b}_+$ to a neighborhood of $0$ in $\R^{n_b}_+$ such that, setting $\Psi := \id_{\R^{n_a}} \otimes \Psi_b$, the principal part of $\Psi^* P$ takes the form $(\xi_b^n)^2 +r(x_b ,\xi_a,\xi_b')$. From the function $\phi \circ\Psi^{-1}$ (still defining a strictly pseudoconvex surface for $\Psi^* P$ since this property is invariant), we can construct a quadratic polynomial $\tilde\psi$ exactly as in Lemma~\ref{l:phi-psi}/Corollary~\ref{c:geometric-setting} such that the Carleman estimates~\eqref{Carlemanfromboundary}-\eqref{Carlemanuptoboundary} hold for $\Psi^* P$ and $\tilde\psi$. We then use Corollary \ref{cor:carl-potentielanal-bord} and then Corollary \ref{cor:carl-potentiel-bord} to allow, first, lower order terms analytic in $x_a$ and then lower order terms independent on $x_a$ with the right estimates (note that both properties are invariant by our change of coordinates in $x_b$). Applying then the diffeomorphism $\Psi$ to come back to the original setting yields the sought estimate with $\psi = \tilde\psi \circ \Psi$, which remains a quadratic polynomial with respect to the variable $x_a$ (only) since $\Psi := \id_{\R^{n_a}} \otimes \Psi_b$.  This proves Item~2.

Finally, the geometric assertion of Item~1 comes from the application of Lemma~\ref{l:phi-psi} in the geodesic coordinates. There, using the distance $N(x,y)=|\Psi^{-1}(x)-\Psi^{-1}(y)|$ allows to obtain \eqref{geomrhobis}-\eqref{geomrho}-\eqref{eq:Brsubsetpsi} with euclidian balls as claimed by Item 1.
\enp
The aim of this section is now to prove the following two local results, namely local quantitative uniqueness up to and from the boundary.
\begin{theorem}[Local quantitative uniqueness up to the boundary]
\label{thmquantitativeupboundary}

Let $x^0 \in \{x_n = 0\}$ and $P$ satisfying Assumption \ref{assumptionP}. Assume that $\{x_n = 0\}$ is non-characteristic with respect to $P$.

Assume that there is a function $\phi$ defined in a neighborhood of $x^0$ in $\R^n$ such that $\phi(x^0)=0$, and $\{\phi = 0\}$ is a $C^2$ strongly pseudoconvex oriented surface at $x^0$ in the sense of Definition~\ref{def: pseudoconvex-surface} and such that $\phi_{x_n}'(x^0)>0$.

Then there exists $R_0>0$ such that for any $R\in (0,R_0)$, there exist $r>0, \rho>0$ for any $\vartheta\in C^{\infty}_0(\R^n)$  such that $\vartheta(x)=1$ on a neighborhood of $\left\{\phi\geq 2\rho\right\}\cap B(x^0,3R)$,
for all $c_1, \kappa>0$ there exist $C, \kappa', \beta, \tilde{\tau}_0 >0$ such that we have
\bna
\nor{M^{\beta\mu}_{c_1\mu} \sigma_{r,c_1\mu} u}{1,+}\leq C e^{\kappa \mu}\left(\nor{M^{\mu}_{c_1\mu} \vartheta_{c_1\mu} u}{1,+} + \nor{Pu}{L^2(B(x^0,4R) \cap \R^n_+)}\right)+Ce^{-\kappa' \mu}\nor{u}{1,+}.
\ena
for all $\mu \geq \tilde{\tau}_0 $ and $u\in C^{\infty}_0(\R^{n}_+)$ such that $u_{|x_n=0} = 0$.

Moreover, under the same assumptions, there exists $C_0, \kappa', \beta, \tilde{\tau}_0 >0$ such that for all $V\in L^{\infty}(\R^{n_b})$, $W\in  L^{\infty}(\R^{n_b};\R^n)$ the previous estimate is still true with $P$ replaced by $P_{W,V} = P+  W\cdot \nabla + V$ with $C$ replaced by $C_0 \max \left\{1,\nor{W}{L^{\infty}}\right\}$ and uniformly for all $\mu \geq \tilde{\tau}_0 \max\{1,\|V\|_{L^\infty}^{\frac23},\nor{W}{L^{\infty}}^2\}$.
\end{theorem}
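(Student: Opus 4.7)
The plan is to mimic exactly the three-step proof of Theorem~\ref{th:alpha-unif}, replacing the interior Carleman estimate of Theorem~\ref{thmCarleman} by its boundary counterpart~\eqref{Carlemanuptoboundary} (case of Dirichlet data) provided by Theorem~\ref{th:carleman-bord}. First, Proposition~\ref{p:boundary-geom} furnishes a quadratic polynomial $\psi$ (with respect to $x_a$) together with constants $R_0,\delta,\rho,r,\mathsf{d},\tau_0$ for which all the geometric assertions~\eqref{geomrhobis}, \eqref{geomrho}, \eqref{eq:Brsubsetpsi} hold in Euclidean balls and, crucially, for which the Dirichlet Carleman estimate~\eqref{Carlemanuptoboundary} is valid for all $u\in C^\infty_0(\R^n_+)$ supported in $B(x^0,4R)$ satisfying $u|_{x_n=0}=0$. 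The assumption $\phi_{x_n}'(x^0)>0$ is precisely what ensures, after the convexification argument of Lemma~\ref{l:phi-psi}, that $\psi_{x_n}'>0$ on $\{x_n=0\}\cap K_{r_0}$, which is what permits to drop all boundary terms on the right-hand side of~\eqref{Carlemanuptoboundary}.

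Second, I would apply~\eqref{Carlemanuptoboundary} to the function $\sigma_{2R}\sigma_{R,\lambda}\chi_{\delta,\lambda}(\psi)\tilde\chi_\delta(\psi)u$, exactly as in Section~\ref{s:using-carleman}. The key observation is that since all cutoffs involved are either multiplications by functions of $x$ or tangential Fourier multipliers in $x_a$, they commute with the operation of taking the trace at $x_n=0$; hence if $u|_{x_n=0}=0$ then $\sigma_{2R}\sigma_{R,\lambda}\chi_{\delta,\lambda}(\psi)\tilde\chi_\delta(\psi)u$ also has vanishing Dirichlet trace, so the boundary terms genuinely disappear and we obtain a boundary-free analogue of the key estimate~\eqref{eq:carlemantronque}, where all norms are now on $\R^n_+$. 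The commutator analysis of Lemma~\ref{lemmacommut} is unchanged: the decomposition $[P,\sigma_{2R}\sigma_{R,\lambda}\chi_{\delta,\lambda}(\psi)\tilde\chi_\delta(\psi)] = B_1+B_2+B_3+B_4$ makes sense verbatim, and the preliminary estimates of Section~\ref{s:prelim-estim} (in particular Lemmata~\ref{lmsuppdisjoint}, \ref{l:noyau-chaleur-hormander}, \ref{lmsuppdisjointchi}, \ref{lmsupHK}, \ref{lemma: analytic Chambertin}) are about regularization in the tangential variable $x_a$; by Remark~\ref{rkrestriction} they all transfer to half-space norms $\|\cdot\|_{k,+}$ with no loss.

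Third, the duality/Phragmén–Lindelöf argument of Lemma~\ref{l:local-lem-complex} and the scaling Lemma~\ref{l:holomorphic} apply word for word: the distribution $h_f = \psi_*\bigl((M^{\beta\mu}f)\sigma_{2R}\sigma_{R,\lambda}\chi_{\delta,\lambda}(\psi)\tilde\chi_\delta(\psi)u\bigr)$ is compactly supported in $\R$, its Fourier transform $\hat h_f$ is entire, estimated on $i\R_+$ by the boundary-free Carleman estimate and on $\R$ by the a priori $H^{-(m-1)}(\R^n_+)$--$H^{m-1}(\R^n_+)$ duality (which again makes sense for the half-space restriction norm). The deformation of contour then yields, by the very same interpolation, the bound
\[
\nor{M^{\beta\mu}\sigma_{2R}\sigma_{R,\lambda}\chi_{\delta,\lambda}(\psi)\tilde\chi_{\delta}(\psi)\eta_{\delta,\lambda}(\psi)u}{1,+} \le C e^{-c\mu}(D+\nor{u}{1,+}),
\]
with $D=e^{\kappa\mu}(\nor{M^{2\mu}_\lambda \vartheta_\lambda u}{1,+}+\nor{Pu}{L^2(B(x^0,4R)\cap\R^n_+)})$, from which the conclusion of the theorem follows by the final clean-up step at the end of Section~\ref{s:complex-analysis-arg} (replacing $m-1=1$ throughout, since here $m=2$).

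For the uniform dependence on $V$ and $W$ in the perturbation $P_{W,V} = P + W\cdot\nabla + V$, I would invoke Corollary~\ref{cor:carl-potentiel-bord}, which delivers the same Carleman estimate~\eqref{Carlemanuptoboundary} uniformly for $\tau\ge\tau_0\max\{1,\|V\|_{L^\infty}^{2/3},\|W\|_{L^\infty}^2\}$. Since throughout the complex-analysis step $\tau$ ranges over $[\tau_0,d\mu/2]$ with $\lambda = 2c_1\mu$, the threshold on $\tau$ translates directly into the stated lower bound $\mu\ge\tilde\tau_0\max\{1,\|V\|_{L^\infty}^{2/3},\|W\|_{L^\infty}^2\}$. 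The additional $\nor{W}{L^\infty}$ factor in the constant $C_0\max\{1,\nor{W}{L^\infty}\}$ appears only through the commutator $[P_{W,V},\sigma_{2R}\sigma_{R,\lambda}\chi_{\delta,\lambda}(\psi)\tilde\chi_\delta(\psi)]$, which contains one extra term of the form $W\cdot\nabla(\text{cutoffs})$ whose contribution is linear in $\|W\|_{L^\infty}$ (cf.\ Remark~\ref{rklowerunif}). The main technical obstacle, which is overcome precisely by the hypothesis $\phi_{x_n}'(x^0)>0$, is to ensure that the convexified weight $\psi$ inherits the strict monotonicity in $x_n$ at the boundary, so that the boundary-free Carleman estimate~\eqref{Carlemanuptoboundary} can be invoked in lieu of the heavier~\eqref{Carlemanfromboundary}; all the rest is a direct transposition of the interior proof.
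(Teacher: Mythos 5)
Your proposal is correct and follows essentially the same route as the paper: the paper proves this theorem by the very scheme you describe (it refers to the proof of Theorem~\ref{thmquantitativefromboundary}, which reruns the three steps of Theorem~\ref{th:alpha-unif} with the boundary Carleman estimate of Theorem~\ref{th:carleman-bord}, using Proposition~\ref{p:boundary-geom} for the convexified weight with $\psi_{x_n}'>0$, Remark~\ref{rkrestriction} to transfer the tangential commutator estimates to half-space norms, and Corollary~\ref{cor:carl-potentiel-bord} together with Remark~\ref{rklowerunif} for the uniformity in $V$ and $W$). Your observation that the vanishing Dirichlet trace is preserved by the tangential regularized cutoffs, so that~\eqref{Carlemanuptoboundary} applies with no boundary terms, is exactly the point the paper exploits.
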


This theorem is proved similarly as in the case without boundary. See the details in the proof of the related Theorem \ref{thmquantitativefromboundary} below. 
\begin{theorem}[Local quantitative uniqueness from the boundary]
\label{thmquantitativefromboundary}
Let $x^0$ and $P$ satisfying Assumption \ref{assumptionP}.

Assume that $\{x_n = 0\}$ is non-characteristic with respect to $P$.

 Assume that the function $\phi(x) = -x_n$ satisfies the property of Definition~\ref{def: pseudoconvex-surface} at $x^0$.

Then there exists $R_0>0$ such that for any $R\in (0,R_0)$, there exist $r>0$ 
for all $c_1, \kappa>0$ there exist $C, \kappa', \beta ,\tilde{\tau}_0 >0$ such that we have
\bna
\nor{M^{\beta\mu}_{c_1\mu} \sigma_{r,c_1\mu} u}{1,+}\leq C e^{\kappa \mu}\left(\nor{D_n u}{L^2(B(x^0,4R)\cap \left\{x_n=0\right\}} + \nor{P u}{L^2(B(x^0,4R) \cap \R^n_+)}\right)+Ce^{-\kappa' \mu}\nor{u}{1,+} .
\ena
for all $\mu \geq \tilde{\tau}_0$ and $u\in C^{\infty}_0(\R^{n}_+)$ such that $u_{|x_n=0} = 0$.

The same dependence of the constants holds if $P$ is replaced by $P_{W,V}$ as in Theorem \ref{thmquantitativeupboundary}
\end{theorem}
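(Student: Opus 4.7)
The plan is to follow the three-step structure of the proof of Theorem \ref{th:alpha-unif}, adapted to use the boundary Carleman estimate \eqref{Carlemanfromboundary} rather than the interior one. The geometric step is standard: I invoke Proposition \ref{p:boundary-geom} with $\phi(x) = -x_n$. This is admissible because $\phi(x^0)=0$ (as $x^0$ lies on the boundary) and $\phi$ is strongly pseudoconvex by hypothesis. However, $\phi_{x_n}'(x^0) = -1 < 0$, so I cannot invoke the ``up-to-boundary'' Dirichlet Carleman estimate \eqref{Carlemanuptoboundary}; I must use the full boundary estimate \eqref{Carlemanfromboundary}, which includes boundary trace terms on the right-hand side. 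The geometric properties \eqref{geomrhobis}, \eqref{geomrho}, \eqref{eq:Brsubsetpsi} then yield the same local configuration as in Corollary \ref{c:geometric-setting}, with the distinctive feature that $\{\phi > 2\rho\} \cap \R^n_+ = \emptyset$, so there is no interior observation region $\vartheta$; the observation will come entirely from the boundary.

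For the second step, I apply \eqref{Carlemanfromboundary} to $v = \sigma_{2R}\sigma_{R,\lambda}\chi_{\delta,\lambda}(\psi)\tilde\chi_\delta(\psi)u$ for $u \in C^\infty_0(\R^n_+)$ with $u_{|x_n=0}=0$. The interior commutator $[P,\sigma_{2R}\sigma_{R,\lambda}\chi_{\delta,\lambda}(\psi)\tilde\chi_\delta(\psi)]$ is estimated exactly as in Lemma \ref{lemmacommut}, since that computation is local in the interior and unaffected by the boundary. The new ingredient is to handle the boundary traces in \eqref{Carlemanfromboundary}. Using the Dirichlet condition $u_{|x_n=0}=0$, the product rule gives $v_{|x_n=0}=0$ and $(D_{x'}v)_{|x_n=0}=0$, so only
\bna
(D_n v)_{|x_n=0} = \bigl[\sigma_{2R}\sigma_{R,\lambda}\chi_{\delta,\lambda}(\psi)\tilde\chi_\delta(\psi)\bigr]_{|x_n=0}\,(D_n u)_{|x_n=0}
\ena
survives. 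Since $D_a$ is tangential and commutes with the restriction to $\{x_n=0\}$, the same elimination works for $Q_{\e,\tau}^\psi v$: its boundary trace vanishes and its normal derivative trace reduces to $Q_{\e,\tau}^\psi$ applied to $(D_n v)_{|x_n=0}$. Combined with Lemma \ref{lmboundchipsi}, the four boundary terms in \eqref{Carlemanfromboundary} are therefore bounded by $C \tau^{N} e^{C\tau}\|D_n u\|_{L^2(B(x^0,4R)\cap\{x_n=0\})}^2$, which plays the role of the term $\|M^{2\mu}_\lambda \vartheta_\lambda u\|_{m-1}^2$ in the analogue of \eqref{eq:carlemantronque}.

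The third step (complex analysis) applies verbatim: the construction of the distribution $h_f$, the Phragm\'en--Lindel\"of argument via Lemma \ref{l:holomorphic}, and the contour deformation performed in Section \ref{s:complex-analysis-arg} rely only on analyticity/pairings in the $x_a$ variable and the support properties of $u$ in $\R^n_+$. This produces the analog of Lemma \ref{l:local-lem-complex} with
\bna
D = e^{\kappa \mu}\bigl(\|D_n u\|_{L^2(B(x^0,4R)\cap\{x_n=0\})} + \|P u\|_{L^2(B(x^0,4R)\cap\R^n_+)}\bigr),
\ena
after which the passage to $\nor{M^{\beta\mu}_{c_1\mu}\sigma_{r,c_1\mu}u}{1,+}$ proceeds by the same commutator manipulations (Lemmas \ref{l:commutateurnon}, \ref{l:commgevrey2}, \ref{lmsupHK}) as at the end of Section \ref{s:complex-analysis-arg}. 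For the perturbed operator $P_{W,V}$, I invoke Corollary \ref{cor:carl-potentiel-bord} at the Carleman level, which yields the stated dependence of the constants on $\|V\|_{L^\infty}$ and $\|W\|_{L^\infty}$ (the constant $C_0\max\{1,\|W\|_{L^\infty}\}$ arising from $W \cdot \nabla u$ picking up a factor of $\|W\|_{L^\infty}$ when one goes from $Q^\psi_{\e,\tau}Pu$ to the observation side).

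The main obstacle is the careful book-keeping of the boundary terms: while the Dirichlet condition eliminates most contributions, one must verify that the interaction between the regularization $Q^\psi_{\e,\tau}$ and the trace on $\{x_n=0\}$ behaves as expected, and that the cutoff factors in $v$ evaluated at the boundary do not spoil the localization properties needed for the complex-analysis step. Both points follow because $D_a$ is tangential (hence commutes with the restriction operator) and because the cutoffs $\sigma_{2R}, \chi_{\delta,\lambda}(\psi), \tilde\chi_\delta(\psi)$ restricted to $\{x_n=0\}$ remain supported in $B(x^0,4R)\cap\{x_n=0\}$, which is what the statement requires for the observation term.
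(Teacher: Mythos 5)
Your proposal is correct and follows essentially the same route as the paper: Proposition~\ref{p:boundary-geom} with $\phi=-x_n$, the Carleman estimate \eqref{Carlemanfromboundary} applied to the cutoff function $\sigma_{2R}\sigma_{R,\lambda}\chi_{\delta,\lambda}(\psi)\tilde\chi_\delta(\psi)u$ with the interior observation term vanishing (since $\vartheta$ is supported in $\{x_n<-\rho\}$ and the regularization is tangential) and the boundary traces reduced via the Dirichlet condition to $D_n u_{|x_n=0}$, followed by the unchanged complex-analysis step. The only imprecision is your bound $e^{C\tau}$ on the boundary traces, which must in fact be of the form $e^{2\delta\tau}e^{C\tau^2/\lambda}$ (as the application of Lemma~\ref{lmboundchipsi} indeed yields) for the Phragm\'en--Lindel\"of step to close.
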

\bnp
The proof is very similar to the proof of Theorem \ref{th:alpha-unif} in Section \ref{sectionlocal}, using the Carleman estimate~\eqref{Carlemanfromboundary} of Theorem \ref{th:carleman-bord} . We only sketch it and underline the differences with respect to the boundaryless case. We moreover added the potential $V$ with respect to the general case; we need also check that it is painless in the proof.

\medskip 
\noindent
\textbf{Step 1: The geometric setting.} We start by choosing $\phi=-x_n$. The surface $\left\{\phi=0\right\}=\left\{-x_n=0\right\}$ is non characteristic by assumption, and according to Remark \ref{rknoncaractwave},  is hence a strongly pseudoconvex oriented surface for $P$. 
Proposition~\ref{p:boundary-geom} furnishes an appropriate convexified $\psi$, polynomial of degree two in the variable $x_a$, that satisfies the desired geometric conditions, together with the Carleman estimate~\eqref{Carlemanfromboundary}. We now follow the proof of the boundaryless case. 

\medskip 
\noindent
\textbf{Step 2: Using the Carleman estimate.}
The point is to use the Carleman estimate~\eqref{Carlemanfromboundary} with weight $\psi$, applied to the (compactly supported) function $w=\sigma_{2R}\sigma_{R,\lambda}\chi_{\delta,\lambda}(\psi)\chit_{\delta}(\psi)u$.

Similarly, using the same support property $\supp(\chi_\delta) \subset ]-8\delta , \delta[$, and Lemma \ref{lmboundchipsi}, we write
\bna
\nor{Q_{\e,\tau}^{\psi}P_{W,V} w}{0,+}&\leq& \nor{Q_{\e,\tau}^{\psi}\sigma_{2R}\sigma_{R,\lambda}\chi_{\delta,\lambda}(\psi) \chit_{\delta}(\psi) P_{W,V} u}{0,+}+\nor{Q_{\e,\tau}^{\psi}[\sigma_{2R}\sigma_{R,\lambda}\chi_{\delta,\lambda}(\psi) \chit_{\delta}(\psi) ,P_{W,V}]u}{0,+} \nonumber\\
&\leq &e^{\frac{\tau^2}{\lambda}}e^{\delta\tau}\nor{P_{W,V} u}{L^2(B(x^0,4R)\cap \left\{x_n\geq 0 \right\})}+\nor{Q_{\e,\tau}^{\psi}[\sigma_{2R}\sigma_{R,\lambda}\chi_{\delta,\lambda}(\psi)\chit_{\delta}(\psi),P_{W,V}]u}{0,+}.
\ena
Next, Lemma~\ref{lemmacommut} still holds in $\R^n_+$ since $x_a$ is a tangential variable (see Remark~\ref{rkrestriction}). Hence, the commutator term is bounded by 
\bna
\nor{Q_{\e,\tau}^{\psi}[\sigma_{2R}\sigma_{R,\lambda}\chi_{\delta,\lambda}(\psi)\chit_{\delta}(\psi),P]u}{0,+}
& \leq &
 C e^{2\delta \tau} \nor{ M^{2\mu}_{\lambda}\vartheta_{\lambda} u}{1,+} \\
 & & + C\lambda^{1/2} \tau^N \left(e^{-\frac{\eps\mu^2}{4\tau}}+ e^{-8\delta \tau}+ e^{\delta \tau -c\mu}\right)e^{\frac{\tau^2}{\lambda}}e^{\delta \tau }\nor{u}{1,+}  ,
\ena
with some $\vartheta$ (equal to one in a neighborhood of $\{\phi \geq 2 \rho\} \cap B(x^0,3R)$) supported in $\left\{\phi>\rho\right\}=\left\{x_n<-\rho\right\}$. 

Moreover, following Remark \ref{rklowerunif}, we can get uniform estimates for the commutator of $P_{W,V}$ by replacing $C$ by $C_0\max \left\{1,\nor{W}{L^{\infty}(\R^{n_b})}\right\}$. We will not write it any more for sake of clarity but it appears multiplically in all the estimates.

Since the operator $M^{\mu}_{c_1\mu}$ only applies in the tangential variable $x_a$, we have $\nor{M^{\mu}_{c_1\mu} \vartheta_{c_1\mu} u}{1,+}\leq \nor{\vartheta_{c_1\mu} u}{1,+}$. Moreover, since $\vartheta$ is supported in $\left\{x_n<-\rho\right\}$ and $\vartheta_{c_1\mu}=e^{-\frac{|D_a|^2}{c_1 \mu}}\vartheta$ is a regularization in the variable $x_a$, $\vartheta_{c_1\mu}$ is also supported in $\left\{x_n<-\rho\right\}$ and $\vartheta_{c_1\mu}(x)=0$ if $x_n\geq 0$. In particular, $\nor{\vartheta_{c_1\mu} u}{1,+}=0$. That is
\bna
\nor{Q_{\e,\tau}^{\psi}P_{W,V} w}{0,+}
& \leq & Ce^{\frac{\tau^2}{\lambda}}e^{\delta\tau}\nor{P_{W,V} u}{L^2(B(x^0,4R)\cap \R^n_+)}\\
&&+C\lambda^{1/2} \tau^N\left(e^{-\frac{\eps\mu^2}{4\tau}}+ e^{-8\delta \tau}+ e^{\delta \tau -c\mu}\right)e^{\frac{\tau^2}{\lambda}}e^{\delta \tau }\nor{u}{1,+} .
\ena
The other term in the Carleman estimate that we have to check are 
\bnan
\label{termbord}
\tau |\big(D(Q_{\e,\tau}^{\psi} w)\big)_{|x_n=0}|_{0}^2 + e^{-\mathsf{d} \tau } |e^{\tau\psi}Dw_{|x_n=0}|_{0}^2 \leq C \tau|e^{\tau\psi}D_nw_{|x_n=0}|_{0}^2 , 
\enan
where we have used that $u_{|x_n=0}=w_{|x_n=0}=0$. This also implies $$D_nw_{|x_n=0}=(\sigma_{2R}\sigma_{R,\lambda}\chi_{\delta,\lambda}(\psi)\chit_{\delta}(\psi)D_nu)_{|x_n=0} .$$ Since $ \nor{e^{\tau \psi}\chi_{\delta,\lambda}(\psi)}{L^{\infty}}\leq C \lambda^{1/2}e^{\delta\tau}e^{\frac{\tau^2}{\lambda}}$ thanks to Lemma \ref{lmboundchipsi}, the left hand-side of \eqref{termbord} is bounded by $C\lambda e^{2\delta\tau}e^{2\frac{\tau^2}{\lambda}}\tau\left|D_n u\right|_{L^2(B(x^0,4R)\cap \left\{x_n=0\right\})}^2$. 

So, combining the Carleman estimate of Corollary~\ref{cor:carl-potentiel-bord} and the previous bounds, we have proved for all  $\tau \geq \tau_0 \max\{1 , \|V\|_{L^\infty}^{\frac23}\}$, $\mu \geq 1$, $\frac{1}{C}\mu\leq \lambda \leq C\mu$, 
\bna
\tau^{1/2} \nor{Q_{\e,\tau}^{\psi}\sigma_{2R}\sigma_{R,\lambda}\chi_{\delta,\lambda}(\psi)\chit_{\delta}(\psi)u}{1,+,\tau}
& \leq  &
C e^{\frac{\tau^2}{\lambda}}e^{\delta\tau}\nor{P_{W,V} u}{L^2(B(x^0,4R)\cap \R^n_+)}
\nonumber \\
 & & +  
C\lambda^{1/2}\tau^{1/2} e^{\delta\tau}e^{\frac{\tau^2}{\lambda}}\left|D_n u\right|_{L^2(B(x^0,4R)\cap \left\{x_n=0\right\})}   \nonumber \\
 & & + C\lambda^{1/2} \tau^N\left(e^{-\frac{\eps\mu^2}{4\tau}}+ \tau e^{-8\delta \tau}+ e^{\delta \tau -c\mu}\right)e^{\frac{\tau^2}{\lambda}}e^{\delta \tau }\nor{u}{1,+}   .
\ena
So, denoting $D=e^{\kappa\mu}\left(\nor{D_n u}{L^2(B(x^0,4R)\cap \left\{x_n=0\right\})} +\nor{Pu}{L^2(B(x^0,4R)\cap \R^n_+)}\right)$, we can rewrite it as
\bna
\nor{Q_{\e,\tau}^{\psi}\sigma_{2R}\sigma_{R,\lambda}\chi_{\delta,\lambda}(\psi)\chit_{\delta}(\psi)u}{1,+,\tau}
& \leq  &
C\mu^{1/2} e^{\delta\tau}e^{C\frac{\tau^2}{\mu}}e^{-\kappa \mu} D   \nonumber \\
 & & + C\mu^{1/2}\tau^N \left(e^{-\frac{\eps\mu^2}{4\tau}}+ \tau e^{-8\delta \tau}+ e^{\delta \tau -c\mu}\right)e^{C\frac{\tau^2}{\lambda}}e^{\delta \tau }\nor{u}{1,+}   .
\ena

\medskip 
\noindent
\textbf{Step 3: A complex analysis argument.}
We now proceed exactly as in the boundaryless case. For any test function $f \in C^{\infty}_0(\R^n_+)$, we define the distribution $h_f$ (with $\beta>0$ to be chosen later on)
$$
\langle h_f , w\rangle_{\E'(\R), C^\infty(\R)} := \langle  \sigma_{2R}\sigma_{R,\lambda}\chi_{\delta,\lambda}(\psi)\chit_{\delta}(\psi)  w(\psi) u , (M^{\beta\mu} f)\rangle_{H^1_0(\R^n_+), H^{-1}(\R^n_+)} .
$$
We proceed similarly, noticing at the end that $C^{\infty}_0(\R^n_+)$ is dense in the dual space $H^{-1}(\R^n_+)$ and that all operations are tangential.
The analogue of Lemma~\ref{l:local-lem-complex} is proved with the same complex analysis argument (which does not involve the $x$-space, but only  complexifies the Carleman large parameter $\tau$), using Lemma~\ref{l:holomorphic}. This yields the analogous result for $\mu \geq C \tau_0 \max\{1 , \|V\|_{L^\infty}^{\frac23}\}$.

Finally, it remains to transfer the estimate obtained on $\nor{Q_{\e,\tau}^{\psi}\sigma_{2R}\sigma_{R,\lambda}\chi_{\delta,\lambda}(\psi)\chit_{\delta}(\psi)u}{1,+,\tau}$ to an estimate on $\nor{M^{\beta\mu}_{c_1\mu} \sigma_{r,c_1\mu} u}{1,+}$. The computations of the end of Section~\ref{s:complex-analysis-arg} remain valid in the present context for the following two reasons: (a) the operators $M^{\beta\mu}_{c_1\mu}$ are tangential and the associated estimates of Section~\ref{s:prelim-estim} still hold; (b) these computations only rely on the geometric fact that $\sigma_R=\chi_{\delta}(\psi)=\chit_{\delta}(\psi)=\eta_\delta(\psi)=1$ on a neighborhood of $\supp(\sigma_r)$, which now follows from Proposition~\ref{p:boundary-geom}.
\enp

\subsection{The semiglobal estimate with boundary}
In this section, we prove a version of Theorem~\ref{thmsemiglobal}/\ref{thmsemiglobaldep} adapted to the boundary value problem. More precisely, the following result considers, under the assumptions of the above uniqueness results, the Dirichlet boundary condition at the bottom and the top of the graph, with an observation at the bottom.

Recall that in the present context, the analytic variable is supposed to be tangential to the boundary. In the following results (as opposed to the boundaryless case), this translates into the fact that we assume that, in the splittings $x =(x' , x_n) \in \R^{n-1} \times [0,\ell_0]$ and $x = (x_a,x_b) \in \R^{n_a} \times \R^{n_b}$, the variable $x_n = x_b^n$ always belongs to the $x_b$ variables.

In Theorem~\ref{thmsemiglobalwavebord} below, we state the semiglobal estimate with an observation from the boundary (i.e. the first hypersurface $S_0$ is a Dirichlet boundary) and if the last hypersurface $S_1$ touches a (Dirichlet) boundary.
This is the most intricate situation. The proof is the same in the cases where the last hypersurface does not touch the boundary, or if we have an internal observation around the first surface. We do not state these cases for the sake of concision. 
\begin{theorem}
\label{thmsemiglobalwavebord}
Let $D$ be a bounded open subset of $\R^{n-1}$ with smooth boundary. Let $G = G(x',\eps) \in C^1(\overline{D} \times [0,1+\eta) )$, such that 
\begin{itemize}
\item For all $\eps \in (0,1]$, we have $\{x' \in \R^{n-1} , G(x' , \eps) \geq 0\} = \overline{D}$
\item for all $x' \in D$, the function $\eps \mapsto G(x' , \eps)$ is strictly increasing
\item for all $\eps \in (0,1]$, we have $\{x' \in \R^{n-1} , G(x' , \eps) = 0\} = \partial D$
\end{itemize}
We further set 
$$
\ell_0= \max_{x' \in \overline{D}} G(x' , 1), \qquad  G(x', 0) = 0, \qquad S_0=\overline{D}\times \left\{x_n=0\right\} ,
$$
and,  for $\eps \in (0,1]$, 
\bna
&&S_\eps = \{(x',x_n) \in \R^{n} , x_n \geq 0 \text{ and } G(x' , \eps) =x_n \} =( \overline{D} \times \R )\cap\{(x',x_n) \in \R^{n} , G(x' , \eps) =x_n \},\\
&&K = \{x \in \R^n ,0\leq  x_n \leq G(x', 1)\}.
\ena
We let $\Omega$ be a neighborhood of $K$ in $\R^{n-1}\times [0,\ell_0]$ and $\widetilde{D}$ be a neighborhood of $\overline{D}$ in $\R^{n-1}$. 

Let $P$ satisfying Assumption \ref{assumptionP}. Assume that $\{x_n = 0\}$ and $\{x_n = \ell_0\}$ are non-characteristic with respect to $P$.

Assume also that for any $\e\in [0,1]$, the function
$$
\phi_\eps(x',x_n): =  G(x' , \eps) - x_n 
$$
is strictly pseudoconvex with respect to $P$ on the whole $S_\eps$.

Then, there exist a neighborhood $U$ of $K$ and constants $\kappa, C, \mu_0 >0$ such that for all for all $u\in C^{\infty}_0(\R^{n-1}\times [0,\ell_0])$ satisfying
\bna
 u_{|x_n=0}=u_{|x_n=\ell_0} = 0, \textnormal{ on } \widetilde{D}
\ena
 we have, with $P_V = P + V$
$$
\nor{ u}{L^2(U)}\leq C e^{\kappa \mu}\left(\nor{D_n u_{|x_n=0}}{L^2(\widetilde{D})}  + \nor{Pu}{L^2(\Omega)}\right)+\frac{C}{\mu}\nor{u}{H^1(\R^{n-1}\times [0,\ell_0])}
$$
for all $\mu\geq \mu_0$.

Moreover, under the same assumptions, there exists $C_0, \kappa', \beta, \tilde{\tau}_0 >0$ such that for all $V\in L^{\infty}(\R^{n_b})$, $W\in  L^{\infty}(\R^{n_b};\R^n)$ the previous estimate is still true with $P$ replaced by $P_{W,V} = P+  W\cdot \nabla + V$ with $C$ replaced by $C_0 \max \left\{1,\nor{W}{L^{\infty}}\right\}$ and uniformly for all $\mu \geq \tilde{\tau}_0 \max\{1,\|V\|_{L^\infty}^{\frac23},\nor{W}{L^{\infty}}^2\}$.
\end{theorem}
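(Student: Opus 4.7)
The plan is to follow the proof of Theorem \ref{thmsemiglobal}/Theorem \ref{thmsemiglobaldep}, replacing the boundaryless local estimate of Theorem \ref{th:alpha-unif} by its boundary counterparts wherever needed. To formalize this cleanly, I would first extend the notion of zone of dependence of Definitions \ref{defdependence}--\ref{defdependencestrong} so that a "source'' is allowed to be either an open subset of $\R^{n-1}\times(0,\ell_0)$ (with source term of the form $\nor{M^{\alpha\mu}_\mu \vartheta_\mu u}{1,+}$) or the portion $\widetilde D\times\{0\}$ of the Dirichlet boundary (with source term $\nor{D_n u_{|x_n=0}}{L^2(\widetilde D)}$); the target norm is $\nor{\cdot}{1,+}$ and $u$ is restricted to functions in $C^\infty_0(\R^{n-1}\times[0,\ell_0])$ with $u_{|x_n=0}=u_{|x_n=\ell_0}=0$ on $\widetilde D$. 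The elementary stability properties of Propositions \ref{propweak}--\ref{propstrong} remain valid in this enlarged setting, because the mollifiers $\vartheta_\lambda$ and the multipliers $M^\mu_\lambda$ involved in the definitions act only in the tangential variables $x_a$ (see Remark \ref{rkrestriction}).

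The heart of the proof is then the same foliation/covering argument as in Theorem \ref{thmsemiglobaldep}. For each $\eps\in(0,1]$ I cover $S_\eps$ by finitely many balls $B(x_i^\eps,r_i^\eps)$ centered on $S_\eps$, extract by continuity of $G$ a finite subcovering of the $\eps$-parameter, and order the selected $\eps_j$'s so that $\eps_j-g(\eps_j)$ is increasing. At each center $x_i^\eps$, the local dependence
\bna
B(x_i^\eps,r_i^\eps)\lhd \bigl(\{\phi_\eps>\rho_i^\eps\}\cap B(x_i^\eps,4R_i^\eps)\bigr)
\ena
is produced by distinguishing two cases. If $B(x_i^\eps,4R_i^\eps)$ is disjoint from $\{x_n=\ell_0\}$, the interior local estimate of Theorem \ref{th:alpha-unif} applies; moreover, Remark \ref{rkbouleloin} shows that we can simultaneously arrange $B(x_i^\eps,4R_i^\eps)\cap\{x_n=0\}=\emptyset$ without losing coverage, since any such center would already lie in the initial boundary layer treated below. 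If $B(x_i^\eps,4R_i^\eps)$ meets $\{x_n=\ell_0\}$, then after the affine reflection $x_n\mapsto \ell_0-x_n$ (which preserves Assumption \ref{assumptionP}, keeps $x_a$ tangential, and turns the Dirichlet condition at $x_n=\ell_0$ into one at $x_n=0$), Theorem \ref{thmquantitativeupboundary} applies thanks to the strict pseudoconvexity of $S_\eps$ and yields the same dependence.

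The base case of the iteration is handled separately using the "from-the-boundary'' estimate. Since $\phi_0(x)=-x_n$ is strictly pseudoconvex for $P$ at every point of $S_0$ by Remark \ref{rknoncaractwave} (via the non-characteristicity of $\{x_n=0\}$) and satisfies $(\phi_0)'_{x_n}<0$, Theorem \ref{thmquantitativefromboundary} provides, at every $x^0\in S_0$, a small ball $B(x^0,r_{x^0})$ which is under strong dependence of $\nor{D_n u_{|x_n=0}}{L^2(\widetilde D)}$. Covering the compact $S_0$ by finitely many such balls produces an open one-sided neighborhood $\mathcal V_0$ of $S_0$ with
\bna
\mathcal V_0\lhd \nor{D_n u_{|x_n=0}}{L^2(\widetilde D)}.
\ena
From this seed, the geometric Lemma \ref{lem:supp theta ou il faut} (whose proof is geometric and unchanged) and the abstract iteration of Proposition \ref{propiterationabstrait} propagate the dependence through the finite family of $\eps_j$'s and deliver an open neighborhood $U$ of $K$ satisfying $U\lhd \nor{D_n u_{|x_n=0}}{L^2(\widetilde D)}$.

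Finally, I pass from this low-frequency dependence to the concrete estimate of the theorem exactly as in the implication "Theorem \ref{thmsemiglobaldep} $\Rightarrow$ Theorem \ref{thmsemiglobal}": writing $u=M^{\beta\mu}_\mu \chi_\mu u+(1-M^{\beta\mu}_\mu)\chi_\mu u$ on $U$, the high-frequency part gives the remainder $\mu^{-1}\nor{u}{H^1(\R^{n-1}\times[0,\ell_0])}$ via the trivial multiplier bound, while the low-frequency part is controlled by the boundary dependence with cost $e^{\kappa\mu}$. Uniformity with respect to $V,W$ propagates automatically along the iteration thanks to the uniform versions of Corollaries \ref{cor:carl-potentiel-bord}--\ref{cor:carl-potentielanal-bord} underlying the two boundary local estimates. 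The main obstacle is the bookkeeping of the top-boundary centers from case (ii): one must check that the strict pseudoconvexity of $S_\eps$, which is a property of the oriented surface, transports correctly through the reflection straightening $\{x_n=\ell_0\}$, and that this reflection preserves the tangentiality of $x_a$ to the top boundary — both are consequences of the invariance of Definition \ref{def: pseudoconvex-surface} and of the structural assumption $x_n\in x_b$, but they must be tracked consistently with the behaviour of $S_\eps$ near the "corner'' $\partial D\times\{0\}$ where $S_\eps$ meets the bottom boundary.
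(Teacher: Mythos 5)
Your proposal is correct and follows essentially the same route as the paper: the same three local ingredients (interior estimate via Corollary \ref{cordominance}, Theorem \ref{thmquantitativeupboundary} after the reflection $x_n\mapsto\ell_0-x_n$ for balls meeting the top boundary, Theorem \ref{thmquantitativefromboundary} at $S_0$), the same use of Remark \ref{rkbouleloin} to discard bottom-boundary balls, the same iteration machinery, and the same final frequency splitting. The only difference is organizational: the paper does not formally enlarge the relation $\lhd$ to admit the boundary source, but instead first proves $U\lhd\widetilde\omega$ for interior neighborhoods $\widetilde\omega$ of $S_0$ and then chains this with the finitely many applications of Theorem \ref{thmquantitativefromboundary} at the end, which amounts to the same argument.
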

\bnp
For simplicity, we first make the proof for $V=0$ and we will check the dependence in $V$ at the end.

We will use the same scheme of proof as for Theorem \ref{thmsemiglobaldep}. We first note that the notion of $\lhd$ can be extended to the case when there is a boundary and the variables $\xi_a$ are tangential to this boundary. Then, the local uniqueness results of Corollary \ref{cordominance}, and  Theorem \ref{thmquantitativeupboundary}, can be written as 
\bnan
\label{e:niemelhd}
B(x^0,r) \lhd \left[\left\{\phi> \rho\right\}\cap B(x^0,4R)\right]
\enan
as long as $B(x^0,4R)\cap \left\{x_n=0\right\}=\emptyset$. Indeed, in~\eqref{e:niemelhd}, the case where $B(x^0,4R)\cap \left\{x_n=\ell_0\right\} = \emptyset$ follows from the internal quantitative uniqueness result (e.g. Corollary~\ref{cordominance}), whereas the case ``up to the boundary'' $B(x^0,4R)\cap \left\{x_n=\ell_0\right\}\neq \emptyset$ follows from Theorem \ref{thmquantitativeupboundary}. To apply this theorem in this context, one needs to make the change of variables $x_n \mapsto \ell_0-x_n$, which transforms $\left\{x_n\leq \ell_0\right\}$ into $\R^{n}_+$ and $\phi_\eps = G(x',\e) -x_n$ to $\tilde \phi_\eps :=G(x',\e) -(\ell_0 - x_n)$. The condition $\d_{x_n} \tilde \phi_\eps =- \d_{x_n}\phi_\eps =1>0$ is satisfied, the surface $\{x_n=0\}$ (new coordinates) remains noncharacteristic; the pseudoconvexity assumption is invariant as well. 

\medskip
\textbf{Claim:} For any $\widetilde{\omega}$ open neighborhood of $S_0=\overline{D}\times \left\{x_n=0\right\}$, there exists an open neighborhood $U$ of $K$ (for the topology of $\R^{n-1}\times [0,\ell_0]$) such that
\bna
U\lhd \widetilde{\omega}.
\ena
The claim can be proved with almost the same proof as that of Theorem \ref{thmsemiglobaldep}, but using in addition Theorem \ref{thmquantitativeupboundary} instead of only Theorem \ref{th:alpha-unif}. So, we have to ensure that in the proof, we only apply Theorem \ref{thmquantitativeupboundary} for some points $x_i^{\e_j}$ with $B(x_i^{\e_j},4R_i^{\e_j})\cap \left\{x_n=0\right\}=\emptyset$. This is the point of Remark \ref{rkbouleloin}, which then allows to prove the Claim as in Theorem \ref{thmsemiglobaldep}.

\medskip

Now, let $x^0\in \overline{D}\times \left\{x_n=0\right\}$. We apply Theorem \ref{thmquantitativefromboundary} with $R_x$ small enough so that $\R^{n-1}\times \left\{x_n=0\right\}\cap B(x,R_x)\subset \left\{x_n=0\right\}\times \widetilde{D}$ and $B(x,R_x)\subset \Omega $. It gives $r_x$ so that for some $\beta , \kappa , C , \kappa' , \mu_0>0$,
\bna
\nor{M^{\beta\mu}_{c_1\mu} \sigma^{x^0}_{r,c_1\mu} u}{1,+}\leq C e^{\kappa \mu}\left(\nor{D_n u_{|x_n=0}}{L^2(\widetilde{D})}  + \nor{Pu}{L^2(\Omega)}\right)+Ce^{-\kappa' \mu}\nor{u}{1,+} .
\ena
where $\sigma^{x^0}_r$ is centered in $x^0$. By compactness of $\overline{D}$, we can cover it by a finite number of such balls $\left(B(x^i,r^{i})\right)_{i\in I}$. Pick $\vartheta\in C^{\infty}_0(\R^{n-1}\times [0,\ell_0])$ with $\supp(\vartheta)\subset \cup_{i\in I} B(x^i,r^{i})$ so that $\vartheta=1$ in a neighborhood $\widetilde{\omega}$ of $S_0$. Lemma \ref{lemma: Gevrey Chambertinsomme} gives that for functions $\sigma^{x^i}_{r^i}$ equal to one on $B(x^i,r^{i})$, the estimate
\bna
\nor{M^{2\beta\mu}_{\mu} \vartheta_{\mu} u}{m-1}\leq \sum_{i\in I}\nor{M^{\beta\mu}_{c_1\mu} \sigma^{x^i}_{r^i,c_1\mu} u}{1,+}+Ce^{-c \mu}\nor{u}{1,+} 
\ena
Now, apply the Claim with the selected $\widetilde{\omega}$ and for some $\widetilde{\vartheta}\in  C^{\infty}_0(U\cap \R^{n-1}\times [0,\ell_0])$ equal to $1$ in a neighborhood of $K$. For some $\kappa_1<\min(c/2,\kappa')$, there exist $C_1 , \kappa'_1>0$ so that 
\bna
\nor{M^{\alpha\mu}_{\mu} \widetilde{\vartheta}_{\mu}  u}{1,+}\leq C e^{\kappa_1 \mu}\left(\nor{M^{2\beta\mu}_{\mu} \vartheta_{\mu} u}{m-1} + \nor{Pu}{L^2(\Omega)}\right)+Ce^{-\kappa'_1 \mu}\nor{u}{1,+} .
\ena
This implies, for some $\kappa_2 , \kappa_2' , C>0$,
\bna
\nor{M^{\alpha\mu}_{\mu} \widetilde{\vartheta}_{\mu}  u}{1,+}\leq  C e^{\kappa_2 \mu}\left(\nor{D_n u_{|x_n=0}}{L^2(\widetilde{D})}  + \nor{Pu}{L^2(\Omega)}\right)+Ce^{-\kappa'_2 \mu}\nor{u}{1,+}.
\ena
We finish the proof as in Theorem \ref{thmsemiglobal} once Theorem \ref{thmsemiglobaldep} is proved, taking into account Remark \ref{rkrestriction}. 

\medskip
Now, if $P$ is replaced by $P_{W,V}$, we want to obtain the uniformity with respect to the size of $V$ and $W$. It is clear that the proof of the Theorem involves a finite number of applications of Theorem \ref{thmquantitativeupboundary} and \ref{thmquantitativefromboundary}. Indeed, the scheme of proof of Theorem \ref{thmsemiglobaldep} only involves a finite number of applications of the geometric propagation of the property $\lhd$. They can be divided in two categories: the general ones described in Proposition \ref{propstrong} that are completely independent of the operator $P$ (so, the constants will be independent of $V$ and $W$) and those using Theorems \ref{thmquantitativeupboundary} and \ref{thmquantitativefromboundary} where the dependence of the constants $\mu_0$ and $C$ is explicitly described. Note also that in all the properties (propagation, transitivity, simplification...) that we prove about some relations $\lhd$, once $\kappa$ is fixed, the $\mu_0$ corresponding to some relations is always transformed into the some linear combination (with universal constants) of the $\mu_0$ corresponding to the previous ones. This is the same for the constants $C$ involved in $\lhd$. Finally, a finite number of applications of these rules will always conclude with the restriction of the form $\mu \geq \tilde{\tau}_0 \max\{1,\|V\|_{L^\infty}^{\frac23},\nor{W}{L^{\infty}}^2\}$ and $C$ of the form $C_0 \max \left\{1,\nor{W}{L^{\infty}}\right\}$, once $\kappa$ is fixed.
\enp

\section{Applications}
\label{Application}
We now give applications of the above main results, namely Theorem~\ref{thmsemiglobal} and, in the case with boundary Theorem~\ref{thmsemiglobalwavebord} to the wave and Schr\"odinger operators. In these applications, we study an evolution equation in the analytic variable. We thus have $n_a =1$, $n_b = n-1 = \dim (\M)$ and we denote accordingly $t=x_a$ the time variable and $x=x_b$ the space variable. In this section, we prove general versions of Theorems~\ref{thmobserwaveintro} and~\ref{thmobserschrodintro}: we add (complex valued) lower order terms that are analytic in time. We also provide uniform estimates with respect to these lower order terms if they are time independent.

The proof consists each time in the application of the quantitative estimates of Theorem \ref{thmsemiglobalwavebord} and then using energy estimates to relate the energy to the initial data and source term.
Note that the first step, the quantitative unique continuation itself, does not see the lower order terms. For instance, Theorem \ref{thmobserSchro-bis} below is equally valid for the Schr\"odinger equation $i\partial_t+\Delta_g$, the heat equation $\partial_t-\Delta_g$, Ginzburg-Landau $e^{i\theta}\partial_t+\Delta_g$, etc.  

\subsection{The wave equation}
Our result for the wave equation can be formulated as follows.
\begin{theorem}
\label{thmobserwave}
Let $\M$ be a compact Riemannian manifold with (or without) boundary, $\Delta_g$ the Laplace-Beltrami operator on $\M$, and 
$$P = \partial_t^2 -\Delta_g + W_0 \d_t + W_1\cdot \nabla + V$$ with $V,W_0,W_1, \div(W_1)$ bounded and depending analytically on the variable $t \in (-T,T)$.

For any nonempty open subset $\omega$ of $\M$ and any $T> \mathcal{L}(\M ,\omega)$, there exist $C, \kappa ,\mu_0>0$ such that for any $(u_0,u_1)\in H^1_0(\M)\times L^2(\M)$, $f\in L^2((-T,T)\times\M)$ and associated solution $u$ of 
\bneqn
\label{e:free-wavethm}
Pu= f& & \text{ in } (-T,T) \times \M , \\
u_{\left|\partial \M\right.}=0 & &  \text{ in } (-T,T) \times \d \M ,\\
(u,\partial_tu)_{\left|t=0\right.}=(u_0,u_1)& & \text{ in }\M ,
\eneqn
we have, for any $\mu\geq \mu_0$,
\bnan
\label{estimthm1}
\nor{(u_0,u_1)}{L^2\times H^{-1}}
\leq C e^{\kappa \mu}\left(\nor{u}{L^2((-T,T); H^1(\omega))} + 
\nor{f}{L^2((-T,T)\times \M)} \right)
+ \frac{C}{\mu}\nor{(u_0,u_1)}{H^1\times L^2}.
\enan
If moreover all coefficients of $P$ are analytic in $t$ and $x$, and $\partial \M=\emptyset$, there exists $\tilde{\varphi} \in C^\infty_0((-T,T)\times \omega)$ such that for any $s \in \R$, we have
\bna
\nor{(u_0,u_1)}{L^2\times H^{-1}}\leq C e^{\kappa \mu}
\left(\nor{\tilde{\varphi} u}{H^{-s}((-T,T)\times \M)} + 
\nor{f}{L^2((-T,T)\times \M)} \right)
+\frac{C}{\mu}\nor{(u_0,u_1)}{H^1\times L^2}.
\ena 
If $\partial\M\neq \emptyset$ and $\Gamma$ is a non empty open subset of $\partial \M$, for any $T>\mathcal{L}(\M ,\Gamma)$, there exist $C, \kappa,\mu_0 >0$ such that for any $(u_0,u_1)\in H^1_0(\M)\times L^2(\M)$, $f\in L^2((-T,T)\times\M)$ and associated solution $u$ of \eqref{e:free-wavethm}, we have
\bnan
\label{estimthm2}
\nor{(u_0,u_1)}{L^2\times H^{-1}}\leq 
C e^{\kappa \mu}\left(\nor{\partial_{\nu}u}{L^2((-T,T)\times \Gamma)} + 
\nor{f}{L^2((-T,T)\times \M)} \right)
 +\frac{C}{\mu}\nor{(u_0,u_1)}{H^1\times L^2}.
\enan
Finally, if $V$, $W_0$ and $W_1$ are time-independent then we have the following stronger result. There exist $C_0, \kappa ,\mu_0>0$ such that for any $(u_0,u_1)\in H^1_0(\M)\times L^2(\M)$, $f\in L^2((-T,T)\times\M)$ and associated solution $u$ of \eqref{e:free-wavethm}, and for any $V,W_0,W_1, \div(W_1)$ bounded in the $x$-variable (all independent of $t$), estimates \eqref{estimthm1} and \eqref{estimthm2} hold uniformly for all $\mu\geq \mu_0\max\{1 , \|V\|_{L^\infty}^{\frac23},\|W_0\|_{L^\infty}^{2},\|W_1\|_{L^\infty}^{2}\}$ with constant
\bna
C= C_0\exp \left(C_0\max \left\{\nor{V}{L^{\infty}(\M)},\nor{W_0}{L^{\infty}(\M)},\nor{W_1}{L^{\infty}(\M)},\nor{\div(W_1)}{L^{\infty}(\M)}\right\}\right).
\ena
\end{theorem}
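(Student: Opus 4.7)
The plan is to deduce Theorem~\ref{thmobserwave} from Theorem~\ref{thmsemiglobalwavebord} (respectively Theorem~\ref{thmsemiglobal} when $\partial\M=\emptyset$) applied to the wave operator $P$, followed by a spectral/energy argument that converts an $L^2$ bound on a spacetime neighborhood of $\{t=0\}\times\M$ into a bound of $\|(u_0,u_1)\|_{L^2\times H^{-1}}$. Time $t$ plays the role of the single analytic variable ($n_a=1$), and the analyticity of $V,W_0,W_1,\div(W_1)$ in $t$ is precisely what is needed to put $P$ in the class covered by Assumption~\ref{assumptionP} and Corollary~\ref{cor:carl-potentielanal-bord}. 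Moreover, the principal symbol $p(t,x,\tau,\xi)=-\tau^2+|\xi|_{g^*}^2$ is of the special form discussed in Remark~\ref{rknoncaractwave}, so that \emph{any} non-characteristic (i.e., spacelike) oriented hypersurface is automatically strongly pseudoconvex in $\{\tau=0\}$.

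\textbf{Foliation and application of the semiglobal result.} Following \cite{Leb:Analytic,Robbiano:95}, I would build a smooth approximation $\tilde d(x)$ of the Riemannian distance to $\omega$ (respectively $\Gamma$) with $|\nabla\tilde d|_{g^*}\leq 1+\eta'$, and choose $\alpha\in(\mathcal{L}(\M,\omega)/T,\,1)$, which is possible because $T>\mathcal{L}(\M,\omega)$. Setting
\bna
\Phi_\e(t,x)=\alpha^{-2}\tilde d(x)^2+\e-t^2,
\ena
the level surfaces $S_\e=\{\Phi_\e=0\}$ are spacelike (where they are non-empty), and as $\e$ increases from $\e_0<0$ to $\e_1>0$ they foliate a compact region $K$ containing a slab $[-\eta,\eta]\times\M$ with $\eta>0$, while $S_{\e_0}$ is contained in $(-T,T)\times\omega$ (respectively meets $\partial\M$ inside $(-T,T)\times\Gamma$). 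Resolving for $t$ puts the foliation into the graph form required by Theorem~\ref{thmsemiglobaldepchgt}, and Dirichlet data on $(-T,T)\times\partial\M$ allows the boundary machinery of Section~\ref{s:Dirichlet-pb-waves} (with $t$ tangential to $\partial\M$) to apply; this is exactly where Theorem~\ref{thmsemiglobalwavebord} is used. Possibly after a symmetrisation $t\leftrightarrow -t$, one obtains an open neighborhood $U\supset[-\eta,\eta]\times\M$ of $\{t=0\}\times\M$ and constants $C,\kappa,\mu_0$ such that
\bna
\|u\|_{L^2(U)}\leq Ce^{\kappa\mu}\bigl(\mathrm{Obs}(u)+\|f\|_{L^2((-T,T)\times\M)}\bigr)+\frac{C}{\mu}\|u\|_{H^1((-T,T)\times\M)},
\ena
where $\mathrm{Obs}(u)$ is either $\|u\|_{L^2((-T,T);H^1(\omega))}$ or $\|\partial_\nu u\|_{L^2((-T,T)\times\Gamma)}$ according to the case.

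\textbf{From spacetime $L^2$ to initial data.} Using the spectral decomposition of $-\Delta_g$ with Dirichlet boundary condition and Duhamel's formula, the solution decomposes as
\bna
u(t,x)=\sum_k\bigl(\widehat{u_0}_k\cos(\sqrt{\lambda_k}t)+\widehat{u_1}_k\tfrac{\sin(\sqrt{\lambda_k}t)}{\sqrt{\lambda_k}}\bigr)e_k(x)+R(t,x),
\ena
where $R$ is the contribution of $f$ and of the lower-order terms, which is controlled in $L^2((-\eta,\eta)\times\M)$ by $\|f\|_{L^2}+(\|W_0\|_\infty+\|W_1\|_\infty)\|u\|_{L^2}+\|V\|_\infty\|u\|_{L^2(H^{-1})}$ via standard energy estimates. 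An elementary oscillation/bias computation on $\int_{-\eta}^\eta$ of $\cos^2,\,\sin^2/\lambda_k$ (together with the fact that cross terms are nonnegative after summing dyadic frequency blocks) yields, for $\eta$ small enough,
\bna
\|(u_0,u_1)\|_{L^2\times H^{-1}}^2\leq C\|u\|_{L^2((-\eta,\eta)\times\M)}^2+C\|f\|_{L^2}^2,
\ena
which, combined with the previous step and the standard energy bound $\|u\|_{H^1((-T,T)\times\M)}\leq C(\|(u_0,u_1)\|_{H^1\times L^2}+\|f\|_{L^2})$, gives the desired estimate~\eqref{estimthm1}-\eqref{estimthm2}.

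\textbf{Uniformity and the analytic case.} In the time-independent setting, the explicit dependence of the constants on $\|V\|_\infty,\|W_j\|_\infty,\|\div W_1\|_\infty$ is inherited verbatim from Theorem~\ref{thmsemiglobalwavebord} (the role of $\div W_1$ appears after integration by parts in the energy estimate of Step~3 to absorb $W_1\cdot\nabla u$ symmetrically); keeping track of these constants through Steps~2 and~3 yields the announced exponential dependence. For the fully analytic case without boundary, I apply instead the Holmgren variant of Theorem~\ref{thmsemiglobal} ($n_a=n$), which substitutes $\|\tilde\varphi u\|_{H^{-s}}$ for the $H^1(\omega)$ observation. The main obstacle is geometric: constructing the foliation so that (i) the spacelike condition $\alpha|\nabla\tilde d|_{g^*}<1$ holds everywhere on $K$, (ii) $S_{\e_0}$ is actually contained in the observation region while $S_{\e_1}$ encloses a slab around $\{t=0\}\times\M$, and (iii) the foliation is compatible with the Dirichlet boundary and the change of variables of Theorem~\ref{thmsemiglobaldepchgt}. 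Condition (ii) is exactly what forces the sharp time $T>\mathcal{L}(\M,\omega)$, since it requires $\alpha T>\sup_x\tilde d(x)\approx\mathcal{L}(\M,\omega)$ with $\alpha<1$; this is where the optimality of the time in Lebeau's analytic result is preserved.
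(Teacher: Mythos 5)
Your high-level architecture matches the paper's: first a semiglobal quantitative estimate giving $\nor{u}{L^2((-\eps,\eps)\times\M)}$ in terms of the observation, $\nor{Pu}{L^2}$ and $\frac{1}{\mu}\nor{u}{H^1((-T,T)\times\M)}$, then a conversion to $\nor{(u_0,u_1)}{L^2\times H^{-1}}$ plus the forward energy bound $\nor{u}{H^1}\lesssim \nor{(u_0,u_1)}{H^1\times L^2}+\nor{f}{L^2}$. However, both main steps have genuine gaps. First, the foliation $\Phi_\e(t,x)=\alpha^{-2}\tilde d(x)^2+\e-t^2$ does not do what you claim: for $\e_0<0$ the surface $S_{\e_0}=\{t^2=\alpha^{-2}\tilde d^2+\e_0\}$ lives in $\{\tilde d\geq \alpha\sqrt{|\e_0|}\}$, i.e.\ \emph{away} from $\omega$, so it is not contained in $(-T,T)\times\omega$; the family also changes topology as $\e$ crosses $0$ and so cannot be written as graphs $x_n=G(x',\e)$ over a fixed domain $D$ with $G$ increasing and $G=0$ on $\d D$, which is what Theorems~\ref{thmsemiglobal}, \ref{thmsemiglobaldepchgt} and \ref{thmsemiglobalwavebord} require. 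Moreover, resolving for $t$ would make the analytic variable $t=x_a$ the graph/normal variable $x_n$, which is forbidden in the boundary version (there $x_n$ must be the spatial variable normal to $\d\M$ and $x_a$ tangential), and $\tilde d$ is not smooth at the cut locus or on $\omega$. The paper instead fixes a target point $x^1$, joins it to $x^0\in\Gamma$ (or $\omega$) by a path of length $\ell_0<T$, builds tubular coordinates $(t,w,x_n)$ along the path (Lemma~\ref{lmcoord}) and the bump-function foliation $G(t,w,\e)=\e\ell_0\psi(\sqrt{(w/b)^2+(t/t_0)^2})$ with $|\psi'|\leq\alpha<t_0/\ell_0$, checks noncharacteristicity there, applies Theorem~\ref{thmsemiglobalwavebord} in that single chart, and covers $\M$ by compactness. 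Some version of this localization is needed; the global hyperboloid sweep does not reduce to the paper's propagation machinery.

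Second, your passage from $\nor{u}{L^2((-\eta,\eta)\times\M)}$ to $\nor{(u_0,u_1)}{L^2\times H^{-1}}$ via spectral decomposition and Duhamel does not close in the presence of the first-order terms. The Duhamel remainder is driven by $f-W_0\d_t u-W_1\cdot\nabla u-Vu$, and to bound it in the weak energy space $L^2\times H^{-1}$ one needs the source in $L^1H^{-1}$ controlled by \emph{weak} norms of $u$; this works for $Vu$ and, after writing $W_1\cdot\nabla u=\div(W_1u)-(\div W_1)u$, for $W_1\cdot\nabla u$, but $\nor{W_0\d_t u}{H^{-1}}$ cannot be bounded by $\nor{\d_t u}{H^{-1}}$ when $W_0$ is merely bounded in $x$, so your claimed bound on $R$ in terms of $\nor{u}{L^2}$ is unjustified and the resulting error would enter the final estimate multiplied by $\nor{(u_0,u_1)}{H^1\times L^2}$ \emph{without} the factor $1/\mu$, destroying the conclusion. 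The paper avoids this by a duality argument: it solves the adjoint equation $\Box v+R^*v=0$ with data $(v_0,v_1)$, multiplies by a time cutoff $\chi$ to produce an explicit control $g$ steering $(v_0,v_1)$ to zero on $[0,\eps]$, and pairs with $u$ to obtain $\nor{(u_0,u_1)}{L^2\times H^{-1}}\leq Ce^{CC_{R^*}}(\nor{u}{L^2((0,\eps)\times\M)}+\nor{f}{L^2})$, which is exactly where the hypotheses $\div W_1\in L^\infty$ and analyticity (hence $W^{1,\infty}$ regularity) of $W_0$ in $t$ are used. You should replace your oscillation argument by this duality step (or add regularity hypotheses you are not entitled to).
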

\begin{remark}
Using Lemma \ref{lmfonction} and the admissibility $\nor{\partial_{\nu}u}{L^2(]-T,T[\times \Gamma)}\leq C \nor{(u_0,u_1)}{H^1\times L^2}$, the previous estimates can be written as in Corollary \ref{corlogstab} with some constants depending explicitly on the norms of the lower order terms.
\end{remark}
Theorem~\ref{thmobserwave} above is a consequence of the following result, together with basic energy estimates for solutions to the wave equation.
\begin{theorem}
\label{thmobserwave-bis}
Let $\M$ be a compact Riemannian manifold with (or without) boundary, $\Delta_g$ the Laplace-Beltrami operator on $\M$, and $P = \partial_t^2 -\Delta_g + R$ with $R = R(t, x, \d_t, \d_x)$ is a differential operator of order one on $(-T,T) \times \M$, bounded in the $x$-variable and depending analytically on the variable $t \in (-T,T)$ at any $x \in \M$.

For any nonempty open subset $\omega$ of $\M$ and any $T> \mathcal{L}(\M ,\omega)$, there exist $\eps, C, \kappa ,\mu_0>0$ such that for any $u \in H^1((-T,T) \times \M)$ and $f \in L^2((-T,T) \times \M)$ solving 
\bneqn
\label{e:free-wave-ter}
P u= f& & \text{ in } (-T,T) \times \M , \\
u_{\left|\partial \M\right.}=0 & &  \text{ in } (-T,T) \times \d \M ,\\
\eneqn
we have, for any $\mu\geq \mu_0$,
\bna
\nor{u}{L^2((-\eps ,\eps) \times \M)}\leq C e^{\kappa \mu}\left(\nor{u}{L^2((-T,T); H^1(\omega))} + 
\nor{f}{L^2((-T,T)\times \M)} \right)
+\frac{C}{\mu}\nor{u}{H^1((-T,T) \times \M)}.
\ena
If moreover $\M$ and the metric $g$ and lower order terms $R$ are analytic, and $\partial \M=\emptyset$, there exists $\tilde{\varphi} \in C^\infty_0((-T,T)\times \omega)$ such that for any $s \in \R$, we have
\bna
\nor{u}{L^2((-\eps ,\eps) \times \M)}\leq C e^{\kappa \mu}
\left(\nor{\tilde{\varphi} u}{H^{-s}((-T,T)\times \M)} + 
\nor{f}{L^2((-T,T)\times \M)} \right)
+\frac{C}{\mu}\nor{u}{H^1((-T,T) \times \M)}.
\ena
If $\partial\M\neq \emptyset$ and $\Gamma$ is a non empty open subset of $\partial \M$, for any $T>\mathcal{L}(\M ,\Gamma)$, there exist $\eps , C, \kappa,\mu_0 >0$ such that for any $u \in H^1((-T,T) \times \M)$ and $f \in L^2((-T,T) \times \M)$ solving~\eqref{e:free-wave-ter}, we have
\bna
\nor{u}{L^2((-\eps ,\eps) \times \M)}\leq C e^{\kappa \mu}
\left(\nor{\partial_{\nu}u}{L^2((-T,T)\times \Gamma)} + 
\nor{f}{L^2((-T,T)\times \M)} \right)
+\frac{C}{\mu}\nor{u}{H^1((-T,T) \times \M)}.
\ena
Finally, if all lower order terms are time-independent, that is if $R = W_0 \d_t + W_1\cdot \nabla + V$ does not depend on $t$, then we have the following stronger result. There exist $\eps, C_0, \kappa ,\mu_0>0$ such that for any $u \in H^1((-T,T) \times \M)$ and $f \in L^2((-T,T) \times \M)$ solving~\eqref{e:free-wave-ter} and for any $V, W_0 \in L^\infty(\M)$ and $W_1$ a $L^\infty$ vector field on $\M$, all above estimates hold uniformly for all $\mu\geq \mu_0\max\{1 , \|V\|_{L^\infty}^{\frac23},\|W_0\|_{L^\infty}^{2},\|W_1\|_{L^\infty}^{2}\}$ and $C$ replaced by $C_0 \max \left\{1,\nor{W}{L^{\infty}}\right\}$.
\end{theorem}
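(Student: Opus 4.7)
The plan is to deduce this theorem from the semiglobal quantitative uniqueness results already established: Theorem \ref{thmsemiglobal} (or, more usefully, its diffeomorphic variant Theorem \ref{thmsemiglobaldepchgt}) for internal observation, and Theorem \ref{thmsemiglobalwavebord} for the boundary case. The time variable $t$ plays the role of the analytic variable $x_a$ (since the coefficients of $R$ are assumed analytic in $t$), while the spatial variables $x\in\M$ play the role of $x_b$. The principal symbol $\tau^2-|\xi|_g^2$ of $P$ is quadratic, real, and independent of $t$, so Assumption \ref{assumptionP} is fulfilled in every local chart, and the boundary $(-T,T)\times\partial\M$ has $t$ as a tangential coordinate. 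In particular, by Remark \ref{rknoncaractwave}, a hypersurface in spacetime is strongly pseudoconvex in $\{\xi_a=0\}$ for $P$ if and only if it is noncharacteristic, i.e.\ $(\partial_t\phi)^2>|\nabla_x\phi|_g^2$.

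For the internal observation case with $T>\mathcal{L}(\M,\omega)$, the key step is, for each fixed $x_0\in\M$, to construct a family of hypersurfaces $(S_\e)_{\e\in[0,1]}$ of $(-T,T)\times\M$ foliating a neighborhood of a compact set $K$ containing $(0,x_0)$, with $S_0\subset(-T,T)\times\omega$. Following Lebeau's classical construction, one works in an appropriate chart around a geodesic arc joining $x_0$ to a point $y_0\in\omega$ with $\dist(x_0,y_0)<T$ (which exists by $T>\mathcal{L}(\M,\omega)$), and one sets, in the spirit of Section~\ref{Application} of the paper,
\[
\phi_\e(t,x)=\alpha^2(T_0^2-t^2)-(d(x)-\delta_0)^2+\e,
\]
where $d$ is a smooth nonnegative approximation of the Riemannian distance from a fixed ball in $\omega$, $T_0<T$ is close to $T$, and $\alpha<1$ is chosen close to $1$ with $\alpha T_0>\dist(x_0,\omega)$. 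The noncharacteristicness inequality reduces on the level sets to a condition of the form $\alpha^2|t|>d(x)-\delta_0$, which can be arranged along the whole foliation by the above choice of parameters. Applying Theorem \ref{thmsemiglobaldepchgt} then yields a local quantitative uniqueness estimate in a neighborhood of $(0,x_0)$.

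Compactness of $\M$ allows one to cover $\{0\}\times\M$ by finitely many such local neighborhoods; summing the local estimates and choosing $\eps$ smaller than the minimum vertical extent of the associated sets $K$ produces the global slab estimate on $(-\eps,\eps)\times\M$. In the boundary observation case the same scheme applies, but the initial surface $S_0$ of the foliation is taken on (a piece of) $(-T,T)\times\Gamma$; one then invokes Theorem \ref{thmsemiglobalwavebord}, which is tailored for this Dirichlet setup with observation through $\partial_\nu u$ at the bottom of the graph. The Holmgren-type statement in the analytic case with $\partial\M=\emptyset$ is obtained by the same foliation but applying the $n_a=n$ variant of Theorem \ref{thmsemiglobal}, producing the $H^{-s}$ norm of $\widetilde\varphi u$.

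The uniform statement for time-independent lower-order terms is obtained by replacing, at each step, the estimates of Theorem \ref{thmsemiglobal}/\ref{thmsemiglobalwavebord} by their uniform versions, which trace back to Corollary \ref{cor:carl-potentiel-bord}: the lower bound $\mu\geq\mu_0\max\{1,\|V\|^{2/3},\|W_0\|^2,\|W_1\|^2\}$ and the prefactor $C_0\max\{1,\|W\|_{L^\infty}\}$ are preserved by the finitely many applications of the propagation rules of Section~\ref{s:semiglobal-estimate}. The main obstacle, as expected, lies in the geometric construction of the foliation: the condition $T>\mathcal{L}(\M,\omega)$ is tight, so $\alpha$ must be chosen close to $1$, leaving only narrow margin to guarantee noncharacteristicness of \emph{every} leaf $S_\e$ along the entire foliation; this calibration is classical since \cite{Leb:Analytic} but still requires care, and it determines the eventual value of $\eps$ in the conclusion.
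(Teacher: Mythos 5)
Your overall architecture---reduce to the semiglobal Theorems \ref{thmsemiglobaldepchgt} and \ref{thmsemiglobalwavebord} via a noncharacteristic foliation joining the observation region to an arbitrary point, cover $\{0\}\times\M$ by compactness, and track the constants through Corollary \ref{cor:carl-potentiel-bord} in the time-independent case---is exactly the paper's. The gap is in the foliation itself, and it is not a matter of ``careful calibration'': the family
\[
\phi_\e(t,x)=\alpha^2(T_0^2-t^2)-(d(x)-\delta_0)^2+\e
\]
cannot be made noncharacteristic on the whole of each leaf. With $p=-\tau^2+|\xi|_g^2$ one computes $p(d\phi_\e)=4\big((d-\delta_0)^2|\nabla d|_g^2-\alpha^4t^2\big)$, and on $S_\e=\{\phi_\e=0\}$, where $(d-\delta_0)^2=\alpha^2(T_0^2-t^2)+\e$ and $|\nabla d|_g\simeq 1$, this quantity is strictly positive at $t=0$ and strictly negative as $|t|$ approaches the turning time $\sqrt{T_0^2+\e\alpha^{-2}}$; it therefore vanishes at $|t|\simeq T_0/\sqrt{1+\alpha^2}$, in the bulk of the leaf, for every small $\e$ and every choice of $\alpha$. (Your stated criterion ``noncharacteristic iff $(\partial_t\phi)^2>|\nabla_x\phi|_g^2$'' is also off: noncharacteristic means $\neq$, and at $t=0$ your leaves satisfy the reverse strict inequality, so even by your own reduction the condition fails there.) This is precisely the obstruction that confines hyperboloid-type weights to the non-optimal times of Robbiano and H\"ormander. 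A secondary mismatch is that these leaves are not graphs $x_n=G(x',\e)$ over a fixed disk $D$ with common rim $\partial D\times\{0\}$ contained in the observation region, as the geometric setting of Theorem \ref{thmsemiglobal} requires; restricted to $|t|\le T_0$ they have $\e$-dependent lateral boundaries lying outside $\omega$.

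What the paper does instead (following Lebeau) is pick a path $\gamma$ of length $\ell_0$ with $\mathcal{L}<\ell_0<t_0<T$ from a point of $\omega$ (or of $\Gamma$, hitting $\d\M$ orthogonally) to the target point, build tube coordinates $(t,w,x_n)$ along $\gamma$ in which the metric is block-diagonal up to $O(|w|)$ (Lemma \ref{lmcoord}), and foliate by the graphs $x_n=\e\ell_0\psi\big(\sqrt{(w/b)^2+(t/t_0)^2}\big)$ with $|\psi'|\le\alpha<t_0/\ell_0$. There the conormal has $x_n$-component $-1$ while its time-component has size at most $\e\ell_0\alpha/t_0<1$, so $p(d\phi_\e)\ge\eta>0$ uniformly on every leaf: the leaves are uniformly timelike and close up in the transverse spatial directions $w$ (over the rim $\d D$, which sits inside the observation region), never over the time axis. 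You would need to replace your foliation by this graph construction for the argument to reach the optimal time $T>\mathcal{L}(\M,\omega)$; the remaining steps of your proposal then coincide with the paper's proof.
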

We first prove Theorem~\ref{thmobserwave-bis} and then conclude with the proof of Theorem~\ref{thmobserwave}.
\bnp[Proof of Theorem~\ref{thmobserwave-bis}]
We only prove here the more complicated case of the boundary observation. The internal observation case is simpler and follows the same proof. To transport the information from one point $x^0$ to another point $x^1$, the idea is to build nice coordinates in a neighborhood of a path between $x^0$ and $x^1$. In these coordinates, we construct an appropriate foliation in which to apply our semi-global estimate. To construct these coordinates, we follow the presentation of Lebeau~\cite[pp~21-22]{Leb:Analytic}. 

We fix a point $x^1\in \overline{\M}$. We can find $x^0\in \Gamma$ and a path $\gamma: [0,1]\rightarrow \overline{\M}$ of length $\ell_0$ with $\mathcal{L}(\M, \Gamma)<\ell_0<T$ (see the definition of $\mathcal{L}(\M, \Gamma)$ in~\eqref{e:def-L}) so that $\gamma(0)=x^0$ and $\gamma(1)=x^1$. Moreover, we can impose that 
\bneq
&&\gamma \textnormal{ does not have self intersection}\\
&&\gamma(s)\in \M \textnormal{ for }s\in ]0,1[\\
&&\dot{\gamma}(0) \textnormal{ and }\dot{\gamma}(1)\textnormal{ are orthogonal to }  \partial \M.
\eneq
According to Lemma~\ref{lmcoord} below, we can find local coordinates $(w, x_n)$ near $\gamma$ in which $\overline{\M}$ is defined by $0\leq x_n \leq \ell_0$, the path $\gamma$ by $\gamma(s)=(0,s\ell_0)$ and the metric is given by the matrix $m(w,x_n) \in M_n(\R)$ with
\bnan
\label{fromdiagm}
m(w,x_n)  = \left( \begin{array}{cc}
 m' (x_n) & 0 \\
 0 & 1  
 \end{array}
\right) + \mathcal{O}_{M_n(\R)}(|w|) , \quad \text{ for } w\in B_{\R^{n-1}}(0,\delta), \delta >0 ,
\enan
with $m' (x_n)\in M_{n-1}(\R)$ (uniformly) definite symmetric. With these coordinates in the space variable, and still using the straight time variable, the symbol of the wave operator is given by
\bnan
\label{e:ptauwxn}
p(t,w,x_n, \tau, \xi_w, \xi_n)=p(w,x_n, \tau, \xi_w, \xi_n) = -\tau^2  + \langle m(w,x_n) \xi  , \xi  \rangle ,  \quad\xi = (\xi_w, \xi_n), 
\enan
where we have used $\tau$ for the dual of the time variable and $\xi_w$, $\xi_n$ for the dual to $w\in B_{\R^{n-1}}(0,\delta)$ and $x_n\in [0,\ell_0]$.

We now aim to apply Theorem \ref{thmsemiglobalwavebord}.
Pick again $t_0$ with $\ell_0<t_0<T$. For $b <\delta$ small, to be fixed later on, we define
 \bna
x_n=l , \quad x' = (t,w), \quad D=\left\{(t,w)\left| \Big(\frac{w}{b}\Big)^2+\Big(\frac{t}{t_0}\Big)^2\leq 1\right.\right\}\\
G(t,w,\e)=  \e \ell_0\psi \left(\sqrt{\Big(\frac{w}{b}\Big)^2+\Big(\frac{t}{t_0}\Big)^2} \right), 
 \quad 
 \phi_\eps(t,w,x_n): =  G(t,w , \eps) - x_n , \quad  \e \in [0,1]
\ena 
where $\psi$ is such that 
 \bna
 \psi \text{ even,}\quad  \psi(\pm1) =0, \quad \psi(0)=1,\\
\psi(s)\geq 0, \quad |\psi'(s)| \leq \alpha,  \textnormal{ for }s\in [-1,1], 
 \ena
with $1<\alpha <\frac{t_0}{\ell_0}$. This is possible since $\frac{t_0}{\ell_0}>1$.

Note also that the point $(t=0,w=0,x_n=\ell_0)$ corresponding in the local coordinates to $x^1$ belongs to $\left\{\phi_{1}=0\right\}$.
 We have
  $$
d \phi_\eps(t,w,x_n) 
=   \e \ell_0  \left(\Big(\frac{w}{b}\Big)^2+\Big(\frac{t}{t_0}\Big)^2\right)^{-1/2} \psi' \left(\sqrt{\Big(\frac{w}{b}\Big)^2+\Big(\frac{t}{t_0}\Big)^2} \right) \left( \frac{tdt}{t_0^2}  + \frac{w dw}{b^2}  \right) - d x_n .
 $$
Given the form of the principal symbol of the wave operator in these coordinates (see \eqref{fromdiagm}-\eqref{e:ptauwxn}), we obtain
\bna
p( w,x_n, d \phi_\eps(t,w,x_n) ) &=&  -\e^2 \ell_0^2\frac{t^2}{t_0^4}    \left(\Big(\frac{w}{b}\Big)^2+\Big(\frac{t}{t_0}\Big)^2 \right)^{-1}|\psi'|^2   \\
&&+ \ell_0^2\frac{\e^2}{ b^4}  \langle m' (x_n) w, w \rangle \left(\Big(\frac{w}{b}\Big)^2+\Big(\frac{t}{t_0}\Big)^2 \right)^{-1} |\psi'|^2  + 1 \\
&&+O(|w|^2) \left( 1 + \frac{\e^2 \ell_0^2}{b^4}  |w|^2 \left(\Big(\frac{w}{b}\Big)^2+\Big(\frac{t}{t_0}\Big)^2 \right)^{-1}|\psi'|^2 \right),
\ena
where $|\psi'|^2 $ is taken at the point $\left(\sqrt{\Big(\frac{w}{b}\Big)^2+\Big(\frac{t}{t_0}\Big)^2} \right)$. Now, since $\alpha <\frac{t_0}{\ell_0}$ and $m' (x_n)$ is uniformly (for $x_n\in [0,\ell_0]$) definite positive, there is $\eta>0$ so that for $|w|\leq b$ small enough, we have
\bna
1+O(|w|^2)&\geq &\alpha^2 \frac{\ell_0^2}{t_0^2}\eta\\
 \langle m' (x_n) w, w \rangle +O(|w|^2)|w|^2&\geq&  \frac{1}{2} \langle m' (x_n) w, w \rangle\geq 0.
\ena
Hence, there is a sufficiently small neighborhood (taking again $b$ small enough) of the path (i.e. of $w = 0$), in which we have (for any $\e\in [0,1]$), and any $(t,w,x_n)\in \overline{D}\times [0,\ell_0]$
\bna
p( w,x_n, d \phi_\eps(t,w,x_n) )& \geq&  - \frac{\e^2}{t_0^2}\ell_0^2   \Big(\frac{t}{t_0}\Big)^2\left(\Big(\frac{w}{b}\Big)^2+\Big(\frac{t}{t_0}\Big)^2 \right)^{-1}  |\psi'|^2  
+  \alpha^2 \frac{\ell_0^2}{t_0^2}+\eta  \\
& \geq & -\frac{\ell_0^2}{t_0^2}  |\psi'|^2 +\alpha^2 \frac{\ell_0^2}{t_0^2}+\eta \geq \eta.
 \ena
So, the surface $\{\phi_\eps =0\}$ is noncharacteristic for any $\eps \in [0,1]$ and, therefore, strictly pseudoconvex with respect to the wave operator, see Remark \ref{rknoncaractwave}. 

Moreover, since $b$ can be chosen arbitrary small and $x^0\in \Gamma$ open, we can select $b$ small enough so that in the chosen coordinates, we have $D\subset [-t_0,t_0]\times \Gamma $. Therefore, applying Theorem \ref{thmsemiglobalwavebord} in the chosen coordinates and writing (with a slight abuse of notation) the final result in an invariant way, we get
$$
\nor{ u}{L^2(U)}\leq C e^{\kappa \mu}\left(\nor{\partial_{\nu} u}{L^2((-T,T)\times \Gamma)}  + \nor{Pu}{L^2((-T,T)\times \M)}\right)+\frac{C}{\mu}\nor{u}{H^1((-T,T)\times \M)} ,
$$
where $U$ is a neigborhood (in the local coordinates) of $\left\{\phi_1=0\right\}$ and in particular a neighborhood of $x^1$ (in the global coordinates).
Note, that we actually apply the Theorem to $\chi u$ with $\chi \in C^{\infty}(]-T,T[\times \M)$ so that in the coordinate charts, $\chi u\in C^{\infty}_0([0,\ell_0]\times \R^{n-1})$ and $\chi=1$ on a neighborhood of the $\Omega$ defined in Theorem \ref{thmsemiglobalwavebord}. We have therefore $ \nor{P \chi u}{L^{2}(\Omega)}=\nor{P u}{L^{2}(\Omega)}\leq C\nor{Pu}{L^2(]-T,T[\times \M)}$ and $\nor{\chi u}{H^1([0,\ell_0]\times \R^{n-1})}\leq \nor{u}{H^1((-T,T)\times \M)}$ (where we have switched from some coordinate set to another with a slight abuse of notation).

Since the previous property is true for any $x^1\in \overline{\M}$, we obtain by compactness (taking the worst of all the constants $\kappa$, $C$, $\mu_0$), using only a finite number of this estimate, that there exists $\e>0$ so that we have
$$
\nor{ u}{L^2((-\e,\e)\times \M)}\leq C e^{\kappa \mu}\left(\nor{\partial_{\nu} u}{L^2((-T,T)\times \Gamma)}  + \nor{Pu}{L^2(]-T,T[\times \M)}\right) +\frac{C}{\mu}\nor{u}{H^1((-T,T)\times \M)}.
$$
This concludes the proof of the theorem in the general (boundary) case. 

For the last analytic case, we apply the same reasoning as before using the case $n_a=n$ of Theorem \ref{thmsemiglobal} and taking care for having some analytic change of coordinates. For instance, we need to have an analytic path. So, it leads to an observation $\nor{\varphi u}{H^{-s}}$ where $\varphi=1$ on all the cutoff functions obtained by the theorem. 

The lower order term depending analytically in time are treated using Corollary \ref{cor:carl-potentielanal-bord} and Remark \ref{rklowerunif}.
 
The uniform dependence with respect to time independent lower order terms follows from the fact that we use only a finite number of times Theorem~\ref{thmsemiglobalwavebord}. 
\enp
With Theorem~\ref{thmobserwave-bis}, we now conclude the proof of Theorem~\ref{thmobserwave}, using energy estimates to relate $\nor{(u_0,u_1)}{H^{1}_0\times L^2(\M)}$ to $\nor{u}{H^1((-T,T)\times \M)}$, and $\nor{(u_0,u_1)}{L^2\times H^{-1}(\M)}$ to $\nor{u}{L^2((-T,T)\times \M)}$. These estimates are very classical in the selfadjoint case (which we omit here) and need a little care in the general case.
\bnp[Proof of Theorem~\ref{thmobserwave}]
We consider a perturbation of order one $R(t,x,\d_t,\d_x)u=V(t,x)u+W_0(t,x)\partial_t u +W_1(t,x)\cdot \nabla u$ and perform the energy estimates. We have the pointwise in time estimate, for $s\in [-T,T]$, 
\bna
\nor{R(s)u(s)}{L^2}\leq C_R\left(\nor{u(s)}{H^1(\M)}+\nor{\partial_t u(s)}{L^2(\M)}\right)
\ena with 
$$C_R=\nor{V}{L^{\infty}([-T,T]\times \M)}+\nor{W_0}{L^{\infty}([-T,T]\times \M)}+\nor{W_1}{L^{\infty}([-T,T]\times \M)}.
$$
Using the Dumamel formula and Gronwall Lemma, it gives
\bna
\nor{(u,\partial_tu)(t)}{H^1\times L^2(\M)}\leq Ce^{CC_R}\left((\nor{(u_0,u_1)}{H^1\times L^2(\M)}+\nor{f}{L^1([-T,T], L^2)}\right),
\ena
and in particular, integrating in time,
\bnan
\label{estimNRJH1}
\nor{u}{H^1(]-T,T[\times \M)}\leq Ce^{CC_R}\left((\nor{(u_0,u_1)}{H^1\times L^2(\M)}+\nor{f}{L^1([-T,T], L^2)}\right).
\enan
Let $R^*(t,x,\partial_t,D_x)u=V(t,x)u-\partial_t(W_0(t,x)u) -div(W_1(t,x)u)$ be the formal (space-time) adjoint of $R$ (we take the real duality for simplicity). 

If $(v_0,v_1)\in H^{1}\times L^2$, let $v$ be the associate solution of $\Box v+R^*v=0$. We have 
$$
\nor{R^*(s)v(s)}{L^2}\leq C_{R^*}\left(\nor{v(s)}{H^1(\M)}+\nor{\partial_t v(s)}{L^2(\M)}\right),
$$
with 
$$
C_{R^*}=\nor{V}{L^{\infty}([0,\e]\times \M)}+\nor{W_0}{W^{1,\infty}([0,\e],L^{\infty}(\M))}+\nor{W_1}{L^{\infty}([0,\e]\times \M)}+\nor{\div W_1}{L^{\infty}([0,\e]\times \M)}.
$$ Similar energy estimate applied to $v$ give
\bna
\nor{v}{H^1((-\e,\e)\times \M)}\leq Ce^{C\e C_{R^*}}\nor{(v_0,v_1)}{H^1\times L^2(\M)}.
\ena

 $\chi\in C^{\infty}([0,\e])$ so that $\chi(0)=1$, $\Dot{\chi}(0)=0$ $\chi(\e)=0$ $\Dot{\chi}(\e)=0$. Then, $w=\chi(t)v$ is solution of 
\bneq
\Box w+R^*w&=&2\dot{\chi}(t)\partial_t v+ \dot{\chi}(t)W_0 v+\ddot{\chi}(t)v:=g\\
w_{\left|\partial \M\right.}&=&0  \\
(w,\partial_t w)_{\left|t=0\right.} &=&(v_0,v_1)
\eneq  
Then, $g$ is a (trivial) control that $(v_0,v_1)$ to zero, i.e. $(w,\partial_t w)_{\left|t=\e\right.} = (0,0)$, with $\nor{g}{L^2(]0,\e[\times \M)}\leq Ce^{CC_{R^*}} \nor{(v_0,v_1)}{H^{1}\times L^2}$. So, the usual computation yields, after integrating by parts
\bna
\int_{]0,\e[\times \M} u g=\int_{]0,\e[\times \M} u (\Box +R^*)w=\int_{\M}u_1v_0-\int_{\M}u_0 v_1-\int_{\M} W_0(0,x)u_0v_0+\int_{]-\e,\e[\times \M} fw
\ena
and in particular
\bna
\left\langle (u_0,u_1),(-v_1,v_0)\right\rangle_{L^2\times H^{-1},L^2\times H^1}&\leq & C \nor{u}{L^2(]0,\e[\times \M)}\nor{g}{L^2(]0,\e[\times \M)}+C\nor{f}{L^2(]0,\e[\times \M)}\nor{w}{L^2(]0,\e[\times \M)}\\
&\leq &Ce^{CR^*}\nor{(v_0,v_1)}{H^{1}\times L^2}\left(\nor{u}{L^2(]0,\e[\times \M)}+ \nor{f}{L^2(]0,\e[\times \M)}\right)
\ena
where $\left\langle\cdot,\cdot\right\rangle $ is the twisted duality $\left\langle (u_0,u_1),(v_1,v_0)\right\rangle_{L^2\times H^{-1},L^2\times H^1}=\int_{\M}u_1v_0-\int_{\M}u_0 v_1-\int_{\M} W_0(0,x)u_0v_0$.

By specifying to $v_0$ and $\nor{v_1}{L^2}=1$, this gives first by duality.
\bna
\nor{u_0}{L^2}=\sup_{\nor{v_1}{L^2}=1}\int_{\M}u_0 v_1 \leq Ce^{C_R^*}\left(\nor{u}{L^2(]0,\e[\times \M)}+ \nor{f}{L^2(]0,\e[\times \M)}\right).
\ena
Then, with $v_1=0$ and $\nor{v_0}{H^{1}}=1$, we get
\bna
\nor{u_1}{H^{-1}}&=&\sup_{\nor{v_0}{H^1}=1}\int_{\M}u_1v_0 \leq \sup_{\nor{v_0}{H^1}=1}\int_{\M}\left(u_1v_0 -\int_{\M} W_0(0,x)u_0v_0+\int_{\M} W_0(0,x)u_0v_0\right) \\
&\leq & \sup_{\nor{v_0}{H^1}=1} \left\langle (u_0,u_1),(0,v_0)\right\rangle_{L^2\times H^{-1},L^2\times H^1}+\sup_{\nor{v_0}{H^1}=1}\int_{\M} W_0(0,x)u_0v_0\\
&\leq &Ce^{C_R^*}\left(\nor{u}{L^2(]0,\e[\times \M)}+ \nor{f}{L^2(]0,\e[\times \M)}\right)+C\nor{W_0}{L^{\infty}}\nor{u_0}{L^2}.
\ena
So, finally, we have
\bnan
\label{observtotale}
\nor{(u_0,u_1)}{L^2\times H^{-1}}\leq Ce^{C_R^*}\left(\nor{u}{L^2(]0,\e[\times \M)}+ \nor{f}{L^2(]0,\e[\times \M)}\right).
\enan
In the particular case where the perturbation is independent on time, we have 
$$
C_R+C_{R^*}\leq C \max \left\{\nor{V}{L^{\infty}(\M)},\nor{W_0}{L^{\infty}(\M)},\nor{W_1}{L^{\infty}(\M)},\nor{\div(W_1)}{L^{\infty}(\M)}\right\}.
$$ The combination of Theorem \ref{thmobserwave-bis}, together with estimates \eqref{estimNRJH1} and \eqref{observtotale} gives the sought result.
\enp

The following Lemma is contained in Lebeau \cite{Leb:Analytic} p22, see also Lemma 11.38 pp 221 of \cite{BarilariAgrachevBoscainBook}. We give the proof for sake of completeness.

\begin{lemma}
\label{lmcoord}
Let $\gamma [0,1]\rightarrow \overline{\M}$ be a smooth path without self intersection of length $\ell_0$ so that
\bneq
&&\gamma(s)\in \M \textnormal{ for }s\in ]0,1[\\
&&\gamma(0)=x_0 \textnormal{ and }\gamma(1)=x_1  \textnormal{ belong to } \partial \M\\
&&\dot{\gamma}(0) \textnormal{ and }\dot{\gamma}(1)\textnormal{ are orthogonal to }  \partial \M
\eneq
Then, there are some coordinates $(w,l)\in B_{\R^{n-1}}(0,\e)\times [0,\ell_0]$ in an open neighborhood $U$ near $\gamma([0,1])$ so that
\begin{itemize}
\item  $\gamma([0,1])=\{w=0 \}\times [0,\ell_0]$,
\item the metric $g$ is of the form $
m(l,w)  = \left( \begin{array}{cc}
 1 & 0 \\
 0 &   m' (l)
 \end{array}
\right) + O_{M_n(\R)}(|w|) ,
$
\item in coordinates, we have $\overline{\M}\cap U=B_{\R^{n-1}}(0,\e)\times [0,\ell_0]$ for some $\e>0$.
\end{itemize}

\end{lemma}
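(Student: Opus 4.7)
The plan is to build coordinates $(l,w)$ along $\gamma$ in two steps: first, use Fermi-type coordinates along an arclength-parametrized extension of $\gamma$ to obtain the desired block form of the metric; second, straighten the two pieces of $\partial\M$ by a diffeomorphism whose deviation from the identity is of order $|w|^2$, hence only perturbs the metric at order $|w|$. The two hypotheses at the endpoints of $\gamma$ (transversality to $\partial\M$ and orthogonality) are exactly what is needed to make this final straightening harmless at the level we care about.

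First I would reparametrize $\gamma$ by arclength, embed $\overline\M$ as an open subset of a slightly larger open Riemannian manifold $\widetilde\M$ (by gluing a collar along $\partial\M$), and extend $\gamma$ to an embedded arc $\widetilde\gamma:(-\eta,\ell_0+\eta)\to\widetilde\M$ for some $\eta>0$. Since the parameter interval is contractible one can pick a smooth orthonormal frame $e_1(l),\ldots,e_{n-1}(l)$ of $\dot{\widetilde\gamma}(l)^\perp\subset T_{\widetilde\gamma(l)}\widetilde\M$, and set
\[
\Phi(l,w)=\exp_{\widetilde\gamma(l)}\Bigl(\sum_{i=1}^{n-1}w^i e_i(l)\Bigr).
\]
For $\e>0$ small, $\Phi$ is a diffeomorphism from $(-\eta,\ell_0+\eta)\times B_{\R^{n-1}}(0,\e)$ onto a tubular neighborhood of $\widetilde\gamma$, with $\Phi(l,0)=\widetilde\gamma(l)$. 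By construction $\partial_l\Phi|_{w=0}=\dot{\widetilde\gamma}(l)$ and $\partial_{w^i}\Phi|_{w=0}=e_i(l)$, so the pulled-back metric equals the identity along $\{w=0\}$; a first-order Taylor expansion in $w$ then gives
\[
m(l,w)=\begin{pmatrix}1 & 0 \\ 0 & \mathrm{Id}_{n-1}\end{pmatrix}+O(|w|),
\]
which is the desired form with $m'(l)=\mathrm{Id}_{n-1}$.

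The delicate step, which I expect to be the main obstacle, is to arrange that $\overline\M\cap U=B_{\R^{n-1}}(0,\e)\times[0,\ell_0]$ in the final coordinates. In the coordinates just built, $\partial\M$ is a smooth hypersurface of $\widetilde\M$ through $(0,0)$ and $(\ell_0,0)$. The orthogonality assumption $\dot\gamma(0)\perp\partial\M$ says exactly that $T_{x_0}\partial\M$ is the span of the $e_i(0)$, i.e.\ the $w$-hyperplane at $l=0$; hence $\partial\M$ is locally a graph $\{l=\phi_0(w)\}$ with $\phi_0(0)=0$ and $d\phi_0(0)=0$, so $\phi_0(w)=O(|w|^2)$, and similarly near $(\ell_0,0)$ with some $\phi_1(w)=O(|w|^2)$. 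Choosing cut-offs $\chi_0,\chi_1\in C^\infty_c(\R)$ with $\chi_0\equiv 1$ near $l=0$, $\chi_1\equiv 1$ near $l=\ell_0$ and disjoint supports, the map
\[
(l,w)\mapsto(\widetilde l,w),\qquad \widetilde l=l-\chi_0(l)\phi_0(w)-\chi_1(l)\phi_1(w),
\]
is, for $|w|$ small enough, a diffeomorphism which fixes $\{w=0\}$ and sends the two boundary pieces of $\partial\M$ exactly onto $\{\widetilde l=0\}$ and $\{\widetilde l=\ell_0\}$. The crucial point is that since $\phi_j(w)=O(|w|^2)$, the Jacobian of this map differs from the identity by a matrix that is $O(|w|)$ and vanishes on $\{w=0\}$, so the pulled-back metric retains the block form $\mathrm{diag}(1,\mathrm{Id}_{n-1})+O(|w|)$; after shrinking $\e$ so that no other component of $\partial\M$ meets the coordinate patch, one obtains $\overline\M\cap U=B_{\R^{n-1}}(0,\e)\times[0,\ell_0]$, which is the conclusion of the lemma.
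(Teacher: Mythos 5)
Your proof is correct, and it takes a genuinely different route from the paper's, though both turn on the same mechanism. The paper starts from an arbitrary tubular coordinate system adapted to the boundary, in which $\overline\M$ is $\{f_1(y)\leq x\leq f_2(y)\}$ with $x$ the arclength parameter along $\gamma$; it normalizes $f_1=0$, then applies a shear $(x,y)\mapsto(x-a_x\cdot y,y)$ to kill the off-diagonal terms $l(x)$ of the metric on $\gamma$, observing that orthogonality at $s=0$ forces $l(0)=0$ so the shear fixes $\{x=0\}$ and does not disturb that boundary piece; orthogonality at $s=1$ together with the now-diagonal metric gives $df_2(0)=0$, so the final rescaling $x\mapsto \ell_0 x/f_2(y)$ is the identity to first order on $\gamma$ and preserves the block form. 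You instead normalize the metric first, via Fermi coordinates $\exp_{\widetilde\gamma(l)}(\sum w^ie_i(l))$ on an extension of $\M$, which gives $m=\mathrm{Id}+O(|w|)$ on the nose (so $m'(l)=\mathrm{Id}_{n-1}$, slightly stronger than required), and only then straighten the two boundary caps; the endpoint orthogonality enters by making each cap a graph $l=\phi_j(w)$ with $\phi_j=O(|w|^2)$, so your cutoff shear has Jacobian equal to the identity on $\{w=0\}$ and perturbs the metric only at order $|w|$. In both arguments the orthogonality hypotheses are used precisely to guarantee that the boundary-straightening diffeomorphism is tangent to the identity along $\gamma$. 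What your approach buys is a cleaner metric normalization (standard Fermi coordinates, identity transverse block) at the cost of extending $(\overline\M,g)$ and $\gamma$ past the boundary; the paper's approach stays inside $\overline\M$ but has to track the interaction between the metric-diagonalizing shear and the boundary by hand. Your verification that the Jacobian of the straightening map is $I+O(|w|)$, with the $\partial\widetilde l/\partial w^i$ entries vanishing to first order, is exactly the point that makes the argument close, and it is correct.
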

\bnp
The path $\gamma$ is of length $\ell_0$ so, we can reparametrize it by $\gamma: [0,\ell_0]\rightarrow \M$ such that $\gamma$ is unitary (that is $\nor{\dot{\gamma}(s)}{\gamma(s)}=1$) Moreover, since $\gamma$ does not have self intersection, there exist $U$ a neighborhood (in the topology of $\overline{\M}$) of $\gamma$ and a diffeomorphism $\psi$ (in the structure of $\overline{\M}$) such that 
\begin{itemize}
\item $\psi(U)\subset \left\{(x,y)\in \R^{n}\left|x\in [-\e,\ell_0+\e],|y|\leq \e\right.\right\}$,
\item $\psi(\gamma(s))=(s,0)$,
\item $\psi(U)=\left\{(x,y)\in \R^{n}, f_1(y)\leq x\leq f_2(y)\left|x\in [-\e,\ell_0+\e],|y|\leq \e\right. \right\}$ for some smooth functions $f_i$ locally defined 
\end{itemize}
Up to making the change of variable $(x,y)\mapsto (x-f_1(y),y)$, we can moreover impose $f_1=0$ and change $f_2$ by $f_2-f_1$. 

Then, we make some change of variable to diagonalize the metric on $\gamma$.
By unitarity of the coordinates, the metric on $\gamma$ has the form
$$
m(x,0)  = \left( \begin{array}{cc}
 1 & l(x) \\
^t l(x) &  g(x)  
 \end{array}
\right) ,
$$
where $l$ is a line vector and $g$ is a positive definite matrix. We perform the change of variable $\Phi: (x,y)\mapsto (\widetilde{x},\widetilde{y})=\left(x-a_x\cdot y, y\right)$. In $y=0$, we have $D\Phi(x,0)= \left( \begin{array}{cc}
1&  -a_x \\
0 & Id  \end{array} \right)$ with $^tD\Phi(x,0)= \left( \begin{array}{cc}
1& 0\\ -^ta_x & Id  \end{array}
\right) $ (in particular, the change of variable is valid for small $y$) and $D\Phi(x,0)^{-1}= \left( \begin{array}{cc}
1&  a_x \\
0 & Id  \end{array} \right)$ with $^tD\Phi(x,0)^{-1}= \left( \begin{array}{cc}
1& 0\\ ^ta_x & Id  \end{array}
\right) $. Moreover, in the new coordinates, the set  in $\left\{\widetilde{y}=0\right\}$ and the metric there is given by 
\bna
^tD\Phi(x,0)^{-1}m(x,0)D\Phi(x,0)^{-1}=\left( \begin{array}{cc}
 1 & l(x)+a(x) \\
^t l(x)+^t a(x) & *  
 \end{array}
\right)
\ena

So, we choose $a(x)=-l(x)$ so that in this new coordinates $m(x,0)$ is of the form
\bnan
\label{formm}
m(x,0)  = \left( \begin{array}{cc}
 1 & 0 \\
 0&  *  
 \end{array}
\right).
\enan

We notice that since $\dot{\gamma}(0)$ is orthogonal to $\partial M$ which is defined locally by $\left\{x=0\right\}$, we have $l(0)=0$ ($\dot{\gamma}(0)=(1,0)$ so it implies $^t(0,y)m(0,0)\dot{\gamma}(0)=^tl(0)y$ for all $y$). In particular, $\Phi$ restricted to $\left\{x=0\right\}$ is the identity.

 This implies that in this new coordinates, $\overline{\M}$ is still defined near $\gamma$ by $0\leq x\leq f_2(y)$ (now, we still denote $(x,y)$ for $(\tilde{x},\tilde{y})$).
We still have $f_2(0)=\ell_0$. Morever, since $\dot{\gamma}(\ell_0)=(1,0)$ is orthogonal to $\partial \M$ which is defined locally by $\left\{x=f_2(y)\right\}$ and using that $m(x,0)$ is of the form \eqref{formm}, we get $df_2(0)=0$. 

Finally, making the change of variable $(x,y)\mapsto (\frac{\ell_0}{f(y)} x,y)$, which is the identity on $\gamma$, we get that $\overline{\M}$ is given $0\leq x\leq \ell_0$. Moreover, since $df(0)=0$, the metric is not changed on $\gamma$. 

The expected property of $m$ is then obtained by the mean value theorem using the diagonal form \eqref{formm} on $\gamma$.
\enp
\subsection{The Schr\"odinger equation}
Now, we turn to the Schr\"odinger equation. The result are quite similar to the wave equation except for two facts. 

The first one is that there is no minimal time. This is quite natural with the infinite speed of propagation. In the proof, this appears in the fact that the principal symbol is $|\xi|_g^2$. Therefore, a hypersurface $\left\{\varphi(t,x)=0\right\}$ is non characteristic if $\nabla_x \varphi \neq 0$, without assumption on the time derivative. 

The second difference is that the remainder term involving the $H^1((-T,T),\M)$ norm involves some derivative in time and space which do not have the same weight. Hence, since $\partial_t u=i\Delta_g u$, this term will actually count for two derivatives in space.

\begin{theorem}
\label{thmobserschrod}
Let $\M$ be a compact Riemannian manifold with (or without) boundary, $\Delta_g$ the Laplace-Beltrami operator on $\M$, and 
$$P = i\partial_t +\Delta_g + V$$ with $V$ depending analytically on the variable $t$ in a neighborhood of $(-T,T)$. Assume moreover that $V\in L^{\infty}((-T,T);W^{2,\infty}(\M))$.

 For any nonempty open subset $\omega$ of $\M$ and any $T> 0$, there exist $C ,\kappa , \mu_0 >0$ such that for any $u_0 \in H^2\cap H^1_0$, $f\in L^2((-T,T); H^2(\M))$ and associated solution $u$ of 
\bneqn
\label{e:schrod-freethm}
i\partial_t u+\Delta_g u+Vu= f &&\text{ in } (-T,T) \times \M , \\
u_{\left|\partial \M\right.}=0   && \text{ in } (T,T) \times \d \M ,\\
u(0)=u_0&& \text{ in }\M ,
\eneqn
we have, for any $\mu\geq \mu_0$,
\bnan
\label{estimthm1schrod}
\nor{u_0}{L^2}\leq C e^{\kappa \mu}\left(\nor{u}{L^2((-T,T); H^1(\omega))}+\nor{f}{L^2((-T,T); H^2(\M))}\right)+\frac{C}{\mu}\nor{u_0}{H^2}.
\enan
If moreover all coefficients of $P$ are analytic in $t$ and $x$, and $\partial \M=\emptyset$, there exists $\tilde{\varphi} \in C^\infty_0((-T,T)\times \omega)$ such that for any $s \in \R$, we have
\bnan
\label{estimthm2schrod}
\nor{u_0}{L^2}\leq C e^{\kappa \mu}\left(\nor{\varphi u}{H^{-s}((-T,T)\times \M)}+\nor{f}{L^2((-T,T); H^2(\M))}\right)+\frac{C}{\mu}\nor{u_0}{H^2}.
\enan
If $\partial\M\neq \emptyset$ and $\Gamma$ is a non empty open subset of $\partial \M$, then for any $T> 0$, there exist $C ,\kappa , \mu_0 >0$ such that for any $u_0 \in H^2\cap H^1_0$,  and associated solution $u$ of~\eqref{e:schrod-freethm}, we have, for any $\mu\geq \mu_0$,
\bnan
\label{estimthm3schrod}
\nor{u_0}{L^2}\leq C e^{\kappa \mu}\left(\nor{\partial_{\nu}u}{L^2((-T,T)\times \Gamma)}+\nor{f}{L^2((-T,T); H^2(\M))}\right)+\frac{C}{\mu}\nor{u_0}{H^2}.
\enan
Finally, if $V$ is time-independent then we have the following stronger result. There exist $C_0, \kappa ,\mu_0>0$ such that for any $u_0\in H^2\cap H^1_0(\M)$, $f\in L^2((-T,T)\times\M)$ and associated solution $u$ of \eqref{e:schrod-freethm}, and for any $V$ bounded in the $x$-variable, estimates \eqref{estimthm1schrod} and \eqref{estimthm3schrod} hold uniformly for all $\mu\geq \mu_0\max\{1 , \|V\|_{L^\infty}^{\frac23}\}$ with constant
\bna
C= C_0\exp \left(C_0\nor{V}{W^{2,\infty}(\M)}\right).
\ena
\end{theorem}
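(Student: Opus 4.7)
The plan is to mimic the proof of Theorem~\ref{thmobserwave}, but adapted to the Schr\"odinger operator. The first step is to cast $P = i\d_t + \Delta_g + V$ in the framework of Assumption~\ref{assumptionP}: with $t = x_a$ and $x = x_b$, the principal symbol (of order two) is $\tilde q_{x_b}(\xi_b) = -|\xi_b|_{g^\ast}^2$ and the analytic quadratic form $q_{x_b}(\xi_a)$ is simply the zero form, which is admissible. The subprincipal term $i\d_t$ is a first-order \emph{tangential} operator depending analytically on $t$, so it can be absorbed as a lower-order perturbation via Corollary~\ref{cor:carl-potentielanal-bord}; the bounded potential $V$ (resp. its time-independent avatar) is handled by Corollary~\ref{cor:carl-potentiel-bord}. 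Thus the local Carleman estimates of Theorem~\ref{th:carleman-bord}, and hence Theorem~\ref{thmsemiglobalwavebord}, apply to $P$.

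Second, I would run the geometric argument of the proof of Theorem~\ref{thmobserwave-bis}: pick any target point $x^1 \in \overline{\M}$, a smooth path $\gamma:[0,\ell_0]\to\overline{\M}$ meeting $\partial\M$ orthogonally at its endpoints $x^0 \in \Gamma$ and $x^1$, apply Lemma~\ref{lmcoord} to obtain normal coordinates $(t,w,x_n)$ in a tube around $\gamma$, and foliate by $\phi_\e(t,w,x_n) = \e\ell_0\,\psi\!\bigl(\sqrt{(w/b)^2+(t/t_0)^2}\bigr)-x_n$ as before. The crucial difference with waves is that any $t_0\in(0,T)$ is now admissible: since the principal symbol of $P$ does not involve $\tau=\xi_a$, one has $p(w,x_n,d\phi_\e) = \langle m(w,x_n)\nabla_x\phi_\e,\nabla_x\phi_\e\rangle$, which reduces to $m_{nn}(w,x_n) + O(\e^2 b^{-2}|\psi'|^2\langle m'w,w\rangle)$, so a sufficiently small $b$ guarantees non-characteristicity of $S_\e=\{\phi_\e=0\}$ uniformly in $\e\in[0,1]$. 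By Remark~\ref{rknoncaractwave}, every $S_\e$ is then strongly pseudoconvex for $P$. Theorem~\ref{thmsemiglobalwavebord}, followed by a compactness covering of $\overline\M$ by finitely many such tubes, delivers $\e_0 \in (0,T)$ and constants so that
\[
\nor{u}{L^2((-\e_0,\e_0)\times\M)} \leq C e^{\kappa\mu}\bigl(\mathrm{Obs}(u) + \nor{f}{L^2((-T,T)\times\M)}\bigr) + \frac{C}{\mu}\nor{u}{H^1((-T,T)\times\M)},
\]
where $\mathrm{Obs}(u)$ is the relevant observation norm (internal, boundary, or $H^{-s}$ in the analytic case).

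Third, I would turn this into an estimate on $\nor{u_0}{L^2}$ via Schr\"odinger energy estimates. Standard Duhamel/Gronwall with $V\in L^\infty_tW^{2,\infty}_x$ yield $\nor{u}{L^\infty((-T,T);H^k(\M))} \leq C\bigl(\nor{u_0}{H^k}+\nor{f}{L^1(H^k)}\bigr)$ for $k=0,2$; since $\d_t u = i(\Delta_g u + Vu)-if \in L^2_tL^2_x$, this gives $\nor{u}{H^1((-T,T)\times\M)} \leq C\bigl(\nor{u_0}{H^2}+\nor{f}{L^2((-T,T);H^2)}\bigr)$, controlling the error term. Conversely, the $L^2$-isometry property applied to $u$ minus the Duhamel correction produces $\nor{u_0}{L^2} \leq C\bigl(\e_0^{-1/2}\nor{u}{L^2((-\e_0,\e_0)\times\M)}+\nor{f}{L^2(L^2)}\bigr)$. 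Inserting these in the quantitative uniqueness estimate above, and absorbing the $\mu^{-1}\nor{f}{L^2(H^2)}$ term into the $e^{\kappa\mu}\nor{f}{L^2(H^2)}$ term, yields \eqref{estimthm1schrod}, \eqref{estimthm2schrod}, \eqref{estimthm3schrod}.

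The main obstacle is a bookkeeping one rather than a conceptual one: tracking through the uniform dependence on $\nor{V}{W^{2,\infty}}$ in the time-independent case. Theorem~\ref{thmsemiglobalwavebord} already provides uniform constants $C_0\exp(C_0\nor{V}{L^\infty})$ for the quantitative uniqueness step, so the uniformity with two derivatives on $V$ in the final estimate comes solely from the Schr\"odinger energy bound $\nor{u}{L^\infty(H^2)} \leq e^{CT\nor{V}{W^{2,\infty}}}(\nor{u_0}{H^2}+\nor{f}{L^1(H^2)})$, obtained by commuting $\Delta_g$ twice with $V$. No minimal time arises, in contrast with the wave case, because the pseudoconvexity of $\{\phi_\e=0\}$ is equivalent to non-characteristicity, which only requires $\d_{x_n}\phi_\e\neq 0$ irrespective of $\d_t\phi_\e$, so $t_0$ may be taken arbitrarily small.
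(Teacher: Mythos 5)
Your proposal is correct and follows essentially the same route as the paper: the Schr\"odinger operator is fitted into Assumption~\ref{assumptionP} with the order-two principal symbol $-|\xi_b|_g^2$ (independent of $\xi_a$) and $i\partial_t$ absorbed as an analytic lower-order term, the same tube coordinates and foliation $\phi_\e$ are used with $t_0$ now arbitrary because pseudoconvexity reduces to non-characteristicity in $x$, and the passage to $\nor{u_0}{L^2}$ is done by the same Duhamel/Gronwall energy estimates at the $L^2$ and $H^2$ levels together with $\partial_t u = i(\Delta_g+V)u-if$ to control the space-time $H^1$ norm. The uniform dependence $C_0\exp(C_0\nor{V}{W^{2,\infty}})$ in the time-independent case is tracked exactly as in the paper.
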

As in the case of the wave equation, the previous Theorem is a combination of the following Theorem and energy estimates for the Schr\"odinger equation. 
\begin{theorem}
\label{thmobserSchro-bis}
Let $\M$ be a compact Riemannian manifold with (or without) boundary, $\Delta_g$ the Laplace-Beltrami operator on $\M$, and $P = \Delta_g + R$ with $R = R(t, x, \d_t, \d_x)$ is a differential operator of order one on $(-T,T) \times \M$, bounded in the $x$-variable and depending analytically on the variable $t \in (-T,T)$ at any $x \in \M$.

For any nonempty open subset $\omega$ of $\M$ and any $T> 0$, there exist $\eps, C, \kappa ,\mu_0>0$ such that for any $u \in H^1((-T,T) \times \M)$ and $f \in L^2((-T,T) \times \M)$ solving 
\bneqn
\label{e:free-wave-bis}
P u= f& & \text{ in } (-T,T) \times \M , \\
u_{\left|\partial \M\right.}=0 & &  \text{ in } (-T,T) \times \d \M ,\\
\eneqn
the same estimates as Theorem \ref{thmobserwave-bis} hold.

In the case that $R = W_0 \d_t + W_1\cdot \nabla + V$ does not depend on $t$, the dependence on the size of the coefficients of $R$ remains the same as Theorem \ref{thmobserwave-bis}.
\end{theorem}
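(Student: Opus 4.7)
The plan is to follow the scheme of Theorem \ref{thmobserwave-bis} almost verbatim, applying Theorem \ref{thmsemiglobalwavebord} with the time variable $t$ as the analytic variable $x_a$ and $x \in \M$ as the non-analytic variables $x_b$. The operator $P = \Delta_g + R$ has principal symbol $p(t,x,\tau,\xi) = \tilde{q}_x(\xi)$ which is \emph{independent of $\tau$} and defines, at each $x$, a positive definite quadratic form on $T_x^*\M$. By Remark \ref{rknoncaractwave}, $P$ is analytically principally normal in $\{\xi_a = 0\} = \{\tau = 0\}$ (the coefficients of the principal part being independent of $t$ and those of $R$ analytic in $t$), and any non-characteristic hypersurface is automatically strongly pseudoconvex in $\{\tau = 0\}$ for $P$. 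The crucial simplification with respect to the wave case is precisely this $\tau$-independence of $p$: the non-characteristic condition only constrains the spatial gradient of the defining function, so no minimal time will be required.

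For the boundary observation case, I would fix an arbitrary target $x^1 \in \overline{\M}$, choose $x^0 \in \Gamma$ and a smooth non-self-intersecting path $\gamma$ from $x^0$ to $x^1$, lying in $\M$ and orthogonal to $\partial\M$ at its endpoints, parametrized to have length $\ell_0$. Lemma \ref{lmcoord} then provides local coordinates $(w,x_n) \in B_{\R^{n-1}}(0,\delta) \times [0,\ell_0]$ in which the metric takes the form \eqref{fromdiagm}. In these coordinates (completed by the untouched time variable), the principal symbol is $p(w,x_n,\tau,\xi_w,\xi_n) = \langle m(w,x_n)(\xi_w,\xi_n),(\xi_w,\xi_n)\rangle$.

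Now, given any $T>0$, pick any $t_0 \in (0,T)$ (no lower bound is needed) and set, with $\psi$ as in the wave proof,
\begin{align*}
D &= \left\{(t,w) : \left(\tfrac{w}{b}\right)^2 + \left(\tfrac{t}{t_0}\right)^2 \leq 1\right\},\\
G(t,w,\varepsilon) &= \varepsilon \ell_0 \, \psi\!\left(\sqrt{\left(\tfrac{w}{b}\right)^2 + \left(\tfrac{t}{t_0}\right)^2}\right), \quad \phi_\varepsilon(t,w,x_n) = G(t,w,\varepsilon)-x_n.
\end{align*}
Since $p$ does not involve $\tau$, we have $p(w,x_n,d\phi_\varepsilon) = \langle m(w,x_n)(\partial_w G,-1),(\partial_w G,-1)\rangle$, which using \eqref{fromdiagm} equals $1 + \mathcal{O}(|w|) + \varepsilon^2 \mathcal{O}(|\partial_w G|^2)$. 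Taking $b$ small enough (independently of $\varepsilon \in [0,1]$ and of $(t,w) \in \overline{D}$), this is bounded below by a positive constant, so each $S_\varepsilon = \{\phi_\varepsilon = 0\}$ is non-characteristic, hence strongly pseudoconvex in $\{\tau = 0\}$ for $P$. Choosing in addition $b$ small enough that $D \subset (-T,T) \times \Gamma$ in the chosen chart, Theorem \ref{thmsemiglobalwavebord} provides a neighborhood $U$ of $x^1$ together with the estimate
\begin{equation*}
\|u\|_{L^2(U)} \leq Ce^{\kappa\mu}\left(\|\partial_\nu u\|_{L^2((-T,T)\times\Gamma)} + \|Pu\|_{L^2((-T,T)\times\M)}\right) + \frac{C}{\mu}\|u\|_{H^1((-T,T)\times\M)}.
\end{equation*}

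Covering $\overline{\M}$ by finitely many such neighborhoods (compactness), taking the worst of the resulting constants, and choosing $\varepsilon > 0$ so that $(-\varepsilon,\varepsilon)\times\M$ is contained in the union, yields the stated inequality. The internal observation case is identical, with the defining path starting at $x^0 \in \omega$ instead of $\Gamma$ and the foliation shifted off the boundary; the analytic case uses an analytic path and the $n_a=n$ branch of Theorem \ref{thmsemiglobal}, exactly as in the wave proof. The uniform dependence on time-independent lower order terms $R = W_0\partial_t + W_1\cdot\nabla + V$ follows from the quantitative dependence recorded in Corollary \ref{cor:carl-potentiel-bord} and Theorems \ref{thmquantitativeupboundary}--\ref{thmquantitativefromboundary}, since only finitely many applications of the local estimates enter the argument. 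There is no real obstacle here beyond the routine bookkeeping: the key conceptual point, which shortens the proof relative to the wave case, is that the $\tau$-independence of $p$ removes the geometric constraint $\alpha < t_0/\ell_0$ on the opening of the foliation, so no lower bound on $T$ is needed.
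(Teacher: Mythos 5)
Your proposal is correct and follows essentially the same route as the paper: the same path coordinates of Lemma \ref{lmcoord}, the same foliation $\phi_\eps$, the observation (via Remark \ref{rknoncaractwave}) that the $\tau$-independence of the principal symbol makes every non-characteristic surface strongly pseudoconvex and removes any lower bound on $T$, followed by Theorem \ref{thmsemiglobalwavebord}, compactness, and the same bookkeeping for the lower order terms. The only cosmetic difference is a sign convention in the principal symbol, which does not affect the non-characteristicity argument.
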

\bnp[Proof of Theorem \ref{thmobserSchro-bis}]
The proof is quite similar to the one for the wave equation, so we only sketch the main steps of the proof. The main difference will be that $T$ can be chosen arbitrary. Pick $t_0$ arbitrary with $t_0<T$, this time without any relation with $\ell_0$.

We use the same coordinate charts as defined in the proof of Theorem \ref{thmobserwave} for the wave equation. Then, the principal symbol of the Schr\"odinger operator will be 
$$
p(w,x_n, \tau, \xi_w, \xi_n) = - \langle m(w,x_n) \xi  , \xi  \rangle ,  \quad\xi = (\xi_w, \xi_n). 
$$

Therefore, $p$ is a quadratic form with real coefficients that is definite on the set $\left\{\tau=0\right\}$. Remark \ref{rknoncaractwave} allows to get that any non characteristic hypersurface is strictly pseudoconvex. So, with the same definition of $\phi_{\e}$, we get
\bna
p( w,x_n, d \phi_\eps(t,w,x_n) ) = 
- \ell_0^2\frac{\e^2}{ b^4}  \langle m' (x_n) w, w \rangle \left(\Big(\frac{w}{b}\Big)^2+\Big(\frac{t}{t_0}\Big)^2 \right)^{-1} |\psi'|^2  
- 1 \\
+O(|w|^2) \left( 1 + \frac{\e^2 \ell_0^2}{b^4}  |w|^2 \left(\Big(\frac{w}{b}\Big)^2+\Big(\frac{t}{t_0}\Big)^2 \right)^{-1}|\psi'|^2 \right).
\ena
But, for $w$ small enough, we still have
\bna
-1+O(|w|^2)&\leq &-1/2\\
- \langle m' (x_n) w, w \rangle +O(|w|^2)|w|^2&\leq&  0.
\ena
In particular, with the same notations as for the wave equation, there exists $b$ small enough so that for any $\e\in [0,1]$), and any $(t,w,x_n)\in \overline{D}\times [0,\ell_0]$, we have
\bna
p( w,x_n, d \phi_\eps(t,w,x_n) )& \leq&   -\frac{1}{2}.
 \ena
So, applying the same reasoning as for the wave equation, we obtain the existence of some $\kappa$, $C$, $\mu_0$ and $\e>0$ so that we have
$$
\nor{ u}{L^2(]-\e,\e[\times \M)}\leq C e^{\kappa \mu}\nor{\partial_{\nu} u}{L^2(]-T,T[\times \Gamma)} +\frac{C}{\mu}\nor{u}{H^1(]-T,T[\times \M)}
$$
for any $\mu\geq \mu_0$.

The dependence on the lower order term $R$ follows the same way as for the wave equation.
\enp
\bnp[Proof of Theorem \ref{thmobserschrod}]
Since the multiplication by $V$ acts on $H^1_0$ and $H^2$ if $V\in W^{2,\infty}(\M)$, using Duhamel formula and a Gronwall argument allows to obtain, for $s\in [-T,T]$, 
\bna
\nor{u_0}{L^2(\M)}&\leq &Ce^{C\nor{V}{L^{\infty}(\M)}}\left(\nor{u(s)}{L^2(\M)}+\nor{f}{L^2((-T,T)\times \M)}\right)\\
\nor{u(s)}{H^2(\M)}&\leq& Ce^{C\nor{V}{W^{2,\infty}(\M)}}\left(\nor{u_0}{H^2}+\nor{f}{L^2((-T,T);H^2(\M))}\right).
\ena
Integrating in time, it gives
\bna
\nor{u_0}{L^2(\M)}&\leq& Ce^{C\nor{V}{L^{\infty}(\M)}}\left(\nor{u}{L^2((-\e,\e)\times \M)}+\nor{f}{L^2((-T,T)\times \M)}\right)\\
\nor{u}{L^2((-T,T);H^2(\M))}&\leq& Ce^{C\nor{V}{W^{2,\infty}}}\left(\nor{u_0}{H^2}+\nor{f}{L^2((-T,T);H^2(\M))}\right).
\ena
To estimate $\partial_t u$, we notice that $\partial_t u=i(\Delta+V)u-if$. Therefore, we only need to estimate $\nor{\Delta u}{L^2}$.
\bna
\nor{\partial_t u}{L^2((-T,T)\times \M)}&\leq &C\nor{u}{L^2((-T,T);H^2)}+C \nor{V}{L^{\infty}(\M)}\nor{u}{L^2((-T,T)\times \M)}+\nor{f}{L^2(]-T,T[\times \M)}\\
&\leq &Ce^{C\nor{V}{W^{2,\infty}(\M)}}\left(\nor{u_0}{H^2}+\nor{f}{L^2((-T,T);H^2(\M))}\right).\\
\ena
So, this gives 
\bna
\nor{u}{H^1((-T,T)\times \M)}\leq Ce^{C\nor{V}{W^{2,\infty}(\M)}}\left(\nor{u_0}{H^2}+\nor{f}{L^2((-T,T);H^2(\M))}\right).
\ena
This gives the estimates of the Theorem when combined with Theorem \ref{thmobserSchro-bis}.
\enp
\appendix
\section{Two elementary technical lemmata}
\label{sectionlemmata}
In the above proof, we used the following elementary lemma (see e.g.~\cite{LeLe:09}).
\begin{lemma}
\label{l:fgh-three-fcts}
Let $K$ be a compact set and $f,g,h$ three continuous real valued functions on $K$. Assume that $f \geq 0$ on $K$, and $g>0$ on $\{f=0\}$. Then, there exists $A_0, C>0$ such that for all $A\geq A_0$, we have $g + A f- \frac{1}{A}h \geq C$ on $K$. 
\end{lemma}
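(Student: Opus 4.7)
The plan is to use a compactness argument, splitting $K$ into a neighborhood of the zero set $Z:=\{f=0\}$ (where $g$ itself provides the needed positive lower bound) and its complement (where $f$ is bounded below, so the $Af$ term dominates).

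First, I would observe that $Z$ is closed in the compact set $K$, hence compact. Since $g$ is continuous and strictly positive on $Z$, there exists $\delta>0$ such that $g\geq 2\delta$ on $Z$. By continuity of $g$, the set $U:=\{x\in K : g(x)>\delta\}$ is an open neighborhood of $Z$ in $K$. Its complement $K\setminus U$ is compact and disjoint from $Z$, so $f>0$ on $K\setminus U$; by compactness there exists $\eta>0$ such that $f\geq \eta$ on $K\setminus U$. Let also $M:=\max(\|g\|_{L^\infty(K)},\|h\|_{L^\infty(K)})<\infty$.

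Next, I would estimate $g+Af-\tfrac{1}{A}h$ separately on the two pieces. On $U$, using $f\geq 0$ and $|h/A|\leq M/A$, one gets
\[
g+Af-\tfrac{1}{A}h \;\geq\; \delta - \tfrac{M}{A}.
\]
On $K\setminus U$, using $g\geq -M$, $f\geq \eta$ and $|h/A|\leq M/A$, one gets
\[
g+Af-\tfrac{1}{A}h \;\geq\; -M + A\eta - \tfrac{M}{A}.
\]

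Finally, it suffices to choose $A_0$ large enough that $M/A_0\leq \delta/2$ and $A_0\eta \geq 2M+\delta/2$; then for every $A\geq A_0$ both quantities above are at least $\delta/2$, giving the uniform bound with $C:=\delta/2$. There is no real obstacle here, as everything reduces to continuity and compactness; the only point requiring a little care is organizing the two regimes so that a single threshold $A_0$ works for both estimates simultaneously.
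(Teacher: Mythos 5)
Your proof is correct and is exactly the standard compactness argument the paper has in mind: the paper gives no proof, merely citing \cite{LeLe:09} and reducing the statement to the variant Lemma~\ref{l:fgh-three-fctsbis}, whose hypothesis (an open neighborhood $V$ of $\{f=0\}$ on which $g>c$) is precisely the set $U=\{g>\delta\}$ you construct before splitting $K$ into $U$ and its complement. The only microscopic point is the final constant-chasing in the degenerate case $Z=\emptyset$ with $0<M<\delta/2$, where your two regimes yield the lower bound $\min(\delta/2,M)$ rather than $\delta/2$; since the lemma only asks for some $C>0$, this changes nothing.
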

Lemma \ref{l:fgh-three-fcts} is a consequence of the following variant.
\begin{lemma}
\label{l:fgh-three-fctsbis}
Let $K$ be a compact set and $f$ a continuous real valued function on $K$. Let $g$ and $h$ be two bounded function defined on $K$. Assume that $f \geq 0$ on $K$, and there exists $V$ an open neighborhood  of $\{f=0\}$ in $K$ so that $g>c$ on $V$ for one constant $c>0$. Then, there exists $A_0, C>0$ such that for all $A\geq A_0$, we have $g + A f- \frac{1}{A}h \geq C$ on $K$. 
\end{lemma}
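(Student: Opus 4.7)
The plan is to split $K$ into the open set $V$ (where $g$ is bounded below) and its complement $K\setminus V$ (where $f$ is bounded below), then verify the sought inequality on each piece separately. Since $V$ is open in $K$, the set $K\setminus V$ is closed in the compact space $K$, hence compact itself. The assumption $\{f=0\}\subset V$ together with $f\geq 0$ gives $f>0$ on $K\setminus V$, so by continuity of $f$ and compactness, $\delta:=\min_{K\setminus V} f>0$. I will also denote $M_g:=\sup_K|g|$ and $M_h:=\sup_K|h|$, both finite by assumption.

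On $V$, I would simply use $g>c$, $f\geq 0$, and the trivial bound $|\tfrac{1}{A}h|\leq M_h/A$ to obtain
\[
g(x)+Af(x)-\tfrac{1}{A}h(x)\;\geq\; c-\tfrac{M_h}{A}\;\geq\;\tfrac{c}{2},\qquad x\in V,
\]
as soon as $A\geq 2M_h/c$. On $K\setminus V$, I use $f\geq\delta$ together with the bounds on $g$ and $h$ to obtain
\[
g(x)+Af(x)-\tfrac{1}{A}h(x)\;\geq\;-M_g+A\delta-\tfrac{M_h}{A}\;\geq\;1,\qquad x\in K\setminus V,
\]
provided $A$ is large enough (it suffices, for instance, that $A\delta\geq M_g+1+M_h$, which holds as soon as $A$ exceeds an explicit threshold depending only on $M_g$, $M_h$ and $\delta$).

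Choosing $A_0$ larger than both thresholds above and setting $C:=\min(c/2,1)$, the inequality $g+Af-\tfrac{1}{A}h\geq C$ then holds on all of $K$ for every $A\geq A_0$, which is the desired conclusion.

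There is no real obstacle here: the argument is a routine compactness/splitting exercise, and the only minor point worth checking is the topological one, namely that $V$ being an \emph{open} neighborhood of $\{f=0\}$ in $K$ ensures $K\setminus V$ is compact and thus permits the extraction of the strictly positive lower bound $\delta$ for $f$. Once this is observed, the two estimates above combine directly to yield the lemma.
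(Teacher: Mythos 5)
Your proof is correct, and it is exactly the elementary compactness/splitting argument the paper has in mind (the paper omits the proof, labelling the lemma as an elementary technical fact): on the compact set $K\setminus V$ the continuous function $f$ attains a strictly positive minimum $\delta$ so that $Af$ dominates the bounded terms for large $A$, while on $V$ the bound $g>c$ survives the $O(1/A)$ perturbation. The only point worth stating explicitly, which you do, is the topological observation that $K\setminus V$ is closed in $K$, hence compact, and that $f>0$ there because $\{f=0\}\subset V$; note also that your argument correctly uses only the boundedness (not continuity) of $g$ and $h$, as the statement requires.
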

We also used the following result.
\begin{lemma}
\label{lmfonction}
Let $C_1$, $C_2$, and $\alpha$ be positive. Then, there exists $K>0$ such that for all $\mu_0>0$, for any $a,b,c>0$ such that there the following estimates hold
\bna
b\leq C_2 c , \quad a\leq  c, \quad \text{and} \quad a\leq e^{C_1 \mu}b+\frac{1}{\mu^{\alpha}}c, \quad \text{for all }\mu\geq \mu_0 ,
\ena
we have 
\bna
a\leq \frac{D_1}{\log\left(\frac{c}{b}+1\right)^{\alpha}}c , \quad \text{ with } D_1 = (2C_1)^\alpha \max \left\{ K ,\mu_0^\alpha\right\} ,
\ena
and 
\bna
c\leq e^{D_2 \left(\frac{c}{a}\right)^{1/\alpha}}b , \quad \text{ with } D_2 = D_1^{1/\alpha} = 2C_1  \max \left\{ K^{1/\alpha} ,\mu_0 \right\} .
\ena
\end{lemma}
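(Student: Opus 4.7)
The proof is a standard optimization of the parameter $\mu$ in the interpolation estimate $a \leq e^{C_1 \mu} b + \mu^{-\alpha} c$. The natural choice balances the two terms: the plan is to set $\mu_* := \frac{1}{2C_1} \log(c/b + 1)$, which gives $e^{C_1 \mu_*} = (c/b+1)^{1/2}$, so that $e^{C_1 \mu_*} b = b \sqrt{c/b+1} \leq \sqrt{bc + b^2}$ remains small compared to $c$ (thanks to $b \leq C_2 c$), while $\mu_*^{-\alpha} c$ directly produces the desired factor $(2C_1)^\alpha c / \log(c/b+1)^\alpha$.

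The first step is to split into two cases according to the position of $\mu_*$ relative to $\mu_0$. In the trivial case $\mu_* < \mu_0$ one has $\log(c/b+1)^\alpha < (2C_1 \mu_0)^\alpha = (2C_1)^\alpha \mu_0^\alpha$, hence $\frac{(2C_1)^\alpha \mu_0^\alpha}{\log(c/b+1)^\alpha}\, c > c \geq a$, which (via the $\mu_0^\alpha$ branch of the $\max$ in the definition of $D_1$) gives the first estimate for free. In the main case $\mu_* \geq \mu_0$, I would apply the hypothesis at $\mu = \mu_*$. The second term $\mu_*^{-\alpha} c$ yields exactly the desired logarithmic bound. For the first term, setting $r := b/c \in (0, C_2]$, one has $\sqrt{bc + b^2}/c = \sqrt{r+r^2}$, and the key elementary fact is that the function $r \mapsto \sqrt{r(1+r)}\,\log(1/r+1)^\alpha$ is continuous and bounded on $(0, C_2]$: it is finite at $r = C_2$ and, as $r \to 0^+$, it behaves like $\sqrt{r}\,|\log r|^\alpha \to 0$. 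This supplies a constant $K' = K'(\alpha, C_2)$ with $\sqrt{bc+b^2} \leq K' c / \log(c/b+1)^\alpha$, and one can then take $K := K'/(2C_1)^\alpha + 1$, which is independent of $\mu_0$ as required.

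The second estimate is immediate from the first. Indeed, if $c \leq b$ the bound $c \leq e^{D_2 (c/a)^{1/\alpha}} b$ is trivial, since the right-hand side is at least $b \geq c$; otherwise, the first estimate rewrites as $\log(c/b+1)^\alpha \leq D_1 c/a$, i.e.\ $\log(c/b+1) \leq D_1^{1/\alpha} (c/a)^{1/\alpha} = D_2 (c/a)^{1/\alpha}$, and since $\log(c/b) < \log(c/b+1)$, exponentiating yields the claim.

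The argument is purely elementary and I do not expect any real obstacle. The only design point worth naming is the form of $D_1$, which the statement has already prescribed: splitting the $\max$ as an ``interior optimum'' branch $K$ versus a ``boundary $\mu = \mu_0$'' branch $\mu_0^\alpha$ is precisely what makes the final constant uniform in the threshold $\mu_0$, and the two cases above match these two branches.
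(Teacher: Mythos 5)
Your proof is correct and follows essentially the same route as the paper's: the same choice $\mu_* = \frac{1}{2C_1}\log(c/b+1)$, the same split according to whether $\mu_*\geq\mu_0$ (matching the two branches of the $\max$ in $D_1$), and the same boundedness of $r\mapsto\sqrt{r(1+r)}\log(1/r+1)^\alpha$ on $(0,C_2]$ (the paper's constant $C_3$, after normalizing by $c$). The deduction of the second estimate from the first is also the paper's argument.
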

\bnp
Dividing all inequalities by $c$, setting $y=a/c$ and $x=b/c$, it suffices to prove
$$
\Big(x\leq C_2 , \quad y \leq 1, \quad y \leq e^{C_1 \mu} x + \mu^{-\alpha}\text{ for all }\mu\geq \mu_0  \Big)
\Longrightarrow y \leq \frac{D_1}{\log\left(\frac{1}{x}+1\right)^{\alpha}} \Longrightarrow \frac{1}{x} \leq e^{ \left(\frac{D_1}{y}\right)^{1/\alpha}} .
$$ 
Note that the second implication is straightforward since the second assertion is equivalent to $\frac{1}{x} \leq e^{ \left(\frac{D_1}{y}\right)^{1/\alpha}}- 1$.
To prove the first implication, we set $$\mu(x):= \frac{1}{2C_1}\log\left(\frac{1}{x}+1\right) ,$$ so that $e^{C_1 \mu(x)}x=\left(\frac{1}{x}+1\right)^{1/2} x = (1+x)^{1/2}x^{1/2}$. Denoting now $C_3=  C_3(C_1,C_2,\alpha)=\sup_{x\leq C_2}  (1+x)^{1/2}x^{1/2}\mu(x)^{\alpha} < + \infty$, we have $e^{C_1 \mu(x)}x\leq \frac{C_3}{\mu(x)^{\alpha}} $.  As a consequence, if $\mu(x)\geq \mu_0$, then we have $ y \leq \frac{\left(C_3+1\right)}{\mu(x)^{\alpha}}$, which is the sought estimate.

If now $\mu(x)\leq \mu_0$, that is $\frac{1}{2C_1}\log\left(\frac{1}{x}+1\right)\leq \mu_0$, we have $1\leq \left(\frac{2C_1\mu_0}{\log\left(\frac{1}{x}+1\right)}\right)^{\alpha}$ Then, the assumption $y\leq 1$ directly implies $y \leq \left(\frac{2C_1\mu_0}{\log\left(\frac{1}{x}+1\right)}\right)^{\alpha}$. This concludes the proof of the lemma for $D_1 =(2C_1)^\alpha \max \left\{ C_3+1 ,\mu_0^\alpha\right\}$.
\enp
\section{Elementary complex analysis}

We recall that we identify $\C$ and $\R^2$  with $z= x+iy = (x,y)$ and denote 
$$
Q_1 = \{z \in \C, \Re(z) >0 , \Im(z) > 0\} .
$$

\begin{lemma}
\label{l:green}
Let $f_0 , f_1 \in  W^{1,\infty}(\R^+)$ such that $|f_0'(x)|, |f_1'(x)|\leq C$ 
for some $C>0$ and almost all $x \in \R^+$. Then, the function defined for $(x,y) \in Q_1$ by
\bnan
\label{explicitgreen}
f(x,y)= \frac{4xy}{\pi}\int_0^{\infty}\frac{\xi f_0(\xi)}{((x-\xi)^2+y^2)((x+\xi)^2+y^2)}d\xi+\frac{4xy}{\pi}\int_0^{\infty}\frac{\eta f_1(\eta)}{(x^2+(y+\eta)^2)(x^2+(y-\eta)^2)}d\eta
\enan
satisfies $|f(z)|\leq 2C(1+|z|)$ in $\overline{Q}_1\setminus (0,0)$ together with
$$
\Delta f = 0 \quad \text{in }Q_1, \qquad f(x,0) =f_0(x) , \quad f(0,y) =f_1(y) , \quad x, y \in \R^+ .
$$
If moreover, $f_0(0)=f_1(0)$, then $f$ is continuous on $\overline{Q}_1$.
\end{lemma}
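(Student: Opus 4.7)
I would realize $f$ as the pullback of a Poisson integral on the upper half-plane $H=\{\Im w>0\}$ via the conformal map $\Phi(z)=z^{2}$, which sends $Q_{1}$ biholomorphically onto $H$, sends the positive real axis to itself, and sends the positive imaginary axis to the negative real axis. Accordingly, I would define $g:\R\to\R$ by $g(t)=f_{0}(\sqrt{t})$ for $t\geq 0$ and $g(t)=f_{1}(\sqrt{-t})$ for $t\leq 0$. The Lipschitz hypothesis yields $|g(t)|\leq M+C\sqrt{|t|}$ with $M=\max(|f_{0}(0)|,|f_{1}(0)|)$, so the Poisson integral
\[
F(u,v):=\frac{v}{\pi}\int_{\R}\frac{g(t)}{(u-t)^{2}+v^{2}}\,dt
\]
converges absolutely on $H$ and is harmonic there by the standard theory; moreover $g$ is continuous on $\R\setminus\{0\}$, and continuous on all of $\R$ precisely when $f_{0}(0)=f_{1}(0)$.

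The key computational step is the identification $f(x,y)=F(x^{2}-y^{2},2xy)$. Splitting $F$ into integrals over $t>0$ and $t<0$ and substituting $t=\xi^{2}$ and $t=-\eta^{2}$ respectively, this reduces to the two algebraic identities
\[
(x^{2}-y^{2}-\xi^{2})^{2}+4x^{2}y^{2}=\big((x-\xi)^{2}+y^{2}\big)\big((x+\xi)^{2}+y^{2}\big),
\]
\[
(x^{2}-y^{2}+\eta^{2})^{2}+4x^{2}y^{2}=\big(x^{2}+(y-\eta)^{2}\big)\big(x^{2}+(y+\eta)^{2}\big),
\]
both of which follow by direct expansion (using $(x^{2}-y^{2})^{2}+(2xy)^{2}=(x^{2}+y^{2})^{2}$). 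After accounting for the Jacobians $2\xi\,d\xi$ and $2\eta\,d\eta$ and the factor $v=2xy$, the stated formula for $f$ drops out.

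From this identification the three main conclusions come almost for free: $f=F\circ\Phi$ is harmonic on $Q_{1}$ because $\Phi$ is holomorphic and $F$ is harmonic on $H$; the boundary formulas $f(x,0)=g(x^{2})=f_{0}(x)$ and $f(0,y)=g(-y^{2})=f_{1}(y)$, together with continuity of $f$ on $\overline{Q}_{1}\setminus\{(0,0)\}$, come from the standard fact that the Poisson integral on $H$ recovers its boundary datum at every point of continuity of a locally bounded, sublinearly growing $g$; and continuity of $f$ at the corner $(0,0)$ requires continuity of $g$ at $0$, i.e.\ $f_{0}(0)=f_{1}(0)$.

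The only somewhat delicate point will be the growth bound $|f(z)|\leq 2C(1+|z|)$. I would split $g$ into its bounded part (whose Poisson integral is bounded by $\|g\|_{L^\infty_{\mathrm{loc}}}$) and its sublinear part of size $C\sqrt{|t|}$; a scaling computation gives $\frac{v}{\pi}\int_{\R}\sqrt{|t|}/((u-t)^{2}+v^{2})\,dt\lesssim\sqrt{|u|+v}$, and since $|u|+v=|x^{2}-y^{2}|+2xy\leq 2|z|^{2}$ this yields the required linear bound in $|z|$. This is the only step requiring any real care, with the overall multiplicative constant implicitly depending on $|f_{0}(0)|$ and $|f_{1}(0)|$ in addition to $C$ (for the intended application in Lemma~\ref{l:holomorphic} one has $f_0=0$ and $f_1(0)=0$, so no such extra dependence is incurred).
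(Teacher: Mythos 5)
Your proposal is correct, and it takes a genuinely different (though conformally equivalent) route from the paper. The paper constructs the Green function of $Q_1$ directly by the method of images with the four points $z,\bar z,-z,-\bar z$, writes the representation formula, and then verifies everything by hand: harmonicity from the Green-function property, the linear growth bound and the boundary limits via the explicit integral evaluations of Lemma~\ref{integralespourries} and a somewhat laborious pointwise limit analysis (including the term $T(|\eta-y_0|)$). You instead pull back the half-plane Poisson integral under $\Phi(z)=z^2$; your two algebraic identities are correct (both reduce to $(A\mp 2x\xi)(A\pm 2x\xi)=A^2-4x^2\xi^2$ with $A=x^2+y^2+\xi^2$, and similarly in $\eta$), and after the substitutions $t=\xi^2$, $t=-\eta^2$ they reproduce \eqref{explicitgreen} exactly — indeed the paper's $G_{Q_1}$ is precisely the pullback of $G_H(w,w')=\frac{1}{2\pi}\ln|w'-w|-\frac{1}{2\pi}\ln|w'-\bar w|$ under $w=z^2$. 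What your approach buys is that harmonicity and the boundary convergence at points of continuity come from the standard theory of Poisson integrals of locally bounded, subquadratically growing data on $H$ (the tail is integrable since the integrand decays like $|t|^{-3/2}$), replacing the paper's explicit limit computations; the price is the change-of-variables bookkeeping and the need to invoke the boundary-convergence result for unbounded data, which is standard. One minor quantitative caveat, which you already flag: your scaling bound $\frac{v}{\pi}\int_\R \sqrt{|t|}\,((u-t)^2+v^2)^{-1}dt\lesssim\sqrt{|u|}+\sqrt{v}$ carries the constant $\frac1\pi\int\sqrt{|s|}(1+s^2)^{-1}ds=\sqrt2$, so you get $|f(z)|\leq C'(1+|z|)$ rather than literally $2C(1+|z|)$, and the constant also involves $|f_0(0)|,|f_1(0)|$; since the lemma is only used in Lemma~\ref{l:holomorphic} to guarantee sub-quadratic growth for the Phragmén–Lindelöf argument (and there $f_0=0$, $f_1(0)=0$), this is harmless.
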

Remark that this theorem provides an existence result for the Poisson Problem on $Q_1$ associated to Lipschitz boundary conditions. The Phragm\'en-Lindel\"of theorem~\ref{l:phragmen} below provides an associated uniqueness result in the class of functions having a sub-quadratic growth at infinity.

The next lemma is a key point in the proof of the local estimate.
\begin{lemma}
\label{explicitgreenbis}
{
Let $R>0$, $\delta>0$, $\kappa >0$, $\eps>0$ and $c_1>0$. Then, there exists $d_0 = d_0(\delta, \kappa,R,\eps, c_1)$ such that for any $d<d_0$, there exists $\beta_0(\delta, \kappa,R,\eps, c_1, d)$, such that for any $0<\beta<\beta_0$, the following two assertions hold:
\begin{itemize}
\item the function 
\bna
f_1(y)= Ry  \mathds{1}_{[0, \gamma)}(y) + \mathds{1}_{[\gamma, + \infty)}(y) \min \big\{ Ry , \max(- \kappa , - 9\delta y ,-\frac{\e}{y}) + c_1y^2+ \frac{\beta^2}{y} \big\} .
\ena
is continuous for all $\gamma \leq \frac{\beta}{(R+9\delta)^\frac12}$ (in the application $\gamma = \frac{\tau_0}{\mu}$).
\item the function $f$ then given by Lemma~\ref{l:green} associated to $f_1$ and $f_0=0$ satisfies 
\bna
f(x,y)\leq -8\delta y , \quad \text{ for } \frac{d}{4}\leq |(x,y)|\leq 2 d .
\ena
\end{itemize}
}
\end{lemma}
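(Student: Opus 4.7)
My plan has two main pieces: first verify that under the stated restriction on $\gamma$ the function $f_1$ is continuous (so that Lemma~\ref{l:green} applies), and then estimate the Poisson representation of $f$ on the annulus by a careful three-piece splitting of the boundary integral.

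For the continuity check, the only suspect point is $y=\gamma$. The left-hand limit is $R\gamma$. The right-hand value is $\min\{R\gamma,\, \max(-\kappa,-9\delta\gamma,-\eps/\gamma)+c_1\gamma^2+\beta^2/\gamma\}$, and since the inner $\max$ contains $-9\delta\gamma$, the second argument of the $\min$ is at least $-9\delta\gamma + \beta^2/\gamma$. The hypothesis $\gamma\le\beta/(R+9\delta)^{1/2}$ rearranges precisely to $\beta^2/\gamma\ge (R+9\delta)\gamma$, so the second argument is $\ge R\gamma$ and the $\min$ equals $R\gamma$. Hence $f_1$ is continuous, and piecewise $C^1$ with a uniform bound on $|f_1'|$ (using $c_1\eta^2+\beta^2/\eta\ge R\eta$ forces the $\min$ to equal $R\eta$ outside a bounded interval), so Lemma~\ref{l:green} produces $f\in C^0(\bar Q_1)$ harmonic in $Q_1$ with the prescribed boundary values, at most linear growth, and $f=0$ on $\R_+$.

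Since $-9\delta y$ is itself harmonic in $Q_1$, vanishing on $\R_+$, the function $\tilde f(x,y):=f(x,y)+9\delta y$ is harmonic with boundary values $\tilde f(x,0)=0$ and $\tilde f(0,y)=h(y):=f_1(y)+9\delta y$. The desired conclusion $f\le -8\delta y$ becomes $\tilde f\le \delta y$. Set $y_*:=\min\bigl(\kappa/(9\delta),\sqrt{\eps/(9\delta)}\bigr)$. On $[0,\gamma]$ one has $h(\eta)=(R+9\delta)\eta$; on $[\gamma,y_*]$, the inner $\max$ reduces to $-9\delta\eta$, so $h(\eta)\le c_1\eta^2+\beta^2/\eta$; on $[y_*,\infty)$ we use only $h(\eta)\le (R+9\delta)\eta$. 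I will take $d_0:=y_*/4$ (so that for $d<d_0$ the annulus $\{d/4\le|(x,y)|\le 2d\}$ lies in the interior of $[\gamma, y_*]$-territory once $\gamma$ is small), then $\beta_0$ small enough that $\gamma\le\beta/(R+9\delta)^{1/2}\le d/8$.

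Using Lemma~\ref{l:green} with $f_0=0$, I split $\tilde f(x,y)=\frac{4xy}{\pi}(J_1+J_2+J_3)$ along the three ranges. On $J_1$ ($\eta\le\gamma\le d/8$), the Poisson kernel is bounded by $C/d^4$ and $h(\eta)\le (R+9\delta)\gamma$, yielding $\tfrac{4xy}{\pi}J_1\le C(R+9\delta)\gamma^2/d^2\cdot y$; shrinking $\beta$ makes this $\le \tfrac{\delta}{3}y$. On $J_3$ ($\eta\ge y_*\ge 4d$), the kernel decays like $\eta^{-4}$, so the contribution is $O(d^2/y_*)\cdot y$, which is $\le\tfrac{\delta}{3}y$ by further shrinking $d_0$. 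The main estimate is $J_2$: since $h(\eta)\le c_1\eta^2+\beta^2/\eta$ there, I split again. The $c_1\eta^2$ piece has Poisson extension bounded by $C c_1 d^2\cdot y/y = C c_1 d^2$ on the annulus (using the monotonicity of the Poisson extension and a direct kernel estimate with $\eta\sim d$), which is $\le\tfrac{\delta}{3}y$ once $d_0\le c\delta/c_1$; the $\beta^2/\eta$ piece integrates to at most $C\beta^2/d^2\cdot y$, again made $\le\tfrac{\delta}{3}y$ by further shrinking $\beta_0$. Combining the three gives $\tilde f(x,y)\le \delta y$ on the annulus.

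The main obstacle is the middle integral $J_2$: the $c_1\eta^2$ term is not a globally admissible boundary datum, so I cannot simply invoke a closed-form harmonic extension. The argument must be carried out by a direct quantitative estimate of the Poisson kernel on the annulus, keeping track of the interplay between the three small parameters ($d$ controlling $c_1\eta^2$, $\beta$ controlling $\beta^2/\eta$, and $\gamma$ ensuring the splitting is clean), and then ordering the smallness: first fix $d_0$ from $y_*$, $\delta$, $c_1$, $R$, $\eps$, $\kappa$; then, once $d<d_0$ is given, fix $\beta_0=\beta_0(d,\delta,R)$.
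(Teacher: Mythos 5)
Your overall architecture (continuity at $y=\gamma$ via $\beta^2/\gamma\geq (R+9\delta)\gamma$, then a three-region splitting of the Poisson integral with the near-zero piece killed by shrinking $\beta$ and the far piece by shrinking $d$) matches the paper's, and the continuity argument and the $J_1$, $J_3$ estimates are fine. The difference, and the problem, is in the middle region. The paper does not estimate the middle integral at all: it works with $\tilde f=f+8.5\delta y$ and chooses the middle interval $I_\beta=[\beta\sqrt{2/\delta},\min(\tfrac{\delta}{2c_1},\tfrac{\kappa}{9\delta},\tfrac{\sqrt{\e}}{3\sqrt{\delta}})]$ precisely so that on it $f_1(\eta)=-9\delta\eta+c_1\eta^2+\beta^2/\eta\leq-8.5\delta\eta$ (the cap $\eta\lesssim\delta/c_1$ and the floor $\eta\gtrsim\beta/\sqrt{\delta}$ let the spare $\tfrac{\delta}{2}\eta$ absorb both $c_1\eta^2$ and $\beta^2/\eta$), so the middle contribution to $\tilde f$ is nonpositive by inspection of the sign of the integrand. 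You instead keep the full shift $9\delta y$ and try to bound the positive remainder $c_1\eta^2+\beta^2/\eta$ on $[\gamma,y_*]$ quantitatively.

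That route can be made to work, but your estimate of the $c_1\eta^2$ piece has a genuine gap: you claim a bound ``$Cc_1d^2\cdot y/y=Cc_1d^2$'' and then assert $Cc_1d^2\leq\tfrac{\delta}{3}y$ once $d_0\leq c\delta/c_1$. A $y$-independent bound can never be $\leq\tfrac{\delta}{3}y$ uniformly on the annulus $\{\tfrac{d}{4}\leq|(x,y)|\leq 2d\}$, which contains points with $y$ arbitrarily close to $0$; the whole point of the lemma is that the estimate must vanish linearly at the real axis, where the target $-8\delta y$ vanishes. The correct bound is obtained from the exact identity of Lemma~\ref{integralespourries}, $\frac{4xy}{\pi}\int_0^\infty\frac{\eta\cdot\eta}{(x^2+(y+\eta)^2)(x^2+(y-\eta)^2)}d\eta=y$: on $\eta\in[\gamma,Cd]$ one writes $\eta^3\leq Cd\cdot\eta^2$ and gets a contribution $\leq Cc_1d\cdot y$, while on $\eta\in[Cd,y_*]$ the kernel decay $\lesssim\eta^{-4}$ gives $\lesssim c_1xy\ln(y_*/d)\lesssim c_1d\ln(y_*/d)\,y$; both are $\leq\tfrac{\delta}{3}y$ for $d_0$ small depending on $\delta,c_1,\kappa,\e$. (Your constraint $d_0\leq c\delta/c_1$ is the right kind of condition, but it must be attached to a bound of the form $Cc_1d\cdot y$, not $Cc_1d^2$.) With that repair, and noting that your $J_3$ bound should read $O(d/y_*)\cdot y$ rather than $O(d^2/y_*)\cdot y$ (one factor $x\leq 2d$ and one factor $y$), your argument closes; but as written the key step fails near $y=0$.
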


\bnp[Proof of Lemma~\ref{l:green}]
Let us first justify the form~\eqref{explicitgreen} of the solution.
From the green function $G_\C(z,z') = (2\pi)^{-1}\ln|z'-z|$ in $\C$, we first construct a Green function in $Q_1$ using the so-called ``image points'' $\bar{z}$, $-z$ and $-\bar{z}$.  This yields
$$
G_{Q_1}(z,z') := \frac{1}{2\pi} \ln|z'-z| - \frac{1}{2\pi} \ln|z'-\bar{z}|- \frac{1}{2\pi} \ln|z' +\bar{z}| + \frac{1}{2\pi} \ln|z'+z| ,
$$
that is, with $z= (x,y)$ and $z' = (\xi,\eta)$,
\bna
G_{Q_1}((x,y),(\xi,\eta)) & := & \frac{1}{4\pi} \ln\left((\xi-x)^2 + (\eta -y)^2\right) 
- \frac{1}{4\pi} \ln\left((\xi-x)^2 + (\eta +y)^2\right) \\
&& - \frac{1}{4\pi} \ln\left((\xi+x)^2 + (\eta -y)^2\right)
+ \frac{1}{4\pi} \ln\left((\xi+x)^2 + (\eta +y)^2\right) .
\ena
For fixed $z \in Q_1$, the last three terms are smooth in $z' \in Q_1$ so that $-\Delta_{z'}G_{Q_1}(z,z') = \delta_{z' = z}$. Moreover, for $z' = (\xi, \eta) \in \d Q_1$, either $\xi = 0$ or $\eta = 0$ so that $G_{Q_1} = 0$ for $z' \in Q_1$.

Now we compute
\bna
\frac{\d G_{Q_1}}{\d \xi}\big|_{\xi =0}  &=&- \frac{4xy}{\pi}\frac{\eta}{(x^2+(y+\eta)^2)(x^2+(y-\eta)^2)} ,
\\
\frac{\d G_{Q_1}}{\d \eta}\big|_{\eta =0}  &=&-  \frac{4xy}{\pi} \frac{\xi }{((x-\xi)^2+y^2)((x+\xi)^2+y^2)} .
\ena
The representation formula for solutions of $\Delta f = 0$ in $Q_1$ and $f|_{\d Q_1} = \tilde f$ writes 
$$
f(z) = \int_{\d Q_1} \frac{\d G_{Q_1}}{\d \nu_{\d Q_1}}(z,z')|_{z'\in \d Q_1}\tilde f (z') dz' ,
$$
which justifies~\eqref{explicitgreen}.

\medskip
Let us now estimate for $(x,y) \in Q_1$ the term
\bna
\left| \frac{4xy}{\pi}\int_0^{\infty}\frac{\eta f_1(\eta)}{(x^2+(y+\eta)^2)(x^2+(y-\eta)^2)}d\eta \right| 
&\leq & \frac{4xy}{\pi}\int_0^{\infty}\frac{\eta C(1 +\eta)}{(x^2+(y+\eta)^2)(x^2+(y-\eta)^2)}d\eta \\
&\leq & C\left( 2/\pi \arctan(y/x) + y \right) \\
&\leq & C\left( 1+ y \right) ,
\ena
where we used Lemma~\ref{integralespourries} in the second inequality.
The other term containing $f_0$ can be estimated as well in $Q_1$ by $C\left( 1+ x\right)$ so that 
$$
|f(z)| \leq  C\left( 2+x+ y \right) \leq 2 C\left( 1+|z| \right) , \quad z=(x,y) \in Q_1 .
$$

That $\Delta f = 0$ follows from the definition of $G_{Q_1}$ as a Green function, and it only remains to check the boundary values of $f$. For this, according to the symmetry, it suffices to prove that for all $x_0 , y_0 >0$, we have
\bnan
\label{eq:limits to be proved}
\lim_{(x,y)\to (x_0,0)} (Tf_1)(x,y) = 0 , \quad \lim_{(x,y)\to (0,y_0)} (Tf_1)(x,y) = f_1(y_0)   .
\enan
with 
\bna
(Tf_1)(x,y) : = \frac{4xy}{\pi}\int_0^{\infty}\frac{\eta f_1(\eta)}{(x^2+(y+\eta)^2)(x^2+(y-\eta)^2)}d\eta .
\ena 
Since $f_1' \in L^\infty(\R^+)$, we have
$$
|f_1(\eta)| \leq |f_1(0)| + \eta \|f_1'\|_{L^\infty} .
$$
Hence, according to the definition of  $T$, we obtain 
\bnan
\label{estimTf1}
|(Tf_1) | \leq |f_1(0)| T(1) +  \|f_1'\|_{L^\infty} T(\eta) .
\enan
Using Lemma~\ref{integralespourries}, this implies
$$
|(Tf_1) (x,y)| \leq |f_1(0)|  2/\pi \arctan(y/x)  +  \|f_1'\|_{L^\infty} y ,
$$
and thus $(Tf_1) (x,y)\to 0 $ as ($x,y)\to (x_0,0)$, which yields the first part of~\eqref{eq:limits to be proved}.

To prove the second part of~\eqref{eq:limits to be proved}, we write
$$
|f_1(\eta) - f_1(y_0) | \leq |\eta - y_0|  \|f_1'\|_{L^\infty} .
$$
This implies
\bnan
\label{eq:presque conclu T}
|Tf_1 (x,y) - 2/\pi \arctan(y/x) f_1(y_0) | =  |Tf_1 - T (f_1(y_0)) | \leq   \|f_1'\|_{L^\infty} T(|\eta - y_0|).
\enan
We now study the term 
\bna
T(|\eta - y_0|)(x,y) & = & \frac{4xy}{\pi}\int_0^{\infty}\frac{\eta |\eta - y_0|}{(x^2+(y+\eta)^2)(x^2+(y-\eta)^2)}d\eta  \\
 & = & \frac{4xy}{\pi}\int_0^{y_0}\frac{\eta (y_0 - \eta)}{(x^2+(y+\eta)^2)(x^2+(y-\eta)^2)}d\eta \\
 && \quad + \frac{4xy}{\pi}\int_{y_0}^{\infty}\frac{\eta (\eta - y_0)}{(x^2+(y+\eta)^2)(x^2+(y-\eta)^2)}d\eta  \\
 & = & 2 \frac{4xy}{\pi}\int_0^{y_0}\frac{\eta (y_0 - \eta)}{(x^2+(y+\eta)^2)(x^2+(y-\eta)^2)}d\eta \\
 && \quad + \frac{4xy}{\pi}\int_{0}^{\infty}\frac{\eta (\eta - y_0)}{(x^2+(y+\eta)^2)(x^2+(y-\eta)^2)}d\eta  \\
  & = & 2 \frac{4xy}{\pi}\int_0^{y_0}\frac{\eta (y_0 - \eta)}{(x^2+(y+\eta)^2)(x^2+(y-\eta)^2)}d\eta  + T(\eta - y_0)(x,y) .
  \ena
  With Lemma~\ref{integralespourries}, we have $T(\eta - y_0)(x,y) = y - 2/\pi \arctan(y/x)y_0 \to 0$ as $(x,y)\to (0,y_0)$. Moreover, we have
  \bna
  \frac{4xy}{\pi}\int_0^{y_0}\frac{\eta (y_0 - \eta)}{(x^2+(y+\eta)^2)(x^2+(y-\eta)^2)}d\eta = \frac{1}{\pi}\int_0^{y_0}-\frac{x (y_0 - \eta)}{x^2+(y+\eta)^2}+\frac{x (y_0 - \eta)}{x^2+(y-\eta)^2}d\eta
  \ena
(see the proof of Lemma~\ref{integralespourries}). The term $\int_0^{y_0}\frac{x (y_0 - \eta)}{x^2+(y+\eta)^2} d\eta$ vanishes when $(x,y)\to (0,y_0)$. Concerning the second term, we have
  \bna
  \frac{1}{\pi}\int_0^{y_0}\frac{x (y_0 - \eta)}{x^2+(y-\eta)^2}d\eta 
  & = &  \frac{1}{\pi}\int_{-y/x}^{(y_0-y)/x}(y_0 - y - xs)\frac{ds}{1+s^2} \\
  & = &  \frac{y_0 - y}{\pi}\left( \arctan \left(\frac{y_0 - y}{x}\right) +  \arctan \left(\frac{y}{x}\right)  \right) 
     - \frac{x}{2\pi} \ln \left( \frac{x^2 + (y_0 - y)^2}{x^2 + y^2} \right) ,
  \ena
which vanishes when $(x,y)\to (0,y_0)$. The last three estimate prove $T(|\eta - y_0|)(x,y) \to 0$ as $(x,y)\to (0,y_0)$.
In view of~\eqref{eq:presque conclu T}, this implies 
$$
\lim_{(x,y)\to (0,y_0)}  |Tf_1 (x,y) - 2/\pi \arctan(y/x) f_1(y_0) | = 0
$$
which is the second part of~\eqref{eq:limits to be proved}. 

For the continuity, by symmetry and translation by a constant, it is sufficient to prove that if $f_1(0)=0$, then $Tf_1(x,y)$ converges to zero as $(x,y)$ converges to zero. This is implied by \eqref{estimTf1}.
This concludes the proof of the Lemma.
\enp

\bnp[Proof of Lemma~\ref{explicitgreenbis}]
Let us define 
\bna
I_{\beta} : = \left[\beta \sqrt{2/\delta} , \min(\frac{\delta}{2 c_1} ,  \frac{\kappa}{9\delta} , \frac{\sqrt{\eps}}{3 \sqrt{\delta}}) \right]  ,
\ena
and notice that $I_\beta \neq \emptyset$ for $\beta \leq \beta_0$ with $\beta_0 = \beta_0(\delta, \kappa, c_1 , \eps)$ sufficiently small.
We first prove that for all $\gamma \leq \beta \sqrt{4/\delta}$, we have
\bnan
\label{f1Ibeta}
f_1(y) = -9 \delta y + c_1 y^2  +\frac{\beta^2}{y} \textnormal{ on } I_\beta, 
\enan
and
\bnan
\label{eqn:Ibeta2}
I_{\beta}  \subset \{f_1 \leq -8 \delta y \}, 
\enan
and, \textit{a fortiori} for $\gamma \leq \frac{\beta}{(R+9\delta)^\frac12} \leq \beta \sqrt{4/\delta}$.

For this, notice that $y \in I_\beta$ implies $y \leq \delta/(4 c_1)$ and $y \geq \beta \sqrt{4/\delta}$ which yields 
$$
-\frac{\delta y^2}{2} + c_1 y^3 \leq -\frac{\delta}{4}y^2 \leq -\beta^2 .
$$
As a consequence, \bnan
\label{inegRy}
-\frac{\delta}{2} y + c_1 y^2  +\frac{\beta^2}{y} \leq 0, \quad \text{and hence} 
\quad-9 \delta y + c_1 y^2  +\frac{\beta^2}{y} \leq -8.5 \delta y \leq 0 \leq Ry .
\enan
In particular, \eqref{f1Ibeta} implies \eqref{eqn:Ibeta2}. Moreover, for $y \in I_\beta$, we have $-\kappa \leq -9\delta y$ together with $- \frac{\eps}{y} \leq - 9 \delta y$, so that $\max(- \kappa , - 9\delta y ,-\frac{\e}{y})=-9\delta y$.  This proves \eqref{f1Ibeta} with the help of \eqref{inegRy}. 

\medskip
Let us now check the continuity of $f_1$. First remark that both $Ry$ and  $\min \big\{ Ry , \max(- \kappa , - 9\delta y , - \frac{\eps}{4 y}) + c_1 y^2+ \frac{\beta^2}{y} \big\}$ are continuous. Second, we prove that both functions coincide  for $y \leq \gamma$ which provides the continuity of $f_1$. For $0\leq y \leq \gamma \leq \frac{\beta}{(R+9\delta)^\frac12}$, we have $(9 \delta + R - c_1 y)y^2 \leq \beta^2$ and we obtain $Ry \leq  -9 \delta y + c_1 y^2  +\frac{\beta^2}{y}$. For $\beta \leq \beta_0$ we have $I_\beta \neq \emptyset$ so that $y \leq \beta \sqrt{4/\delta} \leq \min(  \frac{\kappa}{9\delta} , \frac{\sqrt{\eps}}{3 \sqrt{\delta}}) $, and $ \max(- \kappa ,-9 \delta y, -\frac{\eps}{y})= -9 \delta y$ for $y \leq \gamma$. As a consequence, we have 
$$ Ry  = \min \big\{ Ry , \max(- \kappa , - 9\delta y , -\frac{\eps}{y}) + c_1 y^2+ \frac{\beta^2}{y} \big\} , \quad \text{ for } 0\leq y \leq \gamma ,
$$
and $f_1$ is continuous for all $\beta \leq \beta_0$ and $\gamma \leq \frac{\beta}{(R+9\delta)^\frac12}$.

\medskip
Since $f_1$ is continuous and globally Lipschitz, it satisfies all assumptions of lemma~\eqref{l:green} (and $f_0 = 0$), so that we can define $f$ by
\bna
f(x,y)= \frac{4xy}{\pi}\int_0^{\infty}\frac{\eta f_1(\eta)}{(x^2+(y+\eta)^2)(x^2+(y-\eta)^2)}d\eta
\ena
Setting $\widetilde{f}=f+8.5\delta y$, we now prove an upper bound for $\widetilde{f}$. Using the second formula of Lemma~\ref{integralespourries}, we have
\bna
\widetilde{f}(x,y) 
& = & \frac{4xy}{\pi}\int_0^{\infty}\frac{\eta(f_1(\eta)+8.5 \delta\eta)}{(x^2+(y+\eta)^2)(x^2+(y-\eta)^2)}d\eta \\
& = &  \frac{4xy}{\pi}\int_{\R_+ \setminus I_\beta} \cdots d\eta  
+ \frac{4xy}{\pi}\int_{I_\beta} \cdots d\eta .
\ena
According to~\eqref{eqn:Ibeta2}, we have
\bnan
\label{eq:Ibetanegatif}
\frac{4xy}{\pi}\int_{I_\beta}\frac{\eta(f_1(\eta)+8.5 \delta\eta)}{(x^2+(y+\eta)^2)(x^2+(y-\eta)^2)}d\eta \leq 0 .
\enan
Next, for small $\beta$, we have $\R_+ \setminus I_\beta = [0, D_\beta] \cup [D, +\infty]$, with $D_\beta := \beta \sqrt{4/\delta}< D: = \min(\frac{\delta}{2 c_1} ,  \frac{\kappa}{9\delta} , \frac{\sqrt{\eps}}{6 \sqrt{\delta}})$. Since $f_1(y) \leq Ry$, we have
\bna
\frac{4xy}{\pi}\int_D^{\infty} \cdots \leq 
 \frac{4xy}{\pi}\int_D^{\infty}\frac{(R+8.5\delta)\eta^2}{(x^2+(y+\eta)^2)(x^2+(y-\eta)^2)}d\eta .
\ena
If $0\leq y\leq D/2$ and $\eta\geq D$, we have $(y-\eta)^2\geq (\eta-D/2)^2$ and $(y+\eta)^2\geq \eta^2$, so we estimate
\bna
\frac{4xy}{\pi}\int_D^{\infty} \cdots \leq \frac{16xy}{\pi}\int_D^{\infty}\frac{ (R+8.5\delta)\eta^2}{\eta^2(\eta-D/2)^2}d\eta = C(\delta,\kappa, R, \eps , c_1)xy
\ena
So, if $x\leq \nu D$ and $y\leq D/2$, this implies 
\bnan
\label{estimeepourrieD}
\frac{4xy}{\pi}\int_D^{\infty} \cdots
\leq \nu C(\delta,\kappa, R,\eps , c_1)D(\delta,\kappa,\eps , c_1)y 
\leq \de y /4
\enan
as soon as $\nu \leq \frac{\delta}{4C D }$. Now we fix $2d_0 :=2 d_0(\delta,\kappa, R,\eps , c_1) = \min \{ \nu D ,  D/2\}$. For any $d \leq d_0$, we have~\eqref{estimeepourrieD} for all $(x,y)$ such that $|(x,y)| \leq 2 d$.

\medskip
Finally, we study the term $\frac{4xy}{\pi}\int_0^{D_\beta} \cdots d\eta$. 
For $\beta$ sufficiently small (namely $\beta \leq \frac{d\sqrt{\delta}}{16}$), we have $\frac{d}{4} - D_\beta \geq \frac{d}{8}$ (recall that $D_\beta = \beta \sqrt{4/\delta}$). As a consequence, for $(x,y)$ such that $\frac{d}{4} \leq |(x,y)| \leq 2 d$, and for all $\eta \in [0, D_\beta]$, the triangle inequality yields 
$$
(x^2+(y+\eta)^2) \geq (\frac{d}{4} - D_\beta)^2 \geq \frac{d^2}{8^2} , 
\qquad (x^2+(y-\eta)^2) \geq (\frac{d}{4} - D_\beta)^2 \geq \frac{d^2}{8^2} .
$$
Still using that $f_1(y) \leq Ry$, we have
\bna
\frac{4xy}{\pi}\int_0^{D_\beta} \cdots & \leq & 
 \frac{4xy}{\pi}\int_0^{D_\beta}\frac{(R+8.5\delta)\eta^2}{(x^2+(y+\eta)^2)(x^2+(y-\eta)^2)}d\eta \\
 & \leq & \frac{4xy}{\pi}\left( \frac{8}{d}\right)^4 (R+8.5\delta) \int_0^{D_\beta}\eta^2 d\eta \
 \\
 & \leq & \frac{4xy}{\pi}\left( \frac{8}{d}\right)^4 (R+8.5\delta) \frac{D_\beta^3}{3}\
 \\
 & \leq &  C'(R, \delta,   d) \beta^3 y\
\ena
Now, for all $\beta \leq \left(\frac{\delta}{4C'(R, \delta,   d) } \right)^{1/3}$ this is less than $\delta y/4$.

This together with \eqref{eq:Ibetanegatif} and \eqref{estimeepourrieD} implies that $\tilde f(x,y) \leq \delta y/2$ for $(x,y)$ such that $\frac{d}{4} \leq |(x,y)| \leq 2 d$, that is 
\bna
f (x,y )\leq  -8\delta y , \quad \text{ for }\frac{d}{4} \leq |(x,y)| \leq 2 d .
\ena
This concludes the proof of the lemma.
\enp
\begin{lemma}
\label{integralespourries}
For all $x,y >0$, we have
\bna
&&\frac{4xy}{\pi}\int_0^{\infty}\frac{\eta}{(x^2+(y+\eta)^2)(x^2+(y-\eta)^2)}d\eta = 2/\pi \arctan(y/x)
\\
&&\frac{4xy}{\pi}\int_0^{\infty}\frac{\eta^2}{(x^2+(y+\eta)^2)(x^2+(y-\eta)^2)}d\eta =y.
\ena
\end{lemma}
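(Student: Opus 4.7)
The plan is to reduce both integrals to elementary ones by a single partial fraction identity. Writing $a = x^2+(y-\eta)^2$ and $b = x^2+(y+\eta)^2$, one has $b-a = 4y\eta$, so
\[
\frac{1}{(x^2+(y+\eta)^2)(x^2+(y-\eta)^2)} = \frac{1}{4y\eta}\left(\frac{1}{x^2+(y-\eta)^2}-\frac{1}{x^2+(y+\eta)^2}\right).
\]
Multiplying by $\eta$ (respectively $\eta^2$) cancels (respectively reduces) the $\eta$ in the denominator, so the integrand becomes a difference of translates of $\frac{1}{x^2+u^2}$ (respectively $\frac{u}{x^2+u^2}$).

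For the first identity, after substituting $u=\eta-y$ in the first piece and $u=\eta+y$ in the second, the integral becomes $\frac{x}{\pi}\int_{-y}^{y}\frac{du}{x^2+u^2} = \frac{2}{\pi}\arctan(y/x)$, using that $\int_{-y}^{\infty} - \int_{y}^{\infty} = \int_{-y}^{y}$. For the second identity, the same substitutions turn $\eta$ into $u+y$ and $u-y$ respectively. The $u$-parts combine into $\int_{-y}^{y}\frac{u\,du}{x^2+u^2}=0$ by oddness, while the $y$-parts contribute $y\bigl(\int_{-y}^{\infty}+\int_{y}^{\infty}\bigr)\frac{du}{x^2+u^2} = y\cdot \pi/x$ (each primitive being $\frac{1}{x}\arctan(u/x)$, and the boundary values at $\pm y$ canceling). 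Multiplying by the prefactor $x/\pi$ gives $y$. There is no real obstacle here: the only care needed is to track the signs in the two substitutions and to use that the antiderivatives at $\pm\infty$ produce $\pi/2$ which combine cleanly with those at $\pm y$.
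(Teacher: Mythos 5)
Your proof is correct and follows essentially the same route as the paper: the identical partial-fraction identity coming from $\big(x^2+(y+\eta)^2\big)-\big(x^2+(y-\eta)^2\big)=4y\eta$, followed by translation substitutions and elementary $\arctan$ integrals. One point to make explicit in the second identity: the two ``$u$-parts'' $\int \frac{u\,du}{x^2+u^2}$ are individually divergent (their antiderivative is a logarithm, not an $\arctan$), so the cancellation by oddness must be performed on the truncated integrals $\int_0^N$ before letting $N\to\infty$, observing that the leftover window $\int_{N-y}^{N+y}\frac{u\,du}{x^2+u^2}=\frac12\ln\frac{x^2+(N+y)^2}{x^2+(N-y)^2}\to 0$ — which is exactly how the paper handles it.
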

\bnp[Proof of Lemma~\ref{integralespourries}]
First notice that
\bna
\frac{4xy\eta}{(x^2+(y+\eta)^2)(x^2+(y-\eta)^2)}=-\frac{x}{x^2+(y+\eta)^2}+\frac{x}{x^2+(y-\eta)^2} .
\ena
Hence, we obtain
\bna
4xy\int_0^{N}\frac{\eta}{(x^2+(y+\eta)^2)(x^2+(y-\eta)^2)}d\eta & = & \int_0^{N}-\frac{x}{x^2+(y+\eta)^2}+\frac{x}{x^2+(y-\eta)^2}d\eta\\
& = & - \int_{y/x}^{(N+y)/x}\frac{1}{1+s^2} ds +\int_{(y-N)/x}^{y/x} \frac{1}{1+s^2} ds\\
& = & - \arctan((N+y)/x)) + \arctan(y/x) \\
& & + \arctan(y/x) -  \arctan((y-N)/x))\\
& \to & 2 \arctan(y/x), \quad \text{ as }N \to \infty,
\ena
since $x,y>0$.

\medskip
Concerning the second equation, we have
\bna
\int_0^{N}\frac{4xy\eta^2}{(x^2+(y+\eta)^2)(x^2+(y-\eta)^2)}d\eta&=&\int_0^{N}-\frac{x\eta}{x^2+(y+\eta)^2}+\frac{x\eta}{x^2+(y-\eta)^2}d\eta\\
& = &-\int_{-N}^{N}\frac{x\eta}{x^2+(y+\eta)^2}d\eta=-\int_{-N+y}^{N+y}\frac{x(s-y)}{x^2+s^2}ds \\
&=&-\int_{-N+y}^{N+y}\frac{xs}{x^2+s^2}ds+\int_{-N+y}^{N+y}\frac{xy}{x^2+s^2}ds .
\ena
Since the integrand is an odd function, we have $\int_{-N+y}^{N+y}\frac{xs}{x^2+s^2}ds=\int_{-N-y}^{-N+y}\frac{xs}{x^2+s^2}ds$ which converges to zero as $N\to \infty$. Moreover, we have
\bna
\int_{-N-y}^{N-y}\frac{xy}{x^2+s^2}ds=y\int_{(-N-y)/x}^{(N-y)/x}\frac{1}{1+s^2}ds \to \pi y , \quad \text{ as }N\to \infty ,
\ena
which concludes the proof of the lemma. 
\enp

The following is a version of the Phragm\'en Lindel\"of principle for subharmonic functions in a sector of the complex plane. We prove it as a consequence of the maximum principle for subharmonic functions in bounded domains. Note that the usual Phragm\'en Lindel\"of theorem (see~\cite{PL:08} or~\cite[Theorem~3.4]{SS:03}) can be deduced from this one.

\begin{lemma}
\label{l:phragmen}
Let $\phi$ be a subharmonic function in $Q_1$, continuous in $\bar{Q}_1$.
Assume that there exist $\eps>0$ and $C>0$ such that
\bnan
\phi(z) \leq C(1 + |z|^{2-\eps})  , \quad z \in Q_1 ,
\enan
\bnan
\phi(z) \leq 0 , \quad z \in \d Q_1 = \R_+ \cup i \R_+ .
\enan
Then $\phi(z) \leq 0$ for all $z \in Q_1$.
\end{lemma}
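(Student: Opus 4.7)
The plan is to run a standard comparison argument, subtracting from $\phi$ a well-chosen harmonic function that grows faster than $|z|^{2-\varepsilon}$ but stays nonnegative on $\bar Q_1$, and then invoking the maximum principle for subharmonic functions on the bounded domain $Q_1\cap B(0,R)$ before letting $R\to\infty$ and the perturbation parameter go to zero.

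First I would fix an exponent $\beta\in(2-\varepsilon,2)$ and introduce the auxiliary function
\[
v(z) := \Re\bigl((e^{-i\pi/4}z)^{\beta}\bigr) = r^{\beta}\cos\!\bigl(\beta(\theta-\pi/4)\bigr), \qquad z=re^{i\theta},\ \theta\in[0,\pi/2].
\]
Because $z\mapsto (e^{-i\pi/4}z)^{\beta}$ is holomorphic on $Q_1$ (the branch cut of the power function can be placed in the lower half plane), $v$ is harmonic on $Q_1$ and continuous on $\bar Q_1$. Since $\theta-\pi/4\in[-\pi/4,\pi/4]$ on $\bar Q_1$ and $\beta\pi/4<\pi/2$, one has the uniform bound
\[
v(z)\ge c_{\beta}|z|^{\beta}, \qquad z\in\bar Q_1,\quad c_{\beta}:=\cos(\beta\pi/4)>0.
\]

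Second, for any $\delta>0$ set $\phi_{\delta}:=\phi-\delta v$. This function is subharmonic in $Q_1$ and continuous on $\bar Q_1$. On the boundary $\d Q_1$ I have $\phi\le 0$ by hypothesis and $v\ge 0$, hence $\phi_{\delta}\le 0$ on $\d Q_1$. On the quarter-circle $\{|z|=R\}\cap Q_1$ the growth hypothesis combined with the lower bound for $v$ gives
\[
\phi_{\delta}(z)\le C(1+R^{2-\varepsilon})-\delta c_{\beta}R^{\beta}\longrightarrow -\infty \quad\text{as } R\to+\infty,
\]
since $\beta>2-\varepsilon$. Thus, for every $\delta>0$ there exists $R_0(\delta)$ such that $\phi_{\delta}\le 0$ on $\{|z|=R\}\cap Q_1$ whenever $R\ge R_0(\delta)$.

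Third, I apply the maximum principle for subharmonic functions on the bounded open set $\Omega_R := Q_1\cap B(0,R)$ with $R\ge R_0(\delta)$. Its boundary decomposes as a piece of $\d Q_1$ and the arc $\{|z|=R\}\cap Q_1$, and on both pieces $\phi_{\delta}\le 0$. The maximum principle yields $\phi_{\delta}\le 0$ on $\Omega_R$, hence on all of $Q_1$ by letting $R\to\infty$. For a fixed $z\in Q_1$ this reads $\phi(z)\le\delta v(z)$; letting $\delta\to 0^{+}$ gives $\phi(z)\le 0$, as claimed. The only mild subtlety is choosing the rotation by $e^{-i\pi/4}$ and the exponent $\beta\in(2-\varepsilon,2)$ so that $v$ is simultaneously harmonic on $Q_1$, nonnegative (in fact bounded below by $c_{\beta}|z|^{\beta}$) on $\bar Q_1$, and of growth strictly exceeding that of $\phi$; everything else is a routine maximum principle argument.
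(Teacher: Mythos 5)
Your proof is correct and follows essentially the same route as the paper's: both subtract a small multiple of a harmonic comparison function $\Re(w^\beta)$ (the paper takes $\beta=2-\eps/2$, you take any $\beta\in(2-\eps,2)$; the rotation by $e^{-i\pi/4}$ versus rotating the sector is cosmetic), apply the maximum principle on the truncated sector, and let the perturbation parameter tend to zero. No gaps.
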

Note that the power $2-\eps$ with $\eps >0$ is sharp: the result is false for $\eps = 0$, as showed by the harmonic function $(x, y) \mapsto xy$.
\bnp 
First note that the sector $Q_1$ can be rotated, say to quadrant
$$
Q = \{z \in \C , \arg(z) \in [-\frac{\pi}{4},\frac{\pi}{4}]\} .
$$
We set $v := \Re(z^{2-\frac{\eps}{2}})$ (with the principal determination of the logarithm) which is harmonic in $Q$. We have $v(r, \theta) := r^{2-\frac{\eps}{2}} \cos((2-\eps/2) \theta) \geq r^{2-\frac{\eps}{2}} \cos((2-\eps/2)\pi/4 )$ with $ \cos((2-\eps/2)\pi/4 )>0$. Let 
$$
u_\delta(z) = \phi(z) - \delta v(z) ,
$$ 
which is also subharmonic in $Q$. We have $\limsup_{z \in Q , |z|\to \infty} u(z) = - \infty$. As a consequence, there exists $R>0$ such that $ u_\delta(z) <0$ on $\{|z| \geq R\} \cap Q$. Now, on the bounded  set $Q^R = Q \cap \{|z| \leq R\}$, we apply the maximum principle to the function $u_\delta$,
satisfying $u_\delta \leq 0$ on $\d Q^R$. This yields $u_\delta \leq 0$ on $Q^R$ and hence $u_\delta \leq 0$ on $Q$. Finally letting $\delta$ tend to zero, we obtain the sought result.
\enp
\small
\bibliographystyle{alpha} 
\bibliography{bibli}
\end{document}